\documentclass[11pt,a4paper,leqno]{amsart}

\usepackage{common}
\newtheorem{mainthm}{Theorem}

\usepackage{tikz}

\title[Flag multiresolutions]{Flag multiresolutions: $T1$, paraproducts and commutators}
\author{Ben Foster}
\author{Kangwei Li}
\author{Henri Martikainen}

\address[B.F.]{Department of Mathematics, Washington University
in St. Louis, 1 Brookings Drive, St. Louis, MO 63130, USA}
\email{bfoster@wustl.edu}

\address[K.L.]{School of Mathematical Sciences, Zhejiang Normal University,\newline Jinhua 321004, China}
\email{kangwei.li@zjnu.edu.cn}

\address[H.M.]{Department of Mathematics, Washington University
in St. Louis,\newline 1 Brookings Drive, St. Louis, MO 63130, USA}
\email{henri@wustl.edu}

\subjclass[2020]{42B20}
\keywords{Calder\'on--Zygmund operators, multi-parameter analysis, flag singular integrals, commutators}

\begin{document}

\allowdisplaybreaks

\begin{abstract}
	We develop a rich theory of dyadic-probabilistic
	multiresolution methods in the context of flag type singular integrals.
	We apply these methods to prove a full $T1$ theorem, even a representation theorem,
	for bi-parameter flag singular integrals. This is the first time a complete $T1$
	theorem appears in a multi-parameter setting with an entangled nature -- even the related recent
	advances with Zygmund dilations have been in the cancellative $T1 = 0$
	or convolution situations. This theorem comes with a non-trivial necessity argument
	related to the flag type product $\BMO$ assumption on $T1$.
	We also present a version of our theorem in the cancellative tri-parameter
	flag setting showing that our methods are perfectly adaptable to general multi-parameter
	flag settings. Importantly, our $T1$ representation theorem implies
	weighted estimates for every flag $A_p$ weight, showing that our dyadic multiresolutions
	carefully retain the flag invariance of the original singular integrals.
	Our multiresolution methods also yield flag style paraproduct decompositions
	of products of functions and lead to a general and efficient
	method for proving commutator estimates in various flag settings.
\end{abstract}

\maketitle
\tableofcontents

\section{Introduction}
A flag is a nested sequence of linear subspaces of increasing dimension,
such as, $\R^{d_3} \subset \R^{d_2 + d_3} \subset \R^{d_1+d_2+d_3}$ (with the obvious
identifications).
To describe how stereotypical flag singular integrals look like,
we recall first that the $j$th Riesz transform $R^j$ on some Euclidean space $\R^k$ is
the classical one-parameter convolution form
singular integral operator (SIO)
with the kernel $(x_j-y_j)/|x-y|^{k+1}$, $j \in \{1, \ldots, k\}$.
An  example of a tri-parameter flag type singular integral is
$R_{123} R_{23} R_3$, where $R_{123}, R_{23}, R_3$ are \emph{some} Riesz transforms
in $\R^{d}, \R^{d_{23}}$ and $\R^{d_3}$, respectively -- here $d = d_{123} := d_1+d_2+d_3$, $d_{23} := d_2 + d_3$.
More generally, a Fourier multiplier $\wh{T_m f} = m\wh f$ whose symbol $m$ satisfies
\begin{equation*}
	|\partial^\alpha m(\xi)| \lesssim
	(|\xi_1|+|\xi_2|+|\xi_3|)^{-|\alpha_1|}  (|\xi_2|+|\xi_3|)^{-|\alpha_2|}|\xi_3|^{-|\alpha_3|}
\end{equation*}
for $\xi = (\xi_1, \xi_2, \xi_3) \in \R^d = \R^{d_1} \times \R^{d_2} \times \R^{d_3}$
and multi-indices $\alpha = (\alpha_1, \alpha_2, \alpha_3) \in \N^{d_1} \times \N^{d_2} \times \N^{d_3}$,
gives rise to a convolution form tri-parameter flag SIO in $\R^d$.

Flag type SIOs arose in the work of M\"uller, Ricci and Stein \cites{MRS, MRS2}
in connection with certain Marcinkiewicz multipliers on the Heisenberg group; see
also the related later work of Nagel, Ricci and Stein \cite{NRS2001}.
Flag singular integrals in more general homogeneous groups then appeared in \cite{NRSW}.
The theory of product type flag Hardy spaces, $\BMO$ spaces and Littlewood--Paley theory,
together with various different characterizations, has been developed
by several authors, see e.g. \cites{HLS, HLLW}.

Despite the significant body of work in the flag setting, some of which were mentioned above,
some fundamental tools have been noted to be missing.
Indeed, recently in \cite{DLOPW-FLAG} X. Duong, J. Li, Y. Ou, J. Pipher and B. Wick write that
\emph{``In the flag setting, we lack a suitable wavelet basis and this approach is not available.
	Essentially, the wavelet basis requires
	the construction of a suitable multi-resolution analysis, which we do not have in this flag setting.''}
The approach that the authors are referring to is a particular way to prove commutator estimates for SIOs.
More fundamental still is the alluded lack of a flag type multiresolution analysis in the form of
a wavelet or Haar type basis that this approach would require.
In this paper we will develop the flag adapted discrete Haar wavelet methodology
and show that there is a related powerful dyadic--probabilistic multiresolution approach to flag type problems,
including the commutators. In particular,
we can represent a new class of non-convolution form flag type SIOs in a dyadic manner
so that the discrete multiresolution faithfully respects the original flag structure of the operator.
That the decompositions indeed preserve the flag structure in a sharp manner can be seen
from the fact that the resulting dyadic model operators satisfy weighted estimates
with the optimal flag $A_p$ weights (which is a bigger class of weights than
the usual tri-parameter weights). In particular, our dyadic representation theorem
is a refined instance of a $T1$ theorem in this context and directly gives optimal
weighted estimates for our flag SIOs. As we explain later this is also the first time a full
David--Journ\'e style $T1$ theorem appears in this type of an adapted multi-parameter setting
that has an entangled spirit.

The flag setting is not the only specialized multi-parameter setting of interest.
In this regard this paper is a continuation of our recent work in the so-called
Zygmund dilations setting \cites{HLMV, ALM24}.
Zygmund dilations are the particular entangled dilations
$x \mapsto (sx_1, tx_2, stx_3)$, $x \in \R^3$, $s,t > 0$ -- essentially the simplest
``entangled'' dilations. While the flag setting is not as clearly associated
with an underlying dilation, it is in many other ways similar in spirit to the Zygmund case
as will become clear.
Our general goal is to study
these more specialized multi-parameter settings, where there is some kind
of entanglement or interaction between the different parameters, and
try to carry the related invariances of singular integrals to their new dyadic
counterparts.

A general theme of the research then is
obtaining optimal $L^p$ estimates allowed by the invariances.
Let $\calR_Z$ denote all of the Zygmund rectangles
$I \times J \times K$ with $\ell(K) = \ell(I)\ell(J)$ in $\R^3$
and $\calR_F$ denote all of the (tri-parameter) flag rectangles $I \times J \times K$
with $\ell(I) \le \ell(J) \le \ell(K)$. The entanglement is evident in
the form of the rectangles -- the side lengths in different parameters are not independent of each other.
For a class $\mathcal E$ of sets of finite positive measure, we define
\begin{equation*}
	\begin{split}
		M_{\mathcal E}f        & :=\sup_{E\in\mathcal E}1_E\ave{\abs{f}}_E, \qquad \ave{f}_E :=
		|E|^{-1} \int_E f,                                                                                          \\
		[w]_{A_p^{\mathcal E}} & :=\sup_{E\in\mathcal E}\ave{w}_E \ave{w^{-1/(p-1)}}_E^{p-1},\qquad p\in(1,\infty).
	\end{split}
\end{equation*}
Then $M_{\mathcal{E}}$ is $L^p(w)$-bounded if and only if
$[w]_{A_p^{\mathcal E}}<\infty $ in, for instance, each of these cases:
\begin{enumerate}
	\item $\mathcal{E}=\mathcal{Q}:=\{\text{cubes in }\R^d\}$: Muckenhoupt's theorem 1972;
	\item $\mathcal{E}=\mathcal{R}:=\{\text{rectangles in }\R^d\}$: easy iterative product space extension;
	\item $\mathcal{E} = \calR_F$: straightforward iterative argument;
	\item $\mathcal{E}=\mathcal{R}_{Z}$: R. Fefferman \& Pipher \cite{FEPI}.
\end{enumerate}
Beyond the one-parameter theory, in the above maximal function level, the only clearly
challenging result is (4) requiring the Cordoba--Fefferman algorithm; even
the flag case (3) is rather simple in the maximal function level (we record this for the convenience
of the reader in Proposition \ref{prop:maxfunc}).
All of the related singular integral questions are, however, significantly harder -- for instance,
using modern tools the weighted estimates in the pure multi-parameter case follow
from the dyadic representation theorem \cite{Ma1} as first observed in \cite{HPW2018}.
The weighted estimates for Zygmund SIOs, in the generality of SIOs with minimal smoothness and decay,
were only recently solved by two of the current authors together with T. Hyt\"onen and E. Vuorinen \cite{HLMV}.
Weighted estimates for some particular convolution form flag type SIOs
are recorded by Wu \cite{WuFlag}.
As in the Zygmund case, the weighted estimates provide
the most important benchmark to see that our newly developed flag adapted
multiresolution methods are sharp enough. That is why the first corollary of our
representation theorems must be the boundedness of our new flag SIO classes
with the flag weights -- and we achieve this.

The operators
in the Zygmund and flag settings are essentially more
special than usual multi-parameter operators in that they have some
additional decay, invariance or structure adapted to the given setting.
What this means is most obvious in the maximal function level as we saw above.
How the invariance should be captured in the singular integral level -- especially
if we venture outside of multipliers -- is not generally clear at all.
We will, nevertheless, carefully formulate and motivate a natural new non-convolution flag setting in this paper
that, in particular, captures the flag multipliers mentioned above.
To reiterate one of the main points, our singular integrals (just like the maximal functions above) are
on the kernel estimate level essentially
\emph{better} than the classical pure multi-parameter
variants -- but we want a lot out of this, namely, multiple different
enhanced estimates that are not true for general
multi-parameter singular integrals. We also want our $T1$ assumptions
to be flag adapted. This generally speaking means \emph{weaker} assumptions,
for instance, a weak boundedness property just for flag rectangles as opposed
to for general rectangles. The so-called product flag $\BMO$ space
relevant in the $T1$ conditions offers a very non-trivial instance of this latter
philosophy (we discuss this point carefully later).

The restriction in the form of the rectangles
in the Zygmund and flag settings is one manifestation
on how the parameters interact in these specialized settings, and developing
dyadic representations theorems that retain
the invariance of the given SIO is non-trivial. The benefits are clear:
as noted above our Zygmund theory \cite{HLMV} allowed us to, e.g., prove
optimal Zygmund weighted estimates for general Zygmund invariant SIOs with minimal smoothness and decay.
This turned out to be very delicate, and only the case of smooth multipliers
was previously understood by Fefferman--Pipher \cite{FEPI}.
The other previous generalistic methods from more classical
representation theorems \cites{Hy1, Ma1} to sparse domination \cite{Lerner2013ASP},
and their many, many generalizations, do
not provide tools to detect the finer structure granted by these more sophisticated invariances.
Our Zygmund result \cite{HLMV} was, in particular, the first entangled representation theorem.
However, it was still not general as far as representation theorems or $T1$ theorems go as it
was for cancellative SIOs only -- we explain what this means next.
Before that, though, we note another recent result with the same kind of spirit:
in \cite{HyRo} certain special causal SIOs are shown to satisfy ``causal sparse domination'', which is sparse
domination preserving a completely different invariance, namely, causality.

Recall that the $T1$ theorem of David--Journ\'e \cite{DJ1984} is the fundamental $L^2 \to L^2$ boundedness
characterization of classical SIOs $T$
requiring the weak boundedness property (WBP), like $|\langle T1_Q, 1_Q\rangle| \lesssim |Q|$
for all cubes $Q$, and the famous $T1$ conditions $T1, T^*1 \in \BMO$. The main $T1$ assumptions are automatic
for translation invariant convolution operators $Tf = K * f$ in the form $T1 = T^*1 = 0$.
The $T1 = T^*1 = 0$ case is called the cancellative or paraproduct free case and already covers many key operators
(it does not necessarily require convolution form).
Of course, the mere $L^2$ boundedness for convolution operators $K * f$ can also sometimes
be simply established via Plancherel by checking $\widehat K \in L^{\infty}$.
However, the more recent dyadic representation theorems, such as \cites{Hy1, Ma1},
transform David--Journ\'e type assumptions into an exact dyadic identity for $T$
as an average of dyadic model operators
and thus, in addition, allow reducing most further questions to easier, purely dyadic variants.
These much deeper applications include various weighted bounds, like the $A_2$ conjecture in \cite{Hy1}
or the optimal Zygmund weighted estimates in \cite{HLMV}. They also include
commutator estimates, but more on these later.
Because of these key applications that are often non-trivial even for the simplest
of SIOs, cancellative versions
of dyadic representation theorems remain powerful. But they are certainly not completely general and
sometimes it is a very significant challenge to push further -- many obstacles
appear already in pure multi-parameter settings from hybrid operators, like partial paraproducts,
to complicated product $\BMO$ type spaces of Chang and Fefferman \cites{CF1, CF2}. In entangled
situations these obstacles are severe.
There are also situations, like non-doubling analysis,
where convolution operators do not justify the usefulness of the cancellative case $T1 = 0$.
Precisely the dyadic multiresolution methods that we develop here
are the only known attack vector to such non-doubling results, see
\cite{Nazarov2003} for the original one-parameter theory or \cite{Hytonen2014}
for non-doubling analysis in the pure bi-parameter setting. These potential
applications to non-doubling analysis in entangled settings add value to our methods.

We now explain very carefully what we develop in this paper and all of the key philosophies behind them,
as we see them.

\subsection{Basic dyadic flag multiresolution analysis}
We outline briefly the structure of our basic multiresolution analysis.
This is merely the starting point for the proof of a dyadic representation theorem, for instance.
For this discussion say we are in the tri-parameter flag setting in $\R^d = \R^{d_1 + d_2 + d_3}$
and $\calD = \calD^1 \times \calD^2 \times \calD^3$, where $\calD^m$ is a dyadic lattice
in $\R^{d_m}$. The dyadic flag rectangles are $\calD_F = \{I \in \calD \colon
	\ell(I^1) \le \ell(I^2) \le \ell(I^3)\}$ and we often need
subcollections like $\calD_{<, =}$ meaning that $\ell(I^1) < \ell(I^2) = \ell(I^3)$.
Our most basic flag decomposition
is
$$
	f=\sum_{I\in \calD_F} \Lambda_{I^1}\Lambda_{I^2}\Delta_{I^3}f=:\sum_{I\in \calD_F}\Delta_{I, F}f,
$$
where $\Delta$ is a one-parameter martingale difference and, for $m = 1,2$,
$\Lambda_{I^m}$ is defined
so that it implicitly depends on $\ell(I^{m+1})$:
$\Lambda_{I^m} f = \Delta_{I^m} f$ if $\ell(I^m) < \ell(I^{m+1})$
and $\Lambda_{I^m} f = \Delta_{I^m} f + E_{I^m} f$ if $\ell(I^m) = \ell(I^{m+1})$,
where $E$ is the averaging operator $E_A f := 1_A \langle f \rangle_A$. So in the
flag martingale difference $\Delta_{I, F}$ we trade
some cancellation (compared to pure tri-parameter theory)
in the first or second parameters in the cases $\ell(I^1) = \ell(I^2)$
and $\ell(I^2) = \ell(I^3)$, respectively, to be able to restrict the summation to flag
rectangles only. Being later able to deal with this lack of cancellation in connection
with singular integrals is a key part of our theory.
The following weighted square function estimate is the first key indication that
we can continue to build on this multiresolution.
\begin{mainthm}
	For the dyadic flag square function
	\[
		S_F f = S_{\calD_F} f := \Big(\sum_{I\in \calD_F}|\Delta_{I, F} f|^2\Big)^{1/2}
	\]
	we have
	\begin{equation*}
		\| S_F f\|_{L^p(w)}\lesssim \|f\|_{L^p(w)}
	\end{equation*}
	for all $1<p<\infty$ and $w\in A_{p, F}$.
	We also have the following lower bound for all $0<q<\infty$ and $v\in A_{\infty, F}$:
	\begin{equation*}
		\|f\|_{L^q(v)}\lesssim \| S_F f\|_{L^q(v)}.
	\end{equation*}
\end{mainthm}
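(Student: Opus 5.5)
Both estimates rest on a pointwise domination of $S_F$ by compositions of one-parameter objects, with an extra term at the flag boundaries, followed by extrapolation.

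\textbf{Upper bound.} Split $\calD_F$ into the four subcollections $\calD_{<,<}$, $\calD_{=,<}$, $\calD_{<,=}$, $\calD_{=,=}$ and treat each part of the square function separately.
\begin{itemize}
\item On $\calD_{<,<}$ we have $\Delta_{I,F}=\Delta_{I^1}\Delta_{I^2}\Delta_{I^3}$. The corresponding square function is dominated by the full tri-parameter square function restricted to flag rectangles.
\item On the other pieces we expand $\Lambda$, for instance $\Lambda_{I^1}=\Delta_{I^1}+E_{I^1}$ when $\ell(I^1)=\ell(I^2)$. The $E$-terms are then controlled by ``diagonal'' square functions. For $\ell(I^1)=\ell(I^2)$ with $\ell(I^2)<\ell(I^3)$, the term $E_{I^1}\Delta_{I^2}\Delta_{I^3}f$ is a martingale difference in $\R^{d_1+d_2}$ with respect to the one-parameter lattice of cubes $I^1\times I^2$ of equal side. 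More precisely, $E_{I^1}\Delta_{I^2}$ summed over $I^1$ of fixed side equals a partial piece of $\Delta^{12}_{Q}$ (a component of the martingale difference on $Q=I^1\times I^2$).
\end{itemize}
Hence every piece is dominated pointwise by a vector-valued or iterated square function of the form
\[
\Big(\sum |\Delta^{(\cdot)}\cdots f|^2\Big)^{1/2},
\]
where the blocks are martingale differences adapted to the coarser parameters $\R^{d_3}$, $\R^{d_2+d_3}$, $\R^{d}$, each arising via the nesting $I^1\times I^2\times I^3$ with cube-like blocks.

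To get weighted bounds for $w\in A_{p,F}$, use the iterative structure. A flag weight is uniformly $A_p$ in $x_3$ for a.e. fixed $(x_1,x_2)$, uniformly $A_p$ on $\R^{d_2+d_3}$ with respect to the flag rectangles there, and so on. Concretely:
\begin{enumerate}
\item Prove the estimate first in $L^2(w)$ for $w\in A_{2,F}$.
\item Write $S_F$, for each piece, as a composition of one-parameter square functions in successive variables, possibly $\ell^2$-valued. For example, $\Delta_{I^1}\Delta_{I^2}\Delta_{I^3}$ with $\ell(I^1)<\ell(I^2)<\ell(I^3)$ can be dominated by the full tri-parameter square function, since flag $A_p$ weights are in particular \ldots
\end{enumerate}
This last step fails: $A_{p,F}\supsetneq A_p$ of the product type, so the restriction must be exploited genuinely. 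Instead, use the $\ell^2$-valued extension of one-parameter square function bounds with $A_p$ weights in the joint variables. Sum first over $I^1$ (with $\ell(I^1)<\ell(I^2)$ fixed), then over $(I^2,I^3)$. The sum over $I^1$ with $\ell(I^1)\le\ell(I^2)$ equals $f$ minus $E$ at scale $\ell(I^2)$ in $x_1$, and this combines with the $x_2$ scale into a cube average in $\R^{d_1+d_2}$ at scale $\ell(I^2)$. Those averages are one-parameter objects in $\R^{d_{12}}$, which are bounded by the $A_p$ property over cubes $I^1\times I^2$, and these are flag rectangles. Iterating gives the bound via weighted vector-valued one-parameter Littlewood--Paley theory and the flag maximal function (Proposition \ref{prop:maxfunc}); the $p\ne2$ case then follows by Rubio de Francia extrapolation adapted to $A_{p,F}$, which works since $M_{\calR_F}$ is bounded on $L^p(w)$ exactly for $w\in A_{p,F}$.

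\textbf{Lower bound.} For $0<q<\infty$ and $v\in A_{\infty,F}$, the standard route is a good-$\lambda$ or extrapolation argument:
\begin{enumerate}
\item Show $\|f\|_{L^2(w)}\lesssim \|S_Ff\|_{L^2(w)}$ for $w\in A_{2,F}$ by duality, using
\[
\langle f,g\rangle=\sum_{I\in\calD_F}\langle \Delta_{I,F}f,\Delta_{I,F}g\rangle
\]
(the $\Delta_{I,F}$ are orthogonal projections summing to the identity), Cauchy--Schwarz, and the upper bound applied to $g$ in $L^2(w^{-1})$.
\item Extrapolate in the $A_\infty$ form (Cruz-Uribe--Martell--P\'erez style) to all $0<q<\infty$ and $v\in A_{\infty,F}$. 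This requires $A_{\infty,F}=\bigcup_p A_{p,F}$ and a flag Rubio de Francia algorithm, both of which follow from the iterative maximal bound.
\end{enumerate}

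\textbf{Main obstacle.} The hardest step is the weighted upper bound for the pieces carrying the non-cancellative $E$-terms. These must be recombined into cube-adapted martingale differences in the merged parameters so that only flag $A_p$ (not product $A_p$) is used.
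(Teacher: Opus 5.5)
\textbf{The lower bound.} Your argument does not reach the stated range. The duality step is correct, but it only gives $\|f\|_{L^q(v)}\lesssim\|S_Ff\|_{L^q(v)}$ for $1<q<\infty$ and $v\in A_{q,F}$, because it needs the upper bound in $L^{q'}(v^{1-q'})$.

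The Cruz-Uribe--Martell--P\'erez $A_\infty$ extrapolation theorem requires as \emph{input} an estimate at one exponent for \emph{all} $A_\infty$-type weights. No extrapolation scheme can produce that input from $A_p$-information: the pairs $(|Hg|,|g|)$, with $H$ the Hilbert transform, obey every $A_p$ bound but no $L^1$ or $A_\infty$ bound. Hence the cases $0<q\le 1$ and $v\in A_{\infty,F}\setminus A_{q,F}$ remain unproved. The $L^1$ case is actually needed later, in the proof of Theorem~\ref{thm:bmoembedding}.

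The paper never uses duality here. It chains one-parameter lower bounds that already hold, in $\ell^2$-valued form, for every $A_\infty$ weight and every $0<q<\infty$:
\begin{enumerate}
\item first in $\R^{d_3}$, using the uniform fibre property of $v$;
\item then in $\R^{d_{23}}$ along $\calD^{23}_=$;
\item then in $\R^d$ along $\calD_=$.
\end{enumerate}
It then checks, via $\Delta_{J^{23}}=(\Delta_{J^2}+E_{J^2})\Delta_{J^3}+\Delta_{J^2}E_{J^3}$ and the analogous splitting of $\Delta_L$ for $L\in\calD_=$, that the resulting iterated sum of squares is exactly $(S_Ff)^2$.

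\textbf{The upper bound.} This also breaks exactly where you locate the main obstacle. You merge $x_1,x_2$ into cubes $I^1\times I^2$ of $\R^{d_{12}}$ with $x_3$ frozen, and then appeal to an $A_p$ property over these cubes. By Proposition~\ref{prop:weight}, a flag weight gives only $A_p(\R^d)$, uniform $A_p(\R^{d_{23}})$ on $x_1$-fibres and uniform $A_p(\R^{d_3})$ on $(x_1,x_2)$-fibres. It gives no control of $x_{12}$-fibres, since flag rectangles never have $\ell(I^3)<\ell(I^2)$. For instance, take $d_3=1$ and $0<a<p-1$: the weight $w(x)=\dist(x,[0,1]^{d_1+d_2}\times\{0\})^a$ lies in $A_{p,F}$, yet $[w(\cdot,\cdot,x_3)]_{A_p(\R^{d_{12}})}\gtrsim|x_3|^{-a}$.

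Separately, the fact that $\sum_{\ell(I^1)<\ell(I^2)}\Delta_{I^1}$ telescopes to $f$ minus an average says nothing about the $\ell^2$-sum over $I^1$.

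The missing idea is to insert free averages in \emph{all} coarser variables at the scale of the smallest side. For example, for $I\in\calD_{=,<}$,
\[
(\Delta_{I^1}+E_{I^1})\Delta_{I^2}\Delta_{I^3}f=\sum_{J^3\colon \ell(J^3)=\ell(I^2)}(\Delta_{I^1}+E_{I^1})\Delta_{I^2}E_{J^3}\Delta_{I^3}f,
\]
because $\Delta_{I^3}f$ is constant on the children of $I^3$ and $\ell(I^2)<\ell(I^3)$. Each summand is a piece of the one-parameter martingale difference on the cube $I^1\times I^2\times J^3\in\calD_=$ of $\R^d$. So the $\ell^2$-valued form of Lemma~\ref{lem:variantbounds} needs only $w\in A_p(\R^d)$, and you finish with the square function in $\R^{d_3}$.

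On $\calD_{<,<}$ the same device is applied twice. You insert $E_{L^2}E_{L^3}$ at scale $\ell(I^1)$, giving cubes of $\R^d$. Then you insert $E_{L^3}$ at scale $\ell(I^2)$, giving cubes of $\R^{d_{23}}$ and using the $x_1$-fibres. Finally you pass to $\R^{d_3}$. The piece $\calD_{<,=}$ is analogous, using $\calD^{23}_=$.

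Your list $\R^d,\R^{d_{23}},\R^{d_3}$ was the right one; the merges must land there, never in $\R^{d_{12}}$.
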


Suppose $(f,g) \mapsto B(f, g)$ is a bilinear form. We could now formally decompose
$$
	B(f, g) = \sum_{I, J \in \calD_F} B(\Delta_{I, F} f, \Delta_{J, F} g)
$$
simply by decomposing $f$ and $g$ in the flag sense like above. However, we
want to, in addition, have $\ell(I^i) = \ell(J^i)$ for all $i=1,2,3$ --
we denote this requirement by $\ell(I) = \ell(J)$. This is kind of
important even just from a philosophical perspective -- now some
partial dual can mix the rectangles to $I^1 \times J^2 \times I^3$, say,
but this remains flag (this is certainly not the only reason why this is useful).
We can achieve this with the price of
losing some further cancellation -- we show that we can write
\begin{equation}\label{eq:flagB}
	B(f, g) = \sum_{s \in \calS} \Sigma_s, \qquad \calS := \{(s_1, s_2, s_3) \colon s_i \in \{\Delta, E, D\}\},
\end{equation}
where we have precise rules, which we do not carefully explain here, on how to decipher what $\Sigma_s$ is for a given tuple
$s$. But basically (there are some intricate details) the symbol describes the symbol on the first function
and the other one can be inferred (and $D$ denotes the case where there is a $\Lambda$ in both).
For instance, we have
$$
	\Sigma_{(\Delta, E, E)} :=
	\sum_{\substack{ I, J \in \calD_{<, <} \\ \ell(I) = \ell(J) }} B(\Delta_{I^1} E_{I^2} E_{I^3} f, E_{J^1}\Delta_{J^2} \Delta_{J^3} g).
$$
(One intricate detail is are we summing over $\calD_F$ or, say, over $\calD_{<, <}$ as here.)
This is our basic multiresolution decomposition. As the first step in the proof
of a dyadic representation theorem this can be applied with $B(f, g) = \langle Tf, g\rangle$, where $T$ is a
flag singular integral. It can also be applied with $B(f, g) = fg$ to obtain a flag style paraproduct decomposition of
products relevant in commutator theory. Once the rules are explained, the notation is extremely convenient,
and this basic decomposition is a critical first step to many of our more refined decompositions.

\subsection*{The dyadic flag $T1$ representation theorem}
This singular integral part of this paper splits to the bi-parameter and tri-parameter flag settings.
One of our goals is to show that our methods are powerful enough to deal with the completely general
multi-parameter flag setting and restricting to the simplest bi-parameter theory would
not necessarily achieve this. On the other hand, the tri-parameter setting already
serves as a very good model of the general case and there we develop a cancellative $T1$ theorem.
However, we do even more in the bi-parameter flag setting: we develop the full flag analog
of the David--Journ\'e $T1$ theorem.
We do not see an obstacle (except perhaps notational) for a full $T1$
representation theorem also in the tri-parameter (or even multi-parameter) flag setting
-- but we felt that our strategy of writing this paper conveys all of the key ideas
(how to do general multi-parameter and how to do general $T1$) in the cleanest possible way.
Again, the fact that we do an actual full $T1$ in this type of an ``entangled like''
multi-parameter setting is completely new as they have only previously appeared in the
pure product theory \cites{Jou1985, Ma1, Ou}.
On the other hand, any
type of flag native representation theorem, cancellative or not, is also already novel.
On a very basic level this encompasses developing the multiresolution analysis that we
just discussed and was reported
missing in \cite{DLOPW-FLAG}. But it is quite a bit more demanding than just that.
Our methods do not only yield this more general
singular integral framework, but also give a critical new attack vector to commutator
problems -- and we exploit and develop these already in this paper.

The following sharp multiresolution decomposition of SIOs is our first main theorem.
\begin{mainthm}\label{thm:main1}
	Let $T$ be a bi-parameter flag CZO satisfying flag adapted
	$T1$ assumption as in Section \ref{sec:biparCZO}. Then we have
	\begin{align*}
		\langle Tf, g\rangle = C \E \Big(
		 & \sum_{k^1,\, k^2 = 0}^{\infty} 2^{-\alpha_1 k^1} 2^{-\alpha_2 k^2}
		\Big[ \langle Q_k f, g \rangle + \sum_{l=1}^{k^1} \langle R_{k, l}f, g\rangle \Big]
		+ \sum_{k^1,\, k^2 = 0}^{\infty} 2^{-\alpha_2 k^2} \langle R_{k, 0} f, g\rangle                  \\
		 & + \sum_{k^1 = 0}^{\infty} 2^{-\alpha_1 k^1} \langle P_{k^1}^1f, g \rangle +
		\sum_{k^2 = 0}^{\infty} 2^{-\alpha_2 k^2} \langle P_{k^2}^2f, g \rangle\Big)                     \\
		 & + \E \langle \Pi_1(T1, f) + \Pi_1^*(T^*1, f) + \Pi_2(T_11, f) + \Pi_2^*(T_1^*1, f), g\rangle,
	\end{align*}
	where the average is over dyadic lattices and $Q_k$ is a flag shift (in the random lattice)
	as in Definition \ref{def:flagshift},
	$R_{k, l}$ is an almost one-parameter shift as in Definition \ref{def:almostone},
	the operators $P$ are flag partial paraproducts as in Definition \ref{def:partialp}
	and the $\Pi$ operators are full flag paraproducts as in Section \ref{sec:fullpararep}.
	Here $C$ depends only on the kernel estimates and the various CZO assumptions.
	Alternatively, let $T$ be a cancellative tri-parameter flag CZO as in Section \ref{sec:triparCZO}.
	Then $T$ obeys a similar representation involving various shifts but no paraproducts.
	In both cases we have
	$$
		\|Tf\|_{L^p(w)} \lesssim \|f\|_{L^p(w)}
	$$
	for all $p \in (1, \infty)$ and optimal flag weights $w \in A_{p, F}$.
\end{mainthm}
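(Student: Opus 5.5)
We will prove the representation by the now-standard Hytönen-type scheme, with the flag multiresolution of \eqref{eq:flagB} in place of the product martingale decomposition. First we average over random dyadic lattices $\calD = \calD^1\times\calD^2$ and apply \eqref{eq:flagB} to $B(f,g) = \langle Tf, g\rangle$. This writes $\langle Tf,g\rangle$ as $\sum_{s\in\calS}\Sigma_s$ over pairs $I,J\in\calD_F$ with $\ell(I)=\ell(J)$. In each $\Sigma_s$ we reorganize the sum by the relative position of $I$ and $J$ and by the goodness of the smaller rectangles. The random-lattice independence lets us restrict to good cubes, as in \cite{Hy1, Ma1}. We then use that the conditional expectation of the indicator of goodness is a constant independent of the functions; this produces the constant $C$. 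For a fixed pair, we group the terms by the smallest common flag ancestor $K = K^1\times K^2$ with $\ell(I^i)=2^{-k^i}\ell(K^i)$. The resulting normalized coefficients define flag shifts $Q_k$.

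The size and decay bounds for these coefficients are checked case by case from the kernel assumptions of Section \ref{sec:biparCZO}:
\begin{itemize}
\item Separated in both parameters: full kernel estimates.
\item Separated in one parameter, nearby in the other: mixed size/Hölder estimates.
\item Adjacent or equal: the flag weak boundedness property.
\end{itemize}
The geometric factors $2^{-\alpha_1 k^1}2^{-\alpha_2 k^2}$ come from the Hölder continuity together with goodness. A point particular to the flag setting is that in the $\Lambda$ and $E$ slots we lack cancellation. For those terms, when $\ell(I^1)=\ell(I^2)$ after passing to the common ancestor, the operator only has cancellation in the second variable. This is why the almost one-parameter shifts $R_{k,l}$ appear, with the index $l\le k^1$ recording how far the first-parameter scale is from the second, and with the extra family $R_{k,0}$ carrying only $2^{-\alpha_2 k^2}$ decay.

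The terms in which one or both functions carry no cancellation are handled as in the David--Journé argument. We add and subtract so that the non-cancellative parts collapse to pairings against $T1$, $T^*1$, or the partial adjoints $T_1 1$, $T_1^*1$ in one parameter. The fully non-cancellative parts give the full flag paraproducts $\Pi_1,\Pi_2$ and their adjoints. The mixed parts, with cancellation in one parameter only, become flag partial paraproducts $P^1_{k^1}$ and $P^2_{k^2}$. Their one-parameter $\BMO$ coefficients come from the partial $T1$ assumptions.

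The main obstacle we expect is the full paraproduct step. We need to show that the assumed flag product $\BMO$ condition on $T1$, which is weaker than Chang--Fefferman product $\BMO$, suffices for the $L^2$ and weighted boundedness of $\Pi_1$ under our flag Haar expansion. We would first try an $H^1$--$\BMO$ duality adapted to $S_F$, using the square function theorem above to identify the flag Hardy space.

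For the weighted bound, it suffices that each model operator is bounded on $L^p(w)$ for $w\in A_{p,F}$ with at most polynomial growth in $k$, which the exponential factors then absorb. For the shifts we would use the weighted square function estimate $S_F$ (both directions) together with the flag maximal function bound of Proposition \ref{prop:maxfunc} via Fefferman--Stein vector-valued inequalities. For the paraproducts we would use weighted flag $H^1$--$\BMO$ duality.

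The tri-parameter cancellative case follows the same scheme without the paraproduct step, with more $\calS$-cases to enumerate.
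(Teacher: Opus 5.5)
Your outline follows the paper's overall scheme: flag multiresolution of $\langle Tf,g\rangle$ in random lattices, goodness, paraproduct corrections, and weighted bounds for each model operator. The final summation step, however, has a genuine gap.

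You note that the family $R_{k,0}$ carries only the factor $2^{-\alpha_2k^2}$; these are the terms with $\ell(I^1)=\ell(I^2)$, where there is no cancellation in the first parameter. You then assert that model-operator bounds with polynomial growth in $k$ are absorbed by the exponential factors. For $\sum_{k^1,k^2\ge 0}2^{-\alpha_2k^2}\|R_{k,0}\|$ this is false, because nothing decays in $k^1$. Without a further ingredient, neither the convergence of the identity nor the weighted bound is justified.

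The missing idea is the flag normalization of the coefficients. In these terms the kernel estimates bound the coefficients by $2^{-\alpha_2k^2}|I|/|K_F|$. Here $K=I^{(k)}$ is in general \emph{not} flag and $K_F$ is its flag enlargement; regrouping by a common flag ancestor, as you propose, turns the relevant ratio into $1$ and loses this. For $K\in\calD^k_=$ one has $|K|/|K_F|=2^{-d_2(k^1-k^2)_+}$. Unweighted, this gives the decay directly, but for $w\in A_{p,F}$ it must be transferred to the weighted norm. The summation configurations either pick up $|K|/|K_F|$ in front of $M_{\calD_F}$, or, for $\int_I f_K\int_J g_K$, are controlled by $\wt M_{\calD^k_=}$. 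This operator satisfies both $\wt M_{\calD^k_=}\le 2^{-d_2(k^1-k^2)_+}M_{\calD}$ and $\wt M_{\calD^k_=}\le M_{\calD_F}$. Interpolation with change of measure, using $w^{1+\varepsilon}\in A_{p,F}$ (Proposition~\ref{prop:open}, Lemmas~\ref{lem:KperKFmaximal} and~\ref{lem:VecKperKFmaximal}), then gives $\|R_{k,0}\|_{L^p(w)\to L^p(w)}\lesssim(|k|+1)^2 2^{-\eta(k^1-k^2)_+}$ with a weight-dependent $\eta=\eta([w]_{A_{p,F}})>0$. Only with this does the double sum converge.

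You also misplace the origin of the shifts $R_{k,l}$, $1\le l\le k^1$, and this hides the second real difficulty. They do not come from missing cancellation at $\ell(I^1)=\ell(I^2)$; that is the case $l=0$. They arise because in $\Sigma_{(D,\Delta)}$, $\Sigma_{(\Delta,\Delta)}$ and the cases derived from them, one must \emph{not} perform the second-parameter paraproduct correction when $2^{-k^1}\ell(I^2)\le\ell(I^1)<\ell(I^2)$. Doing so would create $H_{I^2,J^2}$ and supports $I^1\times J^2$, hence averages over $K^1\times J^2$. That rectangle is not flag when $\ell(K^1)>\ell(J^2)$, so it is not controlled by $M_{\calD_F}$ (compare Lemma~\ref{lem:formsum3}). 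Your uniform add-and-subtract step would produce exactly such terms.

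The same restriction $\ell(K^1)<\ell(K^2)$ is what makes $P^1_{k^1}$ bounded with flag weights. There $M_{\calD^1}$ is not available, so the paper replaces $\langle g_{K^1}\rangle_{K^2}1_{K^2}$ by $E^2_{\ell(K^1)}M_{\calD^2}g_{K^1}$, and the leftover $K^1$-average is then handled by $M_{\calD_=}$.

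Two smaller corrections. The shifts $R_{k,l}$ have only one-parameter cancellation, so they are treated with one-parameter square functions in the dilated grid $\ell(I^1)=2^{-l}\ell(I^2)$, not with $S_F$. And the full paraproducts you single out as the main obstacle are routine here: only Category~I ones occur, and they follow from flag $H^1$--$\BMO$ duality and Theorem~\ref{thm:square}.
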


There are many things to unpack and discuss related to this.
We first discuss the cancellative part of the decomposition -- the so-called flag
shifts and almost one-parameter shifts. These can never be avoided. We limit
the discussion to the bi-parameter case.
Abstractly, the bilinear forms of all
of the bi-parameter shifts will have, for a fixed $k = (k^1, k^2)$, the form
$$
	(f,g) \mapsto \sum_{K \in \calF} \sum_{I^{(k)} = J^{(k)} = K} a_{IJK} \langle f, \varphi_{I,J}\rangle
	\langle g, \psi_{I, J}\rangle,
$$
where some collection of dyadic rectangles $\calF$ appears, some abstract functions
$\varphi_{I, J}$, $\psi_{I, J}$ appear and
the coefficients $a_{IJK}$ need to satisfy a certain flag specific normalization.
The minimum requirement on the collection $\calF$ is that $K \in \calF$ guarantees
that $I, J$ are flag -- this means that $2^{-k^1}\ell(K^1) \le 2^{-k^2}\ell(K^2)$
(notice that $K$ does not need to be flag).
The functions $\varphi_{I,J}, \psi_{I, J}$
have some cancellation properties, some constancy properties, some size conditions
and some support conditions. In the most ideal situation $\varphi_{I, J}$ really
only depends on $I$ and is some appropriate Haar function related to $I$. But
we critically need to, at least sometimes when we need to ``improve the cancellation'',
allow that $\supp \varphi_{I, J}
	\subset I \cup J \cup (J^1 \times I^2) \cup (I^1 \times J^2)$. But some
of these ``mixed supports'' cases, like $I^1 \times J^2$, can lead
to a situation in the proof where an average like $\langle f \rangle_{K^1 \times J^2}$
appears. But this is not necessarily flag -- while $\ell(I^1) \le \ell(I^2) = \ell(J^2)$
we have now enlarged the smaller cube $I^1$ to $K^1$ and it can be that $\ell(K^1) > \ell(J^2)$.
So a flag maximal function cannot be used to control such an average, leading to an
obvious issue with weighted estimates.
If we can guarantee that $\calF$ is such that $\ell(I^1) \ll \ell(I^2)$ (i.e., $I^1$ is much smaller
than $I^2$) then this problem does not arise.
Here are where the ``almost one-parameter'' shifts enter the picture.
We need to use, depending on the complexity $k$, finitely many of these
to arrange this restriction (they also serve another purpose --
to arrange more cancellation to our partial paraproducts). The key point is that in these almost one-parameter shifts we have
$\ell(I^1) \sim_l \ell(I^2)$ and that in this one-parameter boundary we need \emph{less} cancellation.
The need for the cancellation, on the other hand, is what sometimes requires blowing up
the supports of the functions $\varphi_{I, J}$ and $\psi_{I, J}$. So, we do not
``improve the cancellation'' in this range leading to the fact that the support conditions
of these functions are better and, thus, the problematic averages do not appear. Instead,
we use the one-parameter philosophy here to deal with the poorer cancellation. With this
maneuver we can in other places impose the additional structure to $\calF$ (guaranteeing that $\ell(I^1) \ll \ell(I^2)$)
allowing us to access the required enhanced cancellation in this pure bi-parameter range
but also not suffer from the support conditions.

There are other interesting features regarding the cancellative part. One is that the kernel estimates
are tailored to imply a flag specific normalization
$|a_{IJK}| \le |I| / |K_F|$, where $K_F \supset K$ is
some appropriate flag enlargement of $K$. This only
provides extra decay over the more familiar ratio $|I| / |K|$ (appearing in other representation theorems)
if $K$ is not flag.
This occasional extra decay is, nevertheless, critical to counterbalance some of the lost
cancellation, a natural feature of the flag multiresolutions, in the $\ell(I^1) = \ell(I^2)$ case.
This is related to the term $\sum_{k^1,\, k^2 = 0}^{\infty} 2^{-\alpha_2 k^2} \langle R_{k, 0} f, g\rangle$
in the above representation theorem -- notice that there is no decay in $k^1$.
Utilizing the decay coming from this ratio correctly is a bit demanding
requiring, in particular, interpolation with a change of measure.

Next, we discuss the various paraproducts and the related $T1$ condition.
What is the right condition on $T1$? As we know
from multi-parameter $T1$ theorems, it should be some kind of ``product''
$\BMO$ space. In this setting there is a natural
flag product $\BMO$ space -- call it $\BMO_{\pro, F}$ -- and we
need to assume as part of our definition of a flag CZO
that $T1 \in \BMO_{\pro, F}$ for the full paraproduct associated with $T1$ to be bounded
with flag weights. This simply means that, uniformly over all dyadic grids, we have
the dual square function estimate
$$
	|\langle b, f\rangle| \lesssim \Big\|\Big(\sum_{I \in \calD_F}
	|\Delta_{I, F} f|^2 \Big)^{1/2}\Big\|_{L^1}.
$$
We discuss this dual viewpoint at length later, but it is our favorite angle
on $\BMO$ spaces (however, this can also be formulated as a Carleson condition, if desired).
In particular, this is not the same space as the classical bi-parameter product $\BMO$ space $\BMO_{\pro}$
of Chang and Fefferman that appears in pure bi-parameter $T1$/representation theorems \cites{Jou1985, Ma1},
and this space is not even obviously comparable to it.
This is because while there are less flag rectangles than general rectangles,
the flag martingale differences of a given function
in a flag rectangle $I^1 \times I^2$ only agree
with the bi-parameter martingale difference if $\ell(I^1) < \ell(I^2)$ and
can be less cancellative in the ``one-parameter boundary'' $\ell(I^1) = \ell(I^2)$.
Critically, is this assumption even necessary? In addition, is $T1 \in \BMO_{\pro, F}$ a weaker assumption than
$T1 \in \BMO_{\pro}$ even if the $\BMO$ spaces are not directly comparable?
We answer both of these questions carefully in Section \ref{sec:BMO}. First, the condition
$T1 \in \BMO_{\pro, F}$ is necessary for the $L^2$ boundedness of Flag SIOs. This is because
of the new necessity result Theorem \ref{thm:T1nec} stating that in the bi-parameter
boundary $\ell(I^1) = \ell(I^2)$ flag SIOs satisfy a ``partially one-parameter''
$\BMO$ condition. This combined with the usual Journ\'e covering lemma \cite{Journe1986}
in the $\ell(I^1) < \ell(I^2)$ regime gives the necessity. We restate Theorem \ref{thm:T1nec} here:
\begin{mainthm}
	Assume that $T$ is a bi-parameter flag SIO satisfying the cube testing condition
	\[
		\|1_Q T1_Q\|_{L^2} \lesssim |Q|^{1/2}
	\]
	for all cubes $Q$. Then we have
	\[
		\sup_{Q \in \calD_=} |Q|^{-\frac 12}
		\Big(\sum_{\substack{I\in \calD_= \\ I\subset Q} }|\langle T1, h_{I^1}^0\otimes h_{I^2}\rangle|^2
		\Big)^{1/2}<\infty.
	\]
\end{mainthm}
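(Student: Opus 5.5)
Fix $Q = Q^1\times Q^2\in\calD_=$, so $\ell(Q^1)=\ell(Q^2)$. The functions $h^0_{I^1}\otimes h_{I^2}$ with $I\in\calD_=$, $I\subset Q$, are orthonormal and supported in $Q$, so $\{h^0_{I^1}\otimes h_{I^2}\}$ behaves like a one-parameter Haar system adapted to the cubes $I$. Write $1 = 1_{3Q} + 1_{(3Q)^c}$; here $3Q$ is a cube since $Q$ is. Then split
\[
\langle T1, h^0_{I^1}\otimes h_{I^2}\rangle
= \langle T1_{3Q}, h^0_{I^1}\otimes h_{I^2}\rangle + \langle T1_{(3Q)^c}, h^0_{I^1}\otimes h_{I^2}\rangle ,
\]
interpreted in the usual way modulo constants and the cancellation in the second variable. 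The plan is to bound the square sum of each piece by $|Q|^{1/2}$.

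\emph{Local part.} The testing condition applied to the cube $3Q$ gives $\|1_{3Q}T1_{3Q}\|_{L^2}\lesssim |Q|^{1/2}$. The $h^0_{I^1}\otimes h_{I^2}$ are orthonormal and supported in $Q\subset 3Q$, so Bessel's inequality gives
\[
\sum_{I}|\langle T1_{3Q}, h^0_{I^1}\otimes h_{I^2}\rangle|^2\le \|1_{3Q}T1_{3Q}\|_{L^2}^2\lesssim |Q|,
\]
which is the required bound.

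\emph{Far part.} For $x\in I$ and $y\notin 3Q$, use the cancellation of $h_{I^2}$ in $x_2$. Let $c_I=(c_{I^1},c_{I^2})$ be the center of $I$. Then
\[
\langle T1_{(3Q)^c}, h^0_{I^1}\otimes h_{I^2}\rangle=\iint [K(x,y)-K((x_1,c_{I^2}),y)]\,h^0_{I^1}(x_1)h_{I^2}(x_2)\,dy\,dx .
\]
Here I would use the flag SIO kernel structure of Section \ref{sec:biparCZO}. In the region where $y$ is far from $Q$ in the full $\R^d$ sense, the flag kernel behaves like a one-parameter CZ kernel in $\R^{d_1+d_2}$, with Hölder regularity in $x_2$ and decay $|x-y|^{-d}$. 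This yields the bound $|I|^{1/2}(\ell(I)/\ell(Q))^{\alpha}$, so the square sum over $I\subset Q$ is $\lesssim|Q|$. The geometric series over scales converges because we only sum over $\ell(I^1)=\ell(I^2)$: there is one scale per generation, exactly as in one-parameter theory. The ranges where $y_1$ is near $Q^1$ but $y_2$ is far (or vice versa) are where the flag kernel has a genuinely two-scale form, such as $|x-y|^{-d_1}(|x_1-y_1|+|x_2-y_2|)^{\dots}|x_2-y_2|^{-d_2}$-type bounds. There I would use the mixed size/Hölder estimates of the flag kernel in $x_2$. Since $|x_2-y_2|\gtrsim\ell(Q^2)=\ell(Q)$, the full distance $|x-y|\gtrsim \ell(Q)$ anyway, so the same one-parameter estimate should go through. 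This plays the role of the local $T1$ conditions in one-parameter $T1$ necessity.

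The main obstacle is the far part when $y_2$ is near $Q^2$ but $y_1$ is far from $Q^1$. In that range the $x_2$-cancellation cannot be exploited through kernel regularity, because $|x_2-y_2|$ may be small compared to $\ell(I^2)$. I would handle it by using the partial kernel representation (the $T_1$-type, Calderón–Zygmund structure in the second variable with a kernel-valued operator in the first). For fixed $x_1,y_1$ far apart, the operator in the second variable is a one-parameter CZO with bounds $\lesssim|x_1-y_1|^{-d_1-\dots}$ scaled appropriately. Then $\langle T_{x_1,y_1}1_{3Q^2}\text{-local}, h_{I^2}\rangle$ would be controlled via $L^2$ Bessel in $I^2$ together with the decay in $|x_1-y_1|\ge\ell(Q)$. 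Summing over $I^1$, which is determined by $I^2$ since $\ell(I^1)=\ell(I^2)$ and there are $\sim$ boundedly many positions per $I^2$ after weighting by $h^0_{I^1}$, should close the estimate. If this fails, the fallback is to enlarge $3Q$ to $3Q^1\times\R^{d_2}$ plus a one-parameter tail, and test instead with partial testing conditions.
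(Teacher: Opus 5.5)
Your local part is fine. The functions $h^0_{I^1}\otimes h_{I^2}$, $I\in\calD_=$, are orthonormal: equal $I^2$ forces disjoint $I^1$ of the same size. So Bessel's inequality plus testing on the cube $3Q$ gives what the paper gets from the one-parameter square function.

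The gap is in the far part. It rests on the claim that away from $Q$ the flag kernel behaves like a one-parameter Calder\'on--Zygmund kernel with decay $|x-y|^{-d}$. It does not. The flag size $|x_1-y_1|^{-d_1}(|x_1-y_1|+|x_2-y_2|)^{-d_2}$ is non-integrably singular on the whole set $\{x_1=y_1\}$, however large $|x_2-y_2|$ is. Moreover, the full kernel representation is available only when the supports are separated in both parameters. Consequently the region $y\in 3Q^1\times(3Q^2)^c$, which you treat as routine, cannot be handled with the full kernel alone:
\begin{itemize}
\item For $y_1\in I^1$ you must use the partial kernel $K_{I^1}$ and its H\"older estimate in $x_2$. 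This gives $|I|^{1/2}(\ell(I)/\ell(Q))^{\alpha_2}$.
\item For $y_1\in 3Q^1\setminus I^1$ the mixed H\"older--size bound leaves $\int_{I^1}\int_{3Q^1\setminus I^1}|x_1-y_1|^{-d_1}\,dy_1\,dx_1\lesssim (1+\log(\ell(Q)/\ell(I)))|I^1|$. This is a logarithmic loss that has to be absorbed by $(\ell(I)/\ell(Q))^{\alpha_2}$.
\end{itemize}
In $(3Q^1)^c\times(3Q^2)^c$ the conclusion survives, but not through a one-parameter kernel bound. The $y_1$-integral over $\ell(Q)\lesssim|x_1-y_1|\le|x_2-y_2|$ produces a factor $\log(|x_2-y_2|/\ell(Q))$, which must be summed against $|x_2-y_2|^{-d_2-\alpha_2}$.

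For $y\in(3Q^1)^c\times 3Q^2$, the mechanism you propose (a one-parameter CZO in $x_2$ for fixed $x_1,y_1$, with Bessel in $I^2$) is not supplied by the hypotheses. The only partial kernels are $K_{I^1}$ and $K_{I^2}$, tied to indicators or atoms of a fixed cube, and no partial testing is assumed. Your counting remark is also wrong: for fixed $I^2$ there are $(\ell(Q)/\ell(I))^{d_1}$ admissible $I^1\subset Q^1$.

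The missing idea is that on the near part you should not use the $x_2$-cancellation at all. There $|x_2-y_2|\lesssim\ell(Q)\lesssim|x_1-y_1|$, so the flag size is $\lesssim|x_1-y_1|^{-d_1-d_2}$, and $\int_{(3Q^1)^c}|x_1-y_1|^{-d_1-d_2}\,dy_1\lesssim\ell(Q)^{-d_2}$. Hence the piece $y_2\in 3I^2$, a set of measure $\sim|I^2|$, contributes $\lesssim|I|^{1/2}(\ell(I)/\ell(Q))^{d_2}$ by size alone. Concretely:
\begin{itemize}
\item use the full kernel when $y_2\in 3I^2\setminus I^2$, or when $x_2$ and $y_2$ lie in different children of $I^2$;
\item use the partial kernel $K_{P^2}$, of size $|P^2|^2|x_1-y_1|^{-d_1}(|x_1-y_1|+\ell(P^2))^{-d_2}$, when $x_2$ and $y_2$ lie in the same child $P^2$.
\end{itemize}
Only on $3Q^2\setminus 3I^2$ is the H\"older estimate in $x_2$ needed, and it gives $(\ell(I)/\ell(Q))^{\alpha_2/2}$.

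With these bounds every piece decays geometrically in $\log_2(\ell(Q)/\ell(I))$, up to the harmless logarithms. Then $\sum_{k\ge 0}2^{-ck}\sum_{I^{(k)}=Q}|I|\lesssim|Q|$ closes the argument.
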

Beyond necessity the above also shows that for flag SIOs satisfying the very weak \textbf{one-parameter} style testing
$\|1_Q T1_Q\|_{L^2} \lesssim |Q|^{1/2}$ for \textbf{cubes} $Q$ the assumption $T1 \in \BMO_{\pro, F}$
is a weaker assumption than $T1 \in \BMO_{\pro}$ (which would, by the way, also be necessary
by Journ\'e's covering theorem \cite{Journe1986} as flag SIOs are also classical bi-parameter SIOs).

Recall that the recent Zygmund representation \cite{HLMV} was cancellative so no paraproducts or $T1$ conditions arose.
Recent progress towards understanding
paraproducts in the Zygmund setting was made by two of us and E. Airta in \cite{ALM24},
where the corresponding Zygmund full paraproducts were bounded. This was related to commutators $[b, T]f
	= bTf - T(bf)$; paraproducts are also fundamental for decomposing
products, such as, $bTf$ (hence the name paraproduct). However,
this did not require bounding partial paraproducts as they only appear in
the multiresolution decomposition of the bilinear form $\langle Tf, g\rangle$
related to $T1$ theorems/representation theorems. And it is, indeed, these hybrid
operators -- the partial paraproducts -- that are the most difficult model operators.
We did not know how to handle them in the entangled Zygmund setting, and
this was the main reason our Zygmund theory was restricted to the cancellative case.
The flag setting is the first setting beyond the pure multi-parameter setting
where we are now able to develop a satisfactory theory of partial paraproducts,
and thus a complete $T1$ theorem.
We have reason to believe that this flag result can help
us to develop a full $T1$ theorem also in the Zygmund setting -- it is not
just about bounding the partial paraproducts, it is also about the choice of the multiresolution
so that the structure and the available cancellation play nicely with kernel estimates.

\subsection*{Commutators}
We discuss commutators next, since
they offer another way to capture these multi-parameter invariances in a sharp way.
In their simplest form commutators are defined by $[b,T]f := bTf-T(bf)$.
Here $b$ is a function and $T$ can be an SIO of any type (one-parameter, multi-parameter, flag or Zygmund, say).
We study these commutators and the role of flag-adapted $\BMO$ spaces.
The general idea is that optimal commutator characterizations, in terms of the space
where the symbol $b$ should belong to, should involve only the natural invariance
specific rectangles so that the structure of the SIO is really exploited to get optimal results
better than those that hold for pure multi-parameter operators.
Most naturally this is via some adapted $\BMO$ style spaces.
This is exactly the same type of question than the weighted question:
can we beat generic multi-parameter estimates given the extra invariances?

The natural $\BMO$ for these simple commutators is the ``little'' $\BMO$.
This is the most obvious analog of the classical one-parameter $\BMO$ -- just
replace the supremum over cubes by the supremum over the relevant rectangles (bi-parameter,
flag, Zygmund, \ldots) -- for instance,
$$
	\|b\|_{\bmo_F} := \sup_{I \textup{ flag}} \frac{1}{|I|} \int_I |b - \langle b \rangle_I|.
$$
To get a handle of this space, this definition is often not the most useful --
we almost always prefer the corresponding dual square function estimates.
For instance, the bi-parameter flag little $\BMO$ is equivalent to being in the standard one-parameter
$\BMO(\R^d)$ and, uniformly over $x_1$, in $\BMO(\R^{d_2})$. Therefore, on the dual side it is equivalent
to having both of the following estimates \emph{uniformly over all} dyadic grids $\calD^m$ in $\R^{d_m}$:
\begin{equation*}
	|\langle b, f\rangle| \lesssim \Big\|\Big(\sum_{I \in \calD_{=}} |\Delta_I f|^2 \Big)^{1/2}\Big\|_{L^1}
	\qquad \textup{and} \qquad
	|\langle b, f\rangle| \lesssim \Big\|\Big(\sum_{I^2 \in \calD^2} |\Delta_{I^2} f|^2 \Big)^{1/2}\Big\|_{L^1},
\end{equation*}
where we recall that $\calD = \calD^{12} := \calD^1 \times \calD^2$ and $\calD_=$ consists of the cubes $
	I = I^{12} = I^1 \times I^2 \in \calD$ with
$\ell(I^1) = \ell(I^2)$.

The boundedness proof of the
basic commutator $[b,T]$ is often, not always, trivialized as it is amenable to the ``Cauchy integral trick''.
This is a complex analysis trick exploiting a known connection of $\BMO$ with $A_p$ weights.
It works in most multi-parameter settings -- even in the flag setting, see \cite{DLOPW-FLAG}.
This means that one does not have to bound the commutator $[b,T]$ at all -- its boundedness simply
follows from this trick directly via the weighted boundedness of the original operator $T$.
But if you want
the optimal flag little $\BMO$ you do need to have the optimal flag weighted
estimates, precisely the ones we discussed in Theorem \ref{thm:main1}.
Thus, since our flag CZO class is widely more general than ever before, and they are all weighted bounded by our theory,
we can directly get the basic commutator estimates more generally than in \cite{DLOPW-FLAG} (where
they rely on \cite{WuFlag} for the weighted bounds of some particular SIOs). So we get
$$
	\|[b, T]f\|_{L^p(w)} \lesssim \|b\|_{\bmo_F}\|f\|_{L^p(w)}
$$
for all $1 < p < \infty$ and $w \in A_{p, F}$ whenever
$T$ is a general bi-parameter flag CZO or a cancellative tri-parameter
flag CZO.

We also show that our flag multiresolutions handle these commutators in the cancellative
bi-parameter case directly, without relying on the Cauchy integral trick. This is not a cosmetic point --
this is the base case that demonstrates one can use these methods to study
iterated commutators, such as $[T_1, [b, T_2]]$ for two flag CZOs $T_i$, using these methods.
Such commutators would be genuinely beyond the Cauchy trick.
This relies on the flag paraproduct decomposition obtained from (the bi-parameter
version) of \eqref{eq:flagB} with $B(b,f) = bf$. We write the result of that as
$bf = \sum_{s \in \calS} \pi_s(b, f)$. The worst case is when
$b$ has no cancellation and no $\BMO$ philosophy can save the day -- this
is $s = (E, E)$ in our notation. This case has to be handled inside the
commutator proof. We decompose $\calS \setminus \{(E,E)\} = \calS_1 \cup \calS_2$,
where $s_m \ne E$ for both $m=1,2$ if $s \in \calS_1$.
Bounding these new
flag paraproducts is interesting and one also critically needs
the non-trivial embedding $\bmo_F \subset \BMO_{\pro, F}$ (proved in Theorem \ref{thm:bmoembedding}).
We prove the following boundedness result for our flag paraproducts.
\begin{mainthm}
	Let $1 < p < \infty$ and $w \in A_{p, F}$.
	If $s \in \calS_1$ we have that
	$$
		\|\pi_s(b, f)\|_{L^p(w)} \lesssim \|b\|_{\BMO_{\pro, F}} \|f\|_{L^p(w)}.
	$$
	If $s \in \calS_2$ we have that
	$$
		\|\pi_s(b, f)\|_{L^p(w)} \lesssim \|b\|_{\bmo_F} \|f\|_{L^p(w)}.
	$$
	In particular, for $s \in \calS_1 \cup \calS_2$ we have
	$$
		\|\pi_s(b, f)\|_{L^p(w)} \lesssim \|b\|_{\bmo_F} \|f\|_{L^p(w)}.
	$$
\end{mainthm}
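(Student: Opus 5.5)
Our approach is duality against the flag square function. For $s \in \calS_1$ both components of $s$ are in $\{\Delta, D\}$, so $b$ carries a (flag) martingale difference in both parameters. For $s \in \calS_2$ exactly one component is $E$.

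\emph{The case $s \in \calS_1$.} We write $\langle \pi_s(b,f), g\rangle$ for $g \in L^{p'}(w^{1-p'})$. By the rules behind \eqref{eq:flagB}, this takes the form
$$
\sum_{I \in \calD_F} \langle b, \widetilde\Delta_{I,F} \Phi_I\rangle,
$$
where $\widetilde\Delta_{I,F}$ is a flag martingale difference (a $\Lambda$ or $\Delta$ in each parameter, as in the flag decomposition) and $\Phi_I$ collects products of averages or martingale differences of $f$ and $g$ localized to $I$. We then apply the defining dual estimate of $\BMO_{\pro,F}$ to the function $F = \sum_I \widetilde\Delta_{I,F}(\ldots)$. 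This bounds the form by $\|b\|_{\BMO_{\pro,F}} \|S_F F\|_{L^1}$. Pointwise, $S_F F$ is dominated by one of two expressions:
\begin{itemize}
\item $M_F f \cdot S_F g$, when the symbol puts the averages on $f$;
\item mixed square-maximal functions of the form $(\sum_I (M\Delta f)^2)^{1/2}$, when the cancellation sits on $f$ in one parameter.
\end{itemize}
Hölder with $w, w^{1-p'}$, together with the weighted square function bound from the first main theorem, and the $L^p(w)$ boundedness of $M_F$ (Proposition \ref{prop:maxfunc}) and of the vector-valued mixed operators for $w \in A_{p,F}$, then gives the claim. The one-parameter boundary $\ell(I^1)=\ell(I^2)$ needs care, since there $\Lambda_{I^1}$ contains an $E_{I^1}$ and the function paired with $b$ must still be genuinely in the range of $\Delta_{I,F}$. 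This is exactly why the flag space $\BMO_{\pro,F}$, and not the classical $\BMO_{\pro}$, is the right one.

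\emph{The case $s \in \calS_2$.} Suppose $s_1 = E$ (the case $s_2=E$ is symmetric). Then the $b$-part is $E_{I^1}\Delta_{I^2}$-type or $E_{I^1}\Lambda$-type. We view $b$, uniformly in the first variable, as a function whose cancellation is only in $x_2$, or in the one-parameter diagonal $\calD_=$. We use the characterization of $\bmo_F$ recorded above as two dual square-function estimates:
\begin{itemize}
\item one over $\calD_=$ (one-parameter in $\R^d$);
\item one over $\calD^2$ (uniform in $x_1$).
\end{itemize}
The sum splits into the part with $\ell(I^1)=\ell(I^2)$ and the part with $\ell(I^1)<\ell(I^2)$. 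The diagonal part is a classical one-parameter paraproduct on $\R^d$; it is controlled by the first dual estimate and the one-parameter weighted theory, since $A_{p,F} \subset A_p(\R^d)$. For the part with $\ell(I^1)<\ell(I^2)$, we sum $I^1$ first. Since only averages of $b$ in the first parameter appear, we reduce to a paraproduct in $x_2$ acting on $b$ averaged over $I^1$. We then apply the second estimate, using that averages in $x_1$ preserve $\BMO(\R^{d_2})$ uniformly. The remaining factors are controlled by $M_F$ and $S_F$ as before. Here one must check that the enlarged averages stay flag, which follows from $\ell(I^1) \le \ell(I^2)$.

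\emph{Conclusion and main obstacle.} The final claim follows from the first two bounds together with the embedding $\bmo_F \subset \BMO_{\pro,F}$ of Theorem \ref{thm:bmoembedding}. I expect the main obstacle to be the $\calS_1$ case on the boundary $\ell(I^1)=\ell(I^2)$ for symbols involving $D$. There, neither parameter has full Haar cancellation, and one must verify that the pairing really lands in the flag martingale structure. Otherwise the needed bound would be by $\BMO_{\pro}$ rather than $\BMO_{\pro,F}$. My first attempt is to regroup the $E_{I^1}$ terms into $\Delta_{I,F}$ of the combined sum, using the identity $\Lambda_{I^1}\Delta_{I^2} = \Delta_{I^1\times I^2}$ on $\calD_=$ in the one-parameter sense.
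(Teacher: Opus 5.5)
Your $\calS_1$ strategy (dualize against $S_F$ via $\BMO_{\pro,F}$, then bound by $M_{\calD_F}$ and square functions) matches the paper, and so does your $s_1=E$ plan via \eqref{eq:bmo2}. The genuine gap is the sentence ``the case $s_2=E$ is symmetric''.

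\textbf{Why the symmetry fails.} The flag structure treats the two parameters differently. $\bmo_F=\BMO(\R^d)\cap\BMO^{(2)}$ gives $\BMO$ in $x_2$ uniformly in $x_1$, but no uniform $\BMO$ control in $x_1$, because flag rectangles are never thin in $x_2$. So \eqref{eq:bmo2} has no first-parameter counterpart. In $\pi_{(\Delta,E)}$ and $\pi_{(D,E)}$ the $b$-coefficients are $\langle b, h_{I^1}\otimes 1_{I^2}/|I^2|\rangle$, with cancellation in $x_1$ and an average in $x_2$. They are summed over all of $\calD_<$, respectively $\calD_F$, not just the diagonal. Mirroring your $s_1=E$ argument would need exactly the missing $x_1$-estimate.

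\textbf{How the paper handles it.} It applies the one-parameter estimate \eqref{eq:bmo1} on $\calD_=$ to the whole sum. For $J\in\calD_=$, the parts $\Delta_{J^1}\Delta_{J^2}$ and $E_{J^1}\Delta_{J^2}$ of $\Delta_J$ annihilate $h_{I^1}\otimes 1_{I^2}$, because $\ell(I^1)\le\ell(I^2)$. Only $\Delta_{J^1}E_{J^2}$ with $J^1=I^1$, $J^2\subset I^2$, $\ell(J^2)=\ell(I^1)$ survives, and it produces the factor $1/|I^2|$. Summing over $I^2\supset J^2$ then recombines into $\sum_{J^1\times I^2\in\calD_F}\langle|\Delta_{J^1\times I^2,F}f|\rangle_{J^1\times I^2}^2 1_{J^1\times I^2}$, which $M_{\calD_F}$ and $S_F$ control.

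A repair closer to your idea also works, but it relies on the scale restriction, not on symmetry. Fix $I^2$. For $\ell(I^1)\le\ell(I^2)$, the oscillation of $x_1\mapsto\langle b(x_1,\cdot)\rangle_{I^2}$ over $I^1$ is at most that of $b$ over the flag rectangle $I^1\times I^2$. Hence the $I^1$-sequence is dyadic $\BMO$ at those scales with norm $\lesssim\|b\|_{\bmo_F}$. Then use one-dimensional duality in $x_1$, take the supremum over $I^1$ of the factor carrying $\Delta_{I^2}$ (bounded by $M_{\calD_F}$ since $I$ is flag), and finish with Cauchy--Schwarz in $I^2$.

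Two smaller corrections in this part. For $s_1=E$ the sum runs over $\calD_<$, so your ``diagonal part'' is empty. In $\pi_{(D,E)}$ you must use the redefined operator: the piece $E_{I^1}E_{I^2}b$ on $\calD_=$ has no cancellation, no $\BMO$ norm controls it, and the paper moves it into $\pi_{(E,E)}$.

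\textbf{The $\calS_1$ obstacle you flag is not one.} For $s\in\calS_1$ the $b$-part is $\Delta_{I^1}\Delta_{I^2}b$ or $\Lambda_{I^1}\Delta_{I^2}b=\Delta_{I,F}b$. So the coefficients are $\langle b,h_{I,F}\rangle$ by construction, and $\BMO_{\pro,F}$ controls them by definition. Your proposed fix also rests on a false identity: $\Lambda_{I^1}\Delta_{I^2}\neq\Delta_{I^1\times I^2}$, since on $\calD_=$ one has $\Delta_{I^1\times I^2}=\Lambda_{I^1}\Delta_{I^2}+\Delta_{I^1}E_{I^2}$.

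\textbf{The real difficulty.} In $\pi_{(D,\Delta)}$, $\pi_{(\Delta,D)}$, $\pi_{(E,\Delta)}$ and $\pi_{(E,D)}$ it is the factor of $f$ or $g$ paired with $h_{I^1}$ in $x_1$ that has no cancellation in $x_2$. Bounded naively, it gives a square function in the first parameter alone. That need not be bounded on $L^p(w)$ for $w\in A_{p,F}$, since only $x_2\mapsto w(x_1,x_2)$ is uniformly $A_p$. The paper proceeds as follows.
\begin{itemize}
\item It uses $\langle f\rangle_{I^2}=\langle E^2_{\ell(I^1)}f\rangle_{I^2}$ to replace $\Delta_{I^1}$ by $\Delta_{I^1}E^2_{\ell(I^1)}$. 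This is a one-parameter square function on $\calD_=$, bounded because $A_{p,F}\subset A_p(\R^d)$.
\item It factors $\sum_{I\in\calD}$ into an $I^1$-sum times an $I^2$-sum, paired with $(\sum_{I^2}M_{\calD_F}(\Delta_{I^2}g)^2)^{1/2}$.
\item It handles the non-cancellative $\calD_=$ part of $(D,\Delta)$ by $M_{\calD_F}f$.
\end{itemize}
Your ``vector-valued mixed operators'' must be exactly these; a generic mixed square-maximal function in the first parameter would not be bounded with flag weights.
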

Also, the
case $\calS_1$ above covers all the full paraproducts
that are relevant in the $T1$ representation theorem. The case
$\calS_2$ is only needed in the commutator part.

\vspace{0.3cm}

\noindent \textbf{Acknowledgements.}
K. Li was supported by the National Natural Science Foundation of China through
project numbers 12671125 and 12222114. This material is based upon work supported
by the National Science Foundation (NSF) under Grant No. 2247234 (H. Martikainen).
H.M. was, in addition, supported by the Simons Foundation through MP-TSM-00002361
(travel support for mathematicians).

H.M. thanks Cody Waters for very useful comments.

No LLMs were used in the development of this manuscript apart from the final language and consistency checks.

\section{Preliminaries on flag weights and maximal functions}
We discuss the maximal functions in the tri-parameter setting (the bi-parameter flag
setting is easily understood from this).
We say that a non-negative locally integrable function $w$ belongs to $A_{p, F} = A_{p, F}(\R^d)$,
where $d = d_1 + d_2 + d_3$ and $1 < p < \infty$, if
\[
	[w]_{A_{p, F}}:=  \sup_{I\in \mathcal R_F}
	\langle w\rangle_{I} \langle w^{-\frac 1{p-1}}\rangle_{I}^{p-1}<\infty.
\]
Here $\mathcal R_F$ stands for the collection of \emph{all} flag rectangles in $\R^d$, i.e., the rectangles
$I=I^1\times I^2\times I^3$ with $I^i \subset \R^{d_i}$ and $\ell(I^1)\le \ell(I^2)\le \ell(I^3)$.
We go through a few easy facts about flag weights -- these are well-known but we prove
some of them for the convenience of the reader.
Using the Lebesgue differentiation theorem it is easy to see the part (i) of the following theorem,
where $A_p(\R^d)$ denotes the usual one-parameter Muckenhoupt class. Denote also,
for instance, $d_{23} = d_2 + d_3$.
\begin{prop}\label{prop:weight}\leavevmode
	\begin{enumerate}
		\item[(i)]Let $w\in A_{p, F} = A_{p, F}(\R^d)$, $1<p< \infty$. Then
		      \begin{enumerate}
			      \item[$\bullet$]  $w\in A_{p}(\R^{d})$ with $[w]_{A_p(\R^{d})}\le [w]_{A_{p, F}}$.
			      \item[$\bullet$] $w(x_1, \cdot )\in A_{p}(\R^{d_{23}})$ for a.e. $x_1\in \R^{d_1}$
			            with $$
				            \esssup_{x_1\in \R^{d_1}}[w(x_1, \cdot )]_{A_p(\R^{d_{23}})}\le [w]_{A_{p, F}}.
			            $$
			      \item[$\bullet$]  $w(x_1, x_2, \cdot )\in A_{p}(\R^{d_3})$ for a.e.
			            $x_1\in \R^{d_1}, x_2\in \R^{d_2}$ with
			            $$
				            \esssup_{ x_1\in \R^{d_1}, x_2\in \R^{d_2}}[w(x_1, x_2, \cdot )]_{A_p(\R^{d_3})}
				            \le [w]_{A_{p, F}}.
			            $$
		      \end{enumerate}
		\item[(ii)] Conversely, if
		      \[
			      [w]_{A_p(\R^{d})}+ \esssup_{x_1\in \R^{d_1}}[w(x_1, \cdot )]_{A_p(\R^{d_{23}})}+
			      \esssup_{ x_1\in \R^{d_1}, x_2\in \R^{d_2}}[w(x_1, x_2, \cdot )]_{A_p(\R^{d_3})}<\infty,
		      \] then $w\in A_{p, F}$.
	\end{enumerate}
\end{prop}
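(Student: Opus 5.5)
Part (i) will follow from the fact that cubes are flag rectangles, combined with a Lebesgue differentiation argument in the lower-dimensional variables.

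\emph{First bullet.} Every cube in $\R^d$ is a flag rectangle, since all its side lengths are equal. Hence the supremum defining $[w]_{A_p(\R^d)}$ runs over a subcollection of $\mathcal R_F$, which gives $[w]_{A_p(\R^d)}\le[w]_{A_{p,F}}$ at once.

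\emph{Second bullet.} Fix a cube $Q\subset\R^{d_{23}}$ and a cube $I^1\subset\R^{d_1}$ with $\ell(I^1)\le\ell(Q)$. The rectangle $I^1\times Q$ is flag. By Fubini, $\langle w\rangle_{I^1\times Q}$ is the $I^1$-average of $x_1\mapsto\langle w(x_1,\cdot)\rangle_Q$, and the same holds for $\sigma=w^{-1/(p-1)}$. We then let $I^1$ shrink to $x_1$. By the Lebesgue differentiation theorem, for a.e.\ $x_1$ these averages converge to $\langle w(x_1,\cdot)\rangle_Q$ and $\langle\sigma(x_1,\cdot)\rangle_Q$, and in the limit we get $\langle w(x_1,\cdot)\rangle_Q\langle\sigma(x_1,\cdot)\rangle_Q^{p-1}\le[w]_{A_{p,F}}$.

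The null set depends on $Q$, which is the one point needing care. We handle it by restricting to the countable family of cubes with rational centers and rational side lengths and taking the union of the exceptional sets. A general cube is then reached by monotone convergence: approximate it from inside by rational cubes, or from outside with a ratio tending to one. This gives the bound for all cubes simultaneously for a.e.\ $x_1$.

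\emph{Third bullet.} This is identical. We use flag rectangles $I^1\times I^2\times Q$ with $\ell(I^1)\le\ell(I^2)\le\ell(Q)$ and let $I^1\times I^2$ shrink to $(x_1,x_2)$ in $\R^{d_1+d_2}$. We take the shrinking sets to be cubes $I^1\times I^2$ of equal side lengths, which form a regular differentiation basis, so Lebesgue differentiation applies.

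For part (ii), fix a flag rectangle $R=I^1\times I^2\times I^3$ with $\ell_1\le\ell_2\le\ell_3$. The strategy is an iterated argument using Jensen and Hölder together with the $A_p$ characterization via averages of $f$. The main tool is the equivalent formulation $\langle |f|\rangle_Q^p\le[v]_{A_p}\langle |f|^p v\rangle_Q/\langle v\rangle_Q$, or better, the maximal-function-free characterization
$$
[v]_{A_p} = \sup_Q\sup_{f}\frac{\langle |f|\rangle_Q^p\, v(Q)}{\int_Q |f|^pv},
$$
applied with $f=\sigma 1_Q$.

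The idea is to peel one parameter at a time. Write $R=I^1\times P$ with $P=I^2\times I^3$, and enlarge $I^1$ to the cube $\tilde I^1$ of side $\ell_2$ so that $\tilde I^1\times I^2$ is a cube. The obstacle is that $A_p$ averages are not monotone under enlarging the set, so we cannot simply pass from $R$ to $\tilde I^1\times I^2\times I^3$. Instead we argue with the characterization through test functions. Set $f=\sigma1_R$; the claim $\langle w\rangle_R\langle\sigma\rangle_R^{p-1}\lesssim1$ is equivalent to $\langle\sigma\rangle_R^p\,w(R)\lesssim\sigma(R)$. We first apply the $A_p(\R^{d_3})$ condition in $x_3$ for each fixed $(x_1,x_2)$, and then the $A_p(\R^{d_{23}})$ condition. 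The latter lets us compare an average over $I^2\times I^3$ with an average over the cube $I^2\times\tilde I^3$, where $\tilde I^3\subset I^3$ has side $\ell_2$. Finally the $A_p(\R^d)$ condition handles the cube of side $\ell_1$.

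Concretely, I would follow the standard product-space proof that $A_p$ in each variable separately implies rectangle $A_p$. That proof applies Hölder iteratively:
$$
\langle w\rangle_R\langle\sigma\rangle_R^{p-1}\le \Big\langle \langle w\rangle_{I^3}\langle\sigma\rangle_{I^3}^{p-1}\Big\rangle\cdots,
$$
and the step I expect to be the main obstacle is reducing the non-cube flag rectangle to genuinely one-parameter objects. I would handle it by combining one-parameter $A_p$ in the appropriate variables with the $A_p$-doubling / $A_\infty$ comparability $v(E)/v(Q)\gtrsim(|E|/|Q|)^p$. This lets us pass between $I^1\times I^2\times I^3$ and cubes at the cost of constants depending only on the three characteristics. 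If this turns out too lossy, I would fall back on showing that $M_{\calR_F}$, being dominated by the composition $M_{\R^d}\circ M_{\R^{d_{23}}}\circ M_{\R^{d_3}}$ (Proposition \ref{prop:maxfunc} style), is bounded on $L^p(w)$, and then read off $A_{p,F}$ from testing $M_{\calR_F}$ on $\sigma1_R$.
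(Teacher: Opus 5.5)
Your part (i) is the paper's argument: cubes are flag rectangles, and Lebesgue differentiation in $x_1$ (resp.\ along cubes $I^1\times I^2$ in $(x_1,x_2)$) against the flag rectangles $I^1\times Q$ (resp.\ $I^1\times I^2\times Q$). Your handling of the $Q$-dependent null sets via rational cubes is fine.

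Part (ii), however, does not work along the route you lead with. The displayed ``iterated H\"older'' step $\langle w\rangle_R\langle\sigma\rangle_R^{p-1}\le\big\langle \langle w\rangle_{I^3}\langle\sigma\rangle_{I^3}^{p-1}\big\rangle_{I^1\times I^2}$ is false in general. For $w(x)=W(x_1)$ the inner quantity is identically $1$, while the left side equals $\langle W\rangle_{I^1}\langle W^{-1/(p-1)}\rangle_{I^1}^{p-1}$, which is unbounded when $W\notin A_p(\R^{d_1})$. The doubling comparison $v(E)\gtrsim(|E|/|Q|)^p\,v(Q)$ also fails you. Passing from $I^3$ to a subcube $\tilde I^3$ with $\ell(\tilde I^3)=\ell(I^2)$ costs a factor $(\ell(I^3)/\ell(I^2))^{pd_3}$, and going down to side $\ell(I^1)$ costs a power of $\ell(I^2)/\ell(I^1)$. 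Both are unbounded over $\mathcal R_F$, so your constants depend on the eccentricity of $R$, not only on the three characteristics. No comparison of a long rectangle with a single cube inside it can be uniform; you have to use all the cubes tiling it at once.

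Your fallback does exactly that and is precisely the paper's proof, so make it the proof.

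\emph{Domination.} For $x\in R$ take a cube $J^3\subset I^3$ with $\ell(J^3)=\ell(I^2)$ and $x_3\in J^3$. Since $\langle |f|(y_1,y_2,\cdot)\rangle_{I^3}\le M_3f(y_1,y_2,y_3)$ for every $y_3\in I^3$, you get $\langle|f|\rangle_R\le\langle M_3f\rangle_{I^1\times I^2\times J^3}$. Next take a cube $L^{23}\subset I^2\times J^3$ of side $\ell(I^1)$ containing $(x_2,x_3)$. The same reasoning with the cube $I^2\times J^3\subset\R^{d_{23}}$ gives $\langle M_3f\rangle_{I^1\times I^2\times J^3}\le\langle M_{23}M_3f\rangle_{I^1\times L^{23}}\le M_{123}M_{23}M_3f(x)$.

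\emph{Boundedness and testing.} The one-parameter maximal bound on $L^p(v)$ depends only on $[v]_{A_p}$. With Fubini, the three hypotheses therefore make $M_3$, $M_{23}$, $M_{123}$, and hence $M_F$, bounded on $L^p(w)$. Testing on $\sigma 1_R$ gives $\langle w\rangle_R\langle\sigma\rangle_R^{p-1}\lesssim 1$; here $\sigma(R)<\infty$ because $w\in A_p(\R^d)$. Only the ``only if'' half of Proposition \ref{prop:maxfunc} is used, so nothing is circular.

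\emph{Repairing your first route.} If you want a maximal-function-free proof, the key is that for a fixed rectangle $P$ the inequality $\langle|g|\rangle_P^p\,v(P)\le C\int_P|g|^pv$ is linear in $v$.
First apply the bi-parameter analogue of (ii) to the fiber $w(x_1,\cdot)$ on $I^2\times I^3$; it is proved the same way from the $A_p(\R^{d_{23}})$ and $A_p(\R^{d_3})$ fiber conditions.
Then tile $I^2\times I^3$ by equal near-cubes $L$ with sides in $[\ell(I^1),2\ell(I^1))$.
By Jensen, each $x_1\mapsto\langle w(x_1,\cdot)\rangle_L$ satisfies the $A_p$ inequality on $I^1$ with constant $\lesssim[w]_{A_p(\R^d)}$, since $I^1\times L$ is a near-cube.
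Averaging over $L$ transfers this to $x_1\mapsto\langle w(x_1,\cdot)\rangle_{I^2\times I^3}$ with no eccentricity loss.
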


\begin{rem}\label{rem:weight}
	We may define $A_{\infty, F}:=\cup_{p>1} A_{p, F}$. Then if $v\in A_{\infty, F}$, by definition
	there exists some $p_0>1$ such that $v\in A_{p_0, F}$.
	Then we see that a version of Proposition \ref{prop:weight} also holds for the case $p=\infty$.
\end{rem}

\begin{proof}[Proof of Proposition \ref{prop:weight} (ii)]
	To see this, fix $x$ and an arbitrary $I \in \calR_F$ with $x \in I$. Then,
	let $J^3$ be a cube in $\R^{d_3}$ such that $J^3\subset I^3$, $\ell(J^3)=\ell(I^2)$
	and $x_3 \in J^3$ (here we used that $\ell(I^2) \le \ell(I^3)$).
	It is clear that
	\[
		\langle |f|\rangle_I \le \langle M_{3} f\rangle_{I^1\times I^2 \times J^3},
	\]where $ M_{3}$ is the one-parameter maximal function in $\R^{d_3}$.
	Next, let $L^{23}$ be a cube in $\R^{d_{23}}$ such that $L^{23}\subset I^2 \times J^3$, $\ell(L^2)=\ell(I^1)$
	and $(x_2, x_3) \in L^{23}$ (here we used that $\ell(I^1) \le \ell(I^2)$). We now have
	\[
		\langle M_{3} f\rangle_{I^1\times I^2 \times J^3}\le \langle M_{23}M_{3} f\rangle_{I^1\times L^2 \times L^3},
	\]
	where $ M_{23}$ is the one-parameter maximal function in $\R^{d_{23}}$.
	Now, $I^1\times L^2 \times L^3$ is a cube in $\R^d$ containing the point $x$, and so
	\begin{equation}\label{eq:e0206}
		M_F f(x)\le M_{123} M_{23}M_{3} f(x),
	\end{equation}
	where $M_{123}$ stands for the one-parameter maximal function in $\R^{d}$ and
	\[
		M_F f(x):=  \sup_{I\in \calR_F}1_I(x) \langle |f|\rangle_I.
	\]
	Then  $M_F$ is bounded on $L^p(w)$, and the claim follows via the following Proposition \ref{prop:maxfunc}.
\end{proof}

Muckenhoupt weights are closely related with the corresponding  maximal function,
which is precisely the following
proposition.
\begin{prop}\label{prop:maxfunc}
	Let $1<p<\infty$. Then $\| M_F f\|_{L^p(w)}\lesssim \|f\|_{L^p(w)}$ if and only if $w\in A_{p, F}$.
\end{prop}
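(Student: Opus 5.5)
The necessity direction is the standard Muckenhoupt argument. Suppose $M_F$ is bounded on $L^p(w)$ and fix a flag rectangle $I\in\calR_F$. Test the bound on $f=\sigma 1_I$, where $\sigma=(w+\epsilon)^{-1/(p-1)}$; the $\epsilon$ is there to avoid integrability issues and is removed by monotone convergence. Since $M_F f\ge \langle \sigma\rangle_I 1_I$, we get
\[
\langle\sigma\rangle_I^p\, w(I)\lesssim \int_I \sigma^p w\le \sigma(I).
\]
Rearranging gives $\langle w\rangle_I\langle\sigma\rangle_I^{p-1}\lesssim 1$ uniformly in $I$, so $w\in A_{p,F}$ after letting $\epsilon\to 0$.

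For sufficiency, assume $w\in A_{p,F}$ and use the pointwise domination \eqref{eq:e0206}, $M_F f\le M_{123}M_{23}M_3 f$. It then suffices to show that each of the three maximal operators is bounded on $L^p(w)$. Part (i) of Proposition \ref{prop:weight}, which follows from the Lebesgue differentiation theorem, says three things:
\begin{itemize}
\item[$\bullet$] $w\in A_p(\R^d)$;
\item[$\bullet$] $w(x_1,\cdot)\in A_p(\R^{d_{23}})$ uniformly for a.e.\ $x_1$;
\item[$\bullet$] $w(x_1,x_2,\cdot)\in A_p(\R^{d_3})$ uniformly for a.e.\ $(x_1,x_2)$.
\end{itemize}
The three maximal operators are handled as follows.
\begin{itemize}
\item[$\bullet$] \emph{$M_{123}$.} It is bounded on $L^p(w)$ by Muckenhoupt's theorem.
\item[$\bullet$] \emph{$M_3$.} It acts only in the $x_3$ variable. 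For a.e.\ fixed $(x_1,x_2)$, apply the one-parameter Muckenhoupt theorem in $\R^{d_3}$ with weight $w(x_1,x_2,\cdot)$. Its constant depends only on the uniformly bounded characteristic, so integrating in $(x_1,x_2)$ via Fubini gives $\|M_3 f\|_{L^p(w)}\lesssim\|f\|_{L^p(w)}$.
\item[$\bullet$] \emph{$M_{23}$.} The same fiberwise argument works, fixing $x_1$ and using the uniform $A_p(\R^{d_{23}})$ bound for $w(x_1,\cdot)$.
\end{itemize}
Composing the three bounds gives $\|M_F f\|_{L^p(w)}\lesssim\|f\|_{L^p(w)}$.

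The main point needing care is measurability and the a.e.\ statements in the fiberwise argument. It should be checked that $M_3 f$ and $M_{23}M_3f$ are measurable on $\R^d$. This can be done by restricting to rectangles with rational data, which gives the same maximal functions by a standard approximation. It should also be checked that the exceptional null sets of $x_1$ and of $(x_1,x_2)$ from part (i) do not matter when integrating. Part (i) itself is quick. Take rectangles $I^1\times Q$ with $I^1$ shrinking to $x_1$ and $Q$ a fixed cube in $\R^{d_{23}}$ that is flag-compatible, which holds once $\ell(I^1)\le\ell(Q)$. Lebesgue differentiation in $x_1$, applied to $w$ and $\sigma$ integrated over $Q$, gives the $A_p(\R^{d_{23}})$ bound at a.e.\ $x_1$ for all rational cubes $Q$ simultaneously, and hence for all cubes $Q$. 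The third bullet is analogous. The result is a constant depending on $[w]_{A_{p,F}}$, $p$ and the dimensions.
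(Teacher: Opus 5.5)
Your proposal is correct and follows the paper's proof. Necessity is obtained by testing $M_F$ on a truncated dual weight restricted to a flag rectangle; the paper truncates with $w^{-1/(p-1)}1_{\{N^{-1}<w<N\}}$ instead of your $(w+\epsilon)^{-1/(p-1)}$, which makes no difference. Sufficiency comes from the pointwise bound \eqref{eq:e0206} together with Proposition \ref{prop:weight} (i), and you simply write out the fiberwise Muckenhoupt/Fubini, measurability and Lebesgue differentiation details that the paper leaves implicit.
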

\begin{proof}
	The ``if'' part is a direct consequence of \eqref{eq:e0206} and Proposition \ref{prop:weight} (i).

	To see the ``only if'' part, suppose that $M_F$ is bounded on $L^p(w)$. Set
	$$
		\sigma_N:=w^{-\frac1{p-1}}1_{\{N^{-1}<w<N\}},\qquad N>1.
	$$
	For any $I\in\calR_F$, testing with $\sigma_N1_I$ gives
	\begin{align*}
		w(I)\Big(\frac{\sigma_N(I)}{|I|}\Big)^p
		 &\le \|1_I M_F(\sigma_N1_I)\|_{L^p(w)}^p \\
		 &\lesssim \int_I\sigma_N^p w=\sigma_N(I).
	\end{align*}
	Hence
	\[
		\frac{w(I)}{|I|}\Big(\frac{\sigma_N(I)}{|I|}\Big)^{p-1}\lesssim 1.
	\]
	Letting $N\to\infty$, monotone convergence gives $w\in A_{p,F}$.
\end{proof}

Proposition \ref{prop:maxfunc} tells us exactly that $\calR_F$ is a Muckenhoupt basis.
Therefore, by the well-known Muckenhoupt basis theory (see \cite{CUMP-book}*{Theorem 3.9})
we have the following.

\begin{lem}[Extrapolation]\label{lem:extra}
	Let $(f, g)$ be a pair of non-negative functions.
	Suppose that for some $p_0\in (1, \infty)$ and every $w\in A_{p_0, F}$
	we have
	\[
		\int f^{p_0} w\le \varphi([w]_{A_{p_0, F}}) \int g^{p_0} w,
	\]
	where $\varphi$ is a non-decreasing function. Then for every $p\in (1, \infty)$ and every $w\in A_{p, F}$
	we have
	\[
		\int f^{p} w\le \wt\varphi([w]_{A_{p, F}}) \int g^{p} w,
	\]
	where $\wt\varphi$ is some non-decreasing function.

\end{lem}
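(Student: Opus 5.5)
The extrapolation statement is an instance of the abstract Rubio de Francia extrapolation theorem for Muckenhoupt bases, and I will derive it that way. The first step is to check that $\calR_F$ is a Muckenhoupt basis in the sense of \cite{CUMP-book}. By Proposition \ref{prop:maxfunc}, for every $1<p<\infty$ and every $w\in A_{p,F}$ the maximal operator $M_F$ is bounded on $L^p(w)$. I also need the operator norm to be controlled by a non-decreasing function of $[w]_{A_{p,F}}$. This comes from tracing the proof: \eqref{eq:e0206} gives $M_F\le M_{123}M_{23}M_3$. Each one-parameter factor is then bounded on $L^p(w)$, uniformly in the frozen variables, with norm controlled by Buckley's bound in terms of the one-parameter characteristics. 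Proposition \ref{prop:weight} (i) bounds those characteristics by $[w]_{A_{p,F}}$. Fubini handles the fibre-wise estimates for $M_{23}$ and $M_3$.

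With this in hand I will run the standard extrapolation argument, splitting into two cases.

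\emph{Case $p<p_0$.} Fix $w\in A_{p,F}$. Consider the Rubio de Francia algorithm
\[
Rh=\sum_{k\ge0}\frac{M_F^k h}{(2\|M_F\|_{L^{p}(w)})^k}.
\]
I apply it to a suitable function built from $f$ and $g$, with the goal of producing a weight $W=(Rh)^{-(p_0-p)}w$. Two things need to be verified: that $W\in A_{p_0,F}$ with characteristic controlled by $[w]_{A_{p,F}}$, and that Hölder's inequality lets one pass from the $p_0$ inequality to the $p$ inequality.

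\emph{Case $p>p_0$.} I use the dual version of the algorithm, applied with the dual weight $\sigma=w^{1-p'}\in A_{p',F}$ and the $L^{p'}(\sigma)$ bound for $M_F$.

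The key input in both cases is that $Rh$ is an $A_{1,F}$ weight, meaning $M_F(Rh)\le 2\|M_F\|\,Rh$. I then need the factorization identity
\[
\langle u_0u_1^{1-p}\rangle_I\langle u_0^{-1/(p-1)}u_1\rangle_I^{p-1}\le [u_0]_{A_1}[u_1]_{A_1}^{p-1}
\]
over flag rectangles $I$. This identity only uses averages over the basis, so it transfers verbatim.

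Since all of this is the content of \cite{CUMP-book}*{Theorem 3.9}, the proof will mostly be a citation. The only genuine obstacle is the quantitative dependence of $\|M_F\|_{L^p(w)}$ on $[w]_{A_{p,F}}$, because the abstract theorem needs this to be non-decreasing in order to produce $\wt\varphi$. I expect the composition bound described above to settle it without further difficulty.
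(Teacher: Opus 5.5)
Your proposal is correct and follows the paper's route: the paper simply notes, via Proposition \ref{prop:maxfunc}, that $\calR_F$ is a Muckenhoupt basis and then cites \cite{CUMP-book}*{Theorem 3.9}, which is exactly what your Rubio de Francia sketch reconstructs. Your extra check that $\|M_F\|_{L^p(w)}$ is controlled by a non-decreasing function of $[w]_{A_{p,F}}$ is a sensible refinement that the paper leaves implicit; it uses $M_F\le M_{123}M_{23}M_3$, Buckley's bound, Fubini, and Proposition \ref{prop:weight}~(i).
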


On the other hand, the proof of Proposition \ref{prop:maxfunc} also reveals an important property of $A_{p, F}$
weights. We record it as the following
\begin{prop}\label{prop:open}
	Let $1<p<\infty$ and $w\in A_{p, F}$. Then there exists some $\eps:=\eps([w]_{A_{p, F}})>0$  such that
	$w^{1+\eps}\in A_{p, F}$.
\end{prop}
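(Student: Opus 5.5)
Since $M_F$ is $L^p(w)$-bounded for $w\in A_{p,F}$ by Proposition \ref{prop:maxfunc}, we would prove the openness property by going through the equivalent fiberwise characterization of Proposition \ref{prop:weight} and using the classical one-parameter reverse Hölder inequality in each of the three pieces. Fix $w\in A_{p,F}$ and write $[w]=[w]_{A_{p,F}}$. Proposition \ref{prop:weight}(i) gives three facts. First, $w\in A_p(\R^d)$ with constant at most $[w]$. Second, $w(x_1,\cdot)\in A_p(\R^{d_{23}})$ for a.e.\ $x_1$, uniformly bounded by $[w]$. Third, $w(x_1,x_2,\cdot)\in A_p(\R^{d_3})$ for a.e.\ $(x_1,x_2)$, again uniformly bounded by $[w]$.

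The key step is the classical one-parameter fact that for $u\in A_p(\R^k)$ there is $\eps=\eps(k,p,[u]_{A_p})>0$ with $u^{1+\eps}\in A_p(\R^k)$ and $[u^{1+\eps}]_{A_p}\le C(k,p,[u]_{A_p})$. We obtain this from the reverse Hölder inequality applied to both $u$ and $\sigma=u^{-1/(p-1)}\in A_{p'}$. Using Hölder, $\langle u^{1+\eps}\rangle_Q\langle u^{-(1+\eps)/(p-1)}\rangle_Q^{p-1}\le C\langle u\rangle_Q^{1+\eps}\langle\sigma\rangle_Q^{(p-1)(1+\eps)}\le C[u]^{1+\eps}$. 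The point to stress is that both $\eps$ and the new constant depend only on the $A_p$ characteristic, not on the weight itself. Hence $\eps$ can be chosen monotone in $[u]_{A_p}$, that is, decreasing as the characteristic grows.

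Apply this in each setting. In $\R^d$ it gives $\eps_1([w])$. For the a.e.\ fibers $w(x_1,\cdot)$ it gives $\eps_2([w])$, which is uniform in $x_1$ because the characteristics are uniformly bounded by $[w]$. For the fibers $w(x_1,x_2,\cdot)$ it gives $\eps_3([w])$, again uniform. Take $\eps=\min(\eps_1,\eps_2,\eps_3)$. A smaller exponent still works: $u^{1+\eps}=(u^{1+\eps_i})^{\theta}$ with $\theta\in(0,1]$, and $A_p$ is preserved under powers $\theta\le1$ by Jensen, with the constant not increasing. Then $w^{1+\eps}$ satisfies the three hypotheses of Proposition \ref{prop:weight}(ii), with fibers $(w^{1+\eps})(x_1,\cdot)=w(x_1,\cdot)^{1+\eps}$. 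Proposition \ref{prop:weight}(ii) then yields $w^{1+\eps}\in A_{p,F}$.

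The main obstacle is Proposition \ref{prop:weight}(ii) itself, which was reduced to Proposition \ref{prop:maxfunc}. Its ``if'' direction in turn uses (i) together with the bound $M_F\le M_{123}M_{23}M_3$. There is no circularity, since $M_F$ is controlled by iterated one-parameter maximal operators, each bounded on $L^p(w)$ by the fiberwise $A_p$ information. That step uses Fubini, for example integrating in $x_3$ first with $w(x_1,x_2,\cdot)\in A_p$. Testing $M_F$ as in the proof of Proposition \ref{prop:maxfunc} then returns $A_{p,F}$. One must also check measurability and the a.e.\ issues of the fibers, which are routine.
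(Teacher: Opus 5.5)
Your proposal is correct and follows the paper's proof: you pass to the three fiberwise one-parameter $A_p$ conditions via Proposition \ref{prop:weight}(i), apply classical one-parameter openness (reverse H\"older) in each with $\eps$ depending only on $[w]_{A_{p,F}}$, and return via Proposition \ref{prop:weight}(ii). The only cosmetic difference is that the paper writes down one explicit common exponent $\eps = (c_d[w]_{A_{p,F}}^{\max\{1,1/(p-1)\}})^{-1}$, whereas you take the minimum of three exponents and justify this by $[v^\theta]_{A_p}\le[v]_{A_p}^\theta$ for $0<\theta\le 1$.
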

\begin{proof}
	Take
	\[
		\eps:= \frac 1{c_d[w]_{A_{p, F}}^{\max\{1, \frac 1{p-1}\}}},
	\]
	where $c_d$ is a large constant depending only on the dimension.
	By the well-known one parameter theory and Proposition \ref{prop:weight} (i) we have
	\begin{align*}
		[w^{1+\eps}]_{A_p(\R^{d})} & \lesssim  [w]_{A_p(\R^{d})}^{1+\eps}\le [w]_{A_{p, F}}^{1+\eps}\lesssim  [w]_{A_{p, F}}.
	\end{align*}
	In a similar fashion we also have
	$$
		\esssup_{x_1\in \R^{d_1}}[w^{1+\eps}(x_1, \cdot )]_{A_p(\R^{d_{23}})}\lesssim [w]_{A_{p, F}}
	$$
	and
	$$
		\esssup_{ x_1\in \R^{d_1}, x_2\in \R^{d_2}}[w^{1+\eps}(x_1, x_2, \cdot )]_{A_p(\R^{d_3})}
		\lesssim [w]_{A_{p, F}}.
	$$
	Then the conclusion is a consequence of Proposition \ref{prop:weight} (ii).
\end{proof}

\section{Basic dyadic flag multiresolutions}
\subsection{Martingale differences and square functions}
Given a dyadic grid of cubes $\calD$ in $\R^d$, $I \in \calD$ and $k \in \Z$, $k \ge 0$,
we use the following notation:
\begin{enumerate}
	\item $\ell(I)$ is the side length of $I$.
	\item $I^{(k)} \in \calD$ is the $k$th parent of $I$, i.e.,
	      $I \subset I^{(k)}$ and $\ell(I^{(k)}) = 2^k \ell(I)$.
	\item $\ch(I)$ is the collection of the children of $I$, i.e.,
	      $\ch(I) = \{J \in \calD \colon J^{(1)} = I\}$.
	\item $E_I f=\langle f \rangle_I 1_I$ is the averaging operator on $I$,
	      where $\langle f \rangle_I := \fint_{I} f := \frac{1}{|I|} \int _I f$.
	\item $\Delta_I f$ is the one-parameter martingale difference
	      $\Delta_I f= \sum_{J \in \ch (I)} E_{J} f - E_{I} f$.
	\item $\Delta_{I,k} f$ is the martingale difference block
	      $$
		      \Delta_{I,k} f=\sum_{\substack{J \in \calD \\ J^{(k)}=I}} \Delta_{J} f.
	      $$
\end{enumerate}
Now, suppose for the moment that we are in the bi-parameter product space
$\R^{d_1} \times \R^{d_2}$ and we are given
a dyadic grid $\calD^m$ in each $\R^{d_m}$.
We can form the collection of dyadic rectangles $\calD = \calD^{12} := \calD^1 \times \calD^2$,
and the dyadic grid
$$
	\calD_{=} := \{I = I^1 \times I^2 \in \calD \colon \ell(I^1) = \ell(I^2) \}.
$$
Given $I = I^1 \times I^2 \in \calD_{=}$ (or even just in $\calD$) we have the
(one-parameter) martingale difference like defined above:
$$
	\Delta_I f = \sum_{\substack{ J^1 \in \ch(I^1) \\ J^2 \in \ch(I^2) }} E_{J^1 \times J^2} f
	- E_{I^1 \times I^2}f.
$$
On the other hand, we can also form the bi-parameter martingale difference
$$
	\Delta_{I^1} \Delta_{I^2} f,
$$
where we, for example, understand that $\Delta_{I^1} f(x) = \Delta_{I^1}^1 f(x)
	:= (\Delta_{I^1} f(\cdot, x_2) )(x_1)$ for $x = (x_1, x_2) \in \R^{d_1} \times \R^{d_2}$.
That is, lower dimensional operators act parameter wise like this. Our notation will
be consistent -- $\Delta_{I^1 \times I^2}$ is always
the one-parameter martingale difference (and not a shorthand for the bi-parameter
martingale difference $\Delta_{I^1}\Delta_{I^2}$). But what is the explicit connection
between $\Delta_{I^1\times I^2}$ and the bi-parameter martingale difference $\Delta_{I^1}\Delta_{I^2}$?
Notice that
\begin{align*}
	\Delta_{I^1} \Delta_{I^2} f & = \Delta_{I^1} \Big( \sum_{J^2 \in \ch(I^2)} E_{J^2} f - E_{I^2}f \Big)                                             \\
	                            & = \sum_{\substack{ J^1 \in \ch(I^1)                                     \\ J^2 \in \ch(I^2) }} E_{J^1 \times J^2} f
	- \sum_{J^2 \in \ch(I^2)} E_{I^1 \times J^2} f
	- \sum_{J^1 \in \ch(I^1)} E_{J^1 \times I^2} f + E_{I^1 \times I^2} f,
\end{align*}
$$
	E_{I^1} \Delta_{I^2} f = \sum_{J^2 \in \ch(I^2)} E_{I^1 \times J^2} f - E_{I^1 \times I^2} f
$$
and
$$
	\Delta_{I^1} E_{I^2} f = \sum_{J^1 \in \ch(I^1)} E_{J^1 \times I^2} f - E_{I^1 \times I^2} f
$$
so that
$$
	\Delta_{I^1 \times I^2}f = \Delta_{I^1} \Delta_{I^2} f + E_{I^1} \Delta_{I^2} f
	+ \Delta_{I^1} E_{I^2} f.
$$
These types of connections are useful in various calculations later together with the fact that
one-parameter martingale differences in a fixed lattice have the key property $\Delta_I \Delta_J f
	= \Delta_I f$ if $I = J$ and $\Delta_I \Delta_J f = 0$ otherwise.

We now move on to some basic dyadic flag decompositions.
We build these in the tri-parameter flag setting -- of course, the simpler bi-parameter
setting is then obvious from this.

\subsubsection{Flag resolution of functions}
We now work in $\R^d := \R^{d_1+d_2+d_3}=\R^{d_1}\times \R^{d_2}\times \R^{d_3} $.
Let $\calD = \prod_{m=1}^3 \calD^m$, where
$\calD^m$ is a dyadic grid in $\R^{d_m}$, $m=1,2,3$. We define the dyadic flag rectangles
$\calD_F\subset \calD$  by setting
\[
	\calD_F = \Big\{I = \prod_{m=1}^3 I^m \in \calD \colon \ell(I^1)\le \ell(I^2) \le \ell(I^3)\Big\}.
\]
Now, given a function $f$ in $\R^d$, we may first expand it in the third parameter as
\[
	f=\sum_{I^3\in \calD^3} \Delta_{I^3}f.
\]
Next, for a fixed $I^3$, we may expand $\Delta_{I^3}f$ in the second parameter as follows:
\[
	\Delta_{I^3}f= \sum_{\substack{I^2\in \calD^2\\ \ell(I^2)\le \ell(I^3)}}\Delta_{I^2} \Delta_{I^3} f
	+ \sum_{\substack{I^2\in \calD^2\\ \ell(I^2)= \ell(I^3)}} E_{I^2} \Delta_{I^3} f.
\]
We then define $\Lambda_{I^2} f = \Delta_{I^2} f$ if $\ell(I^2) < \ell(I^3)$
and $\Lambda_{I^2} f = \Delta_{I^2} f + E_{I^2} f$ if $\ell(I^2) = \ell(I^3)$ --
so the operator does depend on $\ell(I^3)$ as well. However, we only use this in a context
where it is clear to which side length we are referring to, and so we do not emphasize this
in the notation.
Thus, we abbreviate the above to
$$
	\Delta_{I^3} f = \sum_{\substack{I^2\in \calD^2\\ \ell(I^2) \le \ell(I^3)}} \Lambda_{I^2}\Delta_{I^3} f.
$$
Likewise, we can do a similar decomposition in $\R^{d_1}$ (with a fixed $I^2$). This gives us
\begin{equation*}
	f=\sum_{I\in \calD_F} \Lambda_{I^1}\Lambda_{I^2}\Delta_{I^3}f=:\sum_{I\in \calD_F}\Delta_{I, F}f,
\end{equation*}
where $\Lambda_{I^1}$ is defined similarly as above
so that it implicitly depends on $\ell(I^2)$:
$\Lambda_{I^1} f = \Delta_{I^1} f$ if $\ell(I^1) < \ell(I^2)$
and $\Lambda_{I^1} f = \Delta_{I^1} f + E_{I^1} f$ if $\ell(I^1) = \ell(I^2)$.
Consequently, $\Delta_{I, F} f$ has worse cancellation properties than usual tri-parameter
martingale differences: while we always have
\[
	\int_{\R^{d_3}}\Delta_{I, F}f=0,
\]
the property
\[
	\int_{\R^{d_2}}\Delta_{I, F}f=0
\]
requires that $\ell(I^2)<\ell(I^3)$. A similar phenomenon happens in $\R^{d_1}$.

The flag martingale differences inherit the following orthogonality relationship.
\begin{lem}\label{lem:orthogonal}
	For $I, J \in \calD_F$ we have
	$$
		\Delta_{I,F} \Delta_{J,F} f = \left\{ \begin{array}{ll}
			\Delta_{I,F} f & \textup{if } I = J,   \\
			0              & \textup{if } I \ne J.
		\end{array} \right.
	$$
\end{lem}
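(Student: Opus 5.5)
The plan is to reduce everything to one-parameter orthogonality in each coordinate. Since $\Delta_{I,F}=\Lambda_{I^1}\Lambda_{I^2}\Delta_{I^3}$ is a tensor product of operators acting in the separate variables, and operators acting in different parameters commute, we get
$$
\Delta_{I,F}\Delta_{J,F} f = (\Lambda_{I^1}\Lambda_{J^1})(\Lambda_{I^2}\Lambda_{J^2})(\Delta_{I^3}\Delta_{J^3}) f,
$$
where $\Lambda_{I^m}$ is understood relative to $\ell(I^{m+1})$ and $\Lambda_{J^m}$ relative to $\ell(J^{m+1})$. The third factor vanishes unless $I^3=J^3$, and equals $\Delta_{I^3}$ if they agree. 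So we may assume $I^3=J^3$.

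Next we treat the second parameter. Since $\ell(I^3)=\ell(J^3)$, the operators $\Lambda_{I^2}$ and $\Lambda_{J^2}$ are defined relative to the same scale $\ell(I^3)$. We compute $\Lambda_{I^2}\Lambda_{J^2}$ in one parameter using the one-parameter facts $\Delta_A\Delta_B=\delta_{AB}\Delta_A$, $E_A E_B=\delta_{AB}E_A$ for $\ell(A)=\ell(B)$, and $\Delta_A E_B=E_B\Delta_A=0$ whenever $\ell(A)\le\ell(B)$.
\begin{enumerate}
\item If both side lengths are $<\ell(I^3)$, the product is $\Delta_{I^2}\Delta_{J^2}=\delta\,\Delta_{I^2}$.
\item If both equal $\ell(I^3)$, then $(\Delta_{I^2}+E_{I^2})(\Delta_{J^2}+E_{J^2})=\delta_{I^2J^2}(\Delta_{I^2}+E_{I^2})$, because the mixed terms vanish and disjoint cubes of the same size give zero.
\item If exactly one equals $\ell(I^3)$, say $\ell(J^2)=\ell(I^3)>\ell(I^2)$, the product is $\Delta_{I^2}(\Delta_{J^2}+E_{J^2})$. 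Here $\Delta_{I^2}\Delta_{J^2}=0$ since $I^2\ne J^2$, and $\Delta_{I^2}E_{J^2}=0$ since $E_{J^2}f$ is constant on $J^2\supset$ (or disjoint from) $I^2$ while $\Delta_{I^2}$ annihilates functions constant on $I^2$.
\end{enumerate}
The symmetric order in case (3) is handled by $E_{I^2}\Delta_{J^2}=0$, which holds because $\Delta_{J^2}f$ has zero mean on every cube of size at least $\ell(J^2)$. In all cases the product is zero unless $I^2=J^2$, in which case it equals $\Lambda_{I^2}$.

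With $I^2=J^2$ the same argument applies in the first parameter at scale $\ell(I^2)$. This gives $\Lambda_{I^1}\Lambda_{J^1}=\delta_{I^1J^1}\Lambda_{I^1}$, and assembling the three factors proves the lemma.

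The only point requiring care is the bookkeeping in case (3) and the observation that we must first match the outer parameter before comparing inner ones. Matching the outer parameter is what makes the $\Lambda$'s refer to the same scale, so the cases above are exhaustive. No step is a genuine obstacle; the main risk is overlooking the mixed $E$–$\Delta$ terms, which is why I would verify those two one-parameter identities explicitly first.
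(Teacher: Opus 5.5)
Your proposal is correct and follows essentially the paper's argument: match the outer parameter first ($I^3=J^3$) so that $\Lambda_{I^2},\Lambda_{J^2}$ refer to the same scale, use the one-parameter identities to show $\Lambda_{I^2}\Lambda_{J^2}$ vanishes unless $I^2=J^2$, then repeat in the first parameter. Your explicit case split, including the same-size disjoint case and the reversed order $E_{I^2}\Delta_{J^2}=0$ that the paper covers only ``by symmetry'', simply makes the paper's brief proof fully explicit.
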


\begin{proof}
	Suppose that $\Delta_{I,F} \Delta_{J,F} f\neq 0$. Then automatically $I^3=J^3$
	by the properties of the one-parameter martingale differences.
	Now, if $I^2 \ne J^2$, then we can by symmetry assume that $I^2 \subsetneq J^2$.
	Then it follows that $\ell(I^2) < \ell(J^2) \le \ell(J^3) = \ell(I^3)$,
	so that $\Lambda_{I^2} \Lambda_{J^2}$
	equals either $\Delta_{I^2} \Delta_{J^2} = 0$ or $\Delta_{I^2} (\Delta_{J^2} + E_{J^2}) = 0$.
	So we must have $I^2 = J^2$. Similarly, we see that $I^1 = J^1$ after this. It is also
	clear that $\Delta_{I, F} \Delta_{I, F} = \Delta_{I, F}$, since
	e.g. $\Delta_{I^2} \Delta_{I^2} = \Delta_{I^2}$, $E_{I^2} E_{I^2} = E_{I^2}$ and
	$\Delta_{I^2} E_{I^2} = E_{I^2} \Delta_{I^2} = 0$.
\end{proof}

\subsubsection{Flag square function}
We are now in the position to define the dyadic flag square function
\[
	S_F f = S_{\calD_F} f := \Big(\sum_{I\in \calD_F}|\Delta_{I, F} f|^2\Big)^{1/2}
\]
and study its weighted boundedness. First, we will find it convenient to establish the boundedness of certain variants of one-parameter square functions.
\begin{lem}\label{lem:variantbounds}
	Define the square function
	\[
		\tilde{S}(f)=\left(\sum_{I\in\mathcal{D}_=}|\Upsilon_{I^1}\Upsilon_{I^2}\Upsilon_{I^3} f|^2\right)^{1/2},
	\]
	where each $\Upsilon_{I^j}\in\{\Delta_{I^j},E_{I^j}\}$ and $\Upsilon_{I^j}=\Delta_{I^j}$ for at least one $j$. Then for any $w\in A_p(\R^d)$, we have that $\tilde{S}$ maps $L^p(w)$ to itself continuously.
\end{lem}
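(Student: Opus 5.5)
The plan is to reduce $\tilde S$ to the standard one-parameter dyadic square function on $\calD_=$ together with a maximal function, using only one-parameter theory in $\R^d$. The key observation is that for $I\in\calD_=$ the product $I=I^1\times I^2\times I^3$ is a genuine cube in $\R^d$, so $\calD_=$ is a one-parameter dyadic lattice in $\R^d$.

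Expanding each $\Upsilon_{I^j}$ in terms of averages on $I^j$ and on its children shows that $\Upsilon_{I^1}\Upsilon_{I^2}\Upsilon_{I^3}f$ lies in the space of functions that are constant on the children of the cube $I$. Since at least one factor is $\Delta_{I^j}$, integrating in the $j$th variable over $I^j$ gives zero, so the function has mean zero on $I$. Hence
\[
	\Upsilon_{I^1}\Upsilon_{I^2}\Upsilon_{I^3}f = P_I \Delta_I f,
\]
where $\Delta_I$ is the one-parameter martingale difference on the cube $I$ and $P_I$ is the orthogonal projection, on $L^2(I)$, onto the finite-dimensional space spanned by the products $h_{I^1}^{\epsilon_1}\otimes h_{I^2}^{\epsilon_2}\otimes h_{I^3}^{\epsilon_3}$ that have the prescribed cancellation pattern. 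This holds because each $\Upsilon_{I^j}$ is an orthogonal projection, $\Delta_{I^j}$ and $E_{I^j}$ commute with the projections in the other variables, and the product projection is contained in the range of $\Delta_I$.

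In the Haar expansion of $\Delta_I f$, the projection $P_I$ simply keeps a subset of the $2^d-1$ cancellative Haar coefficients $\langle f,h_I^\eta\rangle$. Consequently, pointwise,
\[
	|\Upsilon_{I^1}\Upsilon_{I^2}\Upsilon_{I^3}f| \le \sum_{\eta}|\langle f,h_I^\eta\rangle|\,|I|^{-1/2}1_I .
\]
It follows that $\tilde S f$ is dominated by a finite sum of one-parameter Haar square functions of the form
\[
	\Big(\sum_{I\in\calD_=}|\langle f,h_I^\eta\rangle|^2\frac{1_I}{|I|}\Big)^{1/2},
\]
each taken with respect to the single lattice $\calD_=$ in $\R^d$.

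Each such square function is bounded on $L^p(w)$ for $w\in A_p(\R^d)$ by the classical weighted dyadic square function theory. One way to see this is the standard $L^2(w)$ bound for $A_2$ weights followed by extrapolation. Another is to note that these are the square functions of the martingale transforms $f\mapsto \sum_I \langle f,h_I^\eta\rangle h_I^\eta$, which are $L^p(w)$ bounded and whose square functions are comparable to $S_{\calD_=}$.

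The main point of care is the identification of the mixed product operator with a sub-projection of the cube martingale difference, i.e. verifying that the product Haar functions $h_{I^1}^{\epsilon_1}\otimes h_{I^2}^{\epsilon_2}\otimes h_{I^3}^{\epsilon_3}$, with at least one $\epsilon_j$ cancellative, form an orthonormal Haar basis of the range of $\Delta_I$. This requires $\ell(I^1)=\ell(I^2)=\ell(I^3)$, which is exactly what membership in $\calD_=$ guarantees. Once this is in place, the remainder is routine one-parameter theory.
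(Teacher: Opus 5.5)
Your proof is correct. It shares the paper's starting point: since $I\in\calD_=$ is a genuine cube, one has $\Upsilon_{I^1}\Upsilon_{I^2}\Upsilon_{I^3}f=\Upsilon_{I^1}\Upsilon_{I^2}\Upsilon_{I^3}\Delta_I f$, and everything reduces to one-parameter dyadic theory on the lattice $\calD_=$. You finish differently, though.

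The paper uses only the crude size bound $|\Upsilon_{I^1}\Upsilon_{I^2}\Upsilon_{I^3}g|\lesssim\langle|g|\rangle_I$ on $I$ with $g=\Delta_I f$. It then removes the averages with the weighted vector-valued Fefferman--Stein inequality for $M_{\calD_=}$, followed by the weighted bound for the ordinary martingale square function.

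You instead identify the operator exactly as a sub-projection of $\Delta_I$ onto a subset of the Haar functions $h_I^\eta$. This dominates $\tilde S f$ by finitely many Haar square functions, which you bound through weighted martingale transforms (or $A_2$ plus extrapolation).

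What each route buys:
\begin{itemize}
\item Your route gives sharper structural information: the operator is literally a Haar multiplier on $\calD_=$. It avoids the maximal function, but it takes the weighted boundedness of Haar multipliers or Haar square functions as an input.
\item The paper's route uses only size and support, not orthogonality, so it would survive for less rigid averaging-type operators. Its inputs are just Fefferman--Stein and the basic square function estimate.
\end{itemize}

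Your Haar square functions can in fact be handled exactly as in the paper. Since $|\langle f,h_I^\eta\rangle|\le|I|^{1/2}\langle|\Delta_I f|\rangle_I$, the two arguments meet at the same final estimate.
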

\begin{proof}
	First, observe that
	\[
		\Upsilon_{I^1}\Upsilon_{I^2}\Upsilon_{I^3} f=\Upsilon_{I^1}\Upsilon_{I^2}\Upsilon_{I^3} (\Delta_If).
	\]
	Call $g=\Delta_I f$. We have for any $x\in I$ that
	\[
		|\Upsilon_{I^1}\Upsilon_{I^2}\Upsilon_{I^3} g(x)|\lesssim \langle |g|\rangle_{I}
	\]
	Hence, we deduce that
	\[
		\Vert \tilde{S}(f)\Vert_{L^p(w)}\lesssim \left\Vert \left(\sum_{I\in\mathcal{D}_=} \langle |\Delta_If|\rangle_{I}^2 1_I\right)^{1/2}\right\Vert_{L^p(w)}.
	\]
	The claim then follows by using the boundedness of $M_{\calD_=}$ and the square function bound.
\end{proof}
We are now ready to deduce the boundedness of the flag square function.
\begin{thm}\label{thm:square}
	Suppose that $1<p<\infty$ and $w\in A_{p, F}$. Then we have
	\begin{equation}\label{Eq:upper}
		\| S_F f\|_{L^p(w)}\lesssim \|f\|_{L^p(w)}.
	\end{equation}
	We also have the following lower bound for all $0<q<\infty$ and $v\in A_{\infty, F}$:
	\begin{equation}\label{Eq:lower}
		\|f\|_{L^q(v)}\lesssim \| S_F f\|_{L^q(v)}.
	\end{equation}
\end{thm}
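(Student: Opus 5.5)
We split $\calD_F$ according to which of the inequalities $\ell(I^1)\le\ell(I^2)\le\ell(I^3)$ are strict, into the four subcollections $\calD_{<,<}$, $\calD_{<,=}$, $\calD_{=,<}$ and $\calD_{=,=}$. It suffices to bound the square function over each piece. On each piece $\Delta_{I,F}$ has a concrete form:
\begin{itemize}
\item on $\calD_{<,<}$ it is the pure tri-parameter difference $\Delta_{I^1}\Delta_{I^2}\Delta_{I^3}$;
\item on $\calD_{<,=}$ it is $\Delta_{I^1}(\Delta_{I^2}+E_{I^2})\Delta_{I^3} = \Delta_{I^1}\Delta_{I^2\times I^3}$;
\item on $\calD_{=,<}$ it is $\Delta_{I^1\times I^2}\Delta_{I^3}$;
\item on $\calD_{=,=}$ it is a sum of terms $\Upsilon_{I^1}\Upsilon_{I^2}\Delta_{I^3}$ with $I$ a cube.
\end{itemize}

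\textbf{Upper bound.} The $\calD_{=,=}$ piece is exactly a square function of the type in Lemma \ref{lem:variantbounds}. Since $\Upsilon_{I^3}=\Delta_{I^3}$, the lemma gives boundedness on $L^p(w)$ for $w\in A_p(\R^d)$, and Proposition \ref{prop:weight}(i) gives $A_{p,F}\subset A_p(\R^d)$.

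For the mixed pieces we iterate via vector-valued extensions. Take $\calD_{<,=}$ as a model. For fixed $x_1$ the map $f(x_1,\cdot)\mapsto (\Delta_{I^{23}}f)_{I^{23}\in\calD^{23}_=}$ is a one-parameter square function in $\R^{d_{23}}$. The $\ell^2$ sum over $I^1$ with $\ell(I^1)<\ell(I^2)$ is dominated by the sum over all $I^1$, provided we first pass to a positive quantity. We would therefore write
\[
S_{<,=}f\le \Big(\sum_{I^{23}}\sum_{I^1}|\Delta_{I^1}\Delta_{I^{23}}f|^2\Big)^{1/2}
\]
and bound it in two steps. First, for a.e. fixed $x_{23}$, use the weighted one-parameter square function bound in $\R^{d_1}$ with its $\ell^2$-valued extension; the weight $w(\cdot,x_{23})$ is in $A_p(\R^{d_1})$ by the same Lebesgue differentiation argument as in Proposition \ref{prop:weight}(i), since rectangles with $\ell(I^1)$ tiny are flag. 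Second, for fixed $x_1$, use the one-parameter square function in $\R^{d_{23}}$ with weight $w(x_1,\cdot)\in A_p(\R^{d_{23}})$, again from Proposition \ref{prop:weight}(i).

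To make the $\ell^2$-valued extension rigorous we use extrapolation, either Lemma \ref{lem:extra} or its one-parameter analogue. Alternatively, we compute at $p=2$ with Fubini and extrapolate. The pieces $\calD_{<,<}$ and $\calD_{=,<}$ are handled the same way with three, respectively two, iterations, using the slice weights $w(x_1,x_2,\cdot)\in A_p(\R^{d_3})$.

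The main obstacle is justifying the slicing: the flag restriction couples parameters, so the weight must be admissible in each slice. Dropping the restriction $\ell(I^1)<\ell(I^2)$ requires $w(\cdot,x_{23})\in A_p(\R^{d_1})$ uniformly, and this is true because every rectangle $I^1\times Q^{23}$ with $\ell(I^1)\le\ell(Q^{23})$ and $Q^{23}$ shrinking is flag. If that fails we would instead keep the restriction and dominate by $M_F$-type operators through Fefferman--Stein for $M_F$, which follows from \eqref{eq:e0206}.

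\textbf{Lower bound.} We use duality and orthogonality (Lemma \ref{lem:orthogonal}). For $q=2$ and $p\in(1,\infty)$ with $v\in A_{p,F}$,
\[
\langle f,g\rangle=\sum_I\langle\Delta_{I,F}f,\Delta_{I,F}g\rangle\le \int S_Ff\,S_Fg.
\]
Taking $g$ in the dual space with $v^{1-p'}\in A_{p',F}$, Hölder and the upper bound give $\|f\|_{L^p(v)}\lesssim\|S_Ff\|_{L^p(v)}$. For general $0<q<\infty$ and $v\in A_{\infty,F}$, we apply extrapolation in the $A_\infty$ form for the Muckenhoupt basis $\calR_F$ (via \cite{CUMP-book}, using Proposition \ref{prop:maxfunc}) to the pair $(|f|,S_Ff)$, starting from the $L^{p_0}(w)$ estimate valid for all $w\in A_{p_0,F}$. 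This requires an a priori finiteness condition, which we arrange by truncating to finitely many rectangles and passing to a limit.
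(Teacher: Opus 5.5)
Your four-way split and your treatment of $\calD_{=,=}$ via Lemma \ref{lem:variantbounds} are fine. The argument for the three mixed pieces, however, rests on a false property of flag weights.

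\textbf{Upper bound.} For $w\in A_{p,F}$, Proposition \ref{prop:weight}(i) gives only three things: $w\in A_p(\R^d)$, $w(x_1,\cdot)\in A_p(\R^{d_{23}})$ uniformly, and $w(x_1,x_2,\cdot)\in A_p(\R^{d_3})$ uniformly. It says nothing about the slices $w(\cdot,x_{23})$ in $\R^{d_1}$. The Lebesgue differentiation argument you invoke would need rectangles $I^1\times Q^{23}$ with $I^1$ \emph{fixed} and $Q^{23}$ shrinking, and these stop being flag once $\ell(Q^2)<\ell(I^1)$. Flag rectangles that shrink in $x_{23}$ force $\ell(I^1)\to 0$ as well, so they never see a fixed cube $I^1$.

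The claim is in fact false. If $d_3>d_1$, the weight $w(x)=|x|^a$ with $-d_3<a\le -d_1$ lies in $A_{p,F}$ by Proposition \ref{prop:weight}(ii). Yet the $A_p(\R^{d_1})$ constants of $w(\cdot,x_{23})$ blow up as $x_{23}\to 0$. Worse, the operator you dominate by, $\big(\sum_{I^{23}}\sum_{I^1}|\Delta_{I^1}\Delta_{I^{23}}f|^2\big)^{1/2}$ with the restriction $\ell(I^1)<\ell(I^2)$ dropped, is unbounded on $L^p(w)$ for this $w$. You can see this by testing it on $g\otimes h_{Q^{23}}$ with $\ell(Q^{23})\to 0$. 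So the restriction is what makes $S_F$ bounded and must be used, not discarded.

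The same objection applies to $\calD_{=,<}$, which would need $w(\cdot,\cdot,x_3)\in A_p(\R^{d_{12}})$, and to $\calD_{<,<}$. Your fallback via Fefferman--Stein for $M_F$ controls averages, not the $\Delta_{I^1}$ differences, so it does not close the gap either.

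The missing idea is to turn the restriction into full-space one-parameter structure. Suppose $\ell(I^1)<\ell(I^2)=\ell(I^3)$. Then $(\Delta_{I^2}+E_{I^2})\Delta_{I^3}f$ is constant in $x_{23}$ on the cubes $J^{23}\in\calD^{23}_=$ with $\ell(J^2)=\ell(I^1)$. Hence $\Delta_{I,F}f=\sum_{J^{23}}\Delta_{I^1}E_{J^2}E_{J^3}(\Delta_{I^2}+E_{I^2})\Delta_{I^3}f$. Each $\Delta_{I^1}E_{J^2}E_{J^3}$ is one component of the martingale difference of the cube $I^1\times J^{23}\in\calD_=$.
\begin{itemize}
\item The vector-valued form of Lemma \ref{lem:variantbounds} then needs only $w\in A_p(\R^d)$.
\item After that, the square function over $\calD^{23}_=$ in $\R^{d_{23}}$ uses $w(x_1,\cdot)$.
\item For $\calD_{=,<}$ you insert $E_{J^3}$ at scale $\ell(I^2)$ instead, and for $\calD_{<,<}$ you do both insertions.
\end{itemize}
Separately, your identifications on $\calD_{<,=}$ and $\calD_{=,<}$ are wrong. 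Since $\Delta_{I^2\times I^3}=(\Delta_{I^2}+E_{I^2})\Delta_{I^3}+\Delta_{I^2}E_{I^3}$, the operator $\Delta_{I,F}$ is only a piece of $\Delta_{I^1}\Delta_{I^2\times I^3}$ and is not pointwise dominated by it. This is harmless once pieces are bounded by averages of the full difference, as in Lemma \ref{lem:variantbounds}.

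\textbf{Lower bound.} Your duality argument via Lemma \ref{lem:orthogonal} is correct for $1<q<\infty$ and $v\in A_{q,F}$, once the upper bound is known. The passage to $0<q<\infty$ and $v\in A_{\infty,F}$ is not justified, though. $A_\infty$ extrapolation, for any Muckenhoupt basis, needs the estimate at one exponent for \emph{all} weights in $A_{\infty,F}$. An estimate for all $w\in A_{p_0,F}$ only extrapolates to $1<p<\infty$ and $w\in A_{p,F}$. For example, the pair $(M_Fh,|h|)$ satisfies your starting hypothesis by Proposition \ref{prop:maxfunc}, but fails in unweighted $L^1$. The case $q=1$ is exactly the one used later in Theorem \ref{thm:bmoembedding}.

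The paper avoids this by starting from the classical one-parameter lower square function bounds, which hold for all $0<q<\infty$ and all $A_\infty$ weights, vector-valued as well. It applies them successively:
\begin{itemize}
\item in $\R^{d_3}$;
\item in $\R^{d_{23}}$ over $\calD^{23}_=$;
\item in $\R^d$ over $\calD_=$.
\end{itemize}
The slices are handled by Remark \ref{rem:weight}. Explicit orthogonality computations then show that the resulting iterated sum of squares is exactly $(S_Ff)^2$, split over the four subcollections.
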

\begin{proof}
	We first consider the upper bound. In what follows always $I = I^1 \times I^2 \times I^3 \in \calD_F$.
	Define the subcollections $\calD_{<} = \calD_{<, <}$, $\calD_{<,=}$, $\calD_{=, <}$ and
	$\calD_{=} = \calD_{=, =}$ of $\calD_F$, where the first subscript describes the relationship between
	$\ell(I^1)$ and $\ell(I^2)$ and the second the relationship between $\ell(I^2)$ and $\ell(I^3)$.
	For instance, $\calD_{<, =} := \{I \in \calD \colon \ell(I^1) < \ell(I^2) = \ell(I^3)\}$.
	Next, define $S_{F}^{=} f$ to be like $S_F f$ but where we sum only over $I \in \calD_{=}$,
	and similarly for the other subcollections.
	Then we have
	$$
		(S_F f)^2 = (S_F^{<} f)^2 + (S_F^{<, =} f)^2 + (S_F^{=, <} f)^2 + (S_F^{=} f)^2.
	$$
	Clearly, $S_F^= f$ is a variant of the one-parameter dyadic square function, because
	$\calD_{=}$ is a dyadic lattice and
	$$
		\Delta_{I, F} = (\Delta_{I^1} + E_{I^1})(\Delta_{I^2} + E_{I^2})\Delta_{I^3}, \qquad I \in \calD_{=},
	$$
	while $\Delta_{I}$ is the sum of $A_1 A_2 A_3$, $A_i \in \{\Delta_{I^i}, E_{I^i}\}$, $A_1A_2A_3 \ne E_{I^1} E_{I^2} E_{I^3}$.
	Using that $w \in A_p(\R^d)$ we then have by Lemma \ref{lem:variantbounds} that
	\[
		\| S_F^{=} f\|_{L^p(w)}\lesssim \|f\|_{L^p(w)}.
	\]

	Next, we consider $S_F^{=, <}f$. The key observation is that
	\begin{align*}
		(S_F^{=, <} f)^2 & = \sum_{I \in \calD_{=, <}}\Big|(\Delta_{I^1}+E_{I^1})\Delta_{I^2}
		\sum_{\substack{J^3\in \calD^3
				\\ \ell(J^3)=\ell(I^2)}} E_{J^3}\Delta_{I^3}f\Big|^2                                  \\
		                 & = \sum_{I^3} \sum_{\substack{ I^1 \times I^2 \times J^3 \in \calD_{=}
				                                \\ \ell(I^2) < \ell(I^3)}}
		|(\Delta_{I^1}+E_{I^1})\Delta_{I^2}E_{J^3} \Delta_{I^3} f|^2                            \\
		                 & \le \sum_{J^3} \sum_{I \in \calD_{=}}
		|(\Delta_{I^1}+E_{I^1})\Delta_{I^2}   E_{I^3}(\Delta_{J^3}f)|^2                         \\
		                 & =: \sum_{J^3}  (\wt S (\Delta_{J^3}f))^2,
	\end{align*}
	where $\wt S$ is again a variant of the one-parameter dyadic square function, allowing us to use Lemma \ref{lem:variantbounds}.
	Since $w\in A_p(\R^d)$, we deduce that
	\begin{align*}
		\| S_F^{=, <} f\|_{L^p(w)} & \le \Big\|\Big(\sum_{J^3}  (\wt S (\Delta_{J^3}f))^2 \Big)^{1/2}\Big\|_{L^p(w)}                       \\
		                           & \lesssim  \Big\|\Big(\sum_{J^3}  |\Delta_{J^3}f|^2 \Big)^{1/2}\Big\|_{L^p(w)}\lesssim \|f\|_{L^p(w)},
	\end{align*}
	where the last inequality holds by the one-parameter square function estimate in $\R^{d_3}$ using also that $w(x_1, x_2, \cdot )\in A_{p}(\R^{d_3})$ uniformly.

	Next, we estimate $S_F^{<, =}f$. This is quite similar to the above case. Indeed, we have
	\begin{align*}
		(S_F^{<, =} f)^2 & = \sum_{I \in \calD_{<, =}}\Big|\Delta_{I^1} \sum_{\substack{J^2\times J^3\in \calD^{23}
				                                                                \\ \ell(J^2)=\ell(J^3)=\ell(I^1)}}E_{J^2}E_{J^3}(\Delta_{I^2}+E_{I^2})\Delta_{I^3} f\Big|^2 \\
		                 & \le \sum_{J^{23} \in \calD^{23}_{=}} \sum_{I \in \calD_{=}}
		| \Delta_{I^1}  E_{I^2}E_{I^3}[ (\Delta_{J^2}+E_{J^2})\Delta_{J^3} f] |^2                                                                                     \\
		                 & =:  \sum_{J^{23} \in \calD^{23}_{=}} (\wt S ((\Delta_{J^2}+E_{J^2})\Delta_{J^3}f))^2,
	\end{align*}
	where $\wt S$ is again a variant of the one-parameter dyadic square function to which Lemma \ref{lem:variantbounds} applies (not exactly the same as above
	despite the same notation).
	We have using the vector-valued estimate of $\wt S$ and the fact that
	$w \in A_p(\R^d)$ that
	\begin{align*}
		\| S_F^{<, =} f\|_{L^p(w)} & \le \Big\|\Big(\sum_{J^{23} \in \calD^{23}_{=}}
		(\wt S ((\Delta_{J^2}+E_{J^2})\Delta_{J^3}f))^2\Big)^{1/2} \Big\|_{L^p(w)}        \\
		                           & \lesssim \Big\|\Big(\sum_{J^{23} \in \calD^{23}_{=}}
		|(\Delta_{J^2}+E_{J^2})\Delta_{J^3}f|^2\Big)^{1/2} \Big\|_{L^p(w)}\lesssim \|f\|_{L^p(w)},
	\end{align*}
	where the last step used the one-parameter dyadic square function estimate in $\R^{d_{23}}$
	and the fact that $w(x_1, \cdot)\in A_{p}(\R^{d_{23}})$ uniformly.

	Finally, the estimate
	of $S_F^{<, <} f$  is also similar in spirit but requires two applications of the above ideas.
	Indeed, similar calculus first gives us
	\begin{align*}
		\| S_F^{<, <} f\|_{L^p(w)} & \le \Big\|\Big(\sum_{I^{23} \in \calD_{<}^{23}}
		(\wt S (\Delta_{I^2}\Delta_{I^3}f))^2\Big)^{1/2} \Big\|_{L^p(w)}                                      \\
		                           & \lesssim \Big\|\Big(\sum_{I^{23} \in \calD_{<}^{23}}
		                                            |\Delta_{I^2}\Delta_{I^3}f|^2\Big)^{1/2} \Big\|_{L^p(w)}.
	\end{align*}
	Repeating the process, we get
	\begin{align*}
		\Big\|\Big(\sum_{I^{23} \in \calD_{<}^{23}} & |\Delta_{I^2}\Delta_{I^3}f|^2\Big)^{1/2} \Big\|_{L^p(w)} \\
		 & \le
		\Big\|\Big(\sum_{I^3} (\wt S_{23}(\Delta_{I^3}f))^2\Big)^{1/2} \Big\|_{L^p(w)}                         \\
		 & \lesssim
		\Big\|\Big(\sum_{I^3} |\Delta_{I^3}f|^2\Big)^{1/2} \Big\|_{L^p(w)}
		\lesssim \|f\|_{L^p(w)}.
	\end{align*}
	Here $\wt S_{23}$ is some one-parameter square function in $\R^{d_{23}}$ (to which an easier version of Lemma \ref{lem:variantbounds} applies)
	and the estimates used $w(x_1, \cdot)\in A_{p}(\R^{d_{23}})$
	and later $w(x_1, x_2, \cdot) \in A_p(\R^{d_3})$.

	As we have now proved the upper bound, it remains to prove the lower bound.
	By Remark \ref{rem:weight} and the one-parameter lower bound of the
	dyadic square function on $\R^{d_3}$, we have
	\begin{equation*}
		\|f\|_{L^q(v)}\lesssim \Big\|\Big(\sum_{  I^3 } |\Delta_{I^3} f|^2\Big)^{1/2}
		\Big\|_{L^q(v)}.
	\end{equation*}
	Similarly, we may use the vector-valued lower bound for the dyadic square function on $\R^{d_{23}}$ to get
	\begin{align*}
		\|f\|_{L^q(v)}\lesssim \Big\|\Big(\sum_{  I^3 }\sum_{J^{23}\in \calD^{23}_{=}} |\Delta_{J^{23}}\Delta_{I^3} f|^2\Big)^{1/2}
		\Big\|_{L^q(v)}.
	\end{align*}
	Here, recall that the one-parameter martingale difference can be written as
	\begin{equation*}
		\Delta_{J^{23}}=(\Delta_{J^2}+E_{J^2})\Delta_{J^3}+ \Delta_{J^2}E_{J^3}.
	\end{equation*}
	This gives that
	$$
		\Delta_{J^{23}} \Delta_{I^3} f =
		\begin{cases} (\Delta_{J^2} + E_{J^2})\Delta_{I^3} f & \textup{if } I^3 = J^3,          \\
		              \Delta_{J^2} E_{J^3} \Delta_{I^3} f    & \textup{if } J^3 \subsetneq I^3, \\
		              0                                      & \textup{otherwise}.
		\end{cases}
	$$
	Thus, we have
	\begin{align*}
		 & \sum_{  I^3  }\sum_{J^{23} \in \calD^{23}_{=}} |\Delta_{J^{23}}\Delta_{I^3} f|^2                                                                \\
		 & = \sum_{J^2\times I^3\in \calD^{23}_{=}}|(\Delta_{J^2}+E_{J^2})\Delta_{I^3} f|^2+
		\sum_{  I^3 }\sum_{\substack{J^{23}\in \calD^{23}_{=}                                \\ J^3\subsetneq I^3}} | \Delta_{J^2}E_{J^3}\Delta_{I^3} f|^2 \\
		 & =  \sum_{I^{23} \in \calD^{23}_{=}}|(\Delta_{I^2}+E_{I^2})\Delta_{I^3} f|^2+
		\sum_{I^{23} \in \calD^{23}_{<}} | \Delta_{I^2} \Delta_{I^3} f|^2.
	\end{align*}
	Since $v\in A_{\infty}(\R^{d})$, we can use the vector-valued lower bound of the dyadic square function on $\R^{d}$
	to get that
	\begin{equation}\label{eq:L1}
		\begin{split}
			\|f\|_{L^q(v)} & \lesssim \Big\| \Big( \sum_{L \in \calD_{=}}
			\sum_{I^{23}\in \calD^{23}_{=}}|\Delta_L(\Delta_{I^2}+E_{I^2})\Delta_{I^3} f|^2 \\
			               & \hspace{3cm}+\sum_{L \in \calD_{=}}
			\sum_{I^{23} \in \calD^{23}_{<}} | \Delta_L\Delta_{I^2} \Delta_{I^3} f|^2\Big)^{1/2}\Big\|_{L^q(v)}.
		\end{split}
	\end{equation}
	We recall that for $L = L^1 \times L^2 \times L^3 = L^1 \times L^{23}$ we have
	\[
		\Delta_L = (\Delta_{L^1}+E_{L^1})\Delta_{L^{23}}+ \Delta_{L^1}E_{L^{23}}.
	\]
	For the moment, let $\wt \Delta_{I^{23}} = (\Delta_{I^2} + E_{I^2})\Delta_{I^3}$.
	Notice that for $L \in \calD_{=}$ and $I^{23} \in \calD^{23}_=$ we have
	$$
		\Delta_{L^{23}} \wt \Delta_{I^{23}} f =
		\begin{cases}
			\wt \Delta_{I^{23}} f & \textup{if } L^{23} = I^{23}, \\
			0                     & \textup{otherwise},
		\end{cases}
	$$
	and
	$$
		E_{L^{23}} \wt \Delta_{I^{23}} f =
		\begin{cases}
			E_{L^{23}} (\Delta_{I^2} + E_{I^2})\Delta_{I^3} & \textup{if } L^3 \subsetneq I^3, \\
			0                                               & \textup{otherwise}.              \\
		\end{cases}
	$$
	Moreover, notice that in the penultimate case $\ell(L^2) = \ell(L^3) < \ell(I^3) = \ell(I^2)$
	so we must also have $L^2 \subsetneq I^2$.
	Using these observations we see that the first term in the RHS of \eqref{eq:L1} equals
	\begin{align*}
		\sum_{L \in \calD_{=}} \sum_{I^{23}\in \calD^{23}_{=}}|\Delta_L(\Delta_{I^2}+E_{I^2})\Delta_{I^3} f|^2
		 & =
		\sum_{I \in \calD_{=}}|(\Delta_{I^1}+E_{I^1})(\Delta_{I^2}+E_{I^2})\Delta_{I^3} f|^2                                                                                                   \\
		 & + \sum_{I \in \calD_{<, =}} \sum_{ \substack{ L^{23}: I^1\times L^{23} \in \calD_{=} \\ L^{23} \subsetneq I^{23} }} |\Delta_{I^1} E_{L^{23}}(\Delta_{I^2}+E_{I^2})\Delta_{I^3} f|^2 \\
		 & = (S_F^{=}f)^2+ (S_F^{<, =}f)^2.
	\end{align*}

	Finally, we analyze the second term in the RHS of \eqref{eq:L1}.
	Notice that for $L \in \calD_{=}$ and $I^{23} \in \calD^{23}_{<}$ we have
	that $\Delta_{L^{23}} \Delta_{I^2} \Delta_{I^3} f\neq 0$ right away
	implies that $L^2 \subset I^2$ and $L^3 \subset I^3$. But if
	$L^2 \subsetneq I^2$, we first of all have
	$$
		\Delta_{L^{23}} \Delta_{I^2} \Delta_{I^3} f = E_{L^2} \Delta_{L^3} \Delta_{I^2} \Delta_{I^3} f.
	$$
	Moreover, we have that here $\ell(L^3) = \ell(L^2) < \ell(I^2) < \ell(I^3)$ so this actually vanishes.
	So we must have $L^2 = I^2$ and then $\Delta_{L^{23}} \Delta_{I^2} \Delta_{I^3} f =
		\Delta_{I^2} E_{L^3} \Delta_{I^3} f$, since $\ell(L^3) < \ell(I^3)$ implies that terms
	with $\Delta_{L^3}$ do not survive. Notice also that
	$E_{L^{23}} \Delta_{I^2} \Delta_{I^3} f \ne 0$ implies $L^2 \subsetneq I^2$
	and $L^3 \subsetneq I^3$.
	Thus, we get (summing the averages as above)
	\begin{align*}
		 & \sum_{L \in \calD_{=}}
		\sum_{I^{23} \in \calD^{23}_{<}} | \Delta_L\Delta_{I^2} \Delta_{I^3} f|^2                                    \\
		 & = \sum_{I^1 \times I^2 \times L^3 \in \calD_{=}} \sum_{\substack{I^3 \colon I^3 \supsetneq L^3
				                                                    \\ \ell(I^3) > \ell(I^2) } }
		|(\Delta_{I^1} + E_{I^1}) \Delta_{I^2} E_{L^3} \Delta_{I^3} f|^2                                             \\
		 & \hspace{3cm} + \sum_{I^1 \times L^2 \times L^3 \in \calD_=} \sum_{\substack{ I^2 \colon I^2 \supsetneq L^2
				                                                               \\ \ell(I^2) > \ell(I^1) }}
		\sum_{\substack{ I^3 \colon I^3 \supsetneq L^3
				\\ \ell(I^3) > \ell(I^2) }} |\Delta_{I^1} E_{L^2} E_{L^3} \Delta_{I^2} \Delta_{I^3} f|^2                   \\
		 & = \sum_{I \in \calD_{=, <}} | (\Delta_{I^1}+E_{I^1})\Delta_{I^2} \Delta_{I^3} f|^2
		+ \sum_{I \in \calD_{<, <}} | \Delta_{I^1}\Delta_{I^2} \Delta_{I^3} f|^2                                     \\
		 & = (S_F^{=, <}f)^2+ (S_F^{<}f)^2.
	\end{align*}
	Hence, we have proved
	\begin{align*}
		\|f\|_{L^q(v)}\lesssim \big\| \big((S_F^{<} f)^2 + (S_F^{<, =} f)^2 + (S_F^{=, <} f)^2 + (S_F^{=} f)^2\big)^{1/2}\big\|_{L^q(v)}= \|S_Ff\|_{L^q(v)}.
	\end{align*}
	We are done.
\end{proof}

\subsubsection{Haar functions}
For an interval $I \subset \R$ we denote by $I_{l}$ and $I_{r}$ the left and right
halves of $I$, respectively. We define $h_{I}^0 := |I|^{-1/2}1_{I}$ and $h_{I}^1 := |I|^{-1/2}(1_{I_{l}} - 1_{I_{r}})$
-- these are the non-cancellative and cancellative, respectively, $L^2$ normalized Haar functions related to the interval $I$.
In higher dimensions there are more than one cancellative Haars in a given cube.
Indeed, let now $I = I_1 \times \cdots \times I_d$ be a cube in $\R^d$. We
define the Haar function $h_I^{\eta}$, $\eta = (\eta_1, \ldots, \eta_d) \in \{0,1\}^d$, by setting
\begin{displaymath}
	h_I^{\eta} = h_{I_1}^{\eta_1} \otimes \cdots \otimes h_{I_d}^{\eta_d}.
\end{displaymath}
If $\eta \ne 0$ the Haar function is cancellative: $\int h_I^{\eta} = 0$.
It is important that $\langle h_{I}^{\eta}, h_J^{\kappa} \rangle = \delta_{I, J} \delta_{\eta, \kappa}$
for $I, J$ in some fixed dyadic grid. We also note the fundamental decomposition
of martingale differences
$$
	\Delta_I f
	= \sum_{\eta \ne 0} \langle f, h_I^{\eta}\rangle h_I^{\eta}
$$
and the useful fact that $\langle f, h_I^{\eta}\rangle = \langle \Delta_I f, h_I^{\eta} \rangle$,
$\eta \ne 0$.

We sometimes find it useful to have the following type of Haar-like but more general cancellative functions.
Let $I, J$ be two dyadic cubes of the same side length $\ell(I) = \ell(J)$. Then $H_{I, J}$ generally
denotes any function in $\R^{d}$ that is supported on $I \cup J$, is constant on the children
of $I$ and $J$, satisfies $|H_{I, J}| \le |I|^{-1/2}$ and has vanishing mean, i.e.,
$\int H_{I, J} = 0$. In practice, for us $H_{I, J} = h_I^0 - h_J^0$, $H_{I, J} = h_J^0 - h_I^0$ or,
simply, $H_{I, J} = h_I^{\eta}$ or $H_{I,J} = h_J^{\eta}$ for some $\eta \ne 0$.

We often exploit notation by suppressing the presence of $\eta$,
and write $h_I$ for some $h_I^{\eta}$, $\eta \ne 0$.
We also use the notation $u_I$ to stand for either $h_I$ or $h_I^0$.
Then, for instance, we have
$$
	(\Delta_I + E_I) f
	= \sum_{\eta \ne 0} \langle f, h_I^{\eta}\rangle h_I^{\eta} + \langle f, h_I^0\rangle h_I^0
$$
With the signature sum suppressed, we write this as $\langle f, u_I\rangle u_I$.
Similarly, we also note that rigorously, for instance,
$$
	\Delta_I b \Delta_I f = \sum_{\eta, \kappa \ne 0} \langle b, h_{I}^{\eta}\rangle \langle f, h_I^{\kappa}\rangle
	h_I^{\eta} h_I^{\kappa}
$$
but we will just write the RHS as
$$
	\langle b, h_I \rangle \langle f, h_I \rangle h_I h_I.
$$
It requires a bit of care to do all of this -- for instance, we do not write $h_I h_I = h_I^2 = 1_I / |I|$, since, as we saw,
these can be two different cancellative Haar functions. But we always treat
$h_I h_I$ as non-cancellative and just utilize the size $|h_I h_I| = 1_I / |I|$.

When it comes to square functions the exploitation of notation works also fine, but
this is somewhat non-trivial. Let us make this point carefully. In what follows
let $I$ run over cubes in some fixed dyadic lattice of cubes.
Then one can consider the usual one-parameter square function
$$
	Sf = \Big( \sum_I |\Delta_I f|^2 \Big)^{1/2}
$$
and a Haar square function, say,
$$
	S_H f = \Big(\sum_{\eta \ne 0} \sum_I |\langle f, h_I^{\eta}\rangle|^2 \frac{1_I}{|I|}\Big)^{1/2}.
$$
While $Sf \lesssim S_H f$ even pointwise, there is also an $L^p$ comparison to the other direction in
a very strong sense:
$$
	\|S_H f\|_{L^p(w)} \lesssim \|Sf\|_{L^p(w)}
$$
for all $0 < p < \infty$ and $w \in A_{\infty}$. We omit these details now but our upcoming preprint \cite{IBLM-SF}
will, in particular, detail this briefly.
So, for all intents and purposes,
some Haar square function
$$
	\Big(\sum_I |\langle f, h_I\rangle|^2 \frac{1_I}{|I|}\Big)^{1/2}
$$
can be treated like the original martingale difference square function and one
does not mostly need to worry about where, if anywhere, the implicit $\eta$ summation is
(of course, going to the direction $Sf \lesssim S_H f$ it is necessary that all of the
cancellative Haars appear on the RHS).

In the tri-parameter flag setting we can define
that a cancellative flag Haar function $h_{I, F}$ in a (dyadic) flag rectangle
$I = I^1 \times I^2 \times I^3 \in \calD_F$ takes the form $h_{I^1} \otimes h_{I^2} \otimes h_{I^3}$
if $I \in \calD_<$, the form $u_{I^1} \otimes h_{I^2} \otimes h_{I^3}$ if
$I \in \calD_{=, <}$, the form $h_{I^1} \otimes u_{I^2} \otimes h_{I^3}$
if $I \in \calD_{<, =}$ and the form $u_{I^1} \otimes u_{I^2} \otimes h_{I^3}$
if $I \in \calD_=$. Then, with the obvious rigorous intepretation, we write
$$
	\Delta_{I, F}f = \langle f, h_{I, F}\rangle h_{I, F}.
$$
The above commentary about the equivalence of martingale difference
and suitable Haar square functions is a general fact and work also in the flag setting.

\subsection{Flag resolutions of bilinear forms}
Suppose $B(f, g)$ is a bilinear form. Of course, we could formally decompose
$$
	B(f, g) = \sum_{I, J \in \calD_F} B(\Delta_{I, F} f, \Delta_{J, F} g)
$$
simply by decomposing $f$ and $g$ in the flag sense like above. However, we
want to, in addition, have $\ell(I^i) = \ell(J^i)$ for all $i=1,2,3$ --
we denote this requirement by $\ell(I) = \ell(J)$. We can achieve this with the price of
losing some cancellation.

We make a preliminary decomposition that we apply a few times in what follows.
Suppose $s = 2^{-k}$ is some fixed dyadic scale. In what follows we
can take $V_1, V_2 \in \calD^i$ for some fixed $i = 1,2,3$.
Temporarily understand that $\Lambda_V$ means $\Delta_V + E_V$
when $\ell(V) = s$ and otherwise equals $\Delta_V$. With this understanding we can decompose
$$
	B(f, g) =  \sum_{\substack{V_1, V_2 \\ \ell(V_1),\, \ell(V_2) \le s }} B(\Lambda_{V_1} f, \Lambda_{V_2} g).
$$
Now, divide this into three sums over $V_1, V_2$, where $\ell(V_2) < \ell(V_1) \le s$,
$\ell(V_1) < \ell(V_2) \le s$ or $\ell(V_1) = \ell(V_2) \le s$. Using that
for a dyadic scale $m < s$ we have
$\sum_{m < \ell(V) \le s} \Lambda_V h = \sum_{\ell(V) = m} E_V h$, we get
\begin{align*}
	B(f, g) & = \sum_{\substack{ V_1, V_2 \\ \ell(V_1) = \ell(V_2) < s}} B(E_{V_1}f, \Delta_{V_2} g) \\
	        & + \sum_{\substack{ V_1, V_2 \\ \ell(V_1) = \ell(V_2) < s}} B(\Delta_{V_1}f, E_{V_2} g)
	+ \sum_{\substack{ V_1, V_2           \\ \ell(V_1) = \ell(V_2) \le s}} B(\Lambda_{V_1}f, \Lambda_{V_2} g).
\end{align*}

We apply this first in $\calD^3$ with $s = \infty$ -- this gives
$$
	B(f, g) = \sum_{\substack{ I^3, J^3 \\ \ell(I^3) = \ell(J^3)}}
	\big[ B(E_{I^3}f, \Delta_{J^3} g) + B(\Delta_{I^3}f, E_{J^3} g) + B(\Delta_{I^3}f, \Delta_{J^3} g)\big].
$$
Then, we fix $I^3, J^3$ with $\ell(I^3) = \ell(J^3)$ and one of the appearing three terms -- say, we look at $B(E_{I^3} f, \Delta_{J^3} g)$.
We apply the decomposition from above in $\calD^2$ with $s = \ell(I^3) = \ell(J^3)$ -- this gives
\begin{align*}
	B(E_{I^3} f, \Delta_{J^3} g) & = \sum_{\substack{ I^2, J^2 \\ \ell(I^2) = \ell(J^2) < \ell(I^3)}} B(E_{I^2} E_{I^3} f, \Delta_{J^2} \Delta_{J^3} g) \\
	                             & + \sum_{\substack{ I^2, J^2 \\ \ell(I^2) = \ell(J^2) < \ell(I^3)}} B(\Delta_{I^2} E_{I^3} f, E_{J^2} \Delta_{J^3} g) \\
	                             & + \sum_{\substack{ I^2, J^2 \\ \ell(I^2) = \ell(J^2) \le \ell(I^3)}}
	B(\Lambda_{I^2} E_{I^3} f, \Lambda_{J^2} \Delta_{J^3} g),
\end{align*}
where we recall that $\Lambda_{I^2}, \Lambda_{J^2}$ implicitly depend on $\ell(I^3) = \ell(J^3)$.

Finally, we now also fix $I^2, J^2$ with $\ell(I^2) = \ell(J^2)$ and look at one of the appearing three terms
-- say, we look at $B(E_{I^2} E_{I^3} f, \Delta_{J^2} \Delta_{J^3} g)$.
We apply the decomposition from above in $\calD^1$ with $s = \ell(I^2) = \ell(J^2)$ -- this gives
\begin{align*}
	B(E_{I^2} E_{I^3} f, \Delta_{J^2} \Delta_{J^3} g) & = \sum_{\substack{ I^1, J^1 \\ \ell(I^1) = \ell(J^1) < \ell(I^2)}} B(E_{I^1} E_{I^2} E_{I^3} f, \Delta_{J^1} \Delta_{J^2} \Delta_{J^3} g)           \\
	                                                  & + \sum_{\substack{ I^1, J^1 \\ \ell(I^1) = \ell(J^1) < \ell(I^2)}} B(\Delta_{I^1} E_{I^2} E_{I^3} f, E_{J^1}\Delta_{J^2} \Delta_{J^3} g)            \\
	                                                  & + \sum_{\substack{ I^1, J^1 \\ \ell(I^1) = \ell(J^1) \le \ell(I^2)}} B(\Lambda_{I^1} E_{I^2} E_{I^3} f, \Lambda_{J^1} \Delta_{J^2} \Delta_{J^3} g),
\end{align*}
where we again recall our usual flag style convention: $\Lambda_{I^1}, \Lambda_{J^1}$
implicitly depend on $\ell(I^2) = \ell(J^2)$.

In total, we get $3^3 = 27$ different sums -- one of them e.g. being
\begin{equation}\label{eq:ex1}
	\sum_{\substack{ I, J \in \calD_{<} \\ \ell(I) = \ell(J) }} B(\Delta_{I^1} E_{I^2} E_{I^3} f, E_{J^1}\Delta_{J^2} \Delta_{J^3} g).
\end{equation}
Recall that $I \in \calD_{<} = \calD_{<, <}$ means $\ell(I^1) < \ell(I^2) < \ell(I^3)$. All of the $27$ terms, like the one right above,
form our tri-parameter flag type multiresolution decomposition of $B(f, g)$. The similar decomposion
in the bi-parameter case has $9$ terms.

To be able to write all of the 27 terms, we introduce some notation. We will, for instance, denote
the sum in \eqref{eq:ex1} by $\Sigma_{(\Delta, E, E)}$. We explain the logic now.
A symbol $\Delta$ implies that there is a $\Delta$ hitting $f$ and an $E$ hitting $g$ in that parameter,
a symbol $D$ (for double $\Lambda$) means that there is a $\Lambda$ hitting both, and $E$ means that there is an $E$ hitting $f$ and a $\Delta$ hitting $g$.
So the symbol denotes the symbol in $f$ and the corresponding symbol in $g$ can be inferred --
the symbol $D$ is needed to denote the case where there is a $\Lambda$ in both.
Notice also that the sum can run over $\calD_F = \calD_{\le, \le}$, $\calD_< = \calD_{<, <}$,
$\calD_{<, \le}$ or $\calD_{\le, <}$. This
can also be inferred from the above notation -- if there is a $\Delta$ or $E$ in the slot $i \in \{1,2\}$, this
means we will have $<$ in the corresponding slot, and if there is a $D$, this mean we will have $\le$.

Let $\calS = \{(s_1, s_2, s_3) \colon s_i \in \{\Delta, E, D\}\}$ so that our
fundamental flag multiresolution can be written as
\begin{equation*}
	B(f, g) = \sum_{s \in \calS} \Sigma_s.
\end{equation*}
This can be applied with, e.g., $B(f, g) = \langle Tf, g\rangle$, where $T$ is a
flag singular integral, or for $B(f, g) = fg$ to obtain a flag style paraproduct decomposition of
products relevant in commutator theory. We will use both later.

\section{Flag kernels}
\subsection{Tri-parameter flag Fourier multipliers}
We consider tri-parameter flag type Fourier multipliers. We will use them as a model
for formulating our general -- even non-convolution form -- framework for flag SIOs.
Let $\wh{T_m f} = m\wh f$ be a Fourier multiplier whose symbol $m$ satisfies
\begin{equation}\label{eq:def}
	|\partial^\alpha m(\xi)| \lesssim
	(|\xi_1|+|\xi_2|+|\xi_3|)^{-|\alpha_1|}  (|\xi_2|+|\xi_3|)^{-|\alpha_2|}|\xi_3|^{-|\alpha_3|}
\end{equation}
for $\xi = (\xi_1, \xi_2, \xi_3) \in \R^d = \R^{d_1} \times \R^{d_2} \times \R^{d_3}$
and multi-indices $\alpha = (\alpha_1, \alpha_2, \alpha_3) \in \N^{d_1} \times \N^{d_2} \times \N^{d_3}$.

The condition \eqref{eq:def} is based on the typical example of a flag singular integral in the literature:
$R_{123} R_{23} R_3$, where $R_{123}, R_{23}, R_3$ are some Riesz transforms
in $\R^{d}, \R^{d_{23}}$ and $\R^{d_3}$, respectively, where $d = d_{123} := d_1+d_2+d_3$, $d_{23} := d_2 + d_3$.
Indeed, notice that in this case
$$
	m(\xi) = \frac{\pr_i \xi}{|\xi|} \frac{\pr_j \xi}{|\xi_{23}|} \frac{\pr_k \xi}{|\xi_3|},
$$
where $1 \le i \le d$, $d_1+1 \le j \le d$ and $d_1 + d_2 + 1 \le k \le d$, and
$\pr_m$ stands for the projection to the $m$th coordinate.

Our first question is, what kind of kernel estimates can be derived from \eqref{eq:def}?
This is actually already, up to a point, studied in
\cite{NRS2001}. However, we need to go further due to our intention to formulate
a general non-convolution form framework -- in particular, we need some
partial kernel estimates that are not considered in \cite{NRS2001}.
We provide the full details of all our considerations
to make this self-contained and note that our arguments are also different from those in \cite{NRS2001}.

Let $\psi_{123}$, $\psi_{23}$ and $\psi_{3}$ be standard resolutions of unity in
$\R^{d}\setminus \{0\}$, $\R^{d_{23}}\setminus \{0\}$ and $\R^{d_3}\setminus \{0\}$, respectively.
That is, for instance, $\psi_3(\xi_3) = \eta_3(\xi_3) - \eta_3(2\xi_3)$, where
$\eta_3$ is smooth with $\eta_3(\xi_3) = 1$ for $|\xi_3| \le 1$ and $\eta_3(\xi_3) = 0$
for $|\xi_3| \ge 2$. Let $m$ be a bounded function that is smooth in
$\R^{d_1} \times \R^{d_2} \times (\R^{d_3} \setminus \{0\})$ and satisfies \eqref{eq:def}.
For $\xi_3 \ne 0$ we can write
\begin{align*}
	1 & = \sum_{i\in \Z} \psi_3(2^{-i}\xi_3)=\sum_{i,j\in \Z} \psi_3(2^{-i}\xi_3)\psi_{23}(2^{-j}\xi_2, 2^{-j}\xi_3)                 \\
	  & =\sum_{i,j,k\in \Z} \psi_3(2^{-i}\xi_3)\psi_{23}(2^{-j}\xi_2, 2^{-j}\xi_3)\psi_{123}(2^{-k}\xi_1, 2^{-k}\xi_2, 2^{-k}\xi_3).
\end{align*}
Thus, we may write
\begin{align*}
	m(\xi)=\sum_{i,j,k\in \Z} \psi_3(2^{-i}\xi_3)\psi_{23}(2^{-j}\xi_2, 2^{-j}\xi_3)\psi_{123}(2^{-k}\xi_1, 2^{-k}\xi_2, 2^{-k}\xi_3)m(\xi)=:\sum_{i,j,k\in \Z} m_{ijk}(\xi).
\end{align*}
Moreover, we have that for $\xi\in \supp m_{ijk}$ that there holds that
\[
	2^i \sim |\xi_3|,\quad 2^j\sim  |\xi_2|+|\xi_3|,\quad 2^k \sim  |\xi_1|+|\xi_2|+|\xi_3|.
\]
In particular, we must have that $2^i\lesssim 2^j\lesssim 2^k$, and so
$$
	m(\xi) = \sum_{\substack{i, j, k \in \Z \\ 2^i \lesssim 2^j \lesssim 2^k}} m_{ijk}(\xi), \qquad \xi_3 \ne 0.
$$
Notice that each $m_{ijk}$ satisfies \eqref{eq:def} uniformly in $i,j,k$ and that $m_{ijk}$
is smooth and compactly supported so that $K_{ijk} := \check m_{ijk}$ is Schwartz.

\begin{lem}\label{lem:mfullk}
	Let $m$ satisfy \eqref{eq:def}. Then $K=\check m$ is smooth where $x_1 \ne 0$ and satisfies
	\begin{equation*}
		|   \partial^\beta K(x)|\lesssim \frac 1{ |x_1|^{d_1+|\beta_1|}(|x_1|+|x_2|)^{d_2+|\beta_2|}(|x_1|+|x_2|+|x_3|)^{d_3+|\beta_3|}}
	\end{equation*}
	for all multi-indices $\beta = (\beta_1, \beta_2, \beta_3) \in \N^d = \N^{d_1} \times \N^{d_2} \times \N^{d_3}$.
\end{lem}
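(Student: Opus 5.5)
We use the Littlewood--Paley decomposition $m=\sum_{2^i\lesssim 2^j\lesssim 2^k} m_{ijk}$ set up above. Write $K=\sum K_{ijk}$ with $K_{ijk}=\check m_{ijk}$, estimate each piece, and sum the resulting geometric series.

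\textbf{Single-piece bound.} On $\supp m_{ijk}$ we have $|\xi_1|\lesssim 2^k$, $|\xi_2|\lesssim 2^j$ and $|\xi_3|\sim 2^i$. Hence $|\supp m_{ijk}|\lesssim 2^{kd_1}2^{jd_2}2^{id_3}$. The symbol estimate \eqref{eq:def} gives, uniformly,
\[
|\partial^\alpha m_{ijk}|\lesssim 2^{-k|\alpha_1|}2^{-j|\alpha_2|}2^{-i|\alpha_3|}.
\]
For this we use $|\xi|\sim 2^k$ and $|\xi_{23}|\sim 2^j$ on the support, and that derivatives of the cutoffs scale the same way. For any multi-index $\beta$ and any $N_1,N_2,N_3\ge 0$ we multiply $\partial^\beta K_{ijk}$ by $x_1^{\gamma_1}x_2^{\gamma_2}x_3^{\gamma_3}$ with $|\gamma_m|=N_m$. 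This corresponds to $\xi$-derivatives of $\xi^\beta m_{ijk}$, and integrating in $\xi$ gives
\[
|\partial^\beta K_{ijk}(x)|\lesssim 2^{k(d_1+|\beta_1|)}2^{j(d_2+|\beta_2|)}2^{i(d_3+|\beta_3|)}(1+2^k|x_1|)^{-N_1}(1+2^j|x_2|)^{-N_2}(1+2^i|x_3|)^{-N_3}.
\]
Differentiating $\xi^\beta$ only improves the bound, since $|\xi_m|$ is at most the corresponding scale.

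\textbf{Summation.} We sum over $i\le j+C$, $j\le k+C$. Set $a_1=d_1+|\beta_1|$, $a_2=d_2+|\beta_2|$, $a_3=d_3+|\beta_3|$, all positive since $d_m\ge 1$. We treat three ranges of the fine variable separately.

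\begin{itemize}
\item \emph{Sum over $k$ with $i,j$ fixed.} Split at $2^k\sim |x_1|^{-1}$. Above this threshold use decay $N_1>a_1$; below it the series is geometric with ratio $2^{a_1}$. Since also $k\ge j-C$, this yields
\[
\lesssim \min(2^{j},|x_1|^{-1})^{a_1}\lesssim |x_1|^{-a_1}\min\bigl(1,(2^j|x_1|)^{a_1}\bigr).
\]
Writing this as $\min(2^j,|x_1|^{-1})^{a_1}$ is the key point, because it retains a factor that helps the later sums.

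\item \emph{Sum over $j\ge i-C$.} We now have $|x_1|^{-a_1}\min(1,(2^j|x_1|)^{a_1})\,2^{ja_2}(1+2^j|x_2|)^{-N}$. This is bounded by $|x_1|^{-a_1}\sum_j 2^{ja_2}\min\bigl(1,(2^j|x_1|)^{-N}... \bigr)$. More cleanly, we bound $2^{ja_2}$ times the two factors by $\min(2^j,|x_1|^{-1},|x_2|^{-1})^{a_2}$ up to an extra positive power. This gives $|x_1|^{-a_1}(|x_1|+|x_2|)^{-a_2}\min\bigl(1,(2^i(|x_1|+|x_2|))^{a_2}\bigr)$-type control.

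\item \emph{Sum over $i$.} This similarly produces $(|x_1|+|x_2|+|x_3|)^{-a_3}$.
\end{itemize}

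To organise this cleanly, we follow the standard one-parameter template: $\sum_\ell 2^{\ell a}\min(1,(2^\ell r)^{-N})\sim r^{-a}$. The \textbf{main obstacle} is that the outer constraint ($k\gtrsim j$) must transfer decay correctly. When $|x_2|\gg|x_1|$, for example, the $j$-sum must be controlled by $|x_2|$ rather than by $|x_1|$. This requires that, when summing $k$ over $k\ge j$, we keep the refinement $\min(2^j,|x_1|^{-1})^{a_1}\le |x_1|^{-a_1}$ while allowing the $j$-sum to use only the $x_2$-decay and $2^{ja_2}$. The cutoff $2^j\le |x_2|^{-1}$ then yields $|x_2|^{-a_2}$. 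When $|x_1|\ge|x_2|$ we instead use $\min(2^j,|x_1|^{-1})^{a_1}2^{ja_2}$, summed over $j$, which gives $|x_1|^{-a_1-a_2}$ because the factor is $\le 2^{j(a_1+a_2)}$ for $2^j\le|x_1|^{-1}$ and $|x_1|^{-a_1}2^{ja_2}(2^j|x_2|)^{-N}$ beyond. The same case analysis in $x_3$ handles the final sum.

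\textbf{Conclusion.} The series $\sum\partial^\beta K_{ijk}$ converges absolutely and locally uniformly on $\{x_1\ne0\}$, since every bound above is finite there. So $K$ is smooth on that set and obeys the claimed estimate; the identification $K=\check m$ holds in the distributional sense.
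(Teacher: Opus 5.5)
\textbf{The gap.} In the summation step the decay is on the wrong side at every stage. The constraint $2^j\lesssim 2^k$ bounds the coarse index $k$ from \emph{below}, so
\[
\sum_{k\ge j-C}2^{ka_1}\min\{1,(2^k|x_1|)^{-N_1}\}\lesssim |x_1|^{-a_1}\min\{1,(2^j|x_1|)^{-(N_1-a_1)}\}.
\]
When $2^j|x_1|\le 1$ the term with $2^k\sim|x_1|^{-1}$ is included, so the sum is $\gtrsim|x_1|^{-a_1}\gg 2^{ja_1}$. Your bound $\min(2^j,|x_1|^{-1})^{a_1}$ is therefore false there. That form is correct for a sum over the \emph{fine} index under an upper constraint, such as $\sum_{i\le j+C}$; this is how the paper uses it, since it sums $i$ first.

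More seriously, the factor $\min\{1,(2^j|x_1|)^{a_1}\}$ that you carry forward does not decay as $j\to\infty$, so the $j$-sum cannot reach the target. If $|x_2|\ll|x_1|$, the range $|x_1|^{-1}\le 2^j\le|x_2|^{-1}$ contributes about $|x_1|^{-a_1}|x_2|^{-a_2}\gg|x_1|^{-a_1-a_2}$. Your treatment of the case $|x_1|\ge|x_2|$ does not give $|x_1|^{-a_1-a_2}$ either: beyond $2^j=|x_1|^{-1}$ the factor $(2^j|x_2|)^{-N}$ is $\ge 1$ as long as $2^j\le|x_2|^{-1}$, so that range is uncontrolled.

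At $x=(x_1,0,0)$ your bound for the $j$-summand is not even summable. So convergence on $\{x_1\ne0\}$ is not established by what you wrote. The second bullet repeats the reversal: $\min\{1,(2^i(|x_1|+|x_2|))^{a_2}\}$ gives no decay for large $i$. The $i$-sum then produces $|x_3|^{-a_3}$ instead of $(|x_1|+|x_2|+|x_3|)^{-a_3}$ when $|x_3|\ll|x_1|+|x_2|$.

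\textbf{The fix, and comparison with the paper.} Your coarse-to-fine order works once the decay is placed correctly. Put $R_2:=|x_1|+|x_2|$ and $R_3:=R_2+|x_3|$, and let $M=\min\{N_2,N_1-a_1\}$. By the corrected $k$-sum, the $j$-summand is $\lesssim |x_1|^{-a_1}2^{ja_2}\min\{1,(2^jR_2)^{-M}\}$. Hence the sum over $j\ge i-C$ is $\lesssim |x_1|^{-a_1}R_2^{-a_2}\min\{1,(2^iR_2)^{-(M-a_2)}\}$. The $i$-sum is then a plain one-parameter sum at scale $R_3$ and gives $R_3^{-a_3}$. This needs $N_1>a_1+a_2+a_3$, $N_2>a_2+a_3$ and $N_3>a_3$, the same thresholds the paper uses.

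The paper sums $i$, $j$, $k$ in that order and has to finish with an explicit manipulation of minima to reach the product form. The corrected coarse-to-fine order produces the product directly. Because the decay always comes from $|x_1|>0$, it also covers $x_2=0$ or $x_3=0$, which the paper's argument (dividing by $x^\zeta$) handles only by a remark.
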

\begin{proof}
	Fix the multi-index $\beta$. Since $m_{ijk}$ satisfies \eqref{eq:def} uniformly on $i,j,k$, we
	have for all multi-indices $\alpha$ that
	\begin{align*}
		|\partial^\alpha m_{ijk}(\xi)|\lesssim 2^{-i |\alpha_3|-j |\alpha_2|-k|\alpha_1|}.
	\end{align*}
	In particular, we have
	\[
		\|\partial^\alpha m_{ijk}\|_{L^1}\lesssim 2^{i (d_3-|\alpha_3|)+j (d_2-|\alpha_2|)+k(d_1-|\alpha_1|)}.
	\]
	Then for all multi-indices $\alpha$ we have
	\begin{align*}
		\| x^\alpha \partial^\beta K_{ijk}\|_{L^\infty}
		 & \lesssim \| \partial^\alpha (\xi^\beta m_{ijk} )\|_{L^1}\le  \sum_{\gamma\leq\alpha}\binom{\alpha}{\gamma}
		\| \partial^\gamma \xi^\beta \cdot \partial^{\alpha-\gamma}m_{ijk}\|_{L^1}                                        \\
		 & \lesssim\sum_{\gamma\leq\alpha\wedge\beta}\|  \xi^{\beta-\gamma} \cdot \partial^{\alpha-\gamma}m_{ijk}\|_{L^1} \\
		 & \lesssim \sum_{\gamma\leq\alpha\wedge\beta}2^{i (|\beta_3|-|\gamma_3|)
			            +j (|\beta_2|-|\gamma_2|)+k(|\beta_1|-|\gamma_1|)}\|   \partial^{\alpha-\gamma}m_{ijk}\|_{L^1}       \\
		 & \lesssim 2^{i (d_3+|\beta_3|-|\alpha_3|)+j (d_2+|\beta_2|-|\alpha_2|)+k(d_1+|\beta_1|-|\alpha_1|)}.
	\end{align*}
	Let $\zeta= (\zeta_1, \zeta_2, \zeta_3)$ be an arbitrary multi-index with $\zeta_i\in \N^{d_i}$ and $|\zeta_i|= |\beta_i| + d_i$.
	We now get that for all multi-indices $\alpha$ there holds that
	\begin{align*}
		\bigg| \frac{x^{\zeta} \partial^\beta K_{ijk}(x)}{x^{\zeta-\alpha}} \bigg|
		\le  \| x^\alpha \partial^\beta K_{ijk}\|_{L^\infty}
		\lesssim 2^{i (d_3+|\beta_3|-|\alpha_3|)+j (d_2+|\beta_2|-|\alpha_2|)+k(d_1+|\beta_1|-|\alpha_1|)}
	\end{align*}
	and so
	\[
		| x^{\zeta} \partial^\beta K_{ijk}(x)| \lesssim
		2^{i (d_3+|\beta_3|-|\alpha_3|)+j (d_2+|\beta_2|-|\alpha_2|)+k(d_1+|\beta_1|-|\alpha_1|)} |x^{\zeta-\alpha}|.
	\]
	Here the left hand side does not depend on $\alpha$, and the estimate is valid with any $\alpha$.
	We exploit this next.
	Write
	$$
		|x^{\zeta-\alpha}| = \prod_{i=1}^3 |x_i^{\zeta_i}||x_i^{\alpha_i}|^{-1}.
	$$
	We can estimate
	$$
		|x_i^{\zeta_i}| \le |x_i|^{|\beta_i| + d_i}.
	$$
	We also use that given $N_i > 0$ we have
	$$
		|x_i|^{N_i} \lesssim \max_{|\alpha_i| = N_i} |x_i^{\alpha_i}|
	$$
	so that for some multi-index $\alpha_i \in \N^{d_i}$ with $|\alpha_i| = N_i$ (which can depend on $x_i$) we have
	$$
		|x_i^{\alpha_i}|^{-1} \lesssim |x_i|^{-N_i}.
	$$
	So using multi-indices with $|\alpha_i| = 0$ or so that the above holds, we get
	\begin{align*}
		| x^{\zeta} \partial^\beta K_{ijk}(x)|
		\lesssim \min\{ & |2^k x_1|^{|\beta_1| + d_1}, |2^k x_1|^{|\beta_1|+d_1-N_1} \}                \\
		                & \times \min\{ |2^j x_2|^{|\beta_2| + d_2}, |2^j x_2|^{|\beta_2|+d_2-N_2} \}  \\
		                & \times \min\{ |2^i x_3|^{|\beta_3| + d_3}, |2^i x_3|^{|\beta_3|+d_3-N_3} \}.
	\end{align*}
	Below we will use this with some $N_i$ big enough (depending on the fixed $\beta$).

	Now, we get that
	\begin{align*}
		|x^{\zeta}| & \sum_{\substack{i,j,k                                                               \\ 2^i\lesssim 2^j\lesssim 2^k}}
		|\partial^\beta K_{ijk}(x)|                                                                                                        \\
		            & \lesssim \sum_{\substack{j,k
				                       \\   2^j\lesssim 2^k}} \min\{|2^j x_3|^{d_3+|\beta_3|}, 1\}
		\min\{|2^j x_2|^{d_2+|\beta_2|}, |2^j x_2|^{d_2+|\beta_2|-N_2}\}                                                                   \\
		            & \hspace{2cm}\times\min\{|2^k x_1|^{d_1+|\beta_1|}, |2^k x_1|^{d_1+|\beta_1|-N_1}\},
	\end{align*}
	where we have simply noted that if we take $N_3>d_3+|\beta_3|$ then
	\[
		\sum_{i\colon 2^i\lesssim 2^j}\min\{|2^i x_3|^{d_3+|\beta_3|}, |2^i x_3|^{d_3+|\beta_3|-N_3}\}
		\le \sum_{i\in \Z}\min\{|2^i x_3|^{d_3+|\beta_3|}, |2^i x_3|^{d_3+|\beta_3|-N_3}\}\lesssim 1
	\]
	and
	\[
		\sum_{i\colon 2^i\lesssim 2^j}\min\{|2^i x_3|^{d_3+|\beta_3|}, |2^i x_3|^{d_3+|\beta_3|-N_3}\}
		\le \sum_{i \colon 2^i\lesssim 2^j}|2^i x_3|^{d_3+|\beta_3|}\lesssim |2^j x_3|^{d_3+|\beta_3|}.
	\]
	Similarly, if we take $N_2>d_2+|\beta_2|+d_3+|\beta_3|$, we have
	\begin{equation}\label{eq:q1}
		\sum_{ j\colon 2^j\lesssim 2^k} \min\{|2^j x_2|^{d_2+|\beta_2|}, |2^j x_2|^{d_2+|\beta_2|-N_2}\} \lesssim \min\{|2^k x_2|^{d_2+|\beta_2|},1 \}
	\end{equation}
	and
	\begin{equation}\label{eq:q2}
		\begin{split}
			 & \sum_{ j\colon 2^j\lesssim 2^k} |2^j x_3|^{d_3+|\beta_3|}\min\{|2^j x_2|^{d_2+|\beta_2|}, |2^j x_2|^{d_2+|\beta_2|-N_2}\} \\
			 & = |x_3|^{d_3+|\beta_3|}|x_2|^{-d_3-|\beta_3|}\sum_{ j\colon 2^j\lesssim 2^k}
			\min\{|2^j x_2|^{d_3+|\beta_3|+d_2+|\beta_2|}, |2^j x_2|^{d_3+|\beta_3|+d_2+|\beta_2|-N_2}\}                                 \\
			 & \lesssim |x_3|^{d_3+|\beta_3|}|x_2|^{-d_3-|\beta_3|} \min\{|2^k x_2|^{d_3+|\beta_3|+d_2+|\beta_2|},1 \}.
		\end{split}
	\end{equation}
	Combining \eqref{eq:q1} and \eqref{eq:q2} we get that
	\begin{align*}
		\sum_{ j\colon 2^j\lesssim 2^k} & \min\{|2^j x_3|^{d_3+|\beta_3|}, 1\}\min\{|2^j x_2|^{d_2+|\beta_2|}, |2^j x_2|^{d_2+|\beta_2|-N_2}\}   \\
		                                & \lesssim \min\Big\{ |2^k x_2|^{d_2+|\beta_2|},1, (2^k|x_3|)^{d_3+|\beta_3|}(2^k|x_2|)^{d_2+|\beta_2|},
		\frac{|x_3|^{d_3+|\beta_3|}}{|x_2|^{d_3+|\beta_3|}}\Big \}.
	\end{align*}
	Finally, we can now take care of the $k$ summation similarly:
	\begin{align*}
		|x^{\zeta}| \sum_{\substack{i,j,k                                                                                \\ 2^i\lesssim 2^j\lesssim 2^k}}
		|\partial^\beta & K_{ijk}(x)|                                                                                                                                          \\
		                & \lesssim \sum_k \min\Big\{ |2^k x_2|^{d_2+|\beta_2|},1,
		                                                      (2^k|x_3|)^{d_3+|\beta_3|}(2^k|x_2|)^{d_2+|\beta_2|}, \frac{|x_3|^{d_3+|\beta_3|}}{|x_2|^{d_3+|\beta_3|}}\Big \} \\
		                & \hspace{2cm}\times \min\{|2^k x_1|^{d_1+|\beta_1|}, |2^k x_1|^{d_1+|\beta_1|-N_1}\}                                                                  \\
		                & \lesssim \min\Big\{\frac{|x_2|^{d_2+|\beta_2|}}{|x_1|^{d_2+|\beta_2|}}, 1,
		\frac{|x_3|^{d_3+|\beta_3|}}{|x_1|^{d_3+|\beta_3|}}\frac{|x_2|^{d_2+|\beta_2|}}{|x_1|^{d_2+|\beta_2|}},
		\frac{|x_3|^{d_3+|\beta_3|}}{|x_2|^{d_3+|\beta_3|}}\Big\}.
	\end{align*}
	The above required us to choose $N_1>d_1+|\beta_1|+d_2+|\beta_2|+d_3+|\beta_3|$.

	Now, we observe that
	\[
		\min\Big\{1,  \frac{|x_3|^{d_3+|\beta_3|}}{|x_2|^{d_3+|\beta_3|}}\Big\}\sim \frac{|x_3|^{d_3+|\beta_3|}}{(|x_2|+|x_3|)^{d_3+|\beta_3|}}
	\]
	and
	\[
		\min\Big\{\frac{|x_2|^{d_2+|\beta_2|}}{|x_1|^{d_2+|\beta_2|}},   \frac{|x_3|^{d_3+|\beta_3|}}{|x_1|^{d_3+|\beta_3|}}\frac{|x_2|^{d_2+|\beta_2|}}{|x_1|^{d_2+|\beta_2|}}\Big\}\sim \frac{|x_2|^{d_2+|\beta_2|}}{|x_1|^{d_2+|\beta_2|}} \frac{|x_3|^{d_3+|\beta_3|}}{(|x_1|+|x_3|)^{d_3+|\beta_3|}},
	\]
	so that
	\begin{align*}
		 & \min\Big\{\frac{|x_2|^{d_2+|\beta_2|}}{|x_1|^{d_2+|\beta_2|}}, 1,  \frac{|x_3|^{d_3+|\beta_3|}}{|x_1|^{d_3+|\beta_3|}}\frac{|x_2|^{d_2+|\beta_2|}}{|x_1|^{d_2+|\beta_2|}}, \frac{|x_3|^{d_3+|\beta_3|}}{|x_2|^{d_3+|\beta_3|}}\Big\} \\
		 & \sim \min\Big\{\frac{|x_3|^{d_3+|\beta_3|}}{(|x_2|+|x_3|)^{d_3+|\beta_3|}},
		\frac{|x_2|^{d_2+|\beta_2|}}{|x_1|^{d_2+|\beta_2|}} \frac{|x_3|^{d_3+|\beta_3|}}{(|x_1|+|x_3|)^{d_3+|\beta_3|}}\Big\}                                                                                                                   \\
		 & \sim  |x_3|^{d_3+|\beta_3|}|x_2|^{d_2+|\beta_2|}\min\Big\{\frac{1}{|x_2|^{d_2+|\beta_2|}(|x_2|+|x_3|)^{d_3+|\beta_3|}},
		\frac{1}{|x_1|^{d_2+|\beta_2|}(|x_1|+|x_3|)^{d_3+|\beta_3|}}\Big\}                                                                                                                                                                      \\
		 & \sim  |x_3|^{d_3+|\beta_3|}|x_2|^{d_2+|\beta_2|}\frac 1{(|x_1|+|x_2|)^{d_2+|\beta_2|} (|x_1|+|x_2|+|x_3|)^{d_3+|\beta_3|}}.
	\end{align*}

	We have proved that
	$$
		|x^{\zeta}| \sum_{\substack{i,j,k \\ 2^i\lesssim 2^j\lesssim 2^k}}
		|\partial^\beta K_{ijk}(x)| \lesssim
		|x_3|^{d_3+|\beta_3|}|x_2|^{d_2+|\beta_2|}\frac 1{(|x_1|+|x_2|)^{d_2+|\beta_2|} (|x_1|+|x_2|+|x_3|)^{d_3+|\beta_3|}}
	$$
	for all multi-indices $\zeta$ with $|\zeta_i| = |\beta_i| + d_i$.
	Now, with fixed $x$ choose $\zeta_1, \zeta_2, \zeta_3$ so that
	$$
		|x^{\zeta}| = |x_1^{\zeta_1}| |x_2^{\zeta_2}| |x_3^{\zeta_3}| \sim \prod_{i=1}^3 |x_i|^{|\beta_i| + d_i}.
	$$
	We have finally proved that
	\begin{equation}\label{eq:Kbound}
		\sum_{\substack{i,j,k \\ 2^i\lesssim 2^j\lesssim 2^k}}
		|\partial^\beta K_{ijk}(x)| \lesssim
		\frac 1{ |x_1|^{d_1+|\beta_1|}(|x_1|+|x_2|)^{d_2+|\beta_2|}(|x_1|+|x_2|+|x_3|)^{d_3+|\beta_3|}}.
	\end{equation}
	Actually, we proved this with the implicit assumption $x_i \ne 0$ for all $i = 1,2,3$. If $x_2 = 0$
	or $x_3 = 0$ we would need to modify the above argument a little bit.

	We have that $\sum_{i, j, k} K_{ijk}$ converges to $K$ in the sense of distributions. By considering test
	functions that are supported away from $x_1 = 0$ and using the estimate \eqref{eq:Kbound} together
	with the dominated convergence theorem we can see that the tempered distribution $K$ agrees
	with the function
	$$
		x \mapsto \sum_{\substack{i,j,k \\ 2^i\lesssim 2^j\lesssim 2^k}} K_{ijk}(x)
	$$
	that is smooth where $x_1 \ne 0$.
\end{proof}
These derivative estimates imply various estimates for the so-called full kernel $K$.
The abstract theory will not even require the full strength of them. We talk more about these consequences later.

Next, we need to get some idea of how to eventually formulate the so-called partial kernel assumptions
in our abstract framework. This has not been considered previously as far as we know.
So we will derive such partial kernel estimates next for these multipliers.
To make this technically less demanding, we prefer not to anymore work with the full multiplier $m$ like
we did above. This is because
this requires so much additional care with convergence arguments.
Instead, we will take a completely arbitrary truncation
\begin{equation*}
	m_L := \sum_{2^{L^1} \lesssim 2^i \lesssim 2^j \lesssim 2^k \lesssim 2^{L^2}}m_{i,j,k}, \qquad L = (L^1, L^2) \in \Z^2.
\end{equation*}
Notice that as each $m_{ijk}$ satisfies \eqref{eq:def} uniformly in $i,j,k$, then so does $m_L$.
This is because in some neighborhood of an arbitrary point only boundedly many terms in the sum can be non-zero. Also,
$m_L$ is smooth and supported in $|\xi_1| + |\xi_2| + |\xi_3| \lesssim 2^{L^2}$ so that
$K_L := \check m_L$ is Schwartz. We will derive partial kernel estimates for the multipliers uniformly in the truncation parameter $L$.
But we do not want to unnecessarily drag the $L$ along on every line -- so we exploit notation and denote these arbirary truncations with just $m$ and $K$.

First, we can write
$$
	T_m f(x) = \int_{\R^d} m(\xi) \wh f(\xi) e^{2\pi ix \cdot \xi} \ud \xi.
$$
Consider now $f_1, g_1$ to be nice functions in $\R^{d_1}$ and $f_{23}, g_{23}$ be nice functions in $\R^{d_{23}}$.
Then we can write
\begin{align*}
	\langle T_m(f_1 \otimes f_{23}), g_1 \otimes g_{23} \rangle
	 & = \int_{\R^{d_{23}}} \Big( \int_{\R^{d_{23}}} m_{f_1, g_1}(\xi_{23}) \wh f_{23}(\xi_{23})
	e^{2\pi i x_{23} \cdot \xi_{23}} \ud \xi_{23} \Big) g_{23}(x_{23}) \ud x_{23}                \\
	 & = \langle T_{m_{f_1, g_1}}f_{23}, g_{23}\rangle,
\end{align*}
where
\begin{align*}
	m_{f_1, g_1}(\xi_{23}) & := \int_{\R^{d_1}} \Big( \int_{\R^{d_1}} m(\xi) \wh f_1(\xi_1)
	e^{2\pi i x_1 \cdot \xi_1} \ud \xi_1\Big) g_1(x_1) \ud x_1
	= \langle T_{m(\cdot,\, \xi_{23})} f_1, g_1 \rangle.
\end{align*}
Notice that
$$
	\partial^{\alpha_2} \partial^{\alpha_3} m_{f_1, g_1}(\xi_{23})
	= \langle T_{\partial^{\alpha_2} \partial^{\alpha_3} m(\cdot,\, \xi_{23})} f_1, g_1 \rangle.
$$
On the other hand, here the symbol $\xi_1 \mapsto \partial^{\alpha_2} \partial^{\alpha_3} m(\xi_1,\, \xi_{23})$
satisfies
$$
	|\partial^{\alpha_1}\partial^{\alpha_2} \partial^{\alpha_3} m(\xi_1, \xi_{23})|
	\lesssim (|\xi_2| + |\xi_3|)^{-|\alpha_2|} |\xi_3|^{-|\alpha_3|} \cdot |\xi_1|^{-|\alpha_1|}
$$
so that by one-parameter multiplier theory
$$
	|\partial^{\alpha_2} \partial^{\alpha_3} m_{f_1, g_1}(\xi_{23})| \lesssim
	\|f_1\|_{L^p} \|g_1\|_{L^{p'}}(|\xi_2| + |\xi_3|)^{-|\alpha_2|} |\xi_3|^{-|\alpha_3|}, \qquad 1 < p < \infty.
$$
This means that $m_{f_1, g_1}$ is a bi-parameter flag multiplier in $\R^{d_{23}}$ and from the obvious bi-parameter
analogue of Lemma \ref{lem:mfullk} it follows that
$$
	T_{m_{f_1, g_1}}f_{23} = K_{f_1, g_1} * f_{23},
$$
where the kernel $K_{f_1, g_1}$ satisfies
$$
	|\partial^{\beta_2} \partial^{\beta_3} K_{f_1, g_1}(x_{23})|
	\lesssim \frac{\|f_1\|_{L^p}\|g_1\|_{L^{p'}}}{|x_2|^{d_2+|\beta_2|}(|x_2|+|x_3|)^{d_3+|\beta_3|}}, \qquad 1 < p < \infty.
$$

We next look at some other kind of partial kernel. For instance, suppose now that $f_{12}, g_{12}$ are
two functions in $\R^{d_{12}}$ and $f_3, g_3$ are two functions in $\R^{d_3}$. Then we again
write
\begin{align*}
	\langle T_m(f_{12} \otimes f_{3}), g_{12} \otimes g_{3} \rangle
	 & = \int_{\R^{d_{12}}} \Big( \int_{\R^{d_{12}}} m_{f_3, g_3}(\xi_{12}) \wh f_{12}(\xi_{12})
	e^{2\pi i x_{12} \cdot \xi_{12}} \ud \xi_{12} \Big) g_{12}(x_{12}) \ud x_{12}                \\
	 & = \langle T_{m_{f_3, g_3}}f_{12}, g_{12}\rangle,
\end{align*}
where
\begin{align*}
	m_{f_3, g_3}(\xi_{12}) & := \int_{\R^{d_3}} \Big( \int_{\R^{d_3}} m(\xi) \wh f_3(\xi_3)
	e^{2\pi i x_3 \cdot \xi_3} \ud \xi_3\Big) g_3(x_3) \ud x_3
	= \langle T_{m(\xi_{12},\, \cdot)} f_3, g_3 \rangle.
\end{align*}
Arguing like in the $m_{f_1, g_1}$ case we see that
$m_{f_3, g_3}$ is a bi-parameter flag multiplier in $\R^{d_{12}}$, and so
$$
	T_{m_{f_3, g_3}} f_{12} = K_{f_3, g_3} * f_{12},
$$
where
$$
	|\partial^{\beta_1} \partial^{\beta_2} K_{f_3, g_3}(x_{12})
	\lesssim \frac{\|f_3\|_{L^p}\|g_3\|_{L^{p'}}}{|x_1|^{d_1+|\beta_1|}(|x_1|+|x_2|)^{d_2+|\beta_2|}}, \qquad 1 < p < \infty.
$$
The key difference to the $m_{f_1, g_1}$ case is that we are not done with this yet. We now specialize to the case, where
a cube $I^3 \subset \R^{d_3}$ has been fixed and $|f_3|, |g_3| \le 1_{I^3}$. We notice that
if $|x_1| + |x_2| < \ell(I^3)$, then
$$
	\frac{|I^3|}{(|x_1| + |x_2| + \ell(I^3))^{d_3}} \sim \frac{|I^3|}{\ell(I^3)^{d_3}} = 1
$$
so that
\begin{align*}
	|\partial^{\beta_1} \partial^{\beta_2} K_{f_3, g_3}(x_{12})
	 & \lesssim \frac{\|f_3\|_{L^p}\|g_3\|_{L^{p'}}}{|x_1|^{d_1+|\beta_1|}(|x_1|+|x_2|)^{d_2+|\beta_2|}}             \\
	 & \lesssim \frac{|I^3|^2}{|x_1|^{d_1+|\beta_1|}(|x_1|+|x_2|)^{d_2+|\beta_2|}(|x_1| + |x_2| + \ell(I^3))^{d_3}}.
\end{align*}
So we have proved the above estimate whenever $|x_1| + |x_2| < \ell(I^3)$. We now want to see that
it always holds.

To this end, notice that
\begin{equation}\label{eq:parkereq}
	K_{f_3, g_3}(x_{12}) = \langle K(x_{12}, \cdot) * f_3, g_3\rangle.
\end{equation}
Indeed, one can, for instance, verify that given $x_3$ we have
\begin{align*}
	\calF\Big( x_{12} \mapsto & \int_{\R^{d_3}} K(x_{12}, x_3-y_3)f_3(y_3) \ud y_3 \Big)(\xi_{12})                                             \\
	                          & = \int_{\R^{d_3}} m(\xi) \wh f_3(\xi_3) e^{2\pi i x_3 \cdot \xi_3} \ud \xi_3 = T_{m(\xi_{12}, \cdot)} f_3(x_3)
\end{align*}
so that the Fourier transform of the LHS and RHS of \eqref{eq:parkereq} at $\xi_{12}$
both equal $m_{f_3, g_3}(\xi_{12}) = \langle T_{m(\xi_{12}, \cdot)} f_3, g_3\rangle$.
Now, we can estimate
$$
	\partial^{\beta_1} \partial^{\beta_2} K_{f_3, g_3}(x_{12}) = \int_{I^3} \int_{I^3}
	\partial^{\beta_1} \partial^{\beta_2} K(x_{12}, x_3-y_3)f_3(y_3)g_3(x_3) \ud y_3 \ud x_3
$$
using the already proved full kernel estimates -- we do this in the range of interest $\ell(I^3) \le |x_1| + |x_2|$.
So we have
\begin{align*}
	|\partial^{\beta_1} & \partial^{\beta_2} K_{f_3, g_3}(x_{12})|                                                                \\
	                    & \lesssim \frac{1}{|x_1|^{d_1+|\beta_1|}(|x_1|+|x_2|)^{d_2+|\beta_2|}}
	\iint_{I^3\times I^3}\frac{\ud y_3 \ud x_3}{(|x_1|+|x_2|+|x_3-y_3|)^{d_3}}                                                    \\
	                    & \sim \frac{1}{|x_1|^{d_1+|\beta_1|}(|x_1|+|x_2|)^{d_2+|\beta_2|}}
	\iint_{I^3\times I^3}\frac{\ud y_3 \ud x_3}{(|x_1|+|x_2|)^{d_3}}                                                              \\
	                    & \sim \frac {|I^3|^2}{|x_1|^{d_1+|\beta_1|}(|x_1|+|x_2|)^{d_2+|\beta_2|} (|x_1|+|x_2|+\ell(I^3))^{d_3}}.
\end{align*}
Overall, we have proved that
$$
	|\partial^{\beta_1} \partial^{\beta_2} K_{f_3, g_3}(x_{12})|
	\lesssim \frac {|I^3|^2}{|x_1|^{d_1+|\beta_1|}(|x_1|+|x_2|)^{d_2+|\beta_2|} (|x_1|+|x_2|+\ell(I^3))^{d_3}}
$$
whenever $I^3 \subset \R^{d_3}$ is a cube and $|f_3|, |g_3| \le 1_{I_3}$.

Now, we still have four partial kernels to cover. For instance, in our current situation
we can also write in the beginning that
\begin{align*}
	\langle T_m(f_{12} \otimes f_{3}), g_{12} \otimes g_{3} \rangle
	 & = \int_{\R^{d_{3}}} \Big( \int_{\R^{d_{3}}} m_{f_{12}, g_{12}}(\xi_{3}) \wh f_{3}(\xi_{3})
	e^{2\pi i x_{3} \cdot \xi_{3}} \ud \xi_{3} \Big) g_{3}(x_{3}) \ud x_{3}                       \\
	 & = \langle T_{m_{f_{12}, g_{12}}}f_{3}, g_{3}\rangle,
\end{align*}
where
\begin{align*}
	m_{f_{12}, g_{12}}(\xi_{3}) = \langle T_{m(\cdot,\, \xi_3)} f_{12}, g_{12} \rangle.
\end{align*}
This time $m_{f_{12}, g_{12}}$ is seen to be a one-parameter multiplier so that
$$
	T_{m_{f_{12}, g_{12}}} f_3 = K_{f_{12}, g_{12}} * f_3,
$$
where
$$
	|\partial^{\beta_3}K_{f_{12}, g_{12}}(x_3)| \lesssim \frac{\|f_{12}\|_{L^p} \|g_{12}\|_{L^{p'}}}{|x_3|^{d_3 + |\beta_3|}},
	\qquad 1 < p < \infty.
$$
We are directly happy with this.

In our initial setup, where $f_1, g_1$ are functions in $\R^{d_1}$ and $f_{23}, g_{23}$ are functions in $\R^{d_{23}}$,
we can also write
\begin{align*}
	\langle T_m(f_1 \otimes f_{23}), g_1 \otimes g_{23} \rangle
	 & = \langle T_{m_{f_{23}, g_{23}}}f_{1}, g_{1}\rangle, \qquad
	m_{f_{23}, g_{23}}(\xi_{1}) = \langle T_{m(\xi_1,\, \cdot)} f_{23}, g_{23} \rangle.
\end{align*}
Now, this $m_{f_{23}, g_{23}}$ can be seen as a one-parameter multiplier and
so
$$
	T_{m_{f_{23}, g_{23}}} f_1 = K_{f_{23}, g_{23}} * f_1, \qquad |\partial^{\beta_1} K_{f_{23}, g_{23}}(x_1)| \lesssim
	\frac{\|f_{23}\|_{L^p} \|g_{23}\|_{L^{p'}}}{|x_1|^{d_1 + |\beta_1|}}.
$$
Similarly to the $K_{f_3, g_3}$ case, we can add some flag structure to this estimate.
Fix $I^{23}$ with $\ell(I^2) \le \ell(I^3)$ and assume in what follows
$|f_{23}|, |g_{23}| \le 1_{I^{23}}$. Notice that if $|x_1| \le \ell(I^2)$ then
$$
	\frac{|I^{23}|}{(|x_1| + \ell(I^2))^{d_2}(|x_1| + \ell(I^2) + \ell(I^3))^{d_3}} \sim \frac{|I^{23}|}{\ell(I^2)^{d_2}\ell(I^3)^{d_3}} = 1
$$
so that
$$
	|\partial^{\beta_1} K_{f_{23}, g_{23}}(x_1)| \lesssim
	\frac{|I^{23}|}{|x_1|^{d_1 + |\beta_1|}} \sim \frac{|I^{23}|^2}{|x_1|^{d_1 + |\beta_1|}(|x_1| + \ell(I^2))^{d_2}
		(|x_1| + \ell(I^2) + \ell(I^3))^{d_3}}.
$$
Next, we show that this is true also for $|x^1| \ge \ell(I^2)$. To this end,
define $f_{23}^{y_2}(y_3) = f_{23}(y_2,y_3)$ and $g_{23}^{x_2}(x_3) = g_{23}(x_2, x_3)$ and
estimate using $|x^1| \ge \ell(I^2)$ and the already proved estimates for $K_{f_3, g_3}$ that
\begin{align*}
	|\partial^{\beta_1} K_{f_{23}, g_{23}} & (x_1)| = |\langle \partial^{\beta_1}K(x_1, \cdot) * f_{23}, g_{23}\rangle|                                             \\
	                                       & = \Big| \iint_{I^{23} \times I^{23}}
	\partial^{\beta_1} K(x_1, x_2-y_2, x_3-y_3)f_{23}(y_{23})g_{23}(x_{23}) \ud y_{23} \ud x_{23}\Big|                                                              \\
	                                       & \le \iint_{I^2 \times I^2} |\langle \partial^{\beta_1} K(x_1, x_2-y_2,\, \cdot) * f_{23}^{y_2}, g_{23}^{x_2} \rangle|
	\ud y_2 \ud x_2                                                                                                                                                 \\
	                                       & = \iint_{I^2 \times I^2} |\partial^{\beta_1} K_{f_{23}^{y_2}, g_{23}^{x_2}}(x_1, x_2-y_2)| \ud y_2 \ud x_2             \\
	                                       & \lesssim
	\iint_{I^2\times I^2} \frac {|I^3|^2}{|x_1|^{d_1+|\beta_1|}
		                      (|x_1|+|x_2-y_2|)^{d_2} (|x_1|+|x_2-y_2|+\ell(I^3))^{d_3}} \ud y_2 \ud x_2                                                               \\
	                                       & \sim \iint_{I^2 \times I^2} \frac{|I^3|^2}{|x_1|^{d_1+|\beta_1|}|x_1|^{d_2} (|x_1| + \ell(I^3))^{d_3}} \ud y_2 \ud x_2 \\
	                                       & = \frac{|I^{23}|^2}{|x_1|^{d_1+|\beta_1|}|x_1|^{d_2} (|x_1| + \ell(I^3))^{d_3}}                                        \\
	                                       & \sim \frac{|I^{23}|^2}{|x_1|^{d_1 + |\beta_1|}(|x_1| + \ell(I^2))^{d_2}
		                                              (|x_1| + \ell(I^2) + \ell(I^3))^{d_3}}.
\end{align*}
We have shown that
$$
	|\partial^{\beta_1} K_{f_{23}, g_{23}}(x_1)| \lesssim
	\frac{|I^{23}|^2}{|x_1|^{d_1 + |\beta_1|}(|x_1| + \ell(I^2))^{d_2}
		(|x_1| + \ell(I^2) + \ell(I^3))^{d_3}}
$$
whenever $\ell(I^2) \le \ell(I^3)$ and $|f_{23}|, |g_{23}| \le 1_{I^{23}}$.

We now move to the final two cases. Assume that $f_{13}, g_{13}$ are functions in $\R^{d_{13}}$
and $f_2, g_2$ are functions in $\R^{d_2}$. We can write
$$
	\langle T_m(f_{13} \otimes f_{2}), g_{13} \otimes g_{2} \rangle
	= \langle T_{m_{f_{2}, g_{2}}}f_{13}, g_{13}\rangle, \qquad
	m_{f_{2}, g_{2}}(\xi_{13}) = \langle T_{m(\xi_1,\, \cdot, \xi_3)} f_{2}, g_{2} \rangle.
$$
Now, $m_{f_2, g_2}$ is a bi-parameter flag multiplier in $\R^{d_{13}}$ and so
$$
	T_{m_{f_2, g_2}} f_{13} = K_{f_2, g_2} * f_{13}, \qquad
	|\partial^{\beta_1} \partial^{\beta_3} K_{f_2, g_2}(x_{13})| \lesssim \frac{\|f_2\|_{L^p}\|g_2\|_{L^{p'}}}
	{|x_1|^{d_1 + |\beta_1|} (|x_1| + |x_3|)^{d_3 + |\beta_3|} }.
$$
Let now $I^2 \subset \R^{d_2}$ be a cube and $|f_2|, |g_2| \le 1_{I^2}$.
If $|x_1| \le \ell(I^2)$ the above estimate directly implies
$$
	|\partial^{\beta_1} \partial^{\beta_3} K_{f_2, g_2}(x_{13})| \lesssim \frac{|I^2|^2}
	{|x_1|^{d_1 + |\beta_1|} (|x_1| + \ell(I^2))^{d_2} (|x_1| + |x_3|)^{d_3 + |\beta_3|} }.
$$
If $|x_1| \ge \ell(I^2)$ the same estimate follows from the full kernel estimates:
\begin{align*}
	|\partial^{\beta_1} \partial^{\beta_3} K_{f_2, g_2}(x_{13})| & = |\langle \partial^{\beta_1} \partial^{\beta_3} K(x_1, \cdot\,, x_3) * f_2, g_2\rangle|     \\
	                                                             & \lesssim
	\frac{1}{|x_1|^{d_1+|\beta_1|}(|x_1|+|x_3|)^{d_3+|\beta_3|}}
	\iint_{I^2\times I^2}\frac{\ud y_2 \ud x_2}{(|x_1|+|x_2-y_2|)^{d_2}}                                                                                        \\
	                                                             & \sim \frac{|I^2|^2}
	                                                                    {|x_1|^{d_1 + |\beta_1|} (|x_1| + \ell(I^2))^{d_2} (|x_1| + |x_3|)^{d_3 + |\beta_3|} }.
\end{align*}
Finally, we write
$$
	\langle T_m(f_{13} \otimes f_{2}), g_{13} \otimes g_{2} \rangle
	= \langle T_{m_{f_{13}, g_{13}}}f_{2}, g_{2}\rangle, \qquad
	m_{f_{13}, g_{13}}(\xi_{2}) = \langle T_{m(\cdot,\,\xi_2, \cdot\,)} f_{13}, g_{13} \rangle.
$$
This time $m_{f_{13}, g_{13}}$ is a one-parameter multiplier in $\R^{d_2}$ and so
$$
	T_{m_{f_{13}, g_{13}}} f_2 = K_{f_{13}, g_{13}} * f_2, \qquad |\partial^{\beta_2} K_{f_{13}, g_{13}}(x_2)|
	\lesssim \frac{\|f_{13}\|_{L^p}\|g_{13}\|_{L^{p'}}}
	{|x_2|^{d_2 + |\beta_2|}}.
$$
Now, suppose $I^1, I^3$ are cubes with $\ell(I^1) \le \ell(I^3)$ and $|f_{13}|, |g_{13}| \le 1_{I^{13}}$.
If $|x_2| \le \ell(I^3)$ the above estimate directly gives
$$
	|\partial^{\beta_2} K_{f_{13}, g_{13}}(x_2)| \lesssim
	\frac{|I^1||I^3|^2}
	{|x_2|^{d_2 + |\beta_2|}(|x_2| + \ell(I^3))^{d_3}}.
$$
For $|x_2| \ge \ell(I^3)$ we argue using the known estimate in the $K_{f_1, g_1}$ case:
\begin{align*}
	|\partial^{\beta_2} K_{f_{13}, g_{13}} & (x_2)| = |\langle \partial^{\beta_2}K(\cdot,\, x_2, \cdot\,) * f_{13}, g_{13}\rangle|                                 \\
	                                       & = \Big| \iint_{I^{13} \times I^{13}}
	\partial^{\beta_2} K(x_1-y_1, x_2, x_3-y_3)f_{13}(y_{13})g_{13}(x_{13}) \ud y_{13} \ud x_{13}\Big|                                                             \\
	                                       & \le \iint_{I^3 \times I^3} |\langle \partial^{\beta_2} K(\cdot,\, x_2, x_3-y_3) * f_{13}^{y_3}, g_{13}^{x_3} \rangle|
	\ud y_3 \ud x_3                                                                                                                                                \\
	                                       & = \iint_{I^3 \times I^3} |\partial^{\beta_2} K_{f_{13}^{y_3}, g_{13}^{x_3}}(x_2, x_3-y_3)| \ud y_3 \ud x_3            \\
	                                       & \lesssim
	\iint_{I^3\times I^3} \frac {|I^1|}{|x_2|^{d_2+|\beta_2|}
		                      (|x_2|+|x_3-y_3|)^{d_3}} \ud y_3 \ud x_3                                                                                                \\
	                                       & \sim \frac{|I^1||I^3|^2}{|x_2|^{d_2 + |\beta_2|}(|x_2| + \ell(I^3))^{d_3}}.
\end{align*}

We recap our estimates in the form of a lemma:
\begin{lem}\label{lem:par-2}
	Suppose that $m$ satisfies \eqref{eq:def}. Then for flag rectangles $I = I^1 \times I^2 \times I^3 \in \calR_F$ we have, using the notation
	from above for the partial kernels, that
	\begin{equation*}
		|\partial^{\beta_1}\partial^{\beta_2} K_{f_3, g_3}(x_{12})|
		\lesssim \frac {|I^3|^2}{|x_1|^{d_1+|\beta_1|}(|x_1|+|x_2|)^{d_2+|\beta_2|} (|x_1|+|x_2|+\ell(I^3))^{d_3}}
	\end{equation*}
	whenever $|f_3|, |g_3| \le 1_{I^3}$,
	\begin{equation*} 
		|\partial^{\beta_1}\partial^{\beta_3} K_{f_2, g_2}(x_{13})| \lesssim
		\frac {|I^2|^2}{|x_1|^{d_1+|\beta_1|}(|x_1|+\ell(I^2))^{d_2} (|x_1|+|x_3|)^{d_3+|\beta_3|}}
	\end{equation*}
	whenever $|f_2|, |g_2| \le 1_{I^2}$,
	\begin{equation*}
		|\partial^{\beta_2}\partial^{\beta_3} K_{f_1, g_1}(x_{23})| \lesssim
		\frac {|I^1|}{ |x_2|^{d_2+|\beta_2|} ( |x_2|+|x_3|)^{d_3+|\beta_3|}}
	\end{equation*}
	whenever $|f_1|, |g_1| \le 1_{I^1}$,
	\begin{equation*}
		|\partial^{\beta_1} K_{f_{23}, g_{23}}(x_1)| \lesssim
		\frac {|I^{23}|^2}{ |x_1|^{d_1+|\beta_1|}(|x_1|+\ell(I^2))^{d_2}(|x_1|+\ell(I^2)+\ell(I^3))^{d_3} }
	\end{equation*}
	whenever $|f_{23}|, |g_{23}| \le 1_{I^{23}}$,
	\begin{equation*}
		|\partial^{\beta_2} K_{f_{13}, g_{13}}(x_2)| \lesssim
		\frac {|I^1||I^{3}|^2}{ |x_2|^{d_2+|\beta_2|}(|x_2|+\ell(I^3))^{d_3} }
	\end{equation*}
	whenever $|f_{13}|, |g_{13}| \le 1_{I^{13}}$, and
	\begin{equation*}
		|\partial^{\beta_3} K_{f_{12}, g_{12}}(x_3)| \lesssim
		\frac {|I^{12}|}{ |x_3|^{d_3+|\beta_3|} }
	\end{equation*}
	whenever $|f_{12}|, |g_{12}| \le 1_{I^{12}}$.
\end{lem}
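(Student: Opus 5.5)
The lemma only collects the six partial kernel estimates derived in the discussion preceding it, so the plan is to assemble those computations, handling each partial kernel by the same two-regime scheme. In each case the partial kernel arises by freezing the test functions in one group of variables. The resulting partial multiplier $m_{f,g}$ is a (flag or one-parameter) multiplier in the remaining variables, with constant $\|f\|_{L^p}\|g\|_{L^{p'}}$. This uses that for fixed frozen frequencies the symbol obeys the one-parameter Mikhlin bounds in the frozen variables, so one-parameter multiplier theory applies. Lemma \ref{lem:mfullk}, or its bi-parameter/one-parameter analogue, then gives the ``crude'' kernel bound. All of this is done for the arbitrary truncations $m_L$, uniformly in $L$, so that all kernels are Schwartz and the identity \eqref{eq:parkereq}, $K_{f,g}(x) = \langle K(x,\cdot)*f,g\rangle$, is justified by comparing Fourier transforms.

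Step one is to record the crude bounds with $|f|,|g|\le 1_{I^\bullet}$, so that $\|f\|_{L^p}\|g\|_{L^{p'}}\le |I^\bullet|$. For $K_{f_1,g_1}$ and $K_{f_{12},g_{12}}$ this is already the claim. The factor $|I^1|$, respectively $|I^{12}|$, carries no extra decay, so no further work is needed there.

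Step two treats the remaining four kernels ($K_{f_3,g_3}$, $K_{f_2,g_2}$, $K_{f_{23},g_{23}}$, $K_{f_{13},g_{13}}$), splitting according to whether the free variable is small or large relative to the frozen cube.
\begin{itemize}
\item \emph{Small regime.} Here $|x_1|+|x_2|<\ell(I^3)$, or $|x_1|\le \ell(I^2)$, or $|x_2|\le \ell(I^3)$. The crude bound suffices, because the extra factor, e.g. $|I^3|/(|x_1|+|x_2|+\ell(I^3))^{d_3}$, is comparable to $1$.
\item \emph{Large regime.} Write the partial kernel as an integral of a kernel against $f\otimes g$ over $I\times I$ and insert a size estimate. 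For $K_{f_3,g_3}$ and $K_{f_2,g_2}$, use the full kernel estimates of Lemma \ref{lem:mfullk}: since the free variable dominates $\ell(I^\bullet)$, the factor $(|x_1|+|x_2|+|x_3-y_3|)^{-d_3}$ is comparable to $(|x_1|+|x_2|+\ell(I^3))^{-d_3}$, and integrating over $I^3\times I^3$ produces $|I^3|^2$.
\item \emph{Mixed cases.} For $K_{f_{23},g_{23}}$ and $K_{f_{13},g_{13}}$, slice in one variable ($y_2,x_2$, respectively $y_3,x_3$). Bound the inner pairing by the partial kernel estimates already proved for $K_{f_3,g_3}$, respectively $K_{f_1,g_1}$, then integrate over the remaining cube using $|x_1|\ge\ell(I^2)$, respectively $|x_2|\ge\ell(I^3)$. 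The order of proof therefore matters: $K_{f_3,g_3}$ before $K_{f_{23},g_{23}}$, and $K_{f_1,g_1}$ before $K_{f_{13},g_{13}}$.
\end{itemize}

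The main obstacle is rigor rather than the estimates. The points to check are that \eqref{eq:parkereq} and its analogues hold, that differentiation in the free variables passes inside the pairings, and that the slicing argument is legitimate. Working with the Schwartz truncations $K_L$ makes all of these exchanges immediate, and every constant is independent of $L$ because each $m_L$ satisfies \eqref{eq:def} uniformly. A secondary point is the flag condition: we need $\ell(I^2)\le\ell(I^3)$, and $\ell(I^1)\le\ell(I^3)$, to simplify the products of denominators to the stated forms in the mixed cases. For example, $(|x_1|+\ell(I^2)+\ell(I^3))^{d_3}\sim(|x_1|+\ell(I^3))^{d_3}$, and the small-regime comparability for $K_{f_{23},g_{23}}$ uses $\ell(I^2)\le\ell(I^3)$.
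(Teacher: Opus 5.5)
Your proposal is correct and follows the paper's argument essentially step for step. The common steps are: crude bounds from viewing the partial multiplier as a lower-parameter (flag) multiplier, the small/large split relative to the frozen cube, full kernel estimates in the large regime for $K_{f_3,g_3}$ and $K_{f_2,g_2}$, and slicing through the already proved $K_{f_3,g_3}$ and $K_{f_1,g_1}$ bounds for $K_{f_{23},g_{23}}$ and $K_{f_{13},g_{13}}$, all carried out on the Schwartz truncations $m_L$ uniformly in $L$. One harmless inaccuracy: $\ell(I^1)\le\ell(I^3)$ is never used in the $K_{f_{13},g_{13}}$ case; the only flag condition that matters is $\ell(I^2)\le\ell(I^3)$, in the small regime for $K_{f_{23},g_{23}}$.
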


\subsection{Cancellative tri-parameter flag kernels}
Given the above kernel estimates in the concrete situation of tri-parameter flag multipliers, we are now
ready to formulate our abstract tri-parameter flag kernel estimate assumptions.
In the tri-parameter setting we are aiming for ``cancellative'' theory -- this means that
we will not build in any $\BMO$ philosophies to the partial kernels. However, we will do that in the
bi-parameter setting, where we will prove a completely general $T1$.
\subsubsection{Full kernel estimates}
The full kernel
is a function $K \colon (\R^{d}\times \R^{d})\setminus \Delta \to \C$, where
\begin{align*}
	\Delta=\big\{(x, y)\in\R^{d} & \times \R^{d}: x_1 = y_1 \big\}.
\end{align*}
If $\supp_{\R^{d_m}}f\cap  \supp_{\R^{d_m}}g=\emptyset$ for all $m=1,2,3$, then
the associated flag singular integral operator (SIO) $T$
must (among other things) have the following kernel representation
\[
	\langle Tf, g\rangle =\iint K(x,y) f(y) g(x)\ud y\ud x.
\]
\begin{rem}
	Flag SIOs will be linear operators (a priori defined in some appropriate manner, e.g., so that
	their bilinear form is defined on indicators of rectangles) that have all of the kernel estimates
	defined in section. In the tri-parameter setting our SIOs will always be ``cancellative''
	and in the bi-parameter setting general -- in the SIO level the difference is somewhat small, only
	on how generally certain partial kernel estimates are assumed. Such SIOs will be called
	flag CZOs (cancellative or general) if they satisfy additional cancellation or flag adapted $T1$ style assumptions.
	On the CZO level the difference between ``cancellative'' and ``general'' is much bigger
	than on the SIO level, but, again, there is already a difference in the SIO level.
\end{rem}

Define
$$
	\size_F(x,y) := \frac 1{ |x_1-y_1|^{d_1}(|x_1-y_1|+|x_2-y_2|)^{d_2}(|x_1-y_1|+|x_2-y_2|+|x_3-y_3|)^{d_3}}.
$$
Let $(x,y) \in (\R^{d }\times \R^{d })\setminus \Delta$. First, we assume that $K$ satisfies the size estimate
\begin{equation*}
	|K(x, y)|\lesssim \size_F(x,y).
\end{equation*}
Let $x'=(x_1',x_2',x_3')$ be such that $|x_i'-x_i| \le |x_i-y_i|/2$ for $i=1,2,3$. We assume that $K$ satisfies the mixed size and H\"older estimates
\begin{equation*}
	\begin{split}
		 & |K((x'_1,x_2,x_3),y)-K(x,y)|
		\lesssim  \Big(\frac{|x_1'-x_1|}{|x_1-y_1|}\Big)^{\alpha_1}\size_F(x, y),
	\end{split}
\end{equation*}
\begin{equation*}
	\begin{split}
		 & |K((x_1,x_2',x_3),y)-K(x,y)|
		\lesssim  \Big(\frac{|x_2'-x_2|}{  |x_2-y_2|}\Big)^{\alpha_2}\size_F(x,y),
	\end{split}
\end{equation*}
\begin{equation*}
	\begin{split}
		 & |K((x_1,x_2,x_3'),y)-K(x,y)|
		\lesssim  \Big(\frac{|x_3'-x_3|}{|x_3-y_3| }\Big)^{\alpha_3}\size_F(x,y) ,
	\end{split}
\end{equation*}
\begin{equation*}
	\begin{split}
		|K((x_1',x_2',x_3),y) & -K((x_1',x_2,x_3),y)-K((x_1,x_2',x_3),y)+K(x,y)|
		\\
		                      & \qquad  \lesssim  \Big(\frac{|x_1'-x_1|}{|x_1-y_1|}\Big)^{\alpha_1}\Big(\frac{|x_2'-x_2|}{  |x_2-y_2|}\Big)^{\alpha_2}\size_F(x,y),
	\end{split}
\end{equation*}

\begin{equation*}
	\begin{split}
		|K((x_1,x_2',x_3'),y) & -K((x_1,x_2',x_3),y)-K((x_1,x_2,x_3'),y)+K(x,y)|
		\\
		                      & \qquad  \lesssim  \Big(\frac{|x_2'-x_2|}{  |x_2-y_2|}\Big)^{\alpha_2}\Big(\frac{|x_3'-x_3|}{ |x_3-y_3|}\Big)^{\alpha_3}\size_F(x,y),
	\end{split}
\end{equation*}
and
\begin{equation*}
	\begin{split}
		|K((x_1',x_2,x_3'),y) & -K((x_1',x_2,x_3),y)-K((x_1,x_2,x_3'),y)+K(x,y)|
		\\
		                      & \qquad  \lesssim \Big(\frac{|x_1'-x_1|}{|x_1-y_1|}\Big)^{\alpha_1}\Big(\frac{|x_3'-x_3|}{ |x_3-y_3|}\Big)^{\alpha_3}\size_F(x,y),
	\end{split}
\end{equation*}
where $\alpha_m \in (0,1]$ for all $m=1,2,3$. Finally, we also require that  $K$ satisfies the H\"older estimate
\begin{equation*}
	\begin{split}
		\Big| & \big[K(x',y)-K((x_1',x_2',x_3),y)-K((x_1,x_2',x_3'),y)+K(x_1, x_2', x_3,y)\big]                                                                                              \\
		      & \qquad-\big[K((x_1',x_2,x_3'),y)-K((x_1',x_2,x_3),y)-K((x_1,x_2,x_3'),y)+K(x,y)\big] \Big|                                                                                   \\
		      & \lesssim  \Big(\frac{|x_1'-x_1|}{|x_1-y_1|}\Big)^{\alpha_1}\Big(\frac{|x_2'-x_2|}{ |x_2-y_2|}\Big)^{\alpha_2}\Big(\frac{|x_3'-x_3|}{ |x_3-y_3|}\Big)^{\alpha_3}\size_F(x,y).
	\end{split}
\end{equation*}
We emphasize that these assumptions are weaker than what we obtained for the flag Fourier multipliers.
First, this type of H\"older estimates are, of course, weaker than derivative bounds.
Second, for the regularity estimates in the second and third variables, we do not need to assume
the stronger decay factors of the form
\[
	\Big(\frac{|x_2'-x_2|}{ |x_1-y_1|+ |x_2-y_2|}\Big)^{\alpha_2}\quad \text{or}\quad \Big(\frac{|x_3'-x_3|}{ |x_1-y_1|+|x_2-y_2|+|x_3-y_3|}\Big)^{\alpha_3}
\]
that are true for the multipliers.

Define the adjoint kernels $K^*$, $K^*_1$ and $K^*_{23}$ via the following relations:
\begin{equation*}
	K^*(x,y)=K(y,x), \quad
	K^*_1(x,y)=K((y_1,x_2,x_3),(x_1,y_2,y_3))
\end{equation*}
and
\begin{equation*}
	K^*_{23}(x,y)=K((x_1,y_2,y_3),(y_1,x_2,x_3)).
\end{equation*}
Other natural adjoint kernels can be defined in a similar way.
We assume that each adjoint kernel satisfies the same estimates as the kernel $K$.

\subsubsection{Partial kernel estimates}
Since our tri-parameter theory is ``cancellative'', i.e., paraproduct free, we will only need
to require the partial kernel estimates in a more limited way than what is true for multipliers. For instance,
instead of having a priori control of $K_{f_1, g_1}$ for all cubes $I^1$ and $|f_1|, |g_1| \le 1_{I^1}$,
it will be enough to only demand the estimates for $K_{I^1} := K_{1_{I^1}, 1_{I^1}}$.

First, we assume that
for every cube $I^1\subset \R^{d_1}$ there exists a kernel
$$
	K_{I^1} \colon (\R^{d_{23}} \times \R^{d_{23}}) \setminus \{(x_{23}, y_{23}) \colon x_2 = y_2\} \to \C,
$$
so that if $\supp_{\R^{d_i}}f_{23}$ and   $\supp_{\R^{d_i}}g_{23}$ are disjoint for $i=2,3$, then
\begin{equation*}
	\langle T(1_{I^1} \otimes f_{23}), 1_{I^1} \otimes g_{23}\rangle
	= \iint K_{I^1}(x_{23}, y_{23}) f_{23}(y_{23})g_{23}(x_{23}) \ud y_{23} \ud x_{23}.
\end{equation*}
Define
$$
	\size_{I^1}(x_{23}, y_{23}) =\frac {|I^1|}{ |x_2-y_2|^{d_2} ( |x_2-y_2|+|x_3-y_3|)^{d_3 }}.
$$
The kernel $K_{I^1}$ is assumed to satisfy the following estimates.
Let $x_{23}, y_{23}$ be such that $x_2 \ne y_2$. First, we assume the size estimate
\begin{equation*}
	|K_{I^1}(x_{23}, y_{23}) | \lesssim \size_{I^1}(x_{23}, y_{23}).
\end{equation*}
Let $x'_{23}=(x_2',x_3')$ be such that $|x_i'-x_i| \le |x_i-y_i|/2$ for $i=2,3$.
We assume the mixed size and H\"older estimates
\begin{equation*}
	\begin{split}
		|K_{I^1}(x_2', x_3,y_{23})-K_{I^1}(x_{23},y_{23})|  \lesssim
		\Big( \frac{|x_2'-x_2|}{|x_2-y_2|}  \Big)^{\alpha_{2}} \size_{I^1}(x_{23}, y_{23})
	\end{split}
\end{equation*}
and
\begin{equation*}
	\begin{split}
		|K_{I^1}(x_2, x_3',y_{23})-K_{I^1}(x_{23},y_{23})|  \lesssim
		\Big( \frac{|x_3'-x_3|}{|x_3-y_3|}  \Big)^{\alpha_{3}}\size_{I^1}(x_{23}, y_{23}).
	\end{split}
\end{equation*}
Finally, we also assume the H\"older estimate
\begin{equation*}
	\begin{split}
		|K_{I^1}(x_{23}',y_{23}) & -K_{I^1}(x_2', x_3,y_{23})- K_{I^1}(x_2, x_3',y_{23}) +K_{I^1}(x_{23},y_{23})| \\
		                         & \lesssim
		\Big( \frac{|x_2'-x_2|}{|x_2-y_2|}  \Big)^{\alpha_{2}}\Big( \frac{|x_3'-x_3|}{|x_3-y_3|}  \Big)^{\alpha_{3}}\size_{I^1}(x_{23}, y_{23}).
	\end{split}
\end{equation*}
We assume that the adjoint kernels defined by $$
	K_{I^1}^*(x_{23}, y_{23})
	=K_{I^1}(y_{23}, x_{23}), \quad K_{I^1, 2}^*(x_{23}, y_{23})= K_{I^1}(y_{2}, x_3, x_{2}, y_3)
$$ and $ K_{I^1, 3}^*(x_{23}, y_{23})= K_{I^1}(x_{2}, y_3, y_{2}, x_3)$ satisfy the same estimates.

Next, for every cube $I^2\subset \R^{d_2}$ we assume that there exists a kernel
$$
	K_{I^2} \colon (\R^{d_{13}} \times \R^{d_{13}}) \setminus \{(x_{13}, y_{13}) \colon x_1 = y_1 \} \to \C,
$$
so that if $\supp_{\R^{d_i}}f_{13}$ and   $\supp_{\R^{d_i}}g_{13}$ are disjoint for $i=1,3$, then
\begin{equation*}
	\langle T(1_{I^2} \otimes f_{13}), 1_{I^2} \otimes g_{13}\rangle
	= \iint K_{I^2}(x_{13}, y_{13}) f_{13}(y_{13})g_{13}(x_{13}) \ud y_{13} \ud x_{13}.
\end{equation*}
The kernel $K_{I^2}$ is assumed to satisfy the following estimates.
Let $x_{13}, y_{13}$ be such that $x_1 \not= y_1$. First, we assume the size estimate
\begin{equation*}
	\begin{split}
		|K_{I^2}(x_{13}, y_{13}) |
		 & \lesssim \frac {|I^2|^2}{ |x_1-y_1|^{d_1}(|x_1-y_1|+\ell(I^2))^{d_2} ( |x_1-y_1|+|x_3-y_3|)^{d_3 }} \\
		 & =: \size_{I^2}(x_{13}, y_{13}).
	\end{split}
\end{equation*}
Let $x'_{13}=(x_1',x_3')$ be such that $|x_i'-x_i| \le |x_i-y_i|/2$ for $i=1,3$.
We also assume the mixed size and H\"older estimates
\begin{equation*}
	\begin{split}
		|K_{I^2}(x_1', x_3,y_{13})-K_{I^2}(x_{13},y_{13})|   \lesssim
		\Big( \frac{|x_1'-x_1|}{|x_1-y_1|}  \Big)^{\alpha_{1}} \size_{I^2}(x_{13}, y_{13}).
	\end{split}
\end{equation*}
and
\begin{equation*}
	\begin{split}
		|K_{I^2}(x_1, x_3',y_{13})-K_{I^2}(x_{13},y_{13})|  \lesssim
		\Big( \frac{|x_3'-x_3|}{|x_3-y_3|}  \Big)^{\alpha_{3}} \size_{I^2}(x_{13}, y_{13}).
	\end{split}
\end{equation*}
Finally, we also assume the H\"older estimate
\begin{equation*}
	\begin{split}
		|K_{I^2}(x_{13}',y_{13}) & -K_{I^2}(x_1', x_3,y_{13})- K_{I^2}(x_1, x_3',y_{13}) +K_{I^2}(x_{13},y_{13})| \\
		                         & \lesssim
		\Big( \frac{|x_1'-x_1|}{|x_1-y_1|}  \Big)^{\alpha_{1}}\Big( \frac{|x_3'-x_3|}{|x_3-y_3|}  \Big)^{\alpha_{3}}
		\size_{I^2}(x_{13}, y_{13}).
	\end{split}
\end{equation*}
As before, we also assume the natural symmetric estimates (i.e., the adjoint kernels satisfy the same estimates).

Next, for every cube $I^3\subset \R^{d_3}$ we assume that there exists a kernel
$$
	K_{I^3} \colon (\R^{d_{12}} \times \R^{d_{12}}) \setminus \{(x_{12}, y_{12}) \colon x_1 = y_1\} \to \C,
$$
so that if $\supp_{\R^{d_i}}f_{12}$ and   $\supp_{\R^{d_i}}g_{12}$ are disjoint for $i=1,2$, then
\begin{equation*}
	\langle T(f_{12}\otimes 1_{I^3} ), g_{12}\otimes 1_{I^3}\rangle
	= \iint K_{I^3}(x_{12}, y_{12}) f_{12}(y_{12})g_{12}(x_{12}) \ud y_{12} \ud x_{12}.
\end{equation*}
The kernel $K_{I^3}$ is assumed to satisfy the following estimates.
Let $x_{12}, y_{12}$ be such that $x_1\not=y_1$. First, we assume the size estimate
\begin{equation}\label{E:eq96}
	|K_{I^3}(x_{12}, y_{12}) |
	\lesssim \frac {|I^3|^2}{ |x_1-y_1|^{d_1}(|x_1-y_1|+|x_2-y_2| )^{d_2} ( |x_1-y_1|+|x_2-y_2|+\ell(I^3))^{d_3 }}.
\end{equation}
Again, we denote the RHS of \eqref{E:eq96} by $\size_{I^3}(x_{12}, y_{12})$.
Let $x'_{12}=(x_1',x_2')$ be such that $|x_i'-x_i| \le |x_i-y_i|/2$ for $i=1,2$.
We assume the mixed size and H\"older estimates
\begin{equation*}
	\begin{split}
		|K_{I^3}(x_1', x_2,y_{12})-K_{I^3}(x_{12},y_{12})|   \lesssim
		\Big( \frac{|x_1'-x_1|}{|x_1-y_1|}  \Big)^{\alpha_{1}} \size_{I^3}(x_{12}, y_{12}).
	\end{split}
\end{equation*}
and
\begin{equation*}
	\begin{split}
		|K_{I^3}(x_1, x_2',y_{12})-K_{I^3}(x_{12},y_{12})|   \lesssim
		\Big( \frac{|x_2'-x_2|}{|x_2-y_2|}  \Big)^{\alpha_{2}} \size_{I^3}(x_{12}, y_{12}).
	\end{split}
\end{equation*}
Finally, we also assume the H\"older estimate
\begin{equation*}
	\begin{split}
		|K_{I^3}(x_{12}',y_{12}) & -K_{I^3}(x_1', x_2,y_{12})- K_{I^3}(x_1, x_2',y_{12}) +K_{I^3}(x_{12},y_{12})| \\
		                         & \lesssim
		\Big( \frac{|x_1'-x_1|}{|x_1-y_1|}  \Big)^{\alpha_{1}}
		\Big( \frac{|x_2'-x_2|}{|x_2-y_2|}  \Big)^{\alpha_{2}} \size_{I^3}(x_{12}, y_{12}).
	\end{split}
\end{equation*}
Again, we assume the symmetric estimates.

It remains to define the three partial kernels, where we fix two out of the three parameters (instead of one like
in the above three cases).
Let $I^1 \times I^2 \times I^3$ be flag.
We assume the existence of the partial kernels $K_{I^{12}}(x_3, y_3)$, $K_{I^{13}}(x_2, y_2)$ and $K_{I^{23}}(x_1, y_1)$
modelling from the multiplier case as above. For instance, the kernel
\[
	K_{I^{12}} \colon (\R^{d_{3}} \times \R^{d_{3}}) \setminus \{(x_{3}, y_{3}) \colon x_3=y_3\} \to \C
\]
is a function such that if $\supp f_3 \cap \supp g_3 = \emptyset$ then
\begin{equation*}
	\langle T(1_{I^{12}} \otimes f_{3}), 1_{I^{12}} \otimes g_{3}\rangle
	= \iint K_{I^{12}}(x_{3}, y_{3}) f_{3}(y_{3})g_{3}(x_{3}) \ud y_{3} \ud x_{3}.
\end{equation*}
We assume that $K_{I^{12}}\in CZ_{\alpha_3}(\R^{d_3})$ is a classical $\alpha_3$-Calder\'on-Zygmund kernel with
the kernel constants being bounded $C|I^{12}|$.
For $K_{I^{13}} $ and $K_{I^{23}} $ we assume the obvious representation properties and the following estimates
together with their natural symmetric counterparts:
\begin{equation*}
	|K_{I^{13}}(x_2, y_2) |                        \lesssim \frac {|I^1||I^{3}|^2}{ |x_2-y_2|^{d_2 }(|x_2-y_2|+\ell(I^3))^{d_3} }
	=: \size_{I^{13}}(x_2, y_2),
\end{equation*}
\begin{equation*}
	|K_{I^{13}}(x_2', y_2)-K_{I^{13}}(x_2, y_2) |  \lesssim \Big( \frac{|x_2'-x_2|}{|x_2-y_2|} \Big)^{\alpha_{2}}
	\size_{I^{13}}(x_2, y_2),
\end{equation*}
\begin{equation*}
	\begin{split}
		|K_{I^{23}}(x_1, y_1) | & \lesssim \frac { |I^{23}|^2}{ |x_1-y_1|^{d_1 }(|x_1-y_1|+\ell(I^2))^{d_2} (|x_1-y_1|+\ell(I^2)+\ell(I^3))^{d_3}} \\
		                        & =: \size_{I^{23}}(x_1, y_1)
	\end{split}
\end{equation*}
and
\begin{equation*}
	|K_{I^{23}}(x_1', y_1)-K_{I^{23}}(x_1, y_1) |
	\lesssim \Big( \frac{|x_1'-x_1|}{|x_1-y_1|}  \Big)^{\alpha_{1}} \size_{I^{23}}(x_1, y_1),
\end{equation*}
where $|x_i'-x_i| \le |x_i-y_i|/2$ for $i=1,2$.

\subsection{General bi-parameter flag kernels}
The bi-parameter kernel estimates are highly similar than in the tri-parameter setting, but, since our
bi-parameter theory will involve paraproducts, we will
need partial kernels also for some atoms in addition to the indicators like in the tri-parameter
situation. While these estimates are simply the natural
bi-parameter analogs of the above tri-parameter estimates (with the
aforementioned difference in the partial kernels) we formulate things carefully to have a clear reference.
For the rest of this section, $d=d_1+d_2$ in order to capture the bi-parameter flag structure.

\subsubsection{Full kernel estimates}
This section is fully analogous to the tri-parameter cancellative case. The full kernel
is a function $K \colon (\R^{d}\times \R^{d})\setminus \Delta \to \C$, where
\begin{align*}
	\Delta=\big\{(x, y)\in\R^{d} & \times \R^{d}: x_1 = y_1 \big\}.
\end{align*}
If $\supp_{\R^{d_m}}f\cap  \supp_{\R^{d_m}}g=\emptyset$ for all $m=1,2$, then
the associated flag singular integral must have the following kernel representation
\[
	\langle Tf, g\rangle =\iint K(x,y) f(y) g(x)\ud y\ud x.
\]

Define
$$
	\size_F(x,y) := \frac 1{ |x_1-y_1|^{d_1}(|x_1-y_1|+|x_2-y_2|)^{d_2}}.
$$
Let $(x,y) \in (\R^{d }\times \R^{d })\setminus \Delta$. First, we assume that $K$ satisfies the size estimate
\begin{equation*}
	|K(x, y)|\lesssim \size_F(x,y).
\end{equation*}
Let $x'=(x_1',x_2')$ be such that $|x_i'-x_i| \le |x_i-y_i|/2$ for $i=1,2$. We assume that $K$ satisfies the mixed size and H\"older estimates
\begin{equation*}
	\begin{split}
		 & |K((x'_1,x_2),y)-K(x,y)|
		\lesssim  \Big(\frac{|x_1'-x_1|}{|x_1-y_1|}\Big)^{\alpha_1}\size_F(x, y),
	\end{split}
\end{equation*}
\begin{equation*}
	\begin{split}
		 & |K((x_1,x_2'),y)-K(x,y)|
		\lesssim  \Big(\frac{|x_2'-x_2|}{  |x_2-y_2|}\Big)^{\alpha_2}\size_F(x,y),
	\end{split}
\end{equation*}
where $\alpha_m \in (0,1]$ for all $m=1,2$. Finally, we also require that  $K$ satisfies the H\"older estimate
\begin{equation*}
	\begin{split}
		|K(x',y) & -K((x_1',x_2),y)-K((x_1,x_2'),y)+K(x,y)|
		\\
		         & \qquad  \lesssim  \Big(\frac{|x_1'-x_1|}{|x_1-y_1|}\Big)^{\alpha_1}\Big(\frac{|x_2'-x_2|}{  |x_2-y_2|}\Big)^{\alpha_2}\size_F(x,y).
	\end{split}
\end{equation*}

As in the cancellative tri-parameter case, define the adjoint kernels $K^*$, $K^*_1$ and $K^*_2$ via the following relations:
\begin{align*}
	K^*(x,y)&=K(y,x), \\
	K^*_1(x,y)&=K((y_1,x_2),(x_1,y_2)), \\
	K^*_2(x,y)&=K((x_1,y_2),(y_1,x_2)).
\end{align*}
We assume that each adjoint kernel satisfies the same estimates as the kernel $K$.

\subsubsection{Partial kernel estimates}
We start by recalling the tri-parameter partial kernel estimates and rewriting them in the bi-parameter setting. First, we assume that
for every cube $I^1\subset \R^{d_1}$ there exists a kernel
$$
	K_{I^1} \colon (\R^{d_{2}} \times \R^{d_{2}}) \setminus \{(x_{2}, y_{2}) \colon x_2 = y_2\} \to \C,
$$
so that if $\supp{f_{2}}$ and   $\supp g_{2}$ are disjoint, then
\begin{equation}\label{eq biparam partial kernel indicator}
	\langle T(1_{I^1} \otimes f_{2}), 1_{I^1} \otimes g_{2}\rangle
	= \iint K_{I^1}(x_{2}, y_{2}) f_{2}(y_{2})g_{2}(x_{2}) \ud y_{2} \ud x_{2}.
\end{equation}
Define
$$
	\size_{I^1}(x_{2}, y_{2}) =\frac {|I^1|}{ |x_2-y_2|^{d_2}}.
$$
The kernel $K_{I^1}$ is assumed to satisfy the following estimates.
Let $x_{2}\ne y_{2}$. First, we assume the size estimate
\begin{equation}\label{E:eq62}
	|K_{I^1}(x_{2}, y_{2}) | \lesssim \size_{I^1}(x_{2}, y_{2}).
\end{equation}
Let $x'_{2}$ be such that $|x_2'-x_2| \le |x_2-y_2|/2$.
We assume the H\"older estimate
\begin{equation}\label{eq partial holder 2}
	\begin{split}
		|K_{I^1}(x_2',y_{2})-K_{I^1}(x_{2},y_{2})|  \lesssim
		\Big( \frac{|x_2'-x_2|}{|x_2-y_2|}  \Big)^{\alpha_{2}} \size_{I^1}(x_{2}, y_{2}).
	\end{split}
\end{equation}
We assume that the adjoint kernels defined by $$
	K_{I^1}^*(x_{2}, y_{2})
	=K_{I^1}(y_{2}, x_{2})
$$ satisfy the same estimates.

Next, for every cube $I^2\subset \R^{d_2}$ we assume that there exists a kernel
$$
	K_{I^2} \colon (\R^{d_{1}} \times \R^{d_{1}}) \setminus \{x_1 = y_1 \} \to \C,
$$
so that if $\supp f_{1}$ and $\supp g_{1}$ are disjoint, then
\begin{equation*}
	\langle T(f_1\otimes 1_{I^2}), g_1\otimes 1_{I^2}\rangle
	= \iint K_{I^2}(x_{1}, y_{1}) f_{1}(y_{1})g_{1}(x_{1}) \ud y_{1} \ud x_{1}.
\end{equation*}
The kernel $K_{I^2}$ is assumed to satisfy the following estimates.
For $x_1 \not= y_1$, we assume the size estimate
\begin{equation*}
	\begin{split}
		|K_{I^2}(x_{1}, y_{1}) |
		 & \lesssim \frac {|I^2|^2}{ |x_1-y_1|^{d_1}(|x_1-y_1|+\ell(I^2))^{d_2}} \\
		 & =: \size_{I^2}(x_{1}, y_{1}).
	\end{split}
\end{equation*}
Let $x'_{1}$ be such that $|x_1'-x_1| \le |x_1-y_1|/2$. We also assume the H\"older estimate
\begin{equation*}
	\begin{split}
		|K_{I^2}(x_1',y_{1})-K_{I^2}(x_{1},y_{1})|   \lesssim
		\Big( \frac{|x_1'-x_1|}{|x_1-y_1|}  \Big)^{\alpha_{1}} \size_{I^2}(x_{1}, y_{1}).
	\end{split}
\end{equation*}
As before, we also assume the natural symmetric estimates (i.e., the adjoint kernels satisfy the same estimates).

Since we will not be assuming $T1 = 0$ style cancellation conditions in the bi-parameter theory,
we must also address partial kernel estimates associated to atoms,
rather than merely the case of indicator functions that was discussed above.
Let $a_{I^1}$ denote an $L^\infty$ atom supported on a cube $I^1$, i.e.,
$a_{I^1}=1_{I^1}a_{I^1}$, $\int a_{I^1}=0$ and $\|a_{I^1}\|_{L^{\infty}}\le 1$.
Abusing notation, we will again denote $K_{I^1}$ using the equation
\eqref{eq biparam partial kernel indicator}, but where $K_{I^1}$ is not the same as there,
instead being adapted to the atom $a_{I^1}$, that is
\begin{equation*}
	\langle T(a_{I^1} \otimes f_{2}), 1_{I^1} \otimes g_{2}\rangle
	= \iint K_{I^1}(x_{2}, y_{2}) f_{2}(y_{2})g_{2}(x_{2}) \ud y_{2} \ud x_{2}.
\end{equation*}
We require these partial kernels $K_{I^1}$ and their adjoints to satisfy \eqref{E:eq62} and \eqref{eq partial holder 2}.
Moreover, the partial kernels defined by
\begin{equation*}
	\langle T(1_{I^1} \otimes f_{2}), a_{I^1} \otimes g_{2}\rangle
	= \iint K_{I^1}(x_{2}, y_{2}) f_{2}(y_{2})g_{2}(x_{2}) \ud y_{2} \ud x_{2}.
\end{equation*}
must also satisfy these estimates as well.

Finally, we must also assume that the partial kernels $K_{I^2}$ exist
and satisfy the usual estimates when one of the indicators $1_{I^2}$
is replaced by an atom $a_{I^2}$.

\section{Flag \texorpdfstring{$\BMO$}{BMO} spaces}\label{sec:BMO}
We study these $\BMO$ spaces in the bi-parameter setting
$\R^d = \R^{d_1} \times \R^{d_2}$, where we prove the full $T1$ theorem
and give a direct multiresolution proof of the commutator estimate.

\subsection{Product flag $\BMO$ and necessity considerations}
We discuss a subtle point regarding the product $\BMO$ type assumptions that will
appear in our representation theorem for bi-parameter flag CZOs.
In particular, we discuss their necessity and how they compare to the
$\BMO$ assumptions in the standard pure bi-parameter setting.
We start by carefully defining all the $\BMO$ spaces
and discussing some known properties.

Let $A := (a_I)_{I \in \calD}$ be a sequence of scalars indexed by all the dyadic
rectangles $I \in \calD = \calD^1 \times \calD^2$.
For $\Omega \subset \R^d$ we define the following square functions
$$
	S_A(x) := \Big(\sum_{I \in \calD} |a_I|^2 \frac{1_I(x)}{|I|} \Big)^{1/2} \qquad \textup{and} \qquad
	S_{A, \Omega}(x) := \bigg(\sum_{ \substack{ I \in \calD \\ I \subset \Omega}}
	|a_I|^2 \frac{1_I(x)}{|I|} \bigg)^{1/2}.
$$
For $p \in (0,\infty)$ we define
$$
	\|A\|_{\BMO_{\pro}(p)} = \sup_{\Omega} \frac{1}{|\Omega|^{1/p}} \|S_{A, \Omega}\|_{L^p},
$$
where $\sup$ runs over $\Omega \subset \R^d$ with $0 < |\Omega| < \infty$
and satisfying that for all $x \in \Omega$ there exists $I \in \calD$
so that $x \in I \subset \Omega$.
We set $\|A\|_{\BMO_{\pro}} := \|A\|_{\BMO_{\pro}(2)}$ and note that the well-known
bi-parameter John-Nirenberg tells us that $\|A\|_{\BMO_{\pro}} \sim \|A\|_{\BMO_{\pro}(p)}$,
$0 < p < \infty$. It is also well-known that given another sequence $B = (b_I)$ we have
\begin{equation}\label{eq:h1bmoseq}
	\sum_{I \in \calD} |a_I| |b_I| \lesssim \|A\|_{\BMO_{\pro}} \|S_B\|_{L^1},
\end{equation}
which we refer to as the $H^1$--$\BMO$ duality estimate in this context.
In fact, we also conversely have that if $A$ is such that for all $B$ we have
$$
	\sum_{I \in \calD} |a_I| |b_I| \le C\|S_B\|_{L^1},
$$
then $\|A\|_{\BMO_\pro} \le C$. Therefore, we can define $\BMO$ in either equivalent way --
we mostly have use for the dual estimate and not the Carleson type definition.

If a sequence $A = (a_I)_{I \in \calF}$ is only indexed over a subset
$\calF \subset \calD$ (e.g., $\calF = \calD_F$) then we understand the condition $A \in \BMO_{\pro}$ via the requirement
that the sequence $(\wt a_I)_{I \in \calD} \in \BMO_\pro$, where $\wt a_I = a_I$ if $I \in \calF$
and $\wt a_I = 0$ otherwise.

Let now $b \in L^1_{\loc}(\R^d)$. We say that $b \in \BMO_{\pro}$ if, uniformly on all
dyadic lattices $\calD$,
the sequence $(\langle b, h_{I^1} \otimes h_{I^2}\rangle)_{I \in \calD} \in \BMO_{\pro}$ (rigorously
the sequence $(\langle b, h_{I^1}^{\eta_1} \otimes h_{I^2}^{\eta_2}\rangle)_{\substack{ I \in \calD \\ \eta_i \ne 0}}$),
	i.e.,
	\begin{align*}
		\|b\|_{\BMO_{\pro}(p)} & = \sup_{\calD} \sup_{\Omega} \frac{1}{|\Omega|^{1/p}} \bigg\| \Big(
		                                                                                       \sum_{\substack{ I \in \calD \\ I \subset \Omega }} |\langle b, h_{I^1} \otimes h_{I^2}\rangle|^2
		                                                                                       \frac{1_I}{|I|} \Big)^{1/2}\bigg\|_{L^p} \\
		                       & \sim \sup_{\calD} \sup_{\Omega} \frac{1}{|\Omega|^{1/p}} \bigg\| \Big(
		                                                                                          \sum_{\substack{ I \in \calD \\ I \subset \Omega }} |\Delta_{I^1}\Delta_{I^2} b|^2\Big)^{1/2}
		\bigg\|_{L^p} < \infty,
	\end{align*}
	where we used the equivalence of the Haar and martingale difference square functions in $L^p$,
$0 < p < \infty$ (this was discussed previously). Next, $b$ belongs to the flag product $\BMO$,
	denoted $b \in \BMO_{\pro, F}$, if, uniformly on all dyadic lattices $\calD$,
	the sequence $(\langle b, h_{I, F}\rangle)_{I \in \calD_F} \in
\BMO_{\pro}$. Therefore, we have
	\begin{align*}
		\|b\|_{\BMO_{\pro, F}(p)} & = \sup_{\calD} \sup_{\Omega} \frac{1}{|\Omega|^{1/p}} \bigg\| \Big(
		                                                                                          \sum_{\substack{ I \in \calD_F \\ I \subset \Omega }} |\langle b, h_{I, F}\rangle|^2
		                                                                                          \frac{1_I}{|I|} \Big)^{1/2}\bigg\|_{L^p}                  \\
		                          & \sim \sup_{\calD} \sup_{\Omega} \frac{1}{|\Omega|^{1/p}} \bigg\| \Big(
		                                                                                             \sum_{\substack{ I \in \calD_F \\ I \subset \Omega }} |\Delta_{I, F} b|^2\Big)^{1/2} \Big\|_{L^p}
	\end{align*}

	So the standard product $\BMO$ and the flag product $\BMO$ are not directly comparable --
	while it is true that $\calD_F \subset \calD$, the coefficients $\langle b, h_{I, F} \rangle$
	are not quite always the same than $\langle b, h_{I^1} \otimes h_{I^2}\rangle$. Indeed,
	recall that if $\ell(I^1) = \ell(I^2)$ we have $h_{I, F} = u_{I^1} \otimes h_{I^2}$,
	where $u_{I^1}$ can also be the non-cancellative Haar. If $T$ is an
$L^2$ bounded flag SIO, it is, in particular, an $L^2$ bounded standard bi-parameter SIO.
	Therefore, by the known theory using Journ\'e's covering theorem \cites{Jou1985, Journe1986}
	we know that $T1 \in \BMO_{\pro}$. But, in our representation theorem,
	we want to require that $T1 \in \BMO_{\pro, F}$ --
	is this a necessary condition as well? The answer is yes, but it is non-trivial.
	By the fact that $T1 \in \BMO_{\pro}$
	for $L^2$ bounded flag SIOs, we only need to check the following (we discuss
	this and other points more carefully after the theorem).
	\begin{thm}\label{thm:T1nec}
	Assume that $T$ is a bi-parameter flag SIO satisfying the cube testing condition
	\[
	\|1_Q T1_Q\|_{L^2} \lesssim |Q|^{1/2}
\]
for all cubes $Q$. Then we have
\[
	\sup_{Q \in \calD_=} |Q|^{-\frac 12}
	\Big(\sum_{\substack{I\in \calD_= \\ I\subset Q} }|\langle T1, h_{I^1}^0\otimes h_{I^2}\rangle|^2
	\Big)^{1/2}<\infty.
\]
\end{thm}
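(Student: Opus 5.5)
Fix a cube $Q = Q^1\times Q^2 \in \calD_=$. The plan is to localize $T1$ to a fixed enlargement of $Q$, use the cube testing condition for the local part, and use the flag kernel estimates for the far part.

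Write $1 = 1_{3Q} + 1_{(3Q)^c}$, where $3Q$ is the concentric cube with $\ell(3Q) = 3\ell(Q)$. For $I \subset Q$ in $\calD_=$ this gives
\[
\langle T1, h^0_{I^1}\otimes h_{I^2}\rangle = \langle T1_{3Q}, h^0_{I^1}\otimes h_{I^2}\rangle + \langle T1_{(3Q)^c}, h^0_{I^1}\otimes h_{I^2}\rangle.
\]

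\emph{Local part.} The functions $h^0_{I^1}\otimes h_{I^2}$, $I\in\calD_=$, are orthonormal: distinct cubes of the same generation have disjoint supports, and for different generations the cancellative $h_{I^2}$ kills the product. By Bessel's inequality the square sum of the local terms is at most $\|1_Q T1_{3Q}\|_{L^2}^2$. Since $3Q$ is a cube containing $Q$, the testing condition on $3Q$ gives $\|1_Q T1_{3Q}\|_{L^2}\le\|1_{3Q}T1_{3Q}\|_{L^2}\lesssim |Q|^{1/2}$. This is exactly the one-parameter nature of the statement: the non-cancellative $h^0_{I^1}$ is harmless here because only orthonormality is needed.

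\emph{Far part.} Split $(3Q)^c \subset \big((3Q^1)^c\times\R^{d_2}\big)\cup\big(3Q^1\times(3Q^2)^c\big)$.

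For $y\in(3Q^1)^c\times\R^{d_2}$ the supports are separated in both parameters relative to the $x_1$ variable, so the full kernel representation applies. I use the cancellation of $h_{I^2}$ in $x_2$ together with the $x_2$-H\"older estimate. In that estimate the factor is $(|x_2-c_{I^2}|/|x_2-y_2|)^{\alpha_2}$, valid only when $|x_2-y_2|\ge 2\ell(I^2)$. The region $|x_2-y_2|\lesssim\ell(I^2)$ must instead be handled by size, and this is where the flag size matters. Since $|x_1-y_1|\gtrsim\ell(Q)\ge\ell(I^2)$, we have $\size_F\sim |x_1-y_1|^{-d_1-d_2}$ there, giving a small-volume contribution. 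Overall I expect a pointwise bound of the form
\[
|\langle T1_{(3Q^1)^c\times\R^{d_2}}, h^0_{I^1}\otimes h_{I^2}\rangle| \lesssim |I|^{1/2}\Big(\frac{\ell(I)}{\ell(Q)}\Big)^{\alpha_2}.
\]
The improvement over the naive bound comes from the flag decay $(|x_1-y_1|+|x_2-y_2|)^{-d_2}$: integrating $y_2$ over $|x_2-y_2|>\ell(I^2)$ against $|x_2-y_2|^{-\alpha_2}(|x_1-y_1|+|x_2-y_2|)^{-d_2}$ gives about $\ell(I)^{-\alpha_2}|x_1-y_1|^{-\alpha_2}$, and the $y_1$ integral over $|x_1-y_1|\gtrsim \ell(Q)$ then yields $\ell(Q)^{-\alpha_2}$. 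Summing the squares over $I\subset Q$, $I\in\calD_=$, gives $\sum_k 2^{-2k\alpha_2}\sum_{\ell(I)=2^{-k}\ell(Q)}|I| \lesssim |Q|$.

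For $y\in 3Q^1\times(3Q^2)^c$ I use the partial kernel $K_{I^1}$-type estimate. The relevant kernel for $\langle T(1_{3Q^1}\otimes 1_{(3Q^2)^c}), h^0_{I^1}\otimes h_{I^2}\rangle$ is not of indicator–indicator form on a common cube. I therefore write $h^0_{I^1}=|I^1|^{-1/2}1_{I^1}$ and $1_{3Q^1}=1_{I^1}+1_{3Q^1\setminus I^1}$.
\begin{itemize}
\item The $1_{I^1}$ piece uses $K_{I^1}$ with its size bound $|I^1|/|x_2-y_2|^{d_2}$ and H\"older regularity. Together with the cancellation of $h_{I^2}$, it gives a contribution $|I^1|^{1/2}|I^2|^{1/2}(\ell(I)/\ell(Q))^{\alpha_2}$.
\item The $3Q^1\setminus I^1$ piece is separated in parameter $1$, so the full kernel applies. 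This needs a further split into dyadic annuli around $I^1$. Using $|x_2-y_2|\gtrsim\ell(Q)\ge|x_1-y_1|$, the flag size is $\sim|x_1-y_1|^{-d_1}|x_2-y_2|^{-d_2}$. The $x_1,y_1$ integral then has only a logarithmic singularity near $\partial I^1$, which the H\"older decay in $x_2$ should absorb.
\end{itemize}

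The main obstacle is this last far-off-diagonal piece, $y_1$ near but outside $I^1$ with $y_2$ far. I would try to control it by combining the $x_2$-H\"older gain $(\ell(I^2)/|x_2-y_2|)^{\alpha_2}$ with summation over annuli $2^{j}I^1\setminus 2^{j-1}I^1$. If a logarithmic loss remains, I would instead absorb the whole near-$Q^1$ region into an enlarged testing cube adapted to each scale. The same Bessel argument can then be run scale by scale, replacing $3Q$ by cubes $3I$ and using $\|1_{3I}T1_{3I}\|\lesssim|I|^{1/2}$ to treat everything within $3I^1$ locally.
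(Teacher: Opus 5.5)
Your outline is the paper's proof: testing on $3Q$ plus orthogonality for the local part (your Bessel step is equivalent to the paper's square-function step), and flag kernel estimates off $3Q$. Merging the paper's regions $(3Q^1)^c\times 3Q^2$ and $(3Q^1)^c\times(3Q^2)^c$ into $(3Q^1)^c\times\R^{d_2}$ and doing a single $y_2$-integral is fine. In the endpoint $\alpha_2=d_2=1$ this costs an extra factor $1+\log\frac{\ell(Q)}{\ell(I)}$; the paper avoids it by using the exponent $\alpha_2/2$ in the corresponding region, and the factor is harmless anyway, for the reason given below.

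There is, however, a justification error in that region. The full kernel representation is only assumed when the supports are disjoint in \emph{both} parameters, so it is not available for $y_2\in I^2$. As in the paper, split $1_{I^2}$ and $h_{I^2}$ over the children $L^2,P^2$ of $I^2$. Pairs with $L^2\ne P^2$ use the full kernel. For $L^2=P^2$ you must use the size bound $|P^2|^2/(|x_1-y_1|^{d_1}(|x_1-y_1|+\ell(P^2))^{d_2})$ of the partial kernel $K_{P^2}$. This reproduces your near-diagonal bound $|I|^{1/2}(\ell(I)/\ell(Q))^{d_2}$.

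The more substantive problem is that you leave the piece $y_1\in 3Q^1\setminus I^1$, $y_2\in(3Q^2)^c$ unresolved, and the fallback you propose would not work. The direct estimate already closes it. The nearby estimate handles $3I^1\setminus I^1$, and each dyadic annulus of $3Q^1\setminus 3I^1$ contributes $\lesssim|I^1|$, so
\[
\int_{I^1}\int_{3Q^1\setminus I^1}\frac{\ud y_1\ud x_1}{|x_1-y_1|^{d_1}}\lesssim\Big(1+\log\frac{\ell(Q)}{\ell(I)}\Big)|I^1|.
\]
Combine this with
\[
\int_{I^2}\int_{(3Q^2)^c}\frac{\ell(I)^{\alpha_2}}{|x_2-y_2|^{d_2+\alpha_2}}\,\ud y_2\ud x_2\lesssim|I^2|\Big(\frac{\ell(I)}{\ell(Q)}\Big)^{\alpha_2}.
\]
This gives the bound $|I|^{1/2}(1+k)2^{-\alpha_2k}$ when $\ell(I)=2^{-k}\ell(Q)$. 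The logarithm does remain, but it is only polynomial in $k$, and $\sum_k(1+k)^22^{-2\alpha_2k}\sum_{I^{(k)}=Q}|I|\lesssim|Q|$. This is exactly the paper's computation.

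Drop the scale-by-scale testing on $3I$. Bessel's inequality needs one fixed function paired against the whole orthonormal family. Per-cube testing only gives $|\langle T1_{3I},h^0_{I^1}\otimes h_{I^2}\rangle|\lesssim|I|^{1/2}$. That is a contribution $\lesssim|Q|$ from every generation with no decay in $k$, which diverges when summed over generations.
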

\begin{proof}
	Notice first that for $Q \in \calD_=$ we have
	\begin{align*}
		\Big(\sum_{\substack{I\in \calD_= \\ I\subset Q} }
		|\langle T1_{3Q}, h_{I^1}^0\otimes h_{I^2}\rangle|^2 \Big)^{1/2}
		 & \le
		\Big(\sum_{I\in \calD_= }
		|I| \langle |E_{I^1} \Delta_{I^2}(1_{3Q}T1_{3Q})| \rangle_I^2 \Big)^{1/2}   \\
		 & \le \Big(\sum_{I\in \calD_=}
		\| E_{I^1} \Delta_{I^2}(1_{3Q}T1_{3Q})\|_{L^2}^2  \Big)^{1/2}               \\
		 & \lesssim \|1_{3Q} T1_{3Q}\|_{L^2} \lesssim |3Q|^{1/2} \lesssim |Q|^{1/2}
	\end{align*}
	by the assumed testing condition and a one-parameter square function estimate.
	Therefore, we only need to estimate
	$$
		\Big(\sum_{\substack{I\in \calD_= \\ I\subset Q} }|\langle T1_{(3Q)^c}, h_{I^1}^0\otimes h_{I^2}\rangle|^2
		\Big)^{1/2}
		\le S_{c1} + S_{c2} + S_{c12},
	$$
	where ``$c$'' stands for complement and, e.g., $c1$ means complement in the first parameter,
	and we are referring to the division $(3Q)^c = ((3Q^1)^c \times 3Q^2) \cup (3Q^1 \times (3Q^2)^c)
		\cup ((3Q^1)^c \times (3Q^2)^c)$. Thus, for instance, we mean
	$$
		S_{c1} :=
		\Big(\sum_{\substack{I\in \calD_= \\ I\subset Q} }|\langle T(1_{(3Q^1)^c} \otimes 1_{3Q^2}), h_{I^1}^0\otimes h_{I^2}\rangle|^2
		\Big)^{1/2}.
	$$
	We deal with the above $S_{c1}$ term first and begin by estimating
	\begin{equation}\label{eq:S1csplit}
		\begin{split}
			|\langle T(1_{(3Q^1)^c} & \otimes 1_{3Q^2} ), h_{I^1}^0\otimes h_{I^2}\rangle|                                           \\
			                        & \le |\langle T(1_{(3Q^1)^c} \otimes 1_{I^2} ), h_{I^1}^0\otimes h_{I^2}\rangle|
			+|\langle T(1_{(3Q^1)^c} \otimes 1_{(3I^2\setminus I^2)} ), h_{I^1}^0\otimes h_{I^2}\rangle|                             \\
			                        & +|\langle T(1_{(3Q^1)^c} \otimes 1_{(3Q^2\setminus 3I^2)} ), h_{I^1}^0\otimes h_{I^2}\rangle|.
		\end{split}
	\end{equation}
	For the second term we estimate using the size estimate
	(throwing away the factor $|x_2-y_2| \lesssim \ell(I) \le \ell(Q) \lesssim |x_1-y_1|$
	and using the easy estimate $\int_{w \in \R^m\colon |w-w_0| > c} |w-w_0|^{-m-\eps} \lesssim c^{-\eps}$
	) that
	\begin{align*}
		|\langle T(1_{(3Q^1)^c} & \otimes 1_{(3I^2\setminus I^2)} ), h_{I^1}^0\otimes h_{I^2}\rangle|                                                \\
		                        & \lesssim
		|I|^{-1/2}\int_{I}\int_{(3Q^1)^c\times (3I^2\setminus I^2)} \frac{\ud y \ud x}{|x_1-y_1|^{d_1+d_2}}                                          \\
		                        & \lesssim |I|^{-1/2} |I^2|^2 \int_{I^1} \int_{(3Q^1)^c} \frac{\ud y_1 \ud x_1}{|x_1-y_1|^{d_1+d_2}}                 \\
		                        & \lesssim |I|^{-1/2} |I^2|^2 \int_{I^1} \ell(Q)^{-d_2} \ud x_1 = |I|^{1/2} \Big(\frac{\ell(I)}{\ell(Q)}\Big)^{d_2}.
	\end{align*}
	Next, for the first term in \eqref{eq:S1csplit} we estimate
	$$
		|\langle T(1_{(3Q^1)^c} \otimes 1_{I^2} ), h_{I^1}^0\otimes h_{I^2}\rangle|
		\le |I|^{-1/2} \sum_{L^2, P^2 \in \ch(I^2)}
		|\langle T(1_{(3Q^1)^c} \otimes 1_{P^2}), 1_{I^1}\otimes 1_{L^2}\rangle|.
	$$
	For $L^2 \ne P^2$ this is exactly the same estimate as in the above case.
	For $P^2 = L^2$ we have by the size estimate of the partial kernel that
	$$
		|\langle T(1_{(3Q^1)^c} \otimes 1_{P^2}), 1_{I^1}\otimes 1_{P^2}\rangle|
		\lesssim |I^2|^2 \int_{I^1} \int_{(3Q^1)^c} \frac{\ud y_1 \ud x_1}{|x_1-y_1|^{d_1+d_2}}
		\lesssim |I| \Big(\frac{\ell(I)}{\ell(Q)}\Big)^{d_2}
	$$
	and so we again have
	$$
		|\langle T(1_{(3Q^1)^c} \otimes 1_{I^2} ), h_{I^1}^0\otimes h_{I^2}\rangle|
		\lesssim |I|^{1/2} \Big(\frac{\ell(I)}{\ell(Q)}\Big)^{d_2}.
	$$
	For the last term in \eqref{eq:S1csplit} we use the mixed H\"older and size estimate
	of the full kernel to get
	\begin{align*}
		|\langle & T(1_{(3Q^1)^c} \otimes 1_{(3Q^2\setminus 3I^2)} ), h_{I^1}^0\otimes h_{I^2}\rangle|                                                                                         \\
		         & \lesssim  |I|^{-1/2} \int_{I}\int_{(3Q^1)^c\times (3Q^2\setminus 3I^2) }\frac{\ell(I)^{\alpha_2}}{|x_1-y_1|^{d_1}(|x_1-y_1|+|x_2-y_2|)^{d_2}|x_2-y_2|^{\alpha_2}}\ud y\ud x \\
		         & \lesssim |I|^{-1/2} \ell(I)^{\alpha_2/2}
		\int_{I^1} \int_{(3Q^1)^c} \frac{\ud y_1 \ud x_1}{|x_1-y_1|^{d_1+d_2}}
		\times \int_{I^2} \int_{3Q^2 \setminus 3I^2} \frac{\ud y_2 \ud x_2}{|x_2-y_2|^{\alpha_2/2}},
	\end{align*}
	where we did the $\alpha_2/2$ business only for the reason that if $d_2 = 1$ it could
	theoretically be that $\alpha_2 = 1 = d_2$, which we prefer not to have to deal with in the next integral
	(now $\alpha_2/2 \le 1/2 < d_2$ for sure). Next, we have by polar coordinates (or divide to dyadic
	annuli) that
	\begin{align*}
		\int_{y_2\colon |y_2-x_2| \lesssim \ell(Q)}
		\frac{\ud y_2}{|x_2-y_2|^{\alpha_2/2}}
		\sim \int_{0}^{c \ell(Q)} r^{d_2 - \alpha_2/2 - 1} \ud r
		\sim \ell(Q)^{d_2-\alpha_2/2}.
	\end{align*}
	We also already calculated above the $I^1 \times (3Q^1)^c$ integral -- thus, in total we have
	\begin{align*}
		|\langle & T(1_{(3Q^1)^c} \otimes 1_{(3Q^2\setminus 3I^2)} ), h_{I^1}^0\otimes h_{I^2}\rangle|                          \\
		         & \lesssim |I|^{-1/2} \ell(Q)^{-d_2}|I^1| \Big( \frac{\ell(I)}{\ell(Q)} \Big)^{\alpha_2/2} \ell(Q)^{d_2} |I^2|
		= |I|^{1/2}\Big(\frac{\ell(I)}{\ell(Q)} \Big)^{\alpha_2/2}.
	\end{align*}
	Now, combining the above, we get
	$$
		S_{c1}^2 \le \sum_{k=0}^{\infty} 2^{-\alpha_2 k} \sum_{I^{(k)} = Q} |I| = |Q| \sum_{k=0}^{\infty}
		2^{-\alpha_2 k} \lesssim |Q|.
	$$

	Next, we move on study
	$$
		S_{c2} :=
		\Big(\sum_{\substack{I\in \calD_= \\ I\subset Q} }|\langle T(1_{3Q^1} \otimes 1_{(3Q^2)^c}, h_{I^1}^0\otimes h_{I^2}\rangle|^2
		\Big)^{1/2}.
	$$
	We estimate
	\begin{align*}
		|\langle T(1_{3Q^1\times (3Q^2)^c} ), h_{I^1}^0\otimes h_{I^2}\rangle|
		 & \le|\langle T(1_{  I^1\times (3Q^2)^c} ), h_{I^1}^0\otimes h_{I^2}\rangle|              \\
		 & +|\langle T(1_{(3Q^1\setminus I^1)\times (3Q^2)^c} ), h_{I^1}^0\otimes h_{I^2}\rangle|.
	\end{align*}
	By the partial kernel estimates we have
	\begin{align*}
		|\langle T(1_{  I^1\times (3Q^2)^c} ), h_{I^1}^0\otimes h_{I^2}\rangle|
		 & \lesssim \int_{I^2}\int_{ (3Q^2)^c } \frac{|I^1|^{\frac 12}|I^2|^{-\frac 12}\ell(I)^{\alpha_2}}{|x_2-y_2|^{d_2+\alpha_2}}\ud y_2 \ud x_2
		\le |I|^{1/2}\Big( \frac{\ell(I)}{\ell(Q)}\Big)^{\alpha_2}.
	\end{align*}
	By the full kernel estimates there holds that
	\begin{align*}
		 & |\langle T(1_{(3Q^1\setminus I^1)\times (3Q^2)^c} ), h_{I^1}^0\otimes h_{I^2}\rangle|                                                                                 \\
		 & \lesssim \int_{I} \int_{(3Q^1\setminus I^1)\times (3Q^2)^c}\frac{|I|^{-\frac 12}\ell(I)^{\alpha_2} }{|x_1-y_1|^{d_1} (|x_1-y_1|+|x_2-y_2|)^{d_2}|x_2-y_2|^{\alpha_2}}
		\ud y\ud x                                                                                                                                                               \\
		 & \le |I|^{-1/2} \ell(I)^{\alpha_2} \int_{I^1} \int_{3Q^1 \setminus I^1} \frac{\ud y_1 \ud x_1}{|x_1-y_1|^{d_1}}
		\times \int_{I^2} \int_{(3Q^2)^c} \frac{\ud y_2 \ud x_2}{|x_2-y_2|^{d_2 + \alpha_2}}                                                                                     \\
		 & \lesssim |I|^{-1/2} |I^2| \Big( \frac{\ell(I)}{\ell(Q)} \Big)^{\alpha_2}
		\int_{I^1} \int_{3Q^1 \setminus I^1} \frac{\ud y_1 \ud x_1}{|x_1-y_1|^{d_1}}.
	\end{align*}
	Next, notice that using the standard ``nearby'' integral estimate \eqref{eq:nearbyint} and
	polar coordinates we get
	\begin{align*}
		\int_{I^1} \int_{3Q^1 \setminus I^1} \frac{\ud y_1 \ud x_1}{|x_1-y_1|^{d_1}}
		 & \le \int_{I^1} \int_{3I^1 \setminus I^1}\frac{\ud y_1 \ud x_1}{|x_1-y_1|^{d_1}}
		+ \int_{I^1} \int_{3Q^1 \setminus 3I^1}\frac{\ud y_1 \ud x_1}{|x_1-y_1|^{d_1}}               \\
		 & \lesssim |I^1| + \int_{I^1} \int_{c_1 \ell(I)}^{c_2 \ell(Q)} r^{-d_1+d_1-1} \ud r \ud x_1
		\lesssim \Big(1 + \log \frac{\ell(Q)}{\ell(I)} \Big)|I^1|.
	\end{align*}
	Therefore, we have proved
	$$
		|\langle T(1_{(3Q^1\setminus I^1)\times (3Q^2)^c} ), h_{I^1}^0\otimes h_{I^2}\rangle|
		\lesssim |I|^{1/2}
		\Big(1 + \log \frac{\ell(Q)}{\ell(I)} \Big) \Big( \frac{\ell(I)}{\ell(Q)} \Big)^{\alpha_2}.
	$$
	We now get
	\begin{align*}
		S_{c2}^2
		= \sum_{k=0}^\infty\sum_{I^{(k)}= Q} |\langle T(1_{3Q^1} \otimes 1_{(3Q^2)^c} ),
		h_{I^1}^0\otimes h_{I^2}\rangle|^2
		\lesssim  \sum_{k=0}^\infty (1+k)^2 2^{-2\alpha_2 k} |Q|\lesssim |Q|.
	\end{align*}

	We move on to the final term
	$$
		S_{c12} :=
		\Big(\sum_{\substack{I\in \calD_= \\ I\subset Q} }|\langle T(1_{(3Q^1)^c} \otimes 1_{(3Q^2)^c}, h_{I^1}^0\otimes h_{I^2}\rangle|^2
		\Big)^{1/2}.
	$$
	Again, by the full kernel estimates we have
	\begin{align*}
		 & |\langle T(1_{(3Q^1)^c\times (3Q^2)^c} ), h_{I^1}^0\otimes h_{I^2}\rangle|                                                                            \\
		 & \le \int_{I} \int_{(3Q^1)^c\times (3Q^2)^c}\frac{|I|^{-\frac 12}\ell(I)^{\alpha_2} }{|x_1-y_1|^{d_1} (|x_1-y_1|+|x_2-y_2|)^{d_2}|x_2-y_2|^{\alpha_2}}
		\ud y\ud x                                                                                                                                               \\
		 & \le |I|^{-1/2} \ell(I)^{\alpha_2} \int_{I^2} \int_{(3Q^2)^c} \frac{1}{|x_2-y_2|^{d_2+\alpha_2}} \int_{I^1}
		\int_{\substack{ y^1 \in (3Q^1)^c \\ |x_1-y_1| \le |x_2-y_2|}}
		\frac{\ud y_1}{|x_1-y_1|^{d_1}} \ud x_1 \ud y_2 \ud x_2                                                                                                  \\
		 & + |I|^{-1/2} \ell(I)^{\alpha_2} \int_{I^2} \int_{(3Q^2)^c} \frac{1}{|x_2-y_2|^{\alpha_2}} \int_{I^1}
		\int_{y^1 \colon \, |x_1-y_1| > |x_2-y_2|}
		\frac{\ud y_1}{|x_1-y_1|^{d_1 + d_2}} \ud x_1 \ud y_2 \ud x_2                                                                                            \\
		 & =: A_{\le} + A_>.
	\end{align*}
	Now, we have
	\begin{align*}
		A_{\le} & \lesssim |I|^{-1/2} \ell(I)^{\alpha_2} \int_{I^2} \int_{(3Q^2)^c} \frac{1}{|x_2-y_2|^{d_2+\alpha_2}}
		\int_{I^1} \int_{c_1 \ell(Q)}^{|x_2-y_2|} r^{-1} \ud r \ud x_1 \ud y_2 \ud x_2                                                     \\
		        & \lesssim |I|^{-1/2} |I^1| \ell(I)^{\alpha_2} \int_{I^2} \int_{|y_2-x_2| \ge c\ell(Q)} \frac{1}{|y_2-x_2|^{d_2+\alpha_2}}
		\Big(C+\log \frac{|y_2-x_2|}{\ell(Q)}\Big) \ud y_2 \ud x_2                                                                         \\
		        & \lesssim |I|^{1/2} \ell(I)^{\alpha_2}
		\sum_{k=0}^{\infty}  (2^k\ell(Q))^{-d_2-\alpha_2} (k+1) (2^k\ell(Q))^{d_2}                                                         \\
		        & = |I|^{1/2} \Big( \frac{\ell(I)}{\ell(Q)}\Big)^{\alpha_2} \sum_{k=0}^{\infty} 2^{-\alpha_2 k} (k+1)
		\lesssim |I|^{1/2} \Big( \frac{\ell(I)}{\ell(Q)}\Big)^{\alpha_2}
	\end{align*}
	and
	\begin{align*}
		A_> \lesssim |I|^{-1/2}|I^1| \ell(I)^{\alpha_2} \int_{I^2} \int_{y^2\colon \, |x_2-y_2| \ge c\ell(Q)} \frac{\ud y_2 \ud x_2}{|x_2-y_2|^{d_2 + \alpha_2}}
		\lesssim |I|^{1/2} \Big( \frac{\ell(I)}{\ell(Q)}\Big)^{\alpha_2}.
	\end{align*}
	So, finally, we also have
	$$
		S_{c12}^2 \lesssim \sum_{k=0}^{\infty} 2^{-2\alpha_2 k} |Q| \lesssim |Q|.
	$$
	We are done.
\end{proof}
\begin{rem}
	The estimate also holds with $h_{I^1}$ in place of $h_{I^1}^0$ by the same proof,
	using the output-atom partial kernel in the $S_{c2}$ estimate.
	Recall that a cancellative one-parameter Haar $h_{I^1 \times I^2}$ is of one of the types
	$h_{I^1} \otimes h_{I^2}$, $h_{I^1}^0 \otimes h_{I^2}$ or $h_{I^1} \otimes h_{I^2}^0$. So the above is somehow saying
	that $T1$ satisfies some partial one-parameter $\BMO$ condition. Notice that we really
	appeared to exploit that the non-cancellation was in the first parameter and not
	the other way around. Notice also that the previous theorem implicitly tells
	us that even these non-standard pairings $\langle T1, h_{I^1}^0 \otimes h_{I^2}\rangle$
	can be meaningfully \emph{defined} using the flag kernel estimates for the non-local parts.
\end{rem}
\begin{rem}
	Notice that we had the $\sup$ running over $\calD_=$ in the theorem, and not over
	the more general $\Omega$ as in the definition of $\BMO_{\pro, F}$. On
	this one-parameter boundary $\ell(I^1) = \ell(I^2)$ these are the same thing by
	the following standard argument.
	Let
	$$
		C :=   \sup_{Q \in \calD_=} |Q|^{-\frac 12}
		\Big(\sum_{\substack{I\in \calD_= \\ I\subset Q} }|\langle T1, h_{I^1}^0\otimes h_{I^2}\rangle|^2
		\Big)^{1/2}.
	$$
	Let $\Omega \subset \R^d$ and
	choose the maximal $Q \in \calD_=$ that are contained in $\Omega$ --
	denote the collection of such $Q$ by $\calS$. Then
	\begin{align*}
		\Big(\sum_{\substack{I\in \calD_= \\ I\subset \Omega} }|\langle T1, h_{I^1}^0\otimes h_{I^2}\rangle|^2
		\Big)^{1/2} & =
		\Big(\sum_{Q \in \calS} \sum_{\substack{I\in \calD_= \\ I\subset Q} }|\langle T1, h_{I^1}^0\otimes h_{I^2}\rangle|^2
		\Big)^{1/2}                                                         \\
		\le
		\Big(\sum_{Q \in \calS}  C^2 |Q| \Big)^{1/2}
		                                                                                                       & = C \Big| \bigcup_{Q \in \calS} Q \Big|^{1/2} \le C|\Omega|^{1/2}.
	\end{align*}
\end{rem}

So Theorem \ref{thm:T1nec} combined with the necessity of $T1 \in \BMO_{\pro}$
(and the above remark)
tells us that it is reasonable to assume $T1 \in \BMO_{\pro, F}$ in our $T1$ style
representation theorem (since the condition is necessary for $L^2$ boundedness). Assuming it then
allows us to derive as a direct
corollary that our operators are weighted bounded with the optimal flag weights -- a better
result than in the pure bi-parameter setting (where strong bi-parameter weights are required).
But even if the conclusion is better and the assumption is reasonable (necessary), it might
bother the reader that this flag adapted condition $T1 \in \BMO_{\pro, F}$ is not strictly weaker than the $T1 \in \BMO_{\pro}$
that appears in the classical bi-parameter $T1$ \cite{Jou1985} or the bi-parameter representation theorem \cite{Ma1}.
Indeed, generally speaking the philosophy is that only the kernel estimates are stronger, and this allows
us to have the other assumptions to be flag adapted in a way that is weaker than the corresponding pure bi-parameter
assumptions (and, of course, to also reach the stronger conclusion of boundedness with flag weights).
But the previous theorem, in fact, tells us that $T1 \in \BMO_{\pro, F}$ follows if $T$ is a bi-parameter flag
SIO satisfying $T1 \in \BMO_{\pro, <}$ (we define this below) and the rather weak, one-parameter style, cube testing condition
$\|1_QT1_Q\|_{L^2} \lesssim |Q|^{1/2}$ (this testing condition already appears, e.g., in many local formulations
of \textbf{one-parameter} $T1$). Here $T1 \in \BMO_{\pro, <}$ is just defined using the grid $\calD_<$
and the coefficients $\langle T1, h_{I, F}\rangle = \langle T1, h_{I^1} \otimes h_{I^2}\rangle$
so this condition is weaker than $T1 \in \BMO_{\pro}$ (that would require the same Carleson condition
but in the whole $\calD$ instead of $\calD_<$). We feel that this makes the assumption
$T1 \in \BMO_{\pro, F}$ completely justified -- on top of the other important points discussed above,
it is also weaker, at least for flag SIOs with very mild one-parameter style testing, than $T1 \in \BMO_{\pro}$.

Finally, for added clarity to non-experts, we discuss a slightly different formulation of \eqref{eq:h1bmoseq} for functions.
So, let $b$, for instance, be in the dyadic product flag $\BMO$, $b \in \BMO_{\pro, F}(\calD)$ in the sense that
\begin{align*}
	\sum_{\substack{ I \in \calD_F \\ I \subset \Omega }} |\langle b, h_{I, F}\rangle|^2 \lesssim |\Omega|
\end{align*}
or, equivalently,
$$
	\sum_{I \in \calD_F} |\langle b, h_{I, F}\rangle| |c_I| \lesssim \Big\|
	\Big(\sum_{I \in \calD_F} |c_I|^2 \frac{1_I}{|I|} \Big)^{1/2}\Big\|_{L^1}.
$$
Notice now that if $f$ is a (reasonable) function and we write
$f = \sum_{I \in \calD_F} \langle f, h_{I, F}\rangle h_{I, F}$
then
$$
	|\langle b, f\rangle| = \Big| \sum_{I \in \calD_F} \langle b, h_{I, F} \rangle \langle f, h_{I, F}\rangle\Big|
	\lesssim \|S_{\calD_F} f\|_{L^1}.
$$
Conversely, if we assume that $\langle b, f\rangle$ satisfies the above estimate for all reasonable $f$, then
given $c_I$ we can choose $f$ so that $\langle f, h_{I, F}\rangle = \sign( \langle b, h_{I, F}\rangle c_I)c_I$
so that
$$
	\sum_{I \in \calD_F} |\langle b, h_{I, F}\rangle| |c_I|
	= |\langle b, f\rangle| \lesssim
	\Big\|\Big(\sum_{I \in \calD_F} |c_I|^2 \frac{1_I}{|I|} \Big)^{1/2}\Big\|_{L^1}.
$$
Therefore, flag product $\BMO$ can equivalently be formulated by demanding that
$$
	|\langle b, f\rangle| \lesssim \|S_{\calD_F} f\|_{L^1}
$$
uniformly in all dyadic grids $\calD$.
Notice that this argument required that $h_{I ,F}$, $I \in \calD_F$, forms a base.
So for $\calD_{\pro, <}$ we would have to be careful, since not all
functions could be represented using just $h_{I, F}$, $I \in \calD_<$.

\subsection{Little flag $\BMO$ and flag product $\BMO$}
We define that $b \in \bmo_F(\R^d)$ if $b$ is locally integrable and
$$
	\|b\|_{\bmo_F(\R^d)} := \sup_{R \in \calR_F} \frac{1}{|R|} \int_R |b-\langle b \rangle_R| < \infty.
$$
This is the ``little'' flag $\BMO$ space.
We define that $b \in \BMO^{(2)}(\R^d)$ if
$$
	\|b\|_{\BMO^{(2)}(\R^d)} := \esssup_{x_1 \in \R^{d_1}} \|b(x_1, \cdot)\|_{\BMO(\R^{d_2})} < \infty,
$$
where $\BMO(\R^{d_2})$ denotes the classical one-parameter $\BMO$ in $\R^{d_2}$. It turns out
that, see \cite{DLOPW-FLAG}, there is the easy description
$$
	\bmo_F(\R^d) = \BMO(\R^d) \cap \BMO^{(2)}(\R^d).
$$
The equivalent $H^1$-$\BMO$ style dual estimates are by far the most useful
for us. In particular, being in $\bmo_F(\R^d)$ is then equivalent to satisfying,
uniformly for all dyadic grids, the following two different one-parameter square function estimates:
\begin{equation}\label{eq:bmo1}
	|\langle b, f\rangle| \lesssim \Big\|\Big(\sum_{I \in \calD_{=}} |\Delta_I f|^2 \Big)^{1/2}\Big\|_{L^1}
\end{equation}
and
\begin{equation}\label{eq:bmo2}
	|\langle b, f\rangle| \lesssim \Big\|\Big(\sum_{I^2 \in \calD^2} |\Delta_{I^2} f|^2 \Big)^{1/2}\Big\|_{L^1}.
\end{equation}
This little flag $\BMO$ space is only relevant for us in the context of commutators.
The product flag $\BMO$ is more relevant in the representation theorem
context. However, their mutual relationship is also of interest in the commutator setting, since
different $\BMO$ conditions are useful for different paraproducts.

The inclusions of various BMO spaces are only clear on the dual side.
With this idea we get the following key inclusion.
\begin{thm}\label{thm:bmoembedding}
	We have that
	$$
		\BMO^{(2)} \subset \BMO_{\pro, F}.
	$$
	In particular, we have
	$$
		\bmo_F \subset \BMO_{\pro, F}.
	$$
\end{thm}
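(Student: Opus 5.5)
The plan is to prove the first inclusion on the dual side, using the square function characterisation of $\BMO_{\pro, F}$ from the end of the previous subsection. The second inclusion then follows at once from $\bmo_F = \BMO \cap \BMO^{(2)}$. So let $b \in \BMO^{(2)}$ and fix a dyadic grid $\calD = \calD^1 \times \calD^2$. It suffices to prove
$$
|\langle b, f\rangle| \lesssim \|b\|_{\BMO^{(2)}} \|S_{\calD_F} f\|_{L^1}
$$
for reasonable $f$ (say finite Haar expansions), with constants uniform in $\calD$.

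\textbf{Step 1: reduce to an estimate for $f$ alone.} Fubini and the one-parameter $H^1$--$\BMO$ duality in $\R^{d_2}$, applied for a.e. fixed $x_1$ to $b(x_1,\cdot)$, give
$$
|\langle b, f\rangle| \le \int_{\R^{d_1}} |\langle b(x_1,\cdot), f(x_1,\cdot)\rangle| \ud x_1 \lesssim \|b\|_{\BMO^{(2)}} \Big\| \Big( \sum_{I^2 \in \calD^2} |\Delta_{I^2} f|^2\Big)^{1/2}\Big\|_{L^1(\R^d)}.
$$
This is exactly \eqref{eq:bmo2}. It therefore remains to prove the purely dyadic inequality
$$
\Big\| \Big( \sum_{I^2} |\Delta_{I^2} f|^2\Big)^{1/2}\Big\|_{L^1} \lesssim \|S_{\calD_F} f\|_{L^1},
$$
where in the bi-parameter setting $\Delta_{I,F} f = \Lambda_{I^1}\Delta_{I^2} f$ and $\ell(I^1) \le \ell(I^2)$.

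\textbf{Step 2: expand each piece in the first variable.} For a fixed $I^2$, put $g_{I^2} := \Delta_{I^2} f$. Then
$$
g_{I^2} = \sum_{\ell(I^1) \le \ell(I^2)} \Lambda_{I^1} g_{I^2}.
$$
This is a one-parameter martingale expansion in $x_1$ started at the scale $\ell(I^2)$: a top-level average term followed by martingale differences. The inequality in Step 1 is therefore an $\ell^2(\calD^2)$-valued $L^1$ lower square function bound in $\R^{d_1}$. The natural tool is the vector-valued lower bound $\|g\|_{L^1(\ell^2)} \lesssim \|S g\|_{L^1(\ell^2)}$, which holds for Hilbert space valued dyadic martingales, and which the paper already used in the vector-valued lower-bound steps of Theorem \ref{thm:square}.

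\textbf{Main obstacle: different starting scales.} Each component $g_{I^2}$ starts at its own scale $\ell(I^2)$, so the $\ell^2$-valued object is not literally a single martingale in $x_1$: the $E_{I^1}$ term at $\ell(I^1) = \ell(I^2)$ is not a martingale difference. I would handle this exactly as in the proof of Theorem \ref{thm:square}, by splitting according to $\calD_<$ and $\calD_=$.
\begin{itemize}
\item[(a)] Decompose $\Delta_{I^2} f$ further in $x_1$ using the \emph{one-parameter} lattice $\calD_=$. Apply the ordinary $\ell^2$-valued lower square function bound with $v = 1$ and $q = 1$ over $L \in \calD_=$.
\item[(b)] Use $\Delta_L = (\Delta_{L^1} + E_{L^1})\Delta_{L^2} + \Delta_{L^1} E_{L^2}$ together with the orthogonality relations computed there. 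The terms should then collapse to $(S_F^{=} f)^2 + (S_F^{<} f)^2$, that is, to $(S_{\calD_F} f)^2$.
\end{itemize}
This is precisely the bi-parameter analogue of the computation around \eqref{eq:L1}.

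\textbf{Fallback for the obstacle.} If that bookkeeping resists, I would instead treat the starting average by a stopping/Davis-type decomposition. The function $E_{I^1} \Delta_{I^2} f$ at $\ell(I^1) = \ell(I^2)$ is itself a martingale difference of $f$ in the one-parameter lattice $\calD_=$. So the vector of these top terms can be absorbed into a one-parameter square function on $\calD_=$.
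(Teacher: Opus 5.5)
Your proposal is correct and is essentially the paper's argument: the same reduction through \eqref{eq:bmo2}, followed by a vector-valued lower square function bound applied to the sequence $(\Delta_{I^2} f)_{I^2}$.

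The only difference is packaging. The paper quotes the vector-valued lower bound for $S_{\calD_F}$ (obtained from Theorem \ref{thm:square} by extrapolation) and collapses $\sum_{I^2}(S_{\calD_F}\Delta_{I^2}f)^2=(S_{\calD_F}f)^2$ by orthogonality. Your steps (a)--(b) instead inline the $\calD_=$ part of that proof, and this works: $\Delta_L\Delta_{I^2}f$ equals $(\Delta_{L^1}+E_{L^1})\Delta_{I^2}f$ if $L^2=I^2$, equals $1_{L^2}\Delta_{L^1}\Delta_{I^2}f$ if $L^2\subsetneq I^2$, and is $0$ otherwise. This route needs only the unweighted Hilbert-valued Davis inequality at $q=1$, so the fallback is unnecessary.
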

\begin{proof}
	Using \eqref{eq:bmo2} we have that
	$$
		|\langle b, f\rangle| \lesssim \Big\|\Big(\sum_{I \in \calD^2} |\Delta_{I^2} f|^2 \Big)^{1/2}\Big\|_{L^1}.
	$$
	We will apply the vector-valued lower flag square function estimate
	$$
		\Big\| \Big( \sum_i |g_i|^2 \Big)^{1/2}\Big\|_{L^q}
		\lesssim \Big\| \Big( \sum_i |S_{\calD_F} g_i|^2 \Big)^{1/2}\Big\|_{L^q}, \qquad 0 < q < \infty,
	$$
	to the above one-parameter square function estimate.
	The weighted estimate that we proved for flag square functions
	implies this vector-valued version by extrapolation.
	Now, we get
	$$
		|\langle b, f\rangle| \lesssim \Big\|\Big(\sum_{I \in \calD^2} |S_{\calD_F}\Delta_{I^2} f|^2 \Big)^{1/2}\Big\|_{L^1}
		= \|S_{\calD_F} f\|_{L^1}.
	$$
	This proves the claimed inclusion.
\end{proof}

\section{Flag Calder\'on--Zygmund operators}
\subsection{Cancellative tri-parameter flag CZOs}\label{sec:triparCZO}
So far, we have defined cancellative SIOs in the tri-parameter setting by stating various desirable full and partial kernel estimates. In this section, we will formulate additional conditions on the flag SIO $T$ itself;
SIOs satisfying the subsequent cancellation assumptions and weak boundedness property will be called cancellative tri-parameter flag Calder\'on--Zygmund operators
(cancellative flag CZOs).
\subsubsection{Cancellation assumptions}
We assume that $\calD = \calD^1 \times \calD^2 \times \calD^3$
for arbitrary dyadic lattices $\calD^i$ in $\R^{d_i}$ and
$I, J \in \calD_F$ with $\ell(I) = \ell(J)$.
We assume that
\begin{align*}
	\langle T(1 \otimes 1_{I^2} \otimes 1_{I^3}), h_{I^1} \otimes 1_{J^2} \otimes 1_{J^3}\rangle
	=  \langle T(h_{I^1} \otimes 1_{I^2} \otimes 1_{I^3}), 1 \otimes 1_{J^2} \otimes 1_{J^3}\rangle
	= 0,
\end{align*}
and that the four symmetric terms, where there is a ``$1$'' in parameters $i=2$ or $i=3$
instead, also vanish. The pairings appearing in the cancellation assumptions are easy to define:
\begin{align*}
	\langle T(1 \otimes & 1_{I^2} \otimes 1_{I^3}), h_{I^1} \otimes 1_{J^2} \otimes 1_{J^3}\rangle                               \\
	                    & := \langle T(1_{3I^1} \otimes 1_{I^2} \otimes 1_{I^3}), h_{I^1} \otimes 1_{J^2} \otimes 1_{J^3}\rangle \\
	                    &\quad + \langle T(1_{(3I^1)^c} \otimes 1_{I^2} \otimes 1_{I^3}), h_{I^1} \otimes 1_{J^2} \otimes 1_{J^3}\rangle.
\end{align*}
We assume that $T$ is such that the first term a priori makes sense (the bilinear form
of $T$ is given to be well-defined for indicators of rectangles). The second term can be \textbf{defined}
using natural kernel representations (either full or some partial ones depending on whether
$I^i = J^i$ or $I^i \ne J^i$ for $i=2,3$). Various mixed H\"older and size estimates
guarantee that such integrals are absolutely convergent using the fact that $\int h_{I^1} = 0$.

\subsubsection{Weak boundedness property}
The flag weak boundedness property (WBP) requires that, for every flag rectangle $I$,
\begin{equation*}
	| \langle T1_I, 1_I \rangle| \lesssim |I|.
\end{equation*}

\begin{defn}\label{defn:flagCZO}
	We say that a linear operator $T$ (such that its bilinear form is at least defined
	for indicators of rectangles) is a (cancellative) flag type Calder\'on-Zygmund operator
	if $T$ has the full kernel representation,
	the partial kernel representations (i.e., it is a cancellative flag SIO) and, in addition,
	satisfies the cancellation assumptions
	and the weak boundedness property.
\end{defn}

\subsubsection{Concrete examples}
The convolution type operator $T_{NRS}$ defined on $\R\times \R \times \R$ with kernel
\[
	K_{NRS}(x_1, x_2, x_3)= \frac{\sign (x_2) \sign (x_3)}{x_1 \sqrt{x_1^2+x_2^2}\sqrt{x_1^2+x_2^2+x_3^2}}
\]
was introduced in \cite{NRS2001} by Nagel, Ricci and Stein.

It is straightforward that $K_{NRS}$ satisfies all the full kernel estimates.
On the other hand, the kernel is antisymmetric in each variable
so that all the partial  kernels have to be zero and also that $\langle T_{NRS}(1_I), 1_I\rangle=0$ for all rectangles $I$. To conclude that $T_{NRS}$ belongs to our class we
show the following cancellation property.

\begin{prop}\label{prop:nrs-cancellation}
	The operator $T_{NRS}$ satisfies that
	\begin{equation*}
		\langle T_{NRS}(1\otimes 1_{I^{23}}), h_{I^1}\otimes 1_{J^{23}}\rangle =0,
	\end{equation*}
	where the $I^i$ and $J^i$ are arbitrary intervals. The symmetrical equalities (i.e. when $T$ is replaced by its duals or the pair $(1, h_{I^1})$ is in other slots) also hold.
\end{prop}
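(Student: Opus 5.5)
The plan is to exploit the antisymmetry of $K_{NRS}$ in the variable carrying the ``$1$'', after reducing the pairing to an absolutely convergent (or symmetrically truncated) integral. Write $u=x-y$. For the model case $\langle T_{NRS}(1\otimes 1_{I^{23}}), h_{I^1}\otimes 1_{J^{23}}\rangle$ the pairing formally equals
$$
\int_{J^{23}}\int_{I^{23}}\int_{\R} h_{I^1}(x_1)\Big(\int_{\R} K_{NRS}(x_1-y_1,x_{23}-y_{23})\ud y_1\Big)\ud x_1\ud y_{23}\ud x_{23}.
$$
For fixed $u_{23}$ with $u_2u_3\ne 0$ (a null set otherwise), the function $t\mapsto K_{NRS}(t,u_2,u_3)$ is odd in $t$. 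It behaves like $c(u_{23})/t$ near $t=0$ and like $|t|^{-3}$ at infinity. Hence its principal value integral over $\R$ vanishes. So I will do the following:
\begin{itemize}
\item introduce the truncated kernels $K_{\eps,R}=K_{NRS}1_{\{\eps<|x_1-y_1|<R\}}$;
\item check that for each $\eps,R$ the truncated pairing is identically zero, by Fubini and the oddness in $t$ of $\int_{\eps<|t|<R}$;
\item show that the truncated pairings converge to the pairing in the sense defined in the paper (local part on $3I^1$ plus kernel-defined part on $(3I^1)^c$).
\end{itemize}

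For the convergence step, the far part on $(3I^1)^c$ converges absolutely using $\int h_{I^1}=0$ and the mixed H\"older estimate in $x_1$. This is exactly how it was defined, so the $R\to\infty$ limit is dominated convergence. For the local part, I will use that the bilinear form of the convolution operator on indicators of rectangles is given by the principal value in $x_1$. This is the standard meaning for $T_{NRS}$, whose full kernel is odd in $x_1$, so the PV exists. The $\eps\to 0$ limit is then the definition itself. The second pairing $\langle T_{NRS}(h_{I^1}\otimes 1_{I^{23}}), 1\otimes 1_{J^{23}}\rangle$ is handled identically, integrating in $x_1$ over $\R$ instead of $y_1$. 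The adjoint kernel $K_{NRS}(-u)$ is again odd in $u_1$, so the dual statements follow the same way.

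For the symmetric statements with the $1$ in slot $2$ (respectively slot $3$), I will integrate out $y_2$ (respectively $y_3$) over $\R$. The relevant oddness is that $K_{NRS}$ is odd in $x_2$: the factor $\sign(x_2)$ flips, while $x_1$, $\sqrt{x_1^2+x_2^2}$ and $\sqrt{x_1^2+x_2^2+x_3^2}$ are even in $x_2$. Similarly, $K_{NRS}$ is odd in $x_3$. In slot $2$ the inner integral decays like $|t|^{-2}$ and is locally integrable for $u_1\ne 0$, so oddness gives zero directly. In slot $3$ the decay is only $|t|^{-1}\cdot|t|^{-1}$ in combination, and in the worst case only like $\sign(t)/|t|$. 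So I will use symmetric truncations $\{|y_3-x_3|<R\}$ centred appropriately, and use $\int h_{I^3}=0$ together with the H\"older estimate in $x_3$ to pass to the limit. Equivalently, replace $1$ by $1_{3I^3}+1_{(3I^3)^c}$ as in the definition and show that the difference between the symmetric truncation and $1_{[-R,R]}$-type truncations tends to zero.

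The main obstacle is this justification of limits: matching the paper's definition of the pairing (local term plus kernel-defined far term) with the symmetric truncations on which oddness acts. This is most delicate in the third parameter, where the kernel decays only like $|t|^{-1}$ in $t=x_3-y_3$. My first attempt there will be to subtract $K_{NRS}(x_1-y_1,x_2-y_2,c_{I^3}-y_3)$ using $\int h_{I^3}=0$. This gains a factor $(\ell(I^3)/|t|)$ and makes every tail absolutely integrable, after which dominated convergence and oddness finish the argument.
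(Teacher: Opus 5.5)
Your argument is correct in outline, but it follows a different route from the paper's.

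\textbf{The paper's route.} The paper never uses oddness in $x_1$. It first uses antisymmetry in $x_2$ and $x_3$ to reduce to $I^2\cap J^2=I^3\cap J^3=\emptyset$. It then replaces $1$ by $1_{J_A^1}$, where $J_A^1=AI^1\cap(AI^1+|I^1|/2)\supset 3I^1$. Translation invariance of $T_{NRS}$ by $|I^1|/2$ in $x_1$ matches the left-half pairing for $AI^1$ with the right-half pairing for $AI^1+|I^1|/2$. What remains are two pairings against sets of measure $|I^1|/2$ at distance $\sim A|I^1|$, and these tend to $0$ by the size estimate. This uses only linearity, translation invariance and the kernel representation for separated supports. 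So the paper never needs to know how the local form on rectangles is computed, and the argument transfers verbatim to general flag multipliers, which is exactly how the paper uses it next.

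\textbf{Your route.} You exploit the special antisymmetry of $K_{NRS}$ in the variable carrying the constant. This gives a more transparent and stronger fact: the symmetrically truncated pairings vanish identically. In slot $1$ it even avoids the reduction to disjoint $I^{23},J^{23}$. The reason is that $|K_{NRS}(u)|\le|u_1|^{-3}$ is integrable on $|u_1|\ge|I^1|$ whatever the overlaps. This also makes your $R\to\infty$ step a genuine dominated convergence without subtracting the $c_{I^1}$ term, even though your truncation window moves with $x_1$.

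\textbf{What your route costs.} There are two points you must still supply.
\begin{enumerate}
\item You must verify that the a priori form of $T_{NRS}$ on indicators of rectangles is the principal value in $x_1$. This is true. The cross-correlation of two rectangle indicators is Lipschitz with compact support, so after subtracting its value at $u_1=0$ the truncated integrals converge absolutely; one then matches this with the multiplier definition by mollification. The paper's translation argument never needs this identification.
\item In slots $2$ and $3$, the far parts on $(3I^2)^c$ and $(3I^3)^c$ are not absolutely integrable in $u_1$ when $I^1$ and $J^1$ overlap. For example, $\int_{\R}|K_{NRS}(u_1,t,u_3)|\,dt\sim |u_1|^{-1}|u_3|^{-1}\log(|u_3|/|u_1|)$ for $|u_1|\ll|u_3|$. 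So ``oddness gives zero directly'' is not a plain Fubini argument there. Either keep the $u_1$-truncation throughout and check consistency with the partial-kernel definitions (which vanish for $T_{NRS}$), or first reduce to disjoint intervals by antisymmetry as the paper does.
\end{enumerate}
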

\begin{proof}
	First of all, since the kernel is antisymmetric in each variable, we may assume that $I^2$ and $J^2$, and also $I^3$ and $J^3$, are disjoint.
	Otherwise, we may, e.g., write
	\[
		1_{I^2}=1_{I^2 \cap J^2}+ 1_{I^2\setminus J^2},\quad 1_{J^2}=1_{I^2 \cap J^2}+ 1_{J^2\setminus I^2}.
	\]
	Then
	\[
		\langle T_{NRS}(1\otimes 1_{I^2 \cap J^2} \otimes 1_{I^{3}}), h_{I^1}\otimes 1_{I^2 \cap J^2} \otimes 1_{J^{3}}\rangle=0.
	\]
	As usual, write
	\begin{align*}
		\langle T_{NRS}(1\otimes 1_{I^{23}}), h_{I^1}\otimes 1_{J^{23}}\rangle & = \langle T_{NRS}(1_{\R\setminus 3I^1}\otimes 1_{I^{23}}), h_{I^1}\otimes 1_{J^{23}}\rangle   \\
		                                                                       & \hspace{2cm}+ \langle T_{NRS}(1_{ 3I^1}\otimes 1_{I^{23}}), h_{I^1}\otimes 1_{J^{23}}\rangle.
	\end{align*}
	Here the second term is already defined. Using the cancellation of $h_{I^1}$ the first term can be interpreted as
	\begin{equation*}
		\iint \big(K(x-y)-K((c_{I^1}, x_2, x_3)- y)\big) 1_{\R\setminus 3I^1}\otimes 1_{I^{23}} (y) h_{I^1}\otimes 1_{J^{23}}(x)\ud y\ud x,
	\end{equation*}
	where using the full kernel estimates the integral is absolutely convergent. Consider the set
	\[
		J_A^1:= AI^1 \cap (AI^1+\frac{|I^1|}{2}),
	\]
	where $A$ is sufficiently large (for instance $A\ge 100$). Then it is easy to see that we should have $3I^1\subset J_A^1$.
	Hence, we have
	\begin{align*}
		\langle T_{NRS}(1_{J_A^1}\otimes 1_{I^{23}}), h_{I^1}\otimes 1_{J^{23}}\rangle & = \langle T_{NRS}(1_{J_A^1\setminus 3I^1}\otimes 1_{I^{23}}), h_{I^1}\otimes 1_{J^{23}}\rangle \\
		                                                                               & \hspace{2cm}+ \langle T_{NRS}(1_{ 3I^1}\otimes 1_{I^{23}}), h_{I^1}\otimes 1_{J^{23}}\rangle.
	\end{align*}
	\[
		\langle T_{NRS}(1\otimes 1_{I^{23}}), h_{I^1}\otimes 1_{J^{23}}\rangle=\lim_{A\to \infty} \langle T_{NRS}(1_{J_A^1}\otimes 1_{I^{23}}), h_{I^1}\otimes 1_{J^{23}}\rangle.
	\]
	Now, the key observation is that $T_{NRS}$ is translation invariant (since it is a convolution operator), i.e.,
	\[
		\langle T(1_I), 1_J\rangle = \langle T(1_{I+t}), 1_{J+t}\rangle,\quad \forall t\in \R^3.
	\]
	Let $I_l^1$ and $I_r^1$ be the left and right halves of $I^1$ and $t=(|I^1|/2, 0,0)$. We have
	\begin{align*}
		\langle T(1_{AI^1}\otimes 1_{I^{23}}), 1_{I_l^1}\otimes 1_{J^{23}}\rangle = \langle T(1_{AI^1+|I^1|/2}\otimes 1_{I^{23}}), 1_{I_r^1}\otimes 1_{J^{23}}\rangle
	\end{align*}
	so that
	\begin{align*}
		\langle T_{NRS}(1_{J_A^1}\otimes 1_{I^{23}}), h_{I^1}\otimes 1_{J^{23}}\rangle
		 & = |I^1|^{-\frac 12}\langle T_{NRS}(1_{J_A^1}\otimes 1_{I^{23}}), (1_{I_l^1}-1_{I_r^1})\otimes 1_{J^{23}}\rangle               \\
		 & =  |I^1|^{-\frac 12}\langle T_{NRS}(1_{(AI^1+|I^1|/2)\setminus J_A^1}\otimes 1_{I^{23}}),  1_{I_r^1}\otimes 1_{J^{23}}\rangle \\
		 & \quad-  |I^1|^{-\frac 12}\langle T_{NRS}(1_{AI^1 \setminus J_A^1}\otimes 1_{I^{23}}),  1_{I_l^1}\otimes 1_{J^{23}}\rangle.
	\end{align*}
	Then invoking the full kernel estimates it is not difficult to see that the  two terms at the right hand side go to zero as $A\to \infty$, since
	\[
		|(AI^1+|I^1|/2)\setminus J_A^1|=|AI^1 \setminus J_A^1|=|I^1|/2.
	\]
	This completes the proof.
	All the symmetrical estimates follow in a similar way.
\end{proof}

In particular, we have proved the following.
\begin{thm}
	$T_{NRS}$ is a cancellative flag type Calder\'on-Zygmund operator.
	In particular, (by our upcoming representation theorem) for every $1<p<\infty$ and all $w\in A_{p, F}$
	we have
	\[
		\|T_{NRS}f\|_{L^p(w)}\lesssim \|f\|_{L^p(w)}.
	\]
\end{thm}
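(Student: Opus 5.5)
This theorem follows by assembling what has already been established: we only need to check that $T_{NRS}$ meets every requirement of Definition \ref{defn:flagCZO}, and then invoke the upcoming cancellative tri-parameter representation theorem (Theorem \ref{thm:main1}) to obtain the weighted bound. There are four things to verify, in this order: the full kernel estimates, the partial kernel representations and estimates, the cancellation assumptions, and the weak boundedness property.

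\emph{Full kernel estimates.} For $x_1 \ne 0$ we have $|K_{NRS}(x)| = \size_F(x,0)$ exactly. This holds because $\sqrt{x_1^2+x_2^2} \sim |x_1|+|x_2|$, and similarly in three variables, with $d_i = 1$. The kernel is smooth off the hyperplanes $x_2 = 0$ and $x_3 = 0$ (apart from $x_1 = 0$), so the mixed H\"older conditions reduce to derivative bounds wherever the signs are constant. When $x$ and $x'$ lie on opposite sides of $\{x_2 = 0\}$ or $\{x_3 = 0\}$, the constraint $|x_i'-x_i| \le |x_i-y_i|/2$ forbids this, because the sign of $x_i - y_i$ is then fixed. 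Differentiating $(x_1^2+x_2^2)^{-1/2}$ in $x_2$ gains a factor $(|x_1|+|x_2|)^{-1} \le |x_2|^{-1}$, and likewise for the other variables. This gives all of the one-, two- and three-variable difference estimates with $\alpha_i = 1$. The adjoint kernels are handled by the same argument, since the kernel is odd in every variable.

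\emph{Partial kernels.} Because $K_{NRS}$ is odd in each variable separately, pairing against $1_{I^i}\otimes 1_{I^i}$ in a fixed parameter $i$ integrates an odd function over a symmetric set. For example, $\int_{I^3}\int_{I^3} \sign(x_3-y_3)(\cdots)\,\ud y_3\,\ud x_3 = 0$ because the integrand is antisymmetric under swapping $x_3 \leftrightarrow y_3$. All partial kernels can therefore be taken to be $0$, and the zero kernel trivially satisfies the required estimates. This needs a short justification that the bilinear form on indicators is well defined, which one can handle through the truncations $\epsilon < |x_1| < \epsilon^{-1}$ that remain odd.

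\emph{Cancellation and weak boundedness.} The cancellation assumptions are exactly Proposition \ref{prop:nrs-cancellation}, including its symmetric versions. The weak boundedness property holds because the same antisymmetry gives $\langle T_{NRS}1_I, 1_I\rangle = 0$ for every rectangle $I$.

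With all four conditions in place, $T_{NRS}$ is a cancellative flag CZO, and the weighted bound for all $w \in A_{p,F}$ follows from Theorem \ref{thm:main1}. The only delicate point I expect is the rigorous \emph{definition} of $T_{NRS}$ on indicators, since the singularity along $x_1 = 0$ is not integrable in the usual sense. I would realize $T_{NRS}$ as a principal value in $x_1$ (or via the odd truncations above) and check that the identities used in the partial kernel and weak boundedness steps survive the limit.
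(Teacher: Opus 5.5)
Your proposal is correct and follows the paper's own argument. The full kernel estimates are checked directly. The partial kernels vanish, and the weak boundedness property holds with $\langle T_{NRS}1_I,1_I\rangle=0$, by the antisymmetry of $K_{NRS}$ in each variable. The cancellation assumptions are Proposition~\ref{prop:nrs-cancellation}, and the weighted bound then comes from the cancellative tri-parameter representation theorem (Theorem~\ref{thm: triparam rep} and its corollary). One minor slip: $|K_{NRS}(x)|$ is only comparable to $\size_F(x,0)$, not exactly equal to it, but comparability is all that is needed.
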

We note that $T_{NRS}$ is a flag type CZO that also showcases the necessity of
the condition $w\in A_{p, F}$.
\begin{thm}
	Suppose that $w$ is a weight for which
	\[
		\|T_{NRS}f\|_{L^p(w)}\lesssim \|f\|_{L^p(w)}
	\]
	for all $f \in L^p(w)$, where $1<p<\infty$. Then $w\in A_{p, F}$.
\end{thm}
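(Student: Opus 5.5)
The plan is to adapt the classical Stein argument showing that $L^p(w)$ boundedness of a nondegenerate Calder\'on--Zygmund operator forces $w \in A_p$. By Proposition \ref{prop:weight} (ii), it is enough to prove three one-parameter $A_p$ conditions:
\begin{itemize}
\item $w \in A_p(\R^3)$;
\item $w(x_1,\cdot) \in A_p(\R^2)$ uniformly in $x_1$;
\item $w(x_1,x_2,\cdot) \in A_p(\R)$ uniformly in $(x_1,x_2)$.
\end{itemize}
A cleaner alternative is to check the flag $A_p$ condition directly on each flag rectangle $R = I^1 \times I^2 \times I^3$ with $\ell(I^1) \le \ell(I^2) \le \ell(I^3)$. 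The plan follows this direct route.

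\emph{Choice of test sets.} Given $R$, pick a translate $\wt R = R + t$ with $t = (t_1,t_2,t_3)$, where $t_1 \sim \ell(I^1)$, $t_2 \sim \ell(I^2)$ and $t_3 \sim \ell(I^3)$. The shifts are chosen so that for $x \in \wt R$ and $y \in R$ the differences $x_i - y_i$ have fixed signs and satisfy $|x_i - y_i| \sim \ell(I^i)$. Then $\sign(x_2-y_2)\sign(x_3-y_3)$ is constant and $x_1 - y_1$ has a fixed sign. Using the flag ordering of the side lengths, we get $|K_{NRS}(x-y)| \sim \ell(I^1)^{-1}\ell(I^2)^{-1}\ell(I^3)^{-1} = |R|^{-1}$, and the kernel has constant sign on $\wt R \times R$.

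\emph{Pointwise lower bound.} For $f \ge 0$ supported on $R$ and $x \in \wt R$, this gives
\[
|T_{NRS} f(x)| \gtrsim \langle f \rangle_R .
\]
Since $\wt R$ is a flag rectangle of comparable size, the same reasoning applies with the roles of $R$ and $\wt R$ exchanged, using the adjoint kernel $K_{NRS}(-\cdot) = -K_{NRS}(\cdot)$.

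\emph{Two-sided doubling.} Testing $f = 1_E$ for a set $E \subset R$, the boundedness gives
\[
\Big(\frac{|E|}{|R|}\Big)^p w(\wt R) \lesssim w(E).
\]
Taking $E = R$ yields $w(\wt R) \lesssim w(R)$, and by symmetry $w(R) \lesssim w(\wt R)$. This also shows $w > 0$ a.e., which rules out degeneracies.

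\emph{Main obstacle: obtaining $A_p$.} Testing with $f = \sigma_N 1_R$, where $\sigma_N = w^{-1/(p-1)}$ truncated as in Proposition \ref{prop:maxfunc}, gives
\[
w(\wt R)\, \langle \sigma_N \rangle_R^p \lesssim \sigma_N(R).
\]
This rearranges to
\[
\frac{w(\wt R)}{|R|}\Big(\frac{\sigma_N(R)}{|R|}\Big)^{p-1} \lesssim 1.
\]
Replacing $w(\wt R)$ by $w(R)$ via the doubling step and letting $N \to \infty$ gives $[w]_{A_{p,F}} < \infty$.

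The step I expect to need the most care is the sign and size control of the kernel on $\wt R \times R$. One has to confirm that $\sqrt{x_1^2+x_2^2} \sim \ell(I^2)$ and $\sqrt{x_1^2+x_2^2+x_3^2} \sim \ell(I^3)$ there. This uses exactly the flag ordering $\ell(I^1) \le \ell(I^2) \le \ell(I^3)$ and the choice of shifts comparable to each side. Without the ordering these lower bounds fail, which is why the conclusion is $A_{p,F}$ rather than product $A_p$.
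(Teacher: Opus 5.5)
Your proof is correct and follows essentially the paper's route: a translated flag rectangle on which $K_{NRS}(x-y)$ has constant sign and size $\gtrsim |R|^{-1}$ (this is where the flag ordering enters), followed by testing with truncations of $\sigma = w^{-1/(p-1)}$. The only difference is how you pass from $w(\wt R)$ back to $w(R)$: you use the two-sided comparability $w(R)\sim w(\wt R)$ obtained by testing indicators, whereas the paper also tests $\sigma_\eps 1_{I^+}$, multiplies the two resulting cross estimates, and closes with H\"older's inequality $\langle w\rangle_{I^+}\langle \sigma\rangle_{I^+}^{p-1}\ge 1$.
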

\begin{proof}
	The proof is actually quite standard. For completeness we record the details.
	Fix an arbitrary rectangle $I \in \calR_F$, and denote $I^+= I+ (\ell(I^1), \ell(I^2), \ell(I^3))$. Let
	\[
		\sigma_{\eps}:= w^{-\frac 1{p-1}} 1_{\{\eps<w<\eps^{-1}\}},\quad 0<\eps<1.
	\]
	Using
	\[
		\|1_{I^+}T_{NRS}(\sigma_\eps 1_I)\|_{L^p(w)}\lesssim \|\sigma_\eps 1_I\|_{L^p(w)}
	\]
	we get
	\[
		\Big(\frac {\sigma_\eps(I)}{|I|}\Big)^{p-1} \frac{w(I^+)}{|I|}\lesssim 1.
	\]
	Then by the monotone convergence theorem we obtain
	\[
		\Big(\frac {\sigma (I)}{|I|}\Big)^{p-1} \frac{w(I^+)}{|I|}\lesssim 1,\quad \sigma:=w^{-\frac 1{p-1}}.
	\]
	Likewise, using
	\[
		\|1_{I}T_{NRS}(\sigma_\eps 1_{I^+})\|_{L^p(w)}\lesssim \|\sigma_\eps 1_{I^+}\|_{L^p(w)}
	\]
	we get that
	\[
		\Big(\frac {\sigma (I^+)}{|I|}\Big)^{p-1} \frac{w(I)}{|I|}\lesssim 1.
	\]
	Now, invoking  H\"older's inequality
	\[
		\Big(\frac {\sigma (I^+)}{|I|}\Big)^{p-1} \frac{w(I^+)}{|I|}\ge 1
	\]
	we have
	\begin{align*}
		\Big(\frac {\sigma (I)}{|I|}\Big)^{p-1} \frac{w(I)}{|I|}\le \Big(\frac {\sigma (I)}{|I|}\Big)^{p-1}\frac{w(I^+)}{|I|}\Big(\frac {\sigma (I^+)}{|I|}\Big)^{p-1} \frac{w(I)}{|I|}\lesssim 1.
	\end{align*}
	We are done.
\end{proof}

Next, we record that also the flag type Fourier multipliers $T = T_m$ with the symbol $m$ satisfying
\eqref{eq:def} are cancellative flag type Calder\'on-Zygmund operators in the sense of our definition.
Of course, we have already proved the kernel estimates before.
The weak boundedness property is trivial, since these operators are certainly bounded in $L^2$.
Thus, we only need to check the cancellation properties.
This goes similarly as in the proof of Proposition \ref{prop:nrs-cancellation}. For instance, we consider
$$\langle T(1\otimes 1_{I^{23}}), h_{I^1}\otimes 1_{J^{23}}\rangle.$$
If both $I^2, J^2$ and $I^3, J^3$ are disjoint, then everything is the same as in
Proposition \ref{prop:nrs-cancellation}. If e.g. $I^2\cap J^2 \neq \emptyset$ and $I^3\cap J^3 =\emptyset$, then we need to consider
\[
	\langle T(1\otimes 1_{I^2 \cap J^2} \otimes 1_{I^{3}}), h_{I^1}\otimes 1_{I^2 \cap J^2} \otimes 1_{J^{3}}\rangle.
\]
In Proposition \ref{prop:nrs-cancellation} this is trivial due to the anti-symmetry of the kernel.
Here we need to use the partial kernel representations and the translation invariance of the kernel
via
\begin{align*}
	 & \langle T(1\otimes 1_{I^2 \cap J^2} \otimes 1_{I^{3}}), h_{I^1}\otimes 1_{I^2 \cap J^2} \otimes 1_{J^{3}}\rangle                            \\
	 & = \langle T(1_{\R^{d_1}\setminus 3I^1}\otimes 1_{I^2 \cap J^2} \otimes 1_{I^{3}}), h_{I^1}\otimes 1_{I^2 \cap J^2} \otimes 1_{J^{3}}\rangle \\
	 & \hspace{3cm}+ \langle T(1_{3I^1}\otimes 1_{I^2 \cap J^2} \otimes 1_{I^{3}}), h_{I^1}\otimes 1_{I^2 \cap J^2} \otimes 1_{J^{3}}\rangle.
\end{align*}
We omit the full details. Therefore, the multipliers are also in our class.

Finally, notice that our arguments show that the translation invariance
\[
	\langle T(1_I), 1_J\rangle = \langle T(1_{I+t}), 1_{J+t}\rangle,\quad \forall t\in \R^3
\]
is a stronger assumption than our cancellation property. In particular,
more general convolution frameworks can also then be seen as a subclass of our
class.
\subsection{Bi-parameter flag CZOs}\label{sec:biparCZO}
As in the tri-parameter case above, a bi-parameter flag CZO is defined to be a bi-parameter flag SIO (satisfying the relevant estimates on all partial and full kernels) that also satisfies certain weak boundedness and BMO conditions. As before, we say that the operator $T$ satisfies the weak boundedness property if for all flag rectangles $I$, it holds that
\begin{equation*}
	| \langle T1_I, 1_I \rangle| \lesssim |I|.
\end{equation*}
\subsubsection*{Diagonal BMO assumptions}
A bi-parameter flag SIO $T$ satisfies the flag diagonal BMO assumption if, for $I^1\times I^2 \in \mathcal R_F$ and $L^{\infty}$-atoms $a_{I^i}$, we have
\begin{align*}
	|\langle T(a_{I^1}\otimes 1_{I^2}), 1_{I}\rangle|+|\langle T(1_{I^1}\otimes a_{I^2}), 1_{I}\rangle|+
	|\langle T(1_I), a_{I^1}\otimes 1_{I^2}\rangle|+|\langle T(1_I), 1_{I^1}\otimes a_{I^2}\rangle|\lesssim |I|.
\end{align*}
\begin{rem}
	Notice that the above is only assumed for flag rectangles making it weaker than in the
	standard bi-parameter theory. Notice also that all of the above conditions
	are obviously necessary for boundedness.
\end{rem}

\subsubsection*{Product BMO assumptions}
A bi-parameter flag SIO $T$ satisfies the product BMO assumption, if $T1, T^*1 \in \BMO_{\pro, F}$
and $T_11, T_1^*1 \in \BMO_{\pro, <}$. Here $T_1$ is the first partial adjoint, defined by
\[
	\langle T_1(f_1\otimes f_2), g_1 \otimes g_2\rangle= \langle T(g_1\otimes f_2), f_1\otimes g_2\rangle.
\]
Its adjoint $T_1^*$ is the second partial adjoint of $T$.

\begin{rem}
	There is a subtle difference in our multiresolution so that the full flag paraproducts
	related to $T1, T^*1$ are defined in $\calD_F$ while the paraproducts
	related to the partial adjoints are defined only in $\calD_<$. This
	explains the small difference in the assumptions. Recall that the $\BMO_{\pro, <}$ condition
	is clearly weaker than the corresponding $\BMO_{\pro}$ requirement in classical
	bi-parameter $T1$ theorems. Recall also that we showed that $T1 \in \BMO_{\pro, F}$
	can be deduced (for flag SIOs) from $T1 \in \BMO_{\pro, <}$ and the one-parameter local $T1$
	condition requiring that $\|1_Q T1_Q\|_{L^2} \lesssim |Q|^{1/2}$ for all cubes $Q$.
\end{rem}

\section{Coefficient estimates for flag CZOs}
\subsection{Bi-parameter coefficient estimates}
\subsubsection{Kernel size estimates}
\begin{lem}\label{lem:fullsize}
	Let $I, J \in \calD_F$
	be two dyadic flag rectangles with $\ell(I) = \ell(J)$ (meaning $\ell(I^m) = \ell(J^m)$ for $m=1,2$)
	and $I^m \ne J^m$ for $m=1,2$.
	Then we have
	\begin{align*}
		F_{IJ} :=
		\int_{I} \int_{J} & \frac{\ud x \ud y}
		                    {|x_1-y_1|^{d_1}(|x_1-y_1| + |x_2-y_2|)^{d_2}} \\
		                  & \lesssim
		\frac{|I|^2}{(\ell(I^1) + \dist(I^1,J^1))^{d_1} \max_{m=1,2}(\ell(I^m) + \dist(I^m,J^m))^{d_2}}.
	\end{align*}
\end{lem}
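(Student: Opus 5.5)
Write $\ell_1 := \ell(I^1) = \ell(J^1)$, $\ell_2 := \ell(I^2) = \ell(J^2)$ with $\ell_1 \le \ell_2$, and $D_m := \ell_m + \dist(I^m, J^m)$. Since $I^m \ne J^m$ and the cubes are of equal size in the same dyadic grid, they are disjoint. The plan is to integrate out the second variable first and then the first, treating separately the case $D_1 \ge D_2$ and the case $D_1 < D_2$. The point is that the target denominator is $D_1^{d_1} \max(D_1, D_2)^{d_2}$.

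\textbf{Case $D_1 \ge D_2$.} We use the bound $(|x_1-y_1| + |x_2-y_2|)^{d_2} \ge |x_1-y_1|^{d_2}$ and integrate in $x_2, y_2$ trivially, which gives
\[
	F_{IJ} \le |I^2|^2 \int_{I^1}\int_{J^1} \frac{\ud x_1 \ud y_1}{|x_1-y_1|^{d_1+d_2}}.
\]
If $\dist(I^1,J^1) \gtrsim \ell_1$, then $|x_1-y_1| \sim D_1$, and we get $|I^1|^2 |I^2|^2 D_1^{-d_1-d_2}$, as desired. If instead $I^1, J^1$ are adjacent, then $D_1 \sim \ell_1$, and the integral can be singular near the common face. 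However, the $\ell^{-d_2}$-type integral $\int_{J^1} |x_1-y_1|^{-d_1-d_2} \ud y_1 \lesssim \dist(x_1, J^1)^{-d_2}$ is not integrable over $I^1$ when $d_2 \ge 1$. So in the adjacent case we must not drop $|x_2-y_2|$. Instead we keep $(|x_1-y_1| + |x_2-y_2|)^{-d_2}$. Since $I^2 \ne J^2$ and $D_2 \le D_1 \sim \ell_1 \le \ell_2$, the cubes $I^2, J^2$ are also adjacent. This is the delicate sub-case.

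\textbf{Adjacent sub-case.} We integrate the full expression over the (adjacent) pairs. The quantity $\int_I\int_J \size_F$ with both pairs adjacent is a standard nearby-type integral. Writing $t = |x_1-y_1|$ and $s = |x_2-y_2|$, we bound $\int_{J^2} (t + |x_2-y_2|)^{-d_2} \ud y_2 \lesssim 1 + \log(\ell_2/(t + \dist(x_2, J^2)))$. This logarithm is integrable in $x_2$ over $I^2$, and after integrating it gives $\lesssim |I^2|(1 + \log(\ell_2/t))$. Then $\int_{I^1}\int_{J^1} |x_1-y_1|^{-d_1}(1 + \log(\ell_2/|x_1-y_1|)) \ud y_1 \ud x_1 \lesssim |I^1| (1 + \log(\ell_2/\ell_1))$ by the standard nearby estimate (the logarithm is harmless after dyadic annuli). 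This is too big by a log unless $\ell_1 \sim \ell_2$. But in this case the target is $|I|^2/(\ell_1^{d_1}\ell_2^{d_2}) = |I|$, while we get $|I| \cdot (1 + \log(\ell_2/\ell_1))$. So more care is needed: in the second variable, $\dist(x_2, J^2)$ is typically $\sim \ell_2 \gg t$. I would redo the estimate as $\int_{J^2} (t+|x_2-y_2|)^{-d_2} \ud y_2 \lesssim \min\{1, \ldots\}$ by integrating over $x_2$ first, using a layer decomposition by distance to the common face, which gives the bound $|I^2|\,(1 + \log(\ell_2/t)) \cdot (t/\ell_2)$-type gains only near the face. This refined bookkeeping is the main obstacle, and the first thing I would try is the crude bound $(t+s)^{-d_2} \le t^{-\epsilon} s^{-d_2+\epsilon}$ for small $\epsilon$, which splits into two product nearby integrals: $\int\int s^{-d_2+\epsilon} \lesssim |I^2| \ell_2^{\epsilon}$ and $\int\int t^{-d_1-\epsilon} \lesssim |I^1| \ell_1^{-\epsilon}$ (integrable since $\epsilon < 1$ and the cubes are disjoint, using a face-distance integral). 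The product gives $|I|\,(\ell_2/\ell_1)^{\epsilon}$, which still loses. Therefore the honest route is the exact computation with the adjacency structure.

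\textbf{Case $D_2 > D_1$.} Here $\dist(I^2, J^2) \gtrsim \ell_2 \ge \ell_1$ unless the cubes are adjacent with $D_2 \sim \ell_2$. If they are separated, then $|x_2-y_2| \sim D_2$, so $(|x_1-y_1| + |x_2-y_2|)^{-d_2} \lesssim D_2^{-d_2}$, and the remaining integral $\int_{I^1}\int_{J^1} |x_1-y_1|^{-d_1}$ is handled by the nearby estimate \eqref{eq:nearbyint}, giving $|I^1|^2 D_1^{-d_1}$ in the separated case. In the adjacent case, $|I^1|$ times a constant, which equals $|I^1|^2 D_1^{-d_1}$, so the total is $|I|^2 D_1^{-d_1}D_2^{-d_2}$. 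If $I^2, J^2$ are adjacent, then $D_2 \sim \ell_2$, and we bound the second-variable integral by $\int_{I^2}\int_{J^2} (t+s)^{-d_2} \lesssim |I^2|(1 + \log(\ell_2/t))$ together with the $x_1$-integration as above. This again meets the same logarithmic issue, which I expect to resolve by the observation that $\int_{I^2}\int_{J^2} (t+|x_2-y_2|)^{-d_2} \ud y_2 \ud x_2 \lesssim |I^2|$ uniformly in $t$: integrating $x_2, y_2$ jointly over adjacent cubes, the $\log$ in $x_2$ integrates to $O(1)$ when averaged over $I^2$ at scale $\ell_2$. Indeed, $\int_{I^2} \log(\ell_2/\dist(x_2, J^2)) \ud x_2 \lesssim |I^2|$. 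This uniform bound also fixes the problematic sub-case above, so I would establish it first as the key lemma.
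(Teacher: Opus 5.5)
Your final argument is correct and is essentially the paper's. The paper also uses the crude bound $|x_1-y_1|^{-d_1-d_2}$ when $I^1,J^1$ are separated and dominant, and the bound $\dist(I^2,J^2)^{-d_2}$ when $I^2,J^2$ are separated. Otherwise it drops $|x_1-y_1|$ from the second factor and applies \eqref{eq:nearbyint} in the adjacent parameters, which is exactly your uniform-in-$t$ key lemma. Your logarithmic detours are unnecessary: $(t+s)^{-d_2}\le s^{-d_2}$ gives that lemma at once. Moreover, in your $D_1\ge D_2$ adjacent subcase one actually has $D_1=D_2=\ell_1=\ell_2$, so no $\log(\ell_2/\ell_1)$ loss could arise there.
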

\begin{proof}
	\textbf{Case $\dist(I^m, J^m) > 0$ for $m=1,2$.}
	We directly get
	$$
		F_{IJ} \le \frac{|I|^2}{\dist(I^1,J^1)^{d_1}
			\max( \dist(I^1,J^1), \dist(I^2, J^2) )^{d_2} },
	$$
	which is the right estimate as $\dist(I^m, J^m) \ge \ell(I^m)$ here.

	\textbf{Case $\dist(I^1, J^1) > 0$ and $\dist(I^2, J^2) = 0$.}
	This time we estimate
	\begin{align*}
		F_{IJ} & \le \frac{|I^1|^2}{\dist(I^1,J^1)^{d_1}}
		\int_{I^2} \int_{J^2} \frac{\ud x_2 \ud y_2}{\max(\dist(I^1, J^1), |x_2-y_2|)^{d_2}} \\
		       & \lesssim \frac{|I^1|^2}{\dist(I^1,J^1)^{d_1}}
		\min\Big(\frac{|I^2|^2}{\dist(I^1, J^1)^{d_2}}, |I^2| \Big).
	\end{align*}
	Here we used the well-known estimate
	\begin{equation}\label{eq:nearbyint}
		\int_{3J^2 \setminus J^2} \int_{J^2} \frac{\ud x_2 \ud y_2}{|x_2-y_2|^{d_2}} \lesssim |J^2| = |I^2|.
	\end{equation}
	In the case $\dist(I^1, J^1) \ge \ell(I^2)$ we estimate
	$$
		F_{IJ} \lesssim \frac{|I|^2}{\dist(I^1, J^1)^{d_1} \dist(I^1, J^1)^{d_2}},
	$$
	which is the desired estimate in this case. In the case $\dist(I^1, J^1) \le \ell(I^2)$ we estimate
	\begin{align*}
		F_{IJ} \lesssim \frac{|I^1|^2}{\dist(I^1,J^1)^{d_1}} |I^2|
		= \frac{|I|^2}{\dist(I^1,J^1)^{d_1}\ell(I^2)^{d_2}},
	\end{align*}
	which is again the desired estimate in this case.

	\textbf{Case $\dist(I^1, J^1) = 0$ and $\dist(I^2, J^2) > 0$.}
	We estimate
	\begin{align*}
		F_{IJ} & \le \int_{I^1} \int_{J^1} \frac{\ud x_1 \ud y_1}{|x_1-y_1|^{d_1}}
		\int_{I^2} \int_{J^2} \frac{\ud x_2 \ud y_2}{\dist(I^2, J^2)^{d_2}}        \\
		       & \lesssim |I^1| \frac{|I^2|^2}{\dist(I^2, J^2)^{d_2}}
		= \frac{|I|^2}{\ell(I^1)^{d_1} \dist(I^2, J^2)^{d_2}},
	\end{align*}
	where we used \eqref{eq:nearbyint} in the first parameter.
	Since $\ell(I^1) \le \ell(I^2) \le \dist(I^2, J^2)$ this is the desired estimate.

	\textbf{Case $\dist(I^m, J^m) = 0$ for $m=1,2$.}
	We simply estimate using \eqref{eq:nearbyint} in both parameters that
	\begin{align*}
		F_{IJ} & \le \int_{I^1} \int_{J^1} \frac{\ud x_1 \ud y_1}{|x_1-y_1|^{d_1}}
		\int_{I^2} \int_{J^2} \frac{\ud x_2 \ud y_2}{|x_2-y_2|^{d_2}}
		\lesssim |I|,
	\end{align*}
	which is the desired estimate in this case, since here
	\begin{align*}
		 & \frac{|I|}{(\ell(I^1) + \dist(I^1,J^1))^{d_1} \max_{m=1,2}(\ell(I^m) + \dist(I^m,J^m))^{d_2}} \\
		 & = \frac{|I|}{\ell(I^1)^{d_1} \max(\ell(I^1), \ell(I^2))^{d_2}}
		= \frac{|I|}{\ell(I^1)^{d_1}\ell(I^2)^{d_2}} = 1.
	\end{align*}
\end{proof}
The above was the integral of $\size_F$ over some flag rectangles. We also record
some estimates on integrals that are related to $\size_{I^1}$ and $\size_{I^2}$.
\begin{lem}\label{lem:sizeI2}
	Let $I^1, J^1 \in \calD^1$ be two dyadic cubes with $\ell(I^1) = \ell(J^1)$ and $I^1 \ne J^1$.
	Let $s$ be a scalar satisfying $s \ge \ell(I^1)$. Then we have that
	\begin{align*}
		F_{I^1J^1} & := \int_{I^1} \int_{J^1} \frac{\ud x_1 \ud y_1}{|x_1-y_1|^{d_1} (|x_1-y_1| + s)^{d_2}}                    \\
		           & \lesssim \frac{|I^1|^2}{(\ell(I^1) + \dist(I^1, J^1))^{d_1} \max(\ell(I^1) + \dist(I^1, J^1), s )^{d_2}}.
	\end{align*}
\end{lem}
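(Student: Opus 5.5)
We mimic the proof of Lemma \ref{lem:fullsize}, splitting according to whether $\dist(I^1,J^1)>0$ or $\dist(I^1,J^1)=0$. Throughout we write $D := \ell(I^1)+\dist(I^1,J^1)$ for the quantity appearing in the target bound. The two simple facts we use are that $|x_1-y_1|\ge \dist(I^1,J^1)$ for $x_1\in I^1$, $y_1 \in J^1$, and the nearby integral estimate \eqref{eq:nearbyint} (applied in $\R^{d_1}$).

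\textbf{Case $\dist(I^1,J^1)>0$.} Since $I^1\ne J^1$ are dyadic cubes of equal size, here $\dist(I^1,J^1)\ge \ell(I^1)$, so $D\sim \dist(I^1,J^1)$. For $x_1\in I^1$, $y_1\in J^1$ we bound $|x_1-y_1|^{d_1}\ge \dist(I^1,J^1)^{d_1}$ and
$$
(|x_1-y_1|+s)^{d_2}\ge \max(\dist(I^1,J^1), s)^{d_2}.
$$
Integrating the resulting constant over $I^1\times J^1$ gives
$$
F_{I^1J^1}\le \frac{|I^1|^2}{\dist(I^1,J^1)^{d_1}\max(\dist(I^1,J^1),s)^{d_2}},
$$
and since $D\sim\dist(I^1,J^1)$ this is the claimed bound up to constants.

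\textbf{Case $\dist(I^1,J^1)=0$.} Here $J^1\subset 3I^1\setminus I^1$ and $D=\ell(I^1)$. Because $s\ge \ell(I^1)$, we have $\max(D,s)=s$, so the target is $|I^1|^2/(\ell(I^1)^{d_1}s^{d_2})=|I^1|s^{-d_2}$. We use $(|x_1-y_1|+s)^{d_2}\ge s^{d_2}$ and then \eqref{eq:nearbyint} in $\R^{d_1}$:
$$
F_{I^1J^1}\le s^{-d_2}\int_{I^1}\int_{J^1}\frac{\ud x_1\ud y_1}{|x_1-y_1|^{d_1}}\lesssim s^{-d_2}|I^1|.
$$
This is exactly the claimed bound.

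\textbf{Main obstacle.} The argument is elementary, and the only point needing care is the adjacent case. There the singularity $|x_1-y_1|^{-d_1}$ is not integrable pointwise in a uniform way, so one cannot simply bound the integrand by a constant; this is handled entirely by \eqref{eq:nearbyint}. The hypothesis $s\ge\ell(I^1)$ is used precisely in this case, to identify $\max(D,s)$ with $s$.
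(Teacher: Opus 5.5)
Your proposal is correct and follows essentially the same route as the paper: the same split into $\dist(I^1,J^1)>0$ (bound the integrand pointwise by $\dist(I^1,J^1)^{-d_1}\max(\dist(I^1,J^1),s)^{-d_2}$) and $\dist(I^1,J^1)=0$ (bound $(|x_1-y_1|+s)^{-d_2}\le s^{-d_2}$, apply \eqref{eq:nearbyint}, and use $s\ge \ell(I^1)$). The paper's proof is slightly terser, and your added justifications ($\dist(I^1,J^1)\ge \ell(I^1)$ for separated equal-size dyadic cubes, and $J^1\subset 3I^1\setminus I^1$ in the adjacent case) are accurate.
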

\begin{proof}
	\textbf{Case $\dist(I^1, J^1) > 0$.} We estimate
	\begin{align*}
		F_{I^1J^1} \le \frac{|I^1|^2}{\dist(I^1,J^1)^{d_1} \max(\dist(I^1, J^1), s)^{d_2}},
	\end{align*}
	which is the required estimate in this case.

	\textbf{Case $\dist(I^1, J^1) = 0$.} We estimate, using \eqref{eq:nearbyint}, that
	$$
		F_{I^1J^1} \le \frac{1}{s^{d_2}} \int_{I^1} \int_{J^1} \frac{\ud x_1 \ud y_1}{|x_1-y_1|^{d_1}}
		\lesssim \frac{|I^1|}{s^{d_2}} = \frac{|I^1|^2}{\ell(I^1)^{d_1} s^{d_2}},
	$$
	which is the desired estimate in this case as $s \ge \ell(I^1)$.
\end{proof}

Finally, we have the following, at this point, obvious result as well.
\begin{lem}\label{lem:sizeI1}
	Let $I^2, J^2 \in \calD^2$ be two dyadic cubes with $\ell(I^2) = \ell(J^2)$ and $I^2 \ne J^2$.
	Then we have that
	\begin{align*}
		F_{I^2J^2} := \int_{I^2} \int_{J^2} \frac{\ud x_2 \ud y_2}{|x_2-y_2|^{d_2}}
		\lesssim \frac{|I^2|^2}{(\ell(I^2) + \dist(I^2, J^2))^{d_2}}.
	\end{align*}
\end{lem}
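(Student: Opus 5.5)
We split according to whether $\dist(I^2,J^2)>0$, in the same way as the proofs of Lemma \ref{lem:fullsize} and Lemma \ref{lem:sizeI2}.

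\textbf{Case $\dist(I^2,J^2)>0$.} For $x_2\in I^2$ and $y_2\in J^2$ we have $|x_2-y_2|\ge \dist(I^2,J^2)$. Bounding the integrand pointwise gives
$$
F_{I^2J^2}\le \frac{|I^2|^2}{\dist(I^2,J^2)^{d_2}}.
$$
Since $I^2,J^2$ are distinct disjoint dyadic cubes of the same side length with positive separation, $\dist(I^2,J^2)\ge \ell(I^2)$. Hence $\dist(I^2,J^2)\sim \ell(I^2)+\dist(I^2,J^2)$, which is the claim.

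\textbf{Case $\dist(I^2,J^2)=0$.} Here $J^2$ is adjacent to $I^2$, so $J^2\subset 3I^2\setminus I^2$. Applying the nearby estimate \eqref{eq:nearbyint} with the roles of $I^2$ and $J^2$ swapped (symmetric, since $\ell(I^2)=\ell(J^2)$) gives
$$
F_{I^2J^2}\le \int_{3I^2\setminus I^2}\int_{I^2}\frac{\ud x_2\,\ud y_2}{|x_2-y_2|^{d_2}}\lesssim |I^2| = \frac{|I^2|^2}{\ell(I^2)^{d_2}},
$$
which is the desired bound because $\ell(I^2)+\dist(I^2,J^2)=\ell(I^2)$.

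The only nontrivial input is the adjacent case, where the logarithmic-looking singularity $|x_2-y_2|^{-d_2}$ on the common boundary must still integrate to $O(|I^2|)$. This is exactly \eqref{eq:nearbyint}, which we take as given, so there is no real obstacle.
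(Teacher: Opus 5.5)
Your proof is correct and is the argument the paper has in mind. The paper states this lemma without proof as ``obvious''; the intended proof is the same two-case split used for Lemmas \ref{lem:fullsize} and \ref{lem:sizeI2}: for positive distance, bound the integrand pointwise using $\dist(I^2,J^2)\ge \ell(I^2)$, and for zero distance, apply \eqref{eq:nearbyint}. One minor point: you do not need to swap roles, since \eqref{eq:nearbyint} holds for any cube and can be applied to $I^2$ directly.
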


\begin{thm}\label{thm:biparcoeff}
	Suppose that $T$ is a bi-parameter flag SIO satisfying the flag WBP. Let $I, J \in \calD_F$
	be two dyadic flag rectangles with $\ell(I) = \ell(J)$ (meaning $\ell(I^m) = \ell(J^m)$ for $m=1,2$). Let
	\begin{equation}\label{eq:biparcoefficient}
		a_{IJ} :=
		|\langle T(h_{I^1}^{\gamma_1} \otimes h_{I^2}^{\gamma_2}),
		h_{J^1}^{\kappa_1} \otimes h_{J^2}^{\kappa_2}\rangle|.
	\end{equation}
	If $(\gamma_1, \kappa_1) \ne (0,0)$ and $(\gamma_2, \kappa_2) \ne (0,0)$ then
	\begin{align*}
		a_{IJ} \lesssim \prod_{m=1}^2 \Big( & \frac{\ell(I^m)}{\ell(I^m) + \dist(I^m, J^m)} \Big)^{\alpha_m}                                        \\
		                                    & \times \frac{|I|}{(\ell(I^1) + \dist(I^1,J^1))^{d_1} \max_{j=1,2}(\ell(I^j) + \dist(I^j,J^j))^{d_2}}.
	\end{align*}
	If we only know $(\gamma_2, \kappa_2) \ne (0,0)$ then
	\begin{align*}
		a_{IJ} \lesssim \Big( & \frac{\ell(I^2)}{\ell(I^2) + \dist(I^2, J^2)} \Big)^{\alpha_2}                                        \\
		                      & \times \frac{|I|}{(\ell(I^1) + \dist(I^1,J^1))^{d_1} \max_{j=1,2}(\ell(I^j) + \dist(I^j,J^j))^{d_2}}.
	\end{align*}
\end{thm}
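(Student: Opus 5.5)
The plan is the standard case analysis according to whether $I^m = J^m$ or $I^m \ne J^m$ for $m=1,2$ (and, when different, whether the cubes are adjacent or separated), in each case using the kernel representation (full or partial) that is available. Write $D_m := \ell(I^m) + \dist(I^m,J^m)$ and let $\mathrm{RHS}$ denote the size factor $|I| D_1^{-d_1} \max_j D_j^{-d_2}$. Note $\max_j D_j = \max(D_1, D_2)$ and $D_1 \le$ something comparable to $D_2$ unless $I^1, J^1$ are far apart.

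\textbf{Case $I^m \ne J^m$ for $m=1,2$.} Here the full kernel representation applies. If both pairs are separated ($\dist(I^m,J^m) \gtrsim \ell(I^m)$ after passing to the non-adjacent situation), I will use the cancellation of whichever Haar in each parameter $m$ is cancellative: subtract the value at the center of $I^m$ (if $\gamma_m \ne 0$) or $J^m$ (if $\kappa_m\ne 0$). Then the mixed H\"older estimates, or the full H\"older estimate when two cancellations are used, give the factor $\prod_m (\ell(I^m)/\dist(I^m,J^m))^{\alpha_m}$ times $\size_F$. Integrating $\size_F$ against $|I|^{-1}1_I\otimes 1_J$ via Lemma \ref{lem:fullsize} yields $\mathrm{RHS}$. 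When a pair is adjacent ($\dist = 0$), no H\"older gain is needed because the factor is $\sim 1$; I just use the size estimate and Lemma \ref{lem:fullsize}. In mixed situations, with one parameter separated and the other adjacent, I use only the one-parameter mixed H\"older estimate. One subtlety is that the kernel H\"older conditions are stated with decay relative to $|x_m - y_m|$, not to the full flag distance; this is exactly what produces the $(\ell/D_m)^{\alpha_m}$ factors.

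\textbf{Case $I^1 = J^1$, $I^2 \ne J^2$.} The partial kernel $K_{I^1}$ is available. Since $(\gamma_1,\kappa_1)$ may be $(0,0)$ or cancellative, I expand $h_{I^1}^{\gamma_1}$ and $h_{J^1}^{\kappa_1}$ over children of $I^1$. Pairs of different children are handled by the full kernel as in the previous case, with $\dist(\text{children})=0$. Same-child pairs are handled by the partial kernel $K_{P^1}$ for the child $P^1$, with size $|P^1|/|x_2-y_2|^{d_2}$, and with H\"older in $x_2$ when separated. This gives $|I^1|^{-1}|I^1| \cdot |I^2|^{-1} F_{I^2J^2}$ times the H\"older factor, and Lemma \ref{lem:sizeI1} matches $\mathrm{RHS}$, since $D_1 = \ell(I^1) \le \ell(I^2)\le D_2$.

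\textbf{Case $I^2 = J^2$, $I^1 \ne J^1$.} Symmetrically I expand in the second parameter. Same-child pairs use $K_{P^2}$, with size $|P^2|^2/(|x_1-y_1|^{d_1}(|x_1-y_1|+\ell(P^2))^{d_2})$; Lemma \ref{lem:sizeI2} with $s=\ell(I^2)\ge \ell(I^1)$ gives exactly $|I| D_1^{-d_1}\max(D_1,\ell(I^2))^{-d_2}$. Different children use the full kernel.

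\textbf{Case $I=J$.} This is handled by expanding both Haars into children and reducing to the WBP for flag rectangles $|\langle T1_{P},1_{P}\rangle|\lesssim|P|$ for same children (children of a flag rectangle are flag). The other child pairs reduce to the previous cases.

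For the second statement, where only $(\gamma_2,\kappa_2)\ne(0,0)$, I repeat everything but use only the cancellation in parameter 2. The main obstacle I expect is the bookkeeping when $D_1 > D_2$: the flag size then depends on the first-parameter separation. I also need to ensure that the H\"older estimate in parameter 2 costs only $(\ell(I^2)/|x_2-y_2|)^{\alpha_2}$ without spoiling the $\max$ denominator. This is handled by splitting the $x_2$-integral region as in Lemma \ref{lem:fullsize} and using $(\ell/|x_2-y_2|)^{\alpha_2}\le(\ell/\dist(I^2,J^2))^{\alpha_2}$ on separated pairs.
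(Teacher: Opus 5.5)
Your proposal is correct and follows essentially the same route as the paper's proof. Both split into cases according to whether the cubes in each parameter are separated, neighboring or equal. When $I^m \ne J^m$ for both $m$, both use full-kernel H\"older/size estimates together with Lemma \ref{lem:fullsize}. When $I^m = J^m$, both split the Haars into children and use the partial kernels $K_{L^1}$, $K_{L^2}$ with Lemmas \ref{lem:sizeI1} and \ref{lem:sizeI2}. For $I=J$, both use the flag WBP on equal children.
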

\begin{proof}
	We prove the case $(\gamma_1, \kappa_1) \ne (0,0)$ and $(\gamma_2, \kappa_2) \ne (0,0)$ first.
	By symmetry we can assume $\gamma_1 \ne 0$ and $\gamma_2 \ne 0$. We can then
	assume for small notational convenience that $\kappa_1 = 0$ and $\kappa_2 = 0$ --
	we write the proof so that it also works if these are cancellative Haars.
	So, we now estimate $a_{IJ} = |\langle T(h_{I^1} \otimes h_{I^2}), h_J^0 \rangle|$.

	A few remarks before we start.
	We notice that it is relevant that cancellation
	is only ever utilized in a given parameter $m$ if $\dist(I^m, J^m) > 0$.
	Also, some of the partial kernel estimates below could technically be
	done without splitting the Haars into sums of indicators of the children
	by considering, instead, the cancellative Haar function as an atom. However, this is not
	advisable -- it would make it look like the cancellation is relevant in the diagonal,
	it would not even be allowed if we would have two cancellative Haars in the
	same parameter (which is also a possible case in our theorem) and it
	would not generalize to our tri-parameter setup where we do not assume
	atomic versions of partial kernel estimates.

	\textbf{Case 1: $\dist(I^m, J^m) > 0$ for $m=1,2$.}
	In this setting, we can use one of the H\"older estimates of the full kernel:
	\begin{align*}
		a_{IJ} & \lesssim |I|^{-1} \int_J \int_I
		\bigg( \frac{|c_{I_1}-y_1|}{|x_1-y_1|} \bigg)^{\alpha_1}\bigg( \frac{|c_{I^2}-y_2|}{|x_2-y_2|}
		                                                        \bigg)^{\alpha_2}\size_F(x,y)\ud y \ud x  \\
		       & \lesssim |I|^{-1} \prod_{m=1}^2 \Big( \frac{\ell(I^m)}{\dist(I^m, J^m)} \Big)^{\alpha_m}
		\int_I \int_J \size_F(x,y)\ud x \ud y.
	\end{align*}
	Recalling that $\dist(I^m, J^m) \ge \ell(I^m)$ and using Lemma \ref{lem:fullsize}
	completes the proof of this case.

	\textbf{Case 2: $\dist(I^1, J^1) > 0$ and $\dist(I^2, J^2) = 0$, $I^2 \ne J^2$.}
	We use the full kernel mixed H\"older--size estimate as follows
	\begin{align*}
		a_{IJ} & = \Big| \int_J \int_I [K(x,y)-K(x, (c_{I^1}, y_2))] h_{I^1}(y_1)h_{I^2}(y_2)h_J^0(x) \ud y \ud x\Big| \\
		       & \lesssim |I|^{-1}\Big( \frac{\ell(I^1)}{\dist(I^1, J^1)} \Big)^{\alpha_1}
		\int_I \int_J \size_F(x,y)\ud x \ud y
	\end{align*}
	so that this case again follows from Lemma \ref{lem:fullsize}.

	\textbf{Case 3: $\dist(I^1, J^1) = 0$, $I^1 \ne J^1$ and $\dist(I^2, J^2) > 0$.}
	Having access to Lemma \ref{lem:fullsize} this goes similarly as the previous case.

	\textbf{Case 4: $\dist(I^m, J^m) = 0$, $I^m \ne J^m$, for $m=1,2$.}
	Thanks to Lemma \ref{lem:fullsize} this is the same as the previous two cases
	except we just use the size estimate of the full kernel.

	\textbf{Case 5: $I^1 = J^1$ and $\dist(I^2, J^2) > 0$.}
	We estimate
	$$
		a_{IJ} \le |I^1|^{-1} \sum_{L^1, P^1 \in \ch(I^1)}
		|\langle T(1_{L^1} \otimes h_{I^2}), 1_{P^1} \otimes h_{J^2}^0\rangle|.
	$$
	It is enough to estimate the summands with the desired estimate (summing
	over the children then produces a dimensional dependence).
	If $L^1 \ne P^1$ this is estimated essentially using Case 3 (remember
	that cancellation in the first parameter is not used in such an estimate
	so it does not matter that we have two indicators here).
	If $L^1 = P^1$, then
	we estimate using the partial kernel's H\"older estimate that
	\begin{align*}
		|\langle T(1_{L^1} \otimes h_{I^2}), 1_{L^1} \otimes h_{J^2}^0\rangle|
		 & = \Big| \int_{J^2} \int_{I^2} [K_{L^1}(x_2, y_2) - K_{L^1}(x_2, c_{I^2})]
		h_{I^2}(y_2)h_{J^2}^0(x_2) \ud y_2 \ud x_2 \Big|                             \\
		 & \lesssim \Big( \frac{\ell(I^2)}{\dist(I^2,J^2)} \Big)^{\alpha_2}
		|I^1| |I^2|^{-1} \int_{I^2} \int_{J^2} \frac{\ud x_2 \ud y_2}{|x_2-y_2|^{d_2}}.
	\end{align*}
	It remains to invoke Lemma \ref{lem:sizeI1} -- notice that the factor
	$$
		\frac{|I^1|}{\ell(I^1)^{d_1}} = 1
	$$
	and also that $\max(\ell(I^1), \ell(I^2) + \dist(I^2,J^2)) = \ell(I^2) + \dist(I^2, J^2)$
	as $\ell(I^1) \le \ell(I^2)$.

	\textbf{Case 6: $I^1 = J^1$ and $\dist(I^2, J^2) = 0$, $I^2 \ne J^2$.}
	Same as the previous Case 5 except we use Case 4 in the case
	$L^1 \ne P^1$ and the size estimate of the partial kernel in the case $L^1 = P^1$.

	\textbf{Case 7: $\dist(I^1, J^1) > 0$ and $I^2 = J^2$.}
	Well, similarly as in Case 5 we estimate
	$$
		a_{IJ} \le |I^2|^{-1} \sum_{L^2, P^2 \in \ch(I^2)}
		|\langle T(h_{I^1} \otimes 1_{L^2}), h_{J^1}^0 \otimes 1_{P^2}\rangle|.
	$$
	If $L^2 \ne P^2$ we rely on Case 2. If $L^2 = P^2$ we use the
	H\"older estimate of $K_{L^2}$ to get
	\begin{align*}
		|\langle T(h_{I^1} & \otimes 1_{L^2}), h_{J^1}^0 \otimes 1_{P^2}\rangle| \\
		                   & \lesssim |I^1|^{-1} |I^2|^2
		\Big( \frac{\ell(I^1)}{\dist(I^1,J^1)} \Big)^{\alpha_1}
		\int_{I^1} \int_{J^1} \frac{\ud x_1 \ud y_1}{|x_1-y_1|^{d_1}(|x_1-y_1| + \ell(I^2))^{d_2}}.
	\end{align*}
	It remains to invoke Lemma \ref{lem:sizeI2} (with $s = \ell(I^2) \ge \ell(I^1)$).

	\textbf{Case 8: $\dist(I^1, J^1) = 0$, $I^1 \ne J^1$ and $I^2=J^2$.}
	Same as Case 7 except we use Case 4 in the case $L^2 \ne P^2$
	and we use just the size estimate of the partial kernel in the case $L^2 = P^2$.

	\textbf{Case 9: $I^1 = J^1$ and $I^2=J^2$.}
	We estimate
	\begin{align*}
		a_{IJ} & \le |I|^{-1} \sum_{L^1, P^1 \in \ch(I^1)} \sum_{L^2, P^2 \in \ch(I^2)}
		|\langle T(1_{L^1} \otimes 1_{L^2}), 1_{P^1} \otimes 1_{P^2} \rangle|           \\
		       & = |I|^{-1}\Big( \sum_{\substack{L^1 \ne P^1 \\ L^2 \ne P^2 }}
		+ \sum_{\substack{L^1 \ne P^1 \\ L^2 = P^2 }}
		+ \sum_{\substack{L^1 = P^1 \\ L^2 \ne P^2 }}
		+\sum_{\substack{L^1 = P^1 \\ L^2 = P^2 }}
		\Big) |\langle T(1_{L^1} \otimes 1_{L^2}), 1_{P^1} \otimes 1_{P^2} \rangle|     \\
		       & = A_{\ne, \ne} + A_{\ne, =} + A_{=, \ne} + A_{=, =}.
	\end{align*}
	As $L^1 \times L^2$ is flag, the last term $A_{=, =}$ is handled by using the flag WBP -- notice
	that the estimate we are after here simply is $a_{IJ} \lesssim 1$ (as it is in all
	of the cases where $\dist(I^m, J^m) = 0$ for $m=1,2$).
	Finally, $A_{\ne, \ne}$ is covered by Case 4, $A_{\ne, =}$ by Case 8 and $A_{=, \ne}$
	by Case 6.

	The remaining case, where only $(\gamma_2, \kappa_2) \ne (0,0)$, follows in the same way.
	When $\dist(I^1, J^1) > 0$, we use only the size
	variant of the corresponding H\"older estimate in the first parameter.
\end{proof}

The following is a way to formulate the above theorem that is more
useful in connection with our dyadic model operators.
\begin{cor}\label{cor:biparcoeffcor}
	Suppose that $T$ is a bi-parameter flag SIO satisfying the flag WBP.
	Suppose $I, J \in \calD_F$ are two dyadic flag rectangles
	with $\ell(I) = \ell(J)$. Write $J^m = I^m \dotplus n^m := I^m + n^m \ell(I^m)$, $n^m \in \Z^{d_m}$. If
	$n^m \ne 0$ let $k^m \ge 2$ be such that $|n^m| \in (2^{k^m-3}, 2^{k^m-2}]$,
	and for $n^m = 0$ put $k^m = 0$.
	Let $a_{IJ}$ be defined as in \eqref{eq:biparcoefficient}.
	If $(\gamma_1, \kappa_1) \ne (0,0)$ and $(\gamma_2, \kappa_2) \ne (0,0)$ then
	\begin{align*}
		|a_{IJ}| \lesssim 2^{-\alpha_1 k^1} 2^{-\alpha_2 k^2} \frac{|I|}{(2^{k^1}\ell(I^1))^{d_1}
			                                                      \max(2^{k^1}\ell(I^1), 2^{k^2}\ell(I^2))^{d_2}}.
	\end{align*}
	If we only know $(\gamma_2, \kappa_2) \ne (0,0)$ then
	\begin{align*}
		|a_{IJ}| \lesssim 2^{-\alpha_2 k^2} \frac{|I|}{(2^{k^1}\ell(I^1))^{d_1}
			                                    \max(2^{k^1}\ell(I^1), 2^{k^2}\ell(I^2))^{d_2}}.
	\end{align*}
\end{cor}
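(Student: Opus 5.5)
This corollary is a reformulation of Theorem \ref{thm:biparcoeff}, so the plan is to convert the geometric quantities $\ell(I^m)+\dist(I^m,J^m)$ into the dyadic parameters $2^{k^m}\ell(I^m)$. The only real input is the comparability
\[
	\ell(I^m) + \dist(I^m, J^m) \sim 2^{k^m}\ell(I^m), \qquad m=1,2,
\]
with dimensional constants. Once this is in hand, every factor in Theorem \ref{thm:biparcoeff} is replaced by its comparable counterpart.

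First I verify the comparability. If $n^m=0$, then $J^m=I^m$, $\dist=0$ and $k^m=0$, so both sides equal $\ell(I^m)$. If $n^m\neq0$, the cubes $I^m$ and $J^m=I^m+n^m\ell(I^m)$ have centers at distance $|n^m|\ell(I^m)$. Hence
\[
	\dist(I^m,J^m)\le |n^m|\ell(I^m)
	\qquad\text{and}\qquad
	\dist(I^m,J^m)\ge (|n^m|_\infty-1)\ell(I^m),
\]
where $|n^m|_\infty\sim_{d_m}|n^m|$. For large $|n^m|$ this gives $\ell(I^m)+\dist(I^m,J^m)\sim |n^m|\ell(I^m)$. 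For small $|n^m|$, with $1\le|n^m|\lesssim1$, both sides are simply $\sim\ell(I^m)$, since $\ell(I^m)\le \ell(I^m)+\dist\le(1+|n^m|)\ell(I^m)$. In all cases $\ell(I^m)+\dist(I^m,J^m)\sim |n^m|\ell(I^m)\sim 2^{k^m}\ell(I^m)$, because $|n^m|\in(2^{k^m-3},2^{k^m-2}]$.

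Next I substitute into the two estimates of Theorem \ref{thm:biparcoeff}, whose hypotheses (flag SIO, flag WBP, $I,J\in\calD_F$, $\ell(I)=\ell(J)$) are exactly the present ones.
\begin{itemize}
	\item The Hölder factor $\big(\ell(I^m)/(\ell(I^m)+\dist(I^m,J^m))\big)^{\alpha_m}$ becomes $\sim 2^{-\alpha_m k^m}$.
	\item The factor $(\ell(I^1)+\dist(I^1,J^1))^{d_1}$ becomes $\sim (2^{k^1}\ell(I^1))^{d_1}$.
	\item The factor $\max_{j}(\ell(I^j)+\dist(I^j,J^j))^{d_2}$ becomes $\sim \max(2^{k^1}\ell(I^1),2^{k^2}\ell(I^2))^{d_2}$, since a maximum of comparable quantities is comparable.
\end{itemize}
This yields both claimed bounds: the first when $(\gamma_1,\kappa_1)$ and $(\gamma_2,\kappa_2)$ are both nonzero, and the second when only $(\gamma_2,\kappa_2)$ is nonzero.

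There is no real obstacle. The only point needing care is the comparability for small nonzero $|n^m|$ (adjacent cubes with $\dist=0$), and this is handled by the trivial bounds above, with implicit constants depending only on the dimensions.
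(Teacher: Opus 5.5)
Your proposal is correct and follows the paper's proof: establish $\ell(I^m)+\dist(I^m,J^m)\sim 2^{k^m}\ell(I^m)$, treating separately $|n^m|\ge 2$ and the case $|n^m|\le 1$ where $\dist(I^m,J^m)=0$, and then substitute into Theorem \ref{thm:biparcoeff}. Your explicit distance bounds $(|n^m|_\infty-1)\ell(I^m)\le\dist(I^m,J^m)\le|n^m|\ell(I^m)$ just spell out the comparability that the paper states in one line.
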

\begin{proof}
	The point is that $\dist(I^m, J^m) \sim |n^m|\ell(I^m)
		\sim 2^{k^m}\ell(I^m)$ if $|n^m| \ge 2$,
	and for $|n^m| \le 1$ we have $\ell(I^m) \sim 2^{k^m}\ell(I^m)$.
	Thus, we always have $\ell(I^m) + \dist(I^m, J^m) \sim 2^{k^m}\ell(I^m)$.
	Therefore, this directly follows from Theorem \ref{thm:biparcoeff}.
\end{proof}
\begin{rem}\label{rem:biparKFcoerr}
	Given a dyadic rectangle $K \in \calD = \calD^1 \times \calD^2$ let $K_F\in \calD_F$ be
	the dyadic flag rectangle satisfying
	$K\subset K_F$, $K_F^1 = K^1$ and
	$\ell(K_F^2)= \max\{\ell(K^1), \ell(K^2)\}$.

	In the dyadic representation theorem we will have
	the situation above -- $J^m = I^m \dotplus n^m$,
	$|n^m| \sim 2^{k^m}$ (in the precise sense above)
	and that $I^m$ is a so-called $k^m$-good cube (
	essentially deep inside its dyadic $k^m$ parent).
	This will guarantee that $I^m$ and $J^m$ not only
	have a common dyadic parent but also that, precisely,
	$(I^m)^{(k^m)} = (J^m)^{(k^m)}$. Call this common parent $K^m$.
	As $\ell(K^m) = 2^{k^m}\ell(I^m)$ the first estimate of Corollary \ref{cor:biparcoeffcor}
	then reads
	$$
		|a_{IJ}| \lesssim 2^{-\alpha_1 k^1} 2^{-\alpha_2 k^2} \frac{|I|}{|K_F|}.
	$$
	This ratio $\frac{|I|}{|K_F|}$ will appear prominently later in the dyadic model operators.
\end{rem}

\subsubsection{Kernel $\BMO$ estimates}
\begin{lem}\label{lem:bmocoeff1}
	Let $I^1, J^1 \in \calD^1$ be two dyadic cubes with $\ell(I^1) = \ell(J^1)$. Let $I^2 \in \calD^2$
	with $\ell(I^2) \ge \ell(I^1)$. Suppose $a_{I^2}$ is an $L^{\infty}$ atom on $I^2$.
	Let $T$ be a bi-parameter flag SIO satisfying the diagonal flag $\BMO$ assumption.
	Then we have
	\begin{align*}
		 & |\langle T(h_{I^1} \otimes a_{I^2}), u_{J^1} \otimes 1 \rangle|               \\
		 & \lesssim \Big( \frac{\ell(I^1)}{\ell(I^1) + \dist(I^1, J^1)} \Big)^{\alpha_1}
		\frac{|I^1|}{(\ell(I^1) + \dist(I^1, J^1))^{d_1}} |I^2|
		\min\Big(1, \frac{\ell(I^2)}{\ell(I^1)+\dist(I^1,J^1)}\Big)^{\alpha_2/2}.
	\end{align*}
	The same is true if $T$ is replaced by any of its (partial) duals.
\end{lem}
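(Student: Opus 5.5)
The plan is to split the test function $1$ in the second parameter as $1 = 1_{I^2} + 1_{3I^2\setminus I^2} + 1_{(3I^2)^c}$. Write $D := \ell(I^1) + \dist(I^1,J^1)$. In each piece we distinguish the far case $\dist(I^1,J^1) > 0$, where we exploit $\int h_{I^1} = 0$ through a H\"older estimate in $y_1$, from the near case $\dist(I^1,J^1) = 0$, where $D \sim \ell(I^1)$ and only size estimates, the nearby estimate \eqref{eq:nearbyint} and the diagonal $\BMO$ assumption are used. Throughout, $|h_{I^1}|, |u_{J^1}| \le |I^1|^{-1/2}$ and $\|a_{I^2}\|_\infty \le 1$, so the normalising factor in front of the integrals is $|I^1|^{-1}$. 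Since $\ell(I^2) \ge \ell(I^1)$, in the near case the target bound is simply $|I^2|$.

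\emph{The piece $1_{I^2}$.} This is where the partial kernels $K_{I^2}$ adapted to the atom $a_{I^2}$ (atom in one slot, $1_{I^2}$ in the other) enter. In the far case, we subtract $K_{I^2}(x_1,c_{I^1})$ using $\int h_{I^1}=0$ and apply the H\"older estimate of this partial kernel. This gives
\[
(\ell(I^1)/D)^{\alpha_1}\, |I^1|\,|I^2|^2/\big(D^{d_1}(D+\ell(I^2))^{d_2}\big).
\]
The factor $|I^2|/(D+\ell(I^2))^{d_2} \le \min(1,\ell(I^2)/D)^{d_2}$ is at least as good as the required $\min(\cdot)^{\alpha_2/2}$. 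In the near case, we expand $h_{I^1}$ and $u_{J^1}$ into sums over children $L^1, P^1$. If $L^1 = P^1$, then $L^1\times I^2$ is flag ($\ell(L^1) \le \ell(I^2)$), and the diagonal flag $\BMO$ assumption gives $|\langle T(1_{L^1}\otimes a_{I^2}), 1_{L^1\times I^2}\rangle| \lesssim |I^1||I^2|$. If $L^1 \ne P^1$, we use the size estimate of the atomic partial kernel together with Lemma \ref{lem:sizeI2} with $s=\ell(I^2)$.

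\emph{The piece $1_{3I^2\setminus I^2}$.} Here we use the full kernel, whose representation applies since the supports are disjoint in the second parameter. In the far case, the mixed H\"older--size estimate in the first variable gives the factor $(\ell(I^1)/D)^{\alpha_1}$. The remaining integral
\[
\int_{I^1}\int_{J^1}\int_{I^2}\int_{3I^2\setminus I^2} \size_F
\]
is bounded by $|I^1|^2 D^{-d_1}\cdot \min\big(|I^2|,\, |I^2|^2 D^{-d_2}\big)$, using \eqref{eq:nearbyint} in $x_2$ when $D\le \ell(I^2)$ and the trivial bound otherwise. In the near case, plain size estimates together with \eqref{eq:nearbyint} in both parameters give $\lesssim |I^1||I^2|$.

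\emph{The tail $1_{(3I^2)^c}$.} This is the main obstacle, and it is where $\alpha_2/2$ appears. We use $\int a_{I^2} = 0$ to subtract $K(x,(y_1,c_{I^2}))$; in the far case we also subtract at $c_{I^1}$, invoking the full H\"older estimate. This yields the kernel bound
\[
(\ell(I^1)/|x_1-y_1|)^{\alpha_1}\,(\ell(I^2)/|x_2-y_2|)^{\alpha_2}\,\size_F(x,y),
\]
with the first factor omitted in the near case. For fixed $x_1,y_1$, we integrate $x_2$ over $|x_2-y_2|\gtrsim \ell(I^2)$ and split according to $|x_2-y_2| \le |x_1-y_1|$ or not, as in the $A_\le, A_>$ estimate in the proof of Theorem \ref{thm:T1nec}. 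This gives
\[
|x_1-y_1|^{-d_1}\min\big(1, \ell(I^2)/|x_1-y_1|\big)^{\alpha_2}\big(1+\log_+ (|x_1-y_1|/\ell(I^2))\big).
\]
We spend half of the $\alpha_2$ decay to absorb the logarithm, and we also use it to keep the $x_1$-integral convergent near the diagonal in the near case. Bounding $\min(1,\ell(I^2)/|x_1-y_1|)^{\alpha_2/2} \le 1$ there reduces matters to \eqref{eq:nearbyint}. In the far case $|x_1-y_1|\sim D$, which produces exactly
\[
(\ell(I^1)/D)^{\alpha_1}\,|I^1|D^{-d_1}\,|I^2|\min\big(1,\ell(I^2)/D\big)^{\alpha_2/2}.
\]

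Finally, the statement for partial duals follows by the same argument, because all the kernel, partial kernel and diagonal $\BMO$ assumptions used are assumed symmetrically.
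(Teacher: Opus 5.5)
You have a genuine gap in the case $I^1=J^1$ for the pieces $1_{3I^2\setminus I^2}$ and $1_{(3I^2)^c}$. This case sits inside your near case (you use $L^1=P^1$ for the piece $1_{I^2}$), and it is genuinely needed, since the statement allows $I^1=J^1$. The rest of your argument is fine: the far case, the piece $1_{I^2}$ (including the children expansion and the diagonal flag $\BMO$ assumption), and the neighbouring case $I^1\neq J^1$, $\dist(I^1,J^1)=0$.

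For those two pieces you invoke the full kernel, on the grounds that the supports are disjoint in the second parameter. But the full kernel representation requires disjoint supports in \emph{every} parameter, and $K$ is only defined off the whole set $\{x_1=y_1\}$. The proposed estimates cannot work even formally. After integrating in $x_2$ you are left with a factor $|x_1-y_1|^{-d_1}$, which is not integrable over $I^1\times I^1$. Moreover, \eqref{eq:nearbyint} only applies to two distinct adjacent cubes. Your claim that half of the $\alpha_2$-decay keeps the $x_1$-integral convergent near the diagonal is also incorrect. Since $\min(1,\ell(I^2)/|x_1-y_1|)=1$ whenever $|x_1-y_1|\le \ell(I^2)$, that decay only helps for large $|x_1-y_1|$.

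The missing ingredient is the partial kernel $K_{L^1}$ (indicator version), which is part of the SIO assumptions. For $I^1=J^1$, expand $h_{I^1}$ and $u_{I^1}$ over children $L^1,P^1\in\ch(I^1)$, with the normalising factor $|I^1|^{-1}$.
\begin{itemize}
\item For $L^1\neq P^1$ the supports are disjoint in both parameters, and your neighbouring-case estimates apply.
\item For $L^1=P^1$ write $\langle T(1_{L^1}\otimes a_{I^2}),1_{L^1}\otimes g_2\rangle=\iint K_{L^1}(x_2,y_2)a_{I^2}(y_2)g_2(x_2)\,\ud y_2\,\ud x_2$.
\item With $g_2=1_{3I^2\setminus I^2}$, the size bound $|L^1|\,|x_2-y_2|^{-d_2}$ and \eqref{eq:nearbyint} give $\lesssim |I^1||I^2|$.
\item With $g_2=1_{(3I^2)^c}$, use $\int a_{I^2}=0$ and the H\"older estimate of $K_{L^1}$. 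This gives $\lesssim |I^1|\int_{(3I^2)^c}\int_{I^2}\ell(I^2)^{\alpha_2}|x_2-y_2|^{-d_2-\alpha_2}\,\ud y_2\,\ud x_2\lesssim |I^1||I^2|$.
\end{itemize}
Multiplying by $|I^1|^{-1}$ yields the required bound $|I^2|$. This is exactly how the paper handles the cases $(V_N,=)$ and $(V_S,=)$.
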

\begin{proof}
	We begin by splitting
	\begin{equation*}
		\begin{split}
			\langle T(h_{I^1}\otimes a_{I^2}), u_{J^1}\otimes 1\rangle
			 & =\langle T(h_{I^1}\otimes a_{I^2}), u_{J^1}\otimes 1_{I^2}\rangle                     \\
			 & +\langle T(h_{I^1}\otimes a_{I^2}), u_{J^1}\otimes 1_{3I^2\setminus I^2}\rangle       \\
			 & + \langle T(h_{I^1}\otimes a_{I^2}), u_{J^1}\otimes 1_{\R^{d_2}\setminus 3I^2}\rangle
			=: V_= + V_N + V_S.
		\end{split}
	\end{equation*}
	We have to study each of these three terms in all of the cases
	1) $\dist(I^1, J^1) > 0$, 2) $\dist(I^1, J^1) = 0$, $I^1 \ne J^1$ and
	3) $I^1 = J^1$. We use $S$ to denote the case of positive distance,
	$N$ to denote the neighboring (distance zero) case and ``$=$'' to denote the equal case.
	For instance, the case $(V_N, S)$ refers to the term
	$V_N = \langle T(h_{I^1}\otimes a_{I^2}), u_{J^1}\otimes 1_{3I^2\setminus I^2}\rangle$
	in the case $\dist(I^1, J^1) > 0$. For $V_=$ and
	$V_N$ we could essentially refer to Theorem \ref{thm:biparcoeff}
	modulo easy details about normalization and, more importantly,
	the distinction between WBP and diagonal $\BMO$ -- but we can also just quickly calculate these by hand.

	\textbf{Case $(V_=, S)$.} Use the partial kernel $K_{I^2}$ (the atom version)
	to write
	\begin{align*}
		|V_=| & = \Big| \int_{J^1} \int_{I^1} \big[K_{I^2}(x_1, y_1) - K_{I^2}(x_1, c_{I^1}) \big]
		h_{I^1}(y_1) u_{J^1}(x_1) \ud y_1 \ud x_1 \Big|                                              \\
		      & \lesssim \Big( \frac{\ell(I^1)}{\dist(I^1, J^1)} \Big)^{\alpha_1} |I^1|^{-1} |I^2|^2
		\int_{J^1} \int_{I^1} \frac{\ud x_1 \ud y_1}{|x_1-y_1|^{d_1}(|x_1-y_1| + \ell(I^2))^{d_2}}.
	\end{align*}
	It remains to use Lemma \ref{lem:sizeI2} -- we even get the last decay factor involving the minimum
	with the bigger exponent $d_2$ instead of $\alpha_2/2$. In fact, only in $V_S$ do we start getting
	this decay factor with the smaller exponent $\alpha_2/2$.

	\textbf{Case $(V_=, N)$.} This is the same as the case $(V_=, S)$ except we use
	the size estimate of the partial kernel.

	\textbf{Case $(V_=, =)$.} We, as usual, first estimate
	$$
		|V_=| \le |I^1|^{-1} \sum_{L^1, P^1 \in \ch(I^1)}
		|\langle T(1_{L^1}\otimes a_{I^2}), 1_{P^1}\otimes 1_{I^2}\rangle|.
	$$
	If $L^1 \ne P^1$ this is essentially the same as the case $(V_=, N)$.
	If $L^1 = P^1$ we use the diagonal flag $\BMO$ assumption recalling that
	$\ell(L^1) < \ell(I^2)$. Thus, we have $|V_=| \lesssim |I^2|$ as desired.

	\textbf{Case $(V_N, S)$.} We use the full kernel to estimate
	\begin{align*}
		|V_N| & =  \Big| \int_{J^1 \times (3I^2 \setminus I^2)} \int_{I}
		\big[ K(x,y) - K(x, (c_{I^1}, y_2))\big] h_{I^1}(y_1)a_{I^2}(y_2)u_{J^1}(x_1) \ud y \ud x \Big| \\
		      & \le \Big( \frac{\ell(I^1)}{\dist(I^1, J^1)} \Big)^{\alpha_1}
		|I^1|^{-1} \int_{J^1 \times (3I^2 \setminus I^2)} \int_{I} \size_F(x,y) \ud y \ud x             \\
		      & \lesssim
		\Big( \frac{\ell(I^1)}{\dist(I^1, J^1)} \Big)^{\alpha_1}
		\frac{|I^1| |I^2|^2}{\dist(I^1,J^1)^{d_1} \max(\dist(I^1,J^1), \ell(I^2))^{d_2}},
	\end{align*}
	where the last estimate is by Lemma \ref{lem:fullsize}. This is the desired estimate.

	\textbf{Case $(V_N, N)$.}
	This is the same as the previous case except we use the size estimate of the full kernel.

	\textbf{Case $(V_N, =)$.}
	We estimate
	$$
		|V_N| \le |I^1|^{-1} \sum_{L^1, P^1 \in \ch(I^1)}
		|\langle T(1_{L^1}\otimes a_{I^2}), 1_{P^1}\otimes 1_{3I^2 \setminus I^2}\rangle|.
	$$
	The case $L^1 \ne P^1$ is essentially the above case $(V_N , N)$. If $L^1 = P^1$
	we use the partial kernel $K_{L^1}$ as follows:
	\begin{align*}
		|\langle T(1_{L^1}\otimes a_{I^2}), 1_{L^1}\otimes 1_{3I^2 \setminus I^2}\rangle|
		 & = \Big| \int_{3I^2 \setminus I^2} \int_{I^2} K_{L^1}(x_2, y_2)a_{I^2}(y_2) \ud y_2 \ud x_2\Big| \\
		 & \lesssim |I^1| \int_{3I^2 \setminus I^2} \int_{I^2} \frac{\ud y_2 \ud x_2}{|x_2-y_2|^{d_2}}
		\lesssim |I^1| |I^2|.
	\end{align*}
	The last estimate was by \eqref{eq:nearbyint}. So $|V_N| \lesssim |I^2|$,
	which is the desired estimate.

	\textbf{Case $(V_S, S)$.}
	We use the full H\"older estimate of the full kernel to dominate $|V_S|$ by
	\begin{align*}
		\Big( \frac{\ell(I^1)}{\dist(I^1, J^1)} \Big)^{\alpha_1} \frac{|I^1|}{\dist(I^1, J^1)^{d_1}}
		\int_{(3I^2)^c} \int_{I^2}
		\frac{1}{(\dist(I^1, J^1) + |x_2-y_2|)^{d_2}}
		\frac{\ell(I^2)^{\alpha_2/2}}{|x_2-y_2|^{\alpha_2/2}} \ud y_2 \ud x_2.
	\end{align*}
	Notice that with $y_2 \in I_2$ fixed we have
	$$
		\int_{(3I^2)^c} \frac{\ud x_2}{(\dist(I^1, J^1) + |x_2-y_2|)^{d_2} |x_2-y_2|^{\alpha_2/2}}
		\lesssim \min(\dist(I^1,J^1)^{-\alpha_2/2}, \ell(I^2)^{-\alpha_2/2}).
	$$
	This completes this case.

	\textbf{Case $(V_S, N)$.}
	We estimate using the mixed H\"older and size of the full kernel that
	$$
		|V_S| \lesssim |I^1|^{-1} \int_{J^1} \int_{I^1} \frac{\ud y_1 \ud x_1}{|x_1-y_1|^{d_1}}
		\int_{(3I^2)^c} \int_{I^2} \frac{\ell(I^2)^{\alpha_2}}{|x_2-y_2|^{d_2+\alpha_2}} \ud y_2 \ud x_2
		\lesssim |I^2|,
	$$
	which is the desired estimate in this case.

	\textbf{Case $(V_S, =)$.}
	We estimate
	$$
		|V_S| \le |I^1|^{-1} \sum_{L^1, P^1 \in \ch(I^1)}
		|\langle T(1_{L^1} \otimes a_{I^2}), 1_{P^1}\otimes 1_{(3I^2)^c}\rangle|.
	$$
	The case $L^1 \ne P^1$ is essentially the above case $(V_S , N)$. If $L^1 = P^1$
	we use H\"older estimate of the partial kernel $K_{L^1}$ to get
	\begin{align*}
		|\langle T(1_{L^1} \otimes a_{I^2}), 1_{L^1}\otimes 1_{(3I^2)^c}\rangle|
		\lesssim |I^1| \int_{(3I^2)^c} \int_{I^2} \frac{\ell(I^2)^{\alpha_2}}{|x_2-y_2|^{d_2 + \alpha_2}}
		\lesssim |I^1| |I^2|
	\end{align*}
	giving us the desired estimate $|V_S| \lesssim |I^2|$.
\end{proof}
Now, consider a situation where $I^1, J^1$ with $\ell(I^1) = \ell(J^1)$ are fixed
with $(I^1)^{(k^1)} = (J^1)^{(k^1)} =: K^1$ and $\ell(I^1) + \dist(I^1, J^1) \sim \ell(K^1)$
(as in Remark \ref{rem:biparKFcoerr} -- this is the basic configuration that arises in the representation theorem).
Then for any $I^2$ with $\ell(I^2) \ge \ell(I^1)$ the above
estimate reads
$$
	|\langle T(h_{I^1} \otimes a_{I^2}), u_{J^1} \otimes 1 \rangle|
	\lesssim 2^{-\alpha_1 k^1} \frac{|I^1|}{|K^1|} |I^2| \min\Big(1, \frac{\ell(I^2)}{\ell(K^1)} \Big)^{\gamma}
$$
for some $\gamma > 0$. It turns out that in our partial paraproducts, where such
coefficient estimates are relevant, we arrange
it so that actually $\ell(I^2) \ge \ell(K^1)$ and the last decay factor
does not play a role. So we do not attempt to keep it around anymore.

\begin{cor}
	Let $T$ be a bi-parameter flag SIO satisfying the diagonal flag $\BMO$ assumption.
	\textbf{Fix} $I^1, J^1 \in \calD^1$ with $\ell(I^1) = \ell(J^1)$ and define the sequence
	\begin{align*}
		\gamma_{I^2} = \gamma_{I^1J^1I^2} & := \langle T(h_{I^1} \otimes h_{I^2}), u_{J^1} \otimes 1\rangle
	\end{align*}
	indexed by all $I^2 \in \calD^2$ with $\ell(I^2) \ge \ell(I^1)$. It satisfies
	$$
		\|(\gamma_{I^2})_{\substack{ I^2 \in \calD^2 \\ \ell(I^2) \ge \ell(I^1)}}\|_{\BMO}
		\lesssim  \Big( \frac{\ell(I^1)}{\ell(I^1) + \dist(I^1, J^1)} \Big)^{\alpha_1}
		\frac{|I^1|}{(\ell(I^1) + \dist(I^1, J^1))^{d_1}} =: C = C_{I^1J^1}.
	$$
\end{cor}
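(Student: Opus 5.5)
We verify the dyadic one-parameter $\BMO$ condition in $\R^{d_2}$ for the sequence $(\gamma_{I^2})$, restricted to $\ell(I^2)\ge \ell(I^1)$. It suffices to show, for every $Q^2\in\calD^2$ with $\ell(Q^2)\ge\ell(I^1)$,
$$
\sum_{\substack{I^2\subset Q^2}} |\gamma_{I^2}|^2 \lesssim C^2 |Q^2|.
$$
If $\ell(Q^2)<\ell(I^1)$ the sum is empty. By the standard maximal-cube argument (as in the remark after Theorem \ref{thm:T1nec}), this also controls general open sets, so the Carleson formulation suffices.

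Fix such a $Q^2$. The plan is to test the linear functional $\phi\mapsto \langle T(h_{I^1}\otimes \phi), u_{J^1}\otimes 1\rangle$ against a suitable function on $Q^2$. Choose coefficients $(c_{I^2})_{I^2\subset Q^2}$ with $\sum|c_{I^2}|^2\le |Q^2|^{-1}\cdot(\text{normalizing})$, and set $\phi:=\sum_{I^2\subset Q^2} \overline{\sign}(\gamma_{I^2})\, |\gamma_{I^2}|\,h_{I^2}$ (truncated to finitely many terms). Then $\sum_{I^2\subset Q^2}|\gamma_{I^2}|^2=\langle T(h_{I^1}\otimes\phi),u_{J^1}\otimes 1\rangle$, with $\phi$ supported in $Q^2$, of mean zero, and with $\|\phi\|_{L^2}=(\sum|\gamma_{I^2}|^2)^{1/2}=:A$. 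We would then like to bound this pairing by $C|Q^2|^{1/2}\|\phi\|_{L^2}$, which gives $A^2\lesssim C|Q^2|^{1/2}A$, and hence the claim.

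For the pairing estimate, we split $1=1_{3Q^2}+1_{(3Q^2)^c}$ in the second variable of the test function. The far part is handled exactly as in the $V_S$ cases of Lemma \ref{lem:bmocoeff1}: use the cancellation of $\phi$ through $\phi=\sum$ of Haars (or directly via the Hölder estimate of the full kernel or of the partial kernel $K_{L^1}$ in the diagonal case), with $\|\phi\|_{L^1}\le |Q^2|^{1/2}\|\phi\|_{L^2}$ replacing the atom normalization $|I^2|$. This yields $C\,|Q^2|^{1/2}\|\phi\|_{L^2}$. In the $(V_S,S)$ configuration we use the dropped decay factor only as $\le 1$.

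For the local part $\langle T(h_{I^1}\otimes\phi),u_{J^1}\otimes 1_{3Q^2}\rangle$, we need an $L^2$-type bound: $\phi\mapsto \langle T(h_{I^1}\otimes\phi), u_{J^1}\otimes g\rangle$ should be bounded on $L^2(\R^{d_2})$ with norm $\lesssim C$. This is the main obstacle. We would treat this auxiliary operator $T_{I^1J^1}$ on $\R^{d_2}$ as a one-parameter CZO with constant $C$ (in case $I^1=J^1$, split $h_{I^1},u_{J^1}$ into children and use $K_{L^1}$ on the diagonal $L^1=P^1$). Its kernel is $\int\int K(\cdot)h_{I^1}u_{J^1}$, and its off-diagonal size and Hölder bounds follow from the full, mixed, and partial kernel estimates together with Lemma \ref{lem:sizeI2}. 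The crude bound $\size_F\le |x_1-y_1|^{-d_1}|x_2-y_2|^{-d_2}$ suffices there. Its WBP on cubes $P^2$ with $\ell(P^2)\ge\ell(I^1)$, and its $T1,T^*1\in\BMO$ conditions, come from Lemma \ref{lem:bmocoeff1} applied to the atom and indicator cases and to the partial duals (the lemma is stated for all duals). Small cubes $\ell(P^2)<\ell(I^1)$ are not needed because all Haars in $\phi$ live at scales $\ge \ell(I^1)$.

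Rather than invoking the full one-parameter $T1$ theorem, a cleaner route would be to expand $g=1_{3Q^2}$ and estimate the matrix $\langle T(h_{I^1}\otimes h_{I^2}),u_{J^1}\otimes h_{P^2}\rangle$ by Theorem \ref{thm:biparcoeff} (Schur test with decay in $\dist(I^2,P^2)$ and scale ratios). The non-cancellative averages of $g$ would then be handled via Lemma \ref{lem:bmocoeff1} for atoms. Either way, the diagonal scale interaction requires the diagonal flag $\BMO$ assumption, and this is where we expect the work to concentrate.
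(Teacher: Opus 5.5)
There is a genuine gap, and it is exactly at the step you flag as ``the main obstacle''. You test the Carleson sum against the $L^2$ function $\phi=\sum_{I^2\subset Q^2}\overline{\gamma_{I^2}}h_{I^2}$. This forces you to prove the local bound $|\langle T(h_{I^1}\otimes\phi),u_{J^1}\otimes 1_{3Q^2}\rangle|\lesssim C|Q^2|^{1/2}\|\phi\|_{L^2}$, i.e.\ an $L^2$ bound for the auxiliary operator $S\phi:=\langle T(h_{I^1}\otimes\phi),u_{J^1}\otimes\cdot\,\rangle$ on $\R^{d_2}$. Neither suggested route delivers it.

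\textbf{The $T1$ route.} A one-parameter $T1$ theorem for $S$ needs $S^*1\in\BMO$. But $S^*1=a:=\langle T^*(u_{J^1}\otimes 1),h_{I^1}\rangle$, and its Haar coefficients at scales $\ge\ell(I^1)$ are precisely the $\gamma_{I^2}$, so in $L^2$/Carleson form this is the statement itself. It also needs a WBP for $S$. For $J^1=I^1$ and cancellative $u_{J^1}$, splitting into children leads to the pairings $\langle T1_{L^1\times P^2},1_{L^1\times P^2}\rangle$, i.e.\ the flag WBP, which is not a hypothesis here (only the diagonal flag $\BMO$ assumption is).

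\textbf{The Schur-test route.} Theorem \ref{thm:biparcoeff} only treats $\ell(I)=\ell(J)$ and also assumes the WBP. So it says nothing about $\langle T(h_{I^1}\otimes h_{I^2}),u_{J^1}\otimes h_{P^2}\rangle$ with $\ell(P^2)\ne\ell(I^2)$. Moreover, for $I^2\subsetneq P^2$ these entries contain the term $\langle h_{P^2}\rangle_{I^2}\gamma_{I^2}$. That is a paraproduct with symbol $(\gamma_{I^2})$, which no size or decay estimate can sum; again the conclusion is needed as input.

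\textbf{The missing idea: you never need an $L^2$ test function.}
\begin{enumerate}
\item By John--Nirenberg, dyadic $\BMO$ is controlled by the $L^1$ mean oscillation, and the $L^1$ oscillation over a cube is dual to $L^\infty$ atoms on that cube.
\item Set $b:=\sum_{\ell(I^2)\ge\ell(I^1)}\gamma_{I^2}h_{I^2}$. Its oscillation vanishes on cubes with $\ell(Q^2)<\ell(I^1)$.
\item Since $b=\sum_{\ell(L^2)=\ell(I^1)/2}E_{L^2}a$, for $\ell(Q^2)\ge\ell(I^1)$ one has $\int_{Q^2}|b-\langle b\rangle_{Q^2}|\le\int_{Q^2}|a-\langle a\rangle_{Q^2}|\lesssim\sup_{a_{Q^2}}|\langle a,a_{Q^2}\rangle|$.
\item Here $\langle a,a_{Q^2}\rangle=\langle T(h_{I^1}\otimes a_{Q^2}),u_{J^1}\otimes 1\rangle$, which is $\lesssim C|Q^2|$ directly by Lemma \ref{lem:bmocoeff1}, since the minimum factor there is at most $1$.
\end{enumerate}
This is the paper's proof. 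Your own remark that Lemma \ref{lem:bmocoeff1} supplies ``$T^*1\in\BMO$'' for $S$ already is the whole argument once combined with John--Nirenberg. The near/far splitting (whose far part you handle correctly) and the detour through $L^2$ boundedness of $S$ are unnecessary.
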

\begin{proof}
	Recall that $I^1, J^1$ are fixed in this argument.
	Define the function $b$ in $\R^{d_2}$ with Haar coefficients
	\[
		b_{I^2}:=\langle b, h_{I^2}\rangle=\begin{cases}
			\langle T^*(u_{J^1}\otimes 1), h_{I^1}\otimes h_{I^2}\rangle, & \ell(I^2) \ge \ell(I^1), \\
			0,                                                            & \text{otherwise.}
		\end{cases}
	\]
	That is, set $b=\sum_{I^2 \in \calD^2} b_{I^2} h_{I^2}$. We have that the sequence $\{\gamma_{I^2} \}$
	is a $\BMO$ sequence if and only if $b$ is in the space $\BMO(\calD^2)$ -- the dyadic
	$\BMO$ in $\R^{d_2}$ with respect to the lattice $\calD^2$.

	Now, we will study the $L^1$ mean oscillation of $b$. Fix a cube $Q^2 \in \calD^2$.
	Notice that if $\ell(Q^2) < \ell(I^1)$, then we have that
	\[
		|Q^2|^{-1}\int_{Q^2}|b-\langle b\rangle_{Q^2}| \le
		\left(|Q^2|^{-1}\int_{Q^2}|b-\langle b\rangle_{Q^2}|^2\right)^{1/2}
	\]
	and
	\[
		\Vert 1_{Q^2}(b-\langle b\rangle_{Q^2})\Vert_{L^2}^2=\left\Vert \sum_{L^2\subset Q^2} \Delta_{L^2} b\right \Vert_{L^2}^2=\sum_{L^2\subset Q^2}\left\Vert  \Delta_{L^2} b\right \Vert_{L^2}^2=0,
	\]
	where we used the fact that $\ell(L^2) \le \ell(Q^2) < \ell(I^1)$.
	Hence, we may assume that $\ell(Q^2) \ge \ell(I^1)$.

	We want to reduce to the $L^1$ oscillations of the function
	\[
		a := \langle T^*(u_{J^1}\otimes 1),h_{I^1}\rangle
	\]
	over cubes $\ell(Q^2) \ge \ell(I^1)$ instead of the $L^1$ oscillations of $b$. But notice that
	$$
		b =  \sum_{I^2 \colon \ell(I^2) \ge \ell(I^1)} \Delta_{I^2} a =  \sum_{\ell(I^2) = \ell(I^1)/2} E_{I^2} a.
	$$
	We decompose and estimate as follows
	\begin{align*}
		\int_{Q^2} |b-\langle b\rangle_{Q^2}| & =
		\sum_{\substack{L^2\subset Q^2 \\ \ell(L^2)=\ell(I^1)/2}}\int_{L^2}|b-\langle b\rangle_{Q^2}| \\
		                                      & =
		\sum_{\substack{L^2\subset Q^2 \\ \ell(L^2)=\ell(I^1)/2}} |L^2| |\langle a \rangle_{L^2}
		-\langle a\rangle_{Q^2}|                                                                      \\
		                                      & \le
		\sum_{\substack{L^2\subset Q^2 \\ \ell(L^2)=\ell(I^1)/2}} |L^2| \langle |a-\langle a \rangle_{Q^2}| \rangle_{L^2}
		=  \int_{Q^2} |a - \langle a \rangle_{Q^2}|.
	\end{align*}
	Therefore, we have reduced to showing that
	$$
		\int_{I^2} |a - \langle a \rangle_{I^2}| \lesssim C|I^2|
	$$
	for all $I^2 \in \calD^2$ with $\ell(I^2) \ge \ell(I^1)$. But this is equivalent to showing that
	$$
		|\langle a, a_{I^2} \rangle| \lesssim C|I^2|
	$$
	for all $L^{\infty}$ atoms $a_{I^2}$. Since
	$$
		\langle a, a_{I^2}\rangle = \langle T(h_{I^1} \otimes a_{I^2}), u_{J^1} \otimes 1\rangle
	$$
	this follows directly from Lemma \ref{lem:bmocoeff1}.
\end{proof}

We have the following analog of Lemma \ref{lem:bmocoeff1} as well.
\begin{lem}\label{lem:bmocoeff2}
	Let $I^2, J^2 \in \calD^2$ be two dyadic cubes with $\ell(I^2) = \ell(J^2)$. Let $I^1 \in \calD^1$
	with $\ell(I^2) \ge \ell(I^1)$. Suppose $a_{I^1}$ is an $L^{\infty}$ atom on $I^1$.
	Let $T$ be a bi-parameter flag SIO satisfying the diagonal flag $\BMO$ assumption.
	Then we have
	\begin{align*}
		|\langle T(a_{I^1} \otimes h_{I^2}), 1 \otimes u_{J^2} \rangle|
		 & \lesssim \Big( \frac{\ell(I^2)}{\ell(I^2) + \dist(I^2, J^2)} \Big)^{\alpha_2}
		\frac{|I^2|}{(\ell(I^2) + \dist(I^2, J^2))^{d_2}} |I^1|.
	\end{align*}
	The same is true if $T$ is replaced by any of its (partial) duals.
\end{lem}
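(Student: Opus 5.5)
We will mirror the proof of Lemma \ref{lem:bmocoeff1}, with the roles of the parameters exchanged. The new feature is that the function $1$ now sits in the first parameter, and in the first parameter the flag kernel has no $x_1$-decay in the second variable. The expected estimate, however, has no decay factor involving $\ell(I^1)$. We split the $\R^{d_1}$-integration of the function $1$ into local, neighbour and separated pieces:
\begin{equation*}
	\langle T(a_{I^1}\otimes h_{I^2}), 1\otimes u_{J^2}\rangle = W_= + W_N + W_S,
\end{equation*}
where $W_=$ pairs against $1_{I^1}\otimes u_{J^2}$, $W_N$ against $1_{3I^1\setminus I^1}\otimes u_{J^2}$, and $W_S$ against $1_{(3I^1)^c}\otimes u_{J^2}$. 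Each of these three terms is then treated in the three cases $\dist(I^2,J^2)>0$ ($S$), $\dist(I^2,J^2)=0$ with $I^2\ne J^2$ ($N$), and $I^2=J^2$ ($=$).

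For $W_=$, the atom and indicator share the same cube $I^1$, so we use the atom version of the partial kernel $K_{I^1}$, whose size is $|I^1|/|x_2-y_2|^{d_2}$. In case $S$ we use the Hölder estimate in $y_2$ at $c_{I^2}$ together with Lemma \ref{lem:sizeI1}; in case $N$ the size estimate suffices. In case $=$ we split $h_{I^2}$ and $u_{J^2}$ into children $L^2,P^2$. If $L^2\ne P^2$ we are back in case $N$. If $L^2=P^2$, the flag rectangle $I^1\times L^2$ is flag because $\ell(I^1)\le \ell(I^2)/2$ fails only when $\ell(I^1)=\ell(I^2)$. That equal-size case needs care: we then instead split $a_{I^1}$ via children of $I^1$ too, or apply the diagonal flag BMO directly to $I^1\times I^2$ using $|\langle T(a_{I^1}\otimes 1_{I^2}),1_{I}\rangle|\lesssim |I|$ after writing $h_{I^2}, u_{J^2}$ on children. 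In every subcase this gives $|W_=|\lesssim |I^1|\cdot(\text{decay in }I^2,J^2)$.

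For $W_N$ and $W_S$ we use the full kernel in cases $S$ and $N$. We exploit the Hölder estimate in $y_2$ in case $S$ (or only size in case $N$), and always the cancellation $\int a_{I^1}=0$ via the mixed/full Hölder estimate in $y_1$ for $W_S$. The key integral is
\begin{equation*}
	\int_{(3I^1)^c}\int_{I^1}\frac{\ell(I^1)^{\alpha_1}}{|x_1-y_1|^{d_1+\alpha_1}(|x_1-y_1|+|x_2-y_2|)^{d_2}}\ud y_1\ud x_1\lesssim \frac{|I^1|}{|x_2-y_2|^{d_2}},
\end{equation*}
after which the $x_2,y_2$ integration is handled by Lemma \ref{lem:sizeI1} (times the Hölder factor). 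For $W_N$ there is no cancellation available, so we bound $(|x_1-y_1|+|x_2-y_2|)^{-d_2}\le |x_2-y_2|^{-d_2}$ and use \eqref{eq:nearbyint} in the first parameter, which gives the factor $|I^1|$. In case $=$ for $W_N$ and $W_S$ we split into children of $I^2$; off-diagonal children reduce to case $N$, while diagonal children $L^2=P^2$ use the partial kernel $K_{L^2}$ (atom versions). For $W_N$ this is the size $\frac{|I^2|^2}{|x_1-y_1|^{d_1}(|x_1-y_1|+\ell(I^2))^{d_2}}$ integrated over $(3I^1\setminus I^1)\times I^1$. For $W_S$ it is the Hölder estimate in $y_1$; using $\ell(I^1)\le \ell(I^2)$ we get $\lesssim |I^1||I^2|$ in both, which is the target $|I^1|$ after normalizing by the Haar functions $|I^2|^{-1}$.

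The main obstacle will be the diagonal case $(W_=,=)$, where the diagonal BMO assumption must be applied on genuinely flag rectangles, and the subcase $\ell(I^1)=\ell(I^2)$. We expect this to resolve because $I^1\times L^2$ with $L^2\in\ch(I^2)$ might not be flag, so we will further decompose $1_{I^1}$ into children $P^1$ of equal size and treat the resulting non-diagonal pieces by the full-kernel/partial-kernel size estimates as in the other cases. The remaining diagonal pieces, on $P^1\times L^2$, are flag and are handled by the assumption. The dual statements follow by the symmetric assumptions on adjoint kernels.
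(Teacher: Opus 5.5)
Your decomposition and case analysis are the paper's. You use the same split $W_=+W_N+W_S$ crossed with the cases $S$, $N$, $=$, and the same tools:
\begin{itemize}
\item the atom-adapted partial kernel $K_{I^1}$ for $W_=$;
\item the full kernel off the diagonal, with H\"older in $y_2$ in case $S$, and for $W_S$ also H\"older in $y_1$ using $\int a_{I^1}=0$;
\item the partial kernel $K_{L^2}$ on the diagonal children of $I^2$.
\end{itemize}
Your treatment of $W_S$ is exactly the paper's. You are also right that the subcase $(W_=,=)$ with $\ell(I^1)=\ell(I^2)$ is delicate. There the mirrored argument from Lemma \ref{lem:bmocoeff1} would apply the diagonal flag $\BMO$ assumption to $I^1\times L^2$, with $L^2$ a child of $I^2$, and this rectangle is not flag. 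The paper's proof refers $W_=$ back to Theorem \ref{thm:biparcoeff} and Lemma \ref{lem:bmocoeff1} and does not single this subcase out.

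However, your fix does not work as written. After splitting $a_{I^1}$ over the children $P^1$ of $I^1$, the diagonal pieces are $\langle T(a_{I^1}1_{P^1}\otimes 1_{L^2}), 1_{P^1}\otimes 1_{L^2}\rangle$. The function $a_{I^1}1_{P^1}$ need not have mean zero, so it is not an atom, and the diagonal flag $\BMO$ assumption does not apply to it.

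Your alternative also fails in general. That alternative applies the assumption to $I^1\times I^2$ after writing $h_{I^2},u_{J^2}$ on children. But the product $h_{I^2}u_{I^2}$ takes both signs on the children, so the diagonal child terms do not recombine into $\langle T(a_{I^1}\otimes 1_{I^2}),1_{I^1}\otimes 1_{I^2}\rangle$. This only works when $u_{J^2}$ is the same Haar function as $h_{I^2}$.

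The repair is to write
\[
a_{I^1}1_{P^1}=(a_{I^1}-\langle a_{I^1}\rangle_{P^1})1_{P^1}+\langle a_{I^1}\rangle_{P^1}1_{P^1}.
\]
\begin{itemize}
\item The first part is twice an atom on $P^1$, so the diagonal flag $\BMO$ assumption on the flag rectangle $P^1\times L^2$ handles it.
\item The second part needs the flag WBP, $|\langle T1_{P^1\times L^2},1_{P^1\times L^2}\rangle|\lesssim |P^1\times L^2|$.
\item The pieces with different children in the two slots are handled by the size of $K_{L^2}$, as you say.
\end{itemize}
WBP is part of the definition of a bi-parameter flag CZO, so this is harmless where the lemma is applied. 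It is not, however, among the lemma's stated hypotheses. With only the diagonal $\BMO$ assumption you would need a further argument for this subcase.
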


\begin{proof}
	Similarly as before, we write
	\begin{equation*}
		\begin{split}
			\langle T(a_{I^1}\otimes h_{I^2}), 1\otimes u_{J^2}\rangle
			 & =\langle T(a_{I^1}\otimes h_{I^2}),  1_{I^1}\otimes u_{J^2}\rangle                      \\
			 & +\langle T(a_{I^1}\otimes h_{I^2}),  1_{3I^1\setminus I^1} \otimes u_{J^2}\rangle       \\
			 & + \langle T(a_{I^1}\otimes h_{I^2}),   1_{\R^{d_1}\setminus 3I^1}\otimes u_{J^2}\rangle
			=: W_= + W_N + W_S.
		\end{split}
	\end{equation*}
	The estimates of $W_=$ and $W_N$ are essentially
	included in the proof of Theorem \ref{thm:biparcoeff} --
	the difference is that when we use WBP in Theorem \ref{thm:biparcoeff},
	here we need to use the diagonal flag BMO assumption. We also did the
	analogous calculations by hand in the proof of Lemma \ref{lem:bmocoeff1}
	so this distinction should be clear.

	It remains to estimate $W_S$. We need to consider the cases
	1) $\dist(I^2, J^2) > 0$, 2) $\dist(I^2, J^2) = 0$, $I^2 \ne J^2$ and
	3) $I^2 = J^2$. We will denote these cases by $(W_S, S)$, $(W_S, N)$ and $(W_S, =)$, respectively.

	\textbf{Case $(W_S, S)$.}   We use the full H\"older estimate of the full kernel to dominate $|W_S|$ by
	\begin{align*}
		\Big( \frac{\ell(I^2)}{\dist(I^2, J^2)} \Big)^{\alpha_2}  |I^2|
		\int_{(3I^1)^c} \int_{I^1}
		\frac{1}{(\dist(I^2, J^2) + |x_1-y_1|)^{d_2}}
		\frac{\ell(I^1)^{\alpha_1}}{|x_1-y_1|^{d_1+\alpha_1}} \ud y_1 \ud x_1.
	\end{align*}
	Simply replace $(\dist(I^2, J^2) + |x_1-y_1|)^{-d_2}$ with $\dist(I^2, J^2)^{-d_2}$
	to obtain the desired estimate.

	\textbf{Case $(W_S, N)$.}  We estimate using the mixed H\"older and size estimate
	of the full kernel that
	$$
		|W_S| \lesssim |I^2|^{-1} \int_{J^2} \int_{I^2} \frac{\ud y_2 \ud x_2}{|x_2-y_2|^{d_2}}
		\int_{(3I^1)^c} \int_{I^1} \frac{\ell(I^1)^{\alpha_1}}{|x_1-y_1|^{d_1+\alpha_1}} \ud y_1 \ud x_1
		\lesssim |I^1|,
	$$
	which is the desired estimate in this case.

	\textbf{Case $(W_S, =)$.} In this case we estimate
	$$
		|W_S| \le |I^2|^{-1} \sum_{L^2, P^2 \in \ch(I^2)}
		|\langle T(a_{I^1} \otimes 1_{L^2} ),   1_{(3I^1)^c}\otimes 1_{P^2}\rangle|.
	$$
	The case $L^2 \ne P^2$ is essentially the above case $(W_S , N)$. If $L^2 = P^2$
	we use the H\"older estimate of the partial kernel $K_{L^2}$ to get
	\begin{align*}
		|\langle T(a_{I^1} \otimes 1_{L^2} ),   1_{(3I^1)^c}\otimes 1_{P^2}\rangle|
		\lesssim |I^2| \int_{(3I^1)^c} \int_{I^1} \frac{\ell(I^1)^{\alpha_1}}{|x_1-y_1|^{d_1 + \alpha_1}}\ud y_1 \ud x_1
		\lesssim |I^1| |I^2|
	\end{align*}
	giving us the desired estimate $|W_S| \lesssim |I^1|$.

\end{proof}

\begin{lem}
	Let $T$ be a bi-parameter flag SIO satisfying the diagonal flag $\BMO$ assumption.
	\textbf{Fix} $I^2, J^2 \in \calD^2$ with $\ell(I^2) = \ell(J^2)$ and define the sequence
	\begin{align*}
		\rho_{I^1} = \langle T(h_{I^1}\otimes h_{I^2}), 1\otimes u_{J^2}\rangle
	\end{align*}
	indexed by all $I^1 \in \calD^1$ with $\ell(I^1) \le \ell(I^2)$. It satisfies
	$$
		\|(\rho_{I^1})_{\substack{ I^1 \in \calD^1 \\ \ell(I^1) \le \ell(I^2)}}\|_{\BMO}
		\lesssim  \Big( \frac{\ell(I^2)}{\ell(I^2) + \dist(I^2, J^2)} \Big)^{\alpha_2}
		\frac{|I^2|}{(\ell(I^2) + \dist(I^2, J^2))^{d_2}} =: C = C_{I^2J^2}.
	$$
\end{lem}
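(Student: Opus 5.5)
The plan is to mirror the proof of the preceding corollary (the $\gamma_{I^2}$ case), now working in the first parameter with $I^2, J^2$ fixed. First I define a function $b$ on $\R^{d_1}$ through its Haar coefficients:
\[
b_{I^1} := \langle b, h_{I^1}\rangle := \langle T^*(1\otimes u_{J^2}), h_{I^1}\otimes h_{I^2}\rangle \quad \text{if } \ell(I^1)\le \ell(I^2),
\]
and $b_{I^1} := 0$ otherwise. The sequence $(\rho_{I^1})$ is then a $\BMO$ sequence exactly when $b\in \BMO(\calD^1)$, so the goal becomes the mean oscillation bound
\[
\int_{Q^1}|b-\langle b\rangle_{Q^1}| \lesssim C|Q^1| \quad \text{for all } Q^1\in\calD^1 .
\]

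Let $a := \langle T^*(1\otimes u_{J^2}), h_{I^2}\rangle$, a function of $x_1$ defined in the natural way through the kernel representations. Then
\[
b=\sum_{\ell(I^1)\le \ell(I^2)}\Delta_{I^1}a = a - \sum_{\ell(L^1)=\ell(I^2)} E_{L^1}a .
\]
Here the roles are reversed compared with the previous corollary: small cubes are kept and large ones are cut away. So I split according to the size of $Q^1$.

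\textbf{Case $\ell(Q^1)\le \ell(I^2)$.} Here $Q^1$ lies inside a single $L^1$ with $\ell(L^1)=\ell(I^2)$. On $Q^1$ the function $b$ differs from $a$ only by a constant, so $\int_{Q^1}|b-\langle b\rangle_{Q^1}|=\int_{Q^1}|a-\langle a\rangle_{Q^1}|$. This is bounded by $C|Q^1|$ once $|\langle a,a_{Q^1}\rangle|\lesssim C|Q^1|$ holds for all $L^\infty$ atoms $a_{Q^1}$. That pairing equals $\langle T(a_{Q^1}\otimes h_{I^2}),1\otimes u_{J^2}\rangle$, which Lemma \ref{lem:bmocoeff2} controls, since $\ell(Q^1)\le\ell(I^2)$.

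\textbf{Case $\ell(Q^1)>\ell(I^2)$.} Decompose $Q^1$ into the cubes $L^1\subset Q^1$ with $\ell(L^1)=\ell(I^2)$. On each such $L^1$ we have $b=a-\langle a\rangle_{L^1}$, so $\langle b\rangle_{L^1}=0$. Since $b$ has no Haar coefficients at scales above $\ell(I^2)$, also $\langle b\rangle_{Q^1}=0$. Therefore
\[
\int_{Q^1}|b-\langle b\rangle_{Q^1}|=\sum_{L^1}\int_{L^1}|a-\langle a\rangle_{L^1}|\le \sum_{L^1}C|L^1| = C|Q^1|
\]
by the first case.

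The only genuine work is verifying that the pairing identity $\langle a,a_{Q^1}\rangle=\langle T(a_{Q^1}\otimes h_{I^2}),1\otimes u_{J^2}\rangle$ makes sense. The pairing with the constant $1$ in the first parameter has to be interpreted via the splitting into $1_{Q^1}$, $1_{3Q^1\setminus Q^1}$ and $1_{(3Q^1)^c}$ used in Lemma \ref{lem:bmocoeff2}, and the cancellation of $a_{Q^1}$ guarantees absolute convergence of the far part. I expect this well-definedness check to be the main obstacle; the rest is routine.
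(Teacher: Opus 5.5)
Your proposal is correct and follows essentially the same route as the paper. The paper uses the same auxiliary $b$ and $a=\langle T^*(1\otimes u_{J^2}),h_{I^2}\rangle$, reduces cubes with $\ell(Q^1)>\ell(I^2)$ to their subcubes of side $\ell(I^2)$ (where $b$ has mean zero), uses that $b-a$ is constant on smaller cubes, and closes with atom duality and Lemma~\ref{lem:bmocoeff2}. The only differences are that the paper handles the two cases in the opposite order, and it takes the identity $\langle a,a_{I^1}\rangle=\langle T(a_{I^1}\otimes h_{I^2}),1\otimes u_{J^2}\rangle$ as given by the kernel framework rather than treating it as the main obstacle.
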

\begin{proof}
	As before, since $I^2,J^2$ are fixed, we define a function $b$ on $\mathbb{R}^{d_1}$ with Haar coefficients
	\[
		b_{I^1}:=\langle b, h_{I^1}\rangle=\begin{cases}
			\langle T(h_{I^1}\otimes h_{I^2}), 1\otimes u_{J^2}\rangle, & \ell(I^1)\le \ell(I^2), \\
			0,                                                          & \text{otherwise}.
		\end{cases}
	\]
	That is, $b=\sum_{I^1} b_{I^1} h_{I^1}$. As usual, we have that the sequence $\{ \rho_{I^1}\}_{I^1}$ is in $\BMO(\calD^1)$ if and only if $b$ is a dyadic BMO function in $\mathbb{R}^{d_1}$ with respect to $\calD^1$.

	We study the $L^1$ mean oscillation of $b$. Fix a cube $Q^1$.
	First, suppose that $\ell(Q^1)>\ell(I^2)$.
	Let $\{J^1_j\}_j$ be the collection in $\calD^1(Q^1)$
	(the dyadic system inside $Q_1$) with $\ell(J^1_j)=\ell(I^2)$ for all $j$. We have
	\begin{align*}
		|Q^1|^{-1}\int_{Q^1} |b-\langle b\rangle_{Q^1}| & =   |Q^1|^{-1}\sum_j \int_{J^1_j}|b-\langle b\rangle_{Q^1}|                                                                                      \\
		                                                & \le  |Q^1|^{-1}\sum_j \int_{J^1_j}|b-\langle b\rangle_{J^1_j}|+  |Q^1|^{-1}\sum_j \int_{J^1_j}|\langle b\rangle_{J^1_j}- \langle b\rangle_{Q^1}| \\
		                                                & = |Q^1|^{-1}\sum_j \int_{J^1_j}|b-\langle b\rangle_{J^1_j}|,
	\end{align*}
	where we have used that by the definition of $b$ we have
	\[
		\langle b\rangle_{J^1_j}- \langle b\rangle_{Q^1} = \sum_{m=1}^s\langle \Delta_{(J^1_j)^{(m)}} b
		\rangle_{J^1_j}=0,\qquad (J^1_j)^{(s)}=Q^1.
	\]
	Thus, to control the oscillation over $Q^1$, $\ell(Q^1) > \ell(I^2)$,
	it actually suffices to prove that
	\[
		|J^1_j|^{-1} \int_{J^1_j}|b-\langle b\rangle_{J^1_j}|\le C.
	\]
	Therefore, we have actually reduced completely
	to studying $L^1$ oscillations in cubes $Q^1$ with $\ell(Q^1)\le \ell(I^2)$.

	We then want to reduce to the $L^1$ oscillations of the function
	\[
		a := \langle T^*(1\otimes u_{J^2}),h_{I^2}\rangle
	\]
	over the cubes $Q^1$ with $\ell(Q^1) \le \ell(I^2)$ instead of the $L^1$ oscillations of $b$. But notice that
	$$
		b =  \sum_{I^1 \colon \ell(I^1) \le \ell(I^2)} \Delta_{I^1} a = a- \sum_{\ell(I^1) = \ell(I^2)} E_{I^1} a.
	$$
	We next notice that
	\[
		\Big(\sum_{\ell(I^1) = \ell(I^2)} E_{I^1} a\Big)1_{Q^1}=\Big\langle \sum_{\ell(I^1) = \ell(I^2)} E_{I^1} a\Big\rangle_{Q^1}1_{Q^1}
	\]
	and, hence,
	\begin{align*}
		\frac 1{|Q^1|}\int_{Q^1}|b-\langle b\rangle_{Q^1}|= \frac 1{|Q^1|}\int_{Q^1}|a-\langle a\rangle_{Q^1}|.
	\end{align*}
	Therefore, we have reduced to showing that
	$$
		\int_{I^1} |a - \langle a \rangle_{I^1}| \lesssim C|I^1|
	$$
	for all $I^1 \in \calD^1$ with $\ell(I^1) \le \ell(I^2)$. But this is equivalent to showing that
	$$
		|\langle a, a_{I^1} \rangle| \lesssim C|I^1|
	$$
	for all $L^{\infty}$ atoms $a_{I^1}$. Since
	$$
		\langle a, a_{I^1}\rangle = \langle T(a_{I^1} \otimes h_{I^2}), 1 \otimes u_{J^2}\rangle,
	$$
	this follows directly from Lemma \ref{lem:bmocoeff2}.
\end{proof}

In the partial paraproducts where the above coefficient estimates are relevant --
those where the paraproduct part is in the first parameter --
we have the summing restriction $\ell(I^1) < \ell(I^2)$ (so we are in the above situation)
and $(I^2)^{(k^2)} = (J^2)^{(k^2)} =: K^2$ with $\ell(I^2) + \dist(I^2,J^2) \sim \ell(K^2)$.
So in that setting the above upper bound reads in the form $2^{-\alpha_2 k^2} \frac{|I^2|}{|K^2|}$,
which is the natural way to write it in the context of our representation theorem.
This will become more clear when we define the partial paraproducts,
look at the coefficient assumptions needed there and see how the
partial paraproducts arise in the representation theorem.

\subsection{Tri-parameter coefficient estimates}
\begin{lem}\label{lem:trifullsize}
	Let $I, J \in \calD_F$
	be two dyadic flag rectangles with $\ell(I) = \ell(J)$ (meaning $\ell(I^m) = \ell(J^m)$ for $m=1,2,3$)
	and $I^m \ne J^m$ for $m=1,2,3$.
	Then we have
	\begin{align*}
		F_{IJ} :=
		\int_{I} \int_{J} & \frac{\ud x \ud y}
		                    {|x_1-y_1|^{d_1}(|x_1-y_1| + |x_2-y_2|)^{d_2}(|x_1-y_1| + |x_2-y_2|+|x_3-y_3|)^{d_3}} \\
		                  & \lesssim
		\frac{|I|^2}{\prod_{k=1}^3\max_{1\le m \le k}(\ell(I^m) + \dist(I^m,J^m))^{d_k}}.
	\end{align*}
\end{lem}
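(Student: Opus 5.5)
We reduce to the bi-parameter Lemma \ref{lem:fullsize} and Lemma \ref{lem:sizeI1}, splitting by whether the distance in each parameter is positive (S) or zero (N). Write $D_m := \ell(I^m) + \dist(I^m, J^m)$. Since $I$ is flag, $\max_{1\le m\le k} D_m \gtrsim \max_{m\le k}\ell(I^m)$, and the goal is
$$
F_{IJ} \lesssim \frac{|I|^2}{D_1^{d_1}\max(D_1,D_2)^{d_2}\max(D_1,D_2,D_3)^{d_3}}.
$$

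Set $a := |x_1-y_1|$, $b := |x_2-y_2|$, $c := |x_3-y_3|$, and $M := \max(D_1,D_2)$. First handle the third-parameter integral with $x_{12}, y_{12}$ fixed; the integrand's third factor is $(a+b+c)^{-d_3}$. There are two cases.

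\textbf{Case $\dist(I^3,J^3)>0$.} Here $c \ge \dist(I^3,J^3) \sim D_3$. We bound $(a+b+c)^{-d_3} \le \max(a+b, D_3)^{-d_3}$ (up to constants), so the $x_3,y_3$ integral is $\lesssim |I^3|^2 \max(a+b,D_3)^{-d_3}$.

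\textbf{Case $\dist(I^3,J^3)=0$.} We use the estimate
$$
\int_{I^3}\int_{J^3}(a+b+c)^{-d_3}\,\ud x_3\,\ud y_3 \lesssim \min\big(|I^3|,\ |I^3|^2(a+b)^{-d_3}\big) \sim |I^3|^2\max(a+b,\ell(I^3))^{-d_3}.
$$
The bound by $|I^3|$ comes from \eqref{eq:nearbyint} (Lemma \ref{lem:sizeI1}).

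In both cases the $x_3, y_3$ integral is $\lesssim |I^3|^2\max(a+b,D_3)^{-d_3}$. It remains to bound
$$
\int_{I^{12}}\int_{J^{12}} \frac{\ud x_{12}\,\ud y_{12}}{a^{d_1}(a+b)^{d_2}\max(a+b,D_3)^{d_3}}.
$$
This is the object of Lemma \ref{lem:fullsize} with one extra factor. When $D_3 \ge M$ we simply drop $a+b$ from the extra factor, getting $D_3^{-d_3}$, and apply Lemma \ref{lem:fullsize}. When $D_3 < M$ we must gain $M^{-d_3}$. This step is the main obstacle. The approach is to rerun the four cases of Lemma \ref{lem:fullsize}, treating the combined factor $(a+b)^{d_2}\max(a+b,D_3)^{d_3}$ as one.

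\textbf{Case S in parameter 1.} Here $a \ge \dist(I^1,J^1)$. If moreover $D_1 \ge D_2$, then $a+b \gtrsim D_1 = M$ and we are done. Otherwise $D_2 > D_1$; parameter 2 either has positive distance, giving $b \gtrsim D_2$, or we are in the situation "$\dist(I^2,J^2)=0$ with $D_2 = \ell(I^2)$". In the latter situation we integrate $b$ over neighbouring cubes: $\int\int (a+b)^{-d_2-d_3}\,\ud x_2\,\ud y_2 \lesssim |I^2|\,\ell(I^2)^{-d_3}$ once $a \lesssim \ell(I^2)$, since the singularity at $b=0$ is integrable when $a>0$. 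More carefully, we split $b \le \ell(I^2)$ versus $b > \ell(I^2)$ and use $\ell(I^3) \ge \ell(I^2)$, which gives $\max(a+b, D_3) \ge \ell(I^2) = M$ automatically. This flag relation $\ell(I^3)\ge\ell(I^2)\ge\ell(I^1)$ is the key point: whenever $D_3 < M$, the maximizing $D_m$ carries positive distance or equals $\ell(I^m) \le \ell(I^3) \le D_3$.

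\textbf{Resolution of the obstacle.} Indeed, $D_3 \ge \ell(I^3) \ge \ell(I^m)$, so $D_3 < M$ forces $M = D_m$ with $\dist(I^m,J^m) > 0$ for some $m \in \{1,2\}$. Then pointwise $a+b \gtrsim \dist(I^m,J^m) \sim M$, so $\max(a+b,D_3)^{-d_3} \lesssim M^{-d_3}$ directly. Lemma \ref{lem:fullsize} then finishes, and no delicate integration is needed.
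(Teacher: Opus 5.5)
Your argument is correct, but it is organized differently from the paper's. The paper runs through all eight S/N configurations of $(I^m,J^m)_{m=1,2,3}$ and estimates each one directly. It pulls out distance factors in the separated parameters, uses \eqref{eq:nearbyint} in the neighbouring ones, and invokes Lemma \ref{lem:fullsize} only in the case $(N,N,S)$.

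You instead integrate out $(x_3,y_3)$ once. This gives $|I^3|^2\max(|x_1-y_1|+|x_2-y_2|,D_3)^{-d_3}$, with $D_m=\ell(I^m)+\dist(I^m,J^m)$, in both the separated and the neighbouring case. You then reduce every configuration to Lemma \ref{lem:fullsize} through one observation. If $D_3<\max(D_1,D_2)$, the maximizing index $m\in\{1,2\}$ must have $\dist(I^m,J^m)>0$, since $\ell(I^m)\le\ell(I^3)\le D_3$. Hence $|x_1-y_1|+|x_2-y_2|\gtrsim\max(D_1,D_2)$ pointwise on $I\times J$.

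What your route buys:
\begin{itemize}
\item It needs only two dichotomies instead of eight cases.
\item It isolates exactly where the flag ordering enters: through $\ell(I^1),\ell(I^2)\le\ell(I^3)$, plus inside Lemma \ref{lem:fullsize}.
\item It iterates verbatim into an induction over any number of flag parameters, whereas the paper's route needs $2^n$ cases.
\end{itemize}
The paper's case split, on the other hand, mirrors the S/N/$=$ bookkeeping used afterwards in Theorem \ref{thm:tripar coeffs}.

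One cleanup is needed. The exploratory paragraph ``Case S in parameter 1'' contains a false estimate. For neighbouring $I^2,J^2$, the integral $\int_{I^2}\int_{J^2}(a+b)^{-d_2-d_3}$ is not $\lesssim |I^2|\ell(I^2)^{-d_3}$ uniformly for $a\lesssim\ell(I^2)$. Already for $d_2=d_3=1$ it is $\sim\log(\ell(I^2)/a)$ when $a\ll\ell(I^2)$. You never actually use this bound. In that sub-situation $M=\ell(I^2)\le\ell(I^3)\le D_3$, so you are in the easy case $D_3\ge M$, which is exactly what your final paragraph shows. Delete the exploratory paragraph and keep only the resolution.
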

\begin{proof}
	This is not so different from the bi-parameter version of this lemma proved earlier, though now we have
	$2^3 = 8$ cases to consider based on whether or not the distances between $I^k$ and $J^k$ are positive or 0 for each $k\in \{1,2,3\}$. We'll use $S$ to denote separation (positive distance) and $N$ to denote the neighboring case (distance=0); for instance, $(N,S,N)$ denotes the case where $\dist (I^1,J^1)=0=\dist(I^3,J^3)$ and $\dist(I^2,J^2)>0$.

	\textbf{Case $(S,S,S)$.}
	We directly get
	$$
		F_{IJ} \le \frac{|I|^2}{\prod_{k=1}^3\max_{1\le m \le k}(\dist(I^m,J^m))^{d_k}},
	$$
	which is the right estimate as $\dist(I^m, J^m) \ge \ell(I^m)$ here.

	\textbf{Case $(S,S,N)$.}
	This time we estimate
	\begin{align*}
		F_{IJ} & \le \frac{|I^1|^2|I^2|^2}{\dist(I^1,J^1)^{d_1}(\dist(I^1,J^1)+\dist(I^2,J^2))^{d_2}}                                           \\
		       & \hspace{3cm} \times \int_{I^3} \int_{J^3} \frac{\ud x_3 \ud y_3}{\max(\dist(I^1, J^1),\dist(I^2,J^2), |x_3-y_3|)^{d_3}}        \\
		       & \lesssim \frac{|I|^2}{\dist(I^1,J^1)^{d_1}(\dist(I^1,J^1)+\dist(I^2,J^2))^{d_2}}                                               \\
		       & \hspace{3cm} \times \min\Big(\frac{1}{\dist(I^1, J^1)^{d_3}},\frac{1}{\dist(I^2, J^2)^{d_3}}, \frac{1}{\ell(I^3)^{d_3}} \Big),
	\end{align*}
	where we used \eqref{eq:nearbyint} in the third parameter. This is the required estimate.

	\textbf{Case $(S,N,S)$.}
	Notice that in this case $|x_2-y_2| \lesssim \ell(I^2) \le \ell(I^3) \le \dist(I^3, J^3)$
	so it makes sense to majorize this via
	\begin{align*}
		F_{IJ} & \lesssim \frac{|I^1|^2|I^3|^2}{\dist(I^1,J^1)^{d_1}\max(\dist(I^1,J^1),\dist(I^3,J^3))^{d_3}}
		\int_{I^{2}} \int_{J^{2}} \frac{\ud x_{2} \ud y_{2}}{\max(\dist(I^1, J^1),|x_2-y_2|)^{d_2}}            \\
		       & \lesssim \frac{|I|^2}{\dist(I^1,J^1)^{d_1}(\dist(I^1,J^1)+\dist(I^3,J^3))^{d_3}}
		\min\Big(\frac{1}{\dist(I^1, J^1)^{d_2}}, \frac{1}{\ell(I^2)^{d_2}} \Big),
	\end{align*}
	which is the desired estimate.

	\textbf{Case $(N,S,S)$.}
	Here $|x_1-y_1| \lesssim \ell(I^1) \le \ell(I^2) \le \dist(I^2, J^2)$ so we estimate
	\begin{align*}
		F_{IJ} & \lesssim \frac{|I^2|^2|I^3|^2}{\dist(I^2,J^2)^{d_2}\max(\dist(I^2,J^2),\dist(I^3,J^3))^{d_3}}
		\int_{I^1} \int_{J^1} \frac{\ud x_1 \ud y_1}{|x_1-y_1|^{d_1}}                                                 \\
		       & \lesssim \frac{|I|^2}{\ell(I^1)^{d_1}\dist(I^2,J^2)^{d_2}\max(\dist(I^2,J^2),\dist(I^3,J^3))^{d_3}}.
	\end{align*}
	This is exactly the desired bound.

	\textbf{Case $(S,N,N)$.}
	We have the bound
	\begin{equation*}
		F_{IJ}  \lesssim \frac{|I^1|^2}{\dist(I^1,J^1)^{d_1}}
		\int_{I^{23}} \int_{J^{23}} \frac{\ud x_{23} \ud y_{23}}{\max(\dist(I^1, J^1),|x_2-y_2|)^{d_2}
			\max(\dist(I^1, J^1),|x_3-y_3|)^{d_3}}.
	\end{equation*}
	The argument then follows by using that the integral tensorizes and using \eqref{eq:nearbyint} as usual.
	(To see that this results in the right estimate one again needs to invoke $\ell(I^3) \ge \ell(I^2)$.)

	\textbf{Case $(N,S,N)$.}
	We bound it via
	\begin{equation*}
		F_{IJ}  \lesssim \frac{|I^2|^2}{\dist(I^2,J^2)^{d_2}}
		\int_{I^{13}} \int_{J^{13}} \frac{\ud x_{13} \ud y_{13}}{|x_1-y_1|^{d_1}\max(\dist(I^2, J^2),|x_3-y_3|)^{d_3}}
	\end{equation*}
	Once again, this tensorizes and each integral factor can be bounded using the standard methods.

	\textbf{Case $(N,N,S)$.}
	We have
	\begin{equation*}
		F_{IJ}  \lesssim \frac{|I^3|^2}{\dist(I^3,J^3)^{d_3}}
		\int_{I^{12}} \int_{J^{12}} \frac{\ud x_{12} \ud y_{12}}{|x_1-y_1|^{d_1}(|x_1-y_1|+|x_2-y_2|)^{d_2}}.
	\end{equation*}
	This can be bounded using Lemma \ref{lem:fullsize}, for instance.

	\textbf{Case $(N,N,N)$.}
	We have (by using \eqref{eq:nearbyint} in all parameters) that
	\begin{equation*}
		F_{IJ}  \lesssim
		\int_{I} \int_{J} \frac{\ud x \ud y}{|x_1-y_1|^{d_1}|x_2-y_2|^{d_2}|x_3-y_3|^{d_3}}
		\lesssim |I|,
	\end{equation*}
	which is the desired bound.
\end{proof}
At this point it is very easy to prove the analogs of Lemmas \ref{lem:sizeI2} and \ref{lem:sizeI1}
(there are six as there are six partial kernels).
We state these next but omit the simple proofs.
Some of these integrals have already appeared exactly in the same form before (perhaps
with different parameters) but we state even those for ease
of reference.
\begin{lem}\label{lem:i12size}
	Let $I^1, J^1 \in \calD^1$ and $I^2, J^2 \in \calD^2$ be dyadic cubes with $\ell(I^k) = \ell(J^k)$ and $I^k \ne J^k$ for $k=1,2$. Assume also that $\ell(I^1)\le \ell(I^2)$. Let $s$ be a scalar satisfying $s \ge \ell(I^2)$. Then we have that
	\begin{align*}
		F_{I^{12}J^{12}} & := \int_{I^{12}} \int_{J^{12}} \frac{\ud x_{12} \ud y_{12}}{|x_1-y_1|^{d_1} (|x_1-y_1| + |x_2-y_2|)^{d_2}(|x_1-y_1| + |x_2-y_2|+s)^{d_3}} \\
		                 & \lesssim \frac{|I^{12}|^2}{(\ell(I^1) + \dist(I^1, J^1))^{d_1} \max(\ell(I^1) + \dist(I^1, J^1), \ell(I^2) + \dist(I^2, J^2) )^{d_2}}     \\
		                 & \times \frac{1}{\max(\ell(I^1) + \dist(I^1, J^1), \ell(I^2) + \dist(I^2, J^2),s )^{d_3}}.
	\end{align*}
\end{lem}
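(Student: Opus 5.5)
The plan is to reduce to Lemma \ref{lem:fullsize}, which bounds the same integral without the third factor, by handling that third factor case by case. Write $D_m := \ell(I^m) + \dist(I^m,J^m)$ for $m=1,2$, and observe that $\max(D_1,D_2) = D_2$ whenever $\dist(I^1,J^1) \lesssim \ell(I^2)$. I will split into the four cases according to whether $\dist(I^m,J^m)$ is positive or zero, exactly as in the proof of Lemma \ref{lem:fullsize}.

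First I handle the third factor. On the domain of integration it is trivially bounded by $s^{-d_3}$. When $\dist(I^m,J^m)>0$ we have $|x_m-y_m| \ge \dist(I^m,J^m) \gtrsim D_m$, so the third factor is also at most $\max(s, D_m)^{-d_3}$ for each such $m$. If the target maximum $\max(D_1,D_2,s)$ is attained, up to constants, by $s$ or by some $D_m$ with $\dist(I^m,J^m)>0$, I simply pull $\max(D_1,D_2,s)^{-d_3}$ out of the integral. What remains is the integral $F_{IJ}$ of Lemma \ref{lem:fullsize} over $I^{12}\times J^{12}$. That rectangle is flag, since $\ell(I^1)\le \ell(I^2)$, so Lemma \ref{lem:fullsize} gives the first two factors directly.

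The remaining situation is when $D_m$ dominates but $\dist(I^m,J^m)=0$, so that $D_m = \ell(I^m)$.
\begin{itemize}
\item If $m=2$, then $D_2 = \ell(I^2) \le s$, so $s$ already attains the maximum up to constants and we are in the previous case.
\item If $m=1$ with $\dist(I^1,J^1)=0$, then $D_1 = \ell(I^1) \le \ell(I^2) \le D_2$, which again reduces to the $D_2$ or $s$ case.
\end{itemize}
So every configuration is covered by pulling out the third factor and applying Lemma \ref{lem:fullsize}. If one wants to avoid quoting Lemma \ref{lem:fullsize} verbatim, the same four-case computation can be redone directly: in each parameter, use \eqref{eq:nearbyint} for neighbors and the distance lower bound for separated cubes.

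The main (mild) obstacle is the bookkeeping in the mixed case $\dist(I^1,J^1)>0$, $\dist(I^2,J^2)=0$. Here the second factor must produce $\max(\dist(I^1,J^1),\ell(I^2))^{-d_2}$, which I obtain via the minimum of the two bounds as in Case 2 of Lemma \ref{lem:fullsize}. The third factor must produce $\max(\dist(I^1,J^1), s)^{-d_3}$, which is immediate since $|x_1-y_1|\ge \dist(I^1,J^1)$ and $s \ge \ell(I^2)$. The assumption $s\ge\ell(I^2)$ is exactly what makes $D_2$ never exceed $s$ when $I^2,J^2$ are neighbors.
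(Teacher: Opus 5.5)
Your argument is correct. The paper omits this proof as routine, and your reduction (bound the third factor by $\max(D_1,D_2,s)^{-d_3}$, then apply Lemma \ref{lem:fullsize} to the flag pair $I^{12},J^{12}$) is exactly how the paper treats the analogous $(N,N,S)$ case of Lemma \ref{lem:trifullsize}.

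Your case split for the third factor is unnecessary. Since $\ell(I^1)\le\ell(I^2)\le s$, on $I^{12}\times J^{12}$ one has pointwise $\max(D_1,D_2,s)\le s+\max_m\dist(I^m,J^m)\le s+|x_1-y_1|+|x_2-y_2|$. Also, your remark $\max(D_1,D_2)=D_2$ should read $\sim$.
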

\begin{lem}\label{lem:i13size}
	Let $I^1, J^1 \in \calD^1$ and $I^3, J^3 \in \calD^3$ be dyadic cubes with $\ell(I^k) = \ell(J^k)$ and $I^k \ne J^k$ for $k=1,3$. Assume also
	that $\ell(I^1)\le \ell(I^3)$. Let $s$ be a scalar satisfying $s \ge \ell(I^1)$. Then we have that
	\begin{align*}
		F_{I^{13}J^{13}} & := \int_{I^{13}} \int_{J^{13}} \frac{\ud x_{13} \ud y_{13}}{|x_1-y_1|^{d_1} (|x_1-y_1| + s)^{d_2}(|x_1-y_1| + |x_3-y_3|)^{d_3}} \\
		                 & \lesssim \frac{|I^{13}|^2}{(\ell(I^1) + \dist(I^1, J^1))^{d_1} \max(\ell(I^1) + \dist(I^1, J^1), s )^{d_2}}                     \\
		                 & \times \frac{1}{\max(\ell(I^1) + \dist(I^1, J^1), \ell(I^3) + \dist(I^3, J^3))^{d_3}}.
	\end{align*}
\end{lem}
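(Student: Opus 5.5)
The plan is to mimic the proofs of Lemma \ref{lem:fullsize} and Lemma \ref{lem:sizeI2}: split according to whether $\dist(I^1,J^1)>0$ (case $S$) or $=0$ (case $N$), and likewise for the third parameter. This gives four cases. Write $D_1 := \ell(I^1)+\dist(I^1,J^1)$ and $D_3 := \ell(I^3)+\dist(I^3,J^3)$. The target bound is
\[
	\frac{|I^{13}|^2}{D_1^{d_1}\max(D_1,s)^{d_2}\max(D_1,D_3)^{d_3}} .
\]
Whenever a separation is positive it dominates the side length, so $D_m\sim \dist(I^m,J^m)$; in the neighboring case $D_m\sim\ell(I^m)$.

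\textbf{Case $(S,S)$.} Bound the integrand pointwise by
\[
	\dist(I^1,J^1)^{-d_1}\max(\dist(I^1,J^1),s)^{-d_2}\max(\dist(I^1,J^1),\dist(I^3,J^3))^{-d_3}.
\]
Integrating over $I^{13}\times J^{13}$ then gives the claim directly.

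\textbf{Case $(S,N)$.} The first two factors are bounded pointwise as above. The $x_3,y_3$ integral of $\max(\dist(I^1,J^1),|x_3-y_3|)^{-d_3}$ is controlled by
\[
	\min\big(|I^3|^2\dist(I^1,J^1)^{-d_3},\,|I^3|\big),
\]
using \eqref{eq:nearbyint}. This equals $|I^3|^2\max(D_1,\ell(I^3))^{-d_3}$, as required.

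\textbf{Case $(N,S)$.} Here $|x_1-y_1|\lesssim \ell(I^1)\le \ell(I^3)\le \dist(I^3,J^3)$, so the third factor is $\lesssim \dist(I^3,J^3)^{-d_3}\sim \max(D_1,D_3)^{-d_3}$. The second factor is bounded by $s^{-d_2}$, and $s\ge \ell(I^1)\sim D_1$. The remaining integral $\int_{I^1}\int_{J^1}|x_1-y_1|^{-d_1}$ is $\lesssim |I^1|=|I^1|^2/\ell(I^1)^{d_1}$ by \eqref{eq:nearbyint}.

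\textbf{Case $(N,N)$.} Bound the second factor by $s^{-d_2}$ and the third by $|x_3-y_3|^{-d_3}$. The integral then tensorizes, and applying \eqref{eq:nearbyint} in both parameters gives $|I^{13}|s^{-d_2}$. This matches the target because $D_1\sim\ell(I^1)$, $\max(D_1,s)=s$ and $\max(D_1,D_3)\sim\ell(I^3)$, using $\ell(I^1)\le\ell(I^3)$.

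The only mildly delicate point is the bookkeeping of the maxima in the mixed cases, in particular ensuring that the hypotheses $s\ge\ell(I^1)$ and $\ell(I^1)\le\ell(I^3)$ are used to identify each bound with the stated maximum. No genuine obstacle is expected.
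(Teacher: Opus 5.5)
Your proof is correct and is essentially the argument the paper intends: the paper omits the proof of this lemma as a routine analog of Lemmas \ref{lem:fullsize}, \ref{lem:sizeI2} and \ref{lem:trifullsize}, which use exactly this separated/neighboring split in each parameter, pointwise bounds on the kernel, and \eqref{eq:nearbyint}. You also invoke $s\ge\ell(I^1)$ and $\ell(I^1)\le\ell(I^3)$ in the right places, namely the $(N,S)$ and $(N,N)$ cases. The only imprecision is cosmetic: in case $(S,N)$, ``equals'' should read ``is comparable to''.
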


\begin{lem}\label{lem:i23size}
	Let $I^2, J^2 \in \calD^2$ and $I^3, J^3 \in \calD^3$ be dyadic cubes with $\ell(I^k) = \ell(J^k)$ and $I^k \ne J^k$ for $k=2,3$. Assume also that $\ell(I^2)\le \ell(I^3)$. Then we have that
	\begin{align*}
		F_{I^{23}J^{23}} & := \int_{I^{23}} \int_{J^{23}} \frac{\ud x_{23} \ud y_{23}}{|x_2-y_2|^{d_2}(|x_2-y_2| + |x_3-y_3|)^{d_3}} \\
		                 & \lesssim
		\frac{|I^{23}|^2}{(\ell(I^2) + \dist(I^2,J^2))^{d_2} \max_{m=2,3}(\ell(I^m) + \dist(I^m,J^m))^{d_3}}.
	\end{align*}
\end{lem}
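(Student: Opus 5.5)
This is a bi-parameter flag integral in $\R^{d_2}\times\R^{d_3}$, with the flag ordering $\ell(I^2)\le \ell(I^3)$. I will argue exactly as in Lemma \ref{lem:fullsize}, with parameter $2$ playing the role of parameter $1$ there and parameter $3$ the role of parameter $2$. The proof splits into four cases, according to whether $\dist(I^2,J^2)$ and $\dist(I^3,J^3)$ are positive (S) or zero (N). Throughout, $\ell(I^m)+\dist(I^m,J^m)\sim \dist(I^m,J^m)$ in the S case and $\sim \ell(I^m)$ in the N case. The only tools are the pointwise bounds $|x_m-y_m|\ge \dist(I^m,J^m)$ and the nearby estimate \eqref{eq:nearbyint}.

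\textbf{Case $(S,S)$.} Bound the integrand pointwise by $\dist(I^2,J^2)^{-d_2}\max(\dist(I^2,J^2),\dist(I^3,J^3))^{-d_3}$. Multiplying by $|I^{23}|^2$ gives the claim.

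\textbf{Case $(S,N)$.} Bound $|x_2-y_2|^{-d_2}\le \dist(I^2,J^2)^{-d_2}$ and $(|x_2-y_2|+|x_3-y_3|)^{-d_3}\le \max(\dist(I^2,J^2),|x_3-y_3|)^{-d_3}$. The $x_3,y_3$ integral is then at most $\min(|I^3|^2\dist(I^2,J^2)^{-d_3},\ C|I^3|)$, using \eqref{eq:nearbyint} for the second bound. Since $|I^3|=\ell(I^3)^{d_3}$, this minimum is comparable to $|I^3|^2\max(\dist(I^2,J^2),\ell(I^3))^{-d_3}$, which is the target.

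\textbf{Case $(N,S)$.} Here $|x_2-y_2|\lesssim \ell(I^2)\le \ell(I^3)\le \dist(I^3,J^3)$. So the second factor is bounded by $\dist(I^3,J^3)^{-d_3}$, and $\max_m(\ell(I^m)+\dist(I^m,J^m))\sim\dist(I^3,J^3)$. Applying \eqref{eq:nearbyint} in parameter $2$ gives $|I^2|\,|I^3|^2\dist(I^3,J^3)^{-d_3}=|I^{23}|^2\ell(I^2)^{-d_2}\dist(I^3,J^3)^{-d_3}$.

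\textbf{Case $(N,N)$.} Drop $|x_2-y_2|$ from the second factor and apply \eqref{eq:nearbyint} in both parameters. This gives $\lesssim|I^{23}|$. That matches the target because $\ell(I^2)^{d_2}\max(\ell(I^2),\ell(I^3))^{d_3}=|I^{23}|$ by the flag condition.

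The only place the flag condition $\ell(I^2)\le\ell(I^3)$ matters is in the $(N,S)$ and $(N,N)$ cases, where it identifies the maximum. No step is a real obstacle; the mildly delicate point is the min-to-max conversion in the $(S,N)$ case.
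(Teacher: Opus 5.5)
Your proof is correct and is the argument the paper has in mind. The paper omits the proof of this lemma; it is Lemma~\ref{lem:fullsize} with parameters $(1,2)$ relabeled as $(2,3)$, proved by the same four-case split (separated/neighboring in each parameter) using the pointwise distance bounds and \eqref{eq:nearbyint}, with $\ell(I^2)\le\ell(I^3)$ used only to identify the maximum in the $(N,S)$ and $(N,N)$ cases. One small point: in the $(N,S)$ case the bound $(|x_2-y_2|+|x_3-y_3|)^{-d_3}\le\dist(I^3,J^3)^{-d_3}$ holds trivially, and the estimate $|x_2-y_2|\lesssim\ell(I^2)\le\dist(I^3,J^3)$ is needed only to compare the maximum in the target with $\dist(I^3,J^3)$.
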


We also have the other partial size estimates such as the integral of $\size_{I^{12}}(x_3,y_3)$ and the analogous versions.
\begin{lem}\label{lem:i3size}
	Let $I^3, J^3 \in \calD^3$ be dyadic cubes with $\ell(I^3) = \ell(J^3)$ and $I^3 \ne J^3$. Then we have that
	\begin{equation*}
		F_{I^{3}J^{3}} := \int_{I^{3}} \int_{J^{3}} \frac{\ud x_{3} \ud y_{3}}{|x_3-y_3|^{d_3}}
		\lesssim \frac{|I^3|^2}{(\ell(I^3) + \dist(I^3, J^3))^{d_3}}.
	\end{equation*}
\end{lem}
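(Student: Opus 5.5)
The plan is to argue in the two cases $\dist(I^3,J^3)>0$ and $\dist(I^3,J^3)=0$, $I^3\ne J^3$. This is the same as Lemma \ref{lem:sizeI1} (just with the third parameter in place of the second), and only the elementary estimate \eqref{eq:nearbyint} is needed beyond direct size bounds.

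\textbf{Case $\dist(I^3,J^3)>0$.} For $x_3\in I^3$ and $y_3\in J^3$ we have $|x_3-y_3|\ge \dist(I^3,J^3)$. Hence
$$
F_{I^3J^3}\le \frac{|I^3|^2}{\dist(I^3,J^3)^{d_3}}.
$$
Since $I^3,J^3$ are dyadic cubes of equal size and are separated, $\dist(I^3,J^3)\ge \ell(I^3)$, as noted in the earlier size lemmas. Thus $\ell(I^3)+\dist(I^3,J^3)\le 2\dist(I^3,J^3)$, and the claimed bound follows.

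\textbf{Case $\dist(I^3,J^3)=0$, $I^3\ne J^3$.} Here $J^3\subset 3I^3\setminus I^3$, because the cubes are adjacent, equal-sized and distinct. Applying \eqref{eq:nearbyint} in $\R^{d_3}$ with the roles of the two cubes swapped as needed gives
$$
F_{I^3J^3}\le \int_{3I^3\setminus I^3}\int_{I^3}\frac{\ud x_3\,\ud y_3}{|x_3-y_3|^{d_3}}\lesssim |I^3|.
$$
The target bound reads $|I^3|^2/\ell(I^3)^{d_3}=|I^3|$, so the two agree.

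No real obstacle is expected. The only point requiring care is the neighboring case, where the kernel $|x_3-y_3|^{-d_3}$ is not integrable near a common boundary point, and \eqref{eq:nearbyint} supplies exactly this estimate.
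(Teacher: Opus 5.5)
Your proof is correct and is essentially the paper's own argument. The paper omits the proof of this lemma, but it handles the analogous Lemmas \ref{lem:fullsize} and \ref{lem:sizeI2} with the same split: in the separated case it uses the direct size bound together with $\dist \ge \ell$, and in the neighboring case it uses \eqref{eq:nearbyint}.
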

\begin{lem}\label{lem:i2size}
	Let $I^2, J^2 \in \calD^2$ be dyadic cubes with $\ell(I^2) = \ell(J^2)$ and $I^2 \ne J^2$. Assume also that $s\ge \ell(I^2)$. Then we have that
	\begin{align*}
		F_{I^2J^2} & := \int_{I^2} \int_{J^2} \frac{\ud x_2 \ud y_2}{|x_2-y_2|^{d_2}(|x_2-y_2| + s)^{d_3}}                  \\
		           & \lesssim \frac{|I^2|^2}{(\ell(I^2) + \dist(I^2, J^2))^{d_2}\max(\ell(I^2) + \dist(I^2, J^2),s)^{d_3}}.
	\end{align*}
\end{lem}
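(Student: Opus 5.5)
The plan is to split according to whether $\dist(I^2,J^2)>0$ or $=0$, exactly as in Lemmas \ref{lem:sizeI2} and \ref{lem:i12size}. Throughout, write $D := \ell(I^2)+\dist(I^2,J^2)$; note $D \sim \dist(I^2,J^2)$ in the separated case and $D\sim \ell(I^2)$ in the neighboring case.

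\textbf{Case $\dist(I^2,J^2)>0$.} For $x_2\in I^2$, $y_2\in J^2$ we have $|x_2-y_2|\ge \dist(I^2,J^2)\gtrsim D$, hence also $|x_2-y_2|+s \gtrsim \max(D,s)$. The integrand is then bounded by $D^{-d_2}\max(D,s)^{-d_3}$. Integrating over $I^2\times J^2$ gives the factor $|I^2|^2$ and hence the claim directly.

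\textbf{Case $\dist(I^2,J^2)=0$, $I^2\ne J^2$.} Here $D\sim\ell(I^2)\le s$, so $\max(D,s)\sim s$. I would bound $(|x_2-y_2|+s)^{-d_3}\le s^{-d_3}$ and apply the nearby estimate \eqref{eq:nearbyint} (or Lemma \ref{lem:sizeI1}) to the remaining integral $\int_{I^2}\int_{J^2}|x_2-y_2|^{-d_2}$. This gives $\lesssim |I^2| s^{-d_3}$. Since $|I^2|=\ell(I^2)^{d_2}\sim D^{d_2}$, this equals $|I^2|^2 D^{-d_2}\max(D,s)^{-d_3}$ up to constants.

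There is no real obstacle here. The only point needing care is the second case, where the hypothesis $s\ge\ell(I^2)$ is exactly what allows replacing $\max(D,s)$ by $s$. This is the same mechanism as in Lemma \ref{lem:sizeI2}.
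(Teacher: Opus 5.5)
Your proof is correct and is the argument the paper intends. The paper omits this proof as simple, by analogy with Lemma~\ref{lem:sizeI2}, whose proof uses the same two-case split: in the separated case you bound the integrand pointwise using $\dist(I^2,J^2)\ge \ell(I^2)$, and in the neighboring case you use $s\ge \ell(I^2)$ to bound the second factor by $s^{-d_3}$ and apply \eqref{eq:nearbyint}.
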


\begin{lem}\label{lem:i1size}
	Let $I^1, J^1 \in \calD^1$ be dyadic cubes with $\ell(I^1) = \ell(J^1)$ and $I^1 \ne J^1$.
	Assume also that $\ell(I^1)\le s_m$, $m=1,2$. Then we have that
	\begin{align*}
		 & F_{I^{1}J^{1}} := \int_{I^{1}} \int_{J^{1}} \frac{\ud x_{1} \ud y_{1}}{|x_1-y_1|^{d_1}(|x_1-y_1|+s_1)^{d_2}(|x_1-y_1| + s_2)^{d_3}} \\
		 & \lesssim
		\frac{|I^{1}|^2}{(\ell(I^1) + \dist(I^1,J^1))^{d_1} \max(\ell(I^1) + \dist(I^1,J^1),s_1)^{d_2}\max(\ell(I^1) + \dist(I^1,J^1),s_2)^{d_3}}.
	\end{align*}
\end{lem}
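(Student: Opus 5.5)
The plan is a case split on the relative position of $I^1$ and $J^1$, in the same way as Lemmas \ref{lem:sizeI2} and \ref{lem:fullsize}. Write $D := \ell(I^1) + \dist(I^1,J^1)$. Since $\ell(I^1) \le s_1, s_2$, the target bound is
\[
|I^1|^2 D^{-d_1} \max(D, s_1)^{-d_2} \max(D, s_2)^{-d_3}.
\]
Since $|x_1 - y_1| \le \sqrt{d_1}\,\ell(I^1)$ is impossible to exploit from below in the neighboring case, we will treat the separated and neighboring cases differently.

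\textbf{Case $\dist(I^1,J^1) > 0$.} Here $|x_1 - y_1| \ge \dist(I^1,J^1)$ for all $x_1 \in J^1$, $y_1 \in I^1$. We bound each factor of the integrand pointwise:
\[
|x_1-y_1|^{-d_1} \le \dist(I^1,J^1)^{-d_1}
\]
and
\[
(|x_1-y_1| + s_m)^{-d} \le \max(\dist(I^1,J^1), s_m)^{-d}.
\]
Integrating over $I^1 \times J^1$ gives a factor $|I^1|^2$. Because $\dist(I^1,J^1) \ge \ell(I^1)$ for distinct dyadic cubes at positive distance, we have $\dist(I^1,J^1) \sim D$, and the claim follows.

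\textbf{Case $\dist(I^1,J^1) = 0$, $I^1 \ne J^1$.} Now $D \sim \ell(I^1) \le s_1, s_2$, so the target reads $|I^1|^2 \ell(I^1)^{-d_1} s_1^{-d_2} s_2^{-d_3} = |I^1| s_1^{-d_2} s_2^{-d_3}$. We drop $|x_1-y_1|$ in the last two factors, bounding them by $s_1^{-d_2} s_2^{-d_3}$, and then apply \eqref{eq:nearbyint}:
\[
\int_{I^1}\int_{J^1} |x_1-y_1|^{-d_1} \,\ud x_1 \ud y_1 \lesssim |I^1|.
\]
This gives exactly the target.

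There is no real obstacle here. The only point requiring care is in the neighboring case: we must not try to extract decay from $|x_1-y_1|$ there, and we must use the hypothesis $s_m \ge \ell(I^1)$ to identify $\max(D, s_m)$ with $s_m$.
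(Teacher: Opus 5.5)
Your proof is correct and is essentially the argument the paper has in mind: the paper omits this proof as simple, and the intended route is the two-case split used for Lemma~\ref{lem:sizeI2}. That means pointwise bounds using $\dist(I^1,J^1)\ge \ell(I^1)$ in the separated case, and \eqref{eq:nearbyint} together with $s_m\ge \ell(I^1)$ (so that $\max(\ell(I^1),s_m)=s_m$) in the neighboring case. The only blemish is your opening remark $|x_1-y_1|\le \sqrt{d_1}\,\ell(I^1)$: for neighboring cubes the correct bound is $2\sqrt{d_1}\,\ell(I^1)$, but the remark plays no role in the proof.
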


Now, we are ready to prove our main tri-parameter coefficient estimate.
In our representation theorem we will have that there is always cancellation
in the third parameter -- similarly as above in the bi-parameter situation we
always had cancellation in the second parameter. So in the language
of the next theorem we could assume  $3 \in \mathscr M$, because this is all
that we will need. However, on
this result this assumption does not need to be made and there
is no real simplification on making it here, either.
\begin{thm}\label{thm:tripar coeffs}
	Suppose that $T$ is a cancellative tri-parameter flag SIO satisfying the flag WBP. Let $I, J$
	be two flag rectangles with $\ell(I) = \ell(J)$ (meaning $\ell(I^m) = \ell(J^m)$ for $m=1,2,3$). Let
	\begin{equation*}
		a_{IJ} :=
		|\langle T(h_{I^1}^{\gamma_1} \otimes h_{I^2}^{\gamma_2}\otimes h_{I^3}^{\gamma_3}),
		h_{J^1}^{\kappa_1} \otimes h_{J^2}^{\kappa_2}\otimes h_{J^3}^{\kappa_3}\rangle|.
	\end{equation*}
	Let $\mathscr{M}$ denote the set of indices $m\in\{1,2,3\}$ such that $(\gamma_m,\kappa_m)\ne (0,0)$. Then
	\begin{align*}
		a_{IJ} \lesssim \prod_{m\in\mathscr{M}} & \Big( \frac{\ell(I^m)}{\ell(I^m) + \dist(I^m, J^m)} \Big)^{\alpha_m}                  \\
		                                        & \times \frac{|I|}{\prod_{k=1}^3\max_{1\le j\le k}(\ell(I^j) + \dist(I^j,J^j))^{d_k}}.
	\end{align*}
\end{thm}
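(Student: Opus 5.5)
We will follow the proof of Theorem \ref{thm:biparcoeff}, now with a trichotomy in each of the three parameters: for each $m$ we either have $\dist(I^m,J^m)>0$ (case $S$), $\dist(I^m,J^m)=0$ with $I^m\ne J^m$ (case $N$), or $I^m=J^m$ (case $=$). This gives $27$ configurations. The governing principle, as in the bi-parameter case, is that cancellation (i.e. a H\"older estimate) is used in parameter $m$ only when $m\in\mathscr M$ and we are in case $S$. In cases $N$ and $=$ the decay factor $(\ell(I^m)/(\ell(I^m)+\dist(I^m,J^m)))^{\alpha_m}$ is $\sim 1$ anyway, so only size estimates are needed there. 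By symmetry of the assumptions under adjoints, we may put the cancellative Haar on whichever side is convenient. The remaining Haars are simply bounded by $|I^m|^{-1/2}$ in absolute value.

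\textbf{Step 1: no equal pairs.} First treat the eight configurations in which $I^m\ne J^m$ for all $m$. Here we use the full kernel representation, which is valid since the supports are separated in every parameter. For the index set $\mathscr M\cap\{m:\text{case }S\}$ we apply the corresponding mixed H\"older/size estimate of the full kernel, subtracting $K(x,(c_{I^m},\cdot))$ in those parameters. If this index set is empty we use the plain size estimate, and if it is $\{1,2,3\}$ the full triple H\"older estimate. Bounding $|y_m-c_{I^m}|/|x_m-y_m|\lesssim \ell(I^m)/\dist(I^m,J^m)$ pulls out the required decay factors. What remains is $|I|^{-1}\int_I\int_J\size_F$, which is handled exactly by Lemma \ref{lem:trifullsize}.

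\textbf{Step 2: some equal pairs.} When $I^m=J^m$ for $m$ in a nonempty set $\mathscr E$, we expand each Haar in a parameter $m\in\mathscr E$ into indicators of children, as in Cases 5--9. This gives pairings $\langle T(\cdots 1_{L^m}\cdots),\cdots 1_{P^m}\cdots\rangle$ with $L^m,P^m\in\ch(I^m)$, costing a factor $|I^m|^{-1}$.

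Children with $L^m\ne P^m$ reduce to configurations with fewer equalities, in which parameter $m$ is in case $N$ and no cancellation is needed there, so we can argue by induction on $|\mathscr E|$. When $L^m=P^m$ for all $m$ in a subset $\mathscr E'\subset\mathscr E$, we use the partial kernel that freezes exactly the parameters in $\mathscr E'$:
\begin{itemize}
\item $K_{L^1}$, $K_{L^2}$, $K_{L^3}$ when $\mathscr E'$ is a singleton;
\item $K_{L^{12}}$, $K_{L^{13}}$, $K_{L^{23}}$ when $|\mathscr E'|=2$;
\item the flag WBP when $\mathscr E'=\{1,2,3\}$.
\end{itemize}
Flagness is preserved here: since $\ell(L^m)=\ell(I^m)/2$ for every $m$, the rectangle of children $L^1\times L^2\times L^3$ is again flag, which is exactly what the kernels $K_{L^{13}}$, $K_{L^{23}}$ and the WBP require. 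On the frozen partial kernel we use its H\"older estimate in the non-frozen parameters belonging to $\mathscr M$ and in case $S$, and its size estimate otherwise. The resulting integrals are precisely those of Lemmas \ref{lem:i12size}--\ref{lem:i1size}. In these lemmas the parameter $s$ is taken to be $\ell(I^m)$ for the frozen parameters, and the hypotheses $s\ge\ell(\cdot)$ hold because $I$ is flag.

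\textbf{Step 3: matching the target bound.} The main obstacle is verifying that the bound produced in each case agrees with
\[
\frac{|I|}{\prod_{k=1}^3\max_{j\le k}(\ell(I^j)+\dist(I^j,J^j))^{d_k}}.
\]
For example, with $K_{L^{23}}$ we get $|I^{23}|^2$ times a bound with denominator involving $(|x_1-y_1|+\ell(I^2))^{d_2}(|x_1-y_1|+\ell(I^2)+\ell(I^3))^{d_3}$, and after multiplying by $|I^{23}|^{-1}$ we must identify this with the maxima above. This works because $\dist(I^j,J^j)=0$ for the frozen parameters, so the maxima reduce to $\max(\ell(I^1)+\dist(I^1,J^1),\ell(I^2))$, and so on, and because the monotonicity $\ell(I^1)\le\ell(I^2)\le\ell(I^3)$ absorbs the remaining terms. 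We would check this carefully for the two-frozen-parameter kernels $K_{I^{13}}$ and $K_{I^{23}}$, whose size bounds carry the flag-specific factors. The remaining configurations are routine repetitions of the bi-parameter computations.
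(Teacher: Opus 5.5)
Your proposal is correct and follows the paper's proof essentially step for step. It uses the same $S/N/{=}$ trichotomy in each parameter, the full kernel with H\"older estimates only in the separated parameters of $\mathscr M$ together with Lemma \ref{lem:trifullsize} when no pairs are equal, the expansion into children with the partial kernel freezing exactly the equal parameters (evaluated via Lemmas \ref{lem:i12size}--\ref{lem:i1size}), and the flag WBP in the fully diagonal case.

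One small slip: in the $L^m\ne P^m$ reduction only the parameters in $\mathscr E$ are halved, so the intermediate rectangles can fail to be flag by a factor of $2$. This only changes constants in the integral lemmas, exactly as in the paper's bi-parameter Case 7.
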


\begin{proof}
	This proof breaks into a large number of cases based on a few factors:
	\begin{enumerate}
		\item Which of the $(\gamma_i,\kappa_i)$ pairs are not $(0,0)$.
		\item For each pair $(I^i,J^i)$, whether the two cubes are separated ($\dist(I^i,J^i)>0$), neighboring ($\dist(I^i,J^i)=0$ and $I^i\ne J^i$) or equal ($I^i=J^i$). We extend the notation from Lemma \ref{lem:trifullsize} to cover the various configurations; for instance, $(N,S,=)$ denotes the case where $I^1,J^1$ are neighboring, $I^2, J^2$ are separated and $I^3=J^3$.
	\end{enumerate}
	For the first point above, the configuration of nonzero pairs is exactly what leads to the factor
	\[
		\prod_{m\in\mathscr{M}} \Big( \frac{\ell(I^m)}{\ell(I^m) + \dist(I^m, J^m)} \Big)^{\alpha_m}
	\]
	in the final estimate; the basic philosophy is that whenever $\gamma^i=0=\kappa^i$, then we will always use size bounds rather than H\"older bounds for the $i$th parameter in the kernel estimates we utilize.

	Thus, we essentially reduce our
	analysis to assuming $\gamma^i\ne 0$ and $\kappa^i=0$ for all $i=1,2,3$.
	We are left with the $27$ cases based on the configurations of $I^i$s and $J^i$s.
	Fortunately, these cases proceed very similarly to the bi-parameter case which we discussed in detail.
	We give a sketch of how to broadly approach the various cases.
	We use the notation $(=,\ne,\ne)$, for instance, to denote that $I^1=J^1,I^2\ne J^2,I^3\ne J^3$.

	\textbf{Case $(\ne,\ne,\ne)$.} In this case, we use the full kernel estimate, using the H\"older-type bounds in each parameter which has $S$. Then Lemma \ref{lem:trifullsize} addresses the contribution from integrating the flag size function. These cases go similarly to cases 1, 2, and 3 from the proof of Theorem \ref{thm:biparcoeff}. For instance, in the case of $(N,S,N)$, we estimate
	\begin{align*}
		a_{IJ} & \lesssim \frac{1}{|I|} \int_J\int_I\left( \frac{|c_{I_2}-y_2|}{|x_2-y_2|}\right)^{\alpha_2}\size_F(x,y)\ud y\ud x  \\
		       & \lesssim \frac{1}{|I|} \left(\frac{\ell(I^2)}{\dist(I^2,J^2)}\right)^{\alpha_2} \int_J\int_I\size_F(x,y)\ud y\ud x
	\end{align*}
	Then using Lemma \ref{lem:trifullsize} completes the proof.

	\textbf{Case $(\ne,\ne,=)$ and permutations of this.}
	In this case, we need to use the partial kernel estimates for $K_{I^3}$. We exploit H\"older estimates in each of the first two parameters that has an $S$ in the slot and otherwise use size estimates. For example,
	in the case $(S,N,=)$ we get the integral
	\begin{align*}
		a_{IJ} & \lesssim \frac{1}{|I|}\int_{J^{12}}\int_{I^{12}}\left( \frac{|c_{I_1}-y_1|}{|x_1-y_1|}\right)^{\alpha_1}\size_{I^3}(x_{12},y_{12})\ud y_{12}\ud x_{12}   \\
		       & \lesssim \left(\frac{\ell(I^1)}{\dist(I^1,J^1)}\right)^{\alpha_1} \frac{1}{|I|} \int_{J^{12}}\int_{I^{12}}\size_{I^3}(x_{12},y_{12})\ud y_{12}\ud x_{12}
	\end{align*}
	and some easy full kernel integral coming from the need to break up the Haars into finite
	sums of indicators of the children in the third parameter
	to be able to use the partial kernel estimates. We did the latter multiple
	times in Theorem \ref{thm:biparcoeff} and omit this remark in what follows.
	The above integral can be evaluated using Lemma \ref{lem:i12size}.

	More generally, in the case of $(=,\ne,\ne)$, we do an analogous argument based on estimates for the partial kernel $K_{I^1}$ and Lemma \ref{lem:i23size}, and in the case of $(\ne,=,\ne)$, we instead use estimates for the partial kernel $K_{I^2}$ and Lemma \ref{lem:i13size}.

	\textbf{Case $(=,\ne,=)$ and permutations of this.} Now, we use partial kernel estimates for $K_{I^{13}}$. When the second parameter is $S$, we use the H\"older bound; otherwise, we use the size bound. For example, in the case $(=,N,=)$ we bound
	\begin{align*}
		a_{IJ} & \lesssim \frac{1}{|I|}\int_{J^{2}}\int_{I^{2}}\size_{I^{13}}(x_{2},y_{2})\ud y_{2}\ud x_{2}
	\end{align*}
	which we can then estimate using Lemma \ref{lem:i2size}.

	The permutation $(=,=,\ne)$ is similar but we use the partial kernel $K_{I^{12}}$ and Lemma \ref{lem:i3size}, and the permutation $(\ne,=,=)$ uses the partial kernel $K_{I^{23}}$ and Lemma \ref{lem:i1size}.

	\textbf{Case $(=,=,=)$.} This follows from the weak boundedness property as in Theorem \ref{thm:biparcoeff}.
\end{proof}
There are obvious tri-parameter versions of Corollary \ref{cor:biparcoeffcor} and
Remark \ref{rem:biparKFcoerr}.

\section{Bi-parameter flag shifts}
\subsection{Martingale block expansions}\label{subsec:blocks}
We will expand $f$ in certain pairings $\langle f, \varphi\rangle$ using specific martingale difference blocks
either in one-parameter style or in flag style depending on the situation and the cancellation properties
of $\varphi$.

We will assume that $k = (k^1, k^2)$, $k^m \ge 0$, is fixed and that $K \in \calD^k_F$, i.e.,
$K = K^1 \times K^2 \in \calD = \calD^1 \times \calD^2$ and
$2^{-k^1}\ell(K^1) \le 2^{-k^2}\ell(K^2)$. Our basic assumption
is that $\varphi = \varphi_K$ satisfies $\varphi_K = 1_K \varphi_K$
and $\varphi_K$ is constant on $I = I^1 \times I^2 \in \calD$ with
$\ell(I^1) < 2^{-k^1}\ell(K^1)$ and $\ell(I^2) < 2^{-k^2}\ell(K^2)$.
Depending on the situation, we assume different cancellation properties.

\subsubsection*{One-parameter expansion}
Here, we assume cancellation only in the weak form $\int_{\R^d} \varphi_K = 0$.
We also fix $\ell \ge 0$ so that $2^{-k^1}\ell(K^1) = 2^{-l}2^{-k^2}\ell(K^2)$.
We expand
$$
	\langle f, \varphi_K \rangle = \Big\langle \sum_{\substack{I \in \calD \\ \ell(I^1) = 2^{-l}\ell(I^2) }}
	\Delta_I f, \varphi_K \Big\rangle,
$$
where $\Delta_I$ is, of course, a one-parameter martingale difference.
In the summation we can assume $I \cap K \ne \emptyset$. Suppose $\ell(I^1) < 2^{-k^1}\ell(K^1)$.
Then we also have
$$
	\ell(I^2) = 2^l \ell(I^1) < 2^{l-k^1}\ell(K^1) = 2^{-k^2}\ell(K^2).
$$
This implies that in this case
$$
	\langle \Delta_I f, \varphi_K \rangle = \langle \varphi_K \rangle_I \int_{\R^d} \Delta_I f = 0.
$$
So we must have $\ell(I^1) \ge 2^{-k^1}\ell(K^1)$ and $\ell(I^2) \ge 2^{-k^2}\ell(K^2)$.

Next, we do a small case study. First, assume that $k^1 \ge k^2$. If $I^1 \supsetneq K^1$, then
$\ell(I^2) > 2^l\ell(K^1) = 2^{k^1-k^2}\ell(K^2)$. In particular, also $I^2 \supsetneq K^2$
and so
$$
	\langle \Delta_I f, \varphi_K \rangle = \langle \Delta_I f \rangle_K \int_{\R^d} \varphi_K = 0.
$$
So we must have $I \subset K^{(0, k^1-k^2)}$. Assume then $k^2 \ge k^1$. If $\ell(I^1) > 2^{k^2-k^1}\ell(K^1)$
(in particular, $I^1 \supsetneq K^1$), then $\ell(I^2) > \ell(K^2)$ and again
$$
	\langle \Delta_I f, \varphi_K \rangle = \langle \Delta_I f \rangle_K \int_{\R^d} \varphi_K = 0.
$$
So must have $I \subset K^{(k^2-k^1, 0)}$. So if we here define $\tau = \tau_k
	:= (0, k^1-k^2)$ if $k^1 \ge k^2$ and $\tau := (k^2-k^1, 0)$ if $k^2 \ge k^1$, then
we always have $I \subset K^{(\tau)}$.

We have shown that
$$
	\langle f, \varphi_K \rangle = \Big\langle 1_K\sum_{\substack{I \subset K^{(\tau)} \\ \ell(I^1) = 2^{-l}\ell(I^2)
			\\ \ell(K^m) \le 2^{k^m}\ell(I^m) }}
	\Delta_I f, \varphi_K \Big\rangle =: \langle \calY_{K, k, l} f, \varphi_K \rangle.
$$

\begin{lem}\label{lem:auxsfY}
	Let
	\[
		\calY_{k, l} f := \Big(\sum_{ K\in \calD} |\calY_{K, k, l}f|^2\Big)^{1/2}.
	\]
	Then for all $p\in (1, \infty)$ and $w\in A_{p, F}$
	we have $$
		\|\calY_{k, l} f\|_{L^p(w)} \lesssim (|k|+1) \|f\|_{L^p(w)}.
	$$
\end{lem}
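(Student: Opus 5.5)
The plan is to reduce $\calY_{k,l}$ pointwise to a one-parameter martingale square function on the eccentric lattice
$$
\calD_{(l)} := \{I \in \calD \colon \ell(I^1) = 2^{-l}\ell(I^2)\},
$$
at the cost of a factor $|k|+1$ from counting scales. Then I use that $\calD_{(l)}$ consists of flag rectangles, so that $A_{p,F}$ weights are, uniformly in $l$, $A_p$ weights for the filtration generated by $\calD_{(l)}$.

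\emph{Step 1 (pointwise reduction).} Fix $K$ and $x \in K$. In $\calY_{K,k,l}f$ the rectangles $I$ satisfy $2^{-k^1}\ell(K^1) \le \ell(I^1) \le \ell((K^{(\tau)})^1)$. Hence $\ell(I^1) = 2^{j-k^1}\ell(K^1)$ for some $j \in \{0, \ldots, N\}$ with $N \lesssim |k|+1$. For fixed $j$ the rectangles $I \in \calD_{(l)}$ of that size are pairwise disjoint, and $\Delta_I f$ is supported on $I$. So at $x$ only the unique $I_j(x) \ni x$ of that size contributes, and
$$
|\calY_{K,k,l}f(x)| \le 1_K(x)\sum_{j=0}^N |\Delta_{I_j(x)}f(x)|.
$$
By Cauchy--Schwarz,
$$
|\calY_{K,k,l}f(x)|^2 \le (N+1)\,1_K(x)\sum_{j=0}^N |\Delta_{I_j(x)}f(x)|^2.
$$

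\emph{Step 2 (summing over $K$).} For $x$ fixed and each admissible size of $K$ (the size of $K^1$ determines that of $K^2$ through $k,l$), there is exactly one $K \ni x$. The size of $I_j(x)$ is determined by the size of $K$ and by $j$, and the map from the size of $K$ to the size of $I$ is injective for fixed $j$. Thus each scale of $\calD_{(l)}$ is counted at most $N+1$ times, and
$$
\calY_{k,l}f(x)^2 \le (N+1)^2 \sum_{I \in \calD_{(l)}} |\Delta_I f(x)|^2 =: (N+1)^2 \big(S_{(l)}f(x)\big)^2,
$$
that is, $\calY_{k,l}f \lesssim (|k|+1)\, S_{(l)} f$.

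\emph{Step 3 (weighted bound for $S_{(l)}$).} This is the main point. The lattice $\calD_{(l)}$ is a regular dyadic filtration: it is nested, each generation partitions $\R^d$, and every rectangle has exactly $2^{d}$ children of comparable measure. Moreover every $I \in \calD_{(l)}$ is flag. Hence
$$
\sup_{I \in \calD_{(l)}} \langle w\rangle_I \langle w^{-1/(p-1)}\rangle_I^{p-1} \le [w]_{A_{p,F}},
$$
and $M_{\calD_{(l)}} \le M_F$ is bounded on $L^p(w)$ by Proposition~\ref{prop:maxfunc}. I would then invoke the weighted square function estimate for regular martingale filtrations with $A_p$ weights adapted to the filtration. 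The argument goes through sparse domination or the $A_2$ bound via Wilson's/Chang--Wilson--Wolff type arguments, followed by extrapolation (Lemma~\ref{lem:extra}), using that Rubio de Francia extrapolation works for the basis $\calD_{(l)}$. This yields $\|S_{(l)}f\|_{L^p(w)} \lesssim \|f\|_{L^p(w)}$ with constants independent of $l$, since only regularity and the $A_p$ characteristic enter.

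The obstacle I expect is precisely making Step 3 uniform in $l$. One must check that no implicit doubling or geometry of cubes is used. Every ingredient (stopping times, the Carleson embedding, the sparse bound) works abstractly for any regular filtration with a bounded number of children, so this should go through. Combining Steps 1--3 gives the claimed bound with factor $|k|+1$.
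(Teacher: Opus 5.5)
There is a gap in Step~2 relative to the lemma as stated. Your parenthetical ``the size of $K^1$ determines that of $K^2$ through $k,l$'' is the relation $2^{-k^1}\ell(K^1)=2^{-l}2^{-k^2}\ell(K^2)$, i.e.\ $K\in\calD(k,l)$. The lemma, however, sums over all $K\in\calD$, and the paper later remarks that summing over all $K\in\calD$ ``was fine for $\calY_{k,l}$''.

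For general $K$, write $u=\log_2\ell(K^1)-k^1$, $v=\log_2\ell(K^2)-k^2-l$ and $N=\max(k^1,k^2)$. The scales $c=\log_2\ell(I^1)$ occurring in $\calY_{K,k,l}f$ are exactly those with $\max(u,v)\le c\le\min(u,v)+N$. Hence a fixed scale of $\calD_{(l)}$ is used by $(N+1)^2$ rectangles $K\ni x$, not $N+1$. Combined with the Cauchy--Schwarz factor of Step~1, you only obtain $\calY_{k,l}f\lesssim(N+1)^{3/2}S_{(l)}f$.

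This loss is intrinsic to the pointwise route. Suppose $\Delta_If(x)=1$ for the rectangles $I\ni x$ of $N+1$ consecutive scales of $\calD_{(l)}$ and $0$ otherwise. Then $\calY_{k,l}f(x)^2$ is of order $N^4$, while $(N+1)^2S_{(l)}f(x)^2=(N+1)^3$. So your argument proves the claimed bound only for the sum over $\calD(k,l)$. That restricted version is all Theorem~\ref{thm:almost} actually uses. There your pointwise domination is a clean alternative that even avoids the lower square function estimate.

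For the full statement you need to replace the pointwise Cauchy--Schwarz by orthogonality in $L^2(w)$, which is what the paper does.
\begin{enumerate}
\item Group the $K$ by $R=K^{(\tau)}$. The $K$ with a common $R$ are disjoint, so $\|\calY_{k,l}f\|_{L^2(w)}^2=\sum_R\|\sum_{K^{(\tau)}=R}\calY_{K,k,l}f\|_{L^2(w)}^2$.
\item For every $K\in\calD$, the inner sum is the single block $\sum\Delta_If$ over $I\in\calD_{(l)}$ with $I\subset R$ and $\ell(R^m)\le2^N\ell(I^m)$.
\item The weighted \emph{lower} square function bound in $\calD_{(l)}$, together with $\Delta_I\Delta_J=\delta_{I,J}\Delta_I$, bounds this block by $\sum_I\|\Delta_If\|_{L^2(w)}^2$ with no loss in $N$.
\item Each $I$ lies in at most $(N+1)^2$ such $R$. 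The weighted upper bound plus extrapolation then give exactly the factor $N+1$.
\end{enumerate}
Your Step~3 is the same ingredient the paper uses. Its uniformity in $l$ needs no abstract filtration theory: the dilation $(x_1,x_2)\mapsto(2^lx_1,x_2)$ maps $\calD_{(l)}$ onto a dyadic lattice of cubes and preserves the $A_p$ characteristic.
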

\begin{proof}
	By extrapolation, it suffices to only prove the $p=2$ case. To this end, we have
	\begin{align*}
		\|\calY_{k, l} f\|_{L^2(w)}^2 & = \sum_{ K\in \calD} \|\calY_{K, k, l}f\|_{L^2(w)}^2                  \\
		                              & = \sum_{R\in \calD}\sum_{K^{(\tau)}=R}\|\calY_{K, k, l}f\|_{L^2(w)}^2
		=\sum_{R\in \calD}\Big\|\sum_{ K^{(\tau)}=R}\calY_{K, k, l}f\Big\|_{L^2(w)}^2.
	\end{align*}
	Notice then that
	\begin{align*}
		\Big\|\sum_{ K^{(\tau)}=R} \calY_{K, k, l}f\Big\|_{L^2(w)}^2 & =
		\Big\|\sum_{\substack{I \subset R                                       \\ \ell(I^1) = 2^{-l}\ell(I^2) \\
				      \ell(R^m) \le 2^{\max(k^1, k^2)}\ell(I^m) }}
		\Delta_{I}f\Big\|_{L^2(w)}^2                                            \\
		                                                             & \lesssim
		\sum_{\substack{I \subset R                                             \\ \ell(I^1) = 2^{-l}\ell(I^2) \\
				\ell(R^m) \le 2^{\max(k^1, k^2)}\ell(I^m)} }
		\| \Delta_I f\|_{L^2(w)}^2.
	\end{align*}
	In the last step we used the weighted one-parameter square function lower bound
	in the scaled grid $\ell(I^1) = 2^{-l}\ell(I^2)$ -- this uses the fact
	that $w \in A_{2, F}$ implies that $w$ is $A_2$ in this scaled grid uniformly in $l$.
	Of course, we also used the fact that $\Delta_I \Delta_J f = \delta_{I, J} \Delta_{I} f$
	for $I, J$ in the scaled grid.

	Therefore, we have
	\begin{align*}
		\|\calY_{k, l} f\|_{L^2(w)}^2 & \lesssim
		\sum_{R \in \calD} \sum_{\substack{I \subset R                                                       \\ \ell(I^1) = 2^{-l}\ell(I^2) \\
				                   \ell(R^m) \le 2^{\max(k^1, k^2)}\ell(I^m)} }
		\| \Delta_I f\|_{L^2(w)}^2                                                                                                                             \\
		                              & = \sum_{I \colon \ell(I^1) = 2^{-l}\ell(I^2)}
		\| \Delta_I f\|_{L^2(w)}^2
		\sum_{\substack{ R \supset I                                                                         \\ \ell(R^m) \le 2^{\max(k^1, k^2)}\ell(I^m) }} 1 \\
		                              & \le (\max(k^1, k^2)+1)^2 \sum_{I \colon \ell(I^1) = 2^{-l}\ell(I^2)}
		\| \Delta_I f\|_{L^2(w)}^2 \lesssim (|k|+1)^2 \|f\|_{L^2(w)}^2.
	\end{align*}
	The last estimate used the weighted one-parameter square function upper bound
	in the scaled grid.
\end{proof}

\subsubsection*{Flag expansion}
Here we assume cancellation in the stronger form $\int_{\R^{d_1}} \varphi_K = \int_{\R^{d_2}} \varphi_K = 0$.
Write
$$
	\langle f, \varphi_K \rangle = \Big\langle \sum_{I \in \calD_F} \Delta_{I, F} f, \varphi_K \Big\rangle.
$$
Again, we may assume $I \cap K \ne \emptyset$. Suppose $\ell(I^2) < 2^{-k^2}\ell(K^2)$. Then
we have
$$
	\langle \Delta_{I, F} f, \varphi_K \rangle = \int_{\R^{d_1}} \langle \varphi_K \rangle_{I^2}
	\int_{\R^{d_2}} \Delta_{I, F} f = 0.
$$
So we can assume $\ell(I^2) \ge 2^{-k^2}\ell(K^2)$. Suppose $\ell(I^1) = \ell(I^2)$. Notice that then
$$
	\ell(I^1) = \ell(I^2) \ge 2^{-k^2}\ell(K^2) \ge 2^{-k^1}\ell(K^1)
$$
as $K \in \calD^k_F$. We next show that this lower bound can be assumed also when $\ell(I^1) < \ell(I^2)$.
So assume $\ell(I^1) < \ell(I^2)$ and $\ell(I^1) < 2^{-k^1}\ell(K^1)$. Then we have
$$
	\langle \Delta_{I, F} f, \varphi_K \rangle = \int_{\R^{d_2}} \langle \varphi_K \rangle_{I^1}
	\int_{\R^{d_1}} \Delta_{I, F} f = 0.
$$
So we can always assume $\ell(I^1) \ge 2^{-k^1}\ell(K^1)$.

Now, let us think about the upper bounds. Suppose $I^2 \supsetneq K^2$. Then we have
$$
	\langle \Delta_{I, F} f, \varphi_K \rangle = \int_{\R^{d_1}} \langle \Delta_{I, F} f \rangle_{K^2}
	\int_{\R^{d_2}} \varphi_K = 0.
$$
Similarly, if $I^1 \supsetneq K^1$ then
$$
	\langle \Delta_{I, F} f, \varphi_K \rangle = \int_{\R^{d_2}} \langle \Delta_{I, F} f \rangle_{K^1}
	\int_{\R^{d_1}} \varphi_K = 0.
$$
So we may assume $I \subset K$. We have showed that
$$
	\langle f, \varphi_K \rangle = \Big\langle \sum_{\substack{I \in \calD_F
			\\ I \subset K \\ \ell(K^m) \le 2^{k^m}\ell(I^m) }} \Delta_{I, F} f, \varphi_K \Big\rangle
	=: \langle \calU_{K, k}f, \varphi_K \rangle.
$$

\begin{lem}\label{lem:auxsf}
	Let
	\[
		\calU_kf := \Big(\sum_{ K\in \calD} |\calU_{K, k}f|^2\Big)^{1/2}.
	\]
	Then for all $p\in (1, \infty)$ and $w\in A_{p, F}$
	we have $$
		\|\calU_k f\|_{L^p(w)}\lesssim (|k|+1) \|f\|_{L^p(w)}.
	$$
\end{lem}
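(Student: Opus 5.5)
The plan mirrors the proof of Lemma \ref{lem:auxsfY}, with the one-parameter scaled grid replaced by the flag multiresolution and Theorem \ref{thm:square} in place of the one-parameter square function bounds. By extrapolation (Lemma \ref{lem:extra}) it suffices to treat $p=2$ and $w\in A_{2,F}$. Since $\calU_{K,k}f$ is supported in $K$ and the sets $K$ in a fixed generation are disjoint, we have
\begin{equation*}
	\|\calU_k f\|_{L^2(w)}^2 = \sum_{K\in\calD}\|\calU_{K,k}f\|_{L^2(w)}^2 .
\end{equation*}
The first step is to bound each summand by the weighted square function lower bound. We write
\begin{equation*}
	\calU_{K,k} f = \sum_{I \in \calG_K} \Delta_{I,F} f,
	\qquad \calG_K := \{ I \in \calD_F \colon I \subset K,\ \ell(K^m) \le 2^{k^m}\ell(I^m) \}.
\end{equation*}
By Lemma \ref{lem:orthogonal} we have $\Delta_{J,F}\calU_{K,k}f = 1_{\calG_K}(J)\Delta_{J,F}f$. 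The lower bound \eqref{Eq:lower} of Theorem \ref{thm:square}, valid for $w\in A_{2,F}\subset A_{\infty,F}$, then gives
\begin{equation*}
	\|\calU_{K,k}f\|_{L^2(w)}^2 \lesssim \Big\|\Big(\sum_{I\in\calG_K}|\Delta_{I,F}f|^2\Big)^{1/2}\Big\|_{L^2(w)}^2 = \sum_{I\in\calG_K}\|\Delta_{I,F}f\|_{L^2(w)}^2 .
\end{equation*}

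The second step is a counting argument. Summing over $K$ and interchanging the order of summation, each fixed $I\in\calD_F$ is counted once for every $K\in\calD$ with $K\supset I$ and $\ell(K^m)\le 2^{k^m}\ell(I^m)$, $m=1,2$. In each parameter $K^m$ is determined by the ancestor generation, which takes at most $k^m+1$ values. So there are at most $(k^1+1)(k^2+1)\lesssim(|k|+1)^2$ such $K$. This yields
\begin{equation*}
	\|\calU_k f\|_{L^2(w)}^2 \lesssim (|k|+1)^2\sum_{I\in\calD_F}\|\Delta_{I,F}f\|_{L^2(w)}^2 = (|k|+1)^2\|S_F f\|_{L^2(w)}^2 .
\end{equation*}
The upper bound \eqref{Eq:upper} of Theorem \ref{thm:square} then gives $\lesssim (|k|+1)^2\|f\|_{L^2(w)}^2$. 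Taking square roots and applying Lemma \ref{lem:extra} (the dependence on $[w]$ is only through the square function constants) completes the proof for all $p$.

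I expect the main point needing care to be the first step, namely applying the lower square function bound to the localized sum $\calU_{K,k}f$. This requires that the flag differences reproduce exactly the summands of $\calU_{K,k}f$, which is where Lemma \ref{lem:orthogonal} is essential. The flag differences do not commute with plain $\Delta$'s, but Lemma \ref{lem:orthogonal} concerns only flag differences, so it applies directly. The lower bound needs $\calU_{K,k}f$ to be in a reasonable class, for example a finite sum for nice $f$, which a standard limiting argument provides.
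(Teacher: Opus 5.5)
Your proposal is correct and is essentially the paper's own proof: you reduce to $p=2$ by extrapolation, apply the lower bound \eqref{Eq:lower} together with Lemma \ref{lem:orthogonal} to each $\calU_{K,k}f$, count the at most $(k^1+1)(k^2+1)$ rectangles $K\supset I$ with $\ell(K^m)\le 2^{k^m}\ell(I^m)$, and finish with the upper bound \eqref{Eq:upper}. Two harmless side remarks: the identity $\|\calU_k f\|_{L^2(w)}^2=\sum_{K}\|\calU_{K,k}f\|_{L^2(w)}^2$ is just Tonelli, so no disjointness of the $K$ is needed, and your collection $\mathcal{G}_K$ is finite, so $\calU_{K,k}f$ is automatically a finite sum.
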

\begin{proof}
	By extrapolation it suffices to only prove the $p=2$ case. To this end, we have
	\begin{align*}
		\|\calU_k f\|_{L^2(w)}^2 & = \sum_{ K\in \calD} \|\calU_{K, k}f\|_{L^2(w)}^2 \\
		                         & \lesssim \sum_{K \in \calD}
		\sum_{\substack{I \in \calD_F
				\\ I \subset K \\ \ell(K^m) \le 2^{k^m}\ell(I^m) }} \|\Delta_{I, F} f\|_{L^2(w)}^2.
	\end{align*}
	In the last estimate
	we used the flag square function lower bound \eqref{Eq:lower}
	and the orthogonality Lemma \ref{lem:orthogonal} for flag martingales.
	Noticing that
	$$
		\sum_{\substack{K \in \calD \\ K \supset I \\ \ell(K^m) \le 2^{k^m}\ell(I^m) }} 1
		\le (k^1+1)(k^2+1) \le (|k|+1)^2,
	$$
	and using the  flag square function upper bound \eqref{Eq:upper} ends the proof.
\end{proof}

\subsection{Abstract bilinear form summation argument}
Our shifts will have various configurations. We take care of some of the easier cases first.
\begin{lem}\label{lem:formsum1}
	Let $k = (k^1, k^2)$, $k^m \ge 0$, be fixed and $\calF \subset \calD^k_F$. Let $f_K, g_K \ge 0$ be functions and
	\begin{align*}
		A_1 & := \sum_{K \in \calF} \sum_{I^{(k)} = J^{(k)} = K} \frac{1}{|K_F|} \int_J f_K \int_J g_K, \\
		A_2 & := \sum_{K \in \calF} \sum_{I^{(k)} = J^{(k)} = K} \frac{1}{|K_F|}
		\int_{J^1 \times I^2} f_K \int_{J} g_K.
	\end{align*}
	Then we have
	$$
		A_i \le \sum_{K \in \calF} \frac{|K|}{|K_F|} \int_K M_{\calD_F} f_K g_K, \qquad i = 1,2.
	$$
\end{lem}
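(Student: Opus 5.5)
The plan is to prove both bounds by direct counting. For a fixed $J$ in the inner sum, the variable $I$ (or part of it) does not appear in the integrand, so summing over it produces an explicit factor. After that, the remaining average of $f_K$ is over a dyadic flag rectangle containing $J$, so it is dominated pointwise on $J$ by $M_{\calD_F} f_K$.

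\emph{The estimate for $A_1$.} The summand does not depend on $I$. For a fixed $J$ with $J^{(k)}=K$, the number of $I$ with $I^{(k)}=K$ is $2^{k^1d_1}2^{k^2d_2}=|K|/|J|$. Hence
\[
A_1=\sum_{K\in\calF}\frac{|K|}{|K_F|}\sum_{J^{(k)}=K}\int_J \langle f_K\rangle_J\, g_K .
\]
Since $K\in\calD^k_F$, we have $\ell(J^1)=2^{-k^1}\ell(K^1)\le 2^{-k^2}\ell(K^2)=\ell(J^2)$, so $J\in\calD_F$. Therefore $\langle f_K\rangle_J\le M_{\calD_F}f_K(x)$ for every $x\in J$. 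The cubes $J$ with $J^{(k)}=K$ partition $K$, which gives $A_1\le\sum_K\frac{|K|}{|K_F|}\int_K M_{\calD_F}f_K\,g_K$.

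\emph{The estimate for $A_2$.} Fix $J$ and sum over $I=I^1\times I^2$.
\begin{itemize}
\item The sum over $I^1$ with $(I^1)^{(k^1)}=K^1$ contributes the factor $|K^1|/|J^1|$, since the integrand does not involve $I^1$.
\item The sum over $I^2$ with $(I^2)^{(k^2)}=K^2$ recombines $\sum_{I^2}\int_{J^1\times I^2}f_K=\int_{J^1\times K^2}f_K$.
\end{itemize}
This gives
\[
A_2=\sum_{K\in\calF}\frac{1}{|K_F|}\sum_{J^{(k)}=K}\frac{|K^1|}{|J^1|}\,|J^1||K^2|\,\langle f_K\rangle_{J^1\times K^2}\int_J g_K .
\]
The one point to check is that $J^1\times K^2$ is a dyadic flag rectangle. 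This holds because $\ell(J^1)=2^{-k^1}\ell(K^1)\le 2^{-k^2}\ell(K^2)\le\ell(K^2)$, which again uses $K\in\calD^k_F$. Since $J\subset J^1\times K^2$, for $x\in J$ we get $\langle f_K\rangle_{J^1\times K^2}\le M_{\calD_F}f_K(x)$. The factor $|J^1|$ cancels, leaving $|K^1||K^2|=|K|$. Summing over the partition $\{J\}$ of $K$ then yields the same bound as for $A_1$.

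The only step needing care is the flag membership of the enlarged rectangle $J^1\times K^2$, and this is exactly where the restriction $\calF\subset\calD^k_F$ is used. Enlarging the first factor instead, as in $K^1\times J^2$, could fail to be flag, which is why only these two configurations are treated here.
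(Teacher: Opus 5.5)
Your proof is correct and is essentially the paper's argument. You sum out the free variables: the factor $|K|/|J|$ for $A_1$, and for $A_2$ the factor $|K^1|/|J^1|$ together with the recombination $\sum_{I^2}\int_{J^1\times I^2} f_K=\int_{J^1\times K^2} f_K$. You then dominate the averages over the dyadic flag rectangles $J$ and $J^1\times K^2$ by $M_{\calD_F} f_K$ on $J$, using the check $\ell(J^1)=2^{-k^1}\ell(K^1)\le 2^{-k^2}\ell(K^2)\le \ell(K^2)$, which is exactly the point the paper highlights.
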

\begin{proof}
	Notice that
	$$
		A_1 \le \sum_{K \in \calF} \sum_{I^{(k)} = J^{(k)} = K} \frac{|I|}{|K_F|}  \int_J M_{\calD_F} f_K g_K
		= \sum_{K \in \calF} \frac{|K|}{|K_F|} \int_K M_{\calD_F} f_K g_K,
	$$
	where we used that $J \in \calD_F$ and $|I| = |J|$.
	For the term $A_2$ we have
	$$
		A_2 \le \sum_{K \in \calF} \sum_{(I^1)^{(k^1)} = (J^1)^{(k^1)} = K^1} \frac{|I^1| |K^2|}{|K_F|}
		\int_{J^1 \times K^2} \langle f_K\rangle_{J^1 \times K^2} g_K
		\le \sum_{K \in \calF} \frac{|K|}{|K_F|} \int_K M_{\calD_F} f_K g_K,
	$$
	where it is of importance that $J^1 \times K^2$ trivially remains flag as $\ell(K^2)
		\ge 2^{k^2}2^{-k^1}\ell(K^1) = 2^{k^2}\ell(J^1) \ge \ell(J^1)$.
\end{proof}
Now, the factor $|K| / |K_F|$ above offers some decay when $K$ is not flag. While
we do not always need it (that is, we can sometimes afford to bound it by $1$), it will sometimes be critical to us. In the next configuration, we have to capture this decay in a more delicate way.
\begin{lem}\label{lem:formsum2}
	Let $k = (k^1, k^2)$, $k^m \ge 0$, be fixed and $\calF \subset \calD^k_F$. Let $f_K, g_K \ge 0$ be functions and
	\begin{align*}
		B := \sum_{K \in \calF} \sum_{I^{(k)} = J^{(k)} = K} \frac{1}{|K_F|} \int_I f_K \int_J g_K.
	\end{align*}
	Then we have
	$$
		B \le \sum_{K \in \calF} \int_K \wt M_{\calF} f_K g_K,
	$$
	where
	$$
		\wt M_{\calF} f := \sup_{K \in \calF} \frac{1_K}{|K_F|} \int_K |f|.
	$$
\end{lem}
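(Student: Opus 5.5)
This is a direct computation, and I will run it in the same spirit as the proof of Lemma \ref{lem:formsum1}. The difference is that now the $f_K$ integral sits on $I$ and the $g_K$ integral sits on $J$. These are unrelated cubes inside $K$, so we cannot put $f_K$ and $g_K$ on a common small flag rectangle. The plan is to decouple the two sums completely.

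First, fix $K \in \calF$ and observe that the double sum over $I, J$ with $I^{(k)} = J^{(k)} = K$ factorizes:
\begin{equation*}
	\sum_{I^{(k)} = J^{(k)} = K} \frac{1}{|K_F|} \int_I f_K \int_J g_K
	= \frac{1}{|K_F|} \Big(\sum_{I^{(k)} = K} \int_I f_K\Big)\Big(\sum_{J^{(k)} = K} \int_J g_K\Big)
	= \frac{1}{|K_F|} \int_K f_K \int_K g_K.
\end{equation*}
Here we used that the rectangles $I$ with $I^{(k)} = K$ partition $K$, and similarly for $J$. Note that the definition of $\calD^k_F$ is only needed to make sense of the summation; no flag property of $I$ or $J$ is used.

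Next, write
\begin{equation*}
	\frac{1}{|K_F|}\int_K f_K = \frac{1}{|K_F|}\int_K f_K \cdot \frac{1}{|K|}\int_K 1_K ,
\end{equation*}
so that
\begin{equation*}
	\frac{1}{|K_F|} \int_K f_K \int_K g_K = \int_K \Big(\frac{1_K}{|K_F|}\int_K f_K\Big) g_K \le \int_K \wt M_{\calF} f_K \, g_K .
\end{equation*}
The inequality holds pointwise on $K$, because $K \in \calF$ is one of the rectangles in the supremum defining $\wt M_{\calF}$. Summing over $K \in \calF$ gives the claim.

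There is no genuine obstacle here. The only point to watch is not to try to replace $\frac{1}{|K_F|}\int_K$ by a flag maximal average. That would lose the factor $|K|/|K_F|$, or fail outright when $K$ is not flag. This is exactly why the non-standard operator $\wt M_{\calF}$, which keeps the normalization $|K_F|$, is introduced. Its weighted boundedness, presumably via interpolation with change of measure as mentioned in the introduction, is a separate matter to be handled later, and is not part of this lemma.
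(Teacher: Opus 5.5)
Your proof is correct and is exactly the paper's argument. The double sum over $I,J$ factors as $\frac{1}{|K_F|}\int_K f_K\int_K g_K$, because the rectangles with $I^{(k)}=K$ partition $K$. This is then bounded by $\int_K \wt M_{\calF} f_K\, g_K$, since $K\in\calF$ is admissible in the supremum defining $\wt M_{\calF}$.
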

\begin{proof}
	Notice simply that
	$$
		B = \sum_{K \in \calF} \frac{1}{|K_F|} \int_K f_K \int_K g_K.
	$$
\end{proof}
\begin{rem}
	Notice that $\wt M_{\calF} f \le M_{\calD_F} f$. Performing such a crude estimate essentially throws
	away the potential extra decay of $|K|/|K_F|$ somehow hidden in $\wt M_{\calF}$.
	Similarly as we can often throw away $|K|/|K_F|$, we can often (but not always) afford to do this
	maximal function estimate.
	But even in the case where we can afford to do this, it
	was important to originally have $1/|K_F|$ here and not $1/|K|$.
	The extra decay of flag kernels compared to pure bi-parameter kernels is quite important.
\end{rem}
Finally, there will be a configuration that is not necessarily good for all $\calF \subset \calD^k_F$,
but is fine if we demand more of $\calF$.
\begin{lem}\label{lem:formsum3}
	Let $k = (k^1, k^2)$, $k^m \ge 0$, be fixed and $\calF \subset \{K \in \calD \colon
		\ell(K^1) \le 2^{-k^2}\ell(K^2)\}$. Let $f_K, g_K \ge 0$ be functions and
	\begin{align*}
		C := \sum_{K \in \calF} \sum_{I^{(k)} = J^{(k)} = K} \frac{1}{|K_F|} \int_{I^1 \times J^2} f_K \int_J g_K.
	\end{align*}
	Then we have
	$$
		C \le \sum_{K \in \calF} \frac{|K|}{|K_F|} \int_K M_{\calD_F} f_K g_K.
	$$
\end{lem}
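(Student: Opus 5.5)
The approach follows the proof of the $A_2$ estimate in Lemma \ref{lem:formsum1}. We bound the average of $f_K$ over $I^1 \times J^2$ by a flag maximal function evaluated on $J$.

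Fix $K \in \calF$ and $I, J$ with $I^{(k)} = J^{(k)} = K$. Nothing in the summand depends on $J^1$ except through $g_K$ on $J$, and nothing depends on $I^2$ except the multiplicity. So we first sum over $I^2$ with $(I^2)^{(k^2)} = K^2$; this produces a factor $2^{k^2 d_2} = |K^2|/|I^2|$. We then rewrite
$$
\int_{I^1 \times J^2} f_K = |I^1||J^2| \langle f_K \rangle_{I^1 \times J^2}.
$$
After the $I^2$ sum, the term becomes
$$
\frac{|I^1||K^2|}{|K_F|} \langle f_K \rangle_{I^1 \times J^2} \int_J g_K .
$$

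The key step is to compare $\langle f_K \rangle_{I^1 \times J^2}$ with a flag average over a rectangle containing $J$. Since $I^1$ and $J^1$ are arbitrary subcubes of $K^1$, we enlarge $I^1$ to $K^1$ and use
$$
\langle f_K \rangle_{I^1 \times J^2} \le \frac{|K^1|}{|I^1|} \langle f_K \rangle_{K^1 \times J^2}.
$$
This loses a factor $2^{k^1 d_1}$, which is not acceptable. So instead we keep $I^1$ fixed and sum over $I^1$ first:
$$
\sum_{(I^1)^{(k^1)} = K^1} |I^1| \langle f_K \rangle_{I^1 \times J^2} = \frac{1}{|J^2|} \int_{K^1 \times J^2} f_K = |K^1| \langle f_K \rangle_{K^1 \times J^2}.
$$
This is lossless.

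The hypothesis on $\calF$ enters here. We have $\ell(K^1) \le 2^{-k^2}\ell(K^2) = \ell(J^2)$, so $K^1 \times J^2 \in \calD_F$. It contains $J$, so
$$
\langle f_K \rangle_{K^1 \times J^2} \le M_{\calD_F} f_K(x) \qquad \text{for } x \in J.
$$

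Putting the pieces together, the contribution of $K$ is at most
$$
\sum_{J^{(k)} = K} \frac{|K^1||K^2|}{|K_F|} \int_J M_{\calD_F} f_K \, g_K = \frac{|K|}{|K_F|} \int_K M_{\calD_F} f_K \, g_K .
$$
Summing over $K \in \calF$ gives the claim.

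The only delicate point is the order of summation: the $I^1$-sum must be carried out before any maximal function bound, so that $I^1$ is replaced by $K^1$ without loss. The flag property of $K^1 \times J^2$ is exactly what the restriction on $\calF$ guarantees. Without it, $\ell(K^1)$ could exceed $\ell(J^2)$, which is the problematic non-flag average discussed in the introduction.
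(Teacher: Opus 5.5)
Your proof is correct and follows the paper's argument. Summing over $I^1$ turns $\int_{I^1\times J^2} f_K$ into $\int_{K^1\times J^2} f_K$ without loss, the $I^2$-sum supplies the multiplicity $|K^2|/|J^2|$, and the restriction on $\calF$ makes $K^1\times J^2$ a flag rectangle containing $J$, so its average is dominated by $M_{\calD_F} f_K$ on $J$; the paper does the same, additionally summing over $J^1$ to write $\int_{K^1\times J^2} g_K$, which is an equivalent bookkeeping choice.
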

\begin{proof}
	Notice that
	$$
		C = \sum_{K \in \calF}  \sum_{(I^2)^{(k^2)} = (J^2)^{(k^2)} = K^2} \frac{|K^1| |I^2|}{|K_F|}
		\int_{K^1 \times J^2} \langle f_K \rangle_{K^1 \times J^2} g_K
		\le \sum_{K \in \calF} \frac{|K|}{|K_F|} \int_K M_{\calD_F} f_K g_K,
	$$
	where we used that $K^1 \times J^2$ is flag as $\ell(J^2) = 2^{-k^2}\ell(K^2)
		\ge \ell(K^1)$ by our standing assumption on $\calF$.
\end{proof}

Now, notice that $|K| / |K_F| = 1$ if $K$ is flag and otherwise (i.e., if $\ell(K^1) > \ell(K^2)$) we have
$$
	\frac{|K|}{|K_F|} = \Big(\frac{\ell(K^2)}{\ell(K^1)} \Big)^{d_2}.
$$
Because of the internal details of our upcoming representation theorem (related
to the kernel estimates and the available cancellation) we will end up needing
this ratio only when $K \in \calD^k_=$ and not for $K \in \calD^k_<$. Therefore, we will
estimate $|K| / |K_F| \le 1$ if $K \in \calD^k_<$. For $K \in \calD^k_=$
the point is that $\ell(K^1) \ge \ell(K^2)$ precisely when $k^1 \ge k^2$, and so
\begin{equation}\label{eq:KKFratio}
	\frac{|K|}{|K_F|} = 2^{-d_2(k^1-k^2)_+}, \qquad K \in \calD^k_=.
\end{equation}
The version of this in the context of Lemma \ref{lem:formsum2} is that
while we can always estimate $\wt M_{\calF} f \le M_{\calD_F} f$, we also have the following
non-trivial estimate with extra decay.
\begin{lem}\label{lem:KperKFmaximal}
	For all $p \in (1, \infty)$ and $w \in A_{p, F}$ there is
	$\eta = \eta([w]_{A_{p, F}}) > 0$ so that for all $k = (k^1, k^2)$ we have
	$$
		\|\wt M_{\calD^k_=} f\|_{L^p(w)} \lesssim
		2^{-\eta(k^1-k^2)_+} \|f\|_{L^p(w)}.
	$$
\end{lem}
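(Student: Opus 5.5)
The plan is to prove a pointwise domination of $\wt M_{\calD^k_=}$ by a flag maximal function of $|f|^q$ for some $q>1$, gaining the factor $(|K|/|K_F|)^{1-1/q}$. We then use self-improvement of $A_{p,F}$ to absorb the power $q$.

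\textbf{Trivial case.} If $k^1 \le k^2$, every $K \in \calD^k_=$ satisfies $\ell(K^1) \le \ell(K^2)$, so $K$ is flag and $K_F = K$. Hence $\wt M_{\calD^k_=} f \le M_{\calD_F} f$, and the claim is Proposition \ref{prop:maxfunc}. So assume $j := k^1 - k^2 > 0$.

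\textbf{Pointwise bound.} For $K \in \calD^k_=$ we have $\ell(K^1) = 2^{j}\ell(K^2)$, so by \eqref{eq:KKFratio} $|K|/|K_F| = 2^{-d_2 j}$. Also $K \subset K_F$, where $K_F = K^1 \times K_F^2$ is a dyadic flag rectangle. Fix $q \in (1,p)$. For $x \in K$, H\"older's inequality gives
\[
	\frac{1}{|K_F|}\int_K |f| \le \frac{|K|^{1-1/q}}{|K_F|}\Big(\int_{K_F}|f|^q\Big)^{1/q}
	= \Big(\frac{|K|}{|K_F|}\Big)^{1-1/q} \langle |f|^q\rangle_{K_F}^{1/q}
	\le 2^{-d_2 j(1-1/q)} \big(M_{\calD_F}(|f|^q)(x)\big)^{1/q}.
\]
Taking the supremum over $K$ yields
\[
	\wt M_{\calD^k_=} f \le 2^{-d_2(1-1/q) j}\,\big(M_{\calD_F}|f|^q\big)^{1/q}.
\]

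\textbf{Weighted bound.} We have
\[
	\|(M_{\calD_F}|f|^q)^{1/q}\|_{L^p(w)} = \|M_{\calD_F}|f|^q\|_{L^{p/q}(w)}^{1/q}.
\]
This is $\lesssim \|f\|_{L^p(w)}$ by Proposition \ref{prop:maxfunc}, provided $w \in A_{p/q,F}$. So it remains to show that $w \in A_{p,F}$ implies $w \in A_{p-\delta,F}$ for some $\delta = \delta([w]_{A_{p,F}})>0$. Then we take $q = p/(p-\delta)$ and $\eta = d_2(1-1/q)$, which depends only on $[w]_{A_{p,F}}$ (and the dimensions).

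\textbf{Main obstacle: the openness $A_{p,F} \subset A_{p-\delta,F}$.} This is the only step not already stated in the paper. I would prove it through the characterization in Proposition \ref{prop:weight}. By part (i), $w$ belongs to $A_p(\R^d)$ and, uniformly in $x_1$, $w(x_1,\cdot)$ belongs to $A_p(\R^{d_2})$, with constants at most $[w]_{A_{p,F}}$ (in the present bi-parameter setting). The classical one-parameter openness of $A_p$ then gives $w \in A_{p-\delta}$ in each of these settings. Here $\delta$ depends only on the $A_p$ constant, and that constant carries a quantitative bound, so $\delta$ is uniform in $x_1$. Part (ii) of Proposition \ref{prop:weight} (the bi-parameter version, with the same proof) then gives $w \in A_{p-\delta,F}$.

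Alternatively, apply Proposition \ref{prop:open} to $\sigma = w^{-1/(p-1)} \in A_{p',F}$ to get $\sigma^{1+\eps} \in A_{p',F}$. The standard H\"older computation then converts this into $w \in A_{p-\delta,F}$.
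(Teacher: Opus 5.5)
Your proof is correct, and it takes a genuinely different route from the paper's.

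\textbf{The paper's route.} The paper never passes to $|f|^q$. It interpolates, via the Stein--Weiss change-of-measure result (Proposition \ref{prop:interpo}), between two bounds:
\begin{itemize}
\item the unweighted bound coming from the pointwise estimate $\wt M_{\calD^k_=} f \le 2^{-d_2(k^1-k^2)_+}M_{\calD}f$;
\item a bound without decay in $L^p(w^{1+\eps})$, where $w^{1+\eps}\in A_{p,F}$ is the ``right'' openness of Proposition \ref{prop:open}.
\end{itemize}
This yields $\eta = d_2\eps/(1+\eps)$.

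\textbf{Your route.} You convert the ratio $|K|/|K_F|$ into decay pointwise by H\"older. The price is the ``left'' openness $A_{p,F}\subset A_{p-\delta,F}$, which the paper does not state. Both of your derivations of it are valid:
\begin{itemize}
\item Proposition \ref{prop:weight} combined with quantitative one-parameter openness;
\item applying Proposition \ref{prop:open} to $\sigma=w^{-1/(p-1)}$. Here the reverse H\"older inequality $\langle\sigma^{1+\eps}\rangle_I^{1/(1+\eps)}\lesssim\langle\sigma\rangle_I$ on flag rectangles follows from $\sigma^{1+\eps}\in A_{p',F}$, Jensen, and $\langle w\rangle_I^{-1/(p-1)}\le\langle\sigma\rangle_I$.
\end{itemize}

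\textbf{What your approach buys.} It is more elementary, and the decay sits in a pointwise bound with a fixed exponent $q$. So the vector-valued Lemma \ref{lem:VecKperKFmaximal} follows at once. Take $q<2$ and use the vector-valued bound for $M_{\calD_F}$ on $L^{p/q}(\ell^{2/q})(w)$. There is no need to track how a weight-dependent $\eta$ survives extrapolation, which the paper has to do by hand.

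\textbf{A further simplification.} For $k^1>k^2$ the enlargement $K_F$ has $\ell(K_F^1)=\ell(K_F^2)$, so $K_F\in\calD_=$. You may therefore replace $M_{\calD_F}$ by the one-parameter dyadic maximal function $M_{\calD_=}$. Then only the classical openness of $A_p(\R^d)\supset A_{p,F}$ is needed, and the flag left-openness becomes unnecessary.

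\textbf{What the paper's route buys.} It reuses an already-proved property and a black-box interpolation theorem that transfers verbatim to the tri-parameter variants (Lemmas \ref{lem:maxOp1}--\ref{lem:maxOp3}). Your H\"older trick adapts to those too, since the relevant enlargements $J^1\times (K_F)^{23}$ are again flag.
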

\begin{proof}
	Notice that $\wt M_{\calD^k_=} f \le 2^{-d_2(k^1-k^2)_+} M_{\calD} f$ and so
	we have the following unweighted estimate:
	$$
		\|\wt M_{\calD^k_=} f\|_{L^p} \le 2^{-d_2(k^1-k^2)_+} \|M_{\calD} f\|_{L^p} \lesssim
		2^{-d_2(k^1-k^2)_+} \|f\|_{L^p}.
	$$
	On the other hand, we have $\wt M_{\calD^k_=} f \le M_{\calD_F} f$
	and so we have the following flag weighted estimate:
	$$
		\|\wt M_{\calD^k_=} f\|_{L^p(w)} \le \|M_{\calD_F} f\|_{L^p(w)} \lesssim \|f\|_{L^p(w)}.
	$$

	As a consequence of Proposition \ref{prop:open}, for all $w \in A_{p, F}$
	there is $\eps = \eps([w]_{A_{p, F}}) > 0$ so that also
	$$
		\|\wt M_{\calD^k_=} f \|_{L^p(w^{1+\eps} )} \lesssim \|f\|_{L^p(w^{1+\eps} )}.
	$$
	Then by interpolation with a change of measure -- see Proposition \ref{prop:interpo} below --
	we get that for $\kappa = \kappa([w]_{A_p, F}) :=\frac \eps{1+\eps}\in (0,1)$ we have
	$$
		\|\wt M_{\calD^k_=} f \|_{L^p(w)}\lesssim (2^{-d_2(k^1-k^2)_+})^\kappa  \|f\|_{L^p(w)}
		= 2^{-\eta(k^1-k^2)_+} \|f\|_{L^p(w)},
	$$
	where $\eta := \kappa d_2 \in (0, d_2)$.
	Indeed, this is obtained by using the interpolation result with
	$p_0 = p_1 = p$, $w_0 = 1$, $w_1 = w^{1+\eps}$ and $t = 1 / (1+\eps)$
	so that $w_0^{1-t}w_1^{t} = 1^{1-t}w^{t(1+\eps)} = w$.
	We are done.
\end{proof}

\begin{prop}\cite{SW}*{Theorem 2.11}\label{prop:interpo}
	Suppose that $1 \le p_0,p_1 \le \infty$ and
	let $w_0$ and $w_1$ be positive weights. Suppose that $T$ is a sublinear operator that satisfies the estimates
	$$
		\| T f  \|_{L^{p_i}(w_i)} \le M_i \| f \|_{L^{p_i}(w_i)}, \quad i=0,1.
	$$
	Let $t \in (0,1)$ and define
	$
		1/p=(1-t)/p_0+t/p_1
	$
	and $w=w_0^{p(1-t)/p_0}w_1^{pt/p_1}$. Then $T$ satisfies the estimate
	$$
		\| T f   \|_{L^{p}(w)} \le M_0^{1-t}M_1^t \| f  \|_{L^{p}(w)}.
	$$
\end{prop}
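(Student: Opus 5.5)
The plan is to reduce to the Stein--Weiss theorem for \emph{linear} operators, which is proved by Hadamard three lines, and then remove linearity by a linearization.

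\textbf{Linear case.} Take $p<\infty$ and suppose first that $T$ is linear. By density it suffices to test simple functions $f$ and $g$ with $\|f\|_{L^p(w)}=1$ and $\|g\|_{L^{p'}(w)}=1$, and to bound $|\int Tf\, g\, w|$.

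For $z$ in the strip $0\le \Re z\le 1$, put
$$
	\frac{1}{p(z)}:=\frac{1-z}{p_0}+\frac{z}{p_1},\qquad
	f_z:=|f|^{p/p(z)}\,\frac{f}{|f|}\,w^{\frac{1}{p}-\frac{1}{p(z)}}\, w_0^{\frac{1-z}{p_0}}\,w_1^{\frac{z}{p_1}}\, w^{-\frac1p}\cdot w^{\frac1p}.
$$
The weight factors in $f_z$ are arranged so that, at $z=t$, the combined weight factor $w^{\frac1p}\cdot w_0^{-\frac{1-t}{p_0}}w_1^{-\frac{t}{p_1}}$ equals $1$; this is exactly the definition $w=w_0^{p(1-t)/p_0}w_1^{pt/p_1}$. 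Define $g_z$ in the same way, with dual exponents and the dual weights $w_i^{1-p_i'}$ folded in. Then $\Phi(z):=\int T(f_z)\,g_z\,w_{\mathrm{eff}}(z)$ is analytic and of admissible growth, because $T$ is linear and $f,g$ are simple. On the boundary line $\Re z=i$ we have
$$
	\|f_z\|_{L^{p_i}(w_i)}=\|g_z\|_{L^{p_i'}(w_i)}=1,
$$
so $|\Phi|\le M_i$ there. Three lines then gives $|\Phi(t)|\le M_0^{1-t}M_1^t$. Since $\Phi(t)=\int Tf\,g\,w$, duality in $L^p(w)$ finishes the linear case. The cases $p_i=\infty$ are handled with the usual modifications.

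\textbf{Sublinear case by linearization.} Since $T$ is sublinear,
$$
	|Tf|\le\sup_{\lambda}|\lambda(f)(x)|,
$$
and I would use the standard realization of such $T$ (for instance maximal operators such as $\wt M_{\calD^k_=}$, which is how the result is used here) as a pointwise supremum of a countable family $\{T_j\}$ of linear operators, each satisfying $|T_jh|\le T h$.

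Fix $f$. Choose a measurable index function $j(x)$ with
$$
	|T_{j(x)}f(x)|\ge(1-\delta)\,|Tf(x)|,
$$
and define the linear operator $Lh(x):=T_{j(x)}h(x)$. Then $|Lh|\le Th$ for every $h$, so $L$ inherits both endpoint bounds with the same constants $M_0$ and $M_1$. Applying the linear case to $L$ gives
$$
	(1-\delta)\|Tf\|_{L^p(w)}\le\|Lf\|_{L^p(w)}\le M_0^{1-t}M_1^t\|f\|_{L^p(w)}.
$$
Letting $\delta\to0$ gives the claim.

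\textbf{Main obstacle.} The main obstacle is the linearization step for a completely abstract sublinear $T$, that is, obtaining a dominating family of linear operators. If no such family is available, I would fall back on Calderón's identification of the Calderón product, $L^{p_0}(w_0)^{1-t}L^{p_1}(w_1)^t=L^p(w)$. Combined with monotonicity of sublinear positive operators this gives the bound; the point to verify is that the factorization $|f|=|f_0|^{1-t}|f_1|^t$, with $\|f_i\|_{L^{p_i}(w_i)}\le\|f\|_{L^p(w)}$, can be pushed through $T$. For the applications in this paper, all the operators involved are maximal operators, so the linearization route suffices.
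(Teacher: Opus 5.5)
The gap is the passage from linear to sublinear $T$. Your linearization presupposes that $T$ is a pointwise supremum of linear operators $T_j$ with $|T_jh|\le |Th|$. That is an extra structural hypothesis, not a consequence of sublinearity, so as written you prove the proposition only for such operators, whereas it is stated for every sublinear $T$. The fallback does not close the gap either. A sublinear operator need not be positive or monotone, and ``pushing the factorization through $T$'' requires an inequality like $T(f_0^{1-t}f_1^t)\le (Tf_0)^{1-t}(Tf_1)^t$, which does not follow from sublinearity (it fails for $Tf=|Hf|$ with $H$ the Hilbert transform).

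The missing idea is that \emph{every} sublinear operator can be linearized on a finite-dimensional space. Let $V=\mathrm{span}\{1_{E_1},\dots,1_{E_n}\}$. For a.e.\ $x$ the map $h\mapsto |Th(x)|$, $h\in V$, is a seminorm on $V$: check sublinearity off a single null set for rational coefficients, then extend by continuity using $\bigl||Th|-|Tg|\bigr|\le |T(h-g)|$. Hahn--Banach then gives a linear functional $\lambda_x$ on $V$ with $|\lambda_x(h)|\le |Th(x)|$ for $h\in V$ and $\lambda_x(f)=|Tf(x)|$ for your fixed $f\in V$. Since $\dim V<\infty$, the map $x\mapsto\lambda_x$ can be chosen measurably. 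Then $Lh(x):=\lambda_x(h)$ is linear on $V$, satisfies both endpoint bounds with constants $M_0,M_1$, and $Lf=|Tf|$.

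For this to be usable, the three lines family must stay in one finite-dimensional $V$, and this is exactly the point your linear case also skips.
\begin{itemize}
\item Your displayed $f_z$ has the wrong signs. It should be $f_z=|f|^{p/p(z)}\frac{f}{|f|}w^{1/p(z)}w_0^{-(1-z)/p_0}w_1^{-z/p_1}$, as your sentence about the weight factor at $z=t$ indicates. With the displayed formula one gets $f_t=fw^{1/p}$ and $\|f_{iy}\|_{L^{p_0}(w_0)}\ne 1$.
\item More importantly, even the corrected $f_z$ is not simple because of the weight factors. So $T(f_z)$ is not $\sum_k c_k(z)T(1_{E_k})$ with analytic $c_k$, and analyticity and admissible growth of $\Phi$ do not follow from ``$T$ linear, $f,g$ simple''.
\item The weight $w_{\mathrm{eff}}(z)$ is never defined. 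Pair against Lebesgue measure instead, with $g$ normalized in $L^{p'}(w^{1-p'})$ and $g_z$ in $L^{p_i'}(w_i^{1-p_i'})$.
\end{itemize}

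One device repairs all of this. Test only simple $f,g$ supported on a finite-measure set where $w_0,w_1$ are bounded above and below. In the weight factors of $f_z,g_z$, replace $w_i$ by simple $\tilde w_i$ with $1\le w_i/\tilde w_i\le 1+\epsilon$ there, and replace $w$ by $\tilde w:=\tilde w_0^{p(1-t)/p_0}\tilde w_1^{pt/p_1}$. Then $f_t=f$ and $g_t=g$, all $f_z$ lie in one finite-dimensional $V$, and $\Phi(z)=\int L(f_z)\,g_z$ is a finite sum of bounded entire functions. The boundary norms are at most $(1+\epsilon)^C$. Three lines, $\epsilon\to0$ and a limiting argument in $f$ then give the statement for arbitrary sublinear $T$.

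Finally, for the operators the paper actually interpolates ($\wt M_{\calD^k_=}$, $\wt M_{\calF}$, $\wt M^{12}_{\calF}$, $\wt M^{23}_{\calF}$, always with $p_0=p_1=p$), your fallback works as it stands and needs no complex analysis. H\"older's inequality on each average gives $\wt M(f_0^{1-t}f_1^t)\le(\wt Mf_0)^{1-t}(\wt Mf_1)^t$ pointwise. Take $f_i:=|f|(w/w_i)^{1/p}$, so that $f_0^{1-t}f_1^t=|f|$ and $\|f_i\|_{L^p(w_i)}=\|f\|_{L^p(w)}$. H\"older's inequality in $L^p(w)$, with $w=w_0^{1-t}w_1^t$, then yields the constant $M_0^{1-t}M_1^t$.
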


\begin{lem}\label{lem:VecKperKFmaximal}
	For all $p \in (1, \infty)$ and $w \in A_{p, F}$ there is
	$\eta = \eta([w]_{A_{p, F}}) > 0$ so that for all $k = (k^1,k^2)$ we have
	$$
		\Big\|\Big(\sum_j [\wt M_{\calD^k_=} f_j ]^2\Big)^{1/2}\Big\|_{L^p(w)} \lesssim
		2^{-\eta(k^1-k^2)_+} \Big\|\Big(\sum_j |f_j|^2\Big)^{1/2}\Big\|_{L^p(w)}.
	$$
\end{lem}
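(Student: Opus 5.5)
Lemma \ref{lem:KperKFmaximal} is proved by interpolating an unweighted bound with decay against a flag-weighted bound without decay, via Proposition \ref{prop:interpo}. I would run the same scheme here, with the vector-valued estimates used in place of the scalar ones. Concretely, I regard $F=(f_j)_j \mapsto (\wt M_{\calD^k_=} f_j)_j$ as a sublinear operator on $L^p(w;\ell^2)$. Proposition \ref{prop:interpo} is stated for scalar functions, but the Stein--Weiss interpolation with change of measure holds equally for $\ell^2$-valued functions. Alternatively, one can linearize: the operator $f \mapsto \wt M_{\calD^k_=} f$ is dominated by a positive linear operator with a measurable choice of the maximizing rectangle, so the $\ell^2$-extension is handled by the standard abstract argument for sublinear operators with values in a Banach lattice.

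\textbf{Unweighted estimate with decay.} As in the proof of Lemma \ref{lem:KperKFmaximal}, we have the pointwise bound $\wt M_{\calD^k_=} f_j \le 2^{-d_2(k^1-k^2)_+} M_{\calD} f_j$, where $M_{\calD}$ is the dyadic strong (bi-parameter) maximal function. The Fefferman--Stein inequality for $M_{\calD}$ on $L^p(\ell^2)$ holds in the product setting: iterate the one-parameter vector-valued Fefferman--Stein inequality in each parameter, using $M_{\calD}\le M_{\calD^1}M_{\calD^2}$. This gives
\[
\Big\|\Big(\sum_j [\wt M_{\calD^k_=} f_j]^2\Big)^{1/2}\Big\|_{L^p} \lesssim 2^{-d_2(k^1-k^2)_+}\Big\|\Big(\sum_j|f_j|^2\Big)^{1/2}\Big\|_{L^p}.
\]

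\textbf{Flag-weighted estimate without decay.} We use $\wt M_{\calD^k_=} f_j \le M_{\calD_F} f_j$. It then suffices to show
\[
\Big\|\Big(\sum_j (M_{\calD_F} f_j)^2\Big)^{1/2}\Big\|_{L^p(v)} \lesssim \Big\|\Big(\sum_j |f_j|^2\Big)^{1/2}\Big\|_{L^p(v)}
\]
for all $v \in A_{p,F}$. This follows from extrapolation in the Muckenhoupt basis $\calR_F$ (Lemma \ref{lem:extra}). At $p_0=2$, the inequality $\sum_j \|M_{\calD_F} f_j\|_{L^2(v)}^2 \lesssim \sum_j\|f_j\|_{L^2(v)}^2$ is immediate from Proposition \ref{prop:maxfunc}, and extrapolating the pair $((\sum_j (M_{\calD_F}f_j)^2)^{1/2},(\sum_j|f_j|^2)^{1/2})$ gives all $p$. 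Alternatively, the pointwise bound $M_F\le M_{12}M_2$ from \eqref{eq:e0206} (the bi-parameter version) combined with iterated weighted Fefferman--Stein and Proposition \ref{prop:weight} gives the same estimate directly.

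\textbf{Interpolation.} By Proposition \ref{prop:open} choose $\eps=\eps([w]_{A_{p,F}})>0$ with $w^{1+\eps}\in A_{p,F}$, and apply the flag-weighted estimate with $v=w^{1+\eps}$. Then interpolate with $w_0=1$, $w_1=w^{1+\eps}$, $p_0=p_1=p$ and $t=1/(1+\eps)$, exactly as in Lemma \ref{lem:KperKFmaximal}. This yields the decay $2^{-\eta(k^1-k^2)_+}$ with $\eta=d_2\eps/(1+\eps)$.

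The only point that needs care is justifying Proposition \ref{prop:interpo} in the $\ell^2$-valued setting. I expect this to be routine: Stein--Weiss is proved by complex interpolation of weighted $L^p$ spaces, which works for $L^p(w;\ell^2)$. For the sublinearity issue, one can restrict to finitely many $j$ and linearize by measurable selection of the supremum, so that the operator becomes linear and bounded by its maximal version.
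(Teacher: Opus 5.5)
Your argument is correct, but it is organized differently from the paper's. The paper does not redo the interpolation in the vector-valued setting. It observes that at $p=2$ the $\ell^2$-valued bound is just the sum of the scalar bounds from Lemma~\ref{lem:KperKFmaximal}, with constant $N_1([w]_{A_{2,F}})\,2^{-d_2\frac{\eps}{1+\eps}(k^1-k^2)_+}$. Since $\eps\sim[w]_{A_{2,F}}^{-1}$, this constant can be written as $N_2([w]_{A_{2,F}})$ with $N_2(x)=N_1(x)2^{-d_2\frac{C}{C+x}(k^1-k^2)_+}$ nondecreasing. The paper then extrapolates, using that the extrapolation constant at exponent $p$ is $N_2(c_p[w]_{A_{p,F}}^{\alpha(p)})$, so a weight-dependent decay $2^{-d_2\rho(k^1-k^2)_+}$ survives.

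You instead interpolate directly at exponent $p$. One endpoint is an unweighted $\ell^2$-valued bound with decay, from iterated Fefferman--Stein for $M_{\calD}$. The other is a flag-weighted $\ell^2$-valued bound for $M_{\calD_F}$ without decay, where extrapolation is used only qualitatively.

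What your route buys: you get the same explicit exponent $\eta=d_2\eps/(1+\eps)$ as in the scalar lemma, and you never need to know how the extrapolation constant depends on $[w]$. The price is the $\ell^2$-valued version of Proposition~\ref{prop:interpo}, which the paper does not state. Your linearization supplies it correctly: fix $(f_j)$ and choose near-maximizing rectangles so the resulting linear map is dominated by $(\wt M_{\calD^k_=} g_j)_j$ on both endpoint spaces. Then apply complex interpolation of $L^p(w_i;\ell^2)$.

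What the paper's route buys: it is shorter given Lemma~\ref{lem:KperKFmaximal}. However, it relies on the quantitative form of the Rubio de Francia extrapolation constant, which is more than Lemma~\ref{lem:extra} literally asserts.
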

\begin{proof}
	We usually say that vector-valued estimates, such as this, just follow from extrapolation.
	That is also true here, but
	we should be slightly careful with how the decay factor with the $\eta$ (that depends on the weight) is
	affected by the extrapolation. The point is that Lemma \ref{lem:KperKFmaximal} gives that
	in $L^2(w)$ we have the claimed vector-valued estimate with
	$$
		N_1([w]_{A_{2, F}}) 2^{-d_2 \frac{\eps}{1+\eps}(k^1-k^2)_+},
	$$
	where $\eps$ is the constant from Proposition \ref{prop:open} and so we know that
	$\eps \sim [w]_{A_{2, F}}^{-1}$, and $N_1$ is some increasing function. But then
	$$
		N_2(x) := N_1(x)2^{-d_2 \frac{C}{C+x}(k^1-k^2)_+}
	$$
	is again a non-decreasing function for $x \ge 1$, and extrapolation
	theorems actually guarantee that in $L^p(w)$ the constant in front is
	$$
		N_2(c_p [w]_{A_{p, F}}^{\alpha(p)}) \lesssim 2^{-d_2 \rho (k^1-k^2)_+},
	$$
	where still $\rho = \rho([w]_{A_p}) > 0$.

\end{proof}

\subsection{Different shifts}
We will next formulate different types of shifts and then bound them using the
tools from the previous subsections.
The bilinear forms of all of our shifts will have the form
$$
	(f,g) \mapsto \sum_{K \in \calF} \sum_{I^{(k)} = J^{(k)} = K} a_{IJK} \langle f, \varphi_{I,J}\rangle
	\langle g, \psi_{I, J}\rangle,
$$
where the coefficients always satisfy the normalization
$$
	|a_{IJK}| \le \frac{|I|}{|K_F|}.
$$
The functions $\varphi_{I, J}$, $\psi_{I, J}$ will also always be supported at least in $K$ -- however,
crucially, they will also have more specific support conditions in terms of $I$, $J$, $I^1 \times J^2$
and so on. The exact conditions depend on the situation.
They will have either one-parameter or bi-parameter cancellation depending on the situation.
Moreover, they will always
satisfy the size conditions $|\varphi_{I, J}|, \, |\psi_{I, J}| \lesssim |I|^{-1/2}$
and the following constancy condition: they are constant on $L^1 \times L^2 \in \calD$
with $\ell(L^m) < \ell(I^m) = 2^{-k^m}\ell(K^m)$.

Thus, what will vary is the exact form of $\calF \subset \calD^k_F$,
the more specific support conditions of the functions $\varphi_{I, J}$, $\psi_{I,J}$
and whether they have one-parameter or bi-parameter cancellation. To focus
on the essential differences we will only specify these properties in what
follows -- the other conditions stated above are always assumed (normalization of coefficients,
size and constancy properties of the functions).

Notice that there is the bad case $\int_{I^1 \times J^2} f_K \int_J g_K$ (together with symmetric cases)
of Lemma \ref{lem:formsum3}. This case can happen, say, if it is a possibility
that $\varphi_{I, J}$ has support in $I^1 \times J^2$ and $\psi_{I, J}$ has support in $J$.
We will eventually have to make sure in our multiresolution of singular integral forms
$\langle Tf, g\rangle$ that we arrange things so that
this only happens if $\calF$ has the restriction that appears in Lemma \ref{lem:formsum3}.
Achieving this will, in particular, require using the so-called ``almost one-parameter'' shifts
$R_{k, l}$ for $l = 1, \ldots, k^1$ (the $l = 0$, i.e., $\calD^k_=$ case would always appear regardless
and it has its own independent difficulties related to the necessity of exploiting the
ratio $|K| / |K_F|$ or Lemma \ref{lem:KperKFmaximal}).

In the context of Lemma \ref{lem:formsum1}, Lemma \ref{lem:formsum2} and Lemma \ref{lem:formsum3}
we will always have by our martingale block expansions that
$f_K = |\calY_{K, k, l} f|$ (this will be the
case in the almost one-parameter shifts where $l$ is also fixed) or $f_K = |\calU_{K ,k} f|$
for some fixed function $f$ (and similarly for $g_K$). Then in both of these cases we know by
Lemma \ref{lem:auxsfY} and Lemma \ref{lem:auxsf} that
$$
	\Big\|\Big(\sum_{ K\in \calD} |f_K|^2\Big)^{1/2}\Big\|_{L^p(w)} \lesssim (|k|+1)\|f\|_{L^p(w)}
$$
for all $1 < p < \infty$ and $w \in A_{p, F}$. Then one can finish the estimates, for instance,
in the following way
\begin{align*}
	\sum_{K \in \calF} \frac{|K|}{|K_F|} \int_K M_{\calD_F} f_K g_K
	 & \le \sum_{K \in \calF} \int M_{\calD_F} f_K g_K                                 \\
	 & \le \Big\|\Big(\sum_{ K\in \calD} |M_{\calD_F} f_K|^2\Big)^{1/2}\Big\|_{L^p(w)}
	\Big\|\Big(\sum_{ K\in \calD} |g_K|^2\Big)^{1/2}\Big\|_{L^{p'}(\wt w)}             \\
	 & \lesssim (|k|+1)^2 \|f\|_{L^p(w)} \|g\|_{L^{p'}(\wt w)},
\end{align*}
where we used the vector-valued estimate of $M_{\calD_F}$ (obtained by extrapolating).
As described before, it turns out we can afford to do this if $\calF \subset
	\calD^k_<$. If $\calF = \calD^k_=$ we do essentially the same except we use \eqref{eq:KKFratio} to get
$$
	\sum_{K \in \calD^k_=} \frac{|K|}{|K_F|} \int_K M_{\calD_F} f_K g_K
	\lesssim (|k|+1)^2 2^{-d_2(k^1-k^2)_+}\|f\|_{L^p(w)} \|g\|_{L^{p'}(\wt w)}.
$$
There is also the situation of Lemma \ref{lem:formsum2}. If $\calF \subset \calD^k_<$ we
estimate $\wt M_{\calF} \le M_{\calD_F} f$, and by the argument above,
we can directly bound
$$
	\sum_{K \in \calF} \int_K \wt M_{\calF} f_K g_K
	\lesssim (|k|+1)^2 \|f\|_{L^p(w)} \|g\|_{L^{p'}(\wt w)}.
$$
If $\calF = \calD^k_=$ we use Lemma \ref{lem:VecKperKFmaximal}
to estimate
$$
	\sum_{K \in \calD^k_{=}} \int_K \wt M_{\calD^k_=} f_K g_K
	\lesssim (|k|+1)^2 2^{-\eta (k^1-k^2)_+} \|f\|_{L^p(w)} \|g\|_{L^{p'}(\wt w)},
$$
where $\eta = \eta([w]_{A_{p, F}}) > 0$.

Finally, we discuss some symmetry considerations that show that we will
never encounter situations that are not covered by Lemma \ref{lem:formsum1},
Lemma \ref{lem:formsum2} or Lemma \ref{lem:formsum3}. The right way to think about this
is to count how many of the $I^1, I^2, J^1$ and $J^2$ do not appear
in either of the two averages with $f_K$ and $g_K$.
\begin{enumerate}\label{symm for mixing}
	\item There is the case that all $4$ appear -- all such cases are, after relabeling, covered
	      by the case $\int_I f_K \int_J g_K$ of Lemma \ref{lem:formsum2}.
	\item There is the case that only $3$ of the four cubes appear -- these are all, after relabeling,
	      reduced to either the case $\int_{J^1 \times I^2} f_K \int_J g_K$ of Lemma \ref{lem:formsum1}
	      or the case $\int_{I^1 \times J^2} f_K \int_J g_K$ of Lemma \ref{lem:formsum3}.
	\item Finally, there is the case that only $2$ of them appear -- all such cases are
	      after relabeling, covered by the case $\int_J f_K \int_J g_K$ of Lemma \ref{lem:formsum1}.
\end{enumerate}
Again, only the case of Lemma \ref{lem:formsum3} (the latter case in (2) above)
is something that needs to be avoided, unless
the additional structure of $\calF$ can be guaranteed.

\subsubsection*{Almost one-parameter shifts}
Given $k = (k^1, k^2)$ and $l$, where $k^m, l\ge 0$, define $\calD(k, l) \subset \calD^k_F$ by setting
$$
	\calD(k, l) := \{K \in \calD \colon 2^{-k^1}\ell(K^1) = 2^{-l}2^{-k^2}\ell(K^2) \}.
$$
\begin{defn}\label{def:almostone}
	Let $k = (k^1, k^2)$, $k^m \ge 0$, and $l \ge 0$ be integers. An almost one-parameter
	shift $R_{k, l}$ has the above general shift form with $\calF = \calD(k, l)$ and the functions
	$\varphi_{I,J}$ and $\psi_{I,J} = \psi_J$ having one-parameter cancellation and
	satisfying $\supp \varphi_{I,J} \subset I \cup J \cup (J^1 \times I^2)$
	and $\supp \psi_{J} \subset J$. The duals of these operators also qualify.
\end{defn}
\begin{rem}
	We know exactly how the functions $\varphi_{I,J}, \psi_J$ look like in our dyadic
	representation theorem -- e.g., one possibility is that
	$\varphi_{I,J} = h_I^0 - h_J^0$ and $\psi_J = h_{J, F}$. But it does
	not appear wortwhile to list all the possible variants as long as we keep
	track they are in this abstract framework. The same goes to the upcoming
	flag shifts $Q_k$ as well.
\end{rem}

\begin{thm}\label{thm:almost}
	For all $p \in (1, \infty)$, $w \in A_{p, F}$ and $k, l$ we have
	$$
		\|R_{k, l}f\|_{L^p(w)} \lesssim (|k|+1)^2\|f\|_{L^p(w)}.
	$$
	For $l = 0$ we have the enhanced estimate
	$$
		\|R_{k, 0}f\|_{L^p(w)} \lesssim (|k|+1)^2 2^{-\eta(k^1-k^2)_+}\|f\|_{L^p(w)},
		\qquad \eta = \eta([w]_{A_{p, F}}) > 0.
	$$
\end{thm}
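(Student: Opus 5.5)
We argue by duality: it suffices to bound $|\langle R_{k,l} f, g\rangle| \lesssim (|k|+1)^2 \|f\|_{L^p(w)}\|g\|_{L^{p'}(\wt w)}$ with $\wt w = w^{1-p'}$. The shift has the form
\[
\sum_{K \in \calD(k,l)} \sum_{I^{(k)}=J^{(k)}=K} a_{IJK}\langle f,\varphi_{I,J}\rangle\langle g,\psi_J\rangle, \qquad |a_{IJK}|\le |I|/|K_F|.
\]
The first step is to insert the martingale blocks. For each $K \in \calD(k,l)$ the functions $\varphi_{I,J}$ and $\psi_J$ are supported in $K$, have one-parameter cancellation, and are constant on rectangles with $\ell(L^m)<2^{-k^m}\ell(K^m)$. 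Moreover, since $K\in \calD(k,l)$, the relation $2^{-k^1}\ell(K^1)=2^{-l}2^{-k^2}\ell(K^2)$ holds. These are exactly the hypotheses of the one-parameter expansion in Section \ref{subsec:blocks}, so
\[
\langle f,\varphi_{I,J}\rangle=\langle \calY_{K,k,l}f,\varphi_{I,J}\rangle, \qquad \langle g,\psi_J\rangle=\langle \calY_{K,k,l}g,\psi_J\rangle.
\]
This holds uniformly over the pairs $I,J$ under $K$. Set $f_K=|\calY_{K,k,l}f|$ and $g_K=|\calY_{K,k,l}g|$.

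Next we use the size bounds $|\varphi_{I,J}|,|\psi_J|\lesssim |I|^{-1/2}$ together with the support conditions. Combined with the normalization of $a_{IJK}$, this dominates the form by
\[
\sum_{K}\sum_{I^{(k)}=J^{(k)}=K}\frac{1}{|K_F|}\Big(\int_I f_K+\int_J f_K+\int_{J^1\times I^2} f_K\Big)\int_J g_K.
\]
The three pieces correspond, respectively, to Lemma \ref{lem:formsum2}, the $A_1$ term of Lemma \ref{lem:formsum1}, and the $A_2$ term of Lemma \ref{lem:formsum1}. The point is that the support of $\varphi_{I,J}$ never includes $I^1\times J^2$, so the problematic configuration of Lemma \ref{lem:formsum3} does not arise. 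Hence the form is bounded by
\[
\sum_K \frac{|K|}{|K_F|}\int_K M_{\calD_F}f_K\, g_K+\sum_K\int_K \wt M_{\calD(k,l)} f_K\, g_K .
\]

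To finish in the general case, we use $|K|/|K_F|\le 1$ and $\wt M_{\calD(k,l)}\le M_{\calD_F}$. Then we apply H\"older's inequality in $L^p(w)\times L^{p'}(\wt w)$ for the $\ell^2$-valued functions, the vector-valued bound for $M_{\calD_F}$ with $A_{p,F}$ weights (obtained via extrapolation, Lemma \ref{lem:extra}), and Lemma \ref{lem:auxsfY} for both $f$ and $g$; note that $\wt w\in A_{p',F}$. This gives the bound $(|k|+1)^2$.

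For $l=0$ we have $\calD(k,0)=\calD^k_=$. The first sum is handled using \eqref{eq:KKFratio}, which gives the factor $2^{-d_2(k^1-k^2)_+}$. The second sum is handled using Lemma \ref{lem:VecKperKFmaximal} in place of the crude maximal bound, which gives the factor $2^{-\eta(k^1-k^2)_+}$. Since $\eta<d_2$, the decay is at least $2^{-\eta(k^1-k^2)_+}$.

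The dual operators are treated symmetrically, by exchanging the roles of $f$ and $g$ and of $w$ and $\wt w$.

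The step requiring the most care is verifying the hypotheses of the martingale block expansion, in particular the constancy scales relative to $K\in\calD(k,l)$, and checking that the support cases land only in Lemmas \ref{lem:formsum1} and \ref{lem:formsum2}. For $l=0$, the main subtlety is extracting the decay that depends on the weight, which is exactly what Lemma \ref{lem:VecKperKFmaximal} supplies.
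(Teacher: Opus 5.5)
Your proposal is correct and follows the paper's own proof essentially step for step. The paper also inserts the one-parameter blocks $\calY_{K,k,l}$ into both pairings and reduces to the three configurations $\int_I$, $\int_J$ and $\int_{J^1\times I^2}$, which are handled by Lemmas \ref{lem:formsum2} and \ref{lem:formsum1} (avoiding Lemma \ref{lem:formsum3}). It then concludes with the crude bounds $|K|/|K_F|\le 1$ and $\wt M_{\calF}\le M_{\calD_F}$ in general, and with \eqref{eq:KKFratio} together with Lemma \ref{lem:VecKperKFmaximal} when $l=0$, since $\calD(k,0)=\calD^k_=$.
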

\begin{proof}
	By our martingale block expansions we can replace $f$ with $\calY_{K, k, l} f$
	and $g$ with $\calY_{K, k, l} g$. This gives that
	\begin{align*}
		|\langle R_{k, l}f, g\rangle| \lesssim \sum_{K \in \calD(k,l)} \sum_{I^{(k)} = J^{(k)} = K}
		\frac{1}{|K_F|} \Big(\int_I |f_K| + \int_J |f_K| + \int_{J^1 \times I^2} |f_K|  \Big)
		\int_J g_K,
	\end{align*}
	where $f_K := |\calY_{K, k, l} f|$ and $g_K = |\calY_{K, k, l} g|$.
	The result follows from our abstract shift theory discussed above.
\end{proof}

\subsubsection*{Flag shifts}
\begin{defn}\label{def:flagshift}
	Let $k = (k^1, k^2)$, $k^m \ge 0$. 	A flag shift $Q_k$ has the general shift form
	with $\calF \subset \{K \in \calD \colon \ell(K^1) < 2^{-k^2}\ell(K^2)\}$
	and the functions $\varphi_{I, J}$ and $\psi_{I, J}$ having bi-parameter cancellation and
	satisfying $\supp \varphi_{I,J}, \, \supp \psi_{I,J} \subset I \cup J \cup (J^1 \times I^2)
		\cup (I^1 \times J^2)$.
\end{defn}

\begin{thm}\label{thm:flagshifts}
	For all $p \in (1, \infty)$, $w \in A_{p, F}$ and $k$ we have
	$$
		\|Q_k f\|_{L^p(w)} \lesssim (|k|+1)^2\|f\|_{L^p(w)}.
	$$
\end{thm}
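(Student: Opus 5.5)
The plan mirrors the proof of Theorem \ref{thm:almost}. First I reduce to $p=2$ by extrapolation (Lemma \ref{lem:extra}); in any case the argument below works for every $p$ directly by duality with the weight $\wt w = w^{-1/(p-1)}\in A_{p',F}$.

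The first step is to replace $f$ and $g$ by martingale blocks. Since $\varphi_{I,J}$ has bi-parameter cancellation, it satisfies $\int_{\R^{d_1}}\varphi_{I,J}=\int_{\R^{d_2}}\varphi_{I,J}=0$. It is supported in $K$ and constant on rectangles $L$ with $\ell(L^m)<2^{-k^m}\ell(K^m)$. These are exactly the hypotheses of the flag expansion in Section \ref{subsec:blocks}, with $\calF\subset\calD^k_F$; note that $\ell(K^1)<2^{-k^2}\ell(K^2)$ certainly gives $2^{-k^1}\ell(K^1)\le 2^{-k^2}\ell(K^2)$. Hence
\[
\langle f,\varphi_{I,J}\rangle=\langle \calU_{K,k}f,\varphi_{I,J}\rangle,
\]
and similarly for $g$ and $\psi_{I,J}$. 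Set $f_K:=|\calU_{K,k}f|$ and $g_K:=|\calU_{K,k}g|$. Using $|a_{IJK}|\le |I|/|K_F|$, the size bounds $|\varphi_{I,J}|,|\psi_{I,J}|\lesssim |I|^{-1/2}$, and the support conditions, I get
\[
|\langle Q_kf,g\rangle|\lesssim \sum_{K\in\calF}\sum_{I^{(k)}=J^{(k)}=K}\frac{1}{|K_F|}\sum_{A,B}\int_A f_K\int_B g_K ,
\]
where $A$ and $B$ each range over the four rectangles $I$, $J$, $J^1\times I^2$, $I^1\times J^2$.

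The second step is to classify the sixteen resulting terms by the symmetry discussion following Lemma \ref{lem:formsum3}. Each term, after relabeling $I\leftrightarrow J$, is one of three types: a Lemma \ref{lem:formsum2} type ($\int_I\int_J$), a Lemma \ref{lem:formsum1} type ($\int_J\int_J$ or $\int_{J^1\times I^2}\int_J$), or the bad type $\int_{I^1\times J^2}f_K\int_J g_K$. Mixed pairs such as $\int_{J^1\times I^2}\int_{I^1\times J^2}$ also involve all four cubes and so fall under the Lemma \ref{lem:formsum2} type. The bad type is handled by Lemma \ref{lem:formsum3}, which is available precisely because of the structural restriction $\calF\subset\{\ell(K^1)\le 2^{-k^2}\ell(K^2)\}$ in Definition \ref{def:flagshift}. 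This is the main point to verify carefully: that every configuration, including those obtained by swapping the roles of the first and second parameters between $f$ and $g$, reduces to one of these three lemmas with averages taken over flag rectangles. In particular, an average over $K^1\times J^2$ is flag because $\ell(J^2)=2^{-k^2}\ell(K^2)\ge\ell(K^1)$, and an average over $J^1\times K^2$ is trivially flag.

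Finally, every term is bounded by
\[
\sum_{K\in\calF}\int_K M_{\calD_F}f_K\,g_K ,
\]
using $|K|/|K_F|\le 1$ and $\wt M_{\calF}\le M_{\calD_F}$; no decay in $K$ is needed here. By H\"older's inequality, the vector-valued weighted bound for $M_{\calD_F}$ (from Proposition \ref{prop:maxfunc} and extrapolation), and Lemma \ref{lem:auxsf} applied to $f$ in $L^p(w)$ and to $g$ in $L^{p'}(\wt w)$, this is
\[
\lesssim (|k|+1)^2\|f\|_{L^p(w)}\|g\|_{L^{p'}(\wt w)} .
\]
Duality then gives the claimed estimate.
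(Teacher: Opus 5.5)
Your proposal is correct and follows the paper's proof essentially verbatim. You insert the flag blocks $\calU_{K,k}$ in both pairings, reduce the resulting positive forms to Lemmas \ref{lem:formsum1}--\ref{lem:formsum3} (the restriction on $\calF$ in Definition \ref{def:flagshift} is what makes Lemma \ref{lem:formsum3} available), and finish with the vector-valued bound for $M_{\calD_F}$ and Lemma \ref{lem:auxsf}.

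One point deserves to be stated explicitly. The relabeling is done independently in each parameter ($I^m \leftrightarrow J^m$), which is legitimate because, for fixed $K$, the sum over $I^{(k)}=J^{(k)}=K$ factorizes over the parameters. This is what collapses the sixteen configurations to the four forms $\int_J f_K\int_J g_K$, $\int_{J^1\times I^2} f_K\int_J g_K$, $\int_I f_K\int_J g_K$ and $\int_{I^1\times J^2} f_K\int_J g_K$, with the maximal function always landing on $f_K$.
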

\begin{proof}
	By our martingale block expansions we can replace $f$ with $\calU_{K,k} f$
	and $g$ with $\calU_{K, k} g$. The proof is then completed as in the $R_{k, l}$
	case, since we have already discussed why Lemma \ref{lem:formsum1},
	Lemma \ref{lem:formsum2} Lemma \ref{lem:formsum3} cover all possible configurations
	of averages.
\end{proof}

\subsection{Shifts in the representation theorem}
The shift part (there will also be paraproducts) of our dyadic representation will look like
$$
	\sum_{k^1,\, k^2 = 0}^{\infty} 2^{-\alpha_1 k^1} 2^{-\alpha_2 k^2}
	\Big[ \langle Q_k f, g \rangle + \sum_{l=1}^{k^1} \langle R_{k, l}f, g\rangle \Big]
	+ \sum_{k^1,\, k^2 = 0}^{\infty} 2^{-\alpha_2 k^2} \langle R_{k, 0} f, g\rangle.
$$
The exponential decay comes from kernel estimates -- there is worse
decay associated to $\calD^k_=$ due to having no cancellation in the first parameter.
Strictly speaking for every $k$ there can be \emph{finitely} many different model
operators, but we omit the finite summations.
The first sum easily converges by the exponential decay in both of the summation variables.
The second sum is also fine -- indeed, fix $w \in A_{p, F}$
and choose $\eta > 0$ (depending on the weight) so that for all $k$
$$
	\|R_{k, 0}f\|_{L^p(w)} \lesssim (|k|+1)^2 2^{-\eta(k^1-k^2)_+}\|f\|_{L^p(w)}.
$$
As this estimate only becomes worse as $\eta$ becomes smaller we can assume $\eta < \alpha_2$.
We need to bound
\begin{align*}
	\sum_{k^2} (k^2+1)^2 2^{-\alpha_2k^2}
	\sum_{k^1 \le k^2} (k^1+1)^2
	+ \sum_{k^1} (k^1+1)^2 2^{-\eta k^1}
	\sum_{k^2 \le k^1} (k^2+1)^2 2^{(\eta-\alpha_2)k^2}.
\end{align*}
The first term is bounded by
$$
	\sum_{k^2} (k^2+1)^5 2^{-\alpha_2k^2} < \infty.
$$
The second term can be bounded by
$$
	\sum_{k^1} (k^1+1)^2 2^{-\eta k^1} \cdot \sum_{k^2} (k^2+1)^2 2^{(\eta-\alpha_2)k^2} < \infty
$$
as $\eta < \alpha_2$. So shifts are fine in the representation theorem
and their weighted boundedness passes through to the singular integral.

\section{Bi-parameter flag full paraproducts}
These types of paraproducts will appear in two separate contexts: in decompositions
of products $bf$ and, independently, in dyadic representation theorems of flag singular integrals
(related to the behaviour of $T1$, $T^*1$ and some partial adjoints).
We start with the flag paraproducts coming from products $bf$ as their boundedness
theory will also cover the representation theorem context
as we will see afterwards.
\subsection{Flag paraproduct decomposition of products}
We take our usual flag multiresolution decomposition of
bilinear forms $B$ and apply it to the
very special case $B(f, g) := fg$ in the bi-parameter flag setting. Then the appearing double sums
over $I, J$ reduce to a single sum simply due to the supports of the martingale difference and
averaging operators. We use the shorthand $\pi_{B, s} := \pi_s$, when $B(f, g) = fg$
and $s \in \calS := \{s = (s_1, s_2)\colon s_m \in \{\Delta, E, D\}\}$.
Therefore, we have the bi-parameter flag paraproduct decomposition of products:
$$
	bf = \sum_{s \in \calS} \pi_s(b, f).
$$
We do not think about $b$ and $f$ as being in symmetric positions -- $b$
will carry some $\BMO$ philosophy while $f$ will be just an $L^p$ function.
The $\BMO$ philosophy of $b$ can be exploited if there is some cancellation --
the worst case is, therefore, $s = (E, E)$. But also in the case $s = (D,E)$
the term $E_{I^1} E_{I^2} b$ appears in $\pi_s(b,f)$ for $\ell(I^1) = \ell(I^2)$.
We want to carve out that piece of $\pi_{(D, E)}(b, f)$ and integrate it into
$\pi_{(E, E)}(b, f)$. With this \textbf{redefinition},
which we have to separately remember, we have that
\begin{equation*}
	\begin{split}
		\pi_{(D, E)}(b, f) & := \sum_{I \in \calD_F} \Delta_{I^1} E_{I^2} b\Lambda_{I^1} \Delta_{I^2} f; \\
		\pi_{(E, E)}(b, f) & := \sum_{I \in \calD_F} E_{I^1} E_{I^2} b \Lambda_{I^1} \Delta_{I^2} f.     \\
	\end{split}
\end{equation*}
All the other paraproducts can be written out using our usual rules.

We categorize the paraproducts into three classes depending on how much cancellation hits
$b$. Category $I$ paraproducts have $s_1, s_2 \in \{\Delta, D\}$, and these require the least
from $b$ to define bounded operators. There are four Category I flag paraproducts.
Category $II$ paraproducts have ($s_1 = E$ and $s_2 \in \{\Delta, D\}$) or
$(s_1 \in \{\Delta, D\}$ and $s_2=E$). These four Category II paraproducts
require more of $b$ to be bounded. And then we have the worst Category $III$ paraproduct
$\pi_{(E, E)}$, which we are not interested in bounding separately (it will play a special
role in the commutator estimates that are considered separately later).
Let $\calS_1$ stand for those $s \in \calS$
that produce a Category I paraproduct. Define $\calS_2$ similarly.

\begin{thm}\label{thm:paraproducts}
	Let $1 < p < \infty$ and $w \in A_{p, F}$.
	If $s \in \calS_1$ we have that
	$$
		\|\pi_s(b, f)\|_{L^p(w)} \lesssim \|b\|_{\BMO_{\pro, F}} \|f\|_{L^p(w)}.
	$$
	If $s \in \calS_2$ we have that
	$$
		\|\pi_s(b, f)\|_{L^p(w)} \lesssim \|b\|_{\bmo_F} \|f\|_{L^p(w)}.
	$$
	In particular, for $s \in \calS_1 \cup \calS_2$ we have
	$$
		\|\pi_s(b, f)\|_{L^p(w)} \lesssim \|b\|_{\bmo_F} \|f\|_{L^p(w)}.
	$$
\end{thm}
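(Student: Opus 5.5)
The plan is to dualize throughout. Fix $g \in L^{p'}(w^{1-p'})$ and use that $w^{1-p'} \in A_{p',F}$. By Lemma \ref{lem:extra} it is enough to work at one exponent, and vector-valued versions of $M_{\calD_F}$ and $S_F$ are available by extrapolation. First I write out the eight paraproducts using the rules of the multiresolution. In the bi-parameter decomposition the second parameter uses $s=\infty$, so there $\Lambda_{I^2}=\Delta_{I^2}$, and a $\Delta$ or $E$ in slot $1$ forces $\ell(I^1)<\ell(I^2)$. For example,
\[
\pi_{(\Delta,\Delta)}(b,f)=\sum_{I\in\calD_<}\Delta_{I^1}\Delta_{I^2}b\,E_{I^1}E_{I^2}f,\qquad
\pi_{(D,D)}(b,f)=\sum_{I\in\calD_F}\Lambda_{I^1}\Delta_{I^2}b\,\Lambda_{I^1}\Delta_{I^2}f,
\]
and similarly for $(D,\Delta)$ and $(\Delta,D)$. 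In every Category I term the function $b$ enters only through the flag Haar coefficients $\langle b,h_{I,F}\rangle$; the redefinition of $\pi_{(D,E)}$ moves the $E_{I^1}E_{I^2}b$ piece into Category III, which keeps this true. Expanding in Haar functions (with suppressed signatures) gives $\langle \pi_s(b,f),g\rangle=\sum_{I}\langle b,h_{I,F}\rangle c_I$, where $c_I$ is a pairing of $f$ and $g$ against products of Haar functions on $I$.

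For Category I, I apply the duality estimate \eqref{eq:h1bmoseq} on $\calD_F$ (extended by zero), which gives $|\langle\pi_s(b,f),g\rangle|\lesssim\|b\|_{\BMO_{\pro,F}}\|S_c\|_{L^1}$. I then bound $S_c$ pointwise, case by case.
\begin{itemize}
\item In $(\Delta,\Delta)$ we have $c_I=\langle f\rangle_I\langle g,h_{I,F}\rangle$, so $S_c\le M_{\calD_F}f\cdot S_Fg$ (the Haar version of $S_F$).
\item In the other three cases one factor carries a cancellative or $u$-type Haar. Then $|c_I|\lesssim|\langle f,h\rangle|\,\langle|g|\rangle_I$ or the reverse, where $h h$ and $h^0h$ products are treated as non-cancellative of size $1_I/|I|$. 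This gives $S_c\lesssim S_Ff\cdot M_{\calD_F}g$, or $M_{\calD_F}f\cdot S_Fg$.
\end{itemize}
Hölder's inequality with $w^{1/p}$ and $w^{-1/p}$, Theorem \ref{thm:square} and Proposition \ref{prop:maxfunc} then close the bound. The only care needed is the equivalence of the Haar and martingale square functions for the mixed $u_{I^1}\otimes h_{I^2}$ terms, which was discussed before.

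For Category II, $b$ is hit by a non-cancellative average in one parameter, so product $\BMO$ is not enough and the plan is to use $\bmo_F=\BMO\cap\BMO^{(2)}$ through \eqref{eq:bmo1} and \eqref{eq:bmo2}.
\begin{itemize}
\item When $s_1=E$ and $s_2\in\{\Delta,D\}$, $b$ appears through $E_{I^1}\Delta_{I^2}b$. I write $\langle\pi_s(b,f),g\rangle=\langle b,F\rangle$ with $F=\sum_I E_{I^1}\Delta_{I^2}\big(\Delta_{I^1}E_{I^2}f\cdot\Delta_{I^2}g\big)$ (or the $D$ analogue), and apply \eqref{eq:bmo2}. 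Then $\Delta_{I^2}F$ has to be controlled by a one-parameter-in-$x_2$ square function with $x_1$ frozen.
\item When $s_2=E$, I instead use \eqref{eq:bmo1}. The goal is to regroup the pair (cube $I^1$, averaging at the larger scale $\ell(I^2)$) into one-parameter martingale differences on the cubes in $\calD_=$ at scale $\ell(I^2)$, possibly after a telescoping like the martingale block arguments of Section \ref{subsec:blocks}.
\end{itemize}
In both cases the target is a bound $\lesssim\|(\text{square function of }f)\cdot(\text{maximal function of }g)\|_{L^1}$, or vice versa, which is then closed as in Category I. The last claim of the theorem follows from Theorem \ref{thm:bmoembedding}, since $\bmo_F\subset\BMO_{\pro,F}$.

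I expect the main obstacle to be the Category II square function step. In the $s_1=E$ case the sum over $I^1$ with $\ell(I^1)<\ell(I^2)$ sits inside $\Delta_{I^2}F$, and I need it to become a genuine square sum rather than an $\ell^1$ sum. I would first try to exploit that $\Delta_{I^1}E_{I^2}f$ has mean zero in $x_1$ and is orthogonal in $I^1$. That suggests bounding $|\Delta_{I^2}F|$ by $\big(\sum_{I^1}|\Delta_{I^1}E_{I^2}f|^2\big)^{1/2}$ times $M\Delta_{I^2}g$-type factors. The flag structure $\ell(I^1)<\ell(I^2)$ would be used to keep every average over a flag rectangle, so that only $M_{\calD_F}$ and $S_F$ appear.
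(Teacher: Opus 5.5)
Your treatment of $s=(\Delta,\Delta)$ and $s=(D,D)$ is fine, and so is the final reduction via $\bmo_F\subset\BMO_{\pro,F}$. The Category I argument breaks down, however, for the mixed paraproducts $s=(D,\Delta)$ and $s=(\Delta,D)$.

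For $s=(D,\Delta)$ the coefficient is $c_I=\langle f,u_{I^1}\otimes\frac{1_{I^2}}{|I^2|}\rangle\langle g,u_{I^1}u_{I^1}\otimes h_{I^2}\rangle$, $I\in\calD_F$, and $(\Delta,D)$ is its mirror image. \emph{Each} factor has cancellation in only one parameter, so neither is a flag coefficient $\langle\cdot,h_{I,F}\rangle$ nor a plain average. Whichever factor you send to $M_{\calD_F}$, the other has to be square-summed over all of $\calD_F$, and that square function is unbounded.
\begin{itemize}
\item Keeping $f$ gives, for cancellative $u_{I^1}$, $\sum_{I^1}\sum_{I^2\colon\ell(I^2)\ge\ell(I^1)}|\Delta_{I^1}E_{I^2}f|^2$. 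For $f=h_{I^1_0}\otimes 1_{Q^2}$ with $Q^2\in\calD^2$ and $\ell(Q^2)\ge\ell(I^1_0)$, this is at least $(1+\log_2(\ell(Q^2)/\ell(I^1_0)))|f|^2$ on the support of $f$. So even the unweighted $L^p$ bound fails.
\item Keeping $g$ gives $\sum_{I^2}\sum_{I^1\colon\ell(I^1)\le\ell(I^2)}|E_{I^1}\Delta_{I^2}g|^2$. This is infinite as soon as some $\Delta_{I^2}g\ne0$, because $E_{I^1}\Delta_{I^2}g\to\Delta_{I^2}g$ as $\ell(I^1)\to0$.
\end{itemize}

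The missing idea is to decouple the parameters and use \emph{two} square functions, one in each parameter.
\begin{itemize}
\item Since $\ell(I^1)\le\ell(I^2)$, averaging $f$ over $I^2$ is the same as averaging $E^2_{\ell(I^1)}f$. Hence $|\langle f,h_{I^1}\otimes\frac{1_{I^2}}{|I^2|}\rangle|\le|I^1|^{1/2}A_{I^1}$ on $I$, where $A_{I^1}=M_{\calD_F}(\Delta_{I^1}E^2_{\ell(I^1)}f)$ depends only on $I^1$.
\item The $g$-factor is at most $|I^2|^{1/2}B_{I^2}$ on $I$, where $B_{I^2}=M_{\calD_F}(\Delta_{I^2}g)$.
\item Then $\sum_{I\in\calD_F}A_{I^1}^2B_{I^2}^2\le(\sum_{I^1}A_{I^1}^2)(\sum_{I^2}B_{I^2}^2)$.
\end{itemize}
Both factors are admissible for flag weights. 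First, $\sum_{I^1}|\Delta_{I^1}E^2_{\ell(I^1)}f|^2=\sum_{L\in\calD_=}|\Delta_{L^1}E_{L^2}f|^2$ is a one-parameter square function on the lattice $\calD_=$, and $A_{p,F}\subset A_p(\R^d)$. Second, $\sum_{I^2}|\Delta_{I^2}g|^2$ is controlled because $\widetilde w(x_1,\cdot)\in A_{p'}(\R^{d_2})$ uniformly, where $\widetilde w=w^{1-p'}$. The non-cancellative part $u_{I^1}=h^0_{I^1}$ lives only on $\calD_=$, where each $I^2$ has a single companion scale. There $M_{\calD_F}f\cdot(\sum_{I^2}M_{\calD_F}(\Delta_{I^2}g)^2)^{1/2}$ suffices.

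This is exactly the step you leave open in Category II, and your tentative fix does not close it. You propose bounding $|\Delta_{I^2}F|$ by $(\sum_{I^1}|\Delta_{I^1}E_{I^2}f|^2)^{1/2}$ times an $M\Delta_{I^2}g$-type factor. Two things go wrong:
\begin{itemize}
\item The $g$-side enters through $\Delta_{I^1}\Delta_{I^2}g$. Cauchy--Schwarz in $I^1$ produces $(\sum_{I^1}|\Delta_{I^1}\Delta_{I^2}g|^2)^{1/2}$, not a maximal function.
\item The $f$-side must be made independent of $I^2$ by replacing $E_{I^2}$ with $E^2_{\ell(I^1)}$ inside an average over the flag rectangle $I$. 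Otherwise you are left with a first-parameter square function, which is not bounded on $L^p(w)$ for $w\in A_{p,F}$, since $w(\cdot,x_2)$ need not lie in $A_p(\R^{d_1})$.
\end{itemize}

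For $s=(E,\Delta)$ with \eqref{eq:bmo2}, the $I^1$-sum inside $\Delta_{I^2}F$ is pointwise $\lesssim\sum_{I^1}M_{\calD_F}(\Delta_{I^1}E^2_{\ell(I^1)}f)\,M_{\calD_F}(\Delta_{I,F}g)$. Cauchy--Schwarz in $I^1$, followed by summation in $I^2$, yields $(\sum_{I^1}M_{\calD_F}(\Delta_{I^1}E^2_{\ell(I^1)}f)^2)^{1/2}(\sum_{I\in\calD_<}M_{\calD_F}(\Delta_{I,F}g)^2)^{1/2}$.

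For $s=(D,E)$ with \eqref{eq:bmo1}, no telescoping is needed. Write $\Delta_J=\Delta_{J^1}\Delta_{J^2}+E_{J^1}\Delta_{J^2}+\Delta_{J^1}E_{J^2}$ for $J\in\calD_=$. Only $\Delta_{J^1}E_{J^2}$ sees $h_{I^1}\otimes\frac{1_{I^2}}{|I^2|}$, and only when $J^1=I^1$ and $J^2\subset I^2$. Cauchy--Schwarz over $I^2\supset J^2$ and regrouping by the flag rectangles $J^1\times I^2$ then reduce everything to $S_Ff$ and a second-parameter square function of $g$.
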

\begin{proof}
	We deal with the category I paraproducts first. The cases $s = (\Delta, \Delta)$ and
	$s = (D, D)$ are essentially symmetric/dual to each other (with the slight difference in summation). Indeed,
	recall that
	\begin{align*}
		\langle \pi_{(\Delta, \Delta)}(b, f), g\rangle & = \sum_{I \in \calD_<} \langle b, h_{I, F}\rangle
		\langle f \rangle_I \langle g, h_{I, F} \rangle                                                \textup{ and} \\
		\langle \pi_{(D, D)}(b, f), g\rangle           & = \sum_{I \in \calD_F} \langle b, h_{I, F}\rangle
		\langle f, h_{I, F} \rangle \langle g, h_{I, F}h_{I, F} \rangle,
	\end{align*}
	where, to be rigorous, one has to, as usual, understand that there is a hidden finite summation related
	to the Haar functions.

	Therefore, for either one of these $s$, $\|\pi_s(b, f)\|_{L^p(w)} \lesssim
		\|b\|_{\BMO_{\pro, F}} \|f\|_{L^p(w)}$ follows by $H^1$-$\BMO$ duality if we show that
	$$
		\Big\|\Big( \sum_{I \in \calD_F} |\langle f, h_{I, F}\rangle|^2 \langle |g| \rangle_I^2
		\frac{1_I}{|I|} \Big)^{1/2} \Big\|_{L^1} \lesssim \|f\|_{L^p(w)} \|g\|_{L^{p'}(\wt w)},
	$$
	where $\wt w := w^{1-p'} \in A_{p', F}$. But this estimate is easy:
	\begin{align*}
		\Big\|\Big( \sum_{I \in \calD_F} |\langle f, h_{I, F}\rangle|^2 \langle |g| \rangle_I^2
		      \frac{1_I}{|I|} \Big)^{1/2} \Big\|_{L^1}
		 & \lesssim
		\Big\|\Big( \sum_{I \in \calD_F} \langle |\Delta_{I, F} f| \rangle_I^2
		      1_I \Big)^{1/2} \Big\|_{L^p(w)} \|M_{\calD_F} g\|_{L^{p'}(\wt w)} \\
		 & \lesssim
		\Big\|\Big( \sum_{I \in \calD_F} [M_{\calD_F} \Delta_{I, F} f]^2
		      \Big)^{1/2} \Big\|_{L^p(w)} \|M_{\calD_F} g\|_{L^{p'}(\wt w)}     \\
		 & \lesssim
		\Big\|\Big( \sum_{I \in \calD_F} [\Delta_{I, F} f]^2
		      \Big)^{1/2} \Big\|_{L^p(w)} \|g\|_{L^{p'}(\wt w)}                 \\
		 & = \|S_{\calD, F} f\|_{L^p(w)} \|g\|_{L^{p'}(\wt w)}
		\lesssim \|f\|_{L^p(w)} \|g\|_{L^{p'}(\wt w)}.
	\end{align*}
	Above we explicitly performed the ``from Haar SF to usual SF trick'' via
	$|\langle f, h_{I, F}\rangle|^2 = |\langle \Delta_{I, F} f, h_{I, F}\rangle|^2
		\le \langle |\Delta_{I, F} f| \rangle_I^2 |I|$.

	We turn to the more interesting paraproducts.
	We now look at the Haar sum related to $\langle \pi_{(D, \Delta)}(b,f), g\rangle$, namely,
	$$
		\sum_{I \in \calD_F} \langle b, h_{I, F}\rangle \Big\langle f, u_{I^1} \otimes \frac{1_{I^2}}{|I^2|}
		\Big\rangle \langle g, u_{I^1}u_{I^1} \otimes h_{I^2}\rangle.
	$$
	Again, it is enough to bound
	$$
		\Big\|\Big(\sum_{I \in \calD_F}
		\Big|\Big\langle f, u_{I^1} \otimes \frac{1_{I^2}}{|I^2|} \Big\rangle\Big|^2
		\langle |\Delta_{I^2} g| \rangle_I^2 \frac{1_I}{|I^1|}
		\Big)^{1/2} \Big\|_{L^1}.
	$$
	We first consider the cancellative case
	\begin{align*}
		\Big\|\Big(\sum_{I \in \calD_F}
		      \Big|\Big\langle f, h_{I^1} & \otimes \frac{1_{I^2}}{|I^2|} \Big\rangle\Big|^2
		      \langle |\Delta_{I^2} g| \rangle_I^2 \frac{1_I}{|I^1|}
		      \Big)^{1/2} \Big\|_{L^1} \\
		 & \le
		\Big\|\Big(\sum_{I \in \calD_F}
		\langle |\Delta_{I^1}E^2_{\ell(I^1)} f| \rangle_I^2
		\langle |\Delta_{I^2} g| \rangle_I^2 1_I
		\Big)^{1/2} \Big\|_{L^1}.
	\end{align*}
	Here $E_s^2 f = \sum_{J^2 \colon \ell(J^2) = s} E_{J^2} f$, and we used that
	$\langle f \rangle_{V^2} = \langle E_s^2 f\rangle_{V^2}$ for any $s \le \ell(V^2)$.
	We then estimate this up by
	\begin{align*}
		\Big\|\Big(\sum_{I \in \calD}
		M_{\calD_F}(\Delta_{I^1} & E^2_{\ell(I^1)} f)^2
		M_{\calD_F}(\Delta_{I^2} g)^2
		\Big)^{1/2} \Big\|_{L^1}                        \\
		                         & =
		\Big\|\Big(\sum_{I^1} M_{\calD_F}(\Delta_{I^1}E^2_{\ell(I^1)} f)^2\Big)^{1/2}
		\Big( \sum_{I^2} M_{\calD_F}(\Delta_{I^2} g)^2
		\Big)^{1/2} \Big\|_{L^1}                        \\
		                         & \lesssim
		\Big\|\Big(\sum_{I^1} |\Delta_{I^1}E^2_{\ell(I^1)} f|^2\Big)^{1/2} \Big\|_{L^p(w)}
		\Big\|\Big( \sum_{I^2} |\Delta_{I^2} g|^2 \Big)^{1/2} \Big\|_{L^{p'}(\wt w)}.
	\end{align*}
	Here the $g$ term is clearly OK, since $x_2 \mapsto \wt w(x_1, x_2)$
	is in $A_{p'}(\R^{d_2})$ uniformly in $x_1$. The $f$ term is also OK, since
	$$
		\Big(\sum_{I^1} |\Delta_{I^1}E^2_{\ell(I^1)} f|^2\Big)^{1/2}
		= \Big(\sum_{L \in \calD_=} |\Delta_{L^1}E_{L^2} f|^2\Big)^{1/2}
	$$
	is a version of the one-parameter square function in $\calD_=$
	and flag weights are also one-parameter weights in the full product space $\R^d$.


	To finish the estimates related to $\pi_{(D, \Delta)}$ it remains to look at
	the non-cancellative case
	\begin{align*}
		\Big\|\Big(\sum_{I \in \calD_=}
		      \langle |f|\rangle_I^2
		      \langle |\Delta_{I^2} g| \rangle_I^2 1_I
		      \Big)^{1/2} \Big\|_{L^1}
		 & \le
		\Big\|M_{\calD_F} f \Big(\sum_{I \in \calD_=}
		                    M_{\calD_F}(\Delta_{I^2} g)^2 1_{I^1}
		\Big)^{1/2} \Big\|_{L^1},
	\end{align*}
	where
	$$
		\sum_{I \in \calD_=} M_{\calD_F}(\Delta_{I^2} g)^2 1_{I^1}
		= \sum_{I^2} M_{\calD_F}(\Delta_{I^2} g)^2 \sum_{I^1 \colon \ell(I^1) = \ell(I^2)} 1_{I^1}
		= \sum_{I^2} M_{\calD_F}(\Delta_{I^2} g)^2.
	$$
	It is now clear that this non-cancellative part is also in control.
	The paraproduct $\pi_{(\Delta, D)}$ is essentially symmetric -- except
	it does not have the non-cancellative part of the above paraproduct
	as we work directly in $\calD_<$.

	It is time to look at the ``little $\BMO$'' style, i.e., Category $II$ paraproducts.
	The cases $s = (E, \Delta)$ and $s = (E, D)$ are dual, so we look at the following Haar form
	$$
		\Big| \sum_{I \in \calD_<} \Big\langle b, \frac{1_{I^1}}{|I^1|} \otimes h_{I^2} \Big\rangle
		\Big\langle f, h_{I^1} \otimes \frac{1_{I^2}}{|I^2|} \Big\rangle
		\langle g, h_{I^1} \otimes h_{I^2} \rangle \Big|
	$$
	coming from $\pi_{(E, \Delta)}(b, \cdot)$.
	We abbreviate $a_I := \Big\langle f, h_{I^1} \otimes \frac{1_{I^2}}{|I^2|} \Big\rangle
		\langle g, h_{I^1} \otimes h_{I^2} \rangle$.
	By using \eqref{eq:bmo2} we get that our Haar sum can be dominated by
	$$
		\Big\| \Big( \sum_{I^2} \Big| \sum_{I^1\colon I \in \calD_{<}} a_I
		\frac{1_{I^1}}{|I^1|} \Big|^2 \frac{1_{I^2}}{|I^2|}  \Big)^{1/2}\Big\|_{L^1}.
	$$
	Using that $|a_I| \le \langle | \Delta_{I^1} E^2_{\ell(I^1)} f|\rangle_I \langle |\Delta_{I, F} g|\rangle_I
		|I^1| |I^2|^{1/2}$ we can bound this by
	\begin{align*}
		\Big\| \Big( & \sum_{I^2} \Big[  \sum_{I^1\colon I \in \calD_{<}}
			                          M_{\calD_F}(\Delta_{I^1} E^2_{\ell(I^1)} f) M_{\calD_F}(\Delta_{I, F} g)
			                          \Big]^2 \Big)^{1/2}\Big\|_{L^1} \\
		             & \le
		\Big\| \Big( \sum_{I^2} \Big[ \sum_{I^1}
			                        M_{\calD_F}(\Delta_{I^1} E^2_{\ell(I^1)} f)^2 \Big] \Big[ \sum_{I^1\colon I \in \calD_{<}}
			                                                                             M_{\calD_F}(\Delta_{I, F} g)^2 \Big]
		\Big)^{1/2}\Big\|_{L^1}                                                                            \\
		             & =
		\Big\| \Big( \sum_{I^1}
		M_{\calD_F}(\Delta_{I^1} E^2_{\ell(I^1)} f)^2 \Big)^{1/2}
		\Big( \sum_{I \in \calD_{<}}
		M_{\calD_F}(\Delta_{I, F} g)^2 \Big)^{1/2}
		\Big\|_{L^1}                                                                                       \\
		             & \lesssim
		\Big\| \Big( \sum_{I^1}
		|\Delta_{I^1} E^2_{\ell(I^1)} f|^2 \Big)^{1/2} \Big\|_{L^p(w)}
		\Big\| \Big( \sum_{I \in \calD_{<}}
		       |\Delta_{I, F} g|^2 \Big)^{1/2}
		\Big\|_{L^{p'}(\wt w)} \lesssim \|f\|_{L^p(w)} \|g\|_{L^{p'}(\wt w)},
	\end{align*}
	where the last step used the estimate for $f$ from above (see the case $s = (D, \Delta)$)
	and the $g$ term was just the known flag square function estimate.

	Finally, we need to look at the Haar form related to $\pi_{s}$ with $s = (D, E)$, that is,
	$$
		\Big| \sum_{I \in \calD_F} \Big\langle b, h_{I^1} \otimes \frac{1_{I^2}}{|I^2|} \Big\rangle
		\langle f, u_{I^1} \otimes h_{I^2} \rangle \langle g, h_{I^1}u_{I^1} \otimes h_{I^2} \rangle
		\Big|.
	$$
	Recall that this is one of the two paraproducts that were redefined previously.
	Also, the paraproduct with $s = (\Delta, E)$ is essentially dual to this, but does
	not have the potentially non-cancellative $u$ functions and the sum is over $\calD_<$.

	We write $a_I := \langle f, u_{I^1} \otimes h_{I^2} \rangle \langle g, h_{I^1}u_{I^1} \otimes h_{I^2} \rangle$
	so that applying \eqref{eq:bmo1} we need to bound
	$$
		\Big\|\Big( \sum_{J \in \calD_{=}} \Big[ \sum_{I \in \calD_F} |a_I|
			\Big|\Delta_J\Big(h_{I^1} \otimes \frac{1_{I^2}}{|I^2|} \Big)\Big| \Big]^2 \Big)^{1/2}\Big\|_{L^1}.
	$$
	Notice that $\Delta_J = \Delta_{J^1}\Delta_{J^2} + E_{J^1} \Delta_{J^2} + \Delta_{J^1} E_{J^2}$.
	The first two parts of $\Delta_J$ actually do not contribute anything in our special situation.
	Indeed, notice first that
	$$
		\langle h_{I^1} \otimes 1_{I^2}, h_{J^1} \otimes h_{J^2} \rangle \ne 0
	$$
	forces (as $\langle h_{I^1}, h_{J^1} \rangle \ne 0$) that $I^1 = J^1$ and so $\ell(I^2) \ge \ell(I^1) = \ell(J^1)$.
	But $\langle 1_{I^2}, h_{J^2}\rangle \ne 0$ forces $\ell(I^2) < \ell(J^2) = \ell(J^1)$. So this always vanishes,
	and $\Delta_{J^1}\Delta_{J^2}$ does not contribute here. To see that, critically, $E_{J^1}\Delta_{J^2}$
	does not contribute either, notice that
	$$
		\langle h_{I^1} \otimes 1_{I^2}, 1_{J^1} \otimes h_{J^2} \rangle \ne 0
	$$
	implies $\ell(J^1) < \ell(I^1)$ and $\ell(I^1) \le \ell(I^2) < \ell(J^2) = \ell(J^1)$, so this always
	vanishes.

	Now, notice that
	$$
		\Big|\Delta_{J^1}  E_{J^2} \Big(h_{I^1} \otimes \frac{1_{I^2}}{|I^2|} \Big)\Big|
		= \delta_{I^1, J^1} \Big\langle \frac{1_{I^2}}{|I^2|}, \frac{1_{J^2}}{|J^2|}\Big\rangle
		\frac{1_{J^1}}{|J^1|^{1/2}} \otimes 1_{J^2}.
	$$
	In our situation we have $\ell(I^2) \ge \ell(I^1) = \ell(J^1) = \ell(J^2)$ so $I^2 \supset J^2$.
	In particular, we always have
	$$
		\Big\langle \frac{1_{I^2}}{|I^2|}, \frac{1_{J^2}}{|J^2|}\Big\rangle = \frac{1}{|I^2|}.
	$$
	Therefore, we are left with bounding
	$$
		\Big\|\Big( \sum_{J \in \calD_{=}} \Big[ \sum_{I^2 \colon I^2 \supset J^2}
		\langle |\Delta_{J^1 \times I^2, F} f| \rangle_{J^1 \times I^2}
		\langle |\Delta_{I^2} g| \rangle_{J^1 \times I^2} \Big]^2 1_J \Big)^{1/2}\Big\|_{L^1}
	$$
	We notice that $J \subset J^1 \times I^2$
	and that $J^1 \times I^2$ is flag. Therefore, we can dominate up by
	\begin{align*}
		\Big\|\Big( \sum_{J \in \calD_{=}} \Big[  \sum_{I^2 \colon I^2 \supset J^2} &
			                                   \langle |\Delta_{J^1 \times I^2, F} f| \rangle_{J^1 \times I^2}^2 1_J \Big]
		\Big[ \sum_{I^2} M_{\calD_F}(\Delta_{I^2} g)^2 \Big] \Big)^{1/2}\Big\|_{L^1} \\
		 & \lesssim
		\Big\|\Big( \sum_{J \in \calD_{=}} \sum_{I^2 \colon I^2 \supset J^2}
		      \langle |\Delta_{J^1 \times I^2, F} f|\rangle_{J^1 \times I^2}^2
		      1_{J} \Big)^{1/2}\Big\|_{L^p(w)}\|g\|_{L^{p'}(\wt w)},
	\end{align*}
	where we used the maximal function estimate and the square function estimate in the second parameter to handle the $g$ term.
	It remains to notice that
	\begin{align*}
		\sum_{J \in \calD_{=}} \sum_{I^2 \colon I^2 \supset J^2}
		\langle |\Delta_{J^1 \times I^2, F} f|\rangle_{J^1 \times I^2}^2
		                                      1_{J}
		 & = \sum_{J^1 \times I^2 \in \calD_F}
		\langle |\Delta_{J^1 \times I^2, F} f|\rangle_{J^1 \times I^2}^2 1_{J^1}
		\sum_{\substack{ J^2\colon \ell(J^2) = \ell(J^1)                          \\ J^2 \subset I^2}} 1_{J^2} \\
		 & = \sum_{J^1 \times I^2 \in \calD_F}
		\langle |\Delta_{J^1 \times I^2, F} f|\rangle_{J^1 \times I^2}^2 1_{J^1 \times I^2}                    \\
		 & \le \sum_{J^1 \times I^2 \in \calD_F} M_{\calD_F}(\Delta_{J^1 \times I^2, F} f)^2
	\end{align*}
	and use the vector-valued $M_{\calD_F}$ estimate and the flag square function estimate to dominate the $f$ term
	by $\|f\|_{L^p(w)}$.
\end{proof}

\subsection{Full paraproducts in the representation theorem}\label{sec:fullpararep}
Importantly, only Category I full paraproducts will appear in the representation theorem.
The proof of the representation theorem starts by applying our usual flag multiresolution
to the bilinear form $B(f, g) := \langle Tf, g\rangle$, where $T$ is an appropriate flag SIO.
So we will have
we have
\begin{equation*}
	B(f, g) = \sum_{s \in \calS} \Sigma_s,
\end{equation*}
where $\calS = \{(s_1, s_2) \colon s_i \in \{\Delta, E, D\}\}$ and our usual rules are followed --
for instance,
$$
	\Sigma_{(\Delta, D)} =
	\sum_{\substack{I,J\in \calD_<\\ \ell(I)=\ell(J)}}
	B(\Delta_{I^1} \Delta_{I^2} f, E_{J^1} \Delta_{J^2 }  g).
$$
We will have only $4$ full paraproducts in the representation theorem,
but there are $9$ different basic symmetries here.
So where exactly do the full paraproducts arise? Well, at least in $\calD_<$, the requirement is that there
is an averaging operator somewhere in the first parameter and in the second parameter.
So, first of all, we will get one full paraproduct from each $\Sigma_{(s_1, s_2)}$, where $s_m \ne D$ for $m=1,2$.
These will all be defined just in $\calD_<$. That is not quite the full story -- indeed,
we can also have too many averaging operators, but only in $\calD_=$,
in $\Sigma_{(D, s_2)}$, $s_2 \in \{\Delta, E\}$ -- and these turn out
to produce paraproducts in $\calD_=$.

We save the details to the proof of the representation theorem and just summarize here.
It turns out that $\Sigma_{(E, E)}$ produces the full paraproduct
$$
	\sum_{I \in \calD_<} \langle T1, h_{I, F}\rangle \langle f \rangle_I h_{I, F}.
$$
The $\calD_=$ part of this comes from $\Sigma_{(D, E)}$. In total they produce

$$
	\sum_{I \in \calD_F} \langle T1, h_{I, F}\rangle \langle f \rangle_I h_{I, F}.
$$
This is essentially $\pi_{(D, D)}^*(T1, f)$ with the immaterial difference
that here we have $1_I / |I|$ while there we have $h_{I, F} h_{I, F}$ (of
which we only ever use the fact the absolute value of that is $1_I / |I|$).
So this is surely bounded if $T1 \in \BMO_{\pro, F}$ by Theorem \ref{thm:paraproducts}.
But this is perhaps unwieldy notation in the current context. So we will
just call this $\Pi_1(T1, f)$. Similarly, then, $\Sigma_{(\Delta, \Delta)}$
and $\Sigma_{(D, \Delta)}$ together produce $\Pi_1^*(T^*1, f)$.

Next, $\Sigma_{(\Delta, E)}$ produces the full paraproduct
$$
	\sum_{I \in \calD_<} \langle T_1 1, h_{I, F} \rangle
	\Big\langle f, h_{I^1} \otimes \frac{1_{I^2}}{|I^2|} \Big\rangle
	\frac{1_{I^1}}{|I^1|} \otimes h_{I^2}.
$$
This is essentially like $\pi_{(\Delta, D)}^*(T_11, f)$ with the same minor difference as above
-- $h_{I^2}h_{I^2}$ versus directly having $1_{I^2}/|I^2|$. So this is again surely bounded if $T_1 1 \in \BMO_{\pro, <}$.
Similarly as above we just call this $\Pi_2(T_11, f)$.
Finally, $\Sigma_{(E, \Delta)}$ produces $\Pi_2^*(T_1^*1, f)$. So the full paraproduct
part of the representation theorem will be
$$
	\langle \Pi_1(T1, f) + \Pi_1^*(T^*1, f) + \Pi_2(T_11, f) + \Pi_2^*(T_1^*1, f), g\rangle.
$$
There is the interesting feature that the paraproducts associated with $T1$ and $T^*1$
are defined in $\calD_F$ while the paraproducts associated with $T_1 1, T_1^* 1$ are
defined in $\calD_<$.

\section{Bi-parameter flag partial paraproducts}
These are hybrid operators of shifts and paraproducts, first introduced in the pure bi-parameter
setting in \cite{Ma1}. They are usually the toughest model operator to handle and, in fact,
have never been successfully bounded with optimal weights in entangled situations. Indeed, even the
recent result in the Zygmund dilation setting \cite{HLMV} was for cancellative
singular integrals because the partial paraproducts were too tough; the full paraproducts
in the Zygmund setting were later handled in \cite{ALM24}. In this paper we are finally
able to handle partial paraproducts in our entangled-like setting, the flag setting.
\begin{defn}\label{def:partialp}
	Flag partial paraproducts take two different forms. In the first variant the shift is
	in the first parameter:
	\begin{equation*}
		\langle P_{k^1}^1 f, g\rangle :=
		\sum_{K\in \calD_<}\sum_{(I^1)^{(k^1)}=(J^1)^{(k^1)}=K^1}
		a_{KI^1J^1}\langle f, H_{I^1, J^1}\otimes h_{K^2}\rangle \Big\langle g, H_{I^1, J^1}\otimes \frac{1_{K^2}}{|K^2|}\Big\rangle,
	\end{equation*}
	where for all such fixed $K^1, I^1, J^1$ we have
	\[
		\|(a_{KI^1J^1})_{K^2}\|_{\BMO(\calD^2)}
		\le \frac{|I^1|}{|K^1|}.
	\]
	In the second variant the shift is in the second parameter:
	\begin{equation*}
		\langle P_{k^2}^2 f, g\rangle :=
		\sum_{K \in \calD^{(0, k^2)}_< }\sum_{(I^2)^{(k^2)}=(J^2)^{(k^2)}=K^2}
		a_{KI^2J^2}\langle f, h_{K^1}\otimes H_{I^2, J^2}\rangle \Big\langle g, \frac{1_{K^1}}{|K^1|}\otimes H_{I^2, J^2}\Big\rangle,
	\end{equation*}
	where for all such fixed $K^2, I^2, J^2$ we have
	\[
		\|(a_{KI^2J^2})_{K^1}\|_{\BMO(\calD^1)}
		\le \frac{|I^2|}{|K^2|}.
	\]
	Of course, one can also interchange $h_{K^2}$ and $1_{K^2}/|K^2|$ and
	$h_{K^1}$ and $1_{K^1}/|K^1|$.
\end{defn}
\begin{rem}
	This is a bit subtle. It is critical here that in $P_{k^1}^1$ we are summing over $\calD_<$
	(which is the $k^2=0$ version of the condition $\ell(K^1) < 2^{-k^2}\ell(K^2)$
	appearing in flag shifts $Q_k$).
	In the other partial paraproduct we have the more obvious
	condition $\calD^{(0, k^2)}_<$, which is the $k^1 = 0$ variant of $\calD^k_<$. Now, it is also importat that all the $H$ functions
	are cancellative here (we are not in some ``$=$'' type grid). These restrictions
	are achieved by our use of the almost one-parameter shifts in our dyadic multiresolution of
	singular integrals. It turns out that even with the key restriction $\calD_<$ in the
	partial paraproducts $P^1_{k^1}$, they are still subtly more difficult than the variant $P_{k^2}^2$.
	As we will see this is essentially because $M_{\calD^2}$ is fine while $M_{\calD^1}$ is not
	(as $x_2 \mapsto w(x_1, x_2)$ is in $A_p$ for flag weights but not the other way around).

	Finally, just like in our shifts, we could possibly abstractify further what $f$ and $g$
	are paired against. And already the current form is more general
	than what actually appears --
	in the partial paraproducts that arise in the representation theorem, there
	is only one true $H$ function (and the other one is a basic cancellative Haar).
	This formalism (where, say, the two $H_{I^1, J^1}$ do not have to be the same)
	simply makes this symmetric so that we do not have to consider different cases.
\end{rem}

\begin{thm}\label{thm:partial-para}
	Suppose that $1<p<\infty$ and $w\in A_{p, F}$. Then we have
	\[
		\|P_{k^i}^i f\|_{L^p(w)} \lesssim (k^i+1) \|f\|_{L^p(w)},\qquad i=1,2.
	\]
\end{thm}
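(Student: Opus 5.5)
We fix $g$ and $\wt w := w^{1-p'} \in A_{p',F}$ and bound $|\langle P_{k^i}^i f, g\rangle| \lesssim (k^i+1)\|f\|_{L^p(w)}\|g\|_{L^{p'}(\wt w)}$. By extrapolation (Lemma \ref{lem:extra}) the value of $p$ is immaterial. The scheme is the standard one for partial paraproducts: for fixed $K^1, I^1, J^1$ (resp. $K^2, I^2, J^2$) we apply the one-parameter $H^1$--$\BMO$ duality in the paraproduct parameter, using the assumption $\|(a_{KI^1J^1})_{K^2}\|_{\BMO(\calD^2)} \le |I^1|/|K^1|$. This gives
\[
	|\langle P^1_{k^1} f, g\rangle| \le \sum_{K^1}\sum_{(I^1)^{(k^1)}=(J^1)^{(k^1)}=K^1} \frac{|I^1|}{|K^1|} \int_{\R^{d_2}} \Big(\sum_{K^2\colon K\in\calD_<} |\langle f, H_{I^1,J^1}\otimes h_{K^2}\rangle|^2 \Big\langle \Big|\Big\langle g, H_{I^1,J^1}\Big\rangle\Big|\Big\rangle_{K^2}^2 \frac{1_{K^2}}{|K^2|}\Big)^{1/2}.
\]
We then expand $f$ and $g$ in the shift parameter using one-parameter martingale blocks $\Delta_{K^1,k^1}$. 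Since $H_{I^1,J^1}$ is cancellative, constant on children of $I^1, J^1$ and supported in $K^1$, we have $\langle f, H_{I^1,J^1}\otimes h_{K^2}\rangle = \langle \Delta_{K^1,k^1}\Delta_{K^2} f, H_{I^1,J^1}\otimes h_{K^2}\rangle$, and $|H|\le |I^1|^{-1/2}$ converts the pairings into averages over $K^1$ (after summing $I^1,J^1$, which costs the factor $|I^1|/|K^1|$ times $2^{d_1k^1}$ choices, cancelling exactly). This reduces matters to
\[
	\Big\| \Big(\sum_{K\in\calD_<} \langle |\Delta_{K^1,k^1}\Delta_{K^2} f|\rangle_{K}^2 \langle |\Delta_{K^1,k^1} g|\rangle_K^2 1_K\Big)^{1/2}\Big\|_{L^1},
\]
and the analogous expression for $P^2_{k^2}$ with the roles of the parameters swapped.

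For $P^2_{k^2}$ this is straightforward. The restriction $K\in \calD^{(0,k^2)}_<$ means $\ell(K^1) < 2^{-k^2}\ell(K^2) \le \ell(K^2)$, so $K$ is flag and averages over $K$ are dominated by $M_{\calD_F}$. Cauchy--Schwarz pointwise splits the square function into $(\sum_{K^1}M_{\calD_F}(\Delta_{K^1}\,\cdot)^2)^{1/2}$ for $f$ and $(\sum_{K^2} M_{\calD_F}(\Delta_{K^2,k^2} g)^2)^{1/2}$ for $g$. The vector-valued $M_{\calD_F}$ bound then handles both factors. The $g$ factor uses the square function in $\R^{d_2}$, since $\wt w(x_1,\cdot)\in A_{p'}$ uniformly (Proposition \ref{prop:weight}), and the $f$ factor uses a one-parameter-in-$\R^{d_1}$-type bound. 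The latter is not directly available because $w(\cdot,x_2)$ need not be $A_p$. To get around this we replace $\Delta_{K^1}$ by $\Delta_{K^1} E^2_{\ell(K^1)}$ (legitimate since we average over $K^2 \supset$ scale $\ell(K^1)$) and obtain the $\calD_=$ one-parameter square function already used in Theorem \ref{thm:paraproducts}. The blocks $\Delta_{K^2,k^2}$ are then summed with the usual $(k^2+1)$ loss via $\sum_{K^2}|\Delta_{K^2,k^2}g|^2 = \sum_{L^2}|\Delta_{L^2}g|^2$, with the loss coming from the nesting of the $M_{\calD_F}$ inside the blocks, as in Lemma \ref{lem:auxsf}.

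For $P^1_{k^1}$ the main obstacle is the $g$ side. The relevant quantity is $\sum_{K^1}|\Delta_{K^1,k^1} g|^2$, a square function in $\R^{d_1}$, for which flag weights give no control. The plan is again to insert averaging in the second parameter at scale $\ell(I^1) = 2^{-k^1}\ell(K^1)$. Since $\ell(K^1)<\ell(K^2)$, the $g$-average over $K$ equals the average of $E^2_{\ell(K^1)}g$, or better of the $\calD_=$-type blocks $\sum_{L^1\colon (L^1)^{(k^1)}=K^1}\Delta_{L^1}E^2_{\ell(L^1)}$. Using $\calD_<$ to keep $K$ flag, we bound the averages by $M_{\calD_F}$ and reduce to the one-parameter square function $(\sum_{L\in\calD_=}|\Delta_{L^1}E_{L^2} g|^2)^{1/2}$ in the scaled grid $\calD_=$, bounded since $\wt w\in A_{p'}(\R^d)$. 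The $f$ side has $\Delta_{K^2}$ and is handled as before by the $\R^{d_2}$ square function. I expect the delicate point to be justifying this replacement of $\Delta_{K^1,k^1}$ by the $E^2$-averaged version inside the pairing with $H_{I^1,J^1}\otimes 1_{K^2}/|K^2|$, which seems to be exactly where $K\in\calD_<$ is used. After that, the block-to-square-function step costs $(k^1+1)$, as in Lemma \ref{lem:auxsfY}.
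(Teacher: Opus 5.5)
You have a genuine gap at the step you pass over quickly: the claim that $|H_{I^1,J^1}|\le|I^1|^{-1/2}$ turns the pairings into averages over $K$, with the pair count ``cancelling exactly''. This is what produces your reduced quantity $\|(\sum_{K\in\calD_<}\langle|F|\rangle_K^2\langle|G|\rangle_K^2 1_K)^{1/2}\|_{L^1}$, and it is not a valid upper bound, for two reasons.

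First, the hypothesis is only a one-parameter $\BMO$ condition on $(a_{KI^1J^1})_{K^2}$ for each fixed $(K^1,I^1,J^1)$. The duality therefore leaves $\sum_{K^1}\sum_{I^1,J^1}$ \emph{outside} the $L^1(\R^{d_2})$ norm of a square function in $K^2$ only. Moving these sums inside one square function over all $K$ is an $\ell^1$-to-$\ell^2$ step in the wrong direction.

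Second, passing to averages over $K$ loses exponentially. For fixed $K^1$ there are $2^{2d_1k^1}$ pairs $(I^1,J^1)$, but the normalization only gains $|I^1|/|K^1|=2^{-d_1k^1}$. The terms where $f$ and $g$ are both averaged over the same $I^1\times K^2$ give $\sum_{I^1}|I^1|\langle F\rangle_{I^1\times K^2}\langle G\rangle_{I^1\times K^2}$. This exceeds $|K^1|\langle F\rangle_K\langle G\rangle_K$ by a factor up to $2^{d_1k^1}$ (take $F,G$ concentrated on one $I^1$), which is incompatible with the claimed $(k^1+1)$ bound.

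There is also a notational slip: $\Delta_{K^1,k^1}$ is a single generation and does not capture $\langle f,h^0_{J^1}-h^0_{I^1}\rangle$. You need all generations between $\ell(I^1)$ and $\ell(K^1)$, and this is where the factor $(k^1+1)$ comes from.

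The correct starting point is
$|\langle f,H_{I^1,J^1}\otimes h_{K^2}\rangle|\le|I^1|^{1/2}|K^2|^{1/2}(\langle f_K\rangle_{I^1\times K^2}+\langle f_K\rangle_{J^1\times K^2})$, with $f_K$ the flag block. The analogous bound holds for $g$ with your $E^2$-inserted block; up to the generation range just noted, that block is exactly right and coincides with the paper's $g_{K^1}=|\calW_{K^1,k^1}g|$. The product then splits into two kinds of terms:
\begin{itemize}
\item terms where both averages are over the same flag rectangle $I^1\times K^2$ (or $J^1\times K^2$), which $M_{\calD_F}$ and Minkowski's inequality handle;
\item cross terms $\langle f_K\rangle_{I^1\times K^2}\,|J^1|\langle g_{K^1}\rangle_{J^1\times K^2}$.
\end{itemize}

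The cross terms are the real difficulty of $P^1_{k^1}$, and your plan does not address them. By contrast, the $E^2$-insertion you single out as delicate is immediate from $\ell(L^2)=\ell(L^1)\le\ell(K^1)<\ell(K^2)$. Because the $J^1$-sum sits outside the $K^2$-square function, you must first replace $\langle g_{K^1}\rangle_{J^1\times K^2}1_{K^2}$ by a majorant independent of $K^2$. Summing over $J^1$ then leaves the $x_1$-average over $K^1$ of that majorant, multiplied against a function living on $I^1$.
\begin{itemize}
\item With the natural majorant $M_{\calD_F}g_{K^1}$, this average requires $M_{\calD^1}$, which flag weights do not control.
\item The block $\calW_{K^1,k^1}g$ by itself does not help either. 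It is constant in $x_2$ only at scale $2^{-k^1}\ell(K^1)$, so its $K^1$-averages are over non-flag rectangles.
\end{itemize}
The missing idea is to use $\ell(K^1)<\ell(K^2)$ once more: $\langle g_{K^1}\rangle_{K^2}1_{K^2}\le E^2_{\ell(K^1)}M_{\calD^2}g_{K^1}$. The right-hand side is constant in $x_2$ at scale $\ell(K^1)$, so $\langle E^2_{\ell(K^1)}M_{\calD^2}g_{K^1}\rangle_{K^1}\le M_{\calD_=}M_{\calD^2}g_{K^1}$, which is bounded for flag weights.

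The same bookkeeping matters for $P^2_{k^2}$. Since the $K^2$-sum is outside the duality, the $f$-factor must keep its second-parameter block $\Delta_{K^1}\sum_{L^2}\Delta_{L^2}f$. That block is already a $\calD_<$ flag block, so no $\calD_=$ trick is needed. Your replacement $\Delta_{K^1}E^2_{\ell(K^1)}f$ is independent of $K^2$ and hence not square-summable over $K^2$. In $P^2_{k^2}$ the cross terms produce $K^2$-averages, which $M_{\calD^2}$ handles, so $M_{\calD_F}$ suffices there.
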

\begin{proof}
	We start with the harder $i=1$ case and, at the end, comment on the
	$i=2$ case. Let $P = P^1_{k^1}$.
	Using the $H^1$--$\BMO$ duality we have
	\begin{align*}
		|\langle Pf, g\rangle| \lesssim \sum_{K^1} \sum_{(I^1)^{(k^1)}=(J^1)^{(k^1)}=K^1}
		\frac{|I^1|}{|K^1|}\Big\|
		\Big( & \sum_{K^2\colon \ell(K^2)>\ell(K^1)}|\langle f, H_{I^1, J^1}\otimes h_{K^2}\rangle|^2 \\
		      & \times \Big|\Big\langle g, H_{I^1, J^1}\otimes
		\frac{1_{K^2}}{|K^2|}\Big\rangle\Big|^2
		\frac{1_{K^2}}{|K^2|}\Big)^{\frac 12}\Big\|_{L^1(\R^{d_2})}.
	\end{align*}
	Using the results from Section \ref{subsec:blocks} we have
	$$
		|\langle f, H_{I^1, J^1}\otimes h_{K^2}\rangle|
		\le  |I^1|^{\frac 12}|K^2|^{\frac12}
		\big(\bla f_K \bra_{I^1\times K^2}+\bla f_K \bra_{J^1\times K^2}\big),
	$$
	where
	$$
		f_K := |\calU_{K, (k^1, 0)} f| = \Big|
		\sum_{\substack{ L = L^1 \times K^2 \in \calD_F \\ L^1 \subset K^1 \\ \ell(K^1) \le 2^{k^1}\ell(L^1) }}
		\Delta_{L, F} f \Big|
	$$
	and
	$$
		\Big\|\Big(\sum_{ K\in \calD} |f_K|^2\Big)^{1/2}\Big\|_{L^p(w)} \lesssim (k^1+1)^{1/2}\|f\|_{L^p(w)}.
	$$
	We also have
	\begin{align*}
		\Big\langle g, H_{I^1, J^1}\otimes \frac{1_{K^2}}{|K^2|}\Big\rangle
		 & = \Big\langle \sum_{\substack{ L^1 \subset K^1          \\ \ell(L^1) \ge 2^{-k^1}\ell(K^1) }}
		\Delta_{L^1} g, H_{I^1, J^1}\otimes
		\frac{1_{K^2}}{|K^2|}\Big\rangle                                                                 \\
		 & =
		\Big\langle \sum_{\substack{ L^1 \subset K^1               \\ \ell(L^1) \ge 2^{-k^1}\ell(K^1) }}
		\sum_{L^2 \colon \ell(L^2) = \ell(L^1) }
		\Delta_{L^1}E_{L^2} g, H_{I^1, J^1}\otimes
		\frac{1_{K^2}}{|K^2|}\Big\rangle                                                                 \\
		 & =
		\Big\langle \sum_{\substack{ L \in \calD_=                 \\ L^1 \subset K^1 \\ \ell(K^1) \le 2^{k^1}\ell(L^1) }}
		\Delta_{L^1}E_{L^2} g, H_{I^1, J^1}\otimes
		\frac{1_{K^2}}{|K^2|}\Big\rangle                                                                 \\
		 & =: \Big \langle \calW_{K^1, k^1} g, H_{I^1, J^1}\otimes
		\frac{1_{K^2}}{|K^2|}\Big\rangle.
	\end{align*}
	Here the averaging was inserted using $\ell(L^2) = \ell(L^1)  \le \ell(K^1) < \ell(K^2)$.
	Therefore, we have
	\begin{align*}
		\Big|\Big\langle g, H_{I^1, J^1}\otimes \frac{1_{K^2}}{|K^2|}\Big\rangle\Big|
		\le  |I^1|^{\frac 12}\big(\bla g_{K^1}\bra_{I^1\times K^2}
		+\bla g_{K^1} \bra_{J^1\times K^2}\big), \qquad g_{K^1} := |\calW_{K^1, k^1} g|.
	\end{align*}
	We can easily show using our techniques from Section \ref{subsec:blocks} that
	$$
		\Big\|\Big(\sum_{ K^1 \in \calD^1} |g_{K^1}|^2\Big)^{1/2}\Big\|_{L^{p'}(\wt w)}
		\lesssim (k^1+1)^{1/2}\|g\|_{L^{p'}(\wt w)}.
	$$
	Notice that combining all of the above we have
	\begin{align*}
		|\langle f, H_{I^1, J^1} & \otimes h_{K^2}\rangle|
		\Big|\Big\langle g, H_{I^1, J^1}\otimes \frac{1_{K^2}}{|K^2|}
		\Big\rangle\Big| \frac{1_{K^2}}{|K^2|^{1/2}}                                                                                  \\
		                         & \le |I^1|
		\big(\bla f_K \bra_{I^1\times K^2}+\bla f_K \bra_{J^1\times K^2}\big)
		\big(\bla g_{K^1}\bra_{I^1\times K^2}
		+\bla g_{K^1} \bra_{J^1\times K^2}\big) 1_{K^2}                                                                               \\
		                         & = \int_{I^1} \langle f_K \rangle_{I^1 \times K^2} \langle g_{K^1} \rangle_{I^1 \times K^2} 1_{K^2}
		+ \int_{J^1} \langle f_K \rangle_{J^1 \times K^2} \langle g_{K^1} \rangle_{J^1 \times K^2} 1_{K^2}                            \\
		                         & + \langle f_K \rangle_{I^1 \times K^2} \int_{J^1} \langle g_{K^1} \rangle_{K^2} 1_{K^2}
		+ \langle f_K \rangle_{J^1 \times K^2} \int_{I^1} \langle g_{K^1} \rangle_{K^2} 1_{K^2}.
	\end{align*}
	The first and the second term are symmetric and so are the third and the fourth.
	We estimate
	$$
		\int_{I^1} \langle f_K \rangle_{I^1 \times K^2} \langle g_{K^1} \rangle_{I^1 \times K^2} 1_{K^2}
		\le \int_{I^1} M_{\calD_F} f_K M_{\calD_F} g_{K^1}.
	$$
	Now, we bound the corresponding part of the square function using Minkowski's integral inequality
	as follows:
	\begin{align*}
		\sum_{K^1} \sum_{(I^1)^{(k^1)}=(J^1)^{(k^1)}=K^1}
		 & \frac{|I^1|}{|K^1|}\Big\|
		\Big( \sum_{K^2} \Big[
		\int_{I^1} M_{\calD_F} f_K M_{\calD_F} g_{K^1}\Big]^2
		\Big)^{\frac 12}\Big\|_{L^1(\R^{d_2})}                                        \\
		 & \le
		\sum_{K^1} \sum_{(I^1)^{(k^1)}=(J^1)^{(k^1)}=K^1}
		\frac{|J^1|}{|K^1|} \int_{\R^{d_2}}
		\int_{I^1} \Big(\sum_{K^2} [M_{\calD_F} f_K]^2 \Big)^{1/2} M_{\calD_F} g_{K^1}              \\
		 & = \sum_{K^1} \int_{\R^{d_2}}
		\int_{K^1} \Big(\sum_{K^2} [M_{\calD_F} f_K]^2 \Big)^{1/2} M_{\calD_F} g_{K^1}              \\
		 & \le
		\Big\|\Big(\sum_{ K\in \calD} |M_{\calD_F}f_K|^2\Big)^{1/2}\Big\|_{L^p(w)}
		\Big\|\Big(\sum_{ K^1 \in \calD^1} |M_{\calD_F} g_{K^1}|^2\Big)^{1/2}\Big\|_{L^{p'}(\wt w)} \\
		 & \lesssim (k^1+1)\|f\|_{L^p(w)} \|g\|_{L^{p'}(\wt w)}.
	\end{align*}

	Next, we consider the remaining part with
	$$
		\langle f_K \rangle_{I^1 \times K^2} \int_{J^1} \langle g_{K^1} \rangle_{K^2} 1_{K^2}.
	$$
	It would seem natural to estimate
	$\int_{J^1} \langle g_{K^1} \rangle_{K^2} 1_{K^2}
		= \int_{J^1} \langle g_{K^1} \rangle_{J^1 \times K^2} 1_{K^2}
		\le \int_{J^1} M_{\calD_F} g_{K^1}$. Essentially, this type of an estimate
	will be good enough in $P^2_{k^2}$, but would lead to trouble later on
	if we did it here. Instead, we estimate, using $\ell(K^1) < \ell(K^2)$, that
	\begin{align*}
		\langle g_{K^1} \rangle_{K^2} 1_{K^2} & = \sum_{\substack{ H^2 \colon \ell(H^2) = \ell(K^1)
				                                          \\  H^2 \subset K^2}} \langle g_{K^1} \rangle_{K^2} 1_{H^2}          \\
		                                      & = \sum_{\substack{H^2 \colon \ell(H^2) = \ell(K^1)  \\ H^2 \subset K^2}}
		\langle \langle g_{K^1} \rangle_{K^2} 1_{K^2} \rangle_{H^2} 1_{H^2}                                              \\
		                                      & \le  \sum_{H^2 \colon \ell(H^2) = \ell(K^1)}
		\langle \langle g_{K^1} \rangle_{K^2} 1_{K^2} \rangle_{H^2} 1_{H^2}                                              \\
		                                      & \le  \sum_{H^2 \colon \ell(H^2) = \ell(K^1)}
		\langle M_{\calD^2} g_{K^1} \rangle_{H^2} 1_{H^2}
		= E_{\ell(K^1)}^2 M_{\calD^2} g_{K^1}.
	\end{align*}
	Now, we bound the corresponding part of the square function as follows:
	\begin{align*}
		\sum_{K^1} & \sum_{(I^1)^{(k^1)}=(J^1)^{(k^1)}=K^1}
		\frac{|I^1|}{|K^1|}\Big\|
		\Big( \sum_{K^2} \Big[
			                 \langle f_K \rangle_{I^1 \times K^2}  \int_{J^1}
			                 E_{\ell(K^1)}^2 M_{\calD^2} g_{K^1} \Big]^2 1_{K^2}
		\Big)^{\frac 12}\Big\|_{L^1(\R^{d_2})}                           \\
		           & = \sum_{K^1} \sum_{(I^1)^{(k^1)}=(J^1)^{(k^1)}=K^1}
		\frac{|I^1|}{|K^1|} \int_{\R^{d_2}}
		\Big( \sum_{K^2} \langle f_K \rangle_{I^1 \times K^2} ^2 1_{K^2} \Big)^{\frac 12}
		\int_{J^1}
		E_{\ell(K^1)}^2 M_{\calD^2} g_{K^1}                              \\
		           & = \sum_{K^1} \sum_{(I^1)^{(k^1)} = K^1}
		\int_{I^1} \int_{\R^{d_2}}
		\Big( \sum_{K^2} \langle f_K \rangle_{I^1 \times K^2} ^2 1_{K^2} \Big)^{\frac 12}
		\langle E_{\ell(K^1)}^2 M_{\calD^2} g_{K^1} \rangle_{K^1}        \\
		           & \le \sum_{K^1}  \int_{K^1} \int_{\R^{d_2}}
		\Big( \sum_{K^2} [M_{\calD_F} f_K]^2 \Big)^{\frac 12}
		\langle E_{\ell(K^1)}^2 M_{\calD^2} g_{K^1} \rangle_{K^1}.
	\end{align*}
	Now, for an implicit $x_1$ variable in $K^1$ (from the above $\int_{K^1}$) we have
	\begin{align*}
		\langle E_{\ell(K^1)}^2 M_{\calD^2} g_{K^1} \rangle_{K^1}
		= \sum_{H^2 \colon \ell(H^2) = \ell(K^1)} \langle M_{\calD^2} g_{K^1} \rangle_{K^1 \times H^2} 1_{H^2}
		\le M_{\calD_=} M_{\calD^2} g_{K^1}.
	\end{align*}
	This is the place where we made critical use of having $E_{\ell(K^1)}^2 M_{\calD^2} g_{K^1}$
	instead of $M_{\calD_F} g_{K^1}$ -- for the latter term we would not have been able to dispose
	of the $K^1$ average (as $M_{\calD^1}$ is not fine with flag weights).
	Thus, we are left with
	\begin{align*}
		\Big\|\Big(\sum_{ K\in \calD} |M_{\calD_F}f_K|^2\Big)^{1/2} & \Big\|_{L^p(w)}
		\Big\|\Big(\sum_{ K^1 \in \calD^1} |M_{\calD_=} M_{\calD^2} g_{K^1}|^2\Big)^{1/2}\Big\|_{L^{p'}(\wt w)}             \\
		                                                            & \lesssim (k^1+1)\|f\|_{L^p(w)} \|g\|_{L^{p'}(\wt w)}.
	\end{align*}

	We are done with the boundedness proof for $P^1_{k^1}$. The proof
	for $P^2_{k^2}$ is essentially the same but one can use $M_{\calD_F}$ in
	the analogous place where we above used $E^2_{\ell(K^1)} M_{\calD^2}$ --
	this is because in the end there will be a $K^2$ average (instead of a $K^1$ average)
	that can be bounded with $M_{\calD^2}$. Similarly, the analogue of $\calW_{K^1, k^1}$
	simply is
	$$
		\sum_{\substack{ L^2 \subset K^2 \\ \ell(L^2) \ge 2^{-k^2}\ell(K^2)}} \Delta_{L^2},
	$$
	which is fine as a square function in the second parameter is directly OK (previously we needed
	an extra averaging term to turn a one-parameter square function in the first parameter into a one-parameter
	square function in $\calD_=$). We are done.
\end{proof}

\subsection{Partial paraproducts in the representation theorem}
This is easy -- the partial paraproducts will arise in the form
$$
	\sum_{k^1 = 0}^{\infty} 2^{-\alpha_1 k^1} \langle P_{k^1}^1f, g \rangle +
	\sum_{k^2 = 0}^{\infty} 2^{-\alpha_2 k^2} \langle P_{k^2}^2f, g \rangle,
$$
which is clearly summable with $f \in L^p(w)$, $g \in L^{p'}(\wt w)$, $1 < p < \infty$.

\section{Bi-parameter flag \texorpdfstring{$T1$}{T1} representation theorem}
We begin by some fundamental preliminaries on randomization.
Let $\calD_0$ be the standard dyadic grid in some $\R^m$.
For $\omega \in (\{0,1\}^m)^{\Z}$, $\omega = (\omega_i)_{i \in \Z}$,
we define the shifted dyadic lattice
$$
	\calD(\omega) := \Big\{L + \omega := L + \sum_{i\colon 2^{-i} < \ell(L)}
	2^{-i}\omega_i \colon L \in \calD_0\Big\}.
$$
Let $\bbP_{\omega}$ simply be the product probability measure on $ (\{0,1\}^m)^{\Z}$.

We recall the following notion of a $k$-good, $k \ge 2$, cube from \cite{GH}.
We say that $G \in \calD(\omega)$ is $k$-good, or $G \in \calD(\omega, k)$, $k \ge 2$,
if
\begin{equation*}
	d(G, \partial G^{(k)}) \ge \frac{\ell(G^{(k)})}{4} = 2^{k-2} \ell(G).
\end{equation*}
Notice that for all $L \in \calD_0$ and $k \ge 2$ we have
\begin{equation*}
	\bbP_{\omega}( \{ \omega\colon L + \omega \in \calD(\omega, k) \})  = \frac{1}{2^m}.
\end{equation*}
The key implication of $G \in \calD(\omega, k)$ is that for $n \in \Z^m$
with $|n| \le 2^{k - 2}$ (here we understand $|n|$ as $\max_i |n^i|$) we have
\begin{equation}\label{eq:kparent}
	(G \dotplus n)^{(k)} = G^{(k)}, \qquad G \dotplus n := G + n\ell(G).
\end{equation}
Notice that if $L \in \calD_0$,
the position of $L + \omega$ depends only on the values of
$\omega_i$ for those $i$ for which $2^{-i} < \ell(L)$.
On the other hand, the $k$-goodness $L + \omega \in \calD(\omega, k)$
of $L + \omega$ for some $k \ge 2$ depends on $\omega_i$ for those $i$ for which
$\ell(L) \le 2^{-i} < \ell(L^{(k)})$. In particular, the position is independent of
$k$-goodness for all $k \ge 2$.

In our current bi-parameter flag setting $\R^d = \R^{d_1}\times \R^{d_2}$ we define for
\[
	\sigma = (\sigma^1, \sigma^2) \in (\{0,1\}^{d_1})^{\Z} \times  (\{0,1\}^{d_2})^{\Z},
	\qquad \sigma^m = (\sigma^m_i)_{i \in \Z},
\]
that
$$
	\calD(\sigma) := \calD(\sigma^1) \times \calD(\sigma^2).
$$
Let $\bbP_{\sigma} := \bbP_{\sigma^1} \times \bbP_{\sigma^2}$.
For $k = (k^1, k^2)$, $k^m \ge 2$, we define
the corresponding collection of good rectangles by setting
$$
	\calD(\sigma, k) := \calD(\sigma^1, k^1) \times \calD(\sigma^2, k^2).
$$
We also, e.g., write
$$
	\calD(\sigma, (k^1, 0))
	= \calD(\sigma^1, k^1) \times \calD(\sigma^2),
$$
that is, a zero designates that we do not have goodness in that parameter.
As for most of the argument the grids are fixed,
it makes sense to mostly suppress $\sigma$ from the notation and
abbreviate
$\calD^m = \calD(\sigma^m)$ and $\calD^m(k^m) = \calD(\sigma^m, k^m)$, $m = 1,2$.
Then also
$
	\calD = \calD(\sigma) = \calD^1 \times \calD^2
$ and
$\calD(k) = \calD(\sigma, k)= \prod_{m=1}^2 \calD^m(k^m)$.
We use some related obvious notation, such as, $\calD_F(k)$ stands for the flag rectangles of $\calD(k)$.

\begin{thm}\label{thm:biparflagT1rep}
	Suppose $T$ is a bi-parameter flag CZO. Then we have
	\begin{align*}
		\langle Tf, g\rangle = C \E \Big(
		 & \sum_{k^1,\, k^2 = 0}^{\infty} 2^{-\alpha_1 k^1} 2^{-\alpha_2 k^2}
		\Big[ \langle Q_k f, g \rangle + \sum_{l=1}^{k^1} \langle R_{k, l}f, g\rangle \Big]
		+ \sum_{k^1,\, k^2 = 0}^{\infty} 2^{-\alpha_2 k^2} \langle R_{k, 0} f, g\rangle                  \\
		 & + \sum_{k^1 = 0}^{\infty} 2^{-\alpha_1 k^1} \langle P_{k^1}^1f, g \rangle +
		\sum_{k^2 = 0}^{\infty} 2^{-\alpha_2 k^2} \langle P_{k^2}^2f, g \rangle\Big)                     \\
		 & + \E \langle \Pi_1(T1, f) + \Pi_1^*(T^*1, f) + \Pi_2(T_11, f) + \Pi_2^*(T_1^*1, f), g\rangle.
	\end{align*}
	Here $C$ depends only on the kernel estimates and the various CZO assumptions.
	In particular, we have
	$$
		\|Tf\|_{L^p(w)} \lesssim \|f\|_{L^p(w)}
	$$
	for all $p \in (1, \infty)$ and $w \in A_{p, F}$.
\end{thm}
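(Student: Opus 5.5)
The plan is to start from the bi-parameter version of the flag multiresolution \eqref{eq:flagB}, applied in a fixed random lattice $\calD(\sigma)$ to $B(f,g)=\langle Tf,g\rangle$. This writes $\langle Tf,g\rangle=\sum_{s\in\calS}\Sigma_s$ with nine terms, each a sum over pairs $I,J$ with $\ell(I)=\ell(J)$. We then average over $\sigma$ and analyse each $\Sigma_s$ separately.

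\textbf{Step 1 (expansion and goodness).} Within each $\Sigma_s$ we expand the martingale differences and averages into Haar functions $h$, $h^0$, $u$. We write $J^m=I^m\dotplus n^m$ and group by the $k^m$ determined by $|n^m|$, as in Corollary \ref{cor:biparcoeffcor}. Following the standard trick of \cite{GH}, we add goodness by independence: the position of $I^m$ is independent of its $k^m$-goodness, which has probability $2^{-d_m}$. This lets us insert the indicator $I^m\in\calD^m(k^m)$ at the cost of the constant $C$. By \eqref{eq:kparent} we then have $(I^m)^{(k^m)}=(J^m)^{(k^m)}=K^m$.

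\textbf{Step 2 (cancellative terms).} For the parts where both parameters carry a cancellative Haar on at least one side, Theorem \ref{thm:biparcoeff} and Remark \ref{rem:biparKFcoerr} give the bound $|a_{IJ}|\lesssim 2^{-\alpha_1k^1}2^{-\alpha_2k^2}|I|/|K_F|$. In the $\calD_=$ part, where there is no cancellation in the first parameter, only $2^{-\alpha_2k^2}$ survives. These terms become flag shifts $Q_k$ and the $R_{k,0}$ shifts.

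The delicate point is the averaging pieces $E_{I^m}$ paired with $\Delta_{J^m}$. We use the usual device of subtracting and adding $\langle T(\cdot),\cdot\rangle$ against $1$ in that parameter. The differences, of the form $h^0_I-h^0_J$, give cancellative $H_{I,J}$ functions with the mixed supports $I\cup J\cup (J^1\times I^2)\cup(I^1\times J^2)$. The added-back pieces give partial or full paraproducts.

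To stay inside the admissible configurations of Lemma \ref{lem:formsum3}, we split the range $\ell(K^1)\ge 2^{-k^2}\ell(K^2)$ into the finitely many layers $\calD(k,l)$, $l=1,\dots,k^1$. In these layers we do \emph{not} improve cancellation but keep only one-parameter cancellation, producing the almost one-parameter shifts $R_{k,l}$ with the simpler supports of Definition \ref{def:almostone}. The remaining range then satisfies the $\calF$ restriction of Definition \ref{def:flagshift}.

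\textbf{Step 3 (paraproducts).} The added-back terms containing $T(1\otimes\cdot)$ in one parameter become partial paraproducts. The atom versions of the partial kernels and the diagonal BMO assumption, via Lemmas \ref{lem:bmocoeff1}, \ref{lem:bmocoeff2} and the corollaries after them, give the $\BMO$ coefficient bounds required in Definition \ref{def:partialp}, with decay $2^{-\alpha_1k^1}$ or $2^{-\alpha_2k^2}$. Their summation restrictions ($\calD_<$, respectively $\calD_<^{(0,k^2)}$) come from the layer splitting in Step 2. The terms containing $T1$, $T^*1$, $T_11$, $T_1^*1$ in both parameters are exactly as described in Section \ref{sec:fullpararep}: $\Sigma_{(E,E)}$ together with the $\calD_=$ part of $\Sigma_{(D,E)}$ gives $\Pi_1(T1,f)$, and similarly for the other three.

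\textbf{Step 4 (weighted bound).} Theorems \ref{thm:almost}, \ref{thm:flagshifts}, \ref{thm:partial-para} and \ref{thm:paraproducts}, with $T1,T^*1\in\BMO_{\pro,F}$ and $T_11,T_1^*1\in\BMO_{\pro,<}$, combine with the summation argument at the end of the shift section, which handles the missing $k^1$ decay through the $2^{-\eta(k^1-k^2)_+}$ gain. This gives the weighted bound.

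The main obstacle is Step 2. We must organise the algebra so that every bad mixed average $\int_{I^1\times J^2}f_K\int_J g_K$ lands only in $\calF$ satisfying $\ell(K^1)\le 2^{-k^2}\ell(K^2)$. At the same time the $|I|/|K_F|$ normalization must be retained in the $\calD_=$ layer.
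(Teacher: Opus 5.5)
Your proposal follows the paper's proof essentially step for step: the flag multiresolution of $\langle Tf,g\rangle$ in a random grid, insertion of $k^m$-goodness, the coefficient estimates of Corollary \ref{cor:biparcoeffcor} and Lemmas \ref{lem:bmocoeff1}--\ref{lem:bmocoeff2}, paraproduct corrections, and the key device of \emph{not} correcting in the second parameter on the layers $\calD(k,l)$, $1\le l\le k^1$, or on $\calD_=$. That device confines the configuration of Lemma \ref{lem:formsum3} to $\ell(K^1)<2^{-k^2}\ell(K^2)$ and turns the remaining pieces into almost one-parameter shifts. Two small points should be stated precisely: on $\calD_=$ the fully non-cancellative piece $E_J g$ is corrected jointly in $\R^d$ (producing $R_{k,0}$ and the $\calD_=$ part of $\Pi_1^*(T^*1,f)$) rather than parameter by parameter, and the restriction $\calD^{(0,k^2)}_<$ in $P^2_{k^2}$ comes directly from $I\in\calD_<$, not from the layer splitting.
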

\begin{proof}
	We apply our usual flag multiresolution
	to the bilinear form $B(f, g) := \langle Tf, g\rangle$, which gives
	\begin{equation*}
		B(f, g) = \E \sum_{s \in \calS} \Sigma_s,
	\end{equation*}
	where $\calS = \{(s_1, s_2) \colon s_i \in \{\Delta, E, D\}\}$
	and the averaging is over the random dyadic lattices as explained
	previously. That is, $\E = \E_{\sigma}$ and $\Sigma_s = \Sigma_s(\sigma)$ in the sense that
	we have performed the dyadic flag multiresolution in the particular random grid $\calD = \calD(\sigma)$.

	\subsection{The case \texorpdfstring{$(D, D)$}{(D, D)}}
	Here we look at the simplest term
	\begin{align*}
		\Sigma_{(D, D)} & =
		\sum_{\substack{I,J\in \calD_F \\ \ell(I)=\ell(J)}}
		B(\Lambda_{I^1} \Delta_{I^2} f, \Lambda_{J^1} \Delta_{J^2} g).
	\end{align*}
	Now, the first step is to parametrize the cubes $J^i$ as shifts of the corresponding cubes
	$I^i$ -- this can be done as $\ell(I^i) = \ell(J^i)$.
	In what follows $n^i \in \Z^{d_i}$ for $i=1,2$ and
	$I^i \dot+ n^i := I^i + n^i\ell(I^i)$.
	We can then write
	$$
		\Sigma_{(D,D)} = \sum_{n=(n^1, n^2) \in \Z^d}
		\sum_{I \in \calD_F} c_{I, n},
	$$
	where
	\[
		c_{I, n} := B\big(\Lambda_{I^1}\Delta_{I^2}f,
		\Lambda_{I^1\dot+ n^1}\Delta_{I^2\dot+ n^2}g\big).
	\]
	Next, we divide these $n^m$ into dyadic blocks using a variable $k^m \ge 2$
	-- this is simply motivated by \eqref{eq:kparent}.
	So for $n^m \ne 0$ there exist a unique $k^m\ge 2$
	so that $|n^m|\in (2^{k^m-3}, 2^{k^m-2}]$.
	Using this we write
	\begin{align*}
		\sum_{n=(n^1, n^2) \in \Z^d} \sum_{I \in \calD_F} c_{I, n}
		 & = \sum_{k^1, k^2 = 2}^\infty \sum_{\substack{n = (n^1, n^2) \in \Z^{d} \\
		|n^m|\in (2^{k^m-3}, 2^{k^m-2}]}} \sum_{I \in \calD_F} c_{I, n}           \\
		 & + \sum_{k^1=2}^\infty \sum_{\substack{n^1 \in \Z^{d_1}                 \\ |n^1|\in (2^{k^1-3}, 2^{k^1-2}]}}
		\sum_{I \in \calD_F} c_{I, (n^1,0)}                                       \\
		 & + \sum_{k^2=2}^\infty \sum_{\substack{n^2 \in \Z^{d_2}                 \\ |n^2|\in (2^{k^2-3}, 2^{k^2-2}]}}
		\sum_{I \in \calD_F} c_{I, (0, n^2)}
		+ \sum_{I \in \calD_F} c_{I, (0,0)}.
	\end{align*}
	Using the independence of $k^m$-goodness and position we then get
	\begin{align*}
		\E \Sigma_{(D, D)}
		 & = 2^d \E \sum_{k^1, k^2 = 2}^\infty \sum_{\substack{n = (n^1, n^2) \in \Z^{d} \\
		|n^m|\in (2^{k^m-3}, 2^{k^m-2}]}} \sum_{I \in \calD_F(k)} c_{I, n}            \\
		 & + 2^{d_1}\E \sum_{k^1=2}^\infty \sum_{\substack{n^1 \in \Z^{d_1}              \\ |n^1|\in (2^{k^1-3}, 2^{k^1-2}]}}
		\sum_{I \in \calD_F(k^1, 0)} c_{I, (n^1,0)}                                   \\
		 & + 2^{d_2} \E \sum_{k^2=2}^\infty \sum_{\substack{n^2 \in \Z^{d_2}             \\ |n^2|\in (2^{k^2-3}, 2^{k^2-2}]}}
		\sum_{I \in \calD_F(0, k^2)} c_{I, (0, n^2)}
		+ \E \sum_{I \in \calD_F} c_{I, (0,0)}.
	\end{align*}
	We consider, for instance, with a fixed $k^1 \ge 2$ the part
	\begin{align*}
		A & := \sum_{\substack{n^1 \in \Z^{d_1}     \\ |n^1|\in (2^{k^1-3}, 2^{k^1-2}]}}
		\sum_{I \in \calD_F(k^1, 0)} c_{I, (n^1,0)} \\
		  & = \sum_{\substack{n^1 \in \Z^{d_1}      \\ |n^1|\in (2^{k^1-3}, 2^{k^1-2}]}}
		\sum_{I \in \calD_<(k^1, 0)} c_{I, (n^1,0)}
		+\sum_{\substack{n^1 \in \Z^{d_1}           \\ |n^1|\in (2^{k^1-3}, 2^{k^1-2}]}}
		\sum_{I \in \calD_=(k^1, 0)} c_{I, (n^1,0)}
		=: A_< + A_=.
	\end{align*}
	We write, using \eqref{eq:kparent}, that
	$$
		A_< = C2^{-\alpha_1 k^1} \sum_{K \in \calD^{(k^1, 0)}_<} \sum_{(I^1)^{(k^1)} = (J^1)^{(k^1)} = K^1} a_{I^1J^1K}
		\langle f, h_{I^1} \otimes h_{K^2} \rangle \langle g, h_{J^1} \otimes h_{K^2} \rangle,
	$$
	where $a_{I^1J^1K} = B(h_{I^1} \otimes h_{K^2}, h_{J^1} \otimes h_{K^2}) / (C2^{-\alpha_1 k^1})$
	whenever $J^1 = I^1 \dotplus n^1$ for $|n^1| \in (2^{k^1-3}, 2^{k^1-2}]$ and $I^1$ is $k^1$-good,
	and $a_{I^1J^1K}$ vanishes otherwise. Our coefficient estimates tell us that $|a_{I^1J^1K}| \le |I^1 \times K^2|/|K_F|$.
	Therefore, we have $A_< = C2^{-\alpha_1 k^1} (\langle Q_{(k^1, 0)}f, g\rangle+\sum_{l=1}^{k^1}\langle R_{(k^1, 0),l}f, g\rangle)$, where $Q_{(k^1, 0)}$ is a flag
	shift and $R_{(k^1, 0), l}$
	is an almost one-parameter shift. This is done by splitting the sum over $K^2$ into $\ell(K^2)>\ell(K^1)$ and $\ell(K^2)\le \ell(K^1)$. Similarly,
	\[
		A_= = C \langle R_{(k^1, 0), 0}f, g\rangle,
	\]
	where $R_{(k^1, 0), 0}$ is an almost one-parameter shift. The other terms are similar, giving
	$$
		\E \Sigma_{(D,D)} =
		C \E \Big[ \sum_{k^1,\, k^2 = 0}^{\infty} 2^{-\alpha_1 k^1} 2^{-\alpha_2 k^2} \Big(\langle Q_k f, g \rangle+\sum_{l=1}^{k^1}\langle R_{k, l} f, g\rangle\Big)
			+ \sum_{k^1,\, k^2 = 0}^{\infty} 2^{-\alpha_2 k^2} \langle R_{k, 0} f, g\rangle \Big].
	$$

	\subsection{The case \texorpdfstring{$(\Delta, D)$}{(Delta, D)}}
	We then look at the, still comparatively simple, term
	$$
		\Sigma_{(\Delta, D)} =
		\sum_{\substack{I,J\in \calD_< \\ \ell(I)=\ell(J)}}
		B(\Delta_{I^1} \Delta_{I^2} f, E_{J^1} \Delta_{J^2} g).
	$$
	While this term does require a paraproduct correction, the need is only in the first parameter --
	this is the easier case as it will not lead to a situation like in Lemma \ref{lem:formsum3}
	that would require further summation restrictions (a paraproduct correction
	in the second parameter, needed in some other terms, will, however, have this effect
	and require extra arrangements).

	For the paraproduct correction
	we write $E_{J^1} g = h_{J^1}^0(\langle g, h_{J^1}^0 \rangle - \langle g, h_{I^1}^0 \rangle) + 1_{J^1}\langle g \rangle_{I^1}$.
	This splits
	$$
		\Sigma_{(\Delta, D)} = \Sigma_{(\Delta, D)}^S + \Sigma_{(\Delta, D)}^{PP},
	$$
	where the superscript $S$ refers to a shift and the superscript $PP$ to a partial paraproduct, and we have
	for $H_{I^1, J^1} := h_{J^1}^0 - h_{I^1}^0$ that
	\begin{align*}
		\Sigma_{(\Delta, D)}^S =
		\sum_{\substack{I,J\in \calD_< \\ \ell(I)=\ell(J)}}
		B(h_{I^1} \otimes h_{I^2}, h_{J^1}^0 \otimes h_{J^2})
		\langle f, h_{I^1} \otimes h_{I^2}\rangle \langle g, H_{I^1, J^1} \otimes h_{J^2} \rangle
	\end{align*}
	and
	\begin{align*}
		\Sigma_{(\Delta, D)}^{PP} & =
		\sum_{\substack{I,J\in \calD_<                                                                               \\ \ell(I)=\ell(J)}}
		B(h_{I^1} \otimes h_{I^2}, 1_{J^1} \otimes h_{J^2})
		\langle f, h_{I^1} \otimes h_{I^2}\rangle \Big \langle g, \frac{1_{I^1}}{|I^1|} \otimes h_{J^2}\Big \rangle                             \\
		                          & =
		\sum_{I\in \calD_<} \sum_{\substack{J^2 \in \calD^2                                                          \\ \ell(J^2) = \ell(I^2)  } }
		B(h_{I^1} \otimes h_{I^2}, 1 \otimes h_{J^2})
		\langle f, h_{I^1} \otimes h_{I^2}\rangle \Big \langle g, \frac{1_{I^1}}{|I^1|} \otimes h_{J^2}\Big \rangle                             \\
		                          & = \sum_{K^1} \sum_{\substack{ I^2, J^2 \colon \ell(I^2) = \ell(J^2)              \\ \ell(I^2) > \ell(K^1)}}
		B(h_{K^1} \otimes h_{I^2}, 1 \otimes h_{J^2})
		\langle f, h_{K^1} \otimes h_{I^2}\rangle \Big \langle g, \frac{1_{K^1}}{|K^1|} \otimes h_{J^2}\Big \rangle.                            \\
	\end{align*}

	Now, to dig out the shift structure in both terms,
	we must perform the usual business with the $n^m, k^m$ and goodness -- in $\Sigma^S_{(\Delta, D)}$
	we will have $n^1 \in \Z^{d_1} \setminus \{0\}$ (as $H_{I^1, J^1} = 0$ if $I^1 = J^1$)
	and $n^2 \in \Z^{d_2}$, after that we split the summation into $\ell(I^2)>2^{k^1}\ell(I^1) $ and $\ell(I^2)\le 2^{k^1}\ell(I^1) $. In the $\Sigma^{PP}_{(\Delta, D)}$ we will only have
	$n^2 \in \Z^{d_2}$ (it is a paraproduct in the first parameter and readily $n^1 = 0$).
	Therefore, using our coefficient estimates we will get
	$$
		\E \Sigma_{(\Delta, D)}^S = C \E \sum_{k^1 = 2}^{\infty} \sum_{k^2 = 0}^{\infty}
		2^{-\alpha_1 k^1} 2^{-\alpha_2 k^2} \Big(\langle Q_k f, g \rangle+\sum_{l=1}^{k^1}\langle R_{k, l} f, g\rangle\Big)
	$$
	and
	$$
		\E \Sigma_{(\Delta, D)}^{PP} = C \E \sum_{k^2 = 0}^{\infty} 2^{-\alpha_2 k^2} \langle P_{k^2}^2 f, g\rangle.
	$$

	\subsection{The case \texorpdfstring{$(D, \Delta)$}{(D, Delta)}}
	Now, we look at
	$$
		\Sigma_{(D, \Delta)} =
		\sum_{\substack{I,J\in \calD_F \\ \ell(I)=\ell(J)}}
		B(\Lambda_{I^1} \Delta_{I^2} f, \Lambda_{J^1} E_{J^2} g).
	$$
	There is the apparent need to do a paraproduct correction in the second parameter
	-- this is problematic as getting $H_{I^2, J^2} = h_{J^2}^0 - h_{I^2}^0$ directly in the shift part
	would lead to the case of Lemma \ref{lem:formsum3} but without the required
	restriction on the summation. Therefore, we must avoid doing this correction on finitely many scales
	-- this is where the almost one parameter shifts $R_{k, l}$
	with $l \in \{0, \ldots, k^1\}$ will enter the picture.
	This term will, in addition, yield a $\calD_=$ type contribution to full paraproducts
	due to us also having to do a certain \emph{one-parameter} paraproduct correction (too little
	cancellation due to the $\Lambda$ operators). Importantly, such a one-parameter
	correction will not cause the situation of Lemma \ref{lem:formsum3} to appear. We get to the details.

	First, we write
	\begin{align*}
		\Sigma_{(D, \Delta)} & =
		\sum_{\substack{I,J\in \calD_<                          \\ \ell(I)=\ell(J)}}
		B(\Delta_{I^1} \Delta_{I^2} f, \Delta_{J^1} E_{J^2} g) +
		\sum_{\substack{I,J\in \calD_=                          \\ \ell(I)=\ell(J)}}
		B(\Lambda_{I^1} \Delta_{I^2} f, \Delta_{J^1} E_{J^2} g)                      \\
		                     & + \sum_{\substack{I,J\in \calD_= \\ \ell(I)=\ell(J)}}
		B(\Lambda_{I^1} \Delta_{I^2} f, E_J g)
		=:
		\Sigma_{(D, \Delta)}^< +
		\Sigma_{(D, \Delta)}^{=, S} +
		\Sigma_{(D, \Delta)}^{=, E}.
	\end{align*}

	Here $S$ again refers to a shift and $E$ just indicates that the term $\Sigma_{(D, \Delta)}^{=, E}$
	corresponds to the averaging part of $\Lambda_{J^1}g = \Delta_{J^1}g + E_{J^1}g$.
	We take care of the ``$=$'' terms first. In $\Sigma_{(D, \Delta)}^{=, S}$ we have
	one-parameter cancellation in both slots so, with our usual techniques and coefficient estimates,
	we have
	$$
		\E \Sigma_{(D, \Delta)}^{=, S} = C \E \sum_{k^1, k^2 = 0}^{\infty} 2^{-\alpha_1 k^1}
		2^{-\alpha_2 k^2} \langle R_{k, 0} f, g\rangle.
	$$
	(The exponential $k^1$ decay is not really needed but happens to be available in this term.)
	It is critical that we did not attempt
	to do some paraproduct correction in parameter $2$ here (which would produce the situation
	of Lemma \ref{lem:formsum3} without summing restrictions) but rather saw this
	directly as an almost one-parameter shift. Next, in $\Sigma_{(D, \Delta)}^{=, E}$
	we do not have enough cancellation even for a one-parameter shift and we need
	to do a paraproduct correction -- but only in a one-parameter sense in the full $\R^d$ which is not
	problematic. Indeed, we write $E_J g = h_J^0(\langle g, h_{J}^0 \rangle - \langle g, h_I^0 \rangle)
		+ 1_J \langle g \rangle_I$.
	This splits
	$$
		\Sigma_{(D, \Delta)}^{=, E} = \Sigma_{(D, \Delta)}^{=, E, S} + \Sigma_{(D, \Delta)}^{=, E, FP},
	$$
	where $FP$ stands for a full paraproduct.
	Now, in $\Sigma_{(D, \Delta)}^{=, E, S}$ there is the required one-parameter cancellation
	in both slots and the support conditions are allowable ($I$ and $I\cup J$) -- therefore, this is an
	almost one-parameter shift and we have
	$$
		\E \Sigma_{(D, \Delta)}^{=, E, S} =   C \E \sum_{k^1, k^2 = 0}^{\infty}
		2^{-\alpha_2 k^2} \langle R_{k, 0} f, g\rangle.
	$$
	Finally, we have
	$$
		\Sigma_{(D, \Delta)}^{=, E, FP} = \sum_{I \in \calD_=}
		\langle T^*1, h_{I, F} \rangle \langle f, h_{I, F} \rangle \langle g \rangle_I.
	$$
	This is the $\calD_=$ part of the full paraproduct $\Pi_1^*(T^*1, f)$ -- the corresponding $\calD_<$
	part will come from $\Sigma_{(\Delta, \Delta)}$.

	We turn to consider $\Sigma_{(D, \Delta)}^<$. So here we must do a paraproduct
	correction in the second parameter but we cannot quite always do it. To be able to specify when we do it,
	we must first introduce part of the shift style structure -- importantly, the related $k^1$ summation
	in the first parameter. So we first do the usual
	\begin{align*}
		\Sigma_{(D, \Delta)}^< & =
		\sum_{\substack{ I \in \calD_<                                                                      \\ J^2 \colon \ell(J^2) = \ell(I^2)}}
		B(\Delta_{I^1} \Delta_{I^2}f, \Delta_{I^1} E_{J^2}g)                                                                                      \\
		                       & + \sum_{k^1=2}^{\infty} \sum_{\substack{ n^1 \in \Z^{d_1}                                                        \\
		|n^1| \in (2^{k^1-3}, 2^{k^1-2}]}} \sum_{\substack{ I \in \calD_<                                   \\ J^2 \colon \ell(J^2) = \ell(I^2)}}
		B(\Delta_{I^1} \Delta_{I^2}f, \Delta_{I^1 \dotplus n^1} E_{J^2}g)                                                                         \\
		                       & = \sum_{\substack{ I \in \calD_<                                           \\ J^2 \colon \ell(J^2) = \ell(I^2)}}
		B(\Delta_{I^1} \Delta_{I^2}f, \Delta_{I^1} E_{J^2}g)                                                                                      \\
		                       & + \sum_{k^1=2}^{\infty} \sum_{\substack{ n^1 \in \Z^{d_1}                                                        \\
		|n^1| \in (2^{k^1-3}, 2^{k^1-2}]}} \sum_{\substack{ I \colon \ell(I^1) < 2^{-k^1}\ell(I^2)          \\ J^2 \colon \ell(J^2) = \ell(I^2)}}
		B(\Delta_{I^1} \Delta_{I^2}f, \Delta_{I^1 \dotplus n^1} E_{J^2}g)                                                                         \\
		                       & + \sum_{k^1=2}^{\infty} \sum_{l=1}^{k^1} \sum_{\substack{ n^1 \in \Z^{d_1}                                       \\
		|n^1| \in (2^{k^1-3}, 2^{k^1-2}]}} \sum_{\substack{ I \colon \ell(I^1)= 2^{-l}\ell(I^2)             \\ J^2 \colon \ell(J^2) = \ell(I^2)}}
		B(\Delta_{I^1} \Delta_{I^2}f, \Delta_{I^1 \dotplus n^1} E_{J^2}g)                                                                         \\
		                       & =: \Sigma_{(D, \Delta)}^{\ll} + \Sigma_{(D, \Delta)}^{<, S},
	\end{align*}
	where $\Sigma_{(D, \Delta)}^{\ll}$ consists of the first two terms with
	$\ell(I^1) < 2^{-k^1}\ell(I^2)$ (thinking that in the first sum formally $k^1 = 0$).
	In $\Sigma_{(D, \Delta)}^{<, S}$ we do not perform a paraproduct correction and, instead, view
	it directly as almost one parameter shifts:
	$$
		\E \Sigma_{(D, \Delta)}^{<, S}
		= C \E \sum_{k^1 = 2}^{\infty} \sum_{k^2 = 0}^{\infty} \sum_{l=1}^{k^1} 2^{-\alpha^1k^1}
		2^{-\alpha^2 k^2} \langle R_{k, l}f, g\rangle.
	$$
	In $\Sigma_{(D, \Delta)}^{\ll}$ we do the paraproduct correction
	via the usual $E_{J^2} g = h_{J^2}^0 \langle g, H_{I^2, J^2} \rangle + 1_{J^2} \langle g \rangle_{I^2}$.
	This splits $\Sigma_{(D, \Delta)}^{\ll} = \Sigma_{(D, \Delta)}^{\ll, S} + \Sigma_{(D, \Delta)}^{\ll, PP}$,
	where
	\begin{align*}
		\E \Sigma_{(D, \Delta)}^{\ll, S} = C\E \sum_{k^1 = 0}^{\infty} \sum_{k^2 = 2}^{\infty}
		                                                               2^{-\alpha^1 k^1} 2^{-\alpha^2 k^2} \langle Q_k f, g\rangle
	\end{align*}
	and
	\begin{align*}
		\E \Sigma_{(D, \Delta)}^{\ll, PP} = C \E \sum_{k^1 = 0}^{\infty} 2^{-\alpha^1 k^1}
		\langle P_{k^1}^1 f, g\rangle.
	\end{align*}
	Here   the partial paraproducts are  of the more delicate form $P_{k^1}^1$ --
	the required summation restrictions are valid in both by construction.

	\subsection{The case \texorpdfstring{$(\Delta, \Delta)$}{(Delta, Delta)}}
	We have arrived at one of the most complicated terms
	$$
		\Sigma_{(\Delta, \Delta)} =
		\sum_{\substack{I,J\in \calD_< \\ \ell(I)=\ell(J)}}
		B(\Delta_{I^1} \Delta_{I^2} f, E_J g),
	$$
	but luckily we have seen the main techniques already. The first step
	is to do the paraproduct correction in the first parameter:
	\begin{align*}
		\Sigma_{(\Delta, \Delta)} & =
		\sum_{\substack{I,J\in \calD_<                                                                \\ \ell(I)=\ell(J)}}
		B(\Delta_{I^1} \Delta_{I^2} f, 1_J(\langle g \rangle_J - \langle g \rangle_{I^1 \times J^2}))                      \\
		                          & + \sum_{\substack{I,J\in \calD_<                                  \\ \ell(I)=\ell(J)}}
		B(\Delta_{I^1} \Delta_{I^2} f, 1_J \langle g \rangle_{I^1 \times J^2})
		=: \Sigma_{(\Delta, \Delta)}^1 + \Sigma_{(\Delta, \Delta)}^2.
	\end{align*}
	Now, notice that
	$$
		\Sigma_{(\Delta, \Delta)}^2 = \sum_{\substack{I \in \calD_< \\ J^2 \colon \ell(J^2) = \ell(I^2)}}
		B(\Delta_{I^1} \Delta_{I^2} f, (1 \otimes 1_{J^2}) \langle g \rangle_{I^1 \times J^2})
	$$
	In this term we can directly do the paraproduct correction in the second parameter as well:
	\begin{align*}
		\Sigma_{(\Delta, \Delta)}^2 & = \sum_{\substack{I \in \calD_< \\ J^2 \colon \ell(J^2) = \ell(I^2)}}
		B(\Delta_{I^1} \Delta_{I^2} f, (1 \otimes 1_{J^2})
		(\langle g \rangle_{I^1 \times J^2} - \langle g \rangle_I))                                         \\
		                            & +\sum_{\substack{I \in \calD_<  \\ J^2 \colon \ell(J^2) = \ell(I^2)}}
		B(\Delta_{I^1} \Delta_{I^2} f, 1 \otimes 1_{J^2})\langle g \rangle_I
		=: \Sigma_{(\Delta, \Delta)}^{2, PP}
		+ \Sigma_{(\Delta, \Delta)}^{2, FP}.
	\end{align*}
	Notice that
	$$
		\Sigma_{(\Delta, \Delta)}^{2, FP} = \sum_{I \in \calD_<}
		\langle T^*1, h_{I, F} \rangle \langle f, h_{I, F}\rangle \langle g \rangle_I.
	$$
	This is the $\calD_<$ part of $\Pi_1^*(T^*1, f)$ -- recall that the
	$\calD_=$ part of this same full paraproduct was formed in $\Sigma_{(D, \Delta)}$.
	Next, notice that
	$$
		\Sigma_{(\Delta, \Delta)}^{2, PP}
		= \sum_{K^1} \sum_{\substack{ I^2, J^2 \colon \ell(I^2) = \ell(J^2)
				\\ \ell(K^1) < \ell(I^2) }} B(h_{K^1} \otimes h_{I^2}, 1 \otimes h_{J^2}^0)
		\langle f, h_{K^1} \otimes h_{I^2}\rangle
		\Big \langle g, \frac{1_{K^1}}{|K^1|} \otimes H_{I^2, J^2}\Big\rangle
	$$
	so that after invoking our coefficient estimates and doing the usual goodness business
	we get
	$$
		\E \Sigma_{(\Delta, \Delta)}^{2, PP} = C \E \sum_{k^2=2}^{\infty} 2^{-\alpha_2 k^2} \langle  P_{k^2}^2f, g\rangle.
	$$

	Now, moving on to $\Sigma_{(\Delta, \Delta)}^1$ -- here we cannot directly do the
	paraproduct correction in the second parameter and must follow the strategy from
	$\Sigma_{(D, \Delta)}^<$. Therefore, we write
	\begin{align*}
		 & \Sigma_{(\Delta, \Delta)}^1                                                \\
		 & =
		\sum_{k^1=2}^{\infty} \sum_{\substack{ n^1 \in \Z^{d_1}                       \\
		|n^1| \in (2^{k^1-3}, 2^{k^1-2}]}} \sum_{\substack{ I \colon \ell(I^1) < 2^{-k^1}\ell(I^2) \\ J^2 \colon \ell(J^2) = \ell(I^2)}}
		B(\Delta_{I^1} \Delta_{I^2} f, h_{I^1 \dotplus n^1}^0 \otimes h_{J^2}^0)
		\langle g, H_{I^1, I^1 \dotplus n^1} \otimes h_{J^2}^0\rangle                 \\
		 & + \sum_{k^1=2}^{\infty} \sum_{l=1}^{k^1} \sum_{\substack{ n^1 \in \Z^{d_1} \\
		|n^1| \in (2^{k^1-3}, 2^{k^1-2}]}} \sum_{\substack{ I \colon \ell(I^1)= 2^{-l}\ell(I^2)    \\ J^2 \colon \ell(J^2) = \ell(I^2)}}
		B(\Delta_{I^1} \Delta_{I^2} f, h_{I^1 \dotplus n^1}^0 \otimes h_{J^2}^0)
		\langle g, H_{I^1, I^1 \dotplus n^1} \otimes h_{J^2}^0\rangle                 \\
		 & =: \Sigma_{(\Delta, \Delta)}^{1, \ll} + \Sigma_{(\Delta, \Delta)}^{1, S}.
	\end{align*}
	As in the $(D, \Delta)$ case we have
	$$
		\E \Sigma_{(\Delta, \Delta)}^{1, S}
		= C \E \sum_{k^1 = 2}^{\infty} \sum_{k^2 = 0}^{\infty} \sum_{l=1}^{k^1} 2^{-\alpha^1k^1}
		2^{-\alpha^2 k^2} \langle R_{k, l}f, g\rangle.
	$$
	In $\Sigma_{(\Delta, \Delta)}^{1, \ll}$ we do the paraproduct correction in the second parameter
	leading to the splitting
	$$
		\Sigma_{(\Delta, \Delta)}^{1, \ll} = \Sigma_{(\Delta, \Delta)}^{1, \ll, S}
		+ \Sigma_{(\Delta, \Delta)}^{1, \ll, PP},
	$$
	where
	$$
		\E \Sigma_{(\Delta, \Delta)}^{1, \ll, S} = C\E \sum_{k^1=2}^{\infty} \sum_{k^2=2}^{\infty}
		2^{-\alpha^1 k^1}
		2^{-\alpha^2 k^2} \langle Q_k f, g\rangle
	$$
	and
	$$
		\E \Sigma_{(\Delta, \Delta)}^{1, \ll, P} = C\E \sum_{k^1=2}^{\infty} 2^{-\alpha_1 k^1}
		\langle P_{k^1}^1 f, g\rangle.
	$$

	\subsection{The case \texorpdfstring{$(\Delta, E)$}{(Delta, E)}}
	This goes exactly like the case $(\Delta, \Delta)$. The steps are carried out in the same order,
	just in slightly different ``slots''. Therefore, for instance, the full paraproduct
	that this term produces is
	\begin{align*}
		\sum_{I^1 \times J^2 \in \calD_<} & B(\Delta_{I^1}\langle f \rangle_{J^2} \otimes 1,
		1 \otimes \Delta_{J^2} \langle g \rangle_{I^1})                                      \\
		                                  & = \sum_{I \in \calD_<}
		\langle T_1 1, h_{I, F}\rangle
		\Big\langle f, h_{I^1} \otimes \frac{1_{I^2}}{|I^2|}\Big\rangle
		\Big\langle g, \frac{1_{I^1}}{|I^1|} \otimes h_{I^2}\Big\rangle
		= \langle \Pi_2(T_11, f), g\rangle.
	\end{align*}

	\subsection{The remaining cases}
	The rest of the cases follow by duality as we explain. If
	$\Sigma_s$ stands for $\Sigma_s(T, f, g)$, then
	$\Sigma_{(E, E)} = \Sigma_{(\Delta, \Delta)}(T^*, g, f)$,
	$\Sigma_{(E, \Delta)} = \Sigma_{(\Delta, E)}(T^*, g, f)$
	and so on. We have completed the proof of the representation theorem.
\end{proof}

\section{Flag commutators}
\subsection{Flag commutator $[b,T]$ upper bound via multiresolutions}
The ``Cauchy integral trick'' allows one to derive the boundedness
of certain commutators directly from the weighted boundedness of the underlying operator.
This is limited to commutators the characterizing condition of which is of ``little $BMO$'' type,
since it requires some classical connections between weights and $\BMO$. In particular,
the flag adapted Cauchy integral trick from \cite{DLOPW-FLAG} combined with our
result giving the weighted boundedness of highly general flag CZOs, Theorem \ref{thm:biparflagT1rep},
directly gives the following.
\begin{thm}\label{thm:littlebmocommutator}
	Let $T$ be a general bi-parameter flag CZO (or a cancellative tri-parameter
	flag CZO). Then we have
	$$
		\|[b, T]f\|_{L^p(w)} \lesssim \|b\|_{\bmo_F}\|f\|_{L^p(w)}
	$$
	for all $1 < p < \infty$ and $w \in A_{p, F}$.
\end{thm}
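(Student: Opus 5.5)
The plan is the Coifman--Rochberg--Weiss ``Cauchy integral trick'', run with flag weights. By Theorem \ref{thm:biparflagT1rep} (respectively the tri-parameter cancellative representation in Theorem \ref{thm:main1}) every such $T$ satisfies
\[
\|Tf\|_{L^p(w)} \le N([w]_{A_{p,F}})\|f\|_{L^p(w)}
\]
for all $w\in A_{p,F}$, with $N$ non-decreasing. The commutator bound then follows from the holomorphic family $T_z f := e^{zb}T(e^{-zb}f)$, $z\in\C$. For nice $f,g$ and bounded $b$ (we reduce to this case by truncation at the end), this map is entire in $z$ and satisfies $[b,T]f = \frac{d}{dz}T_zf\big|_{z=0}$. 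Cauchy's formula over a circle $|z|=\delta$ gives
\[
[b,T]f = \frac{1}{2\pi i}\oint_{|z|=\delta} \frac{T_z f}{z^2}\,\ud z,
\]
and hence
\[
\|[b,T]f\|_{L^p(w)} \le \delta^{-1}\sup_{|z|=\delta}\|T_zf\|_{L^p(w)}.
\]

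Next, for $|z|=\delta$ we rewrite
\[
\|T_zf\|_{L^p(w)}=\|T(e^{-zb}f)\|_{L^p(we^{p\,\mathrm{Re}(z) b})}.
\]
The key step is a flag John--Nirenberg lemma. For $w\in A_{p,F}$ there should be $\eps=\eps([w]_{A_{p,F}})>0$ such that $[we^{tb}]_{A_{p,F}}\lesssim [w]_{A_{p,F}}$ whenever $|t|\,\|b\|_{\bmo_F}\le\eps$.

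To prove this lemma, first note that John--Nirenberg holds uniformly over flag rectangles. This is because $\bmo_F$ is defined through oscillations over $\calR_F$, and each $R\in\calR_F$ is a rectangle of bounded eccentricity in the relevant scaled sense; alternatively one can use the description $\bmo_F=\BMO(\R^d)\cap\BMO^{(2)}(\R^d)$ together with one-parameter John--Nirenberg in each fibre. This gives $[e^{sb}]_{A_{2,F}}\le C$ for $|s|\|b\|_{\bmo_F}$ small. Next, combine this with the openness of Proposition \ref{prop:open}: write
\[
we^{tb}=w\cdot(e^{(1+\eps)t b/\eps})^{\eps/(1+\eps)}
\]
and apply H\"older's inequality on each flag rectangle to both the $w$-average and the $w^{-1/(p-1)}$-average. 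This is exactly the one-parameter argument, carried out with flag rectangles throughout and using the reverse-H\"older-type self-improvement $w^{1+\eps}\in A_{p,F}$ from Proposition \ref{prop:open}.

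With the lemma in hand, choose $\delta=\eps/(p\|b\|_{\bmo_F})$. We then get
\[
\|T_zf\|_{L^p(w)} \le N(C[w]_{A_{p,F}})\|e^{-zb}f\|_{L^p(we^{p\mathrm{Re}(z)b})}= N(\cdot)\|f\|_{L^p(w)},
\]
and the Cauchy estimate yields $\|[b,T]f\|_{L^p(w)}\lesssim \|b\|_{\bmo_F}\|f\|_{L^p(w)}$.

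Two routine points remain. First, the truncations $b_N=\max(-N,\min(b,N))$ satisfy $\|b_N\|_{\bmo_F}\lesssim\|b\|_{\bmo_F}$, and we pass to the limit for nice $f$. Second, analyticity of $z\mapsto\langle T_zf,g\rangle$ follows from the power series of $e^{\pm zb}$ together with $L^2$-boundedness of $T$.

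The main obstacle is the flag John--Nirenberg/weight-perturbation lemma, which is the content of the flag Cauchy trick of \cite{DLOPW-FLAG}. The point needing care is that exponential integrability must hold on every flag rectangle, with constants independent of the eccentricity between $\ell(I^1)$ and $\ell(I^2)$. I would handle this through the fibrewise $\BMO^{(2)}$ characterization and iteration, mirroring the proof of Proposition \ref{prop:weight}.
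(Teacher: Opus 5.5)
Your proposal is correct and is essentially the paper's argument: the paper obtains this theorem by combining the flag-adapted Cauchy integral trick of \cite{DLOPW-FLAG} with the $A_{p,F}$-weighted bounds from Theorem \ref{thm:biparflagT1rep} (and the tri-parameter corollary), and you have simply written out the trick and its weight-perturbation lemma explicitly. One small correction: flag rectangles do not have bounded eccentricity. Uniform John--Nirenberg holds instead because halving all sides of a flag rectangle yields congruent flag rectangles, so the standard Calder\'on--Zygmund stopping argument runs inside each $R\in\calR_F$; your fibrewise alternative also works.
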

We next give a direct multiresolution proof in the cancellative bi-parameter case, because
the multiresolution method is much more flexible than the Cauchy integral trick and can be used
to derive much more complicated commutator estimates, say, various bi-commutator estimates
and two-weight Bloom type estimates. That is why we feel it is important that we develop the previously
missing multiresolution line of attack to flag commutators.

For this multiresolution proof of Theorem \ref{thm:littlebmocommutator}
in the cancellative bi-parameter case, it is enough to consider commutators $[b, U]$, where $U$ is one
of the flag model operators appearing in Theorem \ref{thm:biparflagT1rep}. Since we
want to only showcase the methodology here, we do this only for cancellative
bi-parameter CZOs so we only need to consider commutators of flag shifts and almost one-parameter shifts.

\begin{lem}\label{lem:commu_almost}
	For all $1<p<\infty$, $w\in A_{p, F}$ and $k,l$ we have
	\[
		\|[b, R_{k,l} ]f\|_{L^p(w)}\lesssim (|k|+1)^3\|b\|_{\bmo_F}\|f\|_{L^p(w)}.
	\]
	And for $l=0$ we have the enhanced estimate
	\[
		\|[b, R_{k,0}] f\|_{L^p(w)}\lesssim (|k|+1)^32^{-\eta(k^1-k^2)_+}\|b\|_{\bmo_F}\|f\|_{L^p(w)},
	\]
	where $\eta = \eta([w]_{A_{p, F}}) > 0$.
\end{lem}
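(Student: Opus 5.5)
We follow the standard multiresolution route for commutators of dyadic shifts. Write $R = R_{k,l}$ in its abstract form
\[
\langle R f, g\rangle = \sum_{K \in \calD(k,l)} \sum_{I^{(k)} = J^{(k)} = K} a_{IJK} \langle f, \varphi_{I,J}\rangle \langle g, \psi_J\rangle .
\]
Then
\[
\langle [b,R]f, g\rangle = \sum_{K} \sum_{I,J} a_{IJK}\big(\langle f, \varphi_{I,J}\rangle\langle bg, \psi_J\rangle - \langle bf, \varphi_{I,J}\rangle\langle g,\psi_J\rangle\big).
\]
We insert the constant $\langle b\rangle_K$ (or a suitable flag average adapted to $K$) in both terms; this is allowed since the two insertions cancel. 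The result is
\[
\langle [b,R]f,g\rangle = \sum_K\sum_{I,J} a_{IJK}\big(\langle f,\varphi_{I,J}\rangle\langle (b-\langle b\rangle_K)g,\psi_J\rangle - \langle (b-\langle b\rangle_K)f,\varphi_{I,J}\rangle\langle g,\psi_J\rangle\big).
\]
This alone loses the cancellation of $\varphi,\psi$ when $b$ multiplies the function. The standard fix is to expand $b\,g$ and $b\,f$ via the paraproduct decomposition $bf=\sum_{s\in\calS}\pi_s(b,f)$ of the previous section, or more concretely via martingale differences of $b$ at the relevant scales.

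\textbf{Step 1: paraproduct decomposition.} Using $bf = \sum_{s} \pi_s(b,f)$, the commutator splits as
\[
[b,R]f = \sum_{s\in \calS_1\cup\calS_2}\big(\pi_s(b, Rf) - R\pi_s(b,f)\big) + \big(\pi_{(E,E)}(b,Rf) - R\pi_{(E,E)}(b,f)\big).
\]
The first group is bounded by Theorem \ref{thm:paraproducts} combined with Theorem \ref{thm:almost}. This gives $(|k|+1)^2\|b\|_{\bmo_F}\|f\|_{L^p(w)}$, and for $l=0$ it also carries the factor $2^{-\eta(k^1-k^2)_+}$, since each term is a composition of a bounded paraproduct with $R_{k,l}$.

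\textbf{Step 2: the Category III term.} The remaining term has the form
\[
\sum_I E_{I^1}E_{I^2}b \,\Lambda_{I^1}\Delta_{I^2}(\cdot),
\]
with $b$ carrying no cancellation. We commute it through $R_{k,l}$ by hand. We pair the resulting expression with $g$ and use the block expansions $\calY_{K,k,l}$ of Section \ref{subsec:blocks}. In the bilinear form, each coefficient $\langle f,\varphi_{I,J}\rangle\langle g,\psi_J\rangle$ then gets multiplied by a difference $\langle b\rangle_{L}-\langle b\rangle_{L'}$ of averages over rectangles $L,L'$ lying between the scale of $I$ and $K^{(\tau)}$. Since $\varphi_{I,J}$ is supported in $I\cup J\cup(J^1\times I^2)$ and all these sets lie in $K^{(\tau)}$ with comparable or flag-controlled sizes, we telescope such a difference into at most $O(|k|+1)$ martingale differences of $b$ along a chain of rectangles. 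Each chain step is either a one-parameter step in a scaled grid $\ell(L^1)=2^{-l}\ell(L^2)$, where we use \eqref{eq:bmo1}-type control (after rescaling, $\BMO$ in a dyadic grid in $\R^d$ adapted to $\calD_=$-like rectangles, which $\bmo_F$ controls because the relevant rectangles are flag), or a step in the second parameter controlled by \eqref{eq:bmo2}. Bounding each resulting difference pointwise by $\|b\|_{\bmo_F}$ times an $L^1$-oscillation is too crude. Instead, we aim to rewrite each chain step as a new shift-type form: an almost one-parameter shift with one extra paraproduct-like factor of $b$. We then bound it by $H^1$--$\BMO$ duality exactly as in the proof of Theorem \ref{thm:paraproducts}, using Lemma \ref{lem:auxsfY}, Lemma \ref{lem:formsum1}, Lemma \ref{lem:formsum2} and the vector-valued $M_{\calD_F}$ bound. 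The chain length contributes one extra factor $(|k|+1)$, which yields $(|k|+1)^3$ in total.

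\textbf{Step 3: the $l=0$ case and the main obstacle.} For $l=0$ we must retain the decay $2^{-\eta(k^1-k^2)_+}$. This comes from the ratio $|K|/|K_F|=2^{-d_2(k^1-k^2)_+}$ in \eqref{eq:KKFratio} together with Lemma \ref{lem:VecKperKFmaximal}. We therefore keep the normalization $1/|K_F|$ throughout Step 2 and estimate the final averaged terms by $\wt M_{\calD^k_=}$ instead of $M_{\calD_F}$; interpolation with change of measure then gives the decay. We expect the main obstacle to be Step 2. The oscillations of $b$ must be measured only over flag rectangles, including mixed sets such as $J^1\times I^2$ and enlargements toward $K^{(\tau)}$. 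The $l=0$ case, where $K$ need not be flag, is the dangerous one, and there we will try to telescope only through rectangles in the $\calD_=$ scaled grid, which are cubes and hence controlled by the $\BMO(\R^d)$ part of $\bmo_F$.
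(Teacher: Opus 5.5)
There is a genuine gap in Step~2, and it comes from the decomposition you chose in Step~1. Step~1 is valid as a reduction, since the non-$(E,E)$ flag paraproducts are bounded. The problem is that it leaves a remainder you cannot control.

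The functions of $R_{k,l}$ have only one-parameter cancellation. The expansion that localizes pairings against them is $\calY_{K,k,l}$, which is built from one-parameter differences $\Delta_L$ in the dilated grid $\calD(0,l)=\{L\in\calD\colon \ell(L^1)=2^{-l}\ell(L^2)\}$. Your remainder $\pi_{(E,E)}(b,f)=\sum_{L\in\calD_F}\langle b\rangle_L\Delta_{L,F}f$ is instead built from flag differences, and these do not localize against such functions. Take $\psi_J=h_{J^1}\otimes h^0_{J^2}$, which occurs for instance in $\Sigma^{=,S}_{(D,\Delta)}$. Every $L=J^1\times L^2$ with $L^2\supsetneq J^2$ then gives a generally nonzero $\langle \Delta_{L,F}g,\psi_J\rangle$, because $\int_{\R^{d_2}}\psi_J(x_1,x_2)\,\ud x_2=|J^2|^{1/2}h_{J^1}(x_1)\neq 0$. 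After subtracting one constant per $K$, you are left with $\langle b\rangle_{J^1\times L^2}-c$ over infinitely many scales $L^2$. These can grow logarithmically in $\ell(L^2)/\ell(J^2)$. So your claim that all relevant rectangles lie between the scale of $I$ and $K^{(\tau)}$, and telescope in $O(|k|+1)$ steps, is false in this setup. You also cannot apply $\calY_{K,k,l}$ to $\pi_{(E,E)}(b,f)$ to get a form of the type ($b$-average) $\times$ (block of $f$), because $\Delta_P\Delta_{L,F}$ with $P\in\calD(0,l)$, $L\in\calD_F$ is not diagonal. The flag decomposition is the right tool for $Q_k$, whose functions have bi-parameter cancellation matched by $\calU_{K,k}$ (Lemma~\ref{lem:shifts}), but not for $R_{k,l}$.

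The fix, which is what the paper does, is to expand $bf$ in one-parameter paraproducts in $\calD(0,l)$:
\[
bf=\sum_{L\in\calD(0,l)}\big(\Delta_Lb\,\Delta_Lf+\Delta_Lb\,E_Lf+E_Lb\,\Delta_Lf\big)=:\rho_D+\rho_\Delta+\rho_E .
\]
The first two terms are bounded by one-parameter theory in that grid. This works because $A_{p,F}$ weights are uniformly $A_p$ there, and $\bmo_F$ controls the dyadic $\BMO$ of $\calD(0,l)$, since its rectangles are flag. For the third term write
\[
\rho_E(b,f)=\sum_{L\in\calD(0,l)}(\langle b\rangle_L-\langle b\rangle_{K^{(\tau)}})\Delta_Lf+\langle b\rangle_{K^{(\tau)}}f .
\]
The last piece cancels in the commutator. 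The block expansion of Section~\ref{subsec:blocks} then restricts the sum to $L\subset K^{(\tau)}$ with $\ell(K^m)\le 2^{k^m}\ell(L^m)$, where $K^{(\tau)}\in\calD(0,l)$ is flag.

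Contrary to your remark, the pointwise bound is then exactly what is needed. Telescope $L\to L^{(0,1)}\to\dots\to L^{(0,\zeta^2)}\to\dots\to L^{(\zeta^1,\zeta^2)}=K^{(\tau)}$ through flag rectangles, with $\zeta^1+\zeta^2\lesssim|k|+1$. This gives $|\langle b\rangle_L-\langle b\rangle_{K^{(\tau)}}|\lesssim(|k|+1)\|b\|_{\bmo_F}$. Hence the $b$-adapted blocks $\calY^b_{K,k,l}$ satisfy Lemma~\ref{lem:auxsfY} with one extra factor $|k|+1$, using the lower and upper square function bounds in $\calD(0,l)$. The proof of Theorem~\ref{thm:almost} then runs unchanged, including the $l=0$ decay through $\wt M_{\calD^k_=}$. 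No new shift-type forms or $H^1$--$\BMO$ arguments are needed.
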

\begin{proof}
	We expand $bf$ as one-parameter paraproducts in the dilated grid
	$\calD(0,l)= \{L\in \calD\colon \ell(L^1) = 2^{-l}\ell(L^2)\}$ as follows:
	\begin{align*}
		bf= \sum_{ L\in \calD(0,l)} \Delta_L b \Delta_L f
		+ \sum_{ L\in \calD(0,l)} \Delta_L b E_L f
		+ \sum_{ L\in \calD(0,l)}E_L b \Delta_L f
		=:\sum_{s \in \{D, \Delta, E\}} \rho_s(b,f).
	\end{align*}
	The well-known one-parameter theory tells us that if $s \ne E$ then
	\[
		\|\rho_s (b,f)\|_{L^p(w)} \lesssim \|b\|_{\bmo_F}\|f\|_{L^p(w)}.
	\]
	Here we have used that $w\in A_{p, F}$ implies $w\in A_p(\calD(0,l))$, and a similar fact for
	the symbol $b$.

	Writing
	$$
		\langle [b, R_{k,l}]f, g \rangle = \langle R_{k,l}f, bg \rangle - \langle R_{k,l}(bf), g \rangle,
	$$
	expanding the products $bf$ and $bg$ like above, and using the boundedness
	of the above paraproducts and the model operator $R_{k,l}$
	we are left with the task of estimating
	\begin{align*}
		\bla R_{k,l} & f, \rho_E (b, g)\bra -
		\bla R_{k,l} (\rho_E(b,f)), g\bra                                                                                                                                 \\
		             & = \sum_{K \in \calD(k,l)} \sum_{I^{(k)} = J^{(k)} = K} a_{IJK}\big[ \langle f, \varphi_{I,J}\rangle
			                                                                             \langle \rho_E(b, g), \psi_{I, J}\rangle- \langle \rho_E(b, f), \varphi_{I,J}\rangle
			                                                                             \langle g, \psi_{I, J}\rangle\big],
	\end{align*}
	where we recall that  the functions
	$\varphi_{I,J}$ and $\psi_{I,J} = \psi_J$ have one-parameter cancellation and
	satisfy $\supp \varphi_{I,J} \subset I \cup J \cup (J^1 \times I^2)$
	and $\supp \psi_{J} \subset J$.

	Observe that
	\begin{align*}
		\rho_E (b,f)=\sum_{ L\in \calD(0,l)} \langle b \rangle_L \Delta_L f
		= \sum_{ L\in \calD(0,l)} (\langle b\rangle_L-\langle b\rangle_{K^{(\tau)}} )\Delta_L f
		+ \langle b\rangle_{K^{(\tau)}} f
	\end{align*}
	and we have the analogous equation for $\rho_E(b,g)$. Here $\tau = \tau_k$ is defined
	as in Section \ref{subsec:blocks} so that $K^{(\tau)}$ is, in particular, flag. Now, the cancellation
	of the commutator is simply used to kill the terms with $\langle b \rangle_{K^{(\tau)}}f$
	and $\langle b \rangle_{K^{(\tau)}}g$ so that
	\begin{align*}
		\bla R_{k,l} & f, \rho_E (b, g)\bra -
		\bla R_{k,l} (\rho_E(b,f)), g\bra                                                                                                                                     \\
		             & = \sum_{K \in \calD(k,l)} \sum_{I^{(k)} = J^{(k)} = K} a_{IJK} \langle f, \varphi_{I,J}\rangle
		\Big\langle \sum_{ L\in \calD(0,l)} (\langle b\rangle_L-\langle b\rangle_{K^{(\tau)}} )
		\Delta_L g, \psi_{I, J}\Big\rangle                                                                                                                                    \\
		             & -  \sum_{K \in \calD(k,l)} \sum_{I^{(k)} = J^{(k)} = K} a_{IJK}\Big\langle \sum_{ L\in \calD(0,l)} (\langle b\rangle_L-\langle b\rangle_{K^{(\tau)}} )
		\Delta_L f, \varphi_{I,J}\Big\rangle
		\langle g, \psi_{I, J}\rangle.
	\end{align*}
	These terms can be handled on their own now -- essentially we need to run a ``$b$-adapted''
	proof of the boundedness of $R_{k, l}$. Indeed,
	using Section \ref{subsec:blocks} we can insert the familiar restrictions to the above $L$ summations
	(and also replace the free $f$ and $g$ with the appropriate martingale difference blocks) and so we get
	\begin{align*}
		\bla R_{k,l} & f, \rho_E (b, g)\bra -
		\bla R_{k,l} (\rho_E(b,f)), g\bra                                                                                             \\
		             & = \sum_{K \in \calD(k,l)} \sum_{I^{(k)} = J^{(k)} = K} a_{IJK}\langle \calY_{K, k, l} f, \varphi_{I,J} \rangle
		\langle \calY_{K, k, l}^bg, \psi_J\rangle                                                                                     \\ &\hspace{2cm}
		-\sum_{K \in \calD(k,l)} \sum_{I^{(k)} = J^{(k)} = K} a_{IJK}\langle \calY_{K, k, l}^b f, \varphi_{I,J} \rangle
		\langle \calY_{K, k, l} g, \psi_J\rangle,
	\end{align*}
	where the $b$-adapted variant of $\calY_{K, k, l}$ is defined by
	\[
		\calY_{K, k, l}^b f:= 1_K\sum_{\substack{L \subset K^{(\tau)} \\ \ell(L^1) = 2^{-l}\ell(L^2)
				\\ \ell(K^m) \le 2^{k^m}\ell(L^m) }}
		(\langle b\rangle_L-\langle b\rangle_{K^{(\tau)}} )\Delta_L f.
	\]
	Now, by the proof of Theorem \ref{thm:almost} we know that we are done
	if we check the square function estimate for the $b$-adapted operators
	$\calY_{K, k, l}^b$. But the proof of that is almost the same as the proof of
	Lemma \ref{lem:auxsfY} -- we only need to use the soon to follow $\bmo_F$ estimate
	in an appropriate point of the argument. In this argument it will play a role that $K^{(\tau)}$
	is flag (since we need to use the flag $\BMO$ control)
	so we need to retain this information in $\calY_{k, l}^b$ by not summing
	over all $K \in \calD$ (which was fine for $\calY_{k, l}$) but, say, over
	$K \in \calD^k_F$ (or even just $K \in \calD(k, l)$ if desired).

	So write $K^{(\tau)} = L^{(\zeta)}$ so that we must have
	\[
		\zeta^1 +\zeta^2\lesssim (|k|+1).
	\]
	This gives
	\begin{align*}
		|\langle b\rangle_{L}- \langle b\rangle_{K^{(\tau)}}|
		 & \le |\langle b\rangle_{L}- \langle b\rangle_{L^{(0, 1)}}|+\cdots + |\langle b\rangle_{L^{(0, \zeta^2-1)}}- \langle b\rangle_{L^{(0, \zeta^2)}}| \\
		 & \quad+ |\langle b\rangle_{L^{(0, \zeta^2)}}- \langle b\rangle_{L^{(1, \zeta^2)}}|+\cdots
		+  |\langle b\rangle_{L^{(\zeta^1-1, \zeta^2)}}- \langle b\rangle_{L^{(\zeta^1, \zeta^2)}}|
		\\&\lesssim (|k|+1) \|b\|_{\bmo_F}.
	\end{align*}
	Here we used that $L, K^{(\tau)}\in \calD_F$ so that the appearing
	$L^{(0,j)}, L^{(i, \zeta^2)}\in \calD_F$. This completes the proof.
\end{proof}
We must then run a similar argument for the flag shifts $Q_k$ -- this will require
our flag paraproduct decompositions proved in Theorem \ref{thm:paraproducts}.

\begin{lem}\label{lem:shifts}
	For all $p \in (1, \infty)$, $w \in A_{p, F}$ and $k$ we have
	$$
		\|[b, Q_k] f\|_{L^p(w)} \lesssim (|k|+1)^3 \|b\|_{\bmo_F}\|f\|_{L^p(w)}.
	$$
\end{lem}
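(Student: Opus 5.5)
We follow the template of Lemma \ref{lem:commu_almost}, but now expand $bf$ and $bg$ with the bi-parameter flag paraproduct decomposition $bf=\sum_{s\in\calS}\pi_s(b,f)$ from Theorem \ref{thm:paraproducts}.

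First we write
$$
\langle [b,Q_k]f,g\rangle=\langle Q_kf,bg\rangle-\langle Q_k(bf),g\rangle .
$$
For $s\in\calS_1\cup\calS_2$, Theorem \ref{thm:paraproducts} gives $\|\pi_s(b,f)\|_{L^p(w)}\lesssim\|b\|_{\bmo_F}\|f\|_{L^p(w)}$. The dual paraproducts are bounded on $L^{p'}(\wt w)$ with $\wt w\in A_{p',F}$. Combined with Theorem \ref{thm:flagshifts}, this handles every term except the Category III piece $\pi_{(E,E)}$, with bound $(|k|+1)^2$. It remains to estimate
$$
\langle Q_kf,\pi_{(E,E)}(b,g)\rangle-\langle Q_k(\pi_{(E,E)}(b,f)),g\rangle,
\qquad
\pi_{(E,E)}(b,f)=\sum_{I\in\calD_F}\langle b\rangle_I\,\Lambda_{I^1}\Delta_{I^2}f=\sum_{I\in\calD_F}\langle b\rangle_I\,\Delta_{I,F}f,
$$
where the second identity uses the redefinition of $\pi_{(E,E)}$ to include the $E_{I^1}E_{I^2}b$ piece on $\calD_=$.

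For each $K\in\calF\subset\{\ell(K^1)<2^{-k^2}\ell(K^2)\}$ we subtract $\langle b\rangle_K$. This is legitimate because $K$ is flag, so $\sum_I\langle b\rangle_K\Delta_{I,F}f=\langle b\rangle_Kf$, and these terms cancel between the two halves of the commutator. We then apply the flag block expansion of Section \ref{subsec:blocks}. Since $\varphi_{I,J}$ and $\psi_{I,J}$ have bi-parameter cancellation, only $I\in\calD_F$ with $I\subset K$ and $\ell(K^m)\le2^{k^m}\ell(I^m)$ survive, and $f$ and $g$ may be replaced by the blocks $\calU_{K,k}$. This reduces the problem to the form handled in Theorem \ref{thm:flagshifts}, with one function replaced by the $b$-adapted block
$$
\calU^b_{K,k}f:=\sum_{\substack{L\in\calD_F,\ L\subset K\\ \ell(K^m)\le2^{k^m}\ell(L^m)}}(\langle b\rangle_L-\langle b\rangle_K)\Delta_{L,F}f .
$$
Lemmas \ref{lem:formsum1}--\ref{lem:formsum3} and the vector-valued $M_{\calD_F}$ bound then complete the estimate. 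The remaining task is to show that the square function of $\calU^b_{K,k}f$ is bounded by $(|k|+1)^2\|b\|_{\bmo_F}\|f\|_{L^p(w)}$.

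That square function estimate repeats the proof of Lemma \ref{lem:auxsf}: reduce to $p=2$ by extrapolation, apply the flag square function lower and upper bounds of Theorem \ref{thm:square} together with the orthogonality of Lemma \ref{lem:orthogonal}, and use the pointwise coefficient bound
$$
|\langle b\rangle_L-\langle b\rangle_K|\lesssim(|k|+1)\|b\|_{\bmo_F}.
$$
We obtain this bound by a telescoping chain from $L$ to $K=L^{(k^1,k^2)}$ through flag rectangles. Step first in the second parameter up to $L^{(0,k^2)}$; this stays flag since $\ell(L^1)\le\ell(L^2)$. Then step in the first parameter up to $L^{(k^1,k^2)}$; every intermediate rectangle is flag because $\ell(L^1)2^{i}\le2^{k^1}\ell(L^1)=\ell(K^1)<\ell(K^2)$. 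Each step compares averages over a flag rectangle and its flag parent, which costs $\lesssim\|b\|_{\bmo_F}$. Counting $(|k|+1)^2$ from the block estimates and $(|k|+1)$ from the telescoping gives the claimed $(|k|+1)^3$.

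The main obstacle is the cancellation step. Subtracting $\langle b\rangle_K$ uniformly requires that $\pi_{(E,E)}$ pair with $\varphi_{I,J}$ only through $\Delta_{L,F}$, $L\in\calD_F$, $L\subset K$, so that the telescoping path stays in $\calD_F$. This is exactly why the redefinition moving the $E_{I^1}E_{I^2}b$ part of $\pi_{(D,E)}$ into $\pi_{(E,E)}$ was made, and we will check carefully that the mixed supports $I^1\times J^2$ and $J^1\times I^2$ of $\varphi_{I,J}$ cause no trouble. They should not, since these supports stay inside $K$.
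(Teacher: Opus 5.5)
Your proposal is correct and follows the paper's proof. You expand $bf$ and $bg$ with the flag paraproduct decomposition and dispose of $\calS_1\cup\calS_2$ via Theorem~\ref{thm:paraproducts}. You then use the cancellation of the commutator to subtract a flag average inside $\pi_{(E,E)}$, and finish with a $b$-adapted version of Lemma~\ref{lem:auxsf} via telescoping. Your choice $\langle b\rangle_K$ works since $\ell(K^1)<2^{-k^2}\ell(K^2)$ makes $K$ flag; the paper writes $\langle b\rangle_{K^{(\tau)}}$. One small correction: the surviving $L$ only satisfy $K=L^{(\zeta)}$ with $0\le\zeta^m\le k^m$, not $K=L^{(k^1,k^2)}$, but your flag chain works verbatim with $\zeta$ in place of $k$.
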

\begin{proof}
	The strategy is the same as for the almost one-parameter shifts, but this
	time we require the more complicated flag type expansion
	\begin{align*}
		bf= \sum_{s\neq (E, E)}\pi_s (b, f)+ \pi_{(E,E)}(b,f).
	\end{align*}
	Again, by the paraproduct theory, Theorem \ref{thm:paraproducts}, we only need to estimate
	\begin{align*}
		\bla Q_k f, \pi_{(E,E)} (b, g)\bra -
		\bla Q_k(\pi_{(E, E)}(b,f)), g\bra.
	\end{align*}
	Working as above, and recalling that
	\begin{align*}
		\pi_{(E,E)}(b,f)= \sum_{L\in \calD_F }\langle b\rangle_L \Delta_{L, F} f
		=  \sum_{L\in \calD_F }(\langle b\rangle_L-\langle b\rangle_{K^{(\tau)}}) \Delta_{L, F} f
		+ \langle b\rangle_{K^{(\tau)}}f,
	\end{align*}
	we get
	\begin{align*}
		&\bla Q_k f, \pi_{(E,E)} (b, g)\bra -
		\bla Q_k(\pi_{(E, E)}(b,f)), g\bra \\
		 &\quad =  \sum_{K \in \calF} \sum_{I^{(k)} = J^{(k)} = K} a_{IJK} \langle \calU_{K,k} f, \varphi_{I,J}\rangle
		\langle \calU_{K,k}^b g, \psi_{I, J}\rangle                                                               \\
		 &\qquad -\sum_{K \in \calF} \sum_{I^{(k)} = J^{(k)} = K} a_{IJK} \langle \calU_{K,k}^b f, \varphi_{I,J}\rangle
		\langle \calU_{K,k} g, \psi_{I, J}\rangle,
	\end{align*}
	where $\calF$ depends on the type of the flag shift and we have defined
	\[
		\calU_{K,k}^b f:=\sum_{\substack{L \in \calD_F
				\\ L \subset K \\ \ell(K^m) \le 2^{k^m}\ell(L^m) }}
		(\langle b\rangle_L-\langle b\rangle_{K^{(\tau)}}) \Delta_{L, F} f.
	\]
	The proof is now ended in the same way as in the almost one-parameter shift case --
	we rely on the proof of Theorem
	\ref{thm:flagshifts} via the $b$-adapted version of Lemma \ref{lem:auxsf}
	that follows exactly as the $b$-adapted version in the previous result.
\end{proof}
The above two results give another proof of Theorem \ref{thm:littlebmocommutator}
for cancellative bi-parameter CZOs (to get the general case, we would still
need to study commutators of partial paraproducts and full paraproducts).
We briefly demonstrate the necessity of the above $\bmo_F$ condition in our generality.

\subsection{Commutator lower bounds}
We give a very brief outline of the AWF arguments without details as
the flag adaptation of the AWF is almost identical to the, say,
the Zygmund adaptation of the AWF \cite{LiMarZygCom}. The AWF
method originates from \cite{HyLpq2021} and is an efficient way to show lower bounds.

\subsubsection{AWF}
We now specify what we mean with non-degeneracy in the flag setting.
First, for these considerations and approximate weak factorization (AWF)
we are going to need only the following upper bounds for our
$K$. First, we assume the size estimate
$$
	|K(x,y)| \lesssim \size_F(x,y),
$$
where we recall that
\[
	\size_F(x,y)=\frac{1}{|x_1-y_1|^{d_1}(|x_1-y_1|+|x_2-y_2|)^{d_2}}.
\]
Second, we need the following weak variants of a subset of the usual
continuity assumptions of flag kernels.
Let $\omega \colon [0,1] \to [0, \infty)$ be increasing with $\omega(t) \to 0$ as $t \to 0$.
Let $|x_i - x_i'| \le |x_i-y_i| / 2$ for $i = 1,2$. Then we demand that
\begin{equation*}
	|K((x_1', x_2), y) - K(x,y)| \lesssim \omega\Big(\frac{|x_1-x_1'|}{|x_1-y_1|}\Big)
	\size_F(x,y)
\end{equation*}
and
\begin{equation*}
	|K((x_1, x_2'), y) - K(x,y)| \lesssim \omega\Big(\frac{|x_2-x_2'|}{|x_2-y_2|}\Big)
	\size_F(x,y)
\end{equation*}
together with the symmetric conditions in the $y$-variable.
\begin{defn}[Non-degenerate flag kernels]
	Let $K$ be a kernel as specified above.
	We say that $K$ is a non-degenerate flag kernel if
	for every $y \in\R^{d}$ and $\delta_1, \delta_2>0$ with $\delta_1 \le \delta_2$ there exists
	$x \in \R^d$ with $|x_1-y_1| > \delta_1$ and $|x_2-y_2| > \delta_2$
	so that there holds
	\[
		|K(x, y)|\gtrsim \frac{1}{\delta_1^{d_1}\delta_2^{d_2}}.
	\]
\end{defn}
It is critical that the restriction $\delta_1 \le \delta_2$ appears above.
Now, it follows that, in the definition of non-degeneracy, the point $x$ will automatically
satisfy $|x_1-y_1| \lesssim \delta_1$ and $|x_2-y_2| \lesssim \delta_2$.

\subsubsection{Reflected flag rectangles}
Let $I = I^1 \times I^2$ be a flag rectangle in $\R^d$ --
that is, $\ell(I^1) \le \ell(I^2)$. Let $c_{I^i}$ be the center of the interval $I^i$
and set $c_I = (c_{I^1}, c_{I^2})$. We use non-degeneracy with $y := c_I$,
$\delta_1 := A^{1/d}\ell(I^1)$ and $\delta_2 := A^{1/d}\ell(I^2)$, where
$A > 0$ is a large but at this point unspecified parameter. We find the related $x$
so that, in particular, we have
$$
	|K(x, c_I)| \gtrsim \frac{1}{A} \frac{1}{\ell(I^1)^{d_1}\ell(I^2)^{d_2}}
	= \frac{1}{A|I|}.
$$
Let $\wt I^i$ be a cube in $\R^{d_i}$ that is centered at $c_{\wt I^i} := x_i$
and of side length $\ell(I^i)$. Then $c_{\wt I} := (c_{\wt I^1}, c_{\wt I^2})$
is the center of the flag rectangle $\wt I := \wt I^1 \times \wt I^2$.
This is the reflected version of $I$. Notice that
\begin{align*}
	|c_{I^1} - c_{\wt I^1}| & \sim A^{1/d}\ell(I^1) \sim \dist(I^1, \wt I^1), \\
	|c_{I^2} - c_{\wt I^2}| & \sim A^{1/d}\ell(I^2) \sim \dist(I^2, \wt I^2)
\end{align*}
and
$$
	|K(c_{\wt I}, c_I)| \sim \frac{1}{A|I|}.
$$

\begin{lem}
	Let $I$ be a flag rectangle, $y \in I$ and $x \in \wt I$. Then we have
	(for some constant $C$) that
	$$
		|K(x,y) - K(c_{\wt I}, c_I)| \lesssim \omega(CA^{-1/d}) \frac{1}{A|I|}.
	$$
	In particular, for large $A$ we have
	$$
		\Big|\int_I K(x,y)\ud y \Big| \sim \int_I |K(x,y)| \ud y \sim \frac{1}{A}, \,\, x \in \wt I,
	$$
	and
	$$
		\Big|\int_{\wt I} K(x,y)\ud x \Big| \sim \int_{\wt I} |K(x,y)| \ud x \sim \frac{1}{A}, \,\, y \in I.
	$$
\end{lem}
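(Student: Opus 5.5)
The plan is to write $K(x,y)-K(c_{\wt I},c_I)$ as a telescoping sum in which one coordinate moves at a time, and to control each difference with the weak continuity assumptions. Concretely,
\begin{align*}
K(x,y)-K(c_{\wt I},c_I) &= [K(x,y)-K((c_{\wt I^1},x_2),y)] + [K((c_{\wt I^1},x_2),y)-K(c_{\wt I},y)] \\
&\quad + [K(c_{\wt I},y)-K(c_{\wt I},(c_{I^1},y_2))] + [K(c_{\wt I},(c_{I^1},y_2))-K(c_{\wt I},c_I)].
\end{align*}
The first two brackets use the $x$-continuity in parameters $1$ and $2$, and the last two use the symmetric $y$-continuity conditions.

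In each bracket I would first check the admissibility condition $|x_i-x_i'|\le |x_i-y_i|/2$ (or its $y$-analogue). For $x\in\wt I$ and $y\in I$ we have $|x_i-c_{\wt I^i}|\lesssim \ell(I^i)$. The separation estimates recorded above, $|c_{I^i}-c_{\wt I^i}|\sim A^{1/d}\ell(I^i)$, then give $|x_i-y_i|\sim A^{1/d}\ell(I^i)$ for every pair of points involved, uniformly, once $A$ is large. So each ratio appearing in $\omega(\cdot)$ is at most $CA^{-1/d}$, and since $\omega$ is increasing each bracket is bounded by $\omega(CA^{-1/d})$ times $\size_F$ evaluated at the relevant pair.

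I would then bound every such $\size_F$ uniformly. All relevant points satisfy $|x_1-y_1|\sim A^{1/d}\ell(I^1)$ and $|x_2-y_2|\sim A^{1/d}\ell(I^2)$. Flagness, $\ell(I^1)\le\ell(I^2)$, gives $|x_1-y_1|+|x_2-y_2|\sim A^{1/d}\ell(I^2)$. Hence
\[
\size_F\sim \frac{1}{A^{d_1/d}\ell(I^1)^{d_1}\,A^{d_2/d}\ell(I^2)^{d_2}}=\frac{1}{A|I|},
\]
using $d=d_1+d_2$. This proves the first claim, and this step is exactly where flagness enters.

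For the second claim, I would fix $A$ so large that $\omega(CA^{-1/d})$ is a small fraction of the implicit constant in the lower bound $|K(c_{\wt I},c_I)|\gtrsim 1/(A|I|)$. Then for all $x\in\wt I$ and $y\in I$ the value $K(x,y)$ lies in a disc around $K(c_{\wt I},c_I)$ whose radius is at most, say, half of $|K(c_{\wt I},c_I)|$. So its argument stays in a fixed sector and $|K(x,y)|\sim 1/(A|I|)$. Integrating over $I$ (respectively $\wt I$, which has $|\wt I|=|I|$) gives
\[
\Big|\int K\Big|\ge \operatorname{Re}\Big(\overline{\theta}\int K\Big)\gtrsim \int|K|\sim \frac{1}{A},
\]
where $\theta$ is the unit complex number $K(c_{\wt I},c_I)/|K(c_{\wt I},c_I)|$; the reverse inequality is trivial.

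The main obstacle is the uniform comparability $|x_i-y_i|\sim A^{1/d}\ell(I^i)$ throughout the telescoping. The non-degeneracy definition only asserts $|x_i-y_i|>\delta_i$, but the paper notes that $|x_i-y_i|\lesssim\delta_i$ also holds automatically. From these two bounds, the cube sizes $\lesssim\ell(I^i)\ll A^{1/d}\ell(I^i)$ keep every intermediate point within the admissible range once $A$ is large.
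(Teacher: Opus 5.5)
Your proposal is correct and is exactly the standard AWF argument the paper points to. You telescope one coordinate at a time using the $\omega$-continuity in $x_1,x_2,y_1,y_2$, you note that every intermediate pair lies in $\wt I\times I$ where $|x_i-y_i|\gtrsim A^{1/d}\ell(I^i)$, and you then use $|K(c_{\wt I},c_I)|\gtrsim 1/(A|I|)$ to trap $K(x,y)$ in a disc of half that radius.

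One minor inaccuracy: the bound $\size_F\lesssim 1/(A|I|)$ needs only the lower separation bounds and holds without flagness. Flagness is really used when invoking non-degeneracy, which requires $\delta_1\le\delta_2$, to produce $\wt I$ with $|K(c_{\wt I},c_I)|\gtrsim 1/(A|I|)$ in the first place.
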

\begin{proof}
	The proof is almost verbatim the same as in existing AWF arguments and omitted here.
\end{proof}

\subsubsection{AWF for flag rectangles}
In what follows (for the AWF considerations and also for the necessity results
for commutator boundedness) we do not really need a flag singular integral.
Rather, we only need a non-degenerate kernel $K$, and then $T$ and $T^*$ are simply notation for the integrals
\begin{align*}
	Th(x) = \int_{\R^d} K(x, y) h(y)\ud y, \\
	T^* h(x) = \int_{\R^d} K(y, x)  h(y)\ud y.
\end{align*}
We only use these integrals in situations where there is plenty of separation,
such as, $x \in \wt I$ but $h$ is supported on $I$ (and so $y \in I$). That is,
we use these in off-diagonal situations.

We again omit the following standard proof, see e.g. \cite{LiMarZygCom}.
\begin{lem}[Single iteration AWF]\label{lem: one awf}
	Let $K$ be a non-degenerate flag kernel.
	Let $I = I^1 \times I^2$ be a flag rectangle and $f \in L^1$ be a function
	that is supported on $I$ with $\int_I f = 0$. Then
	$$
		f = h_I T^* 1_{\wt I} - 1_{\wt I} Th_I + e_{\wt I},
	$$
	where
	\begin{enumerate}
		\item $|h_I| \lesssim A|f|$,
		\item $|e_{\wt I}| \lesssim \omega(CA^{-1/d})\langle |f| \rangle_I 1_{\wt I}$,
		\item $\int e_{\wt I} = 0$.
	\end{enumerate}
\end{lem}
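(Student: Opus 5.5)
We prove the Single iteration AWF (Lemma \ref{lem: one awf}) by the standard explicit construction, defining $h_I$ by dividing $f$ by the function $T^*1_{\wt I}$. By the preceding lemma on reflected flag rectangles, for $y \in I$ we have $|T^*1_{\wt I}(y)| = |\int_{\wt I} K(x,y)\ud x| \sim A^{-1}$ once $A$ is large. In particular $T^*1_{\wt I}$ does not vanish on $I$, and we set
$$
	h_I := \frac{f}{T^*1_{\wt I}}.
$$
This is supported on $I$ and satisfies $|h_I| \lesssim A|f|$, which is property (1).

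We then define the error term by
$$
	e_{\wt I} := f - h_I T^*1_{\wt I} + 1_{\wt I} Th_I = 1_{\wt I} Th_I.
$$
The identity $f = h_I T^*1_{\wt I} - 1_{\wt I}Th_I + e_{\wt I}$ then holds trivially, since $h_I T^*1_{\wt I} = f$ by construction. We use only the kernel integrals, which are legitimate here because $\wt I$ and $I$ are well separated in both parameters: $\dist(I^i, \wt I^i) \sim A^{1/d}\ell(I^i)$.

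For property (3), Fubini and the definition of $T^*$ give
$$
	\int e_{\wt I} = \int_{\wt I} Th_I = \int h_I\, T^*1_{\wt I} = \int f = 0.
$$

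For property (2) we use the cancellation $\int h_I T^*1_{\wt I} = \int f = 0$ to gain smallness. Write $c := K(c_{\wt I}, c_I)$, so that $|c| \sim 1/(A|I|)$. For $x \in \wt I$ we decompose
$$
	Th_I(x) = \int_I [K(x,y) - c] h_I(y)\ud y + c\int_I h_I.
$$
The first term is bounded by $\omega(CA^{-1/d})\frac{1}{A|I|}\cdot A\|f\|_{L^1} \lesssim \omega(CA^{-1/d})\langle |f|\rangle_I$, using the lemma on reflected rectangles together with $|h_I| \lesssim A|f|$.

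The second term requires more care, and it is the main step. Write
$$
	\int_I h_I = \int_I h_I \frac{T^*1_{\wt I}(y) - \gamma}{\gamma}\ud y, \qquad \gamma := \int_{\wt I} K(x, c_I)\ud x,
$$
which is valid because $\int h_I T^*1_{\wt I} = 0$. By the same kernel-oscillation lemma, $|T^*1_{\wt I}(y) - \gamma| \lesssim \omega(CA^{-1/d})A^{-1}$, while $|\gamma| \sim A^{-1}$. Hence
$$
	\Big|\int_I h_I\Big| \lesssim \omega(CA^{-1/d}) A\|f\|_{L^1}.
$$
Multiplying by $|c| \sim 1/(A|I|)$ gives $\omega(CA^{-1/d})\langle |f|\rangle_I$ again. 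This establishes property (2).

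The only flag-specific input is that $\wt I$ is again a flag rectangle. This follows because the side lengths of $\wt I^i$ equal those of $I^i$, and the non-degeneracy assumption is used with $\delta_1 \le \delta_2$, which is available precisely because $I$ is flag. Apart from this, the argument follows \cite{LiMarZygCom} verbatim.
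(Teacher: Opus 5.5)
Your proof is correct and is the standard AWF construction that the paper omits and defers to \cite{LiMarZygCom}. You set $h_I = f/T^*1_{\wt I}$ and $e_{\wt I} = 1_{\wt I}Th_I$, and you obtain the smallness of $e_{\wt I}$ from the reflected-rectangle oscillation bounds together with the cancellation $\int h_I\, T^*1_{\wt I} = \int f = 0$. The only slip is a harmless sign error: the identity should read $\int_I h_I = \int_I h_I \frac{\gamma - T^*1_{\wt I}(y)}{\gamma}\ud y$, which does not affect the absolute-value bound.
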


We want a few variants and corollaries of this for later use.
It is useful to iterate this one more time to make the error term $e$
be supported in $I$ instead of $\wt I$.
\begin{lem}[twice iterated AWF]
	Let $K$ be a non-degenerate flag kernel.
	Let $I = I^1 \times I^2$ be a flag rectangle, and let $f \in L^1$ be a function that is supported on $I$ with $\int_I f = 0$. Then
	\begin{align*}
		f & = [h_I T^* 1_{\wt I} - 1_{\wt I} Th_I] +
		      [h_{\wt I} T 1_{I} - 1_{I} T^*h_{\wt I}]
		+ e_{I},
	\end{align*}
	where
	\begin{enumerate}
		\item $|h_I| \lesssim A|f|$,
		\item $|h_{\wt I}| \lesssim A\langle |f|\rangle_I 1_{\wt I}$,
		\item $|e_{I}| \lesssim \omega(CA^{-1/d})\langle |f| \rangle_I 1_{I}$,
		\item $\int e_{I} = 0$.
	\end{enumerate}
\end{lem}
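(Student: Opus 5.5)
The plan is to apply Lemma \ref{lem: one awf} twice. The first application is to $f$ itself. The second is to the error term $e_{\wt I}$, which has mean zero, is supported on $\wt I$, and is small in size; this moves the error back to $I$.

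First I apply Lemma \ref{lem: one awf} to $f$ on $I$. This gives
\[
f = h_I T^*1_{\wt I} - 1_{\wt I}Th_I + e_{\wt I},
\]
where
\[
|h_I|\lesssim A|f|,\qquad |e_{\wt I}|\lesssim \omega(CA^{-1/d})\langle|f|\rangle_I1_{\wt I},\qquad \int e_{\wt I}=0.
\]

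Next I apply the lemma to $e_{\wt I}$, with $\wt I$ now playing the role of the base rectangle. The reflection of $\wt I$ should be taken to be $I$ itself. To arrange this, I use the adjoint kernel $K^*(x,y)=K(y,x)$. By the construction of $\wt I$ and the preceding lemma, this kernel satisfies
\[
|K^*(c_I,c_{\wt I})|\sim \frac{1}{A|I|}
\]
together with the oscillation bounds for $x\in I$, $y\in\wt I$. These are exactly the inputs used in the proof of the single-iteration AWF; full non-degeneracy is not needed, only the existence of a reflected partner with these estimates. Note also that $\wt I$ is flag with $|\wt I|=|I|$.

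Rerunning the single-iteration argument with the roles of $T$ and $T^*$ swapped gives
\[
e_{\wt I}=h_{\wt I}T1_I-1_IT^*h_{\wt I}+e_I.
\]
The bounds follow directly:
\[
|h_{\wt I}|\lesssim A|e_{\wt I}|\lesssim A\,\omega(CA^{-1/d})\langle|f|\rangle_I1_{\wt I}\le A\langle|f|\rangle_I1_{\wt I},
\]
since $\omega$ is bounded on $[0,1]$. Also $\int e_I=0$, and
\[
|e_I|\lesssim \omega(CA^{-1/d})\langle|e_{\wt I}|\rangle_{\wt I}1_I\lesssim \omega(CA^{-1/d})^2\langle|f|\rangle_I1_I,
\]
which is at most $\omega(CA^{-1/d})\langle|f|\rangle_I1_I$ up to a constant. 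Substituting this expression for $e_{\wt I}$ into the first decomposition gives the claimed formula.

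The main obstacle is justifying that the single-iteration lemma applies with base $\wt I$ and reflection $I$. That lemma is stated with the reflection produced by non-degeneracy, not with a prescribed one. To handle this, I will check that its proof uses only three things: the two-sided kernel estimate of the preceding lemma, with the roles of $x$ and $y$ swapped (this holds because the regularity assumptions are symmetric in $y$); the relation $\int_I K(x,y)\,dy\sim 1/A$ for $x\in\wt I$, and its counterpart; and the construction
\[
h_{\wt I}=\frac{e_{\wt I}}{T1_I}\quad\text{on } \wt I,
\]
where the denominator is nonvanishing with size $\sim 1/A$. Once this is checked, the rest is the routine bookkeeping above.
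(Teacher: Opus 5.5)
Your proposal is correct and follows the paper's proof: apply Lemma \ref{lem: one awf} to $f$, then apply it again to $e_{\wt I}$ with $1_{\wt I}$ replaced by $1_I$ and $T$ replaced by $T^*$, so that $h_{\wt I}=e_{\wt I}/T1_I$ on $\wt I$. Your extra check — that the single-iteration argument uses only the two-sided estimates of the preceding lemma, so $I$ can serve as the prescribed reflection of $\wt I$ — is exactly what the paper leaves implicit, and your bookkeeping is right, including absorbing $\omega(CA^{-1/d})^2$ into $\omega(CA^{-1/d})$.
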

\begin{proof}
	Use the single iteration AWF twice: first, apply it to $f$ to get a decomposition as in Lemma \ref{lem: one awf}, and then apply it to the term $e_{\wt I}$ arising in that decomposition, with $1_{\wt I}$ replaced by $1_I$ and $T$ with $T^*$.
\end{proof}

Next, we will bound oscillations
$$
	\osc(b, I) := \frac{1}{|I|}\int_I |b-\langle b \rangle_I|,
$$
where $I$ is a flag rectangle, using the AWF.
\begin{lem}\label{lem:AWFCorBdd}
	Let $K$ be a non-degenerate flag kernel.
	Let $b \in L^{1}_{\loc}$, and let the constant $A$ be large enough. Then for all flag rectangles $I$ there exist functions
	$$
		\varphi_{I, 1}, \, \varphi_{I,2}, \,
		\psi_{\wt I, 1}, \, \psi_{\wt I, 2}
	$$
	so that for $i = 1,2$ we have
	$$
		|\varphi_{I, i}|\lesssim 1_{I} \qquad \textup{and} \qquad |\psi_{\wt I,i}|\lesssim |I|^{-1}1_{\wt I}
	$$
	and
	$$
		\osc(b, I) \lesssim \sum_{i=1}^2 |\langle [b,T]\varphi_{I,i}, \psi_{\wt I, i}\rangle|.
	$$
\end{lem}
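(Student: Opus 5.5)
We apply the twice iterated AWF to the function $f := (b - \langle b \rangle_I)\,\sigma_I$, where $\sigma_I := \mathrm{sgn}(b-\langle b\rangle_I)1_I - \langle \mathrm{sgn}(b-\langle b\rangle_I)1_I\rangle_I 1_I$. Equivalently, we use duality: we test $b$ against a bounded mean-zero function supported on $I$. Concretely, take $f := \sigma_I$, which satisfies $|f| \lesssim 1_I$ and $\int f = 0$, and observe that
$$
|I|\,\osc(b,I) = \int_I (b-\langle b\rangle_I)\,\mathrm{sgn}(b-\langle b\rangle_I) = \int b\, f.
$$
Apply the twice iterated AWF to this $f$:
$$
f = [h_I T^* 1_{\wt I} - 1_{\wt I} Th_I] + [h_{\wt I} T 1_{I} - 1_{I} T^*h_{\wt I}] + e_{I},
$$
with $|h_I|\lesssim A 1_I$, $|h_{\wt I}| \lesssim A 1_{\wt I}$, $|e_I| \lesssim \omega(CA^{-1/d}) 1_I$ and $\int e_I = 0$.

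Pair $b$ against each piece. For the first bracket we have
$$
\int b\,[h_I T^*1_{\wt I} - 1_{\wt I}Th_I] = \langle T(b h_I), 1_{\wt I}\rangle - \langle T h_I, b1_{\wt I}\rangle = -\langle [b,T]h_I, 1_{\wt I}\rangle.
$$
This pairing is well defined since $I$ and $\wt I$ are separated in both parameters and only the kernel $K$ enters. Similarly, the second bracket gives $\langle [b,T]1_I, h_{\wt I}\rangle$, up to sign, after moving $T^*$ over. So we set $\varphi_{I,1} := h_I/A$, $\psi_{\wt I,1} := A 1_{\wt I}/|I|$, $\varphi_{I,2} := 1_I$, $\psi_{\wt I,2} := h_{\wt I}/|I|$. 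With $A$ a fixed constant, these have the required size and support bounds, and the two brackets contribute $|I|\sum_i |\langle [b,T]\varphi_{I,i},\psi_{\wt I,i}\rangle|$ up to an $A$-dependent constant.

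For the error term, $\int e_I = 0$ gives $|\int b\,e_I| = |\int (b-\langle b\rangle_I)e_I| \le C\omega(CA^{-1/d}) |I|\osc(b,I)$. Dividing by $|I|$ we obtain
$$
\osc(b,I) \le C_A \sum_i |\langle [b,T]\varphi_{I,i},\psi_{\wt I,i}\rangle| + C\omega(CA^{-1/d})\osc(b,I).
$$
Choosing $A$ large so that $C\omega(CA^{-1/d}) \le 1/2$ lets us absorb the last term, provided $\osc(b,I)<\infty$. That holds since $b\in L^1_{\loc}$.

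The main obstacle is justifying the identity $\int b\,f = -\langle [b,T]\varphi,\psi\rangle$ when $b$ is merely locally integrable. The products $b h_I$ and $b 1_{\wt I}$ are integrable and compactly supported on separated sets. Hence the off-diagonal kernel integrals converge absolutely: $|K|\lesssim \size_F$ is bounded on $\wt I\times I$ by the separation in the first parameter. Fubini then gives the identity. This is exactly why the off-diagonal interpretation of $T$ stated before the lemma suffices.
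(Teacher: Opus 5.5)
Your proof is correct and is essentially the paper's argument. You test $b$ against a bounded mean-zero function supported on $I$; your explicit $f=\sigma_I$, with $\int bf=|I|\osc(b,I)$, is a concrete instance of the paper's $f$. You then apply the twice iterated AWF, identify the two brackets as $-\langle [b,T]h_I,1_{\wt I}\rangle$ and $\langle [b,T]1_I,h_{\wt I}\rangle$, and absorb $\langle b,e_I\rangle$ by taking $A$ large. The rescalings by $A$ and your remarks on absolute convergence and on finiteness of $\osc(b,I)$ are harmless additions. Delete the opening phrase about applying the AWF to $(b-\langle b\rangle_I)\sigma_I$: that function is not bounded, and it is not the one you actually use.
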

\begin{proof}
	Choose $f$ with $\|f\|_{L^{\infty}} \le 1$, $\supp f \subset I$ and $\int_I f = 0$ so that
	$$
		|I|\osc(b, I) =
		\int_I |b-\langle b \rangle_I| \lesssim |\langle b, f\rangle|.
	$$
	Using the twice iterated AWF, we write
	$$
		\langle b, f\rangle = -\langle [b, T]h_I, 1_{\wt I}\rangle
		+ \langle [b,T]1_I, h_{\wt I}\rangle
		+ \langle b, e_I\rangle,
	$$
	where
	\begin{align*}
		|\langle b, e_I\rangle| & \lesssim \omega(CA^{-1/d}) \langle |f| \rangle_I \int_I |b-\langle b \rangle_I| \\
		                        & \le \omega(CA^{-1/d}) |I| \osc(b,I).
	\end{align*}
	This shows that
	\begin{align*}
		\osc(b,I) & \lesssim |\langle [b, T]h_I, |I|^{-1}1_{\wt I}\rangle| \\
		          & + |\langle [b,T]1_I, |I|^{-1}h_{\wt I}\rangle|
		+ \omega(CA^{-1/d}) \osc(b,I).
	\end{align*}
	Fixing $A$ large enough we get
	$$
		\osc(b,I) \lesssim |\langle [b, T]\varphi_{I, 1}, \psi_{\wt I, 1}\rangle|
		+ |\langle [b,T]\varphi_{I, 2}, \psi_{\wt I, 2}\rangle|,
	$$
	where
	\begin{align*}
		\varphi_{I, 1} := h_I \qquad                 & \textup{and} \qquad \varphi_{I, 2} := 1_I, \\
		\psi_{\wt I, 1} := |I|^{-1} 1_{\wt I} \qquad & \textup{and} \qquad \psi_{\wt I, 2}
		:= |I|^{-1} h_{\wt I}.
	\end{align*}
\end{proof}
The constant $A$ is now \textbf{fixed} large enough so that conclusions as in the lemma above hold.

\subsubsection{Necessity results for boundedness}
We define an off-diagonal constant that can be used instead of the full
norm of the commutator $[b,T]$. That is, for these boundedness considerations, we only need the kernel $K$ and the off-diagonal constants instead of an actual SIO and the related commutator.
\begin{defn}
	For $u, t \in (1,\infty)$ and $B(x,y) = b(x)-b(y)$ define
	\begin{align*}
		\Off_{u}^{t} =  \sup \frac{1}{|P_1|^{1+1/u-1/t}}
		\Big| \iint_{\R^{d}\times \R^d} B(x,y)
		K(x,y) f_1(y)f_2(x) \ud y \ud x \Big|,
	\end{align*}
	where the supremum is taken over flag rectangles $P_1 = J^1 \times J^2$
	and $P_2 = L^1 \times L^2$ with
	$$
		\ell(J^i) = \ell(L^i) \qquad \textup{and} \qquad
		\dist (J^i, L^i)\sim \ell(J^i)
	$$
	and over functions $f_i \in L^{\infty}(P_i)$ with $\|f_j\|_{L^{\infty}} \le 1$.
\end{defn}
\begin{lem}\label{lem:off_est}
	Let $K$ be a non-degenerate flag kernel and $b \in L^1_{\loc}$. Then we have
	$$
		\osc(b, I) \lesssim \Off_u^t |I|^{1/u-1/t}, \qquad 1 < u,t < \infty,
	$$
	for all flag rectangles $I$.
\end{lem}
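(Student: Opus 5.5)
Fix a flag rectangle $I$ and apply Lemma \ref{lem:AWFCorBdd}. This gives
$$
	\osc(b, I) \lesssim \sum_{i=1}^2 |\langle [b,T]\varphi_{I,i}, \psi_{\wt I, i}\rangle|,
$$
where $|\varphi_{I,i}| \lesssim 1_I$ and $|\psi_{\wt I,i}| \lesssim |I|^{-1}1_{\wt I}$. The reflected rectangle $\wt I$ is again flag, has the same side lengths as $I$, and satisfies $\dist(I^m,\wt I^m) \sim A^{1/d}\ell(I^m)$, where $A$ is now a fixed constant. Since $\varphi_{I,i}$ and $\psi_{\wt I,i}$ have separated supports in both parameters, the commutator pairing is given by the absolutely convergent kernel integral
$$
	\langle [b,T]\varphi, \psi\rangle = \iint B(x,y)K(x,y)\varphi(y)\psi(x)\ud y\ud x,
$$
with the sign convention of $B$ immaterial. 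The plan is to compare each such term with the definition of $\Off_u^t$.

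Set $f_1 := c\,\varphi_{I,i}$ and $f_2 := c\,|I|\psi_{\wt I,i}$, where $c$ is a small absolute constant chosen so that $\|f_j\|_{L^\infty}\le 1$. Take $P_1 := I$ and $P_2 := \wt I$. With this choice,
$$
	|\langle [b,T]\varphi_{I,i},\psi_{\wt I,i}\rangle| \lesssim |I|^{-1}\Big|\iint B K f_1 f_2\Big| \le |I|^{-1}\,\Off_u^t\,|I|^{1+1/u-1/t} = \Off_u^t |I|^{1/u-1/t}.
$$
Summing over $i=1,2$ then gives the claim.

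The only point needing care is that the pair $(P_1,P_2) = (I,\wt I)$ is admissible in the supremum defining $\Off_u^t$. Both rectangles are flag with $\ell(J^i)=\ell(L^i)$, which holds by construction. The definition, however, asks for $\dist(J^i,L^i)\sim \ell(J^i)$, whereas here we have $\dist \sim A^{1/d}\ell$. Because $A$ is a fixed absolute constant, the comparability constants in "$\sim$" may depend on $A$, so this is harmless. I expect this bookkeeping, together with confirming that the reflection keeps $\wt I$ flag (it does, since the side lengths are copied from $I$), to be the only substantive step; everything else is a direct substitution.
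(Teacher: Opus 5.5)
Your proof is correct and follows the paper's argument. You apply Lemma~\ref{lem:AWFCorBdd}, rescale $\varphi_{I,i}$ and $|I|\psi_{\wt I,i}$ to have $L^\infty$ norm at most one, and test the definition of the off-diagonal constant with $P_1 = I$ and $P_2 = \wt I$; your explicit check of admissibility (flagness of $\wt I$, and $\dist \sim \ell$ with constants depending on the fixed $A$) is exactly what the paper leaves implicit in ``it follows directly from the definition''.
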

\begin{proof}
	Using AWF as above, estimate
	$$
		\osc(b, I) \lesssim \sum_{i=1}^2 |I|^{-1}|\langle [b,T]\varphi_{I,i}, |I|\psi_{\wt I, i}\rangle|,
	$$
	where $\|\varphi_{I, i}\|_{L^{\infty}} \lesssim 1$ and $\||I| \psi_{\wt I, i}\|_{L^{\infty}} \lesssim 1$,
	and the functions $\varphi_{I, i}, \psi_{\wt I, i}$ are supported on $I$ and $\wt I$, respectively.
	It follows directly from the definition that
	$$
		|I|^{-1}|\langle [b,T]\varphi_{I,i}, |I|\psi_{\wt I, i}\rangle| \lesssim \Off_u^t |I|^{1/u-1/t}
	$$
	so we are done.
\end{proof}

We now use this to quickly obtain a necessary condition for $\Off_p^p < \infty$,
in particular, for $\|[b, T]\|_{L^p \to L^p} < \infty$, $1 < p < \infty$.
Let $\calR_F$ denote the collection of flag rectangles and recall that
$$
	\|b\|_{\bmo_F} = \sup_{I \in \calR_F} \osc(b,I).
$$
\begin{thm}\label{thm:ppbddnec}
	Let $K$ be a non-degenerate flag kernel and $b \in L^1_{\loc}$. Then we have
	$$
		\|b\|_{\bmo_F} \lesssim \Off_p^p, \qquad 1 < p < \infty.
	$$
\end{thm}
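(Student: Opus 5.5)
This follows directly from Lemma \ref{lem:off_est} with $u = t = p$: the exponent $1/u - 1/t$ vanishes, so for every flag rectangle $I$ we get $\osc(b, I) \lesssim \Off_p^p$. Taking the supremum over $I \in \calR_F$ gives $\|b\|_{\bmo_F} \lesssim \Off_p^p$, since this supremum is exactly the definition of $\|b\|_{\bmo_F}$.

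For completeness, here is how the ingredients of Lemma \ref{lem:off_est} combine. Lemma \ref{lem:AWFCorBdd} bounds $\osc(b,I)$ by the pairings $|\langle [b,T]\varphi_{I,i}, \psi_{\wt I,i}\rangle|$. In these pairings $\varphi_{I,i}$ is bounded by a constant and supported on $I$, while $|I|\psi_{\wt I,i}$ is bounded and supported on $\wt I$.

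Since $I$ and $\wt I$ are separated in each parameter with $\dist(I^i,\wt I^i)\sim \ell(I^i)$ (this is the reflection construction with $A$ fixed), the commutator pairing equals the off-diagonal kernel integral
$$\iint B(x,y)K(x,y)\varphi_{I,i}(y)\,|I|\psi_{\wt I,i}(x)\,\ud y\, \ud x$$
divided by $|I|$. Here the roles are $P_1 = I$ and $P_2 = \wt I$, and both are flag rectangles. With $u=t$ the normalisation in the definition of $\Off_p^p$ is $|P_1|^{1}$, so this integral is at most $\Off_p^p|I|$.

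The only point needing care is that $\wt I$ qualifies as an admissible $P_2$ in the definition of $\Off$. Its side lengths match those of $I$ and its distances satisfy $\dist(I^i,\wt I^i)\sim A^{1/d}\ell(I^i)\sim \ell(I^i)$ once $A$ is fixed. This was already arranged before Lemma \ref{lem:off_est}, so no real obstacle remains.

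As a remark, if $[b,T]$ is bounded on $L^p$ then Hölder's inequality gives $\Off_p^p\lesssim \|[b,T]\|_{L^p\to L^p}$, since $\|f_1\|_{L^p}\|f_2\|_{L^{p'}}\le |P_1|^{1/p+1/p'}=|P_1|$. This yields the usual necessity statement.
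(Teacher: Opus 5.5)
Your proof is correct and is the same as the paper's: set $u=t=p$ in Lemma \ref{lem:off_est} so that the power of $|I|$ disappears, then take the supremum over all flag rectangles. Your unpacking of that lemma (the AWF pairings with $P_1=I$, $P_2=\wt I$ admissible once $A$ is fixed) and your H\"older remark $\Off_p^p\lesssim\|[b,T]\|_{L^p\to L^p}$ are both accurate.
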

\begin{proof}
	By the previous lemma
	$$
		\osc(b, I) \lesssim \Off_p^p
	$$
	for all flag rectangles, and so we are done.
\end{proof}

\section{Cancellative tri-parameter flag representation theorem}
This is one of the heaviest sections in terms of technical details and notation. Although it is similar in spirit to the bi-parameter case (with certain details being easier because we are in the cancellative setting), the fact of the matter is that there are additional cases that need to be considered. While certain parts of the bi-parameter section were written in a more ad hoc style due to the smaller number of cases, we focus the presentation of this section to give the reader an idea of how everything generalizes to the case of $k$-parameter cancellative flag CZOs. We present below a mini-outline for how this section is organized, drawing on analogies with the bi-parameter case when possible.
\begin{itemize}
	\item In Section~\ref{subsec:tripar-blocks}, we set the stage by studying how different amounts of cancellation lead to different philosophies on the number of parameters we consider a shift to have. We saw in the bi-parameter case that studying quantities like $\Sigma_{\Delta,D}$ leads to the consideration of sums over terms of the form e.g.
	      \[
		      B(\Delta_{I^1}\Delta_{I^2}f, E_{J^1}\Delta_{J^2} g),
	      \]
	      where $\ell(I)=\ell(J)$ and they live in some dyadic lattice like $\mathcal{D}_<$. As we have seen before, in this case we will perform a paraproduct correction in the first parameter, we are left with a quantity like
	      \[
		      B(h_{I^1} \otimes h_{I^2}, h_{J^1}^0 \otimes h_{J^2})
		      \langle f, h_{I^1} \otimes h_{I^2}\rangle \langle g, H_{I^1, J^1} \otimes h_{J^2} \rangle.
	      \]
	      Although the function $H_{I^1, J^1} \otimes h_{J^2}$ has the bi-parameter style cancellation, we still split it into two terms: one with $\ell(I^2)>2^{k^1} \ell(I^1)$ and the other with $\ell(I^2)\le 2^{k^1} \ell(I^1)$, and we call them bi-parameter flag shifts and almost one-parameter shifts, respectively. This means that we track not only cancellation but also the size of the summands.   In our tri-parameter flag representation theorem we will study  four different cancellation philosophies.
	\item In Section~\ref{subsec:tripar-case-study}, we present a model case of how to bound a single term out of the 27 that arise in our decomposition.
	\item In Section~\ref{subsec:tripar-statement}, we state the representation theorem, whose form is motivated by the concrete calculation in Section~\ref{subsec:tripar-case-study}.
	\item In Section~\ref{sec:bilinforms}, we address how to deal with summation for abstract bilinear forms, which will be applied to terms that actually arise when proving the representation theorem.
	\item In Section~\ref{subsec:tripar-shift-details}, we discuss the shifts in more detail, focusing on the support conditions that arise for each of the $4$ types of shift operators that appear.
	\item In Section~\ref{subsec:tripar-algorithm}, we develop a set of ``legal moves" for getting sufficient cancellation among the various groupings of the parameters. With these established, we describe an algorithm that can address all 27 of the cases. The basic idea is to use paraproduct corrections to get enough cancellation, though some care must be taken as far as in which cases this can be done productively.

\end{itemize}

\subsection{Tri-parameter martingale block expansions}\label{subsec:tripar-blocks}
In this setting we will assume that $k = (k^1, k^2, k^3)$, $k^m \ge 0$, is fixed and that $K \in \calD^k_F$, which means that
$K = K^1 \times K^2 \times K^3\in \calD = \calD^1 \times \calD^2 \times \calD^3$ and
$2^{-k^1}\ell(K^1) \le 2^{-k^2}\ell(K^2)\le 2^{-k^3}\ell(K^3)$. Our basic assumption
is that $\varphi = \varphi_K$ satisfies $\varphi_K = 1_K \varphi_K$
and $\varphi_K$ is constant on $I = I^1 \times I^2 \times I^3\in \calD$ with
$\ell(I^1) < 2^{-k^1}\ell(K^1)$, $\ell(I^2) < 2^{-k^2}\ell(K^2)$ and $\ell(I^3) < 2^{-k^3}\ell(K^3)$.
Depending on the situation, we assume different cancellation properties.

\subsubsection*{One-parameter expansion} This means that we have $\int_{\R^d} \varphi_K=0 $. We will fix
$l =(l^1, l^2)\in \N^2$ so that $2^{-k^1}\ell(K^1) = 2^{-l^1 } 2^{-k^2} \ell(K^2)$
and $2^{-k^2}\ell(K^2) = 2^{-l^2 } 2^{-k^3} \ell(K^3)$. We expand
\[
	\langle f, \varphi_K\rangle= \left\langle \sum_{\substack{I\in \calD \\ \ell(I^1)=2^{-l^1} \ell(I^2) \\
			\ell(I^2)=2^{-l^2} \ell(I^3) }} \Delta_I f, \varphi_K\right\rangle,
\]
where $\Delta_I$ is a one-parameter martingale difference. Of course, we can assume that $I\cap K\neq \emptyset$
in the summation. If $\ell(I^1)<2^{-k^1}\ell(K^1)$, then we have
\[
	\ell(I^2)=2^{l^1}\ell(I^1)<2^{l^1-k^1}\ell(K^1)= 2^{-k^2} \ell(K^2)
\]
and
\[
	\ell(I^3)=2^{l^2}\ell(I^2)<2^{l^2-k^2}\ell(K^2)= 2^{-k^3} \ell(K^3).
\]
Hence,
\[
	\langle \Delta_I f, \varphi_K\rangle = \langle \varphi_K\rangle_I \int_{\R^d} \Delta_I f =0.
\]
Thus, we must have $\ell(I^1)\ge 2^{-k^1} \ell(K^1)$, $\ell(I^2)\ge 2^{-k^2} \ell(K^2)$ and
$\ell(I^3)\ge 2^{-k^3} \ell(K^3)$.

Next we show that $I$ cannot be too `big'. More precisely, we show that $I\subset K^{(\tau)}$, where $\tau=(\tau^1, \tau^2,
	\tau^3)$ with
\[
	\tau^i= \max\{k^1, k^2, k^3\}-k^i.
\]
We give a slight variant of the argument from the bi-parameter case. Since $I$ and $K^{(\tau)}$ are in the same one-parameter rectangular grid, suppose on the contrary that $I\not\subset K^{(\tau)}$.
Then we must have $I^j\supsetneq (K^{(\tau)})^j$ for each $j=1,2,3$, and we deduce
\[
	\langle \Delta_I f, \varphi_K\rangle =\langle \Delta_I f\rangle_K \int_{\R^d} \varphi_K =0.
\]

We have shown that
\[
	\langle f, \varphi_K\rangle = \left\langle 1_K \sum_{\substack{I\subset K^{(\tau)} \\ \ell(I^1)=2^{-l^1}\ell(I^2)=2^{-l^1-l^2}
			\ell(I^3) \\ \ell(K^m)\le 2^{k^m} \ell(I^m)}}\Delta_I f, \varphi_K\right\rangle =: \langle \calY_{K,k,l}f,
	\varphi_K\rangle.
\]
Similarly to Lemma \ref{lem:auxsfY}, we have the following result.
\begin{lem}\label{lem:triparauxsfY}
	Let
	\[
		\calY_{k, l} f := \Big(\sum_{ K\in \calD} |\calY_{K, k, l}f|^2\Big)^{1/2}.
	\]
	Then for all $p\in (1, \infty)$ and $w\in A_{p, F}$
	we have $$
		\|\calY_{k, l} f\|_{L^p(w)} \lesssim (|k|+1)^{3/2} \|f\|_{L^p(w)}.
	$$
\end{lem}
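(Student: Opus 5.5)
We follow the proof of Lemma \ref{lem:auxsfY} essentially verbatim, now with a tri-parameter dilated grid. By extrapolation (Lemma \ref{lem:extra}) it suffices to treat $p=2$ and $w \in A_{2,F}$. Fix $k$ and $l=(l^1,l^2)$, and let
$$\calD(l) := \{I \in \calD \colon \ell(I^1) = 2^{-l^1}\ell(I^2) = 2^{-l^1-l^2}\ell(I^3)\}.$$
This is a one-parameter rectangular dyadic lattice in $\R^d$, and its elements are flag rectangles. Hence every $w \in A_{2,F}$ is an $A_2$ weight for this lattice, with constant at most $[w]_{A_{2,F}}$ uniformly in $l$. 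Consequently the one-parameter weighted square function upper and lower bounds hold in $\calD(l)$ with constants independent of $l$. We also use orthogonality: $\Delta_I\Delta_J f = \delta_{I,J}\Delta_I f$ for $I, J \in \calD(l)$.

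We group the $K$ by their ancestor $R = K^{(\tau)}$, where $\tau = \tau_k$ as in Section \ref{subsec:tripar-blocks}. The functions $\calY_{K,k,l}f$ with $K^{(\tau)} = R$ are, up to the factor $1_K$, restrictions to the disjoint sets $K \subset R$ of the single function
$$\sum_{\substack{I\subset R,\ I \in \calD(l) \\ \ell(R^m)\le 2^{\max_j k^j}\ell(I^m)}} \Delta_I f.$$
Here we use $\ell(K^m) = 2^{-\tau^m}\ell(R^m)$ and $k^m+\tau^m = \max_j k^j$. This gives
$$\|\calY_{k,l}f\|_{L^2(w)}^2 = \sum_{R}\Big\|\sum_{K^{(\tau)}=R}\calY_{K,k,l}f\Big\|_{L^2(w)}^2 \lesssim \sum_R \sum_{\substack{I\subset R,\ I\in\calD(l)\\ \ell(R^m)\le 2^{\max_j k^j}\ell(I^m)}} \|\Delta_I f\|_{L^2(w)}^2,$$
where the inequality is the weighted square function lower bound in $\calD(l)$ combined with orthogonality.

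Next we interchange the order of summation. For a fixed $I$, the number of admissible $R \supset I$ is the number of admissible ancestor triples in the three parameters. This is at most $\prod_{m=1}^3(\max_j k^j + 1) \lesssim (|k|+1)^3$. Note that $R$ need not lie in $\calD(l)$, so we simply count ancestors parameter by parameter. The weighted square function upper bound in $\calD(l)$ then gives $\sum_{I\in\calD(l)}\|\Delta_I f\|_{L^2(w)}^2 \lesssim \|f\|_{L^2(w)}^2$. Taking square roots yields the factor $(|k|+1)^{3/2}$.

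The only point needing care is the claim that $w\in A_{2,F}$ is uniformly $A_2$ on the dilated lattice $\calD(l)$, since the square function bounds there rest on it. This follows because every rectangle in $\calD(l)$ satisfies $\ell(I^1)\le\ell(I^2)\le\ell(I^3)$, so the $A_2$ supremum over $\calD(l)$ is dominated by the $A_{2,F}$ supremum. The rest is bookkeeping of $\tau$.
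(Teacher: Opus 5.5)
Your proof is correct and is essentially the paper's argument. Like the paper, you reduce to $p=2$ by extrapolation, group the $K$ by $R=K^{(\tau)}$, and apply the weighted lower and upper square function bounds in the dilated lattice $\calD(l)$, on which $A_{2,F}$ weights are uniformly $A_2$. You then count the at most $(1+\max_j k^j)^3$ admissible ancestors $R\supset I$, which is exactly the tri-parameter adaptation of the proof of Lemma \ref{lem:auxsfY}.
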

\begin{proof}
	The bulk of the proof is analogous to that of Lemma \ref{lem:auxsfY}, leading to an estimate like
	\[
		\|\calY_{k, l} f\|_{L^2(w)}^2 \lesssim \sum_{\substack{I\in\calD \\ \ell(I^1)=2^{-l^1}\ell(I^2)=2^{-l^1-l^2}
				\ell(I^3)}} \|\Delta_I f\|_{L^2(w)}^2\left( \sum_{\substack{R\supset I \\ \ell(R^m)\le 2^{\max_j k^j}\ell(I^m),\ m=1,2,3}} 1\right).
	\]
	For each $I$, each coordinate of $R$ has at most $1+\max_j k^j$ possible ancestor generations. Hence
	\[
		\sum_{\substack{R\supset I \\ \ell(R^m)\le 2^{\max_j k^j}\ell(I^m),\ m=1,2,3}}1
		\le (1+\max_j k^j)^3 \le (|k|+1)^3.
	\]
	The weighted one-parameter square function upper bound and taking square roots give the desired $L^2(w)$ estimate. Extrapolation completes the proof.
\end{proof}
\subsubsection*{Bi-parameter expansion: Case I}
The bi-parameter expansion contains two cases. We begin with the case when we have the cancellation
$\int_{\R^{d_{12}}}\varphi_K= \int_{\R^{d_3}} \varphi_K=0$. Fix $l^1$ so that $2^{-k^1}\ell(K^1)=
	2^{-l^1} 2^{-k^2}\ell(K^2)$. Write
\[
	\langle f, \varphi_K \rangle =\left\langle \sum_{\substack{I \in \calD \\ 2^{l^1} \ell(I^1)=\ell(I^2)\le \ell(I^3) }}\tilde\Delta_I f, \varphi_K \right\rangle,
\]
where $\tilde\Delta_I =\Lambda_{I^{12}}\Delta_{I^3} $. Here $\Lambda_{I^{12}}=\Delta_{I^{12}}$ is the one-parameter martingale difference if $\ell(I^2)<\ell(I^3)$, and $\Lambda_{I^{12}}=\Delta_{I^{12}}+E_{I^{12}}$ if $\ell(I^2)=\ell(I^3)$.

As before, we assume $I \cap K \neq \emptyset$. If $\ell(I^3)< 2^{-k^3}\ell(K^3)$, then
\[
	\langle \tilde\Delta_I f, \varphi_K\rangle = \int_{\R^{d_{12}}} \langle \varphi_K\rangle_{I^3}\int_{\R^{d_3}} \tilde\Delta_I f
	=0.
\]
Thus, we may assume $\ell(I^3)\ge 2^{-k^3}\ell(K^3)$. If $\ell(I^2)=\ell(I^3)$, then
\[
	\ell(I^2)=\ell(I^3)\ge 2^{-k^3}\ell(K^3) \ge 2^{-k^2}\ell(K^2)
\]
and
\[
	\ell(I^1) = 2^{-l^1} \ell(I^2)\ge 2^{-l^1} 2^{-k^2}\ell(K^2)= 2^{-k^1}\ell(K^1).
\]
If $\ell(I^2)<\ell(I^3)$, then we have that $\tilde\Delta_I f= \Delta_{I^{12}}\Delta_{I^3} f$. Thus, if
$\ell(I^1)<2^{-k^1}\ell(K^1)$, we have $\ell(I^2)=2^{l^1} \ell(I^1)< 2^{l^1} 2^{-k^1}\ell(K^1) =2^{-k^2}\ell(K^2)$.
It follows that
\[
	\langle \tilde\Delta_I f, \varphi_K\rangle =
	\int_{\R^{d_3}} \langle \varphi_K\rangle_{I^{12}}\int_{\R^{d_{12}}} \tilde\Delta_I f=0.
\]
Hence, the only nonzero contributions occur when we have that $\ell(I^m) \ge 2^{-k^m} \ell(K^m)$ holds for $m=1,2,3$.

Next, we show that $I \subset K^{(\eta)}$, where $\eta=(\eta^1, \eta^2, 0)$ with $\eta^i=\max\{k^1, k^2\}-k^i$. The containment $I^3\subset K^3$ is obvious, and by the same argument as the bi-parameter case we have
$I^{12}\subset (K^{12})^{(\eta^{12})}$. So far, we have shown that
\[
	\langle f, \varphi_K \rangle =\left\langle 1_K\sum_{\substack{I \subset K^{(\eta)} \\ 2^{l^1} \ell(I^1)=\ell(I^2)\le \ell(I^3) \\ \ell(K^m)\le 2^{k^m} \ell(I^m)}}\tilde\Delta_I f, \varphi_K \right\rangle=:\langle \calV_{K,k,l^1} f, \varphi_K\rangle.
\]
\begin{lem}\label{lem:triparauxsfV1}
	Let
	\[
		\calV_{k, l^1} f := \Big(\sum_{ K\in \calD} |\calV_{K, k, l^1}f|^2\Big)^{1/2}.
	\]
	Then for all $p\in (1, \infty)$ and $w\in A_{p, F}$
	we have $$
		\|\calV_{k, l^1} f\|_{L^p(w)} \lesssim (|k|+1)^{3/2} \|f\|_{L^p(w)}.
	$$
\end{lem}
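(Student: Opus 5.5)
By extrapolation (Lemma \ref{lem:extra}) it suffices to treat $p=2$ with an arbitrary $w\in A_{2,F}$. The argument follows the one for Lemma \ref{lem:auxsfY}, with the one-parameter martingale differences in the scaled grid replaced by the operators $\tilde\Delta_I=\Lambda_{I^{12}}\Delta_{I^3}$. The relevant index set is
\[
	\calG_{l^1}:=\{I\in\calD\colon 2^{l^1}\ell(I^1)=\ell(I^2)\le\ell(I^3)\}.
\]
Viewing $I^{12}$ as a cube of the scaled one-parameter grid $\calD^{12}(0,l^1):=\{I^{12}\colon \ell(I^1)=2^{-l^1}\ell(I^2)\}$ in $\R^{d_{12}}$, the collection $\calG_{l^1}$ is exactly a bi-parameter flag grid in $\R^{d_{12}}\times\R^{d_3}$, in which the ``first cube'' $I^{12}$ has sidelength $\ell(I^2)$. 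The $\tilde\Delta_I$ are precisely the bi-parameter flag martingale differences of this grid. Lemma \ref{lem:orthogonal} therefore gives $\tilde\Delta_I\tilde\Delta_J=\delta_{I,J}\tilde\Delta_I$ for $I,J\in\calG_{l^1}$.

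\textbf{Step 1: weighted square function bounds.} I would first establish
\[
	\Big\|\Big(\sum_{I\in\calG_{l^1}}|\tilde\Delta_I f|^2\Big)^{1/2}\Big\|_{L^2(w)}\sim\|f\|_{L^2(w)},
\]
uniformly in $l^1$. This is Theorem \ref{thm:square} in its bi-parameter form, applied in this scaled setting. For that I need $w$ to be a bi-parameter flag weight for the grid $\calG_{l^1}$, and the check goes through Proposition \ref{prop:weight} (i).
\begin{itemize}
	\item First, $w\in A_2(\R^d)$, and in fact $w$ is $A_2$ with respect to the rectangles $I^{12}\times I^3$ with $\ell(I^2)=\ell(I^3)$. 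These are flag rectangles, since $\ell(I^1)\le\ell(I^2)=\ell(I^3)$.
	\item Second, $x_3\mapsto w(x_{12},x_3)$ is uniformly $A_2(\R^{d_3})$.
\end{itemize}
This is exactly what the proof of Theorem \ref{thm:square} uses: Lemma \ref{lem:variantbounds} in the one-parameter boundary, and the one-parameter square function in the last variable. Flag rectangles $I^{12}\times I^3$ with $\ell(I^2)\le\ell(I^3)$ are tri-parameter flag, so the relevant weighted maximal functions are dominated by $M_F$. Hence the proof of Theorem \ref{thm:square} transfers verbatim. The main thing to verify carefully is that the upper and lower bounds hold uniformly in $l^1$, and that the scaled one-parameter grid $\calD^{12}(0,l^1)$ is handled by the $A_2$ property of $w$ in the scaled grid. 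This is the same fact already used in Lemma \ref{lem:auxsfY}.

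\textbf{Step 2: counting.} By the lower bound and orthogonality,
\[
	\|\calV_{k,l^1}f\|_{L^2(w)}^2=\sum_K\|\calV_{K,k,l^1}f\|_{L^2(w)}^2\le\sum_{R}\Big\|\sum_{\substack{I\subset R,\ I\in\calG_{l^1}\\ \ell(R^m)\le 2^{k^m+\eta^m}\ell(I^m)}}\tilde\Delta_I f\Big\|_{L^2(w)}^2 .
\]
To get this, group the $K$ by $R=K^{(\eta)}$ exactly as in Lemma \ref{lem:auxsfY}. The $1_K$ cutoffs for distinct $K$ with the same $R$ are disjoint, so they recombine into a single sum. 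The lower bound then dominates the right-hand side by $\sum_R\sum_I\|\tilde\Delta_If\|_{L^2(w)}^2$. Interchanging the sums, each $I$ is counted once for every $R\supset I$ with $\ell(R^m)\le 2^{\max_j k^j}\ell(I^m)$, which gives at most $(1+\max_jk^j)^3\le(|k|+1)^3$ choices. Finally, the upper bound from Step 1 gives $\|\calV_{k,l^1}f\|_{L^2(w)}\lesssim(|k|+1)^{3/2}\|f\|_{L^2(w)}$.

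The main obstacle is Step 1, namely confirming that tri-parameter flag weights restrict to uniform bi-parameter flag weights for the scaled grid $\calG_{l^1}$. I expect this to follow from Proposition \ref{prop:weight} and the maximal function domination \eqref{eq:e0206}.
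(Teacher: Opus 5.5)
Your proposal is correct and follows essentially the paper's route. The paper proves the weighted upper bound for $\big(\sum_{I}|\tilde\Delta_I f|^2\big)^{1/2}$ by exactly the bi-parameter flag square-function argument you invoke: it handles the boundary $\ell(I^2)=\ell(I^3)$ with an almost one-parameter square function, and for $\ell(I^2)<\ell(I^3)$ inserts $E_{J^3}$ with $\ell(J^3)=\ell(I^2)$ and then uses the one-parameter square function in $\R^{d_3}$; it then says the rest is as in Lemma \ref{lem:auxsfY}. Your version packages $\{\tilde\Delta_I\}$ as a bi-parameter flag martingale system over the rescaled grid in $\R^{d_{12}}\times\R^{d_3}$ and spells out the lower bound and the $(1+\max_j k^j)^3$ count, which the paper leaves implicit.
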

\begin{proof}
	First of all, we claim that
	\begin{equation}\label{eq:Ebiparsf}
		\Big\|\Big(\sum_{\substack{I \in \calD \\ 2^{l^1} \ell(I^1)=\ell(I^2)\le \ell(I^3) }} |\tilde\Delta_I f|^2\Big)^{1/2}\Big\|_{L^p(w)}\lesssim \|f\|_{L^p(w)}.
	\end{equation}
	In fact, the proof is quite similar as in the case of the bi-parameter flag square function. We can write
	\begin{align*}
		 & \sum_{\substack{I \in \calD \\ 2^{l^1} \ell(I^1)=\ell(I^2)\le \ell(I^3) }} |\tilde\Delta_I f|^2             \\
		 & \hspace{2cm}= \sum_{\substack{I \in \calD \\ 2^{l^1} \ell(I^1)=\ell(I^2)=\ell(I^3) }} |\tilde\Delta_I f|^2+
		\sum_{\substack{I \in \calD \\ 2^{l^1} \ell(I^1)=\ell(I^2)< \ell(I^3) }} |\tilde\Delta_I f|^2                  \\
		 & \hspace{2cm}= \sum_{\substack{I \in \calD \\ 2^{l^1} \ell(I^1)=\ell(I^2)=\ell(I^3) }} |\tilde\Delta_I f|^2+
		\sum_{\substack{I \in \calD \\ 2^{l^1} \ell(I^1)=\ell(I^2)< \ell(I^3) }} \sum_{\substack{J^3\in \calD^3\\ \ell(J^3)=\ell(I^2)}} |\Delta_{I^{12}}E_{J^3} \Delta_{I^3} f|^2.
	\end{align*}
	The first term can be bounded suitably using an almost one-parameter square function in $\R^d$. For the second term, write $g_{I^3}=\Delta_{I^3}f$ and observe that
	\[
		|\Delta_{I^{12}}E_{J^3} g_{I^3}(x)|\lesssim \langle |g_{I^3}|\rangle_{I^{12}\times J^3}.
	\]
	Thus, we can bound
	\[
		\bigg\Vert \Big(\sum_{\substack{I \in \calD \\ 2^{l^1} \ell(I^1)=\ell(I^2)< \ell(I^3) }} \sum_{\substack{J^3\in \calD^3\\ \ell(J^3)=\ell(I^2)}} |\Delta_{I^{12}}E_{J^3} \Delta_{I^3} f|^2\Big)^{1/2}\bigg\Vert_{L^p(w)} \lesssim \bigg\Vert\Big(\sum_{I^3\in \calD^3} |\Delta_{I^3} f|^2\Big)^{1/2}\bigg\Vert_{L^p(w)}.
	\]
	via the almost one-parameter square function bound on $\mathbb{R}^d$; then this resulting term can be bounded using a one-parameter square function bound on $\mathbb{R}^{d_3}$. Thus, we get the claim \eqref{eq:Ebiparsf}. The rest of the argument is similar to the proof of Lemma \ref{lem:auxsfY}.
\end{proof}

\subsubsection*{Bi-parameter expansion: Case II}
Here, we have the cancellation $\int_{\R^{d_{1}}}\varphi_K= \int_{\R^{d_{23}}} \varphi_K=0$.
We fix $l^2$ so that we have $2^{-k^2}\ell(K^2)=2^{-l^2} 2^{-k^3} \ell(K^3)$. Then we can write
\begin{align*}
	\langle f, \varphi_K\rangle =  \left\langle \sum_{\substack{I \in \calD \\ \ell(I^1)\le \ell(I^2)=2^{-l^2}  \ell(I^3) }}\hat\Delta_I f, \varphi_K \right\rangle,
\end{align*}
where $\hat\Delta_I = \Lambda_{I^1}\Delta_{I^{23}}$, where $\Delta_{I^{23}}$ is the one-parameter martingale difference and $\Lambda_{I^1}=\Delta_{I^1}$ if $\ell(I^1)< \ell(I^2)$, and otherwise $\Lambda_{I^1}=\Delta_{I^1}+E_{I^1}$.

Again, assume that $I\cap K\neq \emptyset$. First of all, we have $\ell(I^3) \ge 2^{-k^3}\ell(K^3)$ as otherwise we have $\ell(I^m) < 2^{-k^m}\ell(K^m)$ for $m=2,3$ and
\[
	\langle \hat\Delta_I f, \varphi_K\rangle =
	\int_{\R^{d_1}} \langle \varphi_K\rangle_{I^{23}}\int_{\R^{d_{23}}} \hat\Delta_I f=0.
\]
Then we have
\[
	\ell(I^2)=2^{-l^2} \ell(I^3) \ge 2^{-l^2}2^{-k^3}\ell(K^3)= 2^{-k^2} \ell(K^2).
\]
If we suppose that $\ell(I^1)<\ell(I^2)$, then we should also have that $\ell(I^1) \ge 2^{-k^1}\ell(K^1)$ as otherwise
\[
	\langle \hat\Delta_I f, \varphi_K\rangle =
	\int_{\R^{d_{23}}} \langle \varphi_K\rangle_{I^{1}}\int_{\R^{d_{1}}} \hat\Delta_I f=0.
\]
If $\ell(I^1)=\ell(I^2)$, then trivially
\[
	\ell(I^1)=\ell(I^2)\ge 2^{-k^2} \ell(K^2)\ge 2^{-k^1} \ell(K^1).
\]
So again we have proved that $\ell(I^m)\ge  2^{-k^m} \ell(K^m).$

Next we show that $I\subset K^{(\rho)}$, where $\rho=(0, \rho^2, \rho^3)$ with $\rho^m= \max \{k^2, k^3\}-k^m$.  In fact, $I^1 \subset K^1$ is trivial, by the fact that $\int_{\R^{d_1}} \varphi_K=0$. To see that
$I^{23} \subset (K^{23})^{(\rho^2,\rho^3)}$, since they are in the same grid, we suppose for contradiction that $I^{23} \not\subset (K^{23})^{(\rho^2,\rho^3)}$. Then we must have  $I^{23}\supsetneq (K^{23})^{(\rho^2,\rho^3)}$ and
\[
	\langle \hat\Delta_I f, \varphi_K\rangle =
	\int_{\R^{d_{1}}} \langle \hat\Delta_I f\rangle_{I^{23}}\int_{\R^{d_{23}}} \varphi_K=0.
\]
Thus we have proved
\[
	\langle f, \varphi_K\rangle =\left\langle 1_K\sum_{\substack{I \subset K^{(\rho)} \\ \ell(I^1)\le\ell(I^2)=2^{-l^2}  \ell(I^3) \\ \ell(K^m)\le 2^{k^m} \ell(I^m)}}\hat\Delta_I f, \varphi_K \right\rangle=:\langle \calV_{K,k,l^2} f, \varphi_K\rangle.
\]
\begin{lem}\label{lem:triparauxsfV2}
	Let
	\[
		\calV_{k, l^2} f := \Big(\sum_{ K\in \calD} |\calV_{K, k, l^2}f|^2\Big)^{1/2}.
	\]
	Then for all $p\in (1, \infty)$ and $w\in A_{p, F}$
	we have $$
		\|\calV_{k, l^2} f\|_{L^p(w)} \lesssim (|k|+1)^{3/2} \|f\|_{L^p(w)}.
	$$
\end{lem}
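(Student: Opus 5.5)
The plan follows the template of Lemma \ref{lem:auxsfY} and Lemma \ref{lem:triparauxsfV1}. The first step is the global square function bound
\begin{equation}\label{eq:Hatsfplan}
	\Big\|\Big(\sum_{\substack{I \in \calD \\ \ell(I^1)\le \ell(I^2)=2^{-l^2}\ell(I^3) }} |\hat\Delta_I f|^2\Big)^{1/2}\Big\|_{L^p(w)}\lesssim \|f\|_{L^p(w)},
\end{equation}
together with the matching lower bound $\|\sum_{I} \hat\Delta_I f_I\|_{L^2(w)}^2 \lesssim \sum_I \|\hat\Delta_I f_I\|_{L^2(w)}^2$ for the same family. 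Granting these, the rest is bookkeeping, exactly as in Lemma \ref{lem:auxsfY}. By extrapolation (Lemma \ref{lem:extra}) I take $p=2$. I expand $\|\calV_{k,l^2}f\|_{L^2(w)}^2=\sum_K \|\calV_{K,k,l^2}f\|_{L^2(w)}^2$ and drop the indicator $1_K$. I then use the lower bound together with the orthogonality $\hat\Delta_I\hat\Delta_J=\delta_{I,J}\hat\Delta_I$ inside the family. This orthogonality follows as in Lemma \ref{lem:orthogonal}: $\Delta_{I^{23}}$ is a one-parameter martingale difference in the dilated grid $\ell(I^2)=2^{-l^2}\ell(I^3)$, and $\Lambda_{I^1}$ behaves as in the flag case. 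Finally I interchange the sums. For a fixed $I$, the number of $K$ with $I\subset K^{(\rho)}$ and $\ell(K^m)\le 2^{k^m}\ell(I^m)$ is at most $(|k|+1)^3$, since each coordinate has at most $|k|+1$ admissible generations. The upper bound \eqref{eq:Hatsfplan} then gives the factor $(|k|+1)^{3/2}$.

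For \eqref{eq:Hatsfplan} I split into the regimes $\ell(I^1)=\ell(I^2)$ and $\ell(I^1)<\ell(I^2)$, mirroring the proof of Theorem \ref{thm:square}.

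\textbf{Regime $\ell(I^1)=\ell(I^2)$.} Here $\hat\Delta_I=(\Delta_{I^1}+E_{I^1})\Delta_{I^{23}}$, and $I$ ranges over a one-parameter rectangular lattice in $\R^d$ with fixed side ratios. Expanding $\Delta_{I^{23}}$ into tensor pieces shows that $\hat\Delta_I$ is a combination of products $\Upsilon_{I^1}\Upsilon_{I^2}\Upsilon_{I^3}$ with at least one $\Delta$. So the variant of Lemma \ref{lem:variantbounds} on this dilated lattice applies. It only needs the dyadic maximal function and the square function on that lattice, and both are bounded for $w\in A_p(\R^d)$ uniformly in $l^2$, hence for $w\in A_{p,F}$ by Proposition \ref{prop:weight}.

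\textbf{Regime $\ell(I^1)<\ell(I^2)$.} Here $\hat\Delta_I=\Delta_{I^1}\Delta_{I^{23}}$. I insert $E_{J^{23}}$ over the cubes $J^{23}$ of the dilated $\R^{d_{23}}$-lattice with $\ell(J^2)=\ell(I^1)$, using that $\Delta_{I^{23}}g$ is constant on them. This gives
\[
	\sum |\Delta_{I^1}\Delta_{I^{23}} f|^2 \le \sum_{I^{23}} \sum_{L} |\Delta_{L^1}E_{L^{23}}(\Delta_{I^{23}}f)|^2,
\]
where $L=I^1\times J^{23}$ runs over a dilated one-parameter lattice in $\R^d$. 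The inner sum is again a Lemma \ref{lem:variantbounds}-type square function in $\R^d$, which I apply in its vector-valued form (by extrapolation) with $w\in A_p(\R^d)$. That reduces matters to $\|(\sum_{I^{23}}|\Delta_{I^{23}}f|^2)^{1/2}\|_{L^p(w)}$. This last quantity is a one-parameter square function in $\R^{d_{23}}$ on the dilated lattice, bounded because $w(x_1,\cdot)\in A_p(\R^{d_{23}})$ uniformly by Proposition \ref{prop:weight}.

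The lower bound needed in the first paragraph I would obtain as in the lower bound part of Theorem \ref{thm:square}: first a vector-valued one-parameter lower bound in $\R^{d_{23}}$, then one in $\R^d$, identifying the resulting pieces with the two regimes. Alternatively, at $p=2$ it suffices to dualize the upper bound, using orthogonality and $w^{-1}\in A_{2,F}$.

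I expect the main obstacle to be verifying uniformity in $l^2$ of all the one-parameter estimates on the dilated lattices, and checking that the inserted averages $E_{J^{23}}$ keep the supporting rectangles of the kind controlled by $A_p(\R^d)$ together with the $A_p(\R^{d_{23}})$ fibres. I expect this to go through because the relevant rectangles have fixed side ratios $1:1:2^{l^2}$ and are therefore flag.
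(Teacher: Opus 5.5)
Your proof of the global square function bound for the family $\ell(I^1)\le\ell(I^2)=2^{-l^2}\ell(I^3)$ follows the paper's route. You split into the regimes $\ell(I^1)=\ell(I^2)$ and $\ell(I^1)<\ell(I^2)$, insert averages $E_{J^{23}}$, and reduce to a Lemma~\ref{lem:variantbounds}-type square function and a dilated one-parameter square function in $\R^{d_{23}}$. Obtaining the $L^2(w)$ lower bound by duality from the upper bound for $w^{-1}\in A_{2,F}$ is a valid way to supply what the paper leaves implicit.

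\textbf{The gap is in the bookkeeping.} Write $R=K^{(\rho)}$. Then $\calV_{K,k,l^2}f=1_KF_R$, where $F_R$ is the sum of $\hat\Delta_If$ over the admissible $I\subset R$. It depends only on $R$, because the constraints $\ell(K^m)\le 2^{k^m}\ell(I^m)$ involve only $\ell(K^m)=2^{-\rho^m}\ell(R^m)$. If you drop $1_K$ for each $K$ separately, you count $\|F_R\|_{L^2(w)}^2$ once for every $K$ with $K^{(\rho)}=R$. That is $2^{d_2\rho^2+d_3\rho^3}$ times, which is exponential in $|k^2-k^3|$.

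Your count is wrong for the same reason. For fixed $I$, the condition $I\subset K^{(\rho)}$ does not force $K^m\supset I^m$ for $m=2,3$. At each admissible generation there are up to $2^{\rho^m d_m}$ choices of $K^m$, not one. So the number of $K$ is not bounded by $(|k|+1)^3$ when $\rho\neq 0$.

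The fix is what Lemma~\ref{lem:auxsfY} actually does. The $K$ with $K^{(\rho)}=R$ partition $R$, so $\sum_{K^{(\rho)}=R}\|\calV_{K,k,l^2}f\|_{L^2(w)}^2=\|1_RF_R\|_{L^2(w)}^2\le\|F_R\|_{L^2(w)}^2$. Only then apply the lower bound with orthogonality, and count the $R\supset I$ with $\ell(R^m)\le 2^{\max_j k^j}\ell(I^m)$. There are at most $(|k|+1)^3$ of these.

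\textbf{A secondary point: your stated reason for uniformity in $l^2$ is backwards.} The maximal and square function bounds on the lattice of $1:1:2^{l^2}$ rectangles depend on the $A_p$ characteristic of $w$ over those rectangles. $A_p(\R^d)$ does not control that characteristic uniformly in $l^2$, since cube averages do not control averages over rectangles of growing eccentricity. So ``bounded for $w\in A_p(\R^d)$ uniformly in $l^2$'' is false. What is true, and what the paper uses in Lemma~\ref{lem:auxsfY}, is that these rectangles are flag, so their $A_p$ characteristic is at most $[w]_{A_{p,F}}$.

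Likewise, the final step needs $w(x_1,\cdot)$ to be a bi-parameter flag weight on $\R^{d_2}\times\R^{d_3}$ uniformly in $x_1$, which follows by Lebesgue differentiation in $x_1$. Being merely in $A_p(\R^{d_{23}})$ is not enough.

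With your choice $\ell(J^3)=2^{l^2}\ell(I^1)$, the vector-valued step in the regime $\ell(I^1)<\ell(I^2)$ also needs this dilated condition, so you should extrapolate in the flag class. The paper's choice $\ell(J^2)=\ell(J^3)=\ell(I^1)$ makes $I^1\times J^{23}$ a cube, so there $A_p(\R^d)$ genuinely suffices.
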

\begin{proof}
	It will be enough to prove the following square function estimate
	\[
		\Big\|\Big(\sum_{\substack{I \in \calD \\ \ell(I^1)\le\ell(I^2)= 2^{-l^2} \ell(I^3) }} |\hat\Delta_I f|^2\Big)^{1/2}\Big\|_{L^p(w)}\lesssim \|f\|_{L^p(w)}.
	\]
	Write
	\begin{align*}
		 & \sum_{\substack{I \in \calD \\ \ell(I^1)\le\ell(I^2)= 2^{-l^2} \ell(I^3) }} |\hat\Delta_I f|^2              \\
		 & \hspace{1cm}= \sum_{\substack{I \in \calD \\ \ell(I^1)=\ell(I^2)= 2^{-l^2} \ell(I^3) }} |\hat\Delta_I f|^2
		+ \sum_{\substack{I \in \calD \\ \ell(I^1)<\ell(I^2)= 2^{-l^2} \ell(I^3) }} |\Delta_{I^1} \Delta_{I^{23}} f|^2 \\
		 & \hspace{1cm}=  \sum_{\substack{I \in \calD \\ \ell(I^1)=\ell(I^2)= 2^{-l^2} \ell(I^3) }} |\hat\Delta_I f|^2
		+ \sum_{\substack{I \in \calD \\ \ell(I^1)<\ell(I^2)= 2^{-l^2} \ell(I^3) }}\sum_{\substack{J^{23}\in \calD^{23}\\ \ell(J^2)=\ell(J^3)=\ell(I^1)}} |\Delta_{I^1} E_{J^{23}}\Delta_{I^{23}} f|^2.
	\end{align*}
	The remainder of the estimate follows along the lines of the proof of Lemma \ref{lem:triparauxsfV1}.
\end{proof}
\subsubsection*{Tri-parameter flag expansion}
Here we assume $\int_{\R^{d_1}} \varphi_K =\int_{\R^{d_2}} \varphi_K=\int_{\R^{d_3}} \varphi_K=0$. Write
\begin{align*}
	\langle f, \varphi_K\rangle =\left \langle \sum_{I \in \calD_F}\Delta_{I, F} f,  \varphi_K\right\rangle.
\end{align*}
Again, we assume $I \cap K \neq \emptyset$. First of all, it is clear that we have $I^m \subset K^m$. Suppose that $\ell(I^3)<2^{-k^3} \ell(K^3)$, then
\[
	\langle \Delta_{I, F} f, \varphi_K \rangle = \int_{\R^{d_{12}}} \langle \varphi_K\rangle_{I^3} \int_{\R^{d_3}} \Delta_{I, F} f=0.
\]
Hence $\ell(I^3)\ge 2^{-k^3} \ell(K^3)$. If $\ell(I^2)= \ell(I^3)$, then
\[
	\ell(I^2)= \ell(I^3)\ge 2^{-k^3} \ell(K^3)  \ge 2^{-k^2} \ell(K^2).
\]
While if $\ell(I^2)< \ell(I^3)$, this means $\int_{\R^{d_2}} \Delta_{I, F} f=0$ and suppose that
$\ell(I^2)<2^{-k^2} \ell(K^2)$ we also have
\[
	\langle \Delta_{I, F} f, \varphi_K \rangle = \int_{\R^{d_{13}}} \langle \varphi_K\rangle_{I^2} \int_{\R^{d_2}} \Delta_{I, F} f=0.
\]Thus $\ell(I^2)\ge 2^{-k^2} \ell(K^2)$.
Similarly we have $\ell(I^1)\ge 2^{-k^1} \ell(K^1)$. So far we have proved
\[
	\langle f, \varphi_K\rangle =\left \langle 1_K\sum_{\substack{I \in \calD_F\\ I \subset K \\ \ell(I^m) \ge 2^{-k^m} \ell(K^m)}}\Delta_{I, F} f,  \varphi_K\right\rangle=:\langle \calU_{K,k} f, \varphi_K\rangle.
\]
With the help of Theorem \ref{thm:square}, we record the following lemma, whose proof is completely similar to the one above.
\begin{lem}\label{lem:auxsftri-par}
	Let
	\[
		\calU_kf := \Big(\sum_{ K\in \calD} |\calU_{K, k}f|^2\Big)^{1/2}.
	\]
	Then for all $p\in (1, \infty)$ and $w\in A_{p, F}$
	we have $$
		\|\calU_k f\|_{L^p(w)}\lesssim (|k|+1)^{3/2} \|f\|_{L^p(w)}.
	$$
\end{lem}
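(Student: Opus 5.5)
The plan is to follow the proof of Lemma \ref{lem:auxsf} closely, with the tri-parameter flag square function Theorem \ref{Eq:upper}--\eqref{Eq:lower} (Theorem \ref{thm:square}) replacing its bi-parameter analogue. By extrapolation (Lemma \ref{lem:extra}) it suffices to treat $p=2$ and an arbitrary $w\in A_{2,F}$. Then
\[
\|\calU_k f\|_{L^2(w)}^2=\sum_{K\in\calD}\|\calU_{K,k}f\|_{L^2(w)}^2 ,
\]
and for a fixed $K$ we note that $\calU_{K,k}f = 1_K\sum_{I} \Delta_{I,F}f$, where $I$ runs over the flag rectangles $I\subset K$ with $\ell(I^m)\ge 2^{-k^m}\ell(K^m)$.

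The first step is to check that the factor $1_K$ is harmless. Every such $I$ satisfies $I\subset K$ and $\Delta_{I,F}f$ is supported in $I$, so $1_K$ can simply be dropped. Next we apply the lower bound \eqref{Eq:lower} with $q=2$ and $v=w$. By the orthogonality Lemma \ref{lem:orthogonal} in its tri-parameter form, which has the same proof, we have $\Delta_{J,F}\calU_{K,k}f=\Delta_{J,F}f$ for $J$ in the index set and $0$ otherwise. This gives
\[
\|\calU_{K,k}f\|_{L^2(w)}^2\lesssim \sum_{\substack{I\in\calD_F,\ I\subset K\\ \ell(I^m)\ge 2^{-k^m}\ell(K^m)}}\|\Delta_{I,F}f\|_{L^2(w)}^2 .
\]

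Summing over $K$ and interchanging the order of summation, for each fixed $I$ we count the rectangles $K\supset I$ with $\ell(K^m)\le 2^{k^m}\ell(I^m)$, $m=1,2,3$. In each coordinate $K^m$ is an ancestor of $I^m$ of generation at most $k^m$, so the count is at most $\prod_m(k^m+1)\le(|k|+1)^3$. Here it does not matter whether $K$ is flag, since we sum over all $K\in\calD$. We then obtain
\[
\|\calU_kf\|_{L^2(w)}^2\lesssim (|k|+1)^3\|S_Ff\|_{L^2(w)}^2\lesssim(|k|+1)^3\|f\|_{L^2(w)}^2
\]
by the upper bound \eqref{Eq:upper}. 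Taking square roots gives the factor $(|k|+1)^{3/2}$.

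The only step needing care is the use of the lower square function bound together with orthogonality. One has to confirm that $\Delta_{J,F}$ applied to a finite sum of flag martingale differences returns the right pieces in the tri-parameter case, including the $\Lambda$ operators on the ``$=$'' boundaries. This is exactly the content of Lemma \ref{lem:orthogonal}, so I expect no real obstacle. Convergence issues are handled by working with finitely many terms and a monotone limit.
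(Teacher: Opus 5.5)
Your proof is correct and is essentially the argument the paper intends, namely the proof of Lemma \ref{lem:auxsf} run with Theorem \ref{thm:square}: extrapolate to $p=2$, bound each $\|\calU_{K,k}f\|_{L^2(w)}^2$ via the lower flag square function estimate and Lemma \ref{lem:orthogonal}, count the at most $\prod_m(k^m+1)\le(|k|+1)^3$ rectangles $K\supset I$, and finish with the upper bound. One small remark: Lemma \ref{lem:orthogonal} and Theorem \ref{thm:square} are already stated and proved in the tri-parameter setting in the paper, so they need no adaptation.
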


\subsubsection{Sizes of flag rectangles}
Given $K \in \calD$, let $K_F\in \calD_F$ be the dyadic rectangle containing $K$ with
\[
	K_F^1 = K^1, \qquad
	\ell(K_F^m)=\max_{1\le j\le m}\ell(K^j), \quad m=2,3.
\]
In particular, for later use we record here that
\begin{align*}
	\frac{|K|}{|K_F|} = \max\bigg\{ \frac{\ell(K^1)}{\ell(K^2)}, 1 \bigg\}^{-d_2}
	\max\bigg\{ \frac{\ell(K^1)}{\ell(K^3)}, \frac{\ell(K^2)}{\ell(K^3)}, 1 \bigg\}^{-d_3}.
\end{align*}

In our shifts, we will require that the coefficients satisfy
$$
	|a_{IJK}| \le \frac{|I|}{|K_F|} = \frac{|K|}{|K_F|}\frac{|I|}{|K|}.
$$
One thing worth noticing is that the condition $I \in \calD_F$ can also be written as
\begin{equation}\label{eq:k}
	2^{-k^1}\ell(K^1) \le 2^{-k^2}\ell(K^2) \le 2^{-k^3}\ell(K^3),
\end{equation}
so this is the basic structure satisfied by the $K \in \calD$ that appear in
the outer summation of the flag shifts.
Recall that we denote the collection of rectangles satisfying \eqref{eq:k} by $\calD_{F}^k$. Then
$$
	\sum_{K \in \calD}
	\sum_{\substack{ I, J \in \calD_F \\ I^{(k)} = J^{(k)} = K}}
	= \sum_{K \in \calD_F^k}
	\sum_{\substack{ I, J \in \calD \\ I^{(k)} = J^{(k)} = K}}.
$$
We define $\calD_{=}^k$, $\calD_{=, <}^k$ $\calD_{<, =}^k$ and $\calD_{<}^k$
analogously to the collections $\calD_=$, $\calD_{=, <}$, $\calD_{<, =}$ and $\calD_<$
-- for instance, if $K \in \calD_{=}^k$ then $2^{-k^1}\ell(K^1) = 2^{-k^2}\ell(K^2) = 2^{-k^3}\ell(K^3)$.
Notice that for $K \in \calD_F^k$, we have
$$
	\max\bigg\{ \frac{\ell(K^1)}{\ell(K^2)}, 1 \bigg\} \le \max\{2^{k^1-k^2}, 1 \}
	= 2^{\max\{k^1 - k^2, 0 \}} = 2^{(k^1-k^2)_+},
$$
so that
$$
	\max\bigg\{ \frac{\ell(K^1)}{\ell(K^2)}, 1 \bigg\}^{-d_2} \ge 2^{-d_2(k^1-k^2)_+}.
$$
Equality holds when the first two input side lengths agree:
\[
	2^{-k^1}\ell(K^1) = 2^{-k^2}\ell(K^2).
\]
Otherwise, no decay from this factor is needed. We will therefore use
$$
	\max\bigg\{ \frac{\ell(K^1)}{\ell(K^2)}, 1 \bigg\}^{-d_2} \le
	\begin{cases} 2^{-d_2(k^1-k^2)_+} & \textup{if } 2^{-k^1}\ell(K^1) = 2^{-k^2}\ell(K^2), \\
	              1                   & \textup{otherwise}.
	\end{cases}
$$
Similarly, we estimate
$$
	\max\bigg\{ \frac{\ell(K^1)}{\ell(K^3)}, \frac{\ell(K^2)}{\ell(K^3)}, 1 \bigg\}^{-d_3} \le
	\begin{cases}
		2^{-d_3\max(k^1-k^3, k^2-k^3, 0)} & \textup{if } K \in \calD_=^k,      \\
		2^{-d_3(k^2-k^3)_+}               & \textup{if } K \in \calD_{<, =}^k, \\
		1                                 & \textup{otherwise}.
	\end{cases}
$$
In total, we have obtained that
\begin{equation}\label{eq:ratiodecay}
	\frac{|K|}{|K_F|} \le
	\begin{cases}
		2^{-d_2(k^1-k^2)_+} 2^{-d_3\max(k^1-k^3, k^2-k^3, 0)} & \textup{if } K \in \calD_=^k,      \\
		2^{-d_3(k^2-k^3)_+}                                   & \textup{if } K \in \calD_{<, =}^k, \\
		2^{-d_2(k^1-k^2)_+}                                   & \textup{if } K \in \calD_{=, <}^k, \\
		1                                                     & K \in \calD_<^k,
	\end{cases}
\end{equation}
which will be a sufficiently good estimate for us from the point of view of the
decay coming from the kernel estimates in the representation theorem of flag CZOs. That is,
here we (for instance) have no decay if $K \in \calD_<^k$ (i.e., $I \in \calD_<$), but we have more
cancellation in the representation theorem to obtain the required decay otherwise.
Conversely, we have some extra decay here in the other cases, but
the lack of cancellation when $\ell(I^1) = \ell(I^2)$ or $\ell(I^2) = \ell(I^3)$ (i.e., $2^{-k^1}\ell(K^1) = 2^{-k^2}\ell(K^2)$
or $2^{-k^2}\ell(K^2) = 2^{-k^3}\ell(K^3)$) will result in otherwise worse estimates
in the representation theorem. Thus, these things will eventually balance out nicely.

\subsection{A case study}\label{subsec:tripar-case-study}
For the bilinear form $B(f,g)=\langle Tf, g\rangle$, the flag multiresolution gives
\[
	B(f,g)=\E \sum_{s\in \calS}\Sigma_s,
\]
where $\calS=\{(s_1, s_2, s_3): s_i \in \{\Delta, E, D\}\}$ and the averaging is over the usual random
dyadic lattices. Now, to understand all of the $3^3 = 27$ cases, we need an algorithm, which we will present in Section~\ref{subsec:tripar-algorithm}.

First, to  get a feeling of how this should work,
we will run the argument in the explicit case $s = (D, E, \Delta)$.
Before writing the argument, we describe a flowchart type of process
that exactly predicts the type of operators we will eventually get
from this particular $\Sigma_s = \Sigma_{(D, E, \Delta)}$.
For the flowchart, $\pc_1 f$ denotes a paraproduct correction
in $\R^{d_1}$ and $\pc_{12} f$ a paraproduct correction in $\R^{d_1 + d_2}$ (in the one-parameter sense)
performed on the function $f$.
A paraproduct correction simply means the process of ``adding and subtracting'' a suitable average
that was used repeatedly in the bi-parameter proof; the above operations mean that, say,
$1_{I^1}\langle f \rangle_{I^1}$ is replaced by $1_{I^1}(\langle f \rangle_{I^1} - \langle f \rangle_{J^1})$
or $1_{I^{12}}\langle f \rangle_{I^{12}}$ is replaced by $1_{I^{12}}
	(\langle f \rangle_{I^{12}} - \langle f \rangle_{J^{12}})$) (we emphasize that here we require $\ell(I^1)=\ell(I^2)$).
Here we are working with a cancellative SIO
so the paraproduct correction can be done for free -- no residual paraproducts actually appear.

Thus, we will start with $\Sigma_s$ and do the following procedure.
\begin{enumerate}
	\item Split the sum into the terms where $ \ell(I^1)=\ell(I^2) $ and where $ \ell(I^1)<\ell(I^2) $.
	\item For summation over the terms where $\ell(I^1)=\ell(I^2) $, we perform the paraproduct correction $\pc_{12}$ when necessary (i.e. when we have a non-cancellative operator in $f$ or $g$). Otherwise, we keep it as is.
	\item For summation over the terms where $ \ell(I^1)<\ell(I^2) $, we further split it into terms where $ \ell(K^1)<\ell(I^2) $ and where $  \ell(K^1)\ge \ell(I^2)$. We perform a paraproduct correction $\pc_2$ to the former group.
	\item Split all the resulting terms above based on whether $\ell(I^3)>\max\{\ell(K^1), \ell(K^2)\}$ or $\ell(I^3)\le \max\{\ell(K^1), \ell(K^2)\}$. Then, perform a paraproduct correction $\pc_3$ to the first group of terms.
\end{enumerate}
A superscript $c1$ on $\Sigma_s$ means that
both in $f$ and $g$ the associated Haar type function is cancellative
in the first parameter. The superscript $n1$ means that
we can have a non-cancellative operator in $f$ or $g$.
For instance, if we have the superscript $c12$ (as in $\Sigma_s^{c12}$) it means that
in this term we have at least one-parameter style cancellation in $\R^{d_1+d_2}$ in both $f,g$ so we
can, for instance, have $f$ being paired with $h_{I^{12}}^0 - h_{J^{12}}^0$
and $g$ paired with $h_{J^1} \otimes h_{J^2}$. Of course, we can also view
$c_2$ (or $c_3$) as the weaker information $c23$ -- and we sometimes do this or something analogous.
But this is only done when we have arranged $\ell(I^2) \sim \ell(I^3)$.

It is evident that we need to track multiple things beyond cancellation in $f$ and $g$ --
we need to track the summation conditions, the support conditions and
the decay from the coefficients as well. This is not part of this high level notation
that only tracks the cancellation in $f$ and $g$.
But the summing conditions are recorded on the edges between the nodes
and the support conditions can be tracked by tracking the paraproduct corrections
up to the given node
(this process is the only thing that can enlarge the support of the associated Haar-like functions).
See figure \ref{fig:DEDelta} -- it makes sense to return to this after having read the corresponding written
argument.
\begin{figure}[ht]
	\centering
	\begin{tikzpicture}[scale=1.1]
		\draw (-2.5, -2.5) node {$\bullet$} node[left] {$\Sigma_s$};
		\draw (-2.5,-2.5) -- node[sloped, above, font=\scriptsize] {$\ell(I^1) < \ell(I^2)$} (-1,0) node{$\bullet$}  node[above] {$\Sigma_s^{c1}$};
		\draw (-2.5,-2.5) -- node[sloped, above, font=\scriptsize] {$\ell(I^1) = \ell(I^2)$} (-1,-5) node{$\bullet$}  node[below left] {$\Sigma_s^{n1}$};

		\draw (-1,0) -- node[sloped, above, font=\scriptsize] {$\substack{\ell(I^2) > \ell(K^1) \\ \textup{and } \pc_2 f}$} (1,1) node{$\bullet$}  node[above] {$\Sigma_s^{c1,c2}\,\,\,$};
		\draw (-1,0) -- node[sloped, above, font=\tiny] {$\ell(I^2) \le \ell(K^1)$} (1,-1) node{$\bullet$}  node[below left] {$\Sigma_s^{c1,n2}$};

		\draw (-1,-5) -- node[sloped, above, font=\scriptsize] {$\substack{ \Lambda_{I^1} f \rightsquigarrow \Delta_{I^1}f \textup{ or} \\ \Lambda_{I^1} f \rightsquigarrow \E_{I^1}f \, \& \, \pc_{12}f  }$} (1,-5) node{$\bullet$}  node[below right] {$\Sigma_s^{c12}$};

		\draw[dashed] (1,-1) -- node[sloped, above, font=\scriptsize] {merge paths} (3,-3);
		\draw[dashed] (1, -5) -- node[sloped, above, font=\scriptsize] {merge paths} (3,-3) node{$\bullet$} node[left] {$\Sigma_s^{c12}$};

		\draw (3, -3) -- node[sloped, above, font=\scriptsize] {$\substack{\ell(I^3) > \max \ell(K^m) \\ \textup{and } \pc_3 g}$} (4.5, -1.5) node{$\bullet$} node[right] {$\Sigma_s^{c12, c3}$; bi-par type I, $l^1 \ge 0$};
		\draw (3, -3) -- node[sloped, above, font=\tiny] {$\,\,\ell(I^3) \le \max \ell(K^m)$} (4.5, -4.5) node{$\bullet$} node[right] {$\,\, \Sigma_s^{c123}$; one-par, $l^1 \ge 0$, $l^2 \ge 1$};

		\draw (1, 1) -- node[sloped, above, font=\scriptsize] {$\substack{\qquad \ell(I^3) > \max \ell(K^m) \\ \textup{and } \pc_3 g}$} (4.5, 2.5) node{$\bullet$} node[right] {$\Sigma_s^{c1, c2, c3}$; flag shifts};
		\draw (1, 1) -- node[sloped, above, font=\tiny] {$\,\,\ell(I^3) \le \max \ell(K^m)$} (4.5, -0.5) node{$\bullet$} node[right] {$\,\,\Sigma_s^{c1, c23}$; bi-par type II, $\ell^2 \ge 1$};
	\end{tikzpicture}
	\caption{The multiresolution of $\Sigma_s = \Sigma_{(D, E, \Delta)}$}
	\label{fig:DEDelta}
\end{figure}

We now follow the above flowchart to produce a written proof. We study
$$
	\Sigma_{(D,E,\Delta)}=
	\sum_{\substack{I, J\in \calD_{\le, < }\\ \ell(I)=\ell(J)}} B(\Lambda_{I^1}E_{I^2} \Delta_{I^3} f, \Lambda_{J^1}\Delta_{J^2} E_{J^3}g).
$$
In what follows we often also write $s$ in place of the explicit $(D, E, \Delta)$.
Actually, we would, as usual, need to study the expectation of this quantity and add
the relevant $k^m$-goodness to the cubes $I^m$ at the appropriate point of the argument --
we omit this below as it goes exactly like in the bi-parameter situation
and would now only serve as a technical distraction (but it is required to make sure that the
various $K^m$ cubes really are common $k^m$-parents of $I^m, J^m$). Throughout the proof, given any cubes $I,J$ we let $K(I,J)=K^1\times K^2\times K^3$ denote a common ancestor of $I$ and $J$ that will be defined more carefully below. By abuse of notation, we will suppress the dependence on $I,J$ and just write $K$ for $K(I,J)$.

First, we write
\begin{align*}
	\Sigma_{(D,E,\Delta)} & =\sum_{\substack{I, J\in \calD_{<, < }\\ \ell(I)=\ell(J)}} B(\Delta_{I^1}E_{I^2} \Delta_{I^3} f, \Delta_{J^1}\Delta_{J^2} E_{J^3}g)                \\
	                      & \hspace{2cm}+ \sum_{\substack{I, J\in \calD_{=, < }\\ \ell(I)=\ell(J)}} B(\Lambda_{I^1}E_{I^2} \Delta_{I^3} f, \Lambda_{J^1}\Delta_{J^2} E_{J^3}g) \\
	                      & =: \Sigma_{(D,E,\Delta)}^{c_1}+\Sigma_{(D,E,\Delta)}^{n1}.
\end{align*}
We consider $\Sigma_{(D,E,\Delta)}^{c_1}$ first. As in the bi-parameter case,
we perform the business with the translation parameter $J^1 = I^1 \dotplus n^1$ and the related dyadic
blocks captured by $k^1 \ge 2$. We explicitly write
\begin{equation*}
	K^1:= \begin{cases}
		(I^1)^{(k^1)},\qquad & 0\ne |n^1|\in (2^{k^1-3}, 2^{k^1-2}] \,\, \text{for some}\,\, k^1\ge 2, \\[1em]
		I^1,\qquad           & n^1=0.
	\end{cases}
\end{equation*}
With this notation we split to the parts $\ell(I^2) > \ell(K^1)$ and $\ell(I^2) \le \ell(K^1)$ giving
us the splitting
\begin{align*}
	\Sigma_{(D,E,\Delta)}^{c_1} = \Sigma_{(D,E,\Delta)}^{c1,c2} +\Sigma_{(D,E,\Delta)}^{c1,n2},
\end{align*}
where by the cancellation assumption, $\Sigma_{(D,E,\Delta)}^{c1,c2}$ equals (this
is the paraproduct correction $\pc_{2} f$)
\begin{align*}
	\sum_{\substack{I, J\in \calD_{<, < }\\ \ell(I)=\ell(J)\\ \ell(I^2)>\ell(K^1)}} & B(h_{I^1}\otimes 1_{I^2} \otimes h_{I^3}, h_{J^1}\otimes h_{J^2} \otimes h_{J^3}^0)\Big\langle f,   h_{I^1}\otimes\frac{1_{I^{2}}}{|I^{2}|}\otimes h_{I^3}\Big\rangle  \big\langle g,  h_{J^1}\otimes h_{J^2}\otimes h_{J^3}^0\big\rangle \\
	                                                                                & = \sum_{\substack{I, J\in \calD_{<, < }\\ \ell(I)=\ell(J)\\ \ell(I^2)>\ell(K^1)}} B(h_{I^1}\otimes h_{I^2}^0 \otimes h_{I^3}, h_{J^1}\otimes h_{J^2} \otimes h_{J^3}^0)                                                                   \\
	                                                                                & \hspace{4cm} \times \Big\langle f,   h_{I^1}\otimes H_{I^2, J^2}\otimes h_{I^3}\Big\rangle  \big\langle g,  h_{J^1}\otimes h_{J^2}\otimes h_{J^3}^0\big\rangle,
\end{align*}
where $H_{I^2, J^2}=h_{I^2}^0-h_{J^2}^0$.

To complete the second parameter related considerations we return
to $\Sigma_{(D,E,\Delta)}^{n1}$. We split it to the following two terms
\begin{align*}
	\sum_{\substack{I, J\in \calD_{=, < }\\ \ell(I)=\ell(J)}} B(\Delta_{I^1}E_{I^2} \Delta_{I^3} f, \Lambda_{J^1}\Delta_{J^2} E_{J^3}g)+ \sum_{\substack{I, J\in \calD_{=, < }\\ \ell(I)=\ell(J)}} B(E_{I^1}E_{I^2} \Delta_{I^3} f, \Lambda_{J^1}\Delta_{J^2} E_{J^3}g),
\end{align*}
where the first term is already acceptable and the second, by the cancellation assumption, equals
\begin{align*}
	 & \sum_{\substack{I, J\in \calD_{=, < }\\ \ell(I)=\ell(J)}} B(1_{I^{12}} \otimes h_{I^3}, u_{J^1}\otimes h_{J^2} \otimes h_{J^3}^0)\Big\langle f,   \frac{1_{I^{12}}}{|I^{12}|}\otimes h_{I^3}\Big\rangle  \big\langle g,  u_{J^1}\otimes h_{J^2}\otimes h_{J^3}^0\big\rangle \\
	 & =\sum_{\substack{I, J\in \calD_{=, < }\\ \ell(I)=\ell(J)}} B(h_{I^{12}}^0 \otimes h_{I^3}, u_{J^1}\otimes h_{J^2} \otimes h_{J^3}^0)\Big\langle f,   H_{I^{12}, J^{12}}\otimes h_{I^3}\Big\rangle  \big\langle g,  u_{J^1}\otimes h_{J^2}\otimes h_{J^3}^0\big\rangle,
\end{align*}
where $H_{I^{12}, J^{12}}= h_{I^{12}}^0- h_{J^{12}}^0$. This was the paraproduct correction $\pc_{12}f$.
These terms together are equal to $\Sigma_s^{c12}$ (which is just a modified way to write
$\Sigma_s^{n_1}$). We will now modify the term $\Sigma_s^{c12}$ to also include the
previously obtained $\Sigma_s^{c_1, n2}$ -- it is a similar term whose $c1$ cancellation also implies $c12$ cancellation, and while there we do not have $\ell(I^1) = \ell(I^2)$,
we still have $\ell(I^1) < \ell(I^2) \le \ell(K^1) = 2^{k^1}\ell(I^1)$.

Similarly as before, we define
\begin{equation*}
	K^2:= \begin{cases}
		(I^2)^{(k^2)},\qquad & 0\ne |n^2|\in (2^{k^2-3}, 2^{k^2-2}] \,\, \text{for some}\,\, k^2\ge 2, \\[1em]
		I^2,\qquad           & n^2=0.
	\end{cases}
\end{equation*}

As depicted in Figure \ref{fig:DEDelta}
both $\Sigma_s^{c1, c2}$ and $\Sigma_s^{c12}$ split into two terms according to whether
$\ell(I^3) > \max(\ell(K^1), \ell(K^2))$ or $\ell(I^3) \le \max(\ell(K^1), \ell(K^2))$.
In particular, performing the step $\pc_3 g$ leads to
\begin{align*}
	 & \Sigma_{(D,E,\Delta)}^{c1,c2}                                                                                                                     \\
	 & = \sum_{\substack{I, J\in \calD_{<, < }\\ \ell(I)=\ell(J)\\ \ell(I^2)> \ell(K^1)\\ \ell(I^3)\le \max\{\ell(K^1),\ell(K^2)\}}}
	a_{IJ} \Big\langle f,   h_{I^1}\otimes H_{I^2, J^2}\otimes h_{I^3}\Big\rangle  \big\langle g,  h_{J^1}\otimes h_{J^2}\otimes h_{J^3}^0\big\rangle    \\
	 & + \sum_{\substack{I, J\in \calD_{<, < }\\ \ell(I)=\ell(J)\\ \ell(I^2)> \ell(K^1)\\ \ell(I^3)> \max\{\ell(K^1), \ell(K^2)\}}}
	a_{IJ} \Big\langle f,   h_{I^1}\otimes H_{I^2, J^2}\otimes h_{I^3}\Big\rangle  \big\langle g,  h_{J^1}\otimes h_{J^2}\otimes H_{I^3, J^3}\big\rangle \\
	 & =: \Sigma_{(D, E, \Delta)}^{c1, c23} + \Sigma_{(D, E, \Delta)}^{c1, c2, c3},
\end{align*}
where $a_{IJ} := B(h_{I^1}\otimes h_{I^2}^0 \otimes h_{I^3}, h_{J^1}\otimes h_{J^2} \otimes h_{J^3}^0)$.
Analogously, we may also decompose $\Sigma_s^{c12} = \Sigma_s^{c12, c3} + \Sigma_s^{c123}$.

In total, we have four classes of terms appearing at this point:
\begin{itemize}
	\item Terms with $c1c2c3$ cancellation
	\item Terms with $c12c3$ cancellation where we have comparability among the first two parameters
	\item Terms with $c1c23$ cancellation where we have comparability among the last two parameters
	\item Terms with $c123$ cancellation where we have comparability among all three parameters
\end{itemize}

Recalling the coefficient estimates of Theorem \ref{thm:tripar coeffs}, we deduce that
\[
	\E \Sigma_{(D, E, \Delta)}^{c1, c23} = C\E \sum_{k^1=0}^\infty \sum_{k^2=2}^\infty \sum_{k^3=0}^\infty2^{-k^1 \alpha_1} 2^{-k^2 \alpha_2}2^{-k^3 \alpha_3}\sum_{l^2=1}^{k^2}\bla R_{k, l^2}^2f, g\bra
\]
and
\[
	\E \Sigma_{(D, E, \Delta)}^{c1, c2, c3} =
	C\E \sum_{k^1=0}^\infty \sum_{k^2=2}^\infty \sum_{k^3=0}^\infty2^{-k^1 \alpha_1} 2^{-k^2 \alpha_2}2^{-k^3 \alpha_3} \bla Q_{k}f, g\bra.
\]
Then, we have that $\E \Sigma_{(D,E,\Delta)}^{c12, c3}$ equals
\begin{align*}
	C\E \sum_{k^1=0}^\infty & \sum_{k^2=0}^\infty \sum_{k^3=2}^\infty
	                                                           2^{-k^1 \alpha_1} 2^{-k^2 \alpha_2}2^{-k^3 \alpha_3} \sum_{l^1=1}^{k^1}\bla R_{k, l^1}^1f, g\rangle \\
	                        & + C\E \sum_{k^1=0}^\infty \sum_{k^2=0}^\infty \sum_{k^3=2}^\infty
	                                                                                     2^{-k^2 \alpha_2}2^{-k^3 \alpha_3} \bla R_{k, 0 }^1f, g\bra.
\end{align*}
Finally, we can write $\E \Sigma_{(D,E,\Delta)}^{c123}$ as
\begin{align*}
	C\E \sum_{k^1=0}^\infty \sum_{k^2=0}^\infty & \sum_{k^3=0}^\infty
	                                                           2^{-k^1 \alpha_1} 2^{-k^2 \alpha_2}2^{-k^3 \alpha_3}
	\sum_{l^1=1}^{k^1}\sum_{l^2=1}^{\max\{k^1-l^1, k^2\}}\bla R_{k, l^1, l^2} f, g\bra                                                                                                                    \\
	                                            & +C\E \sum_{k^1=0}^\infty \sum_{k^2=0}^\infty \sum_{k^3=0}^\infty
	                                                                                                        2^{-k^2 \alpha_2}2^{-k^3 \alpha_3} \sum_{l^2=1}^{\max\{k^1, k^2\}}\bla R_{k, 0, l^2}f, g\bra.
\end{align*}
Here $l^2 \le \max \{k^1-l^1, k^2\}$ follows from that
\[
	2^{l^1+l^2}\ell(I^1) = \ell(I^3) \le \max\{\ell(K^1), \ell(K^2)\}= \max \{2^{k^1}, 2^{k^2+l^1}\}\ell(I^1).
\]
With this, we have addressed $1$ of the $27$ terms that arise.

\subsection{The representation theorem in general}\label{subsec:tripar-statement}
\begin{thm}\label{thm: triparam rep}
	Suppose that $T$ is a cancellative tri-parameter flag CZO. Then we have
	\begin{align*}
		\langle Tf, g\rangle & = C \E \bigg(\sum_{k^1, k^2, k^3=0}^\infty 2^{-\alpha_1 k^1-\alpha_2 k^2 -\alpha_3 k^3} \Big[\langle Q_k f, g\rangle+\sum_{l^1=1}^{k^1} \sum_{l^2=1}^{\max\{k^1-l^1, k^2\}} \langle R_{k, l^1, l^2}f, g\rangle\\
			                                                                                                               &\qquad+\sum_{l^1=1}^{k^1} \langle R_{k, l^1}^1 f, g\rangle + \sum_{l^2=1}^{k^2} \langle R_{k, l^2}^2 f, g\rangle\Big]
		\\ & \hspace{1.5cm} +\sum_{k^1, k^2, k^3=0}^\infty 2^{-\alpha_2 k^2-\alpha_3 k^3 }\Big[ \sum_{l^2=1}^{\max\{k^1, k^2\} }\langle R_{k, 0, l^2}f, g\rangle+ \langle R_{k, 0}^1 f, g\rangle
			                                                                                  \Big]\\ & \hspace{1.5cm} +\sum_{k^1, k^2, k^3=0}^\infty 2^{-\alpha_1 k^1-\alpha_3 k^3 }
		\Big[\sum_{l^1=1}^{k^1} \langle R_{k, l^1, 0}f, g\rangle+\langle R_{k, 0}^2 f, g\rangle  \Big]                                                                                                                                         \\
		                     & \hspace{1.5cm} +\sum_{k^1, k^2, k^3=0}^\infty 2^{-\alpha_3 k^3 }\langle R_{k, 0,0}f, g\rangle
		\bigg).
	\end{align*}
\end{thm}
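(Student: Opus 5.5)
The plan is to generalize the case study of Section~\ref{subsec:tripar-case-study} into an algorithm that applies uniformly to all $27$ terms. We first apply the tri-parameter flag multiresolution to $B(f,g)=\langle Tf,g\rangle$ in the random grid $\calD(\sigma)$. This gives $B(f,g)=\E\sum_{s\in\calS}\Sigma_s$. By duality, since $\Sigma_s(T,f,g)$ is a term of the same type for $T^*$ with $g,f$ swapped, we only need to treat roughly half of the symbols $s$.

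For a fixed $s$ we run the flowchart of Figure~\ref{fig:DEDelta}, processed parameter by parameter.
\begin{enumerate}
\item In parameter $1$, we split according to $\ell(I^1)<\ell(I^2)$ or $\ell(I^1)=\ell(I^2)$.
\item In the $<$ branch, we introduce $n^1,k^1$ and $K^1$. We then split into $\ell(I^2)>\ell(K^1)$, where we perform $\pc_2$ (or $\pc_1$) to upgrade to $c1,c2$ cancellation, and $\ell(I^2)\le\ell(K^1)$, where we only keep $c12$ cancellation.
\item In the $=$ branch, we replace $\Lambda$ by $\Delta$ or by $E$ followed by $\pc_{12}$, again reaching $c12$.
\item In parameter $3$ we do the same relative to $\max(\ell(K^1),\ell(K^2))$, using $\pc_3$ or $\pc_{23}$. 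Alternatively we merge into $c23$ or $c123$ when $\ell(I^3)$ is comparable to the smaller scales.
\end{enumerate}
Because $T$ is cancellative, every paraproduct correction is free. The subtracted averages pair against $T(1\otimes\cdots)$ or $T^*(1\otimes\cdots)$ type objects that vanish by the cancellation assumptions, so no paraproducts remain.

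The key rule is the one already used in the bi-parameter theorem. We only perform a correction in parameter $m$ whose support enlargement produces averages like $\langle f\rangle_{K^1\times J^2\times\cdots}$ when the summation guarantees that the enlarged rectangle remains flag. This is the analogue of Lemma~\ref{lem:formsum3}. Otherwise we keep the weaker, lumped cancellation, and the term becomes an almost one-parameter or bi-parameter type I/II shift $R_{k,l^1,l^2}$, $R^1_{k,l^1}$, $R^2_{k,l^2}$. Each node of the algorithm then lands in one of the four cancellation classes ($c1c2c3$, $c12c3$, $c1c23$, $c123$). These match the four martingale block expansions of Section~\ref{subsec:tripar-blocks}, via $\calU_{K,k}$, $\calV_{K,k,l^1}$, $\calV_{K,k,l^2}$ and $\calY_{K,k,l}$.

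After inserting $k^m$-goodness, we use \eqref{eq:kparent} and independence of goodness and position, exactly as in the bi-parameter proof, so that $K^m=(I^m)^{(k^m)}=(J^m)^{(k^m)}$. Theorem~\ref{thm:tripar coeffs} and the tri-parameter analogue of Corollary~\ref{cor:biparcoeffcor} then give
\[
|a_{IJ}|\lesssim \prod_{m\in\mathscr M}2^{-\alpha_mk^m}\frac{|I|}{|K_F|}.
\]
Here $\mathscr M$ always contains $3$. It contains $1$ and $2$ precisely when cancellation in those parameters survives, that is, when $\ell(I^1)<\ell(I^2)$, respectively $\ell(I^2)<\ell(I^3)$. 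The missing decay occurs in the $=$ configurations, which is why the theorem has terms without $2^{-\alpha_1k^1}$ or $2^{-\alpha_2k^2}$. This dictates the grouping in the statement, including the index ranges such as $l^2\le\max\{k^1-l^1,k^2\}$ obtained from $2^{l^1+l^2}\ell(I^1)=\ell(I^3)\le\max(\ell(K^1),\ell(K^2))$.

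The main obstacle is bookkeeping rather than estimates. We must verify, for each of the $27$ symbols, that the support conditions of the resulting Haar-like functions never generate a non-flag average on an index set lacking the flag restriction. We must also check that every term with lost decay in some $k^m$ lands exactly in a class whose coefficients carry the compensating ratio $|K|/|K_F|$ from \eqref{eq:ratiodecay}. I would first formalize the set of legal moves, namely which corrections are permitted given the current summation constraints. I would then check by induction over the parameters that each path of the flowchart terminates in one of the listed model operators with the claimed decay. I expect the $(D,D,\cdot)$ and $(\cdot,D,D)$ families, where the $=$ boundaries in both parameter pairs interact, to require the most care.
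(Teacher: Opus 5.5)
Your proposal follows the paper's proof essentially step for step. The paper likewise applies the flag multiresolution to get the $27$ terms and generalizes the $(D,E,\Delta)$ flowchart into an algorithm of legal moves. In that algorithm a correction $\pc_j$ is performed only when $\ell(I^j)>\max\{\ell(K^1),\dots,\ell(K^{j-1})\}$, so that the enlarged rectangles stay flag, and otherwise the parameter is merged into the preceding group. This lands every piece in the four classes $c1c2c3$, $c12c3$, $c1c23$, $c123$. Goodness and the tri-parameter coefficient estimates then supply the shift coefficients and the index ranges.

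The differences are cosmetic. You use duality to halve the number of symbols, whereas the paper runs the algorithm uniformly. Your ``precisely when'' identification of $\mathscr M$ is slightly too strong, since only the implication from $\ell(I^1)<\ell(I^2)$, resp.\ $\ell(I^2)<\ell(I^3)$, is needed. Also, $\pc_1$ may be needed in the $\ell(I^2)\le\ell(K^1)$ branch as well, as the paper's step of correcting the first group makes explicit. None of these affects the argument.
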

We defer the final proof of this until we develop some machinery involving the various types of flag shifts in the next section. One useful aspect of this representation is the summability based on the bounds we have for each of the shift operators appearing in the formula.

\subsection{Abstract bilinear form summation}\label{sec:bilinforms}
As in the bi-parameter case, we first use the cancellation of both functions to insert the appropriate martingale blocks. We then take absolute values and expand the integrals over unions of cubes. Relabeling $I^m,J^m$ in each coordinate makes the integral involving $g_K$ be over $J$, without changing the common roots or side lengths. There are then $8$ possible cases for the integral involving $f_K$, which we estimate below.
\begin{enumerate}
	\item $J$; this holds by Lemma \ref{lem:triformsum1}
	\item $I$; this holds by Lemma \ref{lem:triformsum3}
	\item $I^1\times J^{23}$; this corresponds to either we are performing $\pc_2$ and $\pc_3$ simultaneously, or we are doing $\pc_{23}$, all of which only occur when $K^1\times J^{23}$ is flag, so this holds by Lemma \ref{lem:triformsum4}
	\item $J^1\times I^2\times J^3$; this corresponds to $\pc_1$ and $\pc_3$ simultaneously, which only occurs when $K^2\times J^3$ is flag, so this holds by Lemma \ref{lem:triformsum5}
	\item $J^{12}\times I^3$; this holds by Lemma \ref{lem:triformsum1}
	\item $I^{12}\times J^3$; this corresponds to $\pc_3$ which only occurs when $(K_F)^{12} \times J_3$ is flag, so this holds by Lemma \ref{lem:triformsum6}
	\item $I^1\times J^2\times I^3$; this corresponds to $\pc_2 $ which only occurs when $K^1\times J^2$ is flag, so this holds by Lemma \ref{lem:triformsum4}
	\item $J^1\times I^{23}$; this holds by Lemma \ref{lem:triformsum2}
\end{enumerate}
As in the bi-parameter case, certain cases above will only arise when additional restrictions apply to the subset of the lattics we are summing over. To capture this, we introduce the following notation. Note that some of the definitions below do not depend on every component of $k$, even though we always allow for such a dependence in the notation.
\begin{itemize}
	\item $\mathcal{D}_{1,<}(k):=\{K\in \calD_F^k: \ell (K^1) < 2^{-k^2} \ell(K^2)\}$ 
	\item $\mathcal{D}_{2,<}(k):=\{K\in\calD_F^k:\max\{\ell(K^1),\ell(K^2)\}<2^{-k^3}\ell(K^3)\} $ 
\end{itemize}
The two definitions above are straightforward enough: $\mathcal{D}_{1,<}$ corresponds to being able to perform paraproduct corrections in the second parameter, since $\ell(K^1)<\ell(I^2)$ implies that $K^1\times I^2$ is flag.  Similarly, $\mathcal{D}_{2,<}$ corresponds to being able to perform paraproduct corrections in the third parameter.  Let us also recall the notation $\calD_{=}^k$, $\calD_{=,<}^k$, $\calD_{<,=}^k$ and $\calD_{<}^k$, which will be relevant in below.

\begin{lem}\label{lem:triformsum1}
	Let $k = (k^1, k^2,k^3)$, $k^m \ge 0$, be fixed and $\calF \subset \calD^k_F$. Let $f_K, g_K \ge 0$ be functions and
	\begin{align*}
		A_1 & := \sum_{K \in \calF} \sum_{I^{(k)} = J^{(k)} = K} \frac{1}{|K_F|} \int_J g_K \int_J f_K, \\
		A_2 & := \sum_{K \in \calF} \sum_{I^{(k)} = J^{(k)} = K} \frac{1}{|K_F|}
		\int_{J^{12} \times I^{3}} f_K \int_{J} g_K.
	\end{align*}
	Then we have
	$$
		A_i \le \sum_{K \in \calF} \frac{|K|}{|K_F|} \int_K M_{\calD_F} f_K g_K, \qquad i = 1,2.
	$$
\end{lem}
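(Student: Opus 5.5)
The plan is to follow the proof of Lemma \ref{lem:formsum1} almost verbatim, replacing the bi-parameter flag grid by the tri-parameter one. Throughout we use two facts: $I^{(k)}=J^{(k)}=K$ forces $\ell(I^m)=\ell(J^m)=2^{-k^m}\ell(K^m)$ for $m=1,2,3$, and $K\in\calF\subset\calD^k_F$ is exactly the statement that such $I$ and $J$ are flag, i.e. lie in $\calD_F$.

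For $A_1$, the sum over $I$ contributes only a counting factor, since the summand depends on $I$ only through the common root $K$. We bound $\int_J f_K=|J|\langle f_K\rangle_J$ and note that $\langle f_K\rangle_J\le M_{\calD_F}f_K(x)$ for every $x\in J$, because $J\in\calD_F$. Hence
\[
\int_J f_K\int_J g_K\le |I|\int_J M_{\calD_F}f_K\, g_K ,
\]
using $|I|=|J|$. Summing over $I$ with $I^{(k)}=K$ produces the factor $\#\{I\}=|K|/|I|$. The subsequent sum over $J$ with $J^{(k)}=K$ tiles $K$, so the total is $\frac{|K|}{|K_F|}\int_K M_{\calD_F}f_K\, g_K$, as claimed.

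For $A_2$, the $f$-integral is over $J^{12}\times I^3$, which depends on $I$ only through $I^3$. We first fix $J$ and sum over $I^{12}$ with $(I^{12})^{(k^1,k^2)}=K^{12}$, which gives a factor $|K^{12}|/|J^{12}|$. The remaining sum over $I^3\subset K^3$ with $\ell(I^3)=2^{-k^3}\ell(K^3)$ tiles $K^3$, so that
\[
\sum_{I^3}\int_{J^{12}\times I^3}f_K=\int_{J^{12}\times K^3}f_K=|J^{12}\times K^3|\,\langle f_K\rangle_{J^{12}\times K^3}.
\]

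The one point needing care is that $J^1\times J^2\times K^3$ must be flag in order to dominate this average by $M_{\calD_F}f_K$ on $J^{12}\times K^3\supset J$. This holds because $\ell(J^1)\le\ell(J^2)\le\ell(J^3)\le\ell(K^3)$, which is the analog of the observation made in Lemma \ref{lem:formsum1}. Collecting factors, we get $|K^{12}||K^3|/|J^{12}|=|K|/|J^{12}|$ times $|J^{12}|\int_J M_{\calD_F}f_K\,g_K$, that is, $|K|\int_J M_{\calD_F}f_K\,g_K$. Summing over $J$ again tiles $K$ and yields the bound, after dividing by $|K_F|$. No step here is a real obstacle; the only thing to verify is the flag property of the enlarged rectangle, which is immediate.
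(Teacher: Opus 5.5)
Your proof is correct and matches the paper's approach: the paper just calls this straightforward and refers to the bi-parameter Lemma \ref{lem:formsum1}. There, as in your argument, summing over $I$ gives only a counting factor, and for $A_2$ the free index $I^3$ tiles $K^3$, so the average is taken over the flag rectangle $J^{12}\times K^3\supset J$.
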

\begin{proof}
	This is straightforward; see also the bi-parameter case Lemma \ref{lem:formsum1}.
\end{proof}

We also have:

\begin{lem}\label{lem:triformsum2}
	Let $k = (k^1, k^2,k^3)$, $k^m \ge 0$, be fixed and $\calF \subset \calD^k_F$. Let $f_K, g_K \ge 0$ be functions and
	\[
		A_3 := \sum_{K \in \calF} \sum_{I^{(k)} = J^{(k)} = K} \frac{1}{|K_F|}
		\int_{J^1 \times I^{23}} f_K \int_{J} g_K.
	\]
	Define the maximal function
	\[
		\wt M^{23}_{\mathcal{F}}(f)(x)=\sup_{K\in \mathcal{F}}\sup_{\ell(J^1)=2^{-{k^1}}\ell(K^1)}\frac{1_{J^1\times K^{23}}}{|J^1\times (K_F)^{23}|}\int_{J^1\times K^{23}} |f(x)|.
	\]
	Then
	\[
		A_3\le \sum_{K} \int_K \wt M^{23}_{\calF}(f_K)g_K.
	\]
\end{lem}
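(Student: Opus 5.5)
The plan is to mimic the proof of Lemma \ref{lem:formsum2} and Lemma \ref{lem:formsum1}: first collapse the sum over the free cubes $I^{23}$ and $J^{23}$ with $(I^{23})^{(k^{23})}=(J^{23})^{(k^{23})}=K^{23}$, keeping $I^1,J^1$ with $(I^1)^{(k^1)}=(J^1)^{(k^1)}=K^1$. Since $f_K,g_K\ge 0$, summing $\int_{J^1\times I^{23}} f_K$ over $I^{23}$ with $(I^{23})^{(k^{23})}=K^{23}$ gives $\int_{J^1\times K^{23}} f_K$. Summing $\int_J g_K$ over $J^{23}$ gives $\int_{J^1\times K^{23}} g_K$. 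I then sum over $I^1$, which contributes a factor $2^{k^1 d_1}=|K^1|/|J^1|$. Thus
\[
A_3 \le \sum_{K\in\calF}\sum_{(J^1)^{(k^1)}=K^1} \frac{|K^1|}{|J^1|}\,\frac{1}{|K_F|}\int_{J^1\times K^{23}} f_K \int_{J^1\times K^{23}} g_K .
\]

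Next I rewrite the normalization. By the definition of $K_F$ we have $K_F^1=K^1$, so $|K_F|=|K^1|\,|(K_F)^{23}|$, and therefore
\[
\frac{|K^1|}{|J^1|\,|K_F|}=\frac{1}{|J^1\times (K_F)^{23}|}.
\]
Each summand is then $\int_{J^1\times K^{23}}\Big(\frac{1}{|J^1\times(K_F)^{23}|}\int_{J^1\times K^{23}} f_K\Big) g_K$. The inner factor, multiplied by $1_{J^1\times K^{23}}$, is pointwise dominated by $\wt M^{23}_{\calF} f_K$, since $J^1$ has the admissible side length $2^{-k^1}\ell(K^1)$ and $K\in\calF$.

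Finally, the sets $J^1\times K^{23}$ with $(J^1)^{(k^1)}=K^1$ partition $K$. So summing over $J^1$ gives $\int_K \wt M^{23}_{\calF}(f_K) g_K$, and summing over $K\in\calF$ gives the claim.

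There is no genuine obstacle here. The only point needing care is that the identity $|K_F|=|K^1|\,|(K_F)^{23}|$ really uses $K_F^1=K^1$. This is exactly why the maximal function is normalized by $|J^1\times(K_F)^{23}|$ rather than by a flag enlargement of $J^1\times K^{23}$. That normalization retains the extra decay $|K^{23}|/|(K_F)^{23}|$, which is handled later, in the spirit of Lemma \ref{lem:KperKFmaximal}.
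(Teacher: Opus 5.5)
Your argument is correct and is essentially the paper's own proof, which simply says the bound follows by summing in $I$ and $J$. You spell out that summation in full: the identity $|K^1|/(|J^1|\,|K_F|) = 1/|J^1\times (K_F)^{23}|$, which comes from $K_F^1=K^1$, and the partition of $K$ into the rectangles $J^1\times K^{23}$ with $(J^1)^{(k^1)}=K^1$. Your first displayed inequality is in fact an equality.
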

\begin{proof}
	This follows easily by summing in $I$ and $J$.
\end{proof}

Another case is
\begin{lem}\label{lem:triformsum3}
	Let $k = (k^1, k^2,k^3)$, $k^m \ge 0$, be fixed and $\calF \subset \calD^k_F$. Let $f_K, g_K \ge 0$ be functions and
	\begin{align*}
		A_4 := \sum_{K \in \calF} \sum_{I^{(k)} = J^{(k)} = K} \frac{1}{|K_F|} \int_I f_K \int_J g_K.
	\end{align*}
	Then we have
	$$
		A_4 \le \sum_{K \in \calF} \int_K \wt M_{\calF} f_K g_K,
	$$
	where
	$$
		\wt M_{\calF} f := \sup_{K \in \calF} \frac{1_K}{|K_F|} \int_K |f|.
	$$
\end{lem}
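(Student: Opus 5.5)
The plan is to mirror the proof of Lemma \ref{lem:formsum2}: the point is that summing over all $I$ and $J$ with $I^{(k)}=J^{(k)}=K$ simply reassembles the full cube $K$ in both integrals, and no maximal function beyond the trivial $K$-average is required. Fix $K\in\calF$. The cubes $I\in\calD$ with $I^{(k)}=K$ are pairwise disjoint and their union is exactly $K$. This holds because in each parameter $m$ the dyadic descendants $I^m$ of $K^m$ of generation $k^m$ tile $K^m$, and $I=I^1\times I^2\times I^3$ ranges over all products of such cubes. The same holds for $J$.

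Since $f_K,g_K\ge0$ and the summand factors as a product of an $I$-term and a $J$-term, the double sum splits:
\[
\sum_{I^{(k)}=J^{(k)}=K}\frac{1}{|K_F|}\int_I f_K\int_J g_K=\frac{1}{|K_F|}\Big(\sum_{I^{(k)}=K}\int_I f_K\Big)\Big(\sum_{J^{(k)}=K}\int_J g_K\Big)=\frac{1}{|K_F|}\int_K f_K\int_K g_K .
\]
We then rewrite the right-hand side as $\int_K \big(\frac{1}{|K_F|}\int_K f_K\big) g_K$. For $x\in K$, the constant $\frac{1}{|K_F|}\int_K f_K$ is one of the competitors in the supremum defining $\wt M_{\calF} f_K(x)$, because $K\in\calF$ and $1_K(x)=1$. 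Hence it is bounded above by $\wt M_{\calF} f_K(x)$. Using $g_K\ge 0$ to integrate this pointwise inequality and then summing over $K\in\calF$ gives $A_4\le\sum_{K\in\calF}\int_K \wt M_{\calF} f_K\, g_K$.

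I do not expect any real obstacle here. The only point needing care is to keep the normalization $1/|K_F|$ rather than $1/|K|$. That normalization is exactly what makes $\wt M_{\calF}$ the right maximal operator, and it is what later allows the decay estimates in \eqref{eq:ratiodecay} and the tri-parameter analogue of Lemma \ref{lem:KperKFmaximal} to be applied when $\calF$ sits in $\calD^k_=$, $\calD^k_{<,=}$ or $\calD^k_{=,<}$. For the crude bound one may simply dominate $\wt M_{\calF}\le M_{\calD_F}$, using that $K\subset K_F$ with $K_F\in\calD_F$.
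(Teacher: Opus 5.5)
Your proposal is correct and is exactly the paper's argument: summing over $I^{(k)}=J^{(k)}=K$ reassembles $K$ in both factors, giving $A_4=\sum_{K\in\calF}\frac{1}{|K_F|}\int_K f_K\int_K g_K$, and the $K$-average with normalization $1/|K_F|$ is then dominated pointwise on $K$ by $\wt M_{\calF} f_K$. Your side remark that $\wt M_{\calF}\le M_{\calD_F}$ via $K\subset K_F\in\calD_F$ is also correct.
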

\begin{proof}
	It suffices to notice that
	$$
		A_4 = \sum_{K \in \calF} \frac{1}{|K_F|} \int_K f_K \int_K g_K.
	$$
\end{proof}

In certain cases, we will need to use additional information about $\mathcal{F}$.
\begin{lem}\label{lem:triformsum4}
	Let $k = (k^1, k^2,k^3)$, $k^m \ge 0$, be fixed and $\calF \subset \calD_{1,<}(k)$. Let $f_K, g_K \ge 0$ be functions and
	\begin{align*}
		A_5 & := \sum_{K \in \calF} \sum_{I^{(k)} = J^{(k)} = K} \frac{1}{|K_F|} \int_{I^1 \times J^{23}} f_K \int_J g_K,            \\
		A_6 & := \sum_{K \in \calF} \sum_{I^{(k)} = J^{(k)} = K} \frac{1}{|K_F|} \int_{I^1 \times J^{2}\times I^{3}} f_K \int_J g_K.
	\end{align*}
	Then we have
	$$
		A_i \le \sum_{K \in \calF} \frac{|K|}{|K_F|} \int_K M_{\calD_F} f_K g_K,\qquad i=5,6.
	$$
\end{lem}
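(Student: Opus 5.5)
The plan mirrors the bi-parameter Lemma \ref{lem:formsum3}: sum out the cubes that do not appear in either integral, replace the resulting integral by an average over an enlarged rectangle times its measure, and check that this enlarged rectangle is flag. The flag property is where the hypothesis $\calF \subset \calD_{1,<}(k)$ enters. Throughout, $I^{(k)}=J^{(k)}=K$ gives $\ell(I^m)=\ell(J^m)=2^{-k^m}\ell(K^m)$.

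For $A_5$, the variable $I^1$ appears only in $\int_{I^1\times J^{23}} f_K$, and $I^{23}$ appears in neither integral. I will first sum over $I^1$ with $(I^1)^{(k^1)}=K^1$, which gives $\int_{K^1\times J^{23}} f_K$. Summing over $I^{23}$ then produces the factor $2^{k^2 d_2+k^3 d_3}=|K^{23}|/|J^{23}|$. Next I write
\[
\int_{K^1\times J^{23}} f_K = |K^1||J^{23}|\,\langle f_K\rangle_{K^1\times J^{23}}.
\]
Combining the factors, the contribution of each $K$ is $\frac{|K|}{|K_F|}\sum_{J}\langle f_K\rangle_{K^1\times J^{23}} \int_J g_K$. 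Since $J\subset K^1\times J^{23}$, this is at most $\frac{|K|}{|K_F|}\int_K M_{\calD_F} f_K\, g_K$, provided $K^1\times J^{23}$ is a dyadic flag rectangle.

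The flag check is the key point. $J$ is flag, so $\ell(J^2)\le \ell(J^3)$. Also $K\in\calD_{1,<}(k)$ means $\ell(K^1)<2^{-k^2}\ell(K^2)=\ell(J^2)$. Together these give $\ell(K^1)\le\ell(J^2)\le\ell(J^3)$, so $K^1\times J^{23}$ is flag.

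For $A_6$ the argument is the same. Summing over $I^1$ gives $\int_{K^1\times J^2\times I^3} f_K$. I then sum over $I^3$ with $(I^3)^{(k^3)}=K^3$, which gives $\int_{K^1\times J^2\times K^3} f_K$, and summing over $I^2$ gives the factor $|K^2|/|J^2|$. Writing the integral as $|K^1||J^2||K^3|\,\langle f_K\rangle_{K^1\times J^2\times K^3}$, the contribution per $K$ is again $\frac{|K|}{|K_F|}\sum_J \langle f_K\rangle_{K^1\times J^2\times K^3}\int_J g_K$. Here $J\subset K^1\times J^2\times K^3$, and this rectangle is flag because $\ell(K^1)<\ell(J^2)\le\ell(J^3)\le\ell(K^3)$. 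The average is therefore dominated by $M_{\calD_F} f_K$ on $J$, and summing over $J$ inside $K$ finishes the proof.

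No real obstacle is expected. The only points needing care are the bookkeeping of the counting factors and the flag verification, and the verification uses the $\calD_{1,<}(k)$ restriction exactly once in each case.
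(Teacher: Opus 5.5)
Your proof is correct and follows the paper's route: sum out the cubes that do not appear, enlarge the $f_K$-integral to the rectangles $K^1\times J^{23}$ and $K^1\times J^2\times K^3$, and use $\calF\subset\calD_{1,<}(k)$ to see that these rectangles are flag (the paper phrases this as $K^1\times J^2$ being flag). Your explicit bookkeeping of the counting factors $|K^{23}|/|J^{23}|$ and $|K^2|/|J^2|$ fills in what the paper's one-line proof leaves to the reader.
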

\begin{proof}
	Both proofs are similar; as in the bi-parameter case, notice that
	$$
		A_i \le \sum_{K \in \calF} \frac{|K|}{|K_F|} \int_K M_{\calD_F} f_K g_K,
	$$
	for $i=5,6$, where we used that $K^1 \times J^2$ is flag (and hence, so are both $K^1\times J^{23}$ and $K^1\times J^2\times K^3$) by our standing assumption on $\calF$.
\end{proof}

\begin{lem}\label{lem:triformsum5}
	Let $k = (k^1, k^2,k^3)$, $k^m \ge 0$, be fixed and $\calF \subset \calD_{2,<}(k)$. Let $f_K, g_K \ge 0$ be functions and
	\begin{align*}
		A_7 & := \sum_{K \in \calF} \sum_{I^{(k)} = J^{(k)} = K} \frac{1}{|K_F|} \int_{J^1 \times I^{2}\times J^{3}} f_K \int_J g_K.
	\end{align*}
	Then we have
	$$
		A_7 \le \sum_{K \in \calF} \frac{|K|}{|K_F|} \int_K M_{\calD_F} f_K g_K.
	$$
\end{lem}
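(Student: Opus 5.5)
The plan is to mimic Lemma \ref{lem:formsum3} and Lemma \ref{lem:triformsum4}: sum out the free cubes, turning the $f_K$ integral into an average over a rectangle that is still flag thanks to the restriction $\calF \subset \calD_{2,<}(k)$.

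\textbf{Step 1: the second-parameter sum.} In $A_7$ the integral of $f_K$ is over $J^1 \times I^2 \times J^3$. The cube $I^2$ appears in this integral only, and $g_K$ is integrated over $J$, which does not involve $I^2$. Hence summing over $I^2$ with $(I^2)^{(k^2)} = K^2$ replaces $I^2$ by $K^2$:
\[
A_7 = \sum_{K \in \calF} \sum_{\substack{(I^m)^{(k^m)} = (J^m)^{(k^m)} = K^m \\ m=1,3}} \sum_{(J^2)^{(k^2)} = K^2} \frac{|K^1\times K^3|^{-1}\,|I^1||I^3|\,|K^1||K^3|}{|K_F|}\,\frac{1}{|K^1||K^3|}\int_{J^1 \times K^2 \times J^3} f_K \int_J g_K .
\]
Bookkeeping this more cleanly: the sums over $I^1$ and $I^3$ each run over $2^{k^1 d_1}$ and $2^{k^3 d_3}$ children. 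Multiplying by $|I^1|$ and $|I^3|$ respectively produces $|K^1|$ and $|K^3|$. In total this gives
\[
A_7 = \sum_{K\in\calF}\sum_{J^{(k)}=K} \frac{|K^1||K^3|}{|K_F|}\int_{J^1\times K^2\times J^3} f_K \,\langle g_K\rangle_J |J| \cdot \frac{1}{|J^1||J^3|}.
\]
Written as an average, this is
\[
A_7 = \sum_{K\in\calF}\sum_{J^{(k)}=K}\frac{|K|}{|K_F|}\,\langle f_K\rangle_{J^1\times K^2\times J^3}\int_J g_K .
\]
The identity $|J^1||K^2||J^3|\cdot|K^1||K^3|/(|J^1||J^3|) = |K|$ confirms the normalization.

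\textbf{Step 2: flagness of the enlarged rectangle.} Here we check that $J^1\times K^2\times J^3$ is flag. First, $\ell(J^1) = 2^{-k^1}\ell(K^1) \le \ell(K^1) \le \max\{\ell(K^1),\ell(K^2)\}$. Second, the condition $K\in\calD_{2,<}(k)$ gives $\ell(K^2) \le \max\{\ell(K^1),\ell(K^2)\} < 2^{-k^3}\ell(K^3) = \ell(J^3)$. The remaining required inequality is $\ell(J^1)\le\ell(K^2)$, and this follows from $K\in\calD^k_F$: we have $2^{-k^1}\ell(K^1)\le 2^{-k^2}\ell(K^2)\le \ell(K^2)$. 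Hence the rectangle lies in $\calD_F$.

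\textbf{Step 3: conclusion.} Because the rectangle is in $\calD_F$ and contains $J$, we have $\langle f_K\rangle_{J^1\times K^2\times J^3} \le M_{\calD_F} f_K(x)$ for $x\in J$. Summing over $J^{(k)}=K$ then gives $\int_K M_{\calD_F} f_K\, g_K$, which is the claimed bound.

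The only nontrivial point is Step 2, namely guaranteeing that $\ell(K^2) \le \ell(J^3)$. This is exactly why the hypothesis $\calF\subset\calD_{2,<}(k)$ is imposed; without it, $K^2$ could exceed $J^3$ and the average would not be flag.
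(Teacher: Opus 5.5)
Your proof is correct and follows the paper's own argument. You sum out the free cube $I^2$ to enlarge it to $K^2$, and observe that $J^1\times K^2\times J^3\in\calD_F$, because $\ell(J^1)\le 2^{-k^2}\ell(K^2)\le\ell(K^2)$ (from $K\in\calD^k_F$) and $\ell(K^2)<2^{-k^3}\ell(K^3)=\ell(J^3)$ (from $K\in\calD_{2,<}(k)$); this lets you dominate the average by $M_{\calD_F}f_K$ on $J$.

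The only slip is the first display in your Step 1: its coefficient carries an extra factor $2^{-k^1d_1-k^3d_3}$. Discard that display. The cleaner identity $A_7=\sum_{K\in\calF}\sum_{J^{(k)}=K}\frac{|K|}{|K_F|}\langle f_K\rangle_{J^1\times K^2\times J^3}\int_J g_K$ that you write next is correct, and it is all you need.
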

\begin{proof}
	As before, notice that
	$$
		A_7 \le \sum_{K \in \calF} \frac{|K|}{|K_F|} \int_K M_{\calD_F} f_K g_K,
	$$
	where we used that $K^2 \times J^3$ is flag (and hence, so is $J^1\times K^{2}\times J^3$) by our standing assumption on $\calF$.
\end{proof}

\begin{lem}\label{lem:triformsum6}
	Let $k = (k^1, k^2,k^3)$, $k^m \ge 0$, be fixed and $\calF \subset \calD_{2,<}(k)$. Let $f_K, g_K \ge 0$ be functions and
	\begin{align*}
		A_8 := \sum_{K \in \calF} \sum_{I^{(k)} = J^{(k)} = K} \frac{1}{|K_F|} \int_{I^{12} \times J^{3}} f_K \int_J g_K.
	\end{align*}
	Define the maximal function
	\[
		\wt M^{12}_{\calF}(f)(x)=\sup_{K\in \calF}\sup_{\ell(J^3)=2^{-{k^3}}\ell(K^3)}\frac{1_{K^{12}\times J^{3}}}{|(K_F)^{12}\times J^3|}\int_{K^{12}\times J^{3}} |f(x)|.
	\]
	Then
	$$
		A_8 \le \sum_{K \in \calF} \int_K \wt M^{12}_{\calF} (f_K) g_K.
	$$
\end{lem}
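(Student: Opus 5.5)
The plan is to collapse the double sum over $I,J$ into integrals over the larger rectangle $K^{12}\times J^3$, in the same spirit as Lemma \ref{lem:formsum3} and Lemma \ref{lem:triformsum2}. Fix $K\in\calF$. The integrand $\int_{I^{12}\times J^3} f_K$ depends on $I$ only through $I^{12}$ and on $J$ only through $J^3$. The factor $\int_J g_K$ depends only on $J$.

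First sum over $I^3$ with $(I^3)^{(k^3)}=K^3$. This only produces a multiplicative factor $2^{k^3 d_3}=|K^3|/|J^3|$.

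Next sum over $I^{12}$ with $(I^{12})^{(k^1,k^2)}=K^{12}$. Since these $I^{12}$ partition $K^{12}$, this gives $\int_{K^{12}\times J^3} f_K$.

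Then sum over $J^{12}$ with $(J^{12})^{(k^1,k^2)}=K^{12}$. Together with the fixed $J^3$, this turns $\int_J g_K$ into $\int_{K^{12}\times J^3} g_K$. We arrive at
\begin{align*}
	A_8 = \sum_{K\in\calF} \sum_{(J^3)^{(k^3)}=K^3} \frac{|K^3|}{|J^3|}\frac{1}{|K_F|}
	\int_{K^{12}\times J^3} f_K \int_{K^{12}\times J^3} g_K .
\end{align*}

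Now compare $|K_F|$ with $|(K_F)^{12}\times J^3|$. By the definition of $K_F$ we have $|K_F| = |(K_F)^{12}|\,\ell(K_F^3)^{d_3}$. Since $K\in\calD_{2,<}(k)$, we have $\max\{\ell(K^1),\ell(K^2)\}<2^{-k^3}\ell(K^3)\le\ell(K^3)$. Hence $\ell(K_F^3)=\ell(K^3)$ and $|K_F| = |(K_F)^{12}|\,|K^3|$. Therefore
\begin{align*}
	\frac{|K^3|}{|J^3|}\frac{1}{|K_F|} = \frac{1}{|(K_F)^{12}\times J^3|}.
\end{align*}
Consequently each summand equals $\int_{K^{12}\times J^3} \langle\text{weighted average of } f_K\rangle\, g_K$, where the average is exactly the one appearing in $\wt M^{12}_{\calF}$. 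It is therefore bounded by $\int_{K^{12}\times J^3} \wt M^{12}_{\calF}(f_K)\, g_K$. Summing over $J^3$ (these partition $K^3$) gives $\int_K \wt M^{12}_{\calF}(f_K) g_K$, which is the claim.

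The only delicate step is the identity $\ell(K_F^3)=\ell(K^3)$. This is where the hypothesis $\calF\subset\calD_{2,<}(k)$ is used. Strictly, only $\ell(K^3)\ge\max\{\ell(K^1),\ell(K^2)\}$ is needed, which follows from the hypothesis.

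The restriction also makes $(K_F)^{12}\times J^3$ a genuine flag rectangle, because $\ell(J^3)=2^{-k^3}\ell(K^3)>\max\{\ell(K^1),\ell(K^2)\}$. This is what later lets $\wt M^{12}_\calF$ be dominated by $M_{\calD_F}$ after the extra decay $|K^{12}|/|(K_F)^{12}|$ is extracted. However, that step is not needed for the stated inequality itself.
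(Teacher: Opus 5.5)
Your proof is correct and is the same direct summation the paper intends; its proof just says ``This is immediate''. You collapse the $I^3$, $I^{12}$ and $J^{12}$ sums and use $|K_F|=|(K_F)^{12}|\,|K^3|$, so each $J^3$-term becomes an average dominated pointwise by $\wt M^{12}_{\calF}(f_K)$ on $K^{12}\times J^3$.

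One small correction to your emphasis: the inequality does not need the hypothesis. It only needs $|K_F|\ge|(K_F)^{12}|\,|K^3|$, which holds for every $K\in\calD^k_F$ because $\ell(K_F^3)\ge\ell(K^3)$. So $\calF\subset\calD_{2,<}(k)$ matters only later, to make $(K_F)^{12}\times J^3$ flag, as you noted yourself.
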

\begin{proof}
	This is immediate.
\end{proof}

\begin{lem}\label{lem:maxOp1}
	Let  $p \in (1, \infty)$ and $w \in A_{p, F}$, then there is
	$\eta = \eta([w]_{A_{p, F}}) > 0$ so that for all $k = (k^1, k^2,k^3)$ we have
	\[
		\|\wt M_{\calF}^{23} f\|_{L^p(w)} \lesssim
		\begin{cases}
			 & 2^{-\eta(k^2-k^3)_+} \|f\|_{L^p(w)}, \quad \calF \subset \calD_{<,=}^k, \\
			 & \|f\|_{L^p(w)}, \quad \calF \subset \calD_{<}^k.
		\end{cases}
	\]
\end{lem}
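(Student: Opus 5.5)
The plan is to copy the proof of Lemma \ref{lem:KperKFmaximal}: prove an unweighted estimate with explicit decay, a flag-weighted estimate with no decay, and then interpolate with a change of measure.

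The case $\calF\subset\calD_<^k$ needs no decay. For $K\in\calF$ and $\ell(J^1)=2^{-k^1}\ell(K^1)$ we have $|J^1\times(K_F)^{23}|\ge|J^1\times K^{23}|$. Hence $\wt M^{23}_{\calF}f\le\sup_R 1_R\langle|f|\rangle_R$, where $R=J^1\times K^{23}$. This $R$ is flag: $\ell(J^1)=2^{-k^1}\ell(K^1)\le 2^{-k^2}\ell(K^2)\le\ell(K^2)$, and $\ell(K^2)\le\ell(K^3)$ because $2^{-k^2}\ell(K^2)<2^{-k^3}\ell(K^3)$ and $k^2\ge k^3$ is not guaranteed. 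The last point is the snag: the ratio $\ell(K^2)/\ell(K^3)$ could exceed one. In that case I would note that $J^1\times K^2_F\times K^3_F$ is flag, and $|J^1\times (K_F)^{23}|$ is exactly its measure. Since $J^1\times K^{23}\subset J^1\times(K_F)^{23}$, the quantity is bounded by $\langle|f|\rangle_{J^1\times(K_F)^{23}}\le M_{\calD_F}f$ pointwise. So $\wt M^{23}_\calF f\le M_{\calD_F}f$ in all cases, and Proposition \ref{prop:maxfunc} (its dyadic version) gives the $L^p(w)$ bound for $w\in A_{p,F}$.

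For $\calF\subset\calD_{<,=}^k$, the same domination gives the flag-weighted bound with no decay, both for $w$ and, via Proposition \ref{prop:open}, for $w^{1+\eps}$. For the unweighted decay I would compute the ratio directly. Here $2^{-k^2}\ell(K^2)=2^{-k^3}\ell(K^3)$, so $\ell(K^2)/\ell(K^3)=2^{k^2-k^3}$. Also $\ell(K^1)\le 2^{k^1-k^2}\ell(K^2)$, but the third side of $K_F$ is governed by $\max(\ell(K^1),\ell(K^2),\ell(K^3))\ge\ell(K^2)$. Therefore
\[
\frac{|J^1\times K^{23}|}{|J^1\times (K_F)^{23}|}\le\Big(\frac{\ell(K^3)}{\max(\ell(K^2),\ell(K^3))}\Big)^{d_3}\le 2^{-d_3(k^2-k^3)_+}.
\]
Thus $\wt M^{23}_\calF f\le 2^{-d_3(k^2-k^3)_+}M_\calD f$, where $M_\calD$ is the strong (rectangular) dyadic maximal function, bounded on unweighted $L^p$.

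Finally I would apply Proposition \ref{prop:interpo} with $p_0=p_1=p$, $w_0=1$, $w_1=w^{1+\eps}$ and $t=1/(1+\eps)$. This yields the decay $2^{-\eta(k^2-k^3)_+}$ with $\eta=d_3\eps/(1+\eps)$, depending only on $[w]_{A_{p,F}}$. The main point to check carefully is the pointwise domination by $M_{\calD_F}$, that is, verifying that the enlarged rectangle $J^1\times(K_F)^{23}$ is genuinely a dyadic flag rectangle. This holds since $(K_F)^2\supset K^2$ is dyadic with $\ell(K_F^2)\ge\ell(K^1)\ge\ell(J^1)$.
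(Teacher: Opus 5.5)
Your proposal is correct and follows the paper's proof essentially verbatim. It uses the pointwise bounds $\wt M^{23}_{\calF} f \le 2^{-d_3(k^2-k^3)_+} M_{\calD} f$ and $\wt M^{23}_{\calF} f \le M_{\calD_F} f$ (the latter via the flag rectangle $J^1\times (K_F)^{23}$), the openness of $A_{p,F}$, and interpolation with change of measure to get $\eta = d_3\eps/(1+\eps)$. Your opening claim that $J^1\times K^{23}$ is flag is false when $k^2>k^3$, but you then correctly pass to $J^1\times (K_F)^{23}$, exactly as the paper does.
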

\begin{proof}
	The argument is very similar to the proof of Lemma \ref{lem:KperKFmaximal}, but we present the details nonetheless. First, if $\calF \subset \calD_{<,=}^k$, it is easy to observe the pointwise bound
	\[\wt M_{\calF}^{23} f \le 2^{-d_3(k^2-k^3)_+}M_{\calD} f,
	\]
	and so we have the following unweighted estimate:
	$$
		\|\wt M_{\calF}^{23} f\|_{L^p} \le 2^{-d_3(k^2-k^3)_+} \|M_{\calD} f\|_{L^p} \lesssim
		2^{-d_3(k^2-k^3)_+} \|f\|_{L^p}.
	$$
	On the other hand, we have $\wt M_{\calF}^{23} f \le M_{\calD_F} f$
	and so we have the following flag weighted estimate:
	$$
		\|\wt M_{\calF}^{23} f\|_{L^p(w)} \le \|M_{\calD_F} f\|_{L^p(w)} \lesssim \|f\|_{L^p(w)}.
	$$

	As a consequence of Proposition \ref{prop:open}, for all $w \in A_{p, F}$
	there is $\eps = \eps([w]_{A_{p, F}}) > 0$ so that also
	$$
		\|\wt M_{\calF}^{23} f \|_{L^p(w^{1+\eps} )} \lesssim \|f\|_{L^p(w^{1+\eps} )}.
	$$
	Then by interpolation with a change of measure (recall Proposition \ref{prop:interpo})
	we get that for $\kappa = \kappa([w]_{A_p, F}) :=\frac \eps{1+\eps}\in (0,1)$ we have
	$$
		\|\wt M_{\calF}^{23} f \|_{L^p(w)}\lesssim (2^{-d_3(k^2-k^3)_+})^\kappa  \|f\|_{L^p(w)}
		= 2^{-\eta(k^2-k^3)_+} \|f\|_{L^p(w)},
	$$
	where $\eta := \kappa d_3 \in (0, d_3)$.
	Indeed, this is obtained by using the interpolation result with
	$p_0 = p_1 = p$, $w_0 = 1$, $w_1 = w^{1+\eps}$ and $t = 1 / (1+\eps)$
	so that $w_0^{1-t}w_1^{t} = 1^{1-t}w^{t(1+\eps)} = w$. If $\calF \subset \calD_{<}^k$, then $\wt M_{\calF}^{23} f \le M_{\calD_F} f$ trivially yields the desired estimate.
	We are done.
\end{proof}
We have an analogous result for the $\wt M_{\calF}^{12}$ operator. We omit the proof since it is so similar.
\begin{lem}\label{lem:maxOp2}
	Let  $p \in (1, \infty)$ and $w \in A_{p, F}$, then there is
	$\eta = \eta([w]_{A_{p, F}}) > 0$ so that for all $k = (k^1, k^2,k^3)$ we have
	\[
		\|\wt M_{\calF}^{12} f\|_{L^p(w)} \lesssim
		\begin{cases}
			 & 2^{-\eta(k^1-k^2)_+} \|f\|_{L^p(w)}, \quad \calF \subset \calD_{=,<}^k\cap \calD_{2,<}(k), \\
			 & \|f\|_{L^p(w)}, \quad \calF \subset \calD_{<}^k\cap \calD_{2,<}(k).
		\end{cases}
	\]
\end{lem}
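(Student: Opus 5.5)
We follow the proof of Lemma \ref{lem:maxOp1} (and Lemma \ref{lem:KperKFmaximal}), interpolating an unweighted estimate with exponential decay against a flag weighted estimate without decay.

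\textbf{Step 1: the case $\calF \subset \calD_{<}^k\cap \calD_{2,<}(k)$.} We check that every averaging set is flag and that the normalisation only helps. Since $K \in \calD_{2,<}(k)$, we have $\max\{\ell(K^1),\ell(K^2)\} < 2^{-k^3}\ell(K^3) = \ell(J^3)$. Since $K \in \calD_<^k$, we also have $2^{-k^1}\ell(K^1)<2^{-k^2}\ell(K^2)$.

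The normalising set $(K_F)^{12}\times J^3$ is flag because $\ell((K_F)^2)=\max(\ell(K^1),\ell(K^2)) < \ell(J^3)$. It contains $K^{12}\times J^3$, so
\[
\frac{1}{|(K_F)^{12}\times J^3|}\int_{K^{12}\times J^3}|f| \le \langle |f|\rangle_{(K_F)^{12}\times J^3}.
\]
Moreover $1_{K^{12}\times J^3}\le 1_{(K_F)^{12}\times J^3}$, so $\wt M^{12}_\calF f \le M_{\calD_F} f$ pointwise. Proposition \ref{prop:maxfunc} (dyadic version) then gives the bound $\|f\|_{L^p(w)}$. This is the estimate we will use in all cases.

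\textbf{Step 2: the case $\calF\subset \calD_{=,<}^k\cap\calD_{2,<}(k)$.} Step 1 already gives the weighted bound with no decay. For the unweighted bound we use the ratio
\[
\frac{|K^{12}\times J^3|}{|(K_F)^{12}\times J^3|} = \Big(\frac{\ell(K^2)}{\max(\ell(K^1),\ell(K^2))}\Big)^{d_2}.
\]
Since $2^{-k^1}\ell(K^1)=2^{-k^2}\ell(K^2)$, this ratio equals $2^{-d_2(k^1-k^2)_+}$. Hence
\[
\wt M^{12}_\calF f \le 2^{-d_2(k^1-k^2)_+} M_{\calD} f,
\]
where $M_\calD$ is the strong dyadic maximal function, which is bounded on unweighted $L^p$. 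This gives the unweighted estimate with decay $2^{-d_2(k^1-k^2)_+}$.

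\textbf{Step 3: interpolation.} By Proposition \ref{prop:open}, $w^{1+\eps}\in A_{p,F}$, so Step 1 applies to $w^{1+\eps}$ as well. We apply Proposition \ref{prop:interpo} with $p_0=p_1=p$, $w_0=1$, $w_1=w^{1+\eps}$ and $t=1/(1+\eps)$. This yields the bound $2^{-\eta(k^1-k^2)_+}$ on $L^p(w)$ with $\eta = d_2\eps/(1+\eps)$.

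The only point needing care is Step 1's check that $(K_F)^{12}\times J^3$ is flag. This is exactly where the hypothesis $\calF\subset\calD_{2,<}(k)$ is essential.
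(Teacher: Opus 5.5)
Your proof is correct and follows the same route as the paper, which omits this proof as analogous to Lemma~\ref{lem:maxOp1}: you dominate $\wt M^{12}_{\calF}$ pointwise by $2^{-d_2(k^1-k^2)_+}M_{\calD}$ to get the unweighted decay, and by $M_{\calD_F}$ to get the flag-weighted bound, and you then interpolate with change of measure via Propositions~\ref{prop:open} and~\ref{prop:interpo}. Your explicit check that $(K_F)^{12}\times J^3$ is flag because $\calF\subset\calD_{2,<}(k)$ is exactly the point the paper leaves implicit.
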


The following bound in terms of $|K|/|K_F|$ applies to both $\wt M_{\calF}$ and $\wt M_{\calF}^{23}$, including when the first two input side lengths agree.
\begin{lem}\label{lem:maxOp3}
	Let $p \in (1, \infty)$ and $w \in A_{p, F}$. There is
	$\eta = \eta([w]_{A_{p, F}}) > 0$ so that for all $k = (k^1, k^2,k^3)$ and $\calF\subset\calD_F^k$ we have
	$$
		\|\wt M_{\calF} f\|_{L^p(w)}+\|\wt M_{\calF}^{23} f\|_{L^p(w)} \lesssim
		\left(\sup_{K\in\calF}\frac{|K|}{|K_F|}\right)^\eta \|f\|_{L^p(w)}.
	$$
\end{lem}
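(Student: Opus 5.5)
The plan is to copy the interpolation-with-change-of-measure argument of Lemma \ref{lem:KperKFmaximal} and Lemma \ref{lem:maxOp1}, this time with the uniform ratio $\rho := \sup_{K\in\calF}|K|/|K_F| \le 1$ as the unweighted gain. Only two endpoint bounds are needed: an unweighted one with the factor $\rho$, and a flag weighted one with no gain.

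\textbf{Unweighted endpoint.} For $K \in \calF$ we have
\[
	\frac{1_K}{|K_F|}\int_K |f| = \frac{|K|}{|K_F|}\,1_K\langle |f|\rangle_K \le \rho\, M_{\calD} f,
\]
where $M_{\calD}$ is the strong dyadic maximal function over all rectangles of $\calD$. The same argument works for $\wt M^{23}_{\calF}$. There the averaging set is $J^1\times K^{23}$ and it is normalized by $|J^1\times (K_F)^{23}|$. Since $K_F^1 = K^1$, we have
\[
	\frac{|J^1\times K^{23}|}{|J^1\times (K_F)^{23}|} = \frac{|K^{23}|}{|(K_F)^{23}|} = \frac{|K|}{|K_F|} \le \rho .
\]
Hence $\wt M^{23}_{\calF} f \le \rho\, M_{\calD} f$ as well. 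Because $M_{\calD}$ is bounded on unweighted $L^p$ (iterate one-parameter maximal functions), both operators are bounded on $L^p$ with norm $\lesssim \rho$.

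\textbf{Weighted endpoint.} We need both operators to be dominated by $M_{\calD_F}$.
\begin{itemize}
	\item For $\wt M_{\calF}$: since $K \subset K_F$ and $K_F$ is a flag rectangle, $|K_F|^{-1}\int_K|f| \le \langle |f|\rangle_{K_F}$, so $\wt M_{\calF} f \le M_{\calD_F} f$.
	\item For $\wt M^{23}_{\calF}$: we use the rectangle $J^1\times (K_F)^{23}$. It contains $J^1\times K^{23}$, and it is flag because $\ell(J^1)\le \ell(K^1)\le \ell(K_F^2)\le \ell(K_F^3)$. This gives $\wt M^{23}_{\calF} f \le M_{\calD_F} f$.
\end{itemize}
By Proposition \ref{prop:maxfunc} (or its dyadic version, which follows from \eqref{eq:e0206}), $M_{\calD_F}$ is bounded on $L^p(v)$ for every $v\in A_{p,F}$.

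\textbf{Interpolation.} Given $w \in A_{p,F}$, Proposition \ref{prop:open} provides $\eps=\eps([w]_{A_{p,F}})>0$ with $w^{1+\eps}\in A_{p,F}$, with a characteristic controlled by $[w]_{A_{p,F}}$. The weighted endpoint then gives boundedness on $L^p(w^{1+\eps})$ with a constant depending only on $[w]_{A_{p,F}}$. Both operators are sublinear, so we apply Proposition \ref{prop:interpo} with
\[
	p_0=p_1=p,\qquad w_0=1,\qquad w_1=w^{1+\eps},\qquad t=1/(1+\eps).
\]
This yields bounds on $L^p(w)$ with constant $\lesssim \rho^{1-t} = \rho^{\eps/(1+\eps)}$. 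So the lemma holds with $\eta := \eps/(1+\eps)$, and we add the two operator bounds.

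\textbf{Main obstacle.} The only real point is verifying the flag containment for $\wt M^{23}_{\calF}$, which is done above. The argument needs only $\calF\subset\calD_F^k$ and no structure beyond the definition of $K_F$. The remaining steps are routine.
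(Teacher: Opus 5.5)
Your proposal is correct and follows the paper's proof. You use the same two pointwise bounds: $\rho\, M_{\calD}$, via $K_F^1=K^1$, and $M_{\calD_F}$, via the flag rectangles $K_F$ and $J^1\times (K_F)^{23}$. You then apply the interpolation with change of measure from Lemma \ref{lem:maxOp1}, and you correctly fill in the details the paper leaves implicit: the flag containment, the interpolation parameters, and $\eta=\eps/(1+\eps)$, which arises because the unweighted gain is $\rho$ itself.
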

\begin{proof}
	Use the pointwise bounds
	\[
		\wt M_{\calF} f\le
		\left(\sup_{K\in\calF}\frac{|K|}{|K_F|}\right)M_{\calD}f,
		\qquad \wt M_{\calF} f\le M_{\calD_F}f,
	\]
	and the same bounds for $\wt M_{\calF}^{23}$. For the latter, note that $K_F^1=K^1$, so
	\[
		\frac{|K^{23}|}{|(K_F)^{23}|}=\frac{|K|}{|K_F|},
		\qquad J^1\times (K_F)^{23}\in\calD_F,
	\]
	whenever $\ell(J^1)=2^{-k^1}\ell(K^1)$. The interpolation argument from Lemma \ref{lem:maxOp1} gives both estimates.
\end{proof}
If $\calF$ is contained in one of the four collections in \eqref{eq:ratiodecay}, inserting the corresponding bound gives the explicit decay in $k$.

Finally, we will need vector-valued analogues of all of these bounds.
\begin{lem}\label{lem:VecKperKFmaximalTri}
	Let $T$ be one of the maximal operators in Lemma \ref{lem:maxOp1}, \ref{lem:maxOp2} or \ref{lem:maxOp3}, and let $C(T)$ be the upper bound from that lemma. For all $p \in (1, \infty)$ and $w \in A_{p, F}$ there is
	$\alpha = \alpha([w]_{A_{p, F}}) > 0$ so that for all $k = (k^1,k^2,k^3)$ we have
	$$
		\Big\|\Big(\sum_j [T f_j ]^2\Big)^{1/2}\Big\|_{L^p(w)} \lesssim
		C(T)^{\alpha} \Big\|\Big(\sum_j |f_j|^2\Big)^{1/2}\Big\|_{L^p(w)}.
	$$
\end{lem}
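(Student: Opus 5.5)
The proof will follow the proof of Lemma \ref{lem:VecKperKFmaximal}: we extrapolate a weighted $L^2$ estimate, keeping track of how the decay constant depends on the weight. Fix one of the maximal operators $T$ from Lemmas \ref{lem:maxOp1}, \ref{lem:maxOp2} or \ref{lem:maxOp3}. Write the decay constant as $C(T)=2^{-d_m N}$, where $N\ge 0$ and $d_m\in\{d_2,d_3\}$ are read off from the relevant case. In Lemma \ref{lem:maxOp3} the bound $\sup_{K\in\calF}|K|/|K_F|\le 1$ is already of this multiplicative form, so we treat it as a number in $(0,1]$.

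The first step is the scalar weighted $L^2$ estimate with explicit constants. For $w\in A_{2,F}$ the proofs of those lemmas give
\[
\|Tf\|_{L^2(w)}\le N_1([w]_{A_{2,F}})\, C(T)^{\kappa},\qquad \kappa=\frac{\eps}{1+\eps},\qquad \eps\sim [w]_{A_{2,F}}^{-1},
\]
by Proposition \ref{prop:open}. Here $N_1$ is non-decreasing, coming from the flag maximal bound $\|M_{\calD_F}\|_{L^2(w^{1+\eps})}$ and from Proposition \ref{prop:interpo}. We must check that $N_1$ really depends only on $[w]_{A_{2,F}}$, which amounts to checking that $[w^{1+\eps}]_{A_{2,F}}\lesssim [w]_{A_{2,F}}$ with the stated choice of $\eps$; this is exactly what the proof of Proposition \ref{prop:open} gives.

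Next, since $\kappa\ge c/(c+[w]_{A_{2,F}})$ and $C(T)\le 1$, we can take
\[
N_2(x):=N_1(x)\,C(T)^{c/(c+x)}
\]
as the constant. This function is non-decreasing in $x\ge 1$: $N_1$ is non-decreasing, and $C(T)^{c/(c+x)}$ increases towards $1$ as $x$ grows because $C(T)\le 1$. So $N_2$ is admissible in the extrapolation Lemma \ref{lem:extra}. The main obstacle is the sub-step of this extrapolation: we need the vector-valued form with a weight-dependent constant that is a fixed function of the characteristic. For this we apply the Rubio de Francia extrapolation of Lemma \ref{lem:extra} (via \cite{CUMP-book}) to the pairs
\[
\Big(\Big(\sum_j [Tf_j]^2\Big)^{1/2},\ \Big(\sum_j |f_j|^2\Big)^{1/2}\Big).
\]
We first obtain the $L^2(w)$ vector-valued bound, which is immediate from the scalar bound by summing squares, since $L^2(w)$ of an $\ell^2$-sum is the $\ell^2$-sum of the $L^2(w)$ norms. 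We then extrapolate to $L^p(w)$, $w\in A_{p,F}$, with constant $N_2(c_p[w]_{A_{p,F}}^{\beta(p)})$. Here we use that the extrapolation constant is obtained by evaluating the $L^2$ constant at an $A_{2,F}$ weight whose characteristic is controlled by a power of $[w]_{A_{p,F}}$, which is how the Rubio de Francia algorithm works for the Muckenhoupt basis $\calR_F$ (Proposition \ref{prop:maxfunc}).

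Finally, evaluating gives
\[
N_2\big(c_p[w]_{A_{p,F}}^{\beta(p)}\big)\lesssim C(T)^{\alpha},\qquad \alpha:=\frac{c}{c+c_p[w]_{A_{p,F}}^{\beta(p)}}>0,
\]
with the implicit constant depending only on $[w]_{A_{p,F}}$, which is the claim. For the unweighted-decay cases where $C(T)=1$ there is nothing to prove beyond the vector-valued flag maximal bound, which already follows by extrapolation.
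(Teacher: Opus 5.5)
Your proposal is correct and follows the route the paper intends; the paper omits this proof and calls it analogous to that of Lemma \ref{lem:VecKperKFmaximal}. That route is: take the $L^2(w)$ bound with constant $N_1([w]_{A_{2,F}})\,C(T)^{\eps/(1+\eps)}$, replace it by the non-decreasing function $N_2(x)=N_1(x)\,C(T)^{c/(c+x)}$ (using $C(T)\le 1$), and extrapolate, so that evaluating $N_2$ at a function of $[w]_{A_{p,F}}$ gives the weight-dependent exponent $\alpha>0$. One minor looseness: Proposition \ref{prop:open} combined with Proposition \ref{prop:weight}~(ii) bounds $[w^{1+\eps}]_{A_{2,F}}$ by some increasing function of $[w]_{A_{2,F}}$ rather than literally by $\lesssim [w]_{A_{2,F}}$, but that is all your argument needs.
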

We omit the proof which is analogous to the proof of Lemma \ref{lem:VecKperKFmaximal}.

\subsection{Details of the representation theorem}\label{subsec:tripar-shift-details}
In this section, we will carefully define the various shifts that appeared in the statement of the representation theorem but were not defined carefully, as well as bringing in some other notation that will be useful for our analysis. We will think of the vector $k=(k^1,k^2,k^3)$ as being fixed throughout.

\subsubsection{Different shifts in the tri-parameter case}
The bilinear forms of all our shifts will have the form
\[
	(f,g) \mapsto \sum_{K \in \calF} \sum_{I^{(k)} = J^{(k)} = K} a_{IJK} \langle f, \varphi_{I,J}\rangle
	\langle g, \psi_{I, J}\rangle,
\]
where the coefficients satisfy the normalization
\[
	|a_{IJK}|\le \frac{|I|}{|K_F|}.
\]
The functions $\varphi_{I,J}, \psi_{I,J}$ will be some specific `$\varphi_K$' from before; depending on the type of shift, it will be required that they satisfy some sort of cancellation condition. As in the bi-parameter case, we require the size bounds $|\varphi_{I,J}|,\,|\psi_{I,J}|\lesssim |I|^{-1/2}$. Once again, they satisfy the following constancy condition: they are constant on $L^1 \times L^2 \times L^3\in \calD$
with $\ell(L^m) < \ell(I^m) = 2^{-k^m}\ell(K^m)$. Below, we list the precise definition of different shifts.

\subsubsection*{Almost one-parameter shifts}
These arise in the representation theorem when all three parameters are comparable, i.e. for fixed $k$, we have that $K\notin \calD_{1,<}(k)\cup \calD_{2,<}(k)$. Additionally, we assume only the weakest one-parameter cancellation on the functions that appear, i.e.
\[
	\int_{\R^d} \varphi_{I,J}=\int_{\R^d} \psi_{I,J}=0.
\]
Thus, given $k=(k^1, k^2, k^3)\in \N^3$ and $l=(l^1, l^2)\in \N^2$ (where we actually have that $0\le l^1\le k^1$ and $0\le l^2\le \max\{k^1-l^1, k^2\}$), define  $\calD^{(1)}(k,l)\subset \calD_F^k$ as
\[
	\calD^{(1)}(k, l):= \{K\in \calD_F^k: 2^{l^1+l^2} 2^{-k^1} \ell(K^1)=2^{l^2} 2^{-k^2} \ell(K^2) =2^{-k^3}\ell(K^3)\}.
\]
Then the almost one-parameter shifts can be defined as:  $\calF= \calD^{(1)}(k, l)$ and the functions $\varphi_{I, J}$, $\psi_{I, J}$ having their respective supports inside (after relabeling)
\[
	(I\cup J\cup (J^1\times I^{23})\cup (J^{12}\times I^3),  J).
\]
Notice that these support conditions arise from paraproduct corrections $\pc_1, \pc_{12},$ or $\pc_{123}$ occurring in one function. The related model operators will be denoted by $R_{k, l^1, l^2}$. We have the following weighted boundedness result.
\begin{thm}\label{thm:almost-3par}
	For all $p \in (1, \infty)$, $w \in A_{p, F}$ and $k, l^1, l^2$ we have
	$$
		\|R_{k, l^1, l^2}f\|_{L^p(w)} \lesssim (|k|+1)^3\|f\|_{L^p(w)}.
	$$
	We have the enhanced estimate in different situations: for $l^1=0$:
	$$
		\|R_{k, 0, l^2}f\|_{L^p(w)} \lesssim (|k|+1)^3 2^{-\eta(k^1-k^2)_+}\|f\|_{L^p(w)};
	$$
	for $l^2 = 0$
	$$
		\|R_{k, l^1, 0}f\|_{L^p(w)} \lesssim (|k|+1)^3 2^{-\eta(k^2-k^3)_+}\|f\|_{L^p(w)};
	$$
	and for $l^1=l^2=0$
	$$
		\|R_{k, 0, 0}f\|_{L^p(w)} \lesssim (|k|+1)^3 2^{-\eta(k^1-k^2)_+} 2^{-\eta\max\{(k^1-k^3)_+, (k^2-k^3)_+\}}\|f\|_{L^p(w)},
	$$
	where $\eta = \eta([w]_{A_{p, F}}) > 0$.
\end{thm}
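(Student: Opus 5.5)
The plan mirrors the proof of Theorem \ref{thm:almost}. Since $\varphi_{I,J}$ and $\psi_{I,J}$ have one-parameter cancellation, are supported in $K$, and are constant on the small rectangles, the one-parameter tri-parameter martingale block expansion of Section \ref{subsec:tripar-blocks} applies to both functions. Thus I replace $f$ by $\calY_{K,k,l}f$ and $g$ by $\calY_{K,k,l}g$, with $l=(l^1,l^2)$ chosen so that $K\in\calD^{(1)}(k,l)$. Using the size bounds $|\varphi_{I,J}|,|\psi_{I,J}|\lesssim |I|^{-1/2}$ and $|a_{IJK}|\le |I|/|K_F|$, I obtain
\[
|\langle R_{k,l^1,l^2}f,g\rangle|\lesssim \sum_{K\in\calD^{(1)}(k,l)}\sum_{I^{(k)}=J^{(k)}=K}\frac{1}{|K_F|}\Big(\int_I+\int_J+\int_{J^1\times I^{23}}+\int_{J^{12}\times I^3}\Big)f_K\int_J g_K,
\]
where $f_K=|\calY_{K,k,l}f|$ and $g_K=|\calY_{K,k,l}g|$.

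These four configurations are exactly the harmless cases (1), (2), (5) and (8) of the list in Section \ref{sec:bilinforms}. No extra structure of $\calF$ is needed, so Lemmas \ref{lem:triformsum1}, \ref{lem:triformsum2} and \ref{lem:triformsum3} apply. This dominates the form by
\[
\sum_K\int_K \big(M f_K\big) g_K,
\]
where $M$ is one of $\frac{|K|}{|K_F|}M_{\calD_F}$, $\wt M_{\calF}$, or $\wt M^{23}_{\calF}$.

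For the general estimate, I bound $|K|/|K_F|\le1$, $\wt M_\calF\le M_{\calD_F}$, and $\wt M^{23}_\calF\le M_{\calD_F}$. Note that $J^1\times (K_F)^{23}$ is flag. I then apply H\"older in $L^p(w)\times L^{p'}(\wt w)$ together with the vector-valued $M_{\calD_F}$ bound. Lemma \ref{lem:triparauxsfY} gives $(|k|+1)^{3/2}$ for each of $f$ and $g$, so the total is $(|k|+1)^3$.

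The enhanced estimates come from reading off the ratio $|K|/|K_F|$ on $\calD^{(1)}(k,l)$:
\begin{itemize}
\item If $l^1=0$, then $2^{-k^1}\ell(K^1)=2^{-k^2}\ell(K^2)$, so $\ell(K^1)/\ell(K^2)=2^{k^1-k^2}$ and $|K|/|K_F|\le 2^{-d_2(k^1-k^2)_+}$.
\item If $l^2=0$, the same reasoning gives $2^{-d_3(k^2-k^3)_+}$.
\item If $l^1=l^2=0$, then $\calF\subset\calD^k_=$ and \eqref{eq:ratiodecay} gives the product bound.
\end{itemize}
These ratios do not depend on the particular $K$, so the factor $\frac{|K|}{|K_F|}$ in the Lemma \ref{lem:triformsum1} terms pulls out directly. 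For $\wt M_\calF$ and $\wt M^{23}_\calF$ I use Lemma \ref{lem:maxOp3}, which gives decay $(\sup_K |K|/|K_F|)^\eta$, and the vector-valued Lemma \ref{lem:VecKperKFmaximalTri}. Together these produce the factor $2^{-\eta(\cdot)}$ with $\eta$ depending on $[w]_{A_{p,F}}$.

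The main obstacle is making sure the extrapolated vector-valued version keeps a weight-dependent but positive exponent, uniformly in $k$. I would handle this exactly as in Lemma \ref{lem:VecKperKFmaximal}, by tracking $\eps\sim[w]^{-1}$ through the extrapolation.
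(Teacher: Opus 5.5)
Your proposal is correct and matches the paper's proof. You use the one-parameter block expansion with $\calY_{K,k,l}$, treat the four support configurations via Lemmas \ref{lem:triformsum1}, \ref{lem:triformsum2} and \ref{lem:triformsum3}, and conclude with Lemma \ref{lem:triparauxsfY} plus the vector-valued ratio-decay bounds for $\wt M_{\calF}$ and $\wt M^{23}_{\calF}$; relying on Lemma \ref{lem:maxOp3} alone, instead of also citing Lemma \ref{lem:maxOp1} as the paper does, is a harmless simplification since it already covers $\wt M^{23}_{\calF}$ for every $\calF\subset\calD^k_F$.
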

\begin{proof}
	Using our one-parameter expansion, we see that
	\begin{align*}
		 & |\langle R_{k, l^1, l^2}f, g\rangle|                                \\
		 & \lesssim \sum_{K \in \calD^{(1)}(k,l)} \sum_{I^{(k)} = J^{(k)} = K}
		\frac{1}{|K_F|} \Big(\int_I |f_K| + \int_{J} |f_K|+ \int_{J^1\times I^{23}} |f_K|+ \int_{J^{12}\times I^3} |f_K|\Big)
		\int_J g_K,
	\end{align*}
	where $f_K=|\mathcal{Y}_{K,k,l}f|$ and $g_K=|\mathcal{Y}_{K,k,l}g|$.
	These terms can be dealt with using Lemmas \ref{lem:triformsum1}, \ref{lem:triformsum2} and \ref{lem:triformsum3} to bound the bilinear forms in terms of $M_{\calD_F}$, $\wt M_{\calF}^{23}$ and $\wt M_{\calF}$. From there, we can bound it using Lemma \ref{lem:triparauxsfY}, as well as standard estimates for $M_{\calD_F}$ and Lemmas \ref{lem:maxOp1}, \ref{lem:maxOp3} and \ref{lem:VecKperKFmaximalTri} for $\wt M_{\calF}^{23}$ and $\wt M_{\calF}$.
\end{proof}

\subsubsection*{Almost bi-parameter shifts: type I}
Fix $k=(k^1, k^2, k^3)\in \N^3$ and $l^1\in \N$, where we in fact know that $l^1\le k^1$. These shifts arise in the representation theorem when we have the first two parameters grouped together (due to comparability) but not the third, due to having that $\ell(K^1)\ge\ell(I^2)$ and $\ell(I^3)>\max\{\ell(K^1),\ell(K^2)\}$. We assume cancellation conditions adapted to the grouping, i.e.
\[
	\int_{\R^{d_{12}}} \varphi_{I,J}=\int_{\R^{d_{12}}} \psi_{I,J}=\int_{\R^{d_3}} \varphi_{I,J}=\int_{\R^{d_3}} \psi_{I,J}=0.
\]
Thus, we define (note that the definition does not depend on $\ell^2$)
\[
	\mathcal{D}^{(2a)}(k,l) := \{K\in \calD_F^k: 2^{l^1} 2^{-k^1} \ell(K^1)= 2^{-k^2} \ell(K^2),  \max\{\ell(K^1), \ell(K^2)\}<2^{-k^3}\ell(K^3)\}.
\]
Paraproduct corrections $\pc_1,\pc_3$ or $\pc_{12},\pc_3$ may occur on the same function or on different functions. Accordingly, the supports of $\varphi_{I,J},\psi_{I,J}$ lie, respectively, in one of the following pairs, up to relabeling and taking the full adjoint:
\[
	\begin{aligned}
	&((I^1\cup J^1)\times I^2\times(I^3\cup J^3),\ J),\\
	&((I^{12}\cup J^{12})\times(I^3\cup J^3),\ J),\\
	&((I^1\cup J^1)\times I^2\times I^3,\ J^{12}\times(I^3\cup J^3)),\\
	&((I^{12}\cup J^{12})\times I^3,\ J^{12}\times(I^3\cup J^3)).
	\end{aligned}
\]
\begin{thm}\label{thm:albipar-type1}
	For all $p \in (1, \infty)$, $w \in A_{p, F}$ and $k, l^1\le k^1$ we have
	$$
		\|R_{k, l^1}^1f\|_{L^p(w)} \lesssim (|k|+1)^3\|f\|_{L^p(w)}.
	$$
	For $l^1=0$, we have the enhanced estimate:
	$$
		\|R_{k,  l^1}^1f\|_{L^p(w)} \lesssim (|k|+1)^3 2^{-\eta(k^1-k^2)_+}\|f\|_{L^p(w)},\quad \eta = \eta([w]_{A_{p, F}}) > 0.
	$$
\end{thm}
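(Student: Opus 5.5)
The argument follows the proof of Theorem \ref{thm:almost-3par}. Here $\calF=\mathcal{D}^{(2a)}(k,l)$ and $\varphi_{I,J},\psi_{I,J}$ have the cancellation $\int_{\R^{d_{12}}}=\int_{\R^{d_3}}=0$. The bi-parameter expansion of Case I then lets us replace $f$ by $\calV_{K,k,l^1}f$ and $g$ by $\calV_{K,k,l^1}g$ in both pairings; the constancy assumption on $\varphi_{I,J},\psi_{I,J}$ is exactly what that expansion requires. Using the size bounds $|\varphi_{I,J}|,|\psi_{I,J}|\lesssim|I|^{-1/2}$ and $|a_{IJK}|\le |I|/|K_F|$, we obtain
$$
|\langle R^1_{k,l^1}f,g\rangle|\lesssim \sum_{K\in\calF}\sum_{I^{(k)}=J^{(k)}=K}\frac{1}{|K_F|}\int_{S_f} f_K\int_{S_g} g_K,
$$
with $f_K=|\calV_{K,k,l^1}f|$, $g_K=|\calV_{K,k,l^1}g|$, and $S_f,S_g$ running over the pieces of the four listed support configurations. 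We take the full adjoint if necessary and relabel $I^m\leftrightarrow J^m$ coordinatewise, which changes neither the common roots nor the side lengths. Each term is then one of the eight cases listed at the start of Section \ref{sec:bilinforms}, with the $g_K$ integral over $J$.

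We then go through the pieces. Expanding the unions, the $f_K$ supports are $J$, $I$, $J^{12}\times I^3$, $I^{12}\times J^3$, $J^1\times I^{23}$ and $J^1\times I^2\times J^3$. The pair $(I^{12}\times J^3, J^{12}\times J^3)$ relabels in the third coordinate to the same kind of configuration. The cases $J$ and $J^{12}\times I^3$ are handled by Lemma \ref{lem:triformsum1}, the case $I$ by Lemma \ref{lem:triformsum3}, and the case $J^1\times I^{23}$ by Lemma \ref{lem:triformsum2}. The two cases needing structure of $\calF$ are $J^1\times I^2\times J^3$ (Lemma \ref{lem:triformsum5}) and $I^{12}\times J^3$ (Lemma \ref{lem:triformsum6}). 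Both require $\calF\subset\calD_{2,<}(k)$, and this holds by definition of $\mathcal{D}^{(2a)}(k,l)$ since $\max\{\ell(K^1),\ell(K^2)\}<2^{-k^3}\ell(K^3)$. Notably, no configuration of type $I^1\times J^2$ arises, because no $\pc_2$ correction is performed, so $\calD_{1,<}(k)$ is not needed. This bookkeeping, showing that every configuration is admissible under only the $\calD_{2,<}$ restriction, is the step I expect to need the most care.

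To conclude, we bound each sum by $\sum_K\int T f_K\, g_K$, where $T$ is one of $M_{\calD_F}$, $\wt M_{\calF}$, $\wt M^{23}_{\calF}$, $\wt M^{12}_{\calF}$. We apply Hölder with $w$ and $\wt w=w^{1-p'}$, the vector-valued bounds of Lemma \ref{lem:VecKperKFmaximalTri}, and Lemma \ref{lem:triparauxsfV1}; the two factors $(|k|+1)^{3/2}$ give $(|k|+1)^3$. For the general estimate every maximal operator is bounded by $M_{\calD_F}$, and the factor $|K|/|K_F|\le1$ is discarded.

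For the enhanced estimate when $l^1=0$, we have $2^{-k^1}\ell(K^1)=2^{-k^2}\ell(K^2)$ and $\ell(K^2)<2^{-k^3}\ell(K^3)$, so $\calF\subset\calD^k_{=,<}\cap\calD_{2,<}(k)$ and \eqref{eq:ratiodecay} gives $|K|/|K_F|\le 2^{-d_2(k^1-k^2)_+}$. The $M_{\calD_F}$ terms keep this explicit factor, and the terms with $\wt M_\calF$, $\wt M^{23}_\calF$ and $\wt M^{12}_\calF$ get $2^{-\eta(k^1-k^2)_+}$ from Lemmas \ref{lem:maxOp3} and \ref{lem:maxOp2}. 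Carrying this decay through the vector-valued extrapolation, as in Lemma \ref{lem:VecKperKFmaximal}, gives the claimed weight-dependent $\eta$.
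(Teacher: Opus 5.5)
Your proposal is correct and follows essentially the same route as the paper: you insert the Case I blocks $\calV_{K,k,l^1}$ from Lemma \ref{lem:triparauxsfV1}, reduce the four support pairs (relabeling $I^3,J^3$ where needed) to the forms of Section \ref{sec:bilinforms}, and handle the two new configurations $I^{12}\times J^3$ and $J^1\times I^2\times J^3$ via Lemmas \ref{lem:triformsum6} and \ref{lem:triformsum5} using $\mathcal{D}^{(2a)}(k,l)\subset\calD_{2,<}(k)$. You then get the $l^1=0$ decay from \eqref{eq:ratiodecay} together with Lemmas \ref{lem:maxOp2}, \ref{lem:maxOp3} and \ref{lem:VecKperKFmaximalTri}. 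The only blemish is the sentence about the pair $(I^{12}\times J^3, J^{12}\times J^3)$, which is already of the form $(\cdot\,,J)$; the pair you presumably meant is $(I, J^{12}\times I^3)$, which relabels in the third coordinate to $(I^{12}\times J^3, J)$.
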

\begin{proof}
	Insert the blocks from Lemma \ref{lem:triparauxsfV1} before expanding the positive integrals. The last two support pairs give the same forms as the first two after relabeling $I^3,J^3$. Most terms follow as in Theorem \ref{thm:almost-3par}. The additional forms have supports $(I^{12}\times J^3,J)$ and $(J^1\times I^2\times J^3,J)$, and are covered by Lemmas \ref{lem:triformsum6} and \ref{lem:triformsum5}, since $\calD^{(2a)}(k,l)\subset\calD_{2,<}(k)$. The bound for $\wt M^{12}_{\calF}$ follows from Lemmas \ref{lem:maxOp2} and \ref{lem:VecKperKFmaximalTri}.
\end{proof}

\subsubsection*{Almost bi-parameter shifts: type II}
Now, fix $k=(k^1, k^2, k^3)\in \N^3$ and $l^2\in \N$. For type II bi-parameter shifts, we have comparability among the second and third parameters but not with the first, due to having that $\ell(K^1)<\ell(I^2)$ and $\ell(I^3)\le\max\{\ell(K^1),\ell(K^2)\}=\ell(K^2)$. Analogously to the type I bi-parameter shifts, we assume the cancellation conditions
\[
	\int_{\R^{d_{23}}} \varphi_{I,J}=\int_{\R^{d_{23}}} \psi_{I,J}=\int_{\R^{d_1}} \varphi_{I,J}=\int_{\R^{d_1}} \psi_{I,J}=0.
\]
Thus, we define (note that the definition does not depend on $\ell^1$)
\[
	\mathcal{D}^{(2b)}(k,l)=\{K\in \calD_F^k:  \ell(K^1)< 2^{-k^2} \ell(K^2),  2^{l^2}2^{-k^2}\ell(K^2)=2^{-k^3}\ell(K^3)\}.
\]
Here the corrections are $\pc_1,\pc_{23}$ or $\pc_1,\pc_2$. The corresponding support pairs, again up to relabeling and taking the full adjoint, are
\[
	\begin{aligned}
	&((I^1\cup J^1)\times(I^{23}\cup J^{23}),\ J),\\
	&((I^1\cup J^1)\times(I^2\cup J^2)\times I^3,\ J),\\
	&((I^1\cup J^1)\times I^{23},\ J^1\times(I^{23}\cup J^{23})),\\
	&((I^1\cup J^1)\times I^2\times I^3,\ J^1\times(I^2\cup J^2)\times J^3).
	\end{aligned}
\]
The related model operators will be denoted by $R_{k,l^2}^2$. Insert the blocks from Lemma \ref{lem:triparauxsfV2} first. After expanding the positive integrals, the last two support pairs reduce to the first two by relabeling $I^{23},J^{23}$ or $I^2,J^2$, respectively. The proof then follows as before, with the additional forms covered by Lemma \ref{lem:triformsum4}, since $\calD^{(2b)}(k,l)\subset\calD_{1,<}(k)$.
\begin{thm}\label{thm:albipar-type2}
	For all $p \in (1, \infty)$, $w \in A_{p, F}$ and $k, l^2\le k^2$ we have
	$$
		\|R_{k, l^2}^2f\|_{L^p(w)} \lesssim (|k|+1)^3\|f\|_{L^p(w)}.
	$$
	For $l^2=0$, we have the enhanced estimate:
	$$
		\|R_{k, 0}^2 f\|_{L^p(w)} \lesssim (|k|+1)^3 2^{-\eta(k^2-k^3)_+}\|f\|_{L^p(w)},\quad \eta = \eta([w]_{A_{p, F}}) > 0.
	$$
\end{thm}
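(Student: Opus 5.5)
We follow the template of Theorems \ref{thm:almost-3par} and \ref{thm:albipar-type1}. Fix $k$ and $l^2$ and the collection $\calF=\calD^{(2b)}(k,l)$. Both $\varphi_{I,J}$ and $\psi_{I,J}$ satisfy $\int_{\R^{d_1}}=\int_{\R^{d_{23}}}=0$, are supported in $K$, and are constant on the small rectangles. So the bi-parameter expansion, Case II, of Section~\ref{subsec:tripar-blocks} applies: we may replace $f$ by $\calV_{K,k,l^2}f$ and $g$ by $\calV_{K,k,l^2}g$ in the pairings. Here $l^2$ is exactly the parameter with $2^{-k^2}\ell(K^2)=2^{-l^2}2^{-k^3}\ell(K^3)$. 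Set $f_K:=|\calV_{K,k,l^2}f|$ and $g_K:=|\calV_{K,k,l^2}g|$.

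Using the size bounds $|\varphi_{I,J}|,|\psi_{I,J}|\lesssim|I|^{-1/2}$ and $|a_{IJK}|\le|I|/|K_F|$, we get
\[
|\langle R^2_{k,l^2}f,g\rangle|\lesssim\sum_{K\in\calF}\sum_{I^{(k)}=J^{(k)}=K}\frac{1}{|K_F|}\int_{S_1}f_K\int_{S_2}g_K,
\]
where $(S_1,S_2)$ runs over the pieces of the four admissible support pairs. We expand unions of cubes into finitely many products and relabel $I^m\leftrightarrow J^m$ coordinatewise so that the $g_K$ integral is over $J$. The third and fourth support pairs, where the $g$ side also carries a correction, then reduce to the first two after relabeling $I^{23},J^{23}$ or $I^2,J^2$. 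The possible $f_K$ domains are $J$, $I$, $J^{12}\times I^3$, $J^1\times I^{23}$, $I^1\times J^{23}$ and $I^1\times J^2\times I^3$. They are handled as follows:
\begin{itemize}
\item Lemma \ref{lem:triformsum1} handles $J$ and $J^{12}\times I^3$.
\item Lemma \ref{lem:triformsum3} handles $I$ and produces $\wt M_\calF$.
\item Lemma \ref{lem:triformsum2} handles $J^1\times I^{23}$ and produces $\wt M_\calF^{23}$.
\item Lemma \ref{lem:triformsum4} handles $I^1\times J^{23}$ and $I^1\times J^2\times I^3$.
\end{itemize}
The last case is the only delicate one. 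It is legitimate because $\ell(K^1)<2^{-k^2}\ell(K^2)$ gives $\calF\subset\calD_{1,<}(k)$, so $K^1\times J^2$ is flag.

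Each resulting form is bounded by $\sum_K\int \mathcal M f_K\,g_K$ with $\mathcal M\in\{M_{\calD_F},\wt M_\calF,\wt M^{23}_\calF\}$. Cauchy--Schwarz in $K$ and Hölder in $L^p(w)\times L^{p'}(\wt w)$ reduce this to the vector-valued bound for $\mathcal M$ (Lemma \ref{lem:VecKperKFmaximalTri}, or extrapolation for $M_{\calD_F}$) together with Lemma \ref{lem:triparauxsfV2} applied to $f$ and to $g$. The latter gives $(|k|+1)^{3/2}\cdot(|k|+1)^{3/2}=(|k|+1)^3$. In general we bound $\wt M_\calF,\wt M^{23}_\calF\le M_{\calD_F}$ crudely, and the ratios $|K|/|K_F|$ by $1$.

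For $l^2=0$ we have $K\in\calD^k_{<,=}$. Here we keep the decay: by \eqref{eq:ratiodecay}, $|K|/|K_F|\le 2^{-d_3(k^2-k^3)_+}$. This directly gives the decay for the $M_{\calD_F}$ terms coming from Lemmas \ref{lem:triformsum1} and \ref{lem:triformsum4}. For the $\wt M_\calF$ and $\wt M^{23}_\calF$ terms we use Lemmas \ref{lem:maxOp1} and \ref{lem:maxOp3}, together with the vector-valued Lemma \ref{lem:VecKperKFmaximalTri}, to get $2^{-\eta(k^2-k^3)_+}$. We then reduce $\eta$ to the minimum over the finitely many terms.

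The main obstacle is the bookkeeping in the expansion step: checking that, after relabeling, every support configuration lands in one of the listed lemmas. In particular, no configuration of the form $J^1\times I^2\times J^3$ or $I^{12}\times J^3$ may arise, since those would require the $\calD_{2,<}$ structure that $\calD^{(2b)}$ lacks. The reason they do not occur is that the only corrections here are $\pc_1$, $\pc_2$ and $\pc_{23}$, and none of them mixes $I^3$ with $J^3$ independently of $I^2$ and $J^2$.
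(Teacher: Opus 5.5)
Your proposal is correct and follows the paper's route. You insert the Case~II blocks $\calV_{K,k,l^2}$ (Lemma~\ref{lem:triparauxsfV2}), relabel the last two support pairs into the first two, dispatch the $f_K$ domains via Lemmas~\ref{lem:triformsum1}, \ref{lem:triformsum2}, \ref{lem:triformsum3} and \ref{lem:triformsum4} using $\calD^{(2b)}(k,l)\subset\calD_{1,<}(k)$, and get the $l^2=0$ decay from \eqref{eq:ratiodecay} together with Lemmas~\ref{lem:maxOp1}, \ref{lem:maxOp3} and \ref{lem:VecKperKFmaximalTri}; this is exactly what the paper compresses into ``the proof then follows as before''. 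The only thing you leave implicit is the full-adjoint version of the support pairs, which the same argument handles with the roles of $f,g$ and $w,\wt w$ interchanged.
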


\subsubsection*{Tri-parameter flag shifts} Finally, we move to define tri-parameter flag shifts. These occur when $\ell(I^2)>\ell(K^1)$ and $\ell(I^3)>\ell(K^2)$. As none of the parameters are grouped, we need to assume the strongest cancellation condition
\[
	\int_{\R^{d_{i}}} \varphi_{I,J}=\int_{\R^{d_{i}}} \psi_{I,J}=0,\qquad\text{ for }i=1,2,3.
\]
Thus, we consider (noting that this does not actually depend on $l$)
\[
	\mathcal{D}^{(3)}(k,l)=\mathcal{D}_{1,<}(k)\cap \mathcal{D}_{2,<}(k).
\]
Paraproduct corrections $\pc_1,\pc_2,\pc_3$ may occur on either function. We therefore require
\begin{equation*}
	\supp\varphi_{I,J},\ \supp\psi_{I,J}\subset
	(I^1\cup J^1)\times(I^2\cup J^2)\times(I^3\cup J^3).
\end{equation*}
We denote the related model operators by $Q_k$. After inserting $\calU_{K,k}$ in both pairings, expansion and relabeling give the eight forms of Section~\ref{sec:bilinforms}. All their hypotheses hold because $\calD^{(3)}(k,l)\subset\calD_{1,<}(k)\cap\calD_{2,<}(k)$, so the weighted bound follows from those estimates and Lemma \ref{lem:auxsftri-par}.
\begin{thm}\label{thm:tripar-type1}
	For all $p \in (1, \infty)$, $w \in A_{p, F}$ and $k$ we have
	$$
		\|Q_kf\|_{L^p(w)} \lesssim (|k|+1)^3\|f\|_{L^p(w)}.
	$$

\end{thm}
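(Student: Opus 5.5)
The bound for $Q_k$ follows the scheme already used for $R_{k,l^1,l^2}$, $R^1_{k,l^1}$ and $R^2_{k,l^2}$, with the tri-parameter flag expansion as the martingale block input.

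\textbf{Step 1: insert blocks.} Both $\varphi_{I,J}$ and $\psi_{I,J}$ have cancellation in each of $\R^{d_1},\R^{d_2},\R^{d_3}$, and they have the required constancy and support in $K$. So the tri-parameter flag expansion lets us replace $f$ by $\calU_{K,k}f$ and $g$ by $\calU_{K,k}g$ in every pairing. Set $f_K:=|\calU_{K,k}f|$ and $g_K:=|\calU_{K,k}g|$. The normalization $|a_{IJK}|\le |I|/|K_F|$, the size bound $|I|^{-1/2}$ and the support condition $(I^1\cup J^1)\times(I^2\cup J^2)\times(I^3\cup J^3)$ give
$$
|\langle Q_kf,g\rangle|\lesssim \sum_{K\in\calF}\sum_{I^{(k)}=J^{(k)}=K}\frac{1}{|K_F|}\sum_{A,B}\int_A f_K\int_B g_K .
$$
Here $A$ and $B$ range over the eight mixed products $X^1\times X^2\times X^3$ with $X^m\in\{I^m,J^m\}$.

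\textbf{Step 2: reduce to the eight forms.} For each pair $(A,B)$, relabel $I^m\leftrightarrow J^m$ coordinatewise so that $B=J$. This preserves the common roots and side lengths, so $A$ becomes one of the eight sets listed in Section~\ref{sec:bilinforms}. Each set is handled by one of the following.
\begin{enumerate}
\item Lemma \ref{lem:triformsum1} covers $J$ and $J^{12}\times I^3$.
\item Lemma \ref{lem:triformsum3} covers $I$.
\item Lemma \ref{lem:triformsum2} covers $J^1\times I^{23}$.
\item Lemma \ref{lem:triformsum4} covers $I^1\times J^{23}$ and $I^1\times J^2\times I^3$. This needs $\calF\subset\calD_{1,<}(k)$.
\item Lemmas \ref{lem:triformsum5} and \ref{lem:triformsum6} cover $J^1\times I^2\times J^3$ and $I^{12}\times J^3$. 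These need $\calF\subset\calD_{2,<}(k)$.
\end{enumerate}
Both restrictions hold because $\calF\subset\calD^{(3)}(k,l)=\calD_{1,<}(k)\cap\calD_{2,<}(k)$. This step is the crux: the restriction on $\calF$ is exactly what makes the enlarged averages such as $K^1\times J^2$ and $K^{12}\times J^3$ flag.

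\textbf{Step 3: sum.} Each bilinear form is now bounded by one of
$$
\sum_K \int_K M_{\calD_F}f_K\,g_K,\qquad \sum_K\int_K \wt M_{\calF}f_K\,g_K,\qquad \sum_K\int_K \wt M^{23}_{\calF}f_K\,g_K,\qquad \sum_K\int_K \wt M^{12}_{\calF}f_K\,g_K .
$$
The factor $|K|/|K_F|$ in the first form is bounded by $1$. All four maximal operators are pointwise dominated by $M_{\calD_F}$. For $\wt M^{23}_\calF$ this uses that $J^1\times(K_F)^{23}$ is flag. For $\wt M^{12}_\calF$ it uses that $(K_F)^{12}\times J^3$ is flag, which again follows from $\calF\subset\calD_{2,<}(k)$.

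We then apply H\"older's inequality in $L^p(w)\times L^{p'}(\wt w)$ with $\wt w=w^{1-p'}$, together with the vector-valued estimate for $M_{\calD_F}$ obtained by extrapolation (Lemma \ref{lem:extra}). Lemma \ref{lem:auxsftri-par} gives
$$
\Big\|\Big(\sum_K f_K^2\Big)^{1/2}\Big\|_{L^p(w)}\lesssim(|k|+1)^{3/2}\|f\|_{L^p(w)},
$$
and the same bound for $g$ in $L^{p'}(\wt w)$. Multiplying the two factors yields the constant $(|k|+1)^3$. No decay in $k$ is needed here, because the kernel estimates already supply full exponential decay in every parameter.
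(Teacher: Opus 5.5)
Your proposal is correct and follows the paper's proof. You insert $\calU_{K,k}$ in both pairings, expand and relabel into the eight forms of Section~\ref{sec:bilinforms}, use $\calF\subset\calD_{1,<}(k)\cap\calD_{2,<}(k)$ to meet their hypotheses, and close with Lemma~\ref{lem:auxsftri-par}. Your only addition is to bound $\wt M_{\calF}$, $\wt M^{23}_{\calF}$ and $\wt M^{12}_{\calF}$ pointwise by $M_{\calD_F}$ instead of invoking the decay lemmas, which is sufficient here since no decay in $k$ is needed.
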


\subsection{Algorithm rules}\label{subsec:tripar-algorithm}
Putting all of this together, we deduce a set of legal ``moves" we may perform in order to facilitate getting enough cancellation in any term arising in our flag multiresolution.
For a general summand $\Sigma_s$, the rules are as follows:
\begin{enumerate}
	\item Split the summand into summation over $\calD_{=,=}$, $\calD_{=,<}$, $\calD_{<,=}$ and $\calD_{<,<}$. Of course, these 4 terms not necessarily all appear; for instance, in our concrete example $s=(D, E, \Delta)$, we only have $\calD_{=,<}$ and $\calD_{<,<}$.
	\item Group parameters whose side lengths are equal; initially leave the others separate. Thus $\calD_{=,=}$ gives the single group $\{123\}$, whereas $\calD_{<,=}$ gives $\{1,23\}$.
	\item Perform paraproduct corrections (if needed) to the first group, potentially expanding out the $\Lambda=\Delta+E$ terms as needed. For instance, if the first group is $\{1\}$, then perform $\pc_1$, and if it is $\{123\}$ arising from $(D,D,E)$, then we will need to perform $\pc_{123}f$ for the term arising from taking the $E$ part of $\Lambda$ in both of the first two parameters for $f$.
	\item Let $j$ be the first index that is not in the first grouping. For terms where $\ell(I^j)>\max\{\ell(K^1),\cdots, \ell(K^{j-1})\}$, then we may perform $\pc_j$ as needed to get cancellation. For the remaining terms, then we think of adding them to the group which contains $j-1$; by assumption, this group already has cancellation, so we can think of having $c1\ldots j$ cancellation well.
	\item Iterate; for terms where we have the inequality $\ell(I^j)>\max\{\ell(K^1),\cdots, \ell(K^{j-1})\}$, then we may perform $\pc_j$, and if it fails to hold, then we group it with an earlier grouping that already has cancellation. As a result of how we bifurcated to select our groupings, we may perform paraproduct corrections on any grouping, if needed. In the end, we find that we have sufficient cancellation for every term that arises, as desired.
\end{enumerate}

The above algorithm works with higher parameter flag structure, even in the non-cancellative case. In the sequel, we just use the cancellative  tri-parameter case as a model to illustrate  why everything is well-organized via this algorithm.

Now, given \textbf{any} of the 27 terms, we break up an arbitrary term, say $\Sigma_s$,  into 4 pieces based on the above algorithm:
\[
	\Sigma_s=\Sigma_s^{=,=}+\Sigma_s^{=,<}+\Sigma_s^{<,=}+\Sigma_s^{<,<}.
\]
Some of the above terms may be zero, depending on the value of $s$. Obviously, the term $\Sigma_s^{=,=}$ will be regarded as $\Sigma_s^{c123}$ (almost one-parameter shifts) after performing suitable paraproduct corrections. For the term $\Sigma_s^{=,<}$, by our algorithm we may perform the paraproduct correction $\pc_{12}$ as needed. Then terms where $\ell(I^3)\le\max\{\ell(K^1), \ell(K^2)\}$ are regarded as $\Sigma_s^{c123}$, and on terms where $\ell(I^3)>\max\{\ell(K^1), \ell(K^2)\}$, we perform $\pc_3$ and it is regarded as $\Sigma_s^{c12c3}$ (an almost bi-parameter shift of type I). So
\[
	\Sigma_s^{=,<}=\Sigma_s^{c123}+ \Sigma_s^{c12c3}.
\]
Similarly, for $\Sigma_s^{<,=}$ we perform $\pc_1$, then if $\ell(I^2)\le \ell(K^1)$ it gives $\Sigma_s^{c123}$, otherwise we may perform $\pc_{23}$ and we obtain $\Sigma_s^{c1c23}$ (almost bi-parameter shifts, type II). Hence
\[
	\Sigma_s^{<,=}=\Sigma_s^{c123}+ \Sigma_s^{c1c23}.
\]
Finally, we move to $\Sigma_{<,<}$. In this case, we may potentially have three groups appearing in some terms. Hence, we may formally write
\begin{align*}
	\Sigma_{s}^{<,<}
	 & = \sum_{\substack{I, J\in \calD_{<, < }\\ \ell(I)=\ell(J)\\ \ell(I^2)> \ell(K^1)\\ \ell(I^3)> \max\{\ell(K^1), \ell(K^2)\}}}
	+ \sum_{\substack{I, J\in \calD_{<,<}\\ \ell(I)=\ell(J)\\ \ell(I^2)\le \ell(K^1)\\ \ell(I^3)> \max\{\ell(K^1),\ell(K^2)\}}}                    \\
	 & \hspace{3cm} + \sum_{\substack{I, J\in \calD_{<, < }\\ \ell(I)=\ell(J)\\ \ell(I^2)> \ell(K^1)\\ \ell(I^3)\le \max\{\ell(K^1), \ell(K^2)\}}}
	+ \sum_{\substack{I, J\in \calD_{<,<}\\ \ell(I)=\ell(J)\\ \ell(I^2)\le \ell(K^1)\\ \ell(I^3)\le \max\{\ell(K^1),\ell(K^2)\}}}
	\\
	 & := \Sigma_s^{c1c2c3}+\Sigma_s^{c12c3}+\Sigma_s^{c1c23}+\Sigma_s^{c123},
\end{align*}
where $\Sigma_s^{c1c2c3}$ is obtained after $\pc_{1}, \pc_2$ and $\pc_3$, $\Sigma_s^{c12c3}$ is obtained after $\pc_{1}$ and $\pc_3$, $\Sigma_s^{c1c23}$ is obtained after $\pc_{1}$ and $\pc_2$, and $\Sigma_s^{c123}$ is obtained after $\pc_{1}$.

Thus, this results in a tri-parameter shift, a bi-parameter shift of type I, a bi-parameter shift of type II, and a one-parameter shift. The discussion in the above shows that the bounds for the sum are as in the representation theorem. This completes the proof of Theorem \ref{thm: triparam rep}.

Finally, observe that the weighted bounds for the shift operators in Theorems \ref{thm:almost-3par}, \ref{thm:albipar-type1}, \ref{thm:albipar-type2}, and \ref{thm:tripar-type1} combined with the representation theorem lead to the following corollary for any flag CZO.

\begin{cor}
	Suppose $T$ is a cancellative tri-parameter flag CZO. Then
	\[
		\Vert Tf\Vert_{L^p(w)}\lesssim \Vert f\Vert_{L^p(w)}
	\]
	for all $p\in (1,\infty)$ and all $w\in A_{p,F}$.
\end{cor}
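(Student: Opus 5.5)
The plan is to combine Theorem \ref{thm: triparam rep} with the uniform weighted bounds already proved for each model operator. Fix $p\in(1,\infty)$ and $w\in A_{p,F}$, and take $f\in L^p(w)$ and $g\in L^{p'}(\wt w)$ with $\wt w=w^{1-p'}\in A_{p',F}$; both functions should be nice, for instance bounded with compact support. By duality it is enough to prove
\[
|\langle Tf,g\rangle|\lesssim \|f\|_{L^p(w)}\|g\|_{L^{p'}(\wt w)}.
\]
We insert the representation and move absolute values inside the expectation and the sums. Each model operator is bounded on $L^p(w)$ uniformly over the random lattice. This follows from Theorems \ref{thm:almost-3par}, \ref{thm:albipar-type1}, \ref{thm:albipar-type2} and \ref{thm:tripar-type1}. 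So the task reduces to showing that the weighted coefficient series converge. The number of model operators for each fixed $k$ is finite, and the implicit finite sums coming from signatures are harmless.

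The summability splits into four groups.

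1. The group $Q_k$, $R^1_{k,l^1}$, $R^2_{k,l^2}$, $R_{k,l^1,l^2}$ with $l^1,l^2\ge1$ carries the full decay $2^{-\alpha_1k^1-\alpha_2k^2-\alpha_3k^3}$. These operators have norms $\lesssim(|k|+1)^3$. At most $(|k|+1)^2$ indices $l$ appear, so this group gives a convergent series
\[
\sum_k (|k|+1)^5 2^{-\alpha\cdot k}<\infty.
\]

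2. The second group lacks decay in $k^1$. We use the enhanced bounds $2^{-\eta(k^1-k^2)_+}$ for $R_{k,0,l^2}$ and $R^1_{k,0}$, where $\eta=\eta([w]_{A_{p,F}})$. As in the bi-parameter argument, we may assume $\eta<\alpha_2$. Then we split into $k^1\le k^2$, where the $k^2$-decay absorbs the polynomial growth in $k^1$, and $k^1>k^2$. In the latter case we bound $2^{-\alpha_2k^2}2^{-\eta(k^1-k^2)}\le 2^{-\eta k^1}2^{(\eta-\alpha_2)k^2}$. The factor $2^{-\alpha_3k^3}$ handles $k^3$.

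3. The third group lacks decay in $k^2$. The same argument works with the enhanced factor $2^{-\eta(k^2-k^3)_+}$ from Theorems \ref{thm:almost-3par} and \ref{thm:albipar-type2}, now paired with the decay $2^{-\alpha_3k^3}$. The decay $2^{-\alpha_1k^1}$ controls $k^1$. The range $l^1\le k^1$ only costs a polynomial factor.

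4. The last term is $R_{k,0,0}$, which carries only $2^{-\alpha_3k^3}$. This is the main obstacle, since decay must be produced in both $k^1$ and $k^2$ from the factor
\[
2^{-\eta(k^1-k^2)_+}2^{-\eta\max\{(k^1-k^3)_+,(k^2-k^3)_+\}}.
\]
I would take $\eta<\alpha_3$ and split according to which of $k^1,k^2,k^3$ is largest.
\begin{itemize}
\item If $k^3$ is largest, then $2^{-\alpha_3k^3}$ bounds a polynomial in $k^1,k^2\le k^3$.
\item If $k^2$ is largest, then $2^{-\alpha_3k^3-\eta(k^2-k^3)}\le 2^{-\eta k^2}2^{-(\alpha_3-\eta)k^3}$, and $k^1\le k^2$ is summed polynomially.
\item If $k^1$ is largest, the second factor gives $2^{-\eta(k^1-k^3)}$. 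Combined with $2^{-\alpha_3k^3}$ this gives decay $2^{-\eta k^1}2^{-(\alpha_3-\eta)k^3}$, and $k^2\le k^1$ costs only a polynomial in $k^1$.
\end{itemize}
In each case the series converges, with a constant depending on $[w]_{A_{p,F}}$.

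Finally, we sum everything and take the supremum over $g$ in the unit ball of $L^{p'}(\wt w)$. This gives the bound first for nice $f$. A density argument then extends it to all $f\in L^p(w)$, since nice functions are dense in $L^p(w)$.
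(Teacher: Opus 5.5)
Your proposal is correct and follows the paper's argument: insert the representation of Theorem~\ref{thm: triparam rep}, use the lattice-uniform weighted bounds for the model operators, and check that the coefficient series converges. The only difference is bookkeeping: where you split into cases, the paper observes once that each group's decay factor is at most $2^{-\delta m}$, with $\delta=\min\{\alpha_1,\alpha_2,\alpha_3,\eta\}$ and $m=\max_j k^j$.
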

\begin{proof}
	Let $g$ be normalized i.e. $\Vert g\Vert_{L^{p'}(w')}=1$. Recall the representation theorem.
	\begin{align*}
		\langle Tf, g\rangle & = C \E \bigg(\sum_{k^1, k^2, k^3=0}^\infty 2^{-\alpha_1 k^1-\alpha_2 k^2 -\alpha_3 k^3} \Big[\langle Q_k f, g\rangle+\sum_{l^1=1}^{k^1} \sum_{l^2=1}^{\max\{k^1-l^1, k^2\}} \langle R_{k, l^1, l^2}f, g\rangle\\
			                                                                                                               &\qquad+\sum_{l^1=1}^{k^1} \langle R_{k, l^1}^1 f, g\rangle + \sum_{l^2=1}^{k^2} \langle R_{k, l^2}^2 f, g\rangle\Big]
		\\ & \hspace{1.5cm} +\sum_{k^1, k^2, k^3=0}^\infty 2^{-\alpha_2 k^2-\alpha_3 k^3 }\Big[ \sum_{l^2=1}^{\max\{k^1, k^2\} }\langle R_{k, 0, l^2}f, g\rangle+ \langle R_{k, 0}^1 f, g\rangle
			                                                                                  \Big]\\ & \hspace{1.5cm} +\sum_{k^1, k^2, k^3=0}^\infty 2^{-\alpha_1 k^1-\alpha_3 k^3 }
		\Big[\sum_{l^1=1}^{k^1} \langle R_{k, l^1, 0}f, g\rangle+\langle R_{k, 0}^2 f, g\rangle  \Big]                                                                                                                                         \\
		                     & \hspace{1.5cm} +\sum_{k^1, k^2, k^3=0}^\infty 2^{-\alpha_3 k^3 }\langle R_{k, 0,0}f, g\rangle
		\bigg).
	\end{align*}
	Taking absolute values and using the triangle inequality, we appeal to the bounds for the various shift operators to deduce
	\begin{align*}
		|\langle Tf, g\rangle| & \le C\Vert f\Vert_{L^p(w)} \bigg(\sum_{k^1, k^2, k^3=0}^\infty 2^{-\alpha_1 k^1-\alpha_2 k^2 -\alpha_3 k^3} \Big[(|k|+1)^3+\sum_{l^1=1}^{k^1} \sum_{l^2=1}^{\max\{k^1-l^1, k^2\}} (|k|+1)^3 \\
			                                                                                                                                     &+\sum_{l^1=1}^{k^1} (|k|+1)^3 + \sum_{l^2=1}^{k^2} (|k|+1)^3\Big]
		\\ &+\sum_{k^1, k^2, k^3=0}^\infty 2^{-\alpha_2 k^2-\alpha_3 k^3 }\Big[ \sum_{l^2=1}^{\max\{k^1, k^2\} }(|k|+1)^32^{-\eta (k^1-k^2)_+}+ (|k|+1)^32^{-\eta (k^1-k^2)_+}
			                                                                  \Big]\\ &+\sum_{k^1, k^2, k^3=0}^\infty 2^{-\alpha_1 k^1-\alpha_3 k^3 }
		\Big[\sum_{l^1=1}^{k^1} (|k|+1)^32^{-\eta (k^2-k^3)_+}+(|k|+1)^32^{-\eta (k^2-k^3)_+}  \Big]                                                                                                                         \\
		                       & \hspace{1.5cm} +\sum_{k^1, k^2, k^3=0}^\infty 2^{-\alpha_3 k^3 }(|k|+1)^32^{-\eta (k^1-k^2)_+}2^{-\eta\max\{(k^1-k^3)_+,(k^2-k^3)_+\}}
		\bigg).
	\end{align*}
	All four sums can be estimated together. Set
	$\delta=\min\{\alpha_1,\alpha_2,\alpha_3,\eta\}>0$ and, for fixed $k$, let
	$m=\max\{k^1,k^2,k^3\}$. We have $k^1+k^2+k^3\ge m$ and
	\begin{align*}
		k^2+k^3+(k^1-k^2)_+ & =\max\{k^1,k^2\}+k^3\ge m, \\
		k^1+k^3+(k^2-k^3)_+ & =k^1+\max\{k^2,k^3\}\ge m, \\
		k^3+\max\{(k^1-k^3)_+,(k^2-k^3)_+\} & =m.
	\end{align*}
	Thus the decay factor in each group is at most $2^{-\delta m}$.
	For fixed $k$, each group contains at most $1+m^2+2m=(m+1)^2$ terms,
	and $(|k|+1)^3\lesssim(m+1)^3$. Finally,
	\[
		\#\{k\in\N^3:\max_j k^j=m\}=(m+1)^3-m^3\le 3(m+1)^2.
	\]
	Consequently,
	\[
		|\langle Tf,g\rangle|
		\lesssim \|f\|_{L^p(w)}\sum_{m=0}^{\infty}(m+1)^7 2^{-\delta m}
		\lesssim \|f\|_{L^p(w)},
	\]
	which proves the claim.
\end{proof}
\bibliography{references}
\end{document}